\newcommand{\periodafter}[1]{#1.}
\titleformat{\subsection}[runin]
  {\normalfont\bfseries}{\thesubsection}{0.5em}{\periodafter}
\titleformat{\subsubsection}[runin]
  {\normalfont\itshape}{\thesubsubsection}{0.5em}{\periodafter}
\newenvironment*{alphaenumerate}
{\begin{enumerate}[label={\alph*)}, noitemsep, nolistsep, align=left, leftmargin=*]}
{\end{enumerate}}
\newenvironment*{bulletlist}
{\begin{itemize}[noitemsep, nolistsep, align=left, leftmargin=*]}
{\end{itemize}}
\theoremstyle{definition}
\newtheorem{definition}{Definition}[section]
\newtheorem{remark}[definition]{Remark}
\theoremstyle{plain}
\newtheorem{theorem}[definition]{Theorem}
\newtheorem{proposition}[definition]{Proposition}
\newtheorem{lemma}[definition]{Lemma}
\newtheorem{corollary}[definition]{Corollary}
\newtheorem{observation}[definition]{Observation}
\newcommand*{\dequal}{\overset{\mathrm{d}}{=}}
\newcommand*{\setle}{\subseteq}
\newcommand*{\rarr}{\rightarrow}
\newcommand*{\lrarr}{\leftrightarrow}
\newcommand*{\darr}{\downarrow}
\newcommand*{\uarr}{\uparrow}
\newcommand*{\dotcup}{\,\dot\cup\,}
\newcommand*{\bigdotcup}{\dot\bigcup}
\newcommand*{\setb}{\mathcal B}
\newcommand*{\seti}{\mathcal I}
\newcommand*{\setp}{\mathcal P}
\newcommand*{\setq}{\mathcal Q}
\NewDocumentCommand{\mcla}{O{} O{}}{\mathcal A_{#1}^{#2}}
\NewDocumentCommand{\mclb}{O{} O{}}{\mathcal B_{#1}^{#2}}
\NewDocumentCommand{\mclc}{O{} O{}}{\mathcal C_{#1}^{#2}}
\NewDocumentCommand{\mcld}{O{} O{}}{\mathcal D_{#1}^{#2}}
\NewDocumentCommand{\mcle}{O{} O{}}{\mathcal E_{#1}^{#2}}
\NewDocumentCommand{\mclf}{O{} O{}}{\mathcal F_{#1}^{#2}}
\NewDocumentCommand{\mclh}{O{} O{}}{\mathcal H_{#1}^{#2}}
\NewDocumentCommand{\mcli}{O{} O{}}{\mathcal I_{#1}^{#2}}
\NewDocumentCommand{\mclk}{O{} O{}}{\mathcal K_{#1}^{#2}}
\NewDocumentCommand{\mclm}{O{} O{}}{\mathcal M_{#1}^{#2}}
\NewDocumentCommand{\mcln}{O{} O{}}{\mathcal N_{#1}^{#2}}
\NewDocumentCommand{\mclo}{O{} O{}}{\mathcal O_{#1}^{#2}}
\NewDocumentCommand{\mclp}{O{} O{}}{\mathcal P_{#1}^{#2}}
\NewDocumentCommand{\mclr}{O{} O{}}{\mathcal R_{#1}^{#2}}
\NewDocumentCommand{\mcls}{O{} O{}}{\mathcal S_{#1}^{#2}}
\NewDocumentCommand{\mclt}{O{} O{}}{\mathcal T_{#1}^{#2}}
\NewDocumentCommand{\mclv}{O{} O{}}{\mathcal V_{#1}^{#2}}
\NewDocumentCommand{\mclw}{O{} O{}}{\mathcal W_{#1}^{#2}}
\NewDocumentCommand{\mclx}{O{} O{}}{\mathcal X_{#1}^{#2}}
\NewDocumentCommand{\bmcla}{}{\bm{\mathcal A}}
\NewDocumentCommand{\bmcld}{}{\bm{\mathcal D}}
\NewDocumentCommand{\bmclr}{}{\bm{\mathcal R}}
\NewDocumentCommand{\bmclv}{}{\bm{\mathcal V}}
\NewDocumentCommand{\mfkg}{}{\mathfrak g}
\NewDocumentCommand{\mfkp}{}{\mathfrak p}
\NewDocumentCommand{\mfkA}{}{\mathfrak A}
\NewDocumentCommand{\mfkP}{}{\mathfrak P}
\NewDocumentCommand{\mfkPr}{}{\mathfrak P_{\mathrm{r}}}
\NewDocumentCommand{\mfkPc}{}{\mathfrak P_{\mathrm{c}}}
\NewDocumentCommand{\mrma}{}{\mathrm{a}}
\NewDocumentCommand{\mrmb}{}{\mathrm{b}}
\NewDocumentCommand{\mrmc}{}{\mathrm{c}}
\NewDocumentCommand{\mrmd}{}{\mathrm{d}}
\NewDocumentCommand{\mrme}{}{\mathrm{e}}
\NewDocumentCommand{\mrmf}{}{\mathrm{f}}
\NewDocumentCommand{\mrmg}{}{\mathrm{g}}
\NewDocumentCommand{\mrml}{}{\mathrm{l}}
\NewDocumentCommand{\mrmm}{}{\mathrm{m}}
\NewDocumentCommand{\mrmp}{}{\mathrm{p}}
\NewDocumentCommand{\mrmr}{}{\mathrm{r}}
\NewDocumentCommand{\mrms}{}{\mathrm{s}}
\NewDocumentCommand{\mrmt}{}{\mathrm{t}}
\NewDocumentCommand{\mrmu}{}{\mathrm{u}}
\NewDocumentCommand{\mrmv}{}{\mathrm{v}}
\NewDocumentCommand{\mrmw}{}{\mathrm{w}}
\NewDocumentCommand{\mrmtv}{}{\mathrm{TV}}
\NewDocumentCommand{\xlnx}{}{\Lambda}
\NewDocumentCommand{\sigmaR}{}{\bm\sigma}
\NewDocumentCommand{\sigmaIID}{O{}}{\bm\sigma^{*#1}}
\NewDocumentCommand{\sigmaNIS}{}{\hat{\bm\sigma}}
\NewDocumentCommand{\sigmaG}{}{\bm\sigma}
\NewDocumentCommand{\sigmaRG}{O{}}{\bm\sigma_{\mathrm{g}#1}}
\NewDocumentCommand{\sigmaRgG}{O{}}{\bm\sigma_{\Gamma #1}}
\NewDocumentCommand{\sigmaP}{}{\check\sigma}
\NewDocumentCommand{\sigmaPR}{}{\check{\bm\sigma}}
\NewDocumentCommand{\tauG}{O{}}{\tau_{\mathrm{g}#1}}
\NewDocumentCommand{\tauR}{O{} O{}}{\bm\tau}
\NewDocumentCommand{\tauTSa}{O{} O{}}{\bm\tau_{\circ #1}^{*#2}}
\NewDocumentCommand{\tauTSM}{O{} O{}}{\bm\tau_{#1}^{*#2}}
\NewDocumentCommand{\bmone}{}{\mathbbm 1}
\NewDocumentCommand{\ZG}{O{}}{Z_{\mathrm{g}#1}}
\NewDocumentCommand{\ZM}{O{}}{\overline Z_{\mathrm{m}#1}}
\NewDocumentCommand{\ZF}{O{}}{Z_{\mathrm{f}#1}}
\NewDocumentCommand{\ZV}{O{}}{Z_{\mathrm{v}#1}}
\NewDocumentCommand{\ZFa}{O{}}{\overline Z_{\mathrm{f}#1}}
\NewDocumentCommand{\ZFabu}{}{\xi}
\NewDocumentCommand{\ZFM}{O{}}{Z_{\mathrm{fm}#1}}
\NewDocumentCommand{\phiG}{O{}}{\phi_{\mathrm{g}#1}}
\NewDocumentCommand{\phiM}{O{}}{\overline\phi_{\mathrm{m}#1}}
\NewDocumentCommand{\phiTSM}{O{}O{}}{\phi_{#1}^{* #2}}
\NewDocumentCommand{\phiTSMe}{O{}O{}}{\bar\phi_{#1}^{* #2}}
\NewDocumentCommand{\phiTSMIID}{O{}O{}}{\phi_{\mathrm{m}^*#1}^{* #2}}
\NewDocumentCommand{\phiTSMNIS}{O{}O{}}{\phi_{\hat{\mathrm{m}}#1}^{* #2}}
\NewDocumentCommand{\phiTSYM}{O{}O{}}{\phi_{#1}^{\star #2}}
\NewDocumentCommand{\phiTSYMR}{O{}O{}}{\bm\phi_{#1}^{\star #2}}
\NewDocumentCommand{\phiTSYMS}{O{}O{}}{\phi_{\mathrm{m}^\circ#1}^{\star #2}}
\NewDocumentCommand{\phiTSP}{O{}O{}}{\phi_{\mathrm{p}#1}^{* #2}}
\NewDocumentCommand{\phiTSPIID}{O{}O{}}{\phi_{\mathrm{p}^*#1}^{* #2}}
\NewDocumentCommand{\phiTSPNIS}{O{}O{}}{\phi_{\hat{\mathrm{p}}#1}^{* #2}}
\NewDocumentCommand{\phia}{O{}}{\phi_{\mathrm{a}#1}}
\NewDocumentCommand{\phiq}{O{}}{\phi_{\mathrm{q}#1}}
\NewDocumentCommand{\phiqbl}{O{}}{\phi_{\mathrm{q}\downarrow #1}}
\NewDocumentCommand{\phiqbu}{O{}}{\phi_{\mathrm{q}\uparrow #1}}
\NewDocumentCommand{\phiI}{O{}}{\phi^{\lrarr #1}}
\NewDocumentCommand{\psiG}{O{}}{\psi_{\mathrm{g}#1}}
\NewDocumentCommand{\psiM}{O{}}{\overline\psi_{\mathrm{m}#1}}
\NewDocumentCommand{\psiR}{}{\bm\psi}
\NewDocumentCommand{\psiRa}{O{}}{\bm\psi_{\circ #1}}
\NewDocumentCommand{\psiTS}{}{\bm\psi^*}
\NewDocumentCommand{\psiae}{O{}}{\overline\psi_{\circ #1}}
\NewDocumentCommand{\psibl}{}{\psi_{\downarrow}}
\NewDocumentCommand{\psibu}{}{\psi_{\uparrow}}
\NewDocumentCommand{\psiI}{O{}}{\psi^{\leftrightarrow #1}}
\NewDocumentCommand{\psiItilde}{O{}}{\tilde\psi^{\leftrightarrow #1}}
\NewDocumentCommand{\psiIR}{O{}}{\bm\psi^{\leftrightarrow #1}}
\NewDocumentCommand{\psiIRa}{O{} O{}}{\bm\psi_{\circ #1}^{\leftrightarrow #2}}
\NewDocumentCommand{\psiTSM}{O{} O{}}{\bm\psi_{#1}^{* #2}}
\NewDocumentCommand{\psiTSa}{O{} O{}}{\bm\psi_{\circ#1}^{* #2}}
\NewDocumentCommand{\psiTSYa}{O{} O{}}{\bm\psi_{\circ #1}^{\star #2}}
\NewDocumentCommand{\psiTSYM}{O{} O{}}{\bm\psi_{#1}^{\star #2}}
\NewDocumentCommand{\psiITSa}{O{} O{}}{\bm\psi_{\circ #1}^{*\leftrightarrow #2}}
\NewDocumentCommand{\psiITS}{O{} O{}}{\bm\psi_{#1}^{*\leftrightarrow #2}}
\NewDocumentCommand{\psiITSM}{O{} O{}}{\bm\psi_{#1}^{*\leftrightarrow #2}}
\NewDocumentCommand{\psiITSMtilde}{O{} O{}}{\tilde{\bm\psi}_{\mathrm{m}#1}^{*\leftrightarrow #2}}
\NewDocumentCommand{\psiITSYa}{O{} O{}}{\bm\psi_{\circ #1}^{\star\leftrightarrow #2}}
\NewDocumentCommand{\psiITSYM}{O{} O{}}{\bm\psi_{\mathrm{m}#1}^{\star\leftrightarrow #2}}
\NewDocumentCommand{\psiWgG}{O{}}{\overline\psi_{\mathrm{w}|\mathrm{g}#1}}
\NewDocumentCommand{\lawG}{O{}}{\mu_{\mathrm{g}#1}}
\NewDocumentCommand{\GR}{}{\bm G}
\NewDocumentCommand{\GRM}{O{}}{\bm G_{#1}}
\NewDocumentCommand{\GTSM}{O{} O{}}{\bm G_{#1}^{*#2}}
\NewDocumentCommand{\GTSYM}{O{} O{}}{\bm G_{#1}^{\star #2}}
\NewDocumentCommand{\GTS}{}{\bm G^*}
\NewDocumentCommand{\GTSNIS}{}{\hat{\bm G}}
\NewDocumentCommand{\wR}{}{\bm w}
\NewDocumentCommand{\wRa}{O{}}{\bm w_{\circ #1}}
\NewDocumentCommand{\wTSM}{O{} O{}}{\bm w_{#1}^{* #2}}
\NewDocumentCommand{\wTS}{O{}}{\bm w^{* #1}}
\NewDocumentCommand{\wTSa}{O{} O{}}{\bm w_{\circ #1}^{* #2}}
\NewDocumentCommand{\wTSYa}{O{} O{}}{\bm w_{\circ #1}^{\star #2}}
\NewDocumentCommand{\wTSYM}{O{} O{}}{\bm w_{#1}^{\star #2}}
\NewDocumentCommand{\vTSM}{O{} O{}}{\bm v_{#1}^{* #2}}
\NewDocumentCommand{\vTSa}{O{} O{}}{\bm v_{\circ #1}^{* #2}}
\NewDocumentCommand{\vTSYa}{O{} O{}}{\bm v_{\circ #1}^{\star #2}}
\NewDocumentCommand{\vTSYM}{O{} O{}}{\bm v_{#1}^{\star #2}}
\NewDocumentCommand{\vR}{}{\bm v}
\NewDocumentCommand{\vRa}{O{}}{\bm v_{\circ #1}}
\NewDocumentCommand{\vTS}{}{\bm v^*}
\NewDocumentCommand{\domPsi}{}{\mathcal D_{\Psi}}
\NewDocumentCommand{\domPsiI}{O{}}{\mathcal D_{\Psi}^{\lrarr #1}}
\NewDocumentCommand{\domG}{}{\mathcal G}
\NewDocumentCommand{\domC}{O{}}{\mathcal D_{\Gamma #1}}
\NewDocumentCommand{\gammaa}{}{\overline\gamma}
\NewDocumentCommand{\gammaaG}{O{}}{\overline\gamma_{\mrmg #1}}
\NewDocumentCommand{\gammaaR}{}{\overline{\bm\gamma}}
\NewDocumentCommand{\gammaN}{O{}}{\gamma_{\mathrm{n}#1}}
\NewDocumentCommand{\gammaR}{}{\bm\gamma}
\NewDocumentCommand{\gammaIID}{O{}}{\bm\gamma^{*#1}}
\NewDocumentCommand{\gammaNIS}{}{\hat{\bm\gamma}}
\NewDocumentCommand{\alphaM}{O{}}{\alpha_{\mathrm{m}#1}}
\NewDocumentCommand{\alphaR}{}{\bm\alpha}
\NewDocumentCommand{\lawpsiS}{}{p}
\NewDocumentCommand{\lawpsi}{O{}}{\mu_{\Psi #1}}
\NewDocumentCommand{\setP}{}{\mathcal U}
\NewDocumentCommand{\setPtilde}{}{\tilde{\mathcal U}}
\NewDocumentCommand{\setPR}{}{\bm{\mathcal U}}
\NewDocumentCommand{\setPRT}{O{}}{\bm{\mathcal U}_{\mathrm{t}#1}}
\NewDocumentCommand{\thetaP}{}{\theta}
\NewDocumentCommand{\thetaPR}{}{\bm\theta}
\NewDocumentCommand{\ThetaP}{O{}}{\Theta^{\darr #1}}
\NewDocumentCommand{\ThetaPbl}{O{}}{\Theta_{\downarrow #1}}
\NewDocumentCommand{\indPRTa}{O{}}{\check{\bm u}_{\mathrm{t}\circ #1}}
\NewDocumentCommand{\indPRT}{O{}}{\check{\bm u}_{\mathrm{t}#1}}
\NewDocumentCommand{\indPR}{}{\check{\bm u}}
\NewDocumentCommand{\sucPT}{O{}}{\check p_{\mathrm{t}#1}}
\NewDocumentCommand{\sucD}{O{}}{p_{\mathrm{d}#1}}
\NewDocumentCommand{\tI}{O{}}{t^{\leftrightarrow #1}}
\NewDocumentCommand{\mI}{O{}}{m^{\leftrightarrow #1}}
\NewDocumentCommand{\mItilde}{O{}}{\tilde m^{\leftrightarrow #1}}
\NewDocumentCommand{\mIR}{O{}}{\bm m^{\leftrightarrow #1}}
\NewDocumentCommand{\mIRtilde}{O{}}{\tilde{\bm m}^{\leftrightarrow #1}}
\NewDocumentCommand{\mIRa}{O{}O{}}{\bm m_{\circ #1}^{\leftrightarrow #2}}
\NewDocumentCommand{\facsI}{O{}}{{\mathcal A}^{\leftrightarrow #1}}
\NewDocumentCommand{\facsV}{O{}}{{\mathcal A}_{\mathrm{v}#1}}
\NewDocumentCommand{\facsVRgD}{O{}}{\bm{\mathcal A}_{\mathrm{d}#1}}
\NewDocumentCommand{\wiresV}{O{}}{{\mathcal H}_{\mathrm{v}#1}}
\NewDocumentCommand{\lawYgC}{O{}}{\mu_{\mathrm{T}|\Gamma #1}}
\DeclareMathOperator{\unif}{u}
\DeclareMathOperator{\Po}{Po}
\DeclareMathOperator{\Bin}{Bin}
\NewDocumentCommand{\opm}{O{}}{\mu_{\bullet #1}}
\NewDocumentCommand{\mR}{}{\bm m}
\NewDocumentCommand{\mbu}{O{}}{m_{\uparrow #1}}
\NewDocumentCommand{\me}{}{\overline m}
\NewDocumentCommand{\hR}{}{\bm h}
\NewDocumentCommand{\hTSM}{O{} O{}}{\bm h_{#1}^{* #2}}
\NewDocumentCommand{\dR}{}{\bm d}
\NewDocumentCommand{\dcond}{O{}}{d_{\mathrm{cond}#1}}
\NewDocumentCommand{\degae}{}{\bar d}
\NewDocumentCommand{\degaR}{}{\bar{\bm d}}
\NewDocumentCommand{\degaIR}{}{\bar{\bm d}^\lrarr}
\NewDocumentCommand{\degabu}{}{d_{\uparrow}}
\NewDocumentCommand{\degF}{O{}}{d_{\mathrm{f}#1}}
\NewDocumentCommand{\degFR}{O{}}{\bm d^*_{\mathrm{f}#1}}
\NewDocumentCommand{\degH}{O{}}{d_{\mathrm{w}#1}}
\NewDocumentCommand{\epsm}{}{\varepsilon_{\mathrm{m}}}
\NewDocumentCommand{\deltam}{}{\delta_{\mathrm{m}}}
\NewDocumentCommand{\reals}{}{\mathbb R}
\NewDocumentCommand{\ints}{}{\mathbb Z}
\NewDocumentCommand{\expe}{}{\mathbb E}
\DeclareMathOperator{\Var}{Var}
\NewDocumentCommand{\prob}{}{\mathbb P}
\NewDocumentCommand{\eps}{}{\varepsilon}
\NewDocumentCommand{\mutinf}{}{I}
\NewDocumentCommand{\DKL}{}{D}
\NewDocumentCommand{\nablaI}{}{\nabla}
\NewDocumentCommand{\nablaIbl}{O{}}{\nabla_{\downarrow #1}}
\NewDocumentCommand{\bethe}{}{B}
\NewDocumentCommand{\bethebu}{O{}}{B_{\uparrow #1}}
\NewDocumentCommand{\ballSC}{O{}}{\mathcal S_{\circ #1}}
\NewDocumentCommand{\distW}{}{\mathrm{d}_{\mathrm{w}}}
\NewDocumentCommand{\distG}{}{\mathrm{d}_{\mathrm{g}}}
\NewDocumentCommand{\setES}{O{}}{\mathcal E_{\mathrm{s}#1}}
\NewDocumentCommand{\epsES}{O{}}{\eps_{\mathrm{s}#1}}
\NewDocumentCommand{\ellES}{O{}}{\ell_{\mathrm{s}#1}}
\NewDocumentCommand{\piR}{O{}}{\bm\pi}
\NewDocumentCommand{\piG}{O{}}{\pi_{\mathrm{g}#1}}
\NewDocumentCommand{\piGC}{O{}}{\check\pi_{\mathrm{g}#1}}
\NewDocumentCommand{\piGR}{O{}}{\hat\pi_{\mathrm{g}#1}}
\title{Mutual Information, Information-Theoretic Thresholds and the Condensation Phenomenon at Positive Temperature}
\author{Konstantinos Panagiotou\thanks{The research leading to these results has received funding from the European Research Council, ERC Grant Agreement 772606–PTRCSP.} \and Matija Pasch\footnotemark[1]}
\begin{document}

\maketitle

\begin{abstract}
There is a vast body of recent literature on
the reliability of communication through noisy channels,
the recovery of community structures in the stochastic block model,
the limiting behavior of the free entropy in spin glasses
and the solution space structure of constraint satisfaction problems.
At first glance, these topics ranging across several disciplines might seem unrelated. However, taking a closer look, structural similarities can be easily identified.

Factor graphs exploit these similarities to model the aforementioned objects and concepts in a unified manner.
In this contribution we discuss the asymptotic average case behavior of several quantities, where the average is taken over sparse Erdős–Rényi type (hyper-) graphs with positive weights, under certain assumptions.
For one, we establish the limit of the mutual information, which is used in coding theory to measure the reliability of communication.
We also determine the limit of the relative entropy, which can be used to decide if weak recovery is possible in the stochastic block model.
Further, we prove the conjectured limit of the quenched free entropy over the planted ensemble, which we use to obtain the preceding limits.
Finally, we describe the asymptotic behavior of the quenched free entropy (over the null model) in terms of the limiting relative entropy.
\end{abstract}
\newpage
\section{Introduction}
Consider the following two prototypical experiments for a given number $q>1$ of colors, a low temperature $T\in\reals_{>0}$ and a large number $n>q$ of vertices.
In the \emph{null model} and in each step we draw an edge $(i,j)\in[n]^2$ between two vertices $i,j\in[n]=\{1,\dots,n\}$ uniformly at random.
In the \emph{teacher-student model} we (the teacher) first draw a coloring $\sigma\in[q]^n$ of the vertices uniformly at random, and then iteratively draw edges $(i,j)\in[n]^2$ 
proportional to their weight $w(i,j)=\exp(-\beta\bmone\{\sigma(i)=\sigma(j)\})$, i.e.~with probability $p(i,j)=w(i,j)/\ZFa[,\sigma]$, where $\ZFa[,\sigma]=\sum_{i,j}w(i,j)$ and $\beta=1/T$ is the so-called \emph{inverse temperature}.
In words, we prefer dichromatic edges to monochromatic edges with respect to the \emph{ground truth} $\sigma$.
Notice that this preference is quantified, i.e.~the penalty $1-e^{-\beta}$ is a function of the inverse temperature.

\vspace{1mm}
\noindent{\bf Community Detection.}
Now, assume that we (the student) are shown a multi-graph $(i_a,j_a)_{a\in[m]}$ by the teacher obtained from one of the two experiments after $m\ge 0$ steps, but we are not told from \emph{which} experiment. Is it then possible to make an educated guess?
For $m=0$ the graph is empty, so the answer certainly is no.
For very, very large $m$ the frequencies $\hat p(i,j)=\frac{1}{m}|\{a\in[m]:(i_a,j_a)=(i,j)\}|$ are jointly very close to the uniform distribution in the null model and close to $p$ in the teacher-student model (with high probability), so the answer is yes, unless $\sigma$ is monochromatic.

This decision problem, which is a version of the well-known \emph{stochastic block model}, has important applications and it has been extensively studied, see~\cite{abbe2017} for an overview. From today's viewpoint we know that the problem undergoes a \emph{sharp phase transition} regarding our ability to tell the two settings apart: in~\cite{coja2018} it was shown that there is a `magic' ratio $d^*\in\reals_{\ge 0}$ such that when the average degree $2m/n$ approaches $d\le d^*$, the answer is (typically) no, and the answer is yes if $d>d^*$.
The quantity $d^*$ is known as the \emph{information-theoretic threshold for weak recovery}.

\vspace{1mm}
\noindent{\bf Spin Glasses and Constraint Satisfaction Problems.}
The previous example can also be encountered in various other settings in different forms and in disguise. To wit, in the context of spin glass theory it is a version of the infamous Potts model; in the setting of constraint satisfaction problems it is a version of graph coloring.
Let $\bm G\in([n]^2)^m$ be the null model and $\bm G^*(\sigma)\in([n]^2)^m$ be the teacher-student model on $m$ edges and for a given ground truth $\sigma$ above.
For $G=(i_a,j_a)_{a\in[m]}$ let $H_G(\sigma)=\sum_a\bmone\{\sigma(i_a)=\sigma(j_a)\}$ denote the \emph{Hamiltonian}.
As suggested by Observation \ref{obs_psiexpenullprod} below, the law of $\bm G^*(\sigma)$ is also given by
\begin{align*}
\prob[\bm G^*(\sigma)=G]=\frac{\exp(-\beta H_\sigma(G))}{\expe[\exp(-\beta H_\sigma(\GR))]}.
\end{align*}
The \emph{partition function} $Z(G)=\sum_\sigma\exp(-\beta H_G(\sigma))>0$ and the \emph{free entropy (density)} $\phi(G)=n^{-1}\ln(Z(G))$ play a central role, e.g.~$Z(G)$ is the number of valid colorings of $G$ for $\beta\rarr\infty$ and hence the free entropy embodies the exponential rate at which we do (not) encounter valid colorings.

The fundamental question is, of course, how $Z(G)$ and $\phi(G)$ behave \emph{typically}, that is typical with respect to $\GR$.
Assume that the average degree $2m/n$ tends to $d\in\reals_{\ge 0}$ for growing $n$, as before.
It turns out that $\phi(\GR)$ concentrates around the \emph{quenched free entropy} $\phiq(m)=\expe[\phi(\GR)]$, see also Proposition \ref{proposition_phi_concon} below, and that the limit $\phiq[,\infty](d)=\lim_{n\rarr\infty}\phiq(m)$ exists.
So, $Z(\GR)$ typically grows/decays exponentially and $\phiq[,\infty](d)$ is a (logarithmic) first order approximation for $Z(\GR)$.

This gives a first answer to our main question: the asymptotics of $Z(G)$ are governed by $\phiq[,\infty](d)$.
But there is a catch:
$\phiq(m)$ is very hard to control.
The go-to approximation is the first moment bound provided by Jensen's inequality, the easy to handle \emph{annealed free entropy} $\phia(m)=n^{-1}\ln(\expe[Z(\GR)])\ge\phiq(m)$.
But this begs another question -- when is
$\phia(m)$ a good estimate?
In particular, when does $\phia[,\infty](d)=\lim_{n\rarr\infty}\phia(m)$ coincide with $\phiq[,\infty](d)$?

This question is answered in \cite[Theorem 2.7]{coja2018} for a large class of models. It is shown that there is a \emph{condensation threshold}, meaning $\phiq[,\infty](d)=\phia[,\infty](d)$ for $d\le d^*$ and $\phiq[,\infty](d)<\phia[,\infty](d)$ for $d>d^*$.
The threshold was obtained by 
verifying the physics prediction that
$\lim_{n\rarr\infty}\phiq^*(m)=\bethebu(d)$,
where $\phiq^*(m)=\expe[\phi(\GTS(\sigmaIID))]$ is the planted model quenched free entropy
over the uniformly random ground truth $\sigmaIID\in[q]^n$ and
$\bethebu(d)$ is the \emph{maximum Bethe free entropy}.
It was further shown that $\phiq^*(m)$ is also subject to a phase transition at $d^*$ in that $\phiq[,\infty](d)=\phia[,\infty](d)=\bethebu(d)$ for $d\le d^*$ and $\phiq[,\infty](d)<\phia[,\infty](d)<\bethebu(d)$ for $d>d^*$.
Moreover, this threshold is equal to the information-theoretic threshold for weak recovery.

\vspace{1mm}
\noindent{\bf Beyond the Example \& Our Contribution.}
The previous example generalizes to a large class of problems covering the general stochastic block model, $k$-spin models from physics, satisfiability problems from computer science and to noisy channels from coding theory as discussed below.
The threshold $d^*$ is further crucial to the design and choice of corresponding algorithms, e.g.~solvers, in that it determines the onset of long-range correlations, see for example Theorem 1.4 in \cite{coja2018b}.

Most previous research tackles specific problems and derives results case by case.
Contributions like \cite{coja2018,coja2021} that work towards a general theory usually rely on restrictive or hard to check assumptions, e.g. uniform ground truths, permutation invariance and convexity assumptions, or solving optimization problems over infinite dimensional spaces.
Our contribution is threefold.
\begin{bulletlist}
\item We need significantly simpler assumptions compared to previous works. To wit, we allow bounded (as opposed to finite) supports for the weight functions. More crucially, we entirely drop symmetry and concavity assumptions (denoted by {\bf SYM} and {\bf BAL} in~\cite{coja2018}); the strength of our approach lies in deriving all results for any maximizer, rather than assuming that the uniform distribution is a maximizer and restricting to this specific choice. Our approach allows to consider biased community structures, any discrete memoryless source for noisy channels, and~bounded $k$-spin models directly.
\item Our theorems (as those of \cite{coja2018}) rely on natural but quite technical conditions, and checking them is usually quite elaborate.
This requires us to solve an infinite dimensional optimization problem and a $(q-1)$-dimensional optimization problem (cf.~{\bf POS} and ~{\bf BAL} in \cite{coja2018}).
We present a large class of models that covers essentially \emph{all known working examples} and for which the former assumption holds, so only the simpler latter assumption needs to be verified.
\item Finally, we obtain new results in this context.
We bound the order of convergence to the limiting quantities, with the error bounds being uniform over the choice of the model, since a variation of the parameters is common practice, e.g.~of the weights through the inverse temperature $\beta$, of the average degree $d$ as above and of the ground truth distribution as explained below.
Further, next to deriving the threshold location, we also quantify the behavior as follows.
We show that the relative entropy converges to the Jensen gap $\bethebu(d)-\phia[,\infty](d)$, and bound the gap $\phia[,\infty](d)-\phiq[,\infty](d)$.
Further details can be found in Section~\ref{information_theoretic_threshold} and Section \ref{implications_extensions_related_work}.
\end{bulletlist}
\subsection{Factor Graphs}\label{factor_graphs}
A \emph{(factor) graph} $G=(v_a,\psi_a)_{a\in\mcla}$ is given by the variable nodes $[n]$, the factor nodes $\mcla$, the arity $k_a>0$ of each factor $a\in\mcla$, its ordered neighborhood $v_a\in[n]^{k_a}$ and its weight function $\psi_a:[q]^{k_a}\rightarrow\reals_{\ge 0}$ over the colors $[q]$.
Notice that $G$ can be understood as a labeled bipartite graph with labeled edges.
For a law $\gamma$ on $[q]$ let $\sigmaIID=\sigmaIID_{\gamma,n}\in[q]^n$ be i.i.d.~with law $\gamma$.
Then, writing $\sigma_{v}=(\sigma(v_h))_{h\in[k]}\in[q]^k$ for $v\in[n]^k$, the \emph{weight} $\psi_G(\sigma)$ and the \emph{Gibbs measure} $\mu_{\gamma,G}(\sigma)$ of an assignment $\sigma\in[q]^n$ to the variables are given by
\begin{align}\label{weightG}
\psi_G(\sigma)=\prod_{a\in\mcla}\psi_a(\sigma_{v_a}),\,~~
\mu_{\gamma,G}(\sigma)=\frac{\prob[\sigmaIID=\sigma]\psi_G(\sigma)}{Z_\gamma(G)},\,~~
Z_\gamma(G)=\expe[\psi_G(\sigmaIID)],
\end{align}
where $\mu_{\gamma,G}$ and $\phi_{\gamma}(G)=\frac{1}{n}\ln(Z_\gamma(G))$ are only defined for $Z_\gamma(G)>0$.
\begin{remark}\label{remark_external_fields}
For the uniform distribution $\unif([q])$ on $[q]$,
the partition function of $G$ is
$q^nZ_{\unif([q])}(G)=\sum_{\sigma\in[q]^n}\psi_G(\sigma)$, and
the free entropy (density) is $\ln(q)+\phi_{\unif([q])}(G)$.
As for $\mu_{\gamma,G}$, we identify a law $\gamma$ on $[q]$ with its probability mass function $\gamma:[q]\rightarrow\reals_{\ge 0}$ and notice that $Z_{\gamma}(G)=\sum_{\sigma}\prod_{i\in[n]}\gamma(\sigma(i))\psi_G(\sigma)$ is the partition function of the graph $G'$ obtained from $G$ by attaching the unary factors $(i,\gamma)$ to the variables $i\in[n]$, known as external fields.
In this sense $Z_{\gamma}(G)$ is the \emph{partition function} and $\phi_{\gamma}(G)$ is the \emph{free entropy} of $G$.
\end{remark}
\subsection{Random Factor Graphs}\label{random_factor_graphs}
As in the introduction, we first consider the null model given by
\begin{bulletlist}
\item the number $q\in\ints_{\ge 1}$ of colors,
\item the arity or factor degree $k\in\ints_{\ge 1}$,
\item the random weight $\psiR:[q]^k\rightarrow[\psibl,\psibu]$ with law $\lawpsiS$
and bounds $\psibl\in(0,1/q)$, $\psibu=\psibl^{-1}$,
\item the ground truth distribution $\gamma^*$ on $[q]$ with $\gamma^*\ge\psibl$ (componentwise),
\item the expected variable degree $d\in[0,\degabu]$ with bound $\degabu\in\reals_{>0}$, and
\item a number $n\in\ints_{\ge 1}$ of variables.
\end{bulletlist}
For a number $m\in\ints_{\ge 0}$ of factors let $\GR_{n,m,p}\dequal(\unif([n]^k)\otimes p)^{\otimes m}$, where $p_1\otimes p_2$ denotes the product measure of $p_1$ and $p_2$, $p^{\otimes n}$ is the $n$-fold product of $p$, $\bm x\dequal\mu$ means that $\bm x$ has the law $\mu$, and $\bm x\dequal\bm y$ that $\bm x$ and $\bm y$ have the same law.
The teacher-student model $\GTS_{n,m,p}(\sigma)$ for a fixed ground truth $\sigma\in[q]^n$ is given by the Radon-Nikodym derivative $G\mapsto\psi_G(\sigma)/\expe[\psi_{\GR}(\sigma)]$ with respect to $\GR_m$.

Let $(\sigmaIID_{\gamma^*,n},\mR_{d,n})\dequal\gamma^{*\otimes n}\otimes\Po(dn/k)$ be the random ground truth (as in the initial example) and a random number of factors such that the average degree of a variable is $d=km/n$, where $\Po(\lambda)$ denotes the Poisson distribution with parameter $\lambda$.
Notice that there is no conceptual motivation to consider the laws $\gamma^{*\otimes n}$ and $\Po(dn/k)$, they are chosen for technical reasons.
Details can be found in Section \ref{implications_extensions_related_work}.
We suppress dependencies that are clear from the given context.

\vspace{1mm}
\noindent{\bf Planting in the Erdős–Rényi Model.}
With the definition of the random graphs in place, we want to embed our initial example into the general framework.
However, notice that the specific definition of the teacher-student model, given by i.i.d.~choices, does not match the general definition, given by the Radon-Nikodym derivative. In order to resolve this issue we need some notation.

Let $\mclp([q])\setle\reals^q$ be the set of probability measures on $[q]$. For an assignment $\sigma\in[q]^n$ let $\gammaN[,\sigma]\in\mclp([q])$ be the relative color frequencies of $\sigma$, i.e.~$\gammaN[,\sigma](z)=\frac{1}{n}|\{i\in[n]:\sigma(i)=z\}|$ for $z\in[q]$.
Finally, for $(\psiR,\sigmaR)\dequal p\otimes\gamma^{\otimes k}$ let
\begin{align}\label{ZFaDef}
\ZFa[,p]:\mclp([q])\rightarrow[\psibl,\psibu],\,
\gamma\mapsto\expe[\psiR(\sigmaR)].
\end{align}
\begin{observation}\label{obs_psiexpenullprod}
We have $\GTS_m(\sigma)\dequal(\vTS_{\sigma},\psiTS_{\sigma})^{\otimes m}$, where $(\vTS_{\sigma},\psiTS_{\sigma})$ is given by the Radon-Nikodym derivative $(v,\psi)\mapsto\psi(\sigma_v)/\ZFa(\gammaN[,\sigma])$ with respect to $\unif([n]^k)\otimes p$.
\end{observation}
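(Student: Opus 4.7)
The plan is to unwind the two definitions and show that the Radon--Nikodym derivative defining $\GTS_m(\sigma)$ factorizes over the $m$ factors, so that the coordinates are i.i.d.\ with the claimed per-factor density.

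First I would write $G=(v_a,\psi_a)_{a\in[m]}$ and use the product structure of the null model $\GR_m\dequal(\unif([n]^k)\otimes p)^{\otimes m}$. Since the factor weight factorizes as $\psi_G(\sigma)=\prod_a\psi_a(\sigma_{v_a})$ (by definition \eqref{weightG}), the Radon--Nikodym derivative defining $\GTS_m(\sigma)$ splits as
\begin{align*}
\frac{\psi_G(\sigma)}{\expe[\psi_{\GR_m}(\sigma)]}
=\prod_{a\in[m]}\frac{\psi_a(\sigma_{v_a})}{c(\sigma)},
\qquad
c(\sigma)=\expe_{(\vR,\psiR)\sim\unif([n]^k)\otimes p}[\psiR(\sigma_{\vR})],
\end{align*}
where independence of the $m$ pairs under $\GR_m$ gives $\expe[\psi_{\GR_m}(\sigma)]=c(\sigma)^m$. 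This already shows that under $\GTS_m(\sigma)$ the pairs $(v_a,\psi_a)$ are i.i.d., each distributed according to the Radon--Nikodym derivative $(v,\psi)\mapsto\psi(\sigma_v)/c(\sigma)$ with respect to $\unif([n]^k)\otimes p$.

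The only point left is to identify $c(\sigma)$ with $\ZFa[,p](\gammaN[,\sigma])$. For this I would observe that when $\vR\sim\unif([n]^k)$, each coordinate $\vR_h$ is independently uniform on $[n]$, so $\sigma(\vR_h)$ is distributed as $\gammaN[,\sigma]$ on $[q]$, and jointly $\sigma_{\vR}\dequal\gammaN[,\sigma]^{\otimes k}$. Plugging this into the definition \eqref{ZFaDef} of $\ZFa[,p]$, namely $\ZFa[,p](\gamma)=\expe[\psiR(\sigmaR)]$ for $(\psiR,\sigmaR)\dequal p\otimes\gamma^{\otimes k}$, yields
\begin{align*}
c(\sigma)=\expe_{(\vR,\psiR)\sim\unif([n]^k)\otimes p}[\psiR(\sigma_{\vR})]
=\expe_{(\psiR,\sigmaR)\sim p\otimes\gammaN[,\sigma]^{\otimes k}}[\psiR(\sigmaR)]
=\ZFa[,p](\gammaN[,\sigma]).
\end{align*}
Substituting this into the factorized density gives precisely the per-factor law of $(\vTS_\sigma,\psiTS_\sigma)$ stated in the observation.

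There is no real obstacle here: the statement is essentially a bookkeeping exercise combining independence of factors under the null model with the fact that sampling a uniform $k$-tuple of variables and reading off $\sigma$ produces i.i.d.\ colors with the empirical distribution $\gammaN[,\sigma]$. The only mild care needed is to note that $\ZFa[,p](\gammaN[,\sigma])>0$ (which follows from $\psiR\ge\psibl>0$), so the per-factor Radon--Nikodym derivative is well-defined.
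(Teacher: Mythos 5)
Your proof is correct and takes essentially the same approach as the paper: factorize the Radon--Nikodym derivative using independence of the $m$ wires-weight pairs under the null model, then identify the per-factor normalizing constant with $\ZFa(\gammaN[,\sigma])$; the paper does this identification by an explicit change-of-summation ($\sum_v n^{-k}\expe[\psiR(\sigma_v)]=\sum_\tau\expe[\psiR(\tau)]\,n^{-k}\sum_v\bmone\{\sigma_v=\tau\}$), which is exactly your observation that $\sigma_{\vR}\dequal\gammaN[,\sigma]^{\otimes k}$ phrased as a sum.
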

\begin{proof}
Notice that $\psi_G$ factorizes, the wires-weight pairs of the null model are i.i.d.~and hence also the expectation factorizes with $\expe[\psi_{\GR(m)}(\sigma)]=\zeta_\sigma^m$,where
\begin{align*}
\zeta_\sigma=\sum_v\frac{1}{n^k}\expe[\psiR(\sigma_v)]
=\sum_\tau\expe[\psiR(\tau)]\frac{\sum_v\bmone\{\sigma_v=\tau\}}{n^k}
=\ZFa(\gammaN[,\sigma]).
\end{align*}
\end{proof}
\subsection{Examples}\label{examples}
Observation \ref{obs_psiexpenullprod} recovers our example for $k=2$, $\psiR(\sigma)=\exp(-\beta\bmone\{\sigma_1=\sigma_2\})$, $\sigma\in[q]^2$, and $\gamma^*=\unif([q])$.
Next, we introduce a slightly more sophisticated version of the stochastic block model and examples from the other disciplines.

\vspace{1mm}
\noindent{\bf Composing Stochastic Block Models.}
As before, we restrict to $k=2$ and $q>1$.
In the initial example we distinguished two cases -- the vertices belong to the same community, i.e.~$\sigma(i)=\sigma(j)$, or they don't.
Now, we compose two of these models as follows.
For this purpose let $\mclc[1]$, $\mclc[2]\setle[q]$
be a non-trivial partition of $[q]$ into two types of communities and $c_1^=,c_1^{\neq},c_2^=,c_2^{\neq},c_{\leftrightarrow}\in\reals_{>0}$ be such that $c_1^=\le c_1^{\neq}$, $c_2^=\le c_2^{\neq}$ and $c_1^{\neq},c_2^{\neq}\le c_\leftrightarrow$.
For $i\in[2]$ and $\sigma\in\mclc[i][2]$ let $\psiR(\sigma)=c_i^{=}$ if $\sigma(1)=\sigma(2)$ and $\psiR(\sigma)=c_i^{\neq}$ otherwise.
For $\sigma\in[q]^2\setminus(\mclc[1][2]\cup\mclc[2][2])$ let $\psiR(\sigma)=c_\leftrightarrow$.
In words, we always prefer to mix, in terms of communities and types.

\vspace{1mm}
\noindent{\bf The $\bm k$-spin model.}
Consider $q=2$ over spins $\{-1,1\}$ and an inverse temperature $\beta\in\reals_{>0}$.
Further, let $\bm J\in[-J,J]$, $J\in\reals_{>0}$, be such that $\bm J\dequal-\bm J$.
For $\sigma\in[q]^k$ we consider the weight $\bm\psi(\sigma)=\exp(-\beta\bm J\prod_h\sigma(h))$.
The null model equipped with these weights is the $k$-spin model.

\vspace{1mm}
\noindent{\bf NAE-SAT.}
Consider $q=2$ over $\{0,1\}$ and let $\eps\in(0,1)$.
Further, let $\bm x\in\{0,1\}^k$ be uniform, $\bm x'=(1-\bm x(h))_{h\in[k]}$, $\bm\psi(x)=\eps$ for $x\in\{\bm x,\bm x'\}$ and $\bm\psi(x)=1$ for $x\in\{0,1\}^k\setminus\{\bm x,\bm x'\}$.
The null model equipped with these weights is the not-all-equal $k$-satisfiability problem with soft constraints.
The solvers mentioned in the introduction try to find solutions $x\in\mcls$ (ground states, to be precise, since we consider soft constraints) in the solution space $\mcls=\{x'\in[q]^n:\psi_G(x)=1\}$. Clearly, the structure of $\mcls$ (e.g.~connectivity) is crucial for the efficiency of such solvers.

\vspace{1mm}
\noindent{\bf Graphical Channels.}
Consider the following noisy channel from \cite{abbe2015}.
Fix a number $q'\in\ints_{>0}$ of possible output values.
For a given input $y\in[q]^k$ to the channel fix a fully supported (conditional) distribution $\nu_y\in\mclp([q'])$ on the possible outputs for the received input $y$, to model the noisy channel.
For an \emph{input message} $x\in[q]^n$ and a (multi-) graph $v\in([n]^k)^m$ let $y_{v,x}=(x_{v_a})_{a\in[m]}$ be the corresponding \emph{codeword}.
For a given codeword $y\in([q]^k)^m$ let the output $\bm z(y)\in[q']^m$ of the channel have the law $\bigotimes_{a\in[m]}\nu_{y_a}$, i.e.~the $m$ elements of $y$ are transmitted independently through identical channels (respectively sequentially through the same memoryless channel).
We consider communication through this noisy channel using a random code, given by the uniformly random graph $\vR\in([n]^k)^m$.
The \emph{graphical channel} with graph $v$ and kernel $(\nu_y)_y$ is the map $[q]^n\times[q']^m\rarr[0,1]$, $(x,z)\mapsto\prob[\bm z(y_{v,x})=z]$.

It is certainly not intuitive to consider transmissions $[q]^k\rarr[q']$, since we usually think of the communicated signal to be preserved or altered -- but preserving the signal is impossible in this model!
Hence, let us motivate this construction with the following, well-known example.
In the binary case $q=2$ over $\{0,1\}$ and given that all coordinates of $\vR_a$ are distinct for all $a\in[m]$\footnote{This amounts to considering a simple $k$-uniform hypergraph with labeled hyperedges, cf.~Section \ref{implications_extensions_related_work}.}, let $\bm M=(\bmone\{\exists h\in[k]\,\vR_{a,h}=i\})_{a\in[m],i\in[n]}\in\{0,1\}^{m\times n}$.
For an input message $x\in\{0,1\}^n$ let $\bm y(x)=y_{\bm v,x}$, and let $\bm y^\circ(x)=\bm Mx=(\sum_{h=1}^k\bm y_{a,h}(x))_{a\in[m]}$ be the codeword, given by the sum of the components of $\bm y_a(x)$ modulo $2$.
Now, we consider the communication through a binary symmetric channel, i.e.~for a fixed error probability $\eta\in(0,1/2)$ and for an input $y\in\{0,1\}$ we let $\tilde\nu_y\in\mclp(\{0,1\})$ be given by $\tilde\nu_y(y)=1-\eta$ and $\tilde\nu_y(1-y)=\eta$. As above, the channel output for a given codeword $y^\circ\in\{0,1\}^m$ is given by independent transmissions $\bm z^\circ(y^\circ)\dequal\bigotimes_{a\in[m]}\tilde\nu_{y^\circ_a}$.
This is an LDGM code with codeword $\bm y^\circ(x)$ and a binary symmetric channel.
Now, we split off the last deterministic encoding step, the sum modulo 2, and consider it as part of the transmission as follows.
The codeword for an input $x\in[q]^n$ is $\bm y(x)$.
For an input $y\in\{0,1\}^k$ and using $y^\circ=\sum_hy_h\in\{0,1\}$ let the channel output $\nu_y\in\mclp(\{0,1\})$ be given by $\nu_y(y^\circ)=1-\eta$ and $\nu_y(1-y^\circ)=\eta$.
The output is $\bm z(y)\dequal\bigotimes_{a\in[m]}\nu_{y_a}$ as before.
Clearly, this graphical channel and the LDGM encoded communication through the binary symmetric channel are equivalent, and only differ in the modularization of the communication process.

In our context, it is irrelevant if we consider $\bm y^\circ(x)$ or $\bm y(x)$ to be the codeword in the example, since this does not change the channel capacity\footnote{This refers to the capacity of the graphical channel with inputs in $[q]^n$ and outputs in $[q']^m$.}, and it does not affect the conditional mutual information
$\mutinf(\bm x,\bm z(y_{\bm v,\bm x})|\bm v)=\sum_vn^{-km}\mutinf(\bm x,\bm z(y_{v,\bm x}))$ of the input and the output given the code,
\begin{align*}
\mutinf(\bm x,\bm z(y_{v,\bm x}))=\sum_{x,z}\prob[\bm x=x,\bm z(y_{v,x})=z]\ln\left(\frac{\prob[\bm x=x,\bm z(y_{v,x})=z]}{\prob[\bm x=x]\sum_{x'}\prob[\bm x=x',\bm z(y_{v,x'})=z]}\right),
\end{align*}
for any choice of $\bm x\in[q]^n$\footnote{For the LDGM code example it is crucial that the code is determined by $\vR$ since the last encoding step, the sums modulo 2, to obtain $\bm y^\circ(x)$ is deterministic.}.
We consider a discrete memoryless source $\bm x\dequal\gamma^{*\otimes n}$ for $\gamma^*\in\mclp([q])$.

Let $c=\sup_{\alpha}\mutinf(\bm y^\circ_\alpha,\bm z^\circ(\bm y^\circ_\alpha))$ be the channel capacity for $(\nu_y)_y$, where $\bm y^\circ_\alpha\dequal\alpha$ for $\alpha\in\mclp([q]^k)$ and $\bm z^\circ(y)\dequal\nu_y$ for $y\in[q]^k$.
Recall from the proof of Observation \ref{obs_psiexpenullprod} that $y_{\bm v,x}\dequal(\gammaN[,x]^{\otimes k})^{\otimes m}$, and that $\gammaN[,\bm x]$ concentrates around $\gamma^*$ as $n$ tends to infinity.
This suggests that 
$\frac{1}{m}\mutinf(\bm x,\bm z(y_{\bm v,\bm x})|\bm v)$ can asymptotically only attain $c$ if
$\mutinf(\bm y^\circ_\alpha,\bm z^\circ(\bm y^\circ_\alpha))=c$ for $\alpha=\gamma^{*\otimes k}$.
Notice that this holds for the LDGM code with the binary symmetric channel in \cite{coja2018}, since the capacity of the channel is attained at $\unif([q])$, i.e.~consistent with the distribution of the discrete memoryless source.

Now, we finally build the connection to the factor graphs.
For $y\in[q]^k$ and $z\in[q']$ let $\psi_z(y)=\nu_y(z)/p^*(z)$ for any fully supported law $p^*$ on $[q']$ and let $p$ be given by $p(\psi_z)=p^*(z)$ (We assume that the $\psi_z$ are distinct and obtain equal $\psi_z$'s as a limiting case over the variation of $p^*$).
\begin{observation}\label{obs_grchmutinf}
We have $\expe[\psiR]\equiv 1$ and $\mutinf(\bm x,\bm z(y_{\bm v,\bm x})|\bm v)=\mutinf(\sigmaIID,\GTS_m(\sigmaIID))$.
\end{observation}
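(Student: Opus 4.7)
The plan is to first verify the normalization $\expe[\psiR]\equiv 1$, and then to combine Observation~\ref{obs_psiexpenullprod} with the deterministic bijection $z\leftrightarrow\psi_z$ between output symbols and weight functions in order to identify the joint laws of $\GTS_m(\sigmaIID)$ and $(\bm v,\bm z(y_{\bm v,\bm x}))$. For the first claim, the definitions of $\psi_z$ and $p$ give directly, for each fixed $y\in[q]^k$,
\begin{align*}
\expe[\psiR(y)] \;=\; \sum_{z\in[q']} p^*(z)\,\psi_z(y) \;=\; \sum_{z\in[q']} \nu_y(z) \;=\; 1,
\end{align*}
using only that $\nu_y$ is a probability measure on $[q']$. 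Substituting this into the formula $\ZFa(\gamma)=\expe[\psiR(\sigmaR)]$ from \eqref{ZFaDef} yields $\ZFa\equiv 1$, and in particular $\ZFa(\gammaN[,\sigma])=1$ for every $\sigma\in[q]^n$.

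With this normalization, Observation~\ref{obs_psiexpenullprod} specializes to the explicit product form
\begin{align*}
\prob[\GTS_m(x)=(v_a,\psi_a)_{a\in[m]}] \;=\; \prod_{a\in[m]} n^{-k}\,p(\psi_a)\,\psi_a(x_{v_a})
\end{align*}
for each $x\in[q]^n$. Under the standing hypothesis that the $\psi_z$ are pairwise distinct (with the degenerate case recovered as the limit in $p^*$ noted in the excerpt), the map $\phi:(v_a,z_a)_{a}\mapsto(v_a,\psi_{z_a})_{a}$ is a bijection, and pulling the previous expression through $\phi$ gives
\begin{align*}
\prod_{a} n^{-k}\,p^*(z_a)\,\psi_{z_a}(x_{v_a}) \;=\; \prod_{a} n^{-k}\,\nu_{x_{v_a}}(z_a) \;=\; \prob[\bm v=v,\,\bm z(y_{\bm v,\bm x})=z\mid\bm x=x],
\end{align*}
where on the right-hand side $\bm v\dequal\unif([n]^k)^{\otimes m}$ is independent of $\bm x$. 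Therefore, conditionally on $\bm x=x$, $\GTS_m(x)$ and $(\bm v,\bm z(y_{\bm v,\bm x}))$ coincide in law up to $\phi$; since both models use the same source $\bm x\dequal\sigmaIID\dequal\gamma^{*\otimes n}$, the identification lifts to an equality between the joint laws of $(\sigmaIID,\GTS_m(\sigmaIID))$ and of $(\bm x,\bm v,\bm z(y_{\bm v,\bm x}))$ up to the deterministic relabeling $\phi$.

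Because mutual information is invariant under deterministic bijective relabelings of either argument, this identification yields $\mutinf(\sigmaIID,\GTS_m(\sigmaIID)) = \mutinf(\bm x,(\bm v,\bm z(y_{\bm v,\bm x})))$. The chain rule together with the independence $\bm x\perp\bm v$ then gives
\begin{align*}
\mutinf(\bm x,(\bm v,\bm z(y_{\bm v,\bm x}))) \;=\; \mutinf(\bm x,\bm v) + \mutinf(\bm x,\bm z(y_{\bm v,\bm x})\mid\bm v) \;=\; \mutinf(\bm x,\bm z(y_{\bm v,\bm x})\mid\bm v),
\end{align*}
which is exactly the claim. The only technical wrinkle is the passage through the bijection $\phi$, which requires the $\psi_z$ to be distinct; this is the working assumption stated just before the observation, and the remaining cases follow by the limiting argument over $p^*$ already indicated in the excerpt.
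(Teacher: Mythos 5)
Your proof is correct and follows essentially the same route as the paper's: verify $\ZFa\equiv 1$ by summing $\nu_y$ over $[q']$, then compute the law of $\GTS_m(x)$ factorwise via Observation~\ref{obs_psiexpenullprod} so that the $p^*(z_a)$'s cancel, identify $(\sigmaIID,\GTS_m(\sigmaIID))\dequal(\bm x,\bm v,\bm z(y_{\bm v,\bm x}))$ through the relabeling $z\leftrightarrow\psi_z$, and reduce $\mutinf(\bm x,(\bm v,\bm z))$ to the conditional mutual information via independence of $\bm v$ and $\bm x$. The only cosmetic differences are that you name the bijection $\phi$ and spell out the chain-rule step $\mutinf(\bm x,(\bm v,\bm z))=\mutinf(\bm x,\bm v)+\mutinf(\bm x,\bm z\mid\bm v)$, whereas the paper writes the cancellation of $p^*(z_a)$ inline and compresses the last step into ``since $\bm v$ is independent of $\bm x$''; both are the same argument.
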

\begin{proof}
For $\sigma\in[q]^k$ we have
$\expe[\psiR(\sigma)]=\sum_{z}p^*(z)\psi_z(\sigma)=\sum_z\nu_{\sigma}(z)=1$, so
$\ZFa\equiv 1$.
Now, for $x\in[q]^n$, $v\in([n]^k)^m$, $z\in[q']^m$ with $\psi=(\psi_{z_a})_{a\in[m]}$ and $y=(x_{v_a})_{a\in[m]}$ we have
\begin{align*}
\prob[\GTS_m(x)=(v,\psi)]
=\frac{1}{n^{km}}\prod_{a\in[m]}\frac{p^*(z_a)\nu_{y_a}(z_a)}{p^*(z_a)}
=\prob[\bm v=v,\bm z(y_{\bm v,x})=z].
\end{align*}
This shows that $(\sigmaIID,\GTS_m(\sigmaIID))\dequal(\bm x,\bm v,\bm z(y_{\bm v,\bm x}))$ by identifying $z$ with $\psi_z$, so
\begin{align*}
\mutinf(\sigmaIID,\GTS_m(\sigmaIID))
=\mutinf(\bm x,(\bm v,\bm z(y_{\bm x,\bm v})))
=\mutinf(\bm x,\bm z(y_{\bm x,\bm v})|\bm v),
\end{align*}
since $\bm v$ is independent of $\bm x$.
\end{proof}
\section{Free Entropies, Divergence and the Mutual Information}
In this section, we present the main results.
First, we discuss the required assumptions and present a class of models that satisfy these assumptions.
Then we turn to the limit of the quenched free entropy over the teacher-student model, the relative entropy, the quenched free entropy over the null model and the mutual information.
Finally, we discuss implications, generalizations, open problems and related work.
The proofs of the results will be discussed in Section \ref{proof_strategy}.
\subsection{Model Assumptions}\label{assumptions}
Recall from Section \ref{random_factor_graphs} that we restrict to $p\in\mclp([\psibl,\psibu]^{[q]^k})$ and $\gamma^*\ge\psibl$, the boundedness assumptions for the weight functions (and the ground truth distribution).
However, we need two additional assumptions, which weaken {\bf BAL} and {\bf POS} in \cite{coja2018} respectively.
Let $\psi:[q]^k\rightarrow\reals_{\ge 0}$, $h\in[k]$,
$\gamma\in\mclp([q])^k$ and $\gamma'\in(\mclp([q])^k)^2$.
Further, let $\sigmaR\dequal\bigotimes_{h'\in[k]}\gamma_{h'}$,
and let $\sigmaR'\dequal\bigotimes_{h'\in[k]}\sigmaR'(h')$ be given by
$\sigmaR'(h)\dequal\gamma'_{1,h}$ and
$\sigmaR'(h')\dequal\gamma'_{2,h'}$ for $h'\neq h$.
Recall $\ZFa[,p]$ from Equation (\ref{ZFaDef}), let $t\in\reals_{\ge 0}$ and further
\begin{align}\label{ZF_def}
\ZFM(\psi,h,\gamma')=\expe\left[\psi(\sigmaR')\right],\quad
\ZF(\psi,\gamma)=\expe\left[\psi(\sigmaR)\right],\quad
\ZFabu_p=\sup_{\gamma}\ZFa[,p](\gamma),\quad
\xlnx(t)=t\ln(t).
\end{align}
Notice that $\ZFa[,p](\gamma)$ is the expectation of $\ZF(\psi,\gamma)$ over $\psiR$ for constant $\gamma$, and that $\ZF(\psi,\gamma)$ is $\ZFM(\psi,h,\gamma')$ for $\gamma'_1=\gamma'_2=\gamma$.
For the weakened {\bf POS} assumption we randomize the arguments to $\ZFM$ and $\ZF$ as follows.
Let $\mclp[][2]([q])=\mclp(\mclp([q]))$ be the laws on the color distributions $\mclp([q])\setle\reals^q$.
For a pair $\pi\in(\mclp[][2]([q]))^2$ of such laws let $\gammaR_{\pi}\dequal\pi_1^{\otimes k}\otimes\pi_2^{\otimes k}$ be the product of the corresponding i.i.d.~color distributions.
Let $(\psiR,\hR,\gammaR_\pi)\dequal\psiR\otimes\hR\otimes\gammaR_\pi$ be independent with
$\psiR\dequal p$, $\bm h\dequal\unif([k])$ and
\begin{align}\label{nablaI_def}
\nablaI(\pi)=\expe\left[\xlnx\left(\ZF(\psiR,\gammaR_{\pi,1})\right)
+(k-1)\xlnx\left(\ZF(\psiR,\gammaR_{\pi,2})\right)
-k\xlnx\left(\ZFM(\psiR,\hR,\gammaR_{\pi})\right)
\right].
\end{align}
Now, with $\gammaR_\pi\dequal\pi$ for $\pi\in\mclp[][2]([q])$, and for given $\gamma\in\mclp([q])$ we consider the infimum
\begin{align}\label{setpi_def}
\nablaIbl(p,\gamma)=\inf_{\pi\in\mclp[*,\gamma][2]([q])}\nablaI(\pi),\quad
\mclp[*,\gamma][2]([q])=\left\{\pi\in\mclp[][2]([q]):\expe[\gammaR_\pi]=\gamma\right\}.
\end{align}
Finally, we are able to state the four assumptions, given by
\begin{align}
\mfkA=\left\{(p,\gamma^*)\in\mclp\left([\psibl,\psibu]^{[q]^k}\right)\times\mclp([q]):\gamma^*\ge\psibl,\ZFa[,p](\gamma^*)=\ZFabu_p,\nablaIbl(p,\gamma^*)\ge 0\right\}.
\end{align}
So, the first two assumptions are the boundedness assumptions for $(p,\gamma^*)$.
The third assumption $\ZFa[,p](\gamma^*)=\ZFabu_p$ that $\gamma^*$ is a maximizer of $\ZFa[,p]$ is the weakened\footnote{The assumption {\bf BAL} in \cite{coja2018} additionally requires $\gamma^*=\unif([q])$ and $\ZFa$ to be concave.} {\bf BAL} assumption from \cite{coja2018}.
Recalling the proof of Observation \ref{obs_psiexpenullprod} and the concentration of $\gammaN[,\sigmaIID]$, this means that $\sigmaIID$ should (typically asymptotically) maximize the expected weight.
The last assumption $\nablaIbl(p,\gamma^*)\ge 0$ is the weakened {\bf POS} assumption from \cite{coja2018} (cf.~\cite{coja2021}).
Some intuition for this assumption is provided below, for the example of graphical channels.

\vspace{1mm}
\noindent{\bf Valid Models.}
Instead of verifying the assumptions for the examples in Section \ref{examples} case by case, we consider the following class of models.
Let $(\bm a_i,\bm b_i,\bm\Delta_i)\in\reals_{>0}\times\reals\times\reals^{[q]^k}$ be such that $\expe[\bm b_i^{\ell}|\bm a_i]\ge 0$ for $\ell\in\ints_{\ge 3}$, such that $|\bm b_i\bm\Delta_i|<1$, such that $\bm b_i$ and $\bm\Delta_i$ are conditionally independent given $\bm a_i$, and such that $\psibl\le\psiR_i\le\psibu$, where $\psiR_i(\sigma)=\bm a_i(1-\bm b_i\bm\Delta_i(\sigma))$ for $\sigma\in[q]^k$ and $i\in\{-1,1\}$.

The subscript $i\in\{-1,1\}$ distinguishes two types of models.
For $i=-1$ let $\bm b_{-1}$ be such that $\expe[\bm b_{-1}^{2\ell+1}|\bm a]=0$ for $\ell\in\ints_{>0}$ and $\bm\Delta_{-1}(\sigma)=\prod_h\bm f_{-1,h}(\sigma(h))$ for $\sigma\in[q]^k$, where $\bm f_{-1}=(\bm f_{-1,h})_h\in(\reals^q)^k$ is conditionally i.i.d.~given $\bm a_{-1}$.
Most notably, in this model the conditionally i.i.d.~(normalized) ``penalties'' $\bm f_{-1}$ may take negative values, while the condition on $\bm b_{-1}$ ensures a certain symmetry.

For $i=1$ let $\bm\Delta_1(\sigma)=\sum_{i=1}^\infty\prod_h\bm f_{1,h,i}(\sigma_h)$ for $\sigma\in[q]^k$, where $\bm f_1=(\bm f_{1,h})_h\in((\reals_{\ge 0}^q)^{\ints_{>0}})^k$ is conditionally i.i.d. given $\bm a_1$.
So, in this case the penalties have to be non-negative, but we do not require the symmetry condition and most notably we may sum over penalties, allowing arbitrary dependencies among the summands, while still enforcing independence of the coordinates.

Now, for $i\in\{-1,1\}$ let $p\in\mclp[i]$ if there exists $(\bm a_i,\bm b_i,\bm\Delta_i)$ as above such that $p$ is the law of $\psiR_i$.
For the sake of \emph{simplicity} the \emph{countable} convex combinations are the valid models
\begin{align*}
\mclp=\left\{\sum_i\alpha(i)p(i):\alpha\in\mclp(\ints_{>0}),\,p\in(\mclp[-1]\cup\mclp[1])^{\ints_{>0}}\right\}.
\end{align*}
\begin{proposition}\label{prop_validmodels}
We have $\{(p,\gamma^*)\in\mclp\times\mclp([q]):\gamma^*\ge\psibl,\ZFa[,p](\gamma^*)=\ZFabu_p\}\setle\mfkA$.
\end{proposition}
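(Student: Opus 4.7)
My plan is to expand $\xlnx$ around $\bm a$ using the representation $\psiR = \bm a(1 - \bm b \bm\Delta)$ of the atoms, show that the linear-in-$\bm b$ contributions vanish in expectation thanks to the mean constraint $\expe[\gammaR_{\pi,i}] = \gamma^*$, and finally bound the higher-order contributions by AM--GM. Since $p \mapsto \nablaI(\pi)$ is linear and $\mclp$ consists of countable convex combinations of atoms from $\mclp[-1] \cup \mclp[1]$, it suffices to prove $\nablaI(\pi) \ge 0$ for $\psiR$ drawn from a single $p \in \mclp[-1] \cup \mclp[1]$ (the interchange is legitimate thanks to $|\bm b \bm\Delta| < 1$ and $\bm a \in [\psibl, \psibu]$). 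Writing $\ZF(\psiR, \gammaR_{\pi,1}) = \bm a(1 - \bm b u_1)$ with $u_1 = \expe[\bm\Delta(\sigmaR') \mid \bm\Delta, \gammaR_{\pi,1}]$, $\sigmaR' \dequal \bigotimes_h \gammaR_{\pi,1,h}$, and defining $u_2, u_{12}^{(h)}$ analogously, the rearranged identity
\begin{align*}
\xlnx(\bm a(1 - \bm b u)) = \bm a(1 - \bm b u) \ln \bm a - \bm a \bm b u + \bm a \sum_{\ell \ge 2} \frac{(\bm b u)^\ell}{\ell(\ell - 1)}
\end{align*}
(which follows from $\ln(1 - x) = -\sum_{\ell \ge 1} x^\ell/\ell$) converges absolutely a.s.\ since $|\bm b u|$ is bounded by $\|\bm b \bm\Delta\|_\infty < 1$.

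Combining the three $\xlnx$ terms of $\nablaI(\pi)$ with multiplicities $1, k - 1, -k$ (the last under $\expe_{\bm h}$), both the $\ln \bm a$- and $\bm b^1$-contributions collapse to $-\bm a \bm b \cdot S$ where $S = u_1 + (k - 1) u_2 - k \expe_{\bm h}[u_{12}^{(\bm h)}]$. The mean constraint $\expe[\gammaR_{\pi, i}] = \gamma^*$ built into $\pi \in \mclp[*, \gamma^*][2]([q])$, together with mutual independence of $\gammaR_{\pi, 1}, \gammaR_{\pi, 2}, \bm\Delta$, gives $\expe[u_1 \mid \bm\psi] = \expe[u_2 \mid \bm\psi] = \expe[u_{12}^{(h)} \mid \bm\psi]$ for every $h$, hence $\expe[S \mid \bm\psi] = 0$. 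Using conditional independence of $\bm b$ and $\bm\Delta$ given $\bm a$, the remaining expansion reads
\begin{align*}
\nablaI(\pi) = \sum_{\ell \ge 2} \frac{\expe[\bm a \cdot \expe[\bm b^\ell \mid \bm a] \cdot T_\ell]}{\ell(\ell - 1)}, \quad T_\ell = \expe\!\left[u_1^\ell + (k - 1) u_2^\ell - k \expe_{\bm h}[(u_{12}^{(\bm h)})^\ell] \,\big|\, \bm a\right].
\end{align*}

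The core step reduces $T_\ell$, in both model classes, to a non-negative linear combination of expressions of the form $A^k + (k - 1) B^k - k A B^{k - 1}$ for $A, B \ge 0$. Conditional i.i.d.-ness of $(\bm f_h)_h$ given $\bm a$ yields, for $i = -1$, a single such term with $A = \expe[\langle \bm f_1, \bm\gamma_1\rangle^\ell \mid \bm a]$ for $\bm\gamma_1 \sim \pi_1$ and $B$ the analogous $\pi_2$-quantity; for $i = 1$, Tonelli (legitimate since $\bm f_{1, h, j} \ge 0$) yields $T_\ell = \sum_{\bar j \in \ints_{> 0}^\ell} \bigl[P_{\bar j}^k + (k - 1) P_{\bar j}'^k - k P_{\bar j} P_{\bar j}'^{k - 1}\bigr]$ with $P_{\bar j} = \expe[\prod_m \langle \bm f_{1, 1, j_m}, \bm\gamma_1\rangle \mid \bm a]$ and $P_{\bar j}'$ analogous. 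The inequality $A^k + (k - 1) B^k \ge k A B^{k - 1}$ is AM--GM applied to the $k$ non-negative numbers $A^k, B^k, \ldots, B^k$, and handles each summand. The main obstacle is the sign bookkeeping required to line up the signs of $A, B$ (respectively $P_{\bar j}, P_{\bar j}'$) with those of $\expe[\bm b^\ell \mid \bm a]$: for $i = 1$, non-negativity of $\bm f_{1, h, j}$ gives $P_{\bar j}, P_{\bar j}' \ge 0$ for every $\ell$ and $\expe[\bm b_1^\ell \mid \bm a] \ge 0$ holds by hypothesis (automatic at $\ell = 2$, assumed for $\ell \ge 3$); for $i = -1$, $A, B \ge 0$ is automatic for even $\ell$, while the symmetry hypothesis $\expe[\bm b_{-1}^{2\ell' + 1} \mid \bm a] = 0$ for $\ell' \ge 1$ annihilates exactly the odd-$\ell$ terms where non-negativity could fail. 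Summing over $\ell$ gives $\nablaI(\pi) \ge 0$ for every $\pi \in \mclp[*, \gamma^*][2]([q])$, hence $\nablaIbl(p, \gamma^*) \ge 0$, as required.
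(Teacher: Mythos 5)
Your proposal is correct and follows the paper's argument essentially step for step: factor out $\bm a$ via $\xlnx(at)=a\xlnx(t)+t\xlnx(a)$, expand $\xlnx(1-\bm b u)=-\bm b u+\sum_{\ell\ge 2}(\bm b u)^\ell/(\ell(\ell-1))$, observe that the $\ln\bm a$- and $\bm b^1$-contributions vanish because $\expe[\gammaR_\pi]=\gamma^*$ makes $\expe[u_1\mid\psiR]=\expe[u_2\mid\psiR]=\expe[u_{12}^{(h)}\mid\psiR]$, pass $\expe[\bm b^\ell\mid\bm a]$ outside via the conditional independence of $\bm b$ and $\bm\Delta$, and reduce each $T_\ell$ to (a sum of) $A^k+(k-1)B^k-kAB^{k-1}$ with $A,B\ge 0$, handling the sign bookkeeping exactly as in the paper (non-negativity of $\bm f_{1}$ for $i=1$; evenness of $\ell$ and vanishing odd moments of $\bm b_{-1}$ for $i=-1$), then extending by linearity in $p$. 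The only genuine difference is cosmetic: you cite AM--GM for $A^k+(k-1)B^k\ge kAB^{k-1}$, whereas the paper uses the explicit factorization $x^k+(k-1)y^k-kxy^{k-1}=(x-y)^2\sum_{r=0}^{k-2}(k-1-r)x^ry^{k-2-r}$; both are equally valid. One small inaccuracy worth flagging: the definition of $\mclp[\pm 1]$ only requires $\bm a>0$ and $\psibl\le\psiR\le\psibu$, not $\bm a\in[\psibl,\psibu]$, so the justification you give for interchanging $\sum_\ell$ with $\expe$ overstates what is assumed — though the paper itself glosses over this interchange too, so it is not a defect relative to the reference argument.
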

Proposition \ref{prop_validmodels} suggests that for $p\in\mclp$ it is sufficient to check if $\gamma^*$ is a maximizer of $\ZFa[,p]$.

\vspace{1mm}
\noindent{\bf NAE-SAT.}
Let $f_{\circ,x^*}(x)=\bmone\{x=x^*\}$ for $x,x^*\in\{0,1\}$.
For $x^*\in\{0,1\}^k$ let $x^*_1=x^*$ and $x^*_2=(1-x^*(h))_h$.
Further, let $f_{x^*}\in((\reals_{\ge 0}^{\{0,1\}})^{\ints_{>0}})^k$ be given by $f_{x^*,i,h}=f_{\circ,x^*_i(h)}$ for $i\in[2]$, $h\in[k]$ and $f_{x^*,i,h}\equiv 0$ otherwise.
Finally, let $b=1-\eps$ and $a=1$.
Then the weight is given by $\psiR(x)=a(1-b\bm\Delta(x))$, $\bm\Delta(x)=\sum_i\prod_h\bm f_{i,h}(x_h)$, $x\in\{0,1\}^k$, where $\bm f=f_{\bm x}$ is i.i.d.~since $\bm x=\unif_{\{0,1\}}^{\otimes k}$ is.
Proposition \ref{prop_validmodels} and $\expe[\psiR]\equiv 1-2^{1-k}(1-\eps)$
yield $(p,\gamma^*)\in\mfkA$ for any $\gamma^*\in\mclp([q])$ with $\gamma^*\ge\psibl$.

\vspace{1mm}
\noindent{\bf The $\bm k$-spin Model.}
For the $k$-spin model we have $\bm\psi(\sigma)=a(\bm J)(1-b(\bm J)\prod_h\sigma(h))$ with $a(\bm J)=(e^{\beta\bm J}+e^{-\beta\bm J})/2$ and $b(\bm J)=(e^{\beta\bm J}-e^{-\beta\bm J})/(e^{\beta\bm J}+e^{-\beta\bm J})$ (Example 1 in \cite{panchenko2004}).
Since $a:\reals\rightarrow\reals_{\ge 1}$ is even and a bijection on $\reals_{\ge 0}$, further $\bm J\dequal-\bm J$ is symmetric and $b$ is odd we have $b(\bm J)\dequal\bm s f(a(\bm J))$ for $f=b\circ a^{-1}$, with $a^{-1}$ being the inverse of $a$ on $\reals_{\ge 0}$, and $\bm s\in\{-1,1\}$ uniform and independent of $a(\bm J)$.
Proposition \ref{prop_validmodels} and $\expe[\psiR]\equiv 1$ suggest that
$(p,\gamma^*)\in\mfkA$ for any $\gamma^*\in\mclp([q])$ with $\gamma^*\ge\psibl$.

\vspace{1mm}
\noindent{\bf The Stochastic Block Model.}
Let $a_{i,0}=c_{\leftrightarrow}-c_i^{\neq}\ge 0$, $a_{i,\sigma^*}=c_i^{\neq}-c_i^=\ge 0$,
$f_{i,0}(\sigma)=\bmone\{\sigma\in\mclc[i]\}$ and $f_{i,\sigma^*}(\sigma)=\bmone\{\sigma\in\mclc[i],\sigma=\sigma^*\}$ for $\sigma,\sigma^*\in[q]$ and $i\in[2]$. Then we have
\begin{align*}
\psi(\sigma)=c_{\leftrightarrow}-\sum_{i=1}^2\sum_{\sigma^*=0}^qa_{i,\sigma^*}\prod_{h=1}^2f_{i,\sigma^*}(\sigma(h)).
\end{align*}
Proposition \ref{prop_validmodels} applies with $a=c_{\leftrightarrow}$, $b=1$ and the factors scaled with $(a_{i,\sigma^*}/c_{\leftrightarrow})^{1/k}$ to take the leading coefficients into account.
For the maximizer of $\ZFa[,p]$ we consider fixed pushforwards $x=\gamma(\mclc[1])\in[0,1]$, $\gamma(\mclc[2])=1-x$. Then we can (locally) maximize over the conditional laws independently, which amounts to the maximization of a standard stochastic block model (\cite{coja2018}),
yielding the uniform distributions $\unif_{\mclc[1]},\unif_{\mclc[2]}$ as the unique local maximizers, unique unless $x\in\{0,1\}$ or $a_{i,1}=0$ for some $i\in\{1,2\}$.
Formally, using $q_i=|\mclc[i]|$, $i\in[2]$, we have $\ZFabu_p=\sup_xf(x)$ with
\begin{align*}
f(x)=c_\leftrightarrow-b_1x^2-b_2(1-x)^2,\,
b_1=\left(a_{1,0}+\frac{a_{1,1}}{q_1}\right),\,
b_2=\left(a_{2,0}+\frac{a_{2,1}}{q_2}\right).
\end{align*}
Unless $\psi\equiv c_{\leftrightarrow}$, the function $f$ has the unique maximizer $x=b_2/(b_1+b_2)$.

\vspace{1mm}
\noindent{\bf Graphical Channels.}
Recall the notions from Section \ref{examples}.
First, recall from Observation \ref{obs_grchmutinf} that $\expe[\psiR]\equiv 1$ and hence $\ZFa[,p](\gamma^*)=\ZFabu_p$ for all $\gamma^*$.
On the other hand, it is model-dependent if $\nablaIbl(p,\gamma)\ge 0$ holds.
However, we observe that it is invariant to the choice of $p^*$.
\begin{observation}\label{obs_graphical_channelspos}
We have $\nablaI_{p,\gamma^*}(\pi)=\nablaI_{p_\circ,\gamma^*}(\pi)$ for $\pi\in\mclp[*,\gamma^*][2]([q])^2$, where $p_\circ$ is induced by $p^*=\unif([q'])$.
\end{observation}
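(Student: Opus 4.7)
The main idea is to isolate the $p^*$-dependence algebraically via the identity $\xlnx(ab)=ab\ln a+ab\ln b$ and then exploit the marginal constraint on $\pi$ to make the isolated part cancel. Writing $\psi_z(y)=\nu_y(z)/p^*(z)$, the plan begins by introducing the $p^*$-independent quantities
\[F_z(\gamma)=\sum_y\prod_h\gamma_h(y_h)\,\nu_y(z),\qquad G_{z,h}(\gamma')=\sum_y\gamma'_{1,h}(y_h)\prod_{h'\neq h}\gamma'_{2,h'}(y_{h'})\,\nu_y(z),\]
so that $\ZF(\psi_z,\gamma)=F_z(\gamma)/p^*(z)$ and $\ZFM(\psi_z,h,\gamma')=G_{z,h}(\gamma')/p^*(z)$. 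Applying $\xlnx$ splits each summand of $\nablaI$ into a nonlinear $p^*$-free piece and a piece that is \emph{linear} in $F_z$ or $G_{z,h}$ and carries all of the $p^*$-dependence through a factor of $\ln p^*(z)$.

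The next step is to integrate out $\psiR\sim p$, which by the definition $p(\psi_z)=p^*(z)$ amounts to the sum $\sum_z p^*(z)(\cdot)$; the factor $p^*(z)$ in the law cancels $1/p^*(z)$ inside $\ZF$, leaving
\[\expe_{\psiR}\bigl[\xlnx(\ZF(\psiR,\gamma))\bigr]=\sum_z F_z(\gamma)\ln F_z(\gamma)-\sum_z F_z(\gamma)\ln p^*(z),\]
and the analogous identity for $\ZFM$. The first summand is manifestly $p^*$-free, while all residual $p^*$-dependence sits in the second, linear logarithmic term.

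Then the plan is to take the expectation over the random color distributions in $\gammaR_\pi$. The $F_z\ln F_z$ and $G_{z,h}\ln G_{z,h}$ pieces recombine into an expression that depends only on $(\nu_y)_y$, $\pi$, $\gamma^*$ and $k$, and a quick bookkeeping check shows that this is exactly $\nablaI_{p_\circ,\gamma^*}(\pi)$ (the case $p^*=\unif([q'])$, for which $p^*(z)\equiv 1/q'$ makes the $\ln p^*$ terms into constants that drop out by the coefficient sum below). For the linear $p^*$-dependent parts, multilinearity of $F_z$ and $G_{z,h}$ in their color-distribution arguments combined with the marginal constraint $\expe[\gammaR_{\pi,i,h}]=\gamma^*$ encoded in $\pi\in\mclp[*,\gamma^*][2]([q])^2$ collapses all three expectations to the \emph{same} constant
\[\expe[F_z(\gammaR_{\pi,1})]=\expe[F_z(\gammaR_{\pi,2})]=\expe[G_{z,\hR}(\gammaR_\pi)]=\sum_y\gamma^{*\otimes k}(y)\,\nu_y(z)=:c_z,\]
where the equality for the third expression uses independence of $\gammaR_{\pi,1}$ and $\gammaR_{\pi,2}$ and the $h$-symmetry induced by $\hR\sim\unif([k])$.

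What remains is pure coefficient arithmetic: the three summands of $\nablaI(\pi)$ carry coefficients $1,\,k-1,\,-k$, so the $p^*$-dependent contributions combine to $-[1+(k-1)-k]\sum_z c_z\ln p^*(z)=0$. The only subtlety to watch is verifying that the marginal constraint applies componentwise to the i.i.d. sequences $\pi_1^{\otimes k}$ and $\pi_2^{\otimes k}$, so that the $\hR$-averaged third expectation really does yield the same mean $c_z$ as the first two; once this linearity/averaging step is pinned down, the cancellation $1+(k-1)-k=0$ is immediate and gives $\nablaI_{p,\gamma^*}(\pi)=\nablaI_{p_\circ,\gamma^*}(\pi)$.
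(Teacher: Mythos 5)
Your proof is correct and follows essentially the same route as the paper: in the paper's language, your $\sum_z F_z\ln F_z$ is $-H(\bm z^\circ(\bm y^\circ_\alpha))$ and your $-\sum_z F_z\ln p^*(z)$ is the cross entropy $H(\bm z^\circ(\bm y^\circ_\alpha)\|p^*)$, and your observation that linearity plus the marginal constraint collapses the three expectations to the same $c_z$ is the paper's statement that the expected cross entropies do not depend on $\pi$ and hence cancel via $1+(k-1)-k=0$. The bookkeeping notation differs but the decomposition, the use of multilinearity with $\expe[\gammaR_{\pi,i,h}]=\gamma^*$, and the final coefficient cancellation are identical.
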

\begin{proof}
Recall that for $y\in[q]^k$ and $z\in[q']$ we have $\psi_z(y)=\nu_y(z)/p^*(z)$.
Hence, for $\gamma\in(\mclp([q])^k)^2$ and $h\in[k]$ we have
$\ZFM(\psi_z,h,\gamma)=\prob[\bm z^\circ(\bm y^\circ_{\alpha})=z]/p^*(z)$, where $\alpha=\alpha_{h,\gamma}$ is given by $\bm y^\circ_{\alpha}\dequal\bigotimes_{h'}\bm y^\circ_{\alpha}(h')$, $\bm y^\circ_{\alpha}(h)\dequal\gamma_{1,h}$ and $\bm y^\circ_{\alpha}(h')\dequal\gamma_{2,h'}$ for $h'\in[k]\setminus\{h\}$.
But then $p^*$ cancels out when we take the expectation over $\psiR$, so
\begin{align*}
\expe[\xlnx(\ZFM(\psiR,h,\gamma))]
=\sum_z\prob[\bm z^\circ(\bm y^\circ_{\alpha})=z]\ln\left(\frac{\prob[\bm z^\circ(\bm y^\circ_{\alpha})=z]}{p^*(z)}\right)
=\DKL\left(\bm z^\circ(\bm y^\circ_{\alpha})\middle\|\,p^*\right).
\end{align*}
We obtain the results for $\ZF$ as special cases and hence a form of $\nablaI$ in terms of relative entropies (which are additive for independent random variables).
Now, the assertion follows from the decomposition of the relative entropy into the cross entropy and the entropy, i.e.~
$\DKL(\bm z^\circ(\bm y^\circ_{\alpha})\|p^*)=H(\bm z^\circ(\bm y^\circ_{\alpha})\|p^*)-H(\bm z^\circ(\bm y^\circ_{\alpha}))$, where
\begin{align*}
H(\bm z^\circ(\bm y^\circ_{\alpha})\|p^*)
&=-\sum_z\prob[\bm z^\circ(\bm y^\circ_{\alpha})=z]\ln(p^*(z)),\\
H(\bm z^\circ(\bm y^\circ_{\alpha}))
&=-\sum_z\prob[\bm z^\circ(\bm y^\circ_{\alpha})=z]\ln(\prob[\bm z^\circ(\bm y^\circ_{\alpha})=z]).
\end{align*}
Next, we take the expectation with respect to $\gammaR_\pi$.
Since the cross entropy is linear in the first component, using independence and linearity of the probability in $\alpha$ yields
$\expe[\expe[H(\bm z^\circ(\bm y^\circ_{\alpha_{h,\gammaR_\pi}})\|p^*)|\gammaR_\pi]]
=H(\bm z^\circ(\bm y^\circ_{\alpha^*})\|p^*)$, where $\alpha^*=\gamma^{*\otimes k}$.
Using that $\ZF$ is a special case of $\ZFM$ we obtain the corresponding results.
Finally, we notice that the expected cross entropies cancel out since they do not depend on $\pi$, and hence
\begin{align*}
\nablaI_{p,\gamma^*}(\pi)
=\expe\left[\expe\left[
kH(\bm z^\circ(\bm y^\circ_{\bm\alpha'}))
-(k-1)H(\bm z^\circ(\bm y^\circ_{\bm\alpha_2}))
-H(\bm z^\circ(\bm y^\circ_{\bm\alpha_1}))\middle|\gammaR_\pi,\hR\right]\right]
\end{align*}
does not depend on $p^*$, where $\bm\alpha'=\alpha_{\hR,\gammaR_\pi}$ and
$\bm\alpha_i\dequal\bigotimes_{h\in[k]}\gammaR_{\pi,i,h}$ for $i\in[2]$.
\end{proof}
\subsection{The Quenched Free Entropy of the Planted Ensemble}
\label{bethe_main}
Our first main result yields the limit of the teacher-student model quenched free entropy.
Recall $\xlnx$, $\ZF$ from Equation (\ref{ZF_def}), and $\mclp[][2]([q])$, $\mclp[*,\gamma^*][2]([q])$ from Equation (\ref{setpi_def}).
For $d'\in\ints_{\ge 0}$ and $(\psi,h,\gamma)\in(\reals_{>0}^{[q]^k}\times[k]\times\mclp([q])^k)^{\ints_{>0}}$ let
\begin{align*}
\ZV[,\gamma^*](d',\psi,h,\gamma)=\sum_{\sigma\in[q]}\gamma^*(\sigma)\prod_{a\in[d']}\left(\sum_{\tau\in[q]^k}\bmone\left\{\tau\left(h_a\right)=\sigma\right\}\psi(\tau)\prod_{h'\neq h}\gamma_{a,h'}(\tau(h'))\right).
\end{align*}
Let $\pi\in\mclp[][2]([q])$, $(\dR,\bm\psi,\bm h,\bm\gamma)\dequal\Po(d)\otimes(p\otimes\unif([k])\otimes\pi^{\otimes k})^{\otimes\ints_{>0}}$, $(\psiR_\circ,\gammaR_\circ)\dequal p\otimes\pi^{\otimes k}$, and let
\begin{align*}
\bethe_{p,\gamma^*,d}(\pi)
=\expe\left[\frac{1}{\ZFabu_p^{\dR}}\xlnx\left(\ZV[,\gamma^*](\dR,\psiR,\hR,\gammaR)\right)\right]
-\frac{d(k-1)}{k\ZFabu_p}\expe\left[\xlnx\left(\ZF(\psiR_{\circ},\gammaR_\circ)\right)\right]
\end{align*}
be the (limiting) Bethe free entropy (density for $\GTS_{\mR}(\sigmaIID)$).
Further, we denote the supremum over $\mclp[*,\gamma^*][2]([q])$ with $\bethebu[,p,\gamma^*](d)=\sup_{\pi\in\mclp[*,\gamma^*][2]([q])}\bethe_{p,\gamma^*,d}(\pi)$.
Finally, let $f(n)=\mclo(g(n))$ if there exists $c(\mfkg)\in\reals_{>0}$ such that $|f(n)|\le cg(n)$ for all $n\in\ints_{>0}$, where $\mfkg=(q,k,\psibl,\degabu)$.
\begin{theorem}\label{thm_bethe}
There exists $\rho(\mfkg)\in\reals_{>0}$ such that for $(p,\gamma^*)\in\mfkA$ we have
\begin{align*}
\expe[\phi_{\gamma^*}(\GTS_{\mR}(\sigmaIID))]=\bethebu(d)+\mclo(n^{-\rho}).
\end{align*}
\end{theorem}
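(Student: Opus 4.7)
I would prove matching upper and lower bounds for $\expe[\phi_{\gamma^*}(\GTS_\mR(\sigmaIID))]$, both converging to $\bethebu(d)$ at polynomial rate in $n$, and both expressed in terms of the Bethe functional $\bethe_{p,\gamma^*,d}(\cdot)$ evaluated at a cavity message law in the constrained set $\mclp[*,\gamma^*][2]([q])$. The lower bound exhibits an explicit near-maximizer $\pi$ together with a pinned reference ensemble whose free entropy matches $\bethe_{p,\gamma^*,d}(\pi)$, while the upper bound proceeds by an Aizenman-Sims-Starr one-vertex analysis realizing some a priori unknown cavity law $\piR^\star$. The two non-trivial assumptions in $\mfkA$ enter in complementary ways: the maximizer condition $\ZFa[,p](\gamma^*) = \ZFabu_p$ anchors the cavity marginal to $\gamma^*$, and the positivity $\nablaIbl(p,\gamma^*) \ge 0$ provides the sign needed to close the interpolation inequality in the lower bound.

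\textbf{Lower bound via pinning and interpolation.} Fix $\pi\in\mclp[*,\gamma^*][2]([q])$ with $\bethe_{p,\gamma^*,d}(\pi) \ge \bethebu(d)-\eps$ for a polynomially small $\eps=\eps(n)$. Build an auxiliary planted model by attaching i.i.d.\ unary pinning factors sampled from $\pi$ to the variables. A direct Poisson-neighborhood computation (based on the Nishimori identity, independence of the pinning fields, and the Galton-Watson structure of local neighborhoods) shows that the expected free entropy of this pinned reference equals $\bethe_{p,\gamma^*,d}(\pi)+\mclo(n^{-\rho})$: the per-variable contribution reproduces $\expe[\Lambda(\ZV[,\gamma^*])/\ZFabu_p^{\dR}]$, and the per-factor overcounting correction reproduces the second summand in the definition of $\bethe$. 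Then interpolate continuously in $t\in[0,1]$ between this reference at $t=0$ and the unpinned planted model $\GTS_\mR(\sigmaIID)$ at $t=1$; differentiating in $t$, the increment of the interpolated free entropy equals, after a Nishimori-based rearrangement, the expectation of $\nablaI$ at the random interpolated cavity distribution $\piR_t$, which lies in $\mclp[*,\gamma^*][2]([q])$ by construction and is therefore $\ge \nablaIbl(p,\gamma^*) \ge 0$ by assumption. Integration in $t$ yields the lower bound.

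\textbf{Upper bound via Aizenman-Sims-Starr.} Write the expected free entropy as (the limit of) a one-vertex increment, i.e.\ the difference between the expected planted free entropy on $n$ and on $n-1$ variables, using Poisson splitting of $\mR$ and the polynomial concentration provided by Proposition \ref{proposition_phi_concon} to control the residual. The increment can be evaluated via quantitative local weak convergence of the planted model around the freshly added variable to a Galton-Watson tree with Poisson$(d)$ offspring: the boundary cavity messages of this tree form an i.i.d.\ sample from some law $\piR^\star$, and the increment equals $\bethe_{p,\gamma^*,d}(\piR^\star) + \mclo(n^{-\rho})$. The Nishimori identity combined with the maximizer assumption $\ZFa[,p](\gamma^*)=\ZFabu_p$ forces the marginal $\expe[\gammaR_{\piR^\star}]$ to coincide with $\gamma^*$, so $\piR^\star\in\mclp[*,\gamma^*][2]([q])$ and the increment is at most $\bethebu(d)$.

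\textbf{Main obstacle.} The central difficulty is verifying that the cavity distribution $\piR^\star$ arising in the upper bound truly lies in the \emph{constrained} set $\mclp[*,\gamma^*][2]([q])$ and not merely in the ambient $\mclp[][2]([q])$: absent the maximizer assumption, configurations whose empirical color profile deviates from $\gamma^*$ could contribute to the partition function at exponential order and the constrained supremum $\bethebu(d)$ would cease to be an upper bound. A related subtlety is that the pinned-reference computation in the lower bound must use the very same Nishimori identities in a way compatible with the Poisson$(d)$ structure, so that the two bounds target the exact same $\bethebu(d)$. Finally, promoting every $o(1)$ in the argument to $\mclo(n^{-\rho})$ uniformly in $(p,\gamma^*)\in\mfkA$ requires quantitative versions of the three probabilistic ingredients -- free entropy concentration, local weak convergence of the planted model to its tree limit, and the pinning lemma -- whose polynomial error terms must be simultaneously balanced by choosing the pinning rate and the Aizenman-Sims-Starr comparison parameters as appropriate fractional powers of $1/n$ depending only on $\mfkg=(q,k,\psibl,\degabu)$.
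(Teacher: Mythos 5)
Your proposal captures the paper's two-sided strategy correctly: a lower bound via the interpolation method (Proposition \ref{proposition_int}), an upper bound via the Aizenman-Sims-Starr scheme (Proposition \ref{proposition_ass}), both enabled by the pinning lemma \ref{lemma_pinning} applied to the Nishimori ground truth $\sigmaNIS$, with the assumptions in $\mfkA$ entering exactly where you say they should. You also correctly identify the central difficulty: ensuring the cavity law seen in the one-vertex increment and the interpolated Gibbs marginal actually lie in the constrained set $\mclp[*,\gamma^*][2]([q])$ so that $\nablaIbl(p,\gamma^*)\ge 0$ applies and $\bethebu(d)$ caps the increment.

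However, there is a genuine gap in how you dispose of that difficulty. You assert that the interpolated cavity distribution lies in $\mclp[*,\gamma^*][2]([q])$ ``by construction'' and that the Nishimori identity together with $\ZFa[,p](\gamma^*)=\ZFabu_p$ ``forces'' $\expe[\gammaR_{\piR^\star}]=\gamma^*$. Neither holds at finite $n$. The empirical Gibbs marginal $\piG[,\GTSM]$ of the pinned teacher-student graph has expected marginal $\gammaaG$ that is only \emph{close} to $\gamma^*$ with very high probability (Lemma \ref{lemma_gammaaR}), not exactly $\gamma^*$; the Nishimori condition gives equality in distribution between the planted spins and the Gibbs spins, but that does not pin down the empirical mean at finite size. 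The paper closes this gap by explicitly constructing a projection $\piG[,G]^\circ\in\mclp[*,\gamma^*][2]([q])$ (Observation \ref{obs_gammaacounterweight} and Lemma \ref{lemma_piapproxpstartwo}): the counterweight $[\gammaaG]_{\mathrm c}$ is mixed in with a carefully chosen weight so that the resulting law has mean exactly $\gamma^*$ while the Wasserstein distance to $\piG[,G]$ is bounded by a continuous function of $\|\gammaaG-\gamma^*\|_2$, which in turn is small by concentration. The transfer from $\piG$ to $\piG^\circ$ then requires Lipschitz continuity of $\nablaI$ (Lemma \ref{lemma_nablaI_lipschitz}) and of $\bethe$ (Lemmas \ref{lemma_bethelipschitzfactor} and \ref{lemma_bethelipschitzvar}) in the Wasserstein metric. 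Without this projection step both your bounds remain in terms of an unconstrained $\pi^\star\in\mclp[][2]([q])$, for which $\nablaI(\pi^\star)\ge 0$ is not guaranteed and $\bethe(\pi^\star)\le\bethebu(d)$ is not available; so the argument as written does not close. A secondary, smaller difference is that the paper's upper bound does not invoke local weak convergence to a Galton-Watson tree; the asymptotic independence needed is obtained directly from the pinning lemma via $\eps$-symmetry, which is what yields the explicit polynomial rate $n^{-\rho}$ uniformly over $\mfkg$.
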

\subsection{The Information-Theoretic Threshold}\label{information_theoretic_threshold}
The second main result addresses the relative entropy of $(\sigmaIID,\GTS(\sigmaIID))$ with respect to $(\sigmaG_{\gamma^*,\GR},\GR)$, where $\sigmaG_{\gamma^*,G}\dequal\mu_{\gamma^*,G}$ are the Gibbs spins from Equation (\ref{weightG}), further $\GR=\GR_{\mR}$ and $\GTS=\GTS_{\mR}$.
If $\bm a$ has a Radon-Nikodym derivative $r$ with respect to $\bm b$, let $\DKL(\bm a\|\bm b)=\expe[\ln(r(\bm a))]$ and $\DKL(\bm a\|\bm b)=\infty$ otherwise.
Further, let $\phia(d)=\phia[,p](d)=\frac{d}{k}\ln(\ZFabu_p)$.
\begin{theorem}\label{thm_infth}
With $\rho$ from Theorem \ref{thm_bethe} and for $(p,\gamma^*)\in\mfkA$ we have
\begin{align*}
\frac{1}{n}\DKL(\sigmaIID,\GTS(\sigmaIID)\|\sigmaG_{\gamma^*,\GR},\GR)=\bethebu(d)-\phia(d)+\mclo(n^{-\rho}).
\end{align*}
\end{theorem}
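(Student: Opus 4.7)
The strategy is to identify the Radon--Nikodym derivative explicitly, to separate the relative entropy into the quenched free entropy (to which Theorem~\ref{thm_bethe} applies) plus a logarithmic correction, and then to bound the correction via a local expansion of $\ZFa$ at the maximizer $\gamma^*$ combined with standard concentration of $\gammaN[,\sigmaIID]$.

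\textbf{Computation of the Radon--Nikodym derivative.} Relative to the reference measure $\gamma^{*\otimes n}\otimes\GR_{\mR}$, the joint law of $(\sigmaIID,\GTS(\sigmaIID))$ has density $(\sigma,G)\mapsto\psi_G(\sigma)/\expe[\psi_{\GR_{|G|}}(\sigma)]$ by definition of the teacher--student model, while the joint law of $(\sigmaG_{\gamma^*,\GR},\GR)$ has density $(\sigma,G)\mapsto\psi_G(\sigma)/Z_{\gamma^*}(G)$ by definition of the Gibbs measure. By Observation~\ref{obs_psiexpenullprod}, $\expe[\psi_{\GR_m}(\sigma)]=\ZFa(\gammaN[,\sigma])^m$, so the Radon--Nikodym derivative of the former with respect to the latter is $(\sigma,G)\mapsto Z_{\gamma^*}(G)/\ZFa(\gammaN[,\sigma])^{|G|}$. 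Taking the expectation of its logarithm under the teacher--student law and using that $\mR\sim\Po(dn/k)$ is independent of $\sigmaIID$ (with $\expe[\mR]=dn/k$), I obtain
\[
\frac{1}{n}\DKL(\sigmaIID,\GTS(\sigmaIID)\|\sigmaG_{\gamma^*,\GR},\GR)=\expe\bigl[\phi_{\gamma^*}(\GTS(\sigmaIID))\bigr]-\frac{d}{k}\expe\bigl[\ln\ZFa(\gammaN[,\sigmaIID])\bigr].
\]

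\textbf{Control of the two terms.} Theorem~\ref{thm_bethe} gives $\expe[\phi_{\gamma^*}(\GTS(\sigmaIID))]=\bethebu(d)+\mclo(n^{-\rho})$. Since $\phia(d)=\tfrac{d}{k}\ln(\ZFabu_p)$, it suffices to prove $\expe[\ln\ZFa(\gammaN[,\sigmaIID])]=\ln(\ZFabu_p)+\mclo(n^{-1})$. The upper bound is immediate from $\ZFa[,p](\gamma^*)=\ZFabu_p$. For the matching lower bound I exploit that $\gamma^*$ is an \emph{interior} maximizer of $\ZFa[,p]$ on $\mclp([q])$ (interiority follows from $\gamma^*\ge\psibl>0$), and that $\ZFa[,p]$ is a polynomial of degree $k$ in the entries of $\gamma$. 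Consequently the tangent-space gradient of $\ln\ZFa[,p]$ at $\gamma^*$ vanishes, and since $\gamma-\gamma^*$ lies in the tangent space of the simplex for every $\gamma\in\mclp([q])$, a second-order Taylor expansion yields
\[
\ln\ZFa[,p](\gamma)\ge\ln(\ZFabu_p)-C\|\gamma-\gamma^*\|^2
\]
for all $\gamma$ in a neighborhood $\mathcal N$ of $\gamma^*$, with $C$ and $\mathcal N$ depending only on $\mfkg$. Splitting the expectation at $\{\gammaN[,\sigmaIID]\in\mathcal N\}$, the multinomial variance bound $\expe[\|\gammaN[,\sigmaIID]-\gamma^*\|^2]=\mclo(n^{-1})$ controls the main part, while a standard Hoeffding-type inequality gives $\prob[\gammaN[,\sigmaIID]\notin\mathcal N]\le e^{-cn}$ and the uniform bound $|\ln\ZFa(\gammaN[,\sigmaIID])|\le|\ln\psibl|$ handles the negligible tail.

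\textbf{Main obstacle.} The entire argument rests on Theorem~\ref{thm_bethe}; once that is available the remaining work is essentially bookkeeping. The only mild subtlety is keeping $C$, $\mathcal N$ and the Hoeffding constant $c$ uniform over $(p,\gamma^*)\in\mfkA$, which is automatic because $\ZFa[,p]$ is polynomial of fixed degree $k$ in $q$ variables and all relevant denominators are bounded below by $\psibl$; the $\mclo(n^{-1})$ correction is absorbed into the $\mclo(n^{-\rho})$ supplied by Theorem~\ref{thm_bethe} since the latter's $\rho$ is $\le 1$.
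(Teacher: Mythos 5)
Your proof is correct and rests on the same two pillars as the paper's: Theorem~\ref{thm_bethe} for the free-entropy term, and the second-order optimality of $\gamma^*$ for $\ZFa[,p]$ combined with concentration of $\gammaN[,\sigmaIID]$ for the remainder. The only real difference is in the bookkeeping: you simplify $\frac{d}{k}\expe[\ln\ZFa(\gammaIID)]$ directly via a Taylor bound and the variance estimate $\expe[\|\gammaIID-\gamma^*\|^2]=\mclo(n^{-1})$, whereas the paper splits the same quantity as $\phi(m)-\delta'(m)$ with $\phi(m)=\frac{1}{n}\ln(\overline Z_{\mathrm m})$ the annealed free entropy and $\delta'(m)=\frac{1}{n}\DKL(\sigmaIID\|\sigmaNIS)$, and then bounds each piece (Observation~\ref{obs_phia} and Observation~\ref{obs_DKLgt}). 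These are the same estimate applied to different intermediate targets; the paper's detour through $\sigmaNIS$ pays off because $\phi(m)$ and $\DKL(\sigmaIID\|\sigmaNIS)$ are reused in the proofs of Theorems~\ref{thm_cond} and~\ref{thm_mutinf}, but for this theorem in isolation your route is more direct. One small point worth making explicit when you write this up: the Taylor lower bound for $\ln\ZFa$ uses not only boundedness of the Hessian but also $\ZFa\ge\psibl$ to convert the bound on $\ZFa$ to one on $\ln\ZFa$, which you have implicitly in "all relevant denominators are bounded below by $\psibl$."
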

The Gibbs spins $\sigmaG_{\gamma^*,n}$ are the standard Gibbs spins for the graph with external fields. This reweighting is required for Theorem \ref{thm_infth} to be reasonable\footnote{This can e.g.~be seen by considering the trivial infinite temperature model $\bm\psi\equiv 1$.}.

Since the relative entropy is non-negative, Theorem \ref{thm_infth} suggests that $\bethebu(d)\ge\phia(d)$.
We will also see that $\phia(d)=\lim_{n\rightarrow\infty}\frac{1}{n}\ln(\expe[Z_{\gamma^*}(\GR)])$ is the annealed free entropy limit, and that $\frac{1}{n}\expe[\xlnx(Z_{\gamma^*}(\GR))]/\expe[Z_{\gamma^*}(\GR)]\rarr\bethebu(d)$, so $\bethebu(d)-\phia(d)$ is a Jensen gap.
Intuitively, Theorem \ref{thm_infth} states that the teacher-student and the null model are indistinguishable in the replica symmetric regime $\mfkPr=\{(p,\gamma^*,d)\in\mfkP:\bethebu(d)=\phia(d)\}$, $\mfkP=\mfkA\times[0,\degabu]$, while they are distinguishable in the condensation regime
$\mfkPc=\mfkP\setminus\mfkPr=\{(p,\gamma^*,d)\in\mfkP:\bethebu(d)>\phia(d)\}$.
\subsection{The Condensation Threshold}\label{condensation_threshold}
We confirmed that the replica symmetric and the condensation regime indeed govern the behavior of the relative entropy.
Next, we ensure that the quenched free entropy for the null model indeed behaves as expected. For this purpose let $\GR=\GR_{\mR}$ and
\begin{align*}
\phiqbu[,p,\gamma^*](d)=\limsup_{n\rightarrow\infty}\expe[\phi_{\gamma^*}(\GR)],\quad
\phiqbl[,p,\gamma^*](d)=\liminf_{n\rightarrow\infty}\expe[\phi_{\gamma^*}(\GR)].
\end{align*}
Let $\phiqbu(d)=\phiqbu[,p,\gamma^*](d)$ and $\phiqbl(d)=\phiqbl[,p,\gamma^*](d)$.
\begin{theorem}\label{thm_cond}
Recall $\rho$ from Theorem \ref{thm_bethe}.
\begin{alphaenumerate}
\item\label{thm_cond_r}
We have $\expe[\phi_{\gamma^*}(\GR)]=\phia(d)+\mclo(n^{-\rho/2})$ for $(p,\gamma^*,d)\in\mfkPr$.
\item\label{thm_cond_c}
There exists $c(\mfkg)\in\reals_{>0}$ such that for $(p,\gamma^*,d)\in\mfkP$ we have
\begin{align*}
\phia(d)-\phiqbu(d)\ge c\sup_{d'\in[0,d]}(\bethebu(d')-\phiqbl(d'))^2.
\end{align*}
\end{alphaenumerate}
\end{theorem}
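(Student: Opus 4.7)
Both parts are deduced by comparing the planted model $\GTS$ (whose free entropy is $\bethebu(d)+\mclo(n^{-\rho})$ by Theorem \ref{thm_bethe}) with the null model $\GR$, using Theorem \ref{thm_infth} as the bridge. The central analytic tool is the Donsker--Varadhan inequality: when $\phi_{\gamma^*}(\GR)$ is $\sigma^2$-subgaussian under $\GR$,
\begin{align*}
\bigl|\expe[\phi_{\gamma^*}(\GTS)]-\expe[\phi_{\gamma^*}(\GR)]\bigr|\le\sqrt{2\sigma^2\,\DKL(\GTS\|\GR)}.
\end{align*}
The subgaussian constant $\sigma^2=\mclo(1/n)$ follows from Azuma's inequality, since replacing any one of the (Poisson many) i.i.d.\ factors changes $n\phi_{\gamma^*}(\GR)$ by at most $\ln(\psibu/\psibl)=\mclo(1)$.

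\textbf{Part (a).} The upper bound $\expe[\phi_{\gamma^*}(\GR)]\le\phia(d)+\mclo(n^{-\rho/2})$ is Jensen's inequality, with $n^{-1}\ln\expe[Z_{\gamma^*}(\GR)]$ evaluated via Observation \ref{obs_psiexpenullprod} and the fact that $\gamma^*$ is a critical point of $\ZFa[,p]$ on the simplex. For the matching lower bound, combine Theorem \ref{thm_bethe} (which yields $\expe[\phi_{\gamma^*}(\GTS)]=\phia(d)+\mclo(n^{-\rho})$ in the replica symmetric regime, since $\bethebu(d)=\phia(d)$ there) with the Donsker--Varadhan transfer, substituting $\DKL(\GTS\|\GR)=\mclo(n^{1-\rho})$ from Theorem \ref{thm_infth}. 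The error is $\sqrt{\mclo(1/n)\cdot\mclo(n^{1-\rho})}=\mclo(n^{-\rho/2})$.

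\textbf{Part (b).} First (\emph{monotonicity}): a Poisson differentiation identity yields
\begin{align*}
\partial_d\bigl(\phia(d)-\expe[\phi_{\gamma^*}(\GR)]\bigr)=\tfrac{1}{k}\,\expe\!\Bigl[\ln\!\bigl(\ZFabu_p\big/\expe_{\sigma\sim\mu_{\gamma^*,\GR}}[\bm\psi(\sigma_{\bm v})]\bigr)\Bigr]\ge 0,
\end{align*}
by Jensen and the maximality of $\ZFabu_p$. Passing to $\limsup_n$ gives $\phia(d)-\phiqbu(d)\ge\phia(d')-\phiqbu(d')$ for $d'\le d$, so it suffices to prove a pointwise inequality at each $d'$. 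Second (\emph{pointwise bound}): introduce the convex function $\psi(t)=\ln\expe_{\GR}[Z_{\gamma^*}(\GR)^t]$, whose values and derivatives encode the relevant quantities---$\psi(1)=n\phia(d')+\mclo(1)$, $\psi'(0)=n\expe[\phi_{\gamma^*}(\GR)]$, and $\psi'(1)=\expe_{\GR}[Z_{\gamma^*}(\GR)\ln Z_{\gamma^*}(\GR)]/\expe[Z_{\gamma^*}(\GR)]=n\expe[\phi_{\gamma^*}(\GTS)]+\mclo(1)$ by a Nishimori-type change of measure. Taylor expansion gives
\begin{align*}
n\bigl(\phia(d')-\expe[\phi_{\gamma^*}(\GR_{d'})]\bigr)&=\int_0^1(1-s)\psi''(s)\,ds, \\
n\bigl(\expe[\phi_{\gamma^*}(\GTS_{d'})]-\expe[\phi_{\gamma^*}(\GR_{d'})]\bigr)&=\int_0^1\psi''(s)\,ds.
\end{align*}
Since $\psi''(s)=\Var_{P_s}(\ln Z_{\gamma^*}(\GR))$ under the tilted measure $P_s\propto Z_{\gamma^*}^s$, a concentration bound stable under tilts (log-Sobolev or Efron--Stein for the Poisson-product reference measure) furnishes a uniform bound $\psi''(s)\le Mn$ with $M=M(\mfkg)$. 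An elementary rearrangement then yields $\int_0^1(1-s)\psi''(s)\,ds\ge(\int_0^1\psi''(s)\,ds)^2/(2Mn)$, and extracting subsequences that realize $\phiqbu(d)$ on the left and $\phiqbl(d')$ on the right delivers the squared inequality.

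\textbf{Main obstacle.} The principal challenge is the uniform variance bound $\psi''(s)\le Mn$ over $s\in[0,1]$. While $\Var_{\GR}(n\phi_{\gamma^*}(\GR))=\mclo(n)$ is immediate from Azuma's inequality at $s=0$, the tilted measures $P_s$ for intermediate $s$ are not product measures, so controlling them uniformly requires either a log-Sobolev inequality for the underlying Poisson-product reference measure or a tilt-stable Efron--Stein argument. A secondary subtlety is the matching of $\limsup$ on the left-hand side with $\liminf$ on the right-hand side, which requires exploiting the finite-$n$ monotonicity along carefully chosen subsequences.
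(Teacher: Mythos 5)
Your Part (a) is correct and takes a genuinely different (and arguably cleaner) route. The paper proves the lower bound $\expe[\phi_{\gamma^*}(\GR)]\ge\phia(d)-\mclo(n^{-\rho/2})$ by a Paley--Zygmund argument: using the change of measure from Observation \ref{obs_nishicond}\ref{obs_nishicond_GNIS} and concentration of $\phiG(\GTSM(\sigmaNIS))$ to show that $\prob[\ZG(\GR)\ge\frac{1}{4}\ZM]\ge\frac{1}{16}e^{-2rn}$ for $r=\mclo(n^{-\rho})$, then invoking McDiarmid concentration of $\phiG(\GR)$. Your transport-entropy (Donsker--Varadhan/Bobkov--Götze) inequality $|\expe[\phi(\GTS)]-\expe[\phi(\GR)]|\le\sqrt{2\sigma^2\DKL(\GTS\|\GR)}$, with $\sigma^2=\mclo(1/n)$ from bounded differences and $\DKL=\mclo(n^{1-\rho})$ by the data-processing inequality applied to Theorem \ref{thm_infth}, collapses the same information in one line and gives the identical rate $\mclo(n^{-\rho/2})$. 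Both methods ultimately use the same two ingredients --- change of measure via the Nishimori derivative and bounded-difference concentration --- so the approaches are structurally parallel; yours is more streamlined, the paper's is more elementary.

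Part (b) has a genuine gap. Your argument rests on the uniform second-derivative bound $\psi''(s)\le Mn$ for $s\in[0,1]$, where $\psi(s)=\ln\expe[Z_{\gamma^*}(\GR)^s]$, and you correctly identify this as the crux. The difficulty is not cosmetic: $\psi''(s)=\Var_{P_s}(\ln Z_{\gamma^*}(\GR))$ where $P_s$ has Radon--Nikodym density $\propto Z^s$ with respect to the null model. At $s=0$ and $s=1$ the measure is a product over i.i.d.\ factor pairs (resp.\ a mixture over i.i.d.\ pairs after conditioning on the ground truth via Observation \ref{obs_TSM_iid}), so McDiarmid/Efron--Stein apply and give $\mclo(n)$ --- these are exactly Lemma \ref{lemma_mcdiarmid} and Lemma \ref{lemma_concphiTSMIIDNIS}. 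For $s\in(0,1)$, however, the tilted measure $P_s$ does not factorize over the wires-weight pairs, so neither Efron--Stein nor the bounded-difference inequality tensorizes, and no log-Sobolev or Poincaré inequality for $P_s$ with an $n$-independent constant is established anywhere in the paper (and is not easy: the tilting function $s\ln Z$ has total variation of order $n$, so naive perturbative arguments lose a factor $e^{\mclo(n)}$). Asserting ``log-Sobolev or a tilt-stable Efron--Stein argument'' names the gap but does not fill it. The paper avoids tilted measures altogether: in Lemma \ref{lemma_condth_c} it considers the subgraph $\GR'=R(\GR)$ with $m'\le m$ factors, bounds $P(n)=\prob[\phiG(\GR')\le\phi(m')+\eps\delta']$ from above by the change of measure to $\GTSM(\sigmaNIS)$ together with Corollary \ref{cor_mutcont}\ref{cor_mutcont_rnbu} and concentration of $\phiG(\GTSM[m'](\sigmaIID))$ at scale $\delta'=\phi^*(m')-\phi(m')$, bounds $P(n)$ from below by concentration of $\phiG(\GR)$, and plays the two exponents off each other. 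This only uses the concentration results already proved at $s=0$ and $s=1$, never at intermediate $s$. Your monotonicity observation (via the Poisson differentiation identity and Jensen) is correct and is a reduction the paper does not use, but it does not substitute for the missing variance bound. Unless you can supply a uniform bound on $\Var_{P_s}(\ln Z)$ for $s\in(0,1)$, the Taylor-remainder route does not close.
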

Recall from Theorem \ref{thm_infth} that $\delta^*(d)=\bethebu(d)-\phia(d)\ge 0$.
Theorem \ref{thm_cond}\ref{thm_cond_c} implies that $\delta_{\uparrow}(d)=\phia(d)-\phiqbu(d)\ge 0$, so in particular $\delta_\downarrow(d)=\phia(d)-\phiqbl(d)\ge\delta_\uparrow(d)\ge 0$.
Now, looking at Theorem \ref{thm_cond}\ref{thm_cond_c} through the eyes of the annealed free entropy gives
\begin{align*}
\delta_\uparrow(d)\ge c\sup_{d'\in[0,d]}(\delta^*(d')+\delta_\downarrow(d'))^2
\ge c\sup_{d'\in[0,d]}(\delta^*(d')+\delta_\uparrow(d'))^2.
\end{align*}
With $\dcond=\dcond(p,\gamma^*)=\inf\{d\in\reals_{>0}:\delta^*(d)>0\}\in[0,\infty]$ Theorem \ref{thm_cond}\ref{thm_cond_r} suggests that $\phiqbl(d)=\phiqbu(d)=\phia(d)=\bethebu(d)$ for $d\in[0,\dcond)$, since $\delta^*(d)=0$, and in particular $(p,\gamma^*,d)\in\mfkPr$.
For $d\in(\dcond,\infty)$ there exists $d'\in[\dcond,d)$ such that $\delta^*(d')>0$, so Theorem \ref{thm_cond}\ref{thm_cond_c} suggests that $\delta_\uparrow(d)>0$.
But then Theorem \ref{thm_cond}\ref{thm_cond_r} requires that $\delta^*(d)>0$ and thereby $(p,\gamma^*,d)\in\mfkPc$.
In a nutshell, the regimes are intervals and $\dcond(p,\gamma^*)$ is a threshold, the condensation threshold by Theorem \ref{thm_cond} and the information-theoretic threshold by Theorem \ref{thm_infth}. We will see that $\delta^*(0)=0$ and $\delta^*$ is continuous, so $(p,\gamma^*,\dcond)\in\mfkPr$.
Theorem \ref{thm_cond}\ref{thm_cond_c} further allows to establish upper bounds for $\phiqbu(d)$, the simplest by considering $d'=d$ and solving the quadratic inequality, i.e.
\begin{align*}
\delta_\uparrow(d)\ge\tilde\delta-\sqrt{\tilde\delta^2-\delta^*(d)},\quad
\tilde\delta=\frac{1}{2c}-\delta^*(d),
\end{align*}
where the above is well-defined since $c$ is such that $\delta^*(d)\le 1/(4c)$.
\subsection{The Mutual Information}
We turn to the fourth and last main result, regarding the limit of the mutual information. In general the mutual information is given by
$\mutinf(\bm a,\bm b)=\DKL(\bm a,\bm b\|\bm a\otimes\bm b)$, which is consistent with the definition in the introduction.
\begin{theorem}\label{thm_mutinf}
With $\rho$ from Theorem \ref{thm_bethe} and for $(p,\gamma^*,d)\in\mfkP$ we have
\begin{align*}
\frac{1}{n}\mutinf\left(\sigmaIID,\GTS_{\mR}(\sigmaIID)\right)=\frac{d}{k\ZFabu}\expe\left[\xlnx\left(\psiR(\sigmaR)\right)\right]-\bethebu(d)+\mclo(n^{-\rho}),\,
(\psiR,\sigmaR)\dequal p\otimes\gamma^{*\otimes k}.
\end{align*}
\end{theorem}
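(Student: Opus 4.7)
The plan is to reduce $\mutinf$ to a combination of (i)~the planted quenched free entropy from Theorem~\ref{thm_bethe}, (ii)~the relative entropy from Theorem~\ref{thm_infth}, and (iii)~a direct evaluation of the ``planted weight at the ground truth.'' Let $(X,Y)=(\sigmaIID,\GTS_{\mR}(\sigmaIID))$ with joint law $P$, marginals $P_X=\gamma^{*\otimes n}$ and $P_Y$, and let $Q$ denote the joint law of $(\sigmaG_{\gamma^*,\GR},\GR)$ from Theorem~\ref{thm_infth}. The decomposition $\ln(dP/d(P_X\otimes P_Y))=\ln(dP/dQ)+\ln(dQ/d(P_X\otimes P_Y))$, combined with $\mu_{\gamma^*,G}(\sigma)/\gamma^{*\otimes n}(\sigma)=\psi_G(\sigma)/Z_{\gamma^*}(G)$, yields the identity
\begin{align*}
\mutinf(X,Y)=\DKL(P\|Q)-\DKL(P_Y\|P_{\GR})-n\expe[\phi_{\gamma^*}(\GTS_{\mR}(\sigmaIID))]+\expe[\ln\psi_{\GTS_{\mR}(\sigmaIID)}(\sigmaIID)].
\end{align*}

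By Theorem~\ref{thm_infth}, $\DKL(P\|Q)=n(\bethebu(d)-\phia(d))+\mclo(n^{1-\rho})$, and by Theorem~\ref{thm_bethe}, $n\expe[\phi_{\gamma^*}(\GTS_{\mR}(\sigmaIID))]=n\bethebu(d)+\mclo(n^{1-\rho})$. For $\DKL(P_Y\|P_{\GR})$, Observation~\ref{obs_psiexpenullprod} gives $P_Y(G)/\prob[\GR=G]=\expe_{\sigmaIID}[\psi_G(\sigmaIID)/\ZFa[,p](\gammaN[,\sigmaIID])^{\mR}]$; since $(p,\gamma^*)\in\mfkA$ enforces $\ZFa[,p]\le\ZFabu$ pointwise, Jensen's inequality gives the lower bound $\DKL(P_Y\|P_{\GR})\ge n\expe[\phi_{\gamma^*}(\GTS_{\mR}(\sigmaIID))]-n\phia(d)=n(\bethebu(d)-\phia(d))+\mclo(n^{1-\rho})$, while the chain rule $\DKL(P\|Q)=\DKL(P_Y\|P_{\GR})+\expe_{P_Y}[\DKL(P_{X|Y}\|\mu_{\gamma^*,Y})]$ together with Theorem~\ref{thm_infth} supplies the matching upper bound, so $\DKL(P\|Q)-\DKL(P_Y\|P_{\GR})=\mclo(n^{1-\rho})$.

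For the planted weight, Observation~\ref{obs_psiexpenullprod} factors $\GTS_m(\sigma)$ into $m$ i.i.d.~draws of density $\psi(\sigma_v)/\ZFa[,p](\gammaN[,\sigma])$ relative to $\unif([n]^k)\otimes p$, so
\begin{align*}
\expe[\ln\psi_{\GTS_m(\sigma)}(\sigma)]=\frac{m}{\ZFa[,p](\gammaN[,\sigma])}\expe[\xlnx(\psiR(\sigma_{\vR}))],\quad(\vR,\psiR)\sim\unif([n]^k)\otimes p.
\end{align*}
The inner expectation depends on $\sigma$ only through $\gammaN[,\sigma]^{\otimes k}$; averaging over $\sigmaIID\sim\gamma^{*\otimes n}$ and $\mR\sim\Po(dn/k)$ and Taylor-expanding at $\gamma^*$---first-order terms vanish because $\expe[\gammaN[,\sigmaIID]]=\gamma^*$ and $\gamma^*$ is stationary for $\ZFa[,p]$, while $\expe[\|\gammaN[,\sigmaIID]-\gamma^*\|^2]=\mclo(1/n)$ controls the remainder---yields $\expe[\ln\psi_{\GTS_{\mR}(\sigmaIID)}(\sigmaIID)]=(dn/(k\ZFabu))\expe[\xlnx(\psiR(\sigmaR))]+\mclo(1)$ with $(\psiR,\sigmaR)\sim p\otimes\gamma^{*\otimes k}$.

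Substituting the four pieces into the identity above, the $n(\bethebu(d)-\phia(d))$ contributions cancel pairwise, leaving $(dn/(k\ZFabu))\expe[\xlnx(\psiR(\sigmaR))]-n\bethebu(d)+\mclo(n^{1-\rho})$; dividing by $n$ gives the stated asymptotic. The main obstacle is the tight two-sided estimate for $\DKL(P_Y\|P_{\GR})$: Jensen supplies the lower bound essentially for free, but the matching upper bound requires both the chain-rule sandwich and Theorem~\ref{thm_infth}, so the error rate $\mclo(n^{-\rho})$ is inherited directly from that theorem rather than from a standalone posterior-concentration argument.
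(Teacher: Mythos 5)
Your decomposition is correct and, once unwound, coincides with the paper's Lemma~\ref{lemma_mi_decomp}: the term $\DKL(P\|Q)-\DKL(P_Y\|\GR)=\expe_{P_Y}[\DKL(P_{X|Y}\|\mu_{\gamma^*,Y})]$ is exactly the conditional relative entropy $n\overline\delta(m)$ appearing there, while $\expe[\ln\psi_{\GTS}(\sigmaIID)]-n\expe[\phi_{\gamma^*}(\GTS)]$ equals $nH(\gamma^*)-n\overline\eta(m)$, so the identity is the same one phrased in $\DKL$ language rather than entropy language. The one genuine variation is how you control $\overline\delta$: the paper bounds it directly by a constant via Observation~\ref{obs_DKLgt}\ref{obs_DKLgt_IIDCG}, whereas you sandwich it between Theorems~\ref{thm_bethe} and~\ref{thm_infth}, with the lower bound on $\DKL(P_Y\|\GR)$ coming from $\ZFa\le\ZFabu$ pointwise (this is a monotonicity step rather than Jensen, but the conclusion is right) and the upper bound from non-negativity of $\overline\delta$ together with Theorem~\ref{thm_infth}. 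Your route avoids the Radon--Nikodym computation behind Observation~\ref{obs_DKLgt}\ref{obs_DKLgt_IIDCG} at the cost of a weaker $\mclo(n^{1-\rho})$ estimate for $\overline\delta$, which is still sufficient because it matches the error inherited from Theorem~\ref{thm_bethe}; and there is no circularity since both quoted theorems are proved independently. The Taylor expansion of the planted log-weight is sketched in both accounts; the paper handles the $\ZFa(\gammaIID)$ fluctuation via Observations~\ref{obs_fad}\ref{obs_fad_maxbound} and~\ref{obs_gtiid}\ref{obs_gtiid_prob}, but the substance matches yours.
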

Based on the discussion of graphical channels, where the normalization with $m$ is canonical, and on Theorem \ref{thm_infth}, the threshold behavior of the mutual information is most apparent if for $d>0$ we write the limit as
\begin{align*}
\lim_{n\rightarrow\infty}\frac{1}{n}\mutinf\left(\sigmaIID,\GTS_{\mR}(\sigmaIID)\right)
=\frac{d}{k}\left[\frac{1}{\ZFabu}\left(\expe\left[\xlnx\left(\psiR(\sigmaR)\right)\right]-
\xlnx(\ZFabu)\right)-\frac{k}{d}(\bethebu(d)-\phia(d))\right].
\end{align*}
\subsection{Implications, Extensions and Related Work}
\label{implications_extensions_related_work}
For the sake of brevity we did not present our main results in their full generality.
In the following, we discuss the actual scope and strength of our results, further implications and related work.

\vspace{1mm}
\noindent{\bf Simplifying Assumptions.}
The quantities in the main results are scaled with $1/n$ and can be written as expectations over $\mR$.
The conditional expectations given $\mR$ rescaled with $1/\mR$ are bounded and Lipschitz in $\mR$.
Hence, the main results hold for more general factor counts $(\bm m^*_n)_n$.
To be specific, let $\eps_i:\ints_{>0}\rightarrow\reals_{>0}$ with $\lim_{n\rightarrow\infty}\eps_i(n)=0$ for $i\in[2]$ and $d^*=\limsup_{n\rightarrow\infty}\expe[\mR^*_n]$.
Then the results hold for $\mR$ replaced by $\mR^*_n$ and $d$ replaced by $d^*$ if $\prob[|\dR^*_n-d^*|>\eps_1(n)]\le\eps_2(n)$, $\expe[\bmone\{|\dR^*_n-d^*|>\eps_1(n)\}\dR^*_n]\le\eps_2(n)$ for all $n\in\ints_{>0}$ and $d^*\le\degabu$, where $\dR^*_n=k\mR^*_n/n$.

Similarly, we only presented the case allowing parallel edges and identical factors.
However, we also establish the results for simple hypergraphs (both with labeled and unlabeled hyperedges).

\vspace{1mm}
\noindent{\bf External Fields.}
As mentioned in Remark \ref{remark_external_fields}, we recover the standard partition function and Gibbs measure for graphs with normalized external fields $\gamma^*$.
Fix $c\in\reals_{>0}$, let $\eta^*=c\gamma^*$ and consider the graph obtained by attaching the external field $\eta^*$ to each variable. This yields a perfect coupling of the null models, without and with $\eta^*$, and the teacher-student models (obtained by reweighting with $\psi_{\GR_m}(\sigma)$), since attaching $\eta^*$
adds a fixed factor $\prod_i\eta^*(\sigma(i))$ to $\psi_{\GR_m}(\sigma)$, since $\sigma$ is fixed.
Hence, mutual informations and relative entropies coincide.
Further, taking the standard partition function adds a constant $\ln(\|\eta^*\|_1)=\ln(c)$ to the discussed free entropy densities.

\vspace{1mm}
\noindent{\bf Modes of Convergence.}
The uniform treatment of models allows us to seamlessly move back and forth between the finite size objects and the limiting objects, and to understand the behavior under a variation of the model parameters, say the weight $\psiR$ through the inverse temperature $\beta$, the average degree as in Theorem \ref{thm_cond} or the ground truth distribution.

Having clarified the benefits, this also holds with respect to $\mR^*_n$ as introduced above, i.e.~the results hold for $\mfkg=(q,k,p,\gamma^*,\degabu,\eps_1,\eps_2)$ and $\mR^*$ such that the inequalities with respect to $\eps_{1,2}$ hold.
This further extends to factor graphs with external fields.

Moreover, as mentioned above, the results are phrased in terms of expectations over $\mR^*$. Instead, we may consider convergence in probability (and high probability events) for the conditional expectations given $\mR^*$. Due to the aforementioned properties of the conditional expectations, our results extend in this way to the conditional expectations (this mode is stated in \cite{coja2021}).

\vspace{1mm}
\noindent{\bf Model-Specific Results.}
While the uniform treatment of models is highly desirable, it might be misleading at times.
For example, in the discussion of Theorem \ref{thm_cond} we ignored the restriction $d\le\degabu$. The reason is that for any fixed choice of $(p,\gamma^*)\in\mfkA$ we can choose $\degabu$ arbitrarily large and in particular larger than $\dcond$, if $\dcond$ is finite.
Hence, only the constant $c$ in Theorem \ref{thm_cond}\ref{thm_cond_c} depends on $\degabu$ (while the uniform bound in Theorem \ref{thm_cond}\ref{thm_cond_r} only has to hold up to $\dcond$).

More importantly, being a continuous function on a compact set, $\ZFa$ does attain its maximum $\ZFabu$ at some $\gamma^*\in\mclp([q])$.
Assume without loss of generality that the support of $\gamma^*$ is $[q']$.
Obviously, we have $\min_{z\in[q']}\gamma^*(z)>0$.
So, consider the model on $[q']$ given by $\gamma'\in\mclp([q'])$, $\gamma'=\gamma^*$, and $\psiR':[q']^k\rightarrow[\psibl,\psibu]$, $\sigma\mapsto\psiR(\sigma)$, with law $p'$. Now, we have $\ZF[,p'](\gamma')=\ZF[,p](\gamma^*)=\ZFabu_{p}$ and hence $\ZF[,p'](\gamma')=\ZFabu_{p'}$. Further, if we have $\nablaIbl(p,\gamma)\ge 0$, then we have $\nablaIbl(p',\gamma')\ge 0$ because the Gibbs marginals $\gammaR_\pi$ are absolutely continuous with respect to $\gamma^*$ for $\pi\in\mclp[*,\gamma^*][2]([q])$, i.e.~$\gammaR_{\pi,i,h}(z)=0$ for $z\in[q]\setminus[q']$, $i\in[2]$, $h\in[k]$.
Hence, our results do apply to $(p',\gamma')$ if they would apply to $(p,\gamma^*)$ without the restriction $\gamma^*\ge\psibl$.
Moreover, since $\gamma^*$ restricts the ground truths to $[q']^n$ and $Z_{\gamma^*}(G)$ restricts the considered assignments to $[q']^n$, the results for $(p',\gamma')$ are exactly the same as they would be for $(p,\gamma^*)$.
In a nutshell, the restriction to $\gamma^*\ge\psibl$ is only relevant for the uniform convergence, our results apply to any maximizer $\gamma^*\in\mclp([q])$ of $\ZFa[,p]$, if $\nablaIbl(p,\gamma^*)\ge 0$. In particular, this explains why we cover the case $q=1$.

\vspace{1mm}
\noindent{\bf The Planted Model.}
Since we discuss the quenched free entropy density with respect to the planted, reweighted, model in Theorem \ref{thm_bethe}, the related notions are also reweighted, in particular $\bethe$ and $\nablaI$.
This is the exact reason for the appearance of $\xlnx(\cdot)$ (and also explains why we work conditional to $\bm a$ in the definition of the valid models).
With respect to both, it is immediate that $\expe[\ZF(\psiR,\gammaR_\pi)]=\ZFabu$ under our assumptions, hence $(\psi,\gamma)\mapsto\ZF(\psi,\gamma)/\ZFabu$ is a Radon-Nikodym derivative. The corresponding observation holds for $\ZFM$, and even
$(\psi,h,\gamma)\mapsto\ZV(d,\psi,h,\gamma)/\ZFabu^d$ is a Radon-Nikodym derivative.
This allows to transition to the reweighted measures, where we lose independence, but e.g.~recover a well-known form of the Bethe free entropy.

\vspace{1mm}
\noindent{\bf Graphical Channels.}
Recall the discussion of graphical channels in Section \ref{assumptions} and let $d>0$.
As explained above, we may also consider any fixed sequence $m_n\rightarrow d$ and obtain the same limit for the mutual information. With Observation \ref{obs_grchmutinf} and Theorem \ref{thm_mutinf} we hence have
\begin{align*}
\lim_{n\rightarrow\infty}\frac{1}{m}\mutinf(\bm x,\bm z(y_{\bm v,\bm x})|\bm v)
&=\expe\left[\xlnx\left(\psiR(\sigmaR)\right)\right]
-\frac{k}{d}\bethebu(d),\\
\expe[\xlnx(\psiR(\sigmaR))]
&=H(\bm z^\circ(\sigmaR)\|p^*)-\expe[\expe[H(\bm z^\circ(\sigmaR))|\sigmaR]],
\end{align*}
if $\nablaIbl(p_\circ,\gamma^*)\ge 0$ using Observation \ref{obs_graphical_channelspos}
and the notions in the proof.
This shows that $\expe[\xlnx(\psiR(\sigmaR))]$ 
attains its minimum $\mutinf(\sigmaR,\bm z^\circ(\sigmaR))$
at the unique minimizer $p^*=p^*_\circ$, where $p^*_\circ$ is the law of $\bm z^\circ(\sigmaR)$.
This shows that $(p,\gamma^*,d)\in\mfkPc$ for $p^*\neq p^*_\circ$ (since the limit does not depend on $p^*$, so $\bethebu(d)>0$).
For $p^*=p^*_\circ$ we have
\begin{align*}
\lim_{n\rightarrow\infty}\frac{1}{m}\mutinf(\bm x,\bm z(y_{\bm v,\bm x})|\bm v)
=\mutinf(\sigmaR,\bm z^\circ(\sigmaR))-\bethebu(d),
\end{align*}
so the limit attains
$\mutinf(\sigmaR,\bm z^\circ(\sigmaR))$ if and only if $(p,\gamma^*,d)\in\mfkPr$.
In particular, recalling the discussion in Section \ref{examples}, if the channel capacity $c$ is attained at $\gamma^{*\otimes k}$ for some $\gamma^*\in\mclp([q])$ and $\nablaIbl(p_\circ,\gamma^*)\ge 0$, then
$\lim_{n\rightarrow\infty}\frac{1}{m}\mutinf(\bm x,\bm z(y_{\bm v,\bm x})|\bm v)=c$ for $d\le\dcond(p,\gamma^*)$.

\vspace{1mm}
\noindent{\bf Related Work.}
We recommend \cite{abbe2017,moore2017} for an excellent introduction to community detection and a survey of results.
The graphical channels are introduced and discussed in \cite{abbe2015}.
For an excellent introduction to factor graph models, related quantities and results in the context of spin glasses, coding theory and complexity theory we highly recommend \cite{mezard2009}.

The sparse Erdős–Rényi type model discussed in this contribution has received considerable attention, in particular when it comes to specific problems.
The following, narrow selection of references is closely related to our proofs and results.
The limit of the null model quenched free entropy was discussed for $q=2$ and permutation invariant weights in \cite{talagrand2001},
and based on the interpolation method in \cite{franz2003, panchenko2004}.
The latter presents a class of models reminiscent of the valid models in this work.
However, although related, the convexity assumptions required for the discussion of the null model and the planted model using the interpolation method differ due to the reweighting.
The case with symmetric independent factors for valid models seems to be due to Maneva (cf.~\cite{montanari2008}).
We derived the valid models in this work based on the examples in \cite{coja2018} and \cite{coja2021}, in particular on the stochastic block model, NAE-SAT and the $k$-spin model.
Next to the presented models, further examples for valid models include positive temperature $k$-SAT (\cite{franz2003,panchenko2004,coja2018,montanari2008}), XOR-SAT (\cite{franz2003}) and hypergraph coloring (\cite{coja2018}).

Not only  the concept of graphical channels, also the main results in \cite{abbe2015} are closely related to Theorem \ref{thm_bethe} and Theorem \ref{thm_mutinf}.
As discussed in Observation \ref{obs_grchmutinf}, the model in \cite{abbe2015} satisfies $\expe[\psiR]\equiv 1$, hence $\ZFa(\unif([q]))=\ZFabu$ trivially holds, while Hypothesis H is closely related to $\inf_{\gamma^*}\nablaIbl(p,\gamma^*)\ge 0$.
As indicated above, only the case $\gamma^*=\unif([q])$ is discussed in \cite{abbe2015}. However, most notably only the existence of a limit is established.
On the other hand, sub-additivity of the free entropy is established and the weights may vanish.

The results in \cite{coja2018} establish not only the existence, but the exact values of the limits in a more general setting.
The assumption {\bf POS} in \cite{coja2020} is closely related to Hypothesis H in \cite{abbe2015}, both due to the interpolation method, which is used in the former case to establish Proposition 3.7 and in the latter to obtain sub-additivity of the free entropy. As mentioned above, {\bf BAL} in \cite{coja2018} (minus concativity) holds for the models in \cite{abbe2015}. Finally, both \cite{abbe2015} and \cite{coja2018} focus on weight distributions $p$ with finite support and $\gamma^*=\unif([q])$.
The results in \cite{coja2021} extend parts of \cite{coja2018} to more general degree distributions, under a more restrictive {\bf SYM} assumption and a weaker {\bf POS} assumption. The extension of our results to convergence in probability as stated in \cite{coja2021} was discussed above.

This work directly extends Theorem 2.2, Theorem 2.6 and Theorem 2.7 in \cite{coja2018} as follows. We consider the weaker {\bf POS} assumption from \cite{coja2021} (yielding new insights as demonstrated in Observation \ref{obs_graphical_channelspos}), drop the assumption {\bf SYM}, extend to arbitrary bounded weights and ground truth distributions $\gamma^*\in\mclp([q])$, weaken {\bf BAL} correspondingly, and in particular $\ZFa$ is not required to be concave.
Under these weaker assumptions we derive stronger results, namely the uniform treatment in all four theorems, also for the explicit limit in Theorem \ref{thm_infth} and the limit in Theorem \ref{thm_cond}\ref{thm_cond_r}, and the explicit bound in Theorem \ref{thm_cond}\ref{thm_cond_c}.

For example, we extend the results for the stochastic block model to a more general version, as illustrated above.
Also the $k$-spin model (cf.~\cite{coja2021}) is now covered for fairly general distributions (still not the Gaussian, though) without requiring additional arguments, as illustrated.
For this example, but also for $k$-SAT, NAE-SAT and LDGM codes (cf.~\cite{coja2018})
we extend the results from $\gamma^*=\unif([q])$ to $\gamma^*\in\mclp([q])$, i.e.~from the free entropy without external fields to the free entropy with (fixed and equal) external fields, and correspondingly for the mutual information, as illustrated.

We implement these significant advances using the same approach as in \cite{coja2018,coja2021}, composed of mutual contiguity with the Nishimori ground truth, concentration, the Aizenman-Sims-Starr scheme and the interpolation method.
Hence, we do not present a new proof technique on a high level,
but rather point out the potential of the existing, ingenious approach.
Since this work is closely related to \cite{coja2018}, we deliberately reuse the notation and the form of presentation for easier comparison.

\vspace{1mm}
\noindent{\bf Open Problems.}
The current discussion adds degrees of freedom to $\dcond$ by introducing a variation of the ground truth distribution $\gamma^*$.
Depending on the problem, there may be little room for the choice of $\gamma^*$.
For other problems however, say lower bounds for satisfiability thresholds, a variation of $\gamma^*$ may even be desirable.
In this case, understanding the behavior of the threshold and the limiting quantities under a variation of $\gamma^*$ may lead to valuable theoretical insights.

Further relaxing the assumptions given by $\mfkA$, i.e.~considering pairs $(p,\gamma^*)$ that violate $\ZFa[,p](\gamma^*)=\ZFabu_p$ or $\nablaIbl(p,\gamma^*)\ge 0$, 
might facilitate to extend the results to ferromagnetic problems like the associative stochastic block model.
In particular, we believe that weakening $\ZFa[,p](\gamma^*)=\ZFabu_p$ is possible without relying on new techniques.
On the other hand, developing new proof techniques to verify $\nablaIbl(p,\gamma^*)\ge 0$ might extend the results beyond the valid models presented here, e.g.~to positive temperature occupation problems. For example, adapting and extending the proof of Lemma 6.15 in \cite{abbe2015} would certainly be helpful.

By carefully working through the proofs, the results can be extended to more general ground truths $\sigmaIID$ and weights $\psiR$.
An unrestricted choice of ground truth certainly is useful in the discussion of the channel capacity of graphical channels or community detection, while an extension to weights $\psiR:[q]^k\rightarrow\reals_{>0}$ satisfying mild assumptions (like Equation (2.1) in \cite{coja2018b}) would be highly desirable to cover a popular branch of spin glasses.
Establishing that the limit of the quenched free entropy for $(p,\gamma^*)\in\mfkA$ exists would be helpful, not only in the context of Theorem \ref{thm_cond}.
Finally, we believe that the results of this contribution can be extended to more general degree distributions, similar to the extension \cite{coja2021} of \cite{coja2018}. This extension is highly desirable for graphical channels application-wise.

\section{Outline of the Proof}\label{proof_strategy}
We briefly recall the approach mentioned in Section \ref{implications_extensions_related_work} and assume that $(p,\gamma^*,d)\in\mfkP$, without further mention.
The bounds in the following results only depend on $\mfkg=(q,k,\psibl,\degabu)$.

We obtain Proposition \ref{prop_validmodels} using a Taylor series expansion as in \cite{coja2021}. Then we show that the resulting contributions are non-negative, yielding {\bf POS} in \cite{coja2018}.
The proofs of all main results rely on the properties of the Nishimori ground truth $\sigmaNIS_{p,\gamma^*,n,m}\in[q]^n$. We present the details in Section \ref{ps_nishimori}.
Next, we derive two crucial properties of the free entropies in the main results: concentration and Lipschitz continuity of the conditional expectations.
Details can be found in Section \ref{ps_phi_concon}.

The proof of Theorem \ref{thm_bethe} relies on the pinning lemma discussed in Section \ref{ps_pinning}.
In Section \ref{ps_bethe} we explain its application and the steps required to obtain Theorem \ref{thm_bethe}.
Theorem \ref{thm_infth} follows from Theorem \ref{thm_bethe} using the properties of $\sigmaNIS_{m}$.
The result is immediate for $\DKL(\GTS_{\mR}(\sigmaNIS_{\mR})\|\GR_{\mR})$, only the discussion of the conditional relative entropy requires some care.
The proof of Theorem \ref{thm_cond} is also rather short, but relies on two clever ideas.
Compared to the preceding two results, for the proof of Theorem \ref{thm_mutinf} it is rather cumbersome to decompose the mutual information into the ground truth weight and the free entropy, and to derive the asymptotics of the former using $\sigmaNIS_m$.
\subsection{The Nishimori Ground Truth}\label{ps_nishimori}
As explained in Remark \ref{remark_external_fields} and Section \ref{information_theoretic_threshold}, we need to consider Gibbs measures $\mu_{\gamma^*,G}$ that are consistent with $\sigmaIID$.
In order to control both $\mu_{\gamma^*,G}$ and $\sigmaIID$ given $\GTS_m(\sigmaIID)$,
we recover the Bayes optimal case and hence ensure that the Nishimori condition holds (Section 1.2.2 in \cite{zdeborova2016}), by introducing the Nishimori ground truth $\sigmaNIS_m$, given by the Radon-Nikodym derivative
\begin{align*}
\hat r_{p,\gamma^*,n,m}:[q]^n\rightarrow\reals_{>0},\,
\sigma\mapsto\frac{\expe[\psi_{\GR(m)}(\sigma)]}{\expe[Z_{\gamma^*}(\GR(m)]}
\end{align*}
with respect to $\sigmaIID$.
Let $\mbu=2\degabu n/k$, and let
$\|\bm a-\bm b\|_{\mrmtv}=\sup_{\mcle}|\mu(\mcle)-\nu(\mcle)|$
be the total variation distance of $\bm a$, $\bm b$ with laws $\mu$, $\nu$ respectively.
\begin{proposition}\label{proposition_mutual_contiguity}
Let $m\in\ints_{\ge 0}$ with $m\le\mbu$.
\begin{alphaenumerate}
\item\label{proposition_mutual_contiguity_rnup}
There exists $c\in\reals_{>0}$ with $\hat r_m\le c$.
\item\label{proposition_mutual_contiguity_rndown}
There exists $c\in\reals_{>0}$ such that $\hat r_m(\sigma)\ge\exp(-c\|\gammaN[,\sigma]-\gamma^*\|_\mrmtv^2n)$.
\item\label{proposition_mutual_contiguity_rncond}
The ground truths $\sigmaIID$ given $\gammaN[,\sigmaIID]$ and $\sigmaNIS_m$ given $\gammaN[,\sigmaNIS_m]$ have the same law.
\item\label{proposition_mutual_contiguity_nishi}
We have $(\sigmaNIS_m,\GTS_m(\sigmaNIS_m))\dequal(\sigmaR_{\gamma^*,\GTS(m,\sigmaNIS(m))},\GTS_m(\sigmaNIS_m))$.
\end{alphaenumerate}
\end{proposition}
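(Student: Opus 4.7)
The plan starts from an explicit formula for $\hat r_m$. By Observation \ref{obs_psiexpenullprod} the expectation factorizes as $\expe[\psi_{\GR_m}(\sigma)] = \ZFa(\gammaN[,\sigma])^m$, and conditioning on $\sigmaIID$ gives $\expe[Z_{\gamma^*}(\GR_m)] = \expe[\ZFa(\gammaN[,\sigmaIID])^m]$. Hence
\[
\hat r_m(\sigma) = \frac{\ZFa(\gammaN[,\sigma])^m}{\expe[\ZFa(\gammaN[,\sigmaIID])^m]},
\]
so $\hat r_m$ depends on $\sigma$ only through the type $\gammaN[,\sigma]$. This structural fact drives all four parts.

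Parts (c) and (d) are algebraic consequences. For (c), both $\sigmaIID$ and $\sigmaNIS_m$ have probability mass functions that are constant on each type class $T_\gamma = \{\sigma \in [q]^n : \gammaN[,\sigma] = \gamma\}$: for $\sigmaIID$ because $\gamma^{*\otimes n}(\sigma)$ is a function of $\gammaN[,\sigma]$, and for $\sigmaNIS_m$ because $\hat r_m$ is. Hence conditional on $\gammaN = \gamma$ both coincide with the uniform distribution on $T_\gamma$. For (d) I unwind the definitions:
\begin{align*}
\prob[\sigmaNIS_m = \sigma,\; \GTS_m(\sigmaNIS_m) = G]
&= \hat r_m(\sigma)\, \gamma^{*\otimes n}(\sigma) \cdot \frac{\psi_G(\sigma)}{\expe[\psi_{\GR_m}(\sigma)]}\, \prob[\GR_m = G] \\
&= \frac{\gamma^{*\otimes n}(\sigma)\, \psi_G(\sigma)\, \prob[\GR_m = G]}{\expe[Z_{\gamma^*}(\GR_m)]},
\end{align*}
after $\expe[\psi_{\GR_m}(\sigma)]$ cancels. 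Summing over $\sigma$ yields the marginal $\prob[\GTS_m(\sigmaNIS_m) = G] = Z_{\gamma^*}(G)\,\prob[\GR_m = G]/\expe[Z_{\gamma^*}(\GR_m)]$, so the conditional law of $\sigmaNIS_m$ given $\GTS_m(\sigmaNIS_m) = G$ is exactly $\mu_{\gamma^*, G}$, which is the law of $\sigmaR_{\gamma^*, G}$.

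For the analytic parts (a) and (b), the key estimate is the multiplicative lower bound
\[
\ZFa(\gamma) \ge \ZFabu \exp\!\bigl(-C\,\|\gamma - \gamma^*\|_\mrmtv^2\bigr) \quad \text{for all } \gamma \in \mclp([q]),
\]
with $C = C(\mfkg)$. Because $\gamma^* \ge \psibl > 0$ lies in the relative interior of the simplex and maximizes $\ZFa$, the restriction of $\nabla \ZFa$ to the tangent space of $\mclp([q])$ vanishes at $\gamma^*$; since $\ZFa$ is a polynomial of degree $k$ with Hessian uniformly bounded in terms of $(q,k,\psibu)$, a second-order Taylor expansion gives this bound in a fixed neighborhood of $\gamma^*$. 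Outside that neighborhood the cruder pointwise bound $\ZFa(\gamma)/\ZFabu \ge \psibl/\psibu > 0$ together with $\|\gamma - \gamma^*\|_\mrmtv^2 \le 1$ extends the estimate to the whole simplex after enlarging $C$. Part (b) follows by raising to the $m$-th power and using $m \le \mbu = 2\degabu n/k$, combined with the trivial denominator bound $\expe[\ZFa(\gammaN[,\sigmaIID])^m] \le \ZFabu^m$. For (a), since the numerator is at most $\ZFabu^m$, it suffices to show $\expe[\ZFa(\gammaN[,\sigmaIID])^m] \ge c\,\ZFabu^m$: Chebyshev applied coordinatewise gives $\prob[\|\gammaN[,\sigmaIID] - \gamma^*\|_\mrmtv^2 \le C'/n] \ge 1/2$ for some $C' = C'(q)$, and on that event the local bound combined with $m/n \le 2\degabu/k$ yields $\ZFa(\gammaN[,\sigmaIID])^m \ge \ZFabu^m \exp(-2CC'\degabu/k)$, a constant multiple of $\ZFabu^m$.

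The main obstacle is the concentration estimate in (a) and careful tracking of the Taylor constants through $\mfkg$; I expect no serious difficulty, since $\Var(\gammaN[,\sigmaIID](z)) \le 1/(4n)$ is immediate and the polynomial form of $\ZFa$ yields uniform Hessian control, with the assumption $\gamma^* \ge \psibl$ precisely ruling out the boundary complications that would otherwise spoil the Taylor expansion.
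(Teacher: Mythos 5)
Your proposal is correct and matches the paper's proof in all essentials: you identify the same key formula $\hat r_m(\sigma)=\ZFa(\gammaN[,\sigma])^m/\expe[\ZFa(\gammaN[,\sigmaIID])^m]$, prove (c) by noting both laws are constant on type classes, prove (d) by computing the joint Radon--Nikodym derivative and observing it factors, and prove (a), (b) via the second-order Taylor bound $\ZFa(\gamma)\ge\ZFabu\exp(-C\|\gamma-\gamma^*\|_\mrmtv^2)$ at the interior maximizer $\gamma^*$, extended globally via the crude ratio bound $\psibl^2$. The single cosmetic difference is in the lower bound on the normalizing constant $\expe[\ZFa(\gammaN[,\sigmaIID])^m]\ge c\ZFabu^m$ needed for (a): you use Chebyshev to restrict to a constant-probability event where $\|\gammaN[,\sigmaIID]-\gamma^*\|_\mrmtv^2\le C'/n$, whereas the paper applies Jensen's inequality to the convex map $x\mapsto e^{-cnx}$ together with $\expe[\|\gammaIID-\gamma^*\|_\mrmtv^2]\le c'/n$; these are interchangeable and both give uniform constants depending only on $\mfkg$.
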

\subsection{Concentration and Continuity}\label{ps_phi_concon}
Before we turn to the proof of Theorem \ref{thm_bethe}, we establish concentration (self-averaging) and continuity of the free entropies.
\begin{proposition}\label{proposition_phi_concon}
Let $m\in\ints_{\ge 0}$, $\sigma\in[q]^n$ and $\GR^\circ\in\{\GR_m,\GTSM[m](\sigma),\GTSM[m](\sigmaIID),\GTSM[m](\sigmaNIS_m)\}$.
\begin{alphaenumerate}
\item\label{proposition_phi_concon_bu}
There exists $c\in\reals_{>0}$ such that $|\phi_{\gamma^*}(\GR^\circ)|\le ckm/n$ almost surely.
\item\label{proposition_phi_concon_conc}
There exists $c\in\reals_{>0}^2$ such that
$\prob[|\phi_{\gamma^*}(\GR^\circ)-\expe[\phi_{\gamma^*}(\GR^\circ)]|\ge r]
\le c_2e^{-c_1r^2n}$ for $m\le\mbu$ and $r\in\reals_{\ge 0}$.
\item\label{proposition_phi_concon_cont}
For $\gammaN[,\sigma]\ge\psibl/2$ and $m\le\mbu$, $\sigma'\in[q]^n$, $m'\in\ints_{\ge 0}$ and $m^\circ\in\ints_{\ge 0}^2$ we have
\begin{align*}
|\expe[\phi_{\gamma^*}(\GTS_m(\sigma)]-\expe[\phi_{\gamma^*}(\GTS_{m'}(\sigma')]|
&\le L\left(\|\gammaN[,\sigma]-\gammaN[,\sigma']\|_\mrmtv+
\left|\frac{km}{n}-\frac{km'}{n}\right|\right),\\
|\expe[\phi_{\gamma^*}(\GR(m^\circ_1)]-\expe[\phi_{\gamma^*}(\GR(m^\circ_2)]|
&\le L\left|\frac{km^\circ_1}{n}-\frac{km^\circ_2}{n}\right|.
\end{align*}
\end{alphaenumerate}
\end{proposition}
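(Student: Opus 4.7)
The plan is to prove the three parts in the order \ref{proposition_phi_concon_bu}, \ref{proposition_phi_concon_cont}, \ref{proposition_phi_concon_conc}, since the Lipschitz estimate from \ref{proposition_phi_concon_cont} feeds into the proof of \ref{proposition_phi_concon_conc} for the random-ground-truth laws. Part \ref{proposition_phi_concon_bu} is immediate: since every single-factor weight takes values in $[\psibl,\psibu]$ with $\psibu=\psibl^{-1}$, we have $\psi_G(\sigma)\in[\psibl^m,\psibu^m]$ for every $\sigma\in[q]^n$, so $Z_{\gamma^*}(G)=\expe[\psi_G(\sigmaIID)]$ lies in the same interval and $|\phi_{\gamma^*}(\GR^\circ)|\le m\ln\psibu/n$ almost surely, with $c=\ln\psibu$.

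For part \ref{proposition_phi_concon_cont}, the $m$-dependence is handled by the observation that adjoining a single factor multiplies $Z_{\gamma^*}$ by a value in $[\psibl,\psibu]$ and thereby shifts $\phi_{\gamma^*}$ by at most $\ln\psibu/n$; telescoping yields the $|km/n-km'/n|$ term for both the null and planted models. The $\sigma$-dependence is the technical core and the main obstacle of the whole proposition, because the naive bound $|\expe_\mu f-\expe_\nu f|\le\|f\|_\infty\|\mu-\nu\|_\mrmtv$ applied to $f=\phi_{\gamma^*}$ only gives $O(m/n)\cdot O(m\eps)=O(n\eps)$ and is useless. The resolution has two ingredients. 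First, $\expe[\phi_{\gamma^*}(\GTS_m(\sigma))]$ is permutation-invariant in $\sigma$, since $\sigmaIID\sim\gamma^{*\otimes n}$ is exchangeable and $Z_{\gamma^*}$ is invariant under relabeling of variable nodes, so after replacing $\sigma'$ by an optimal permutation of itself we may assume $\sigma,\sigma'$ disagree on exactly $s=\|\gammaN[\sigma]-\gammaN[\sigma']\|_\mrmtv\cdot n$ coordinates. Second, $\phi_{\gamma^*}$ viewed as a function of the factor sequence is Lipschitz of constant $O(1/n)$ in the Hamming metric (single-factor replacement changes $Z_{\gamma^*}$ multiplicatively within $[\psibl,\psibu]$), and since $\GTS_m(\sigma)$ and $\GTS_m(\sigma')$ are $m$-fold products of their single-factor laws $\mu_\sigma,\mu_{\sigma'}$, Kantorovich duality gives
\begin{align*}
\left|\expe[\phi_{\gamma^*}(\GTS_m(\sigma))]-\expe[\phi_{\gamma^*}(\GTS_m(\sigma'))]\right|
\le\frac{2\ln\psibu}{n}\cdot m\,\|\mu_\sigma-\mu_{\sigma'}\|_\mrmtv.
\end{align*}
A direct estimate yields $\|\mu_\sigma-\mu_{\sigma'}\|_\mrmtv=O(s/n)$: the Radon--Nikodym density $(v,\psi)\mapsto\psi(\sigma_v)/\ZFa(\gammaN[\sigma])$ agrees with its $\sigma'$-version unless $v$ touches the $s$-element disagreement set (probability at most $ks/n$), and the remaining discrepancy is controlled by Lipschitz continuity of $\ZFa$ in $\gamma$, uniformly once the hypothesis $\gammaN[\sigma]\ge\psibl/2$ keeps the denominators bounded below. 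The resulting bound is $O(m/n\cdot s/n)=O(\degabu s/(kn))=O(\eps)$, as required.

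For part \ref{proposition_phi_concon_conc} the argument splits by case. For $\GR_m$ and $\GTS_m(\sigma)$ the $m$ factors are (conditionally) i.i.d.\ and each factor change alters $\phi_{\gamma^*}$ by at most $2\ln\psibu/n$; McDiarmid's inequality then yields $\prob[|\phi_{\gamma^*}-\expe\phi_{\gamma^*}|\ge r]\le 2\exp(-cr^2n^2/m)$, which becomes $c_2\exp(-c_1r^2n)$ via $m\le\mbu=2\degabu n/k$. For $\GTS_m(\sigmaIID)$ I would concentrate in two steps: conditional on $\sigmaIID$, the fixed-$\sigma$ bound controls $\phi_{\gamma^*}-\expe[\phi_{\gamma^*}\mid\sigmaIID]$ at the same Gaussian rate; the conditional mean is, by permutation invariance together with part \ref{proposition_phi_concon_cont}, a Lipschitz function of $\gammaN[\sigmaIID]$, which itself concentrates around $\gamma^*$ at a Gaussian rate by Hoeffding applied to the $n$ i.i.d.\ coordinates of $\sigmaIID$ (the rare event $\gammaN[\sigmaIID]<\psibl/2$ is absorbed into the constants). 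For $\GTS_m(\sigmaNIS_m)$ the same argument transfers after absorbing the uniform bound $\hat r_m\le c$ from Proposition \ref{proposition_mutual_contiguity}\ref{proposition_mutual_contiguity_rnup} into the constants.
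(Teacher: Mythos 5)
Your proof is correct, and for part~\ref{proposition_phi_concon_cont} it takes a genuinely simpler route than the paper does. The paper's proof of the $\sigma$-dependence (Lemma~\ref{lemma_contphiTSYM} and Corollary~\ref{cor_contphiTSM}) runs through the two-sided planted model $\GTSYM$ and an elaborate union-graph coupling on $n_\cup>n$ variables; the hypothesis $\gammaN[,\sigma]\ge\psibl/2$ is then needed to bound the expected wire degrees $\expe[\bm d(i)]\le k\mbu/(n\psibl/2)$ of the extra variables in that construction. Your argument sidesteps all of this: you exploit that $\GTS_m(\sigma)$ is an $m$-fold product of the single-factor law $\mu_\sigma$, establish $\|\mu_\sigma-\mu_{\sigma'}\|_\mrmtv=\mclo(s/n)$ directly (disagreement-set event plus $\ZFa$-Lipschitz estimate on the normalization), and combine the optimal coordinate-wise coupling with the $2\ln(\psibu)/n$ Hamming--Lipschitz bound on $\phi_{\gamma^*}$. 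The paper's more complex machinery pays off because it simultaneously handles the fully decorated graphs (pins, interpolators, external fields) needed elsewhere, but for the standard model in this proposition your direct argument is both sufficient and cleaner. Parts~\ref{proposition_phi_concon_bu} and~\ref{proposition_phi_concon_conc} follow the paper's strategy essentially verbatim (naive weight bounds; McDiarmid for fixed ground truth, then a conditioning-plus-concentration step for $\sigmaIID$ and a Radon--Nikodym transfer via $\hat r_m\le c$ for $\sigmaNIS_m$), and your high-level handling of the small-$r$ regime and the rare $\gammaN[,\sigmaIID]<\psibl/2$ event is the same dichotomy the paper spells out in Lemma~\ref{lemma_concphiTSMIIDNIS}.

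One imprecision worth flagging: you say the hypothesis $\gammaN[,\sigma]\ge\psibl/2$ ``keeps the denominators bounded below,'' but this is not where it matters — $\ZFa(\gamma)\ge\psibl$ holds unconditionally by Observation~\ref{obs_fad}\ref{obs_fad_bounds}, so the Lipschitz estimate on $1/\ZFa$ in your TV computation needs no lower bound on the color frequencies. As argued above, your approach in fact does not need the hypothesis at all; it only appears in the paper's statement because of the union-graph degree bound.
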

Proposition \ref{proposition_phi_concon}\ref{proposition_phi_concon_bu} with
$\expe[\bmone\{|\dR^*_n-d^*|>\eps_1(n)\}\dR^*_n]\le\eps_2(n)$ from Section \ref{implications_extensions_related_work} suggests that we can restrict the expectations accordingly, whereas Proposition \ref{proposition_phi_concon}\ref{proposition_phi_concon_cont} ensures that on the interval $|\dR^*_n-d^*|\le\eps_1(n)$ the conditional expectations asymptotically coincide.
Regarding the ground truths, we recall that $\prob[\|\gammaN[,\sigmaIID]-\gamma^*\|_\mrmtv\ge r]\le c'\exp(-cr^2n)$ for suitable $c$, $c'$, and hence we can restrict to converging color frequencies (since the free entropies are uniformly bounded for $m\le\mbu$). Proposition \ref{proposition_phi_concon}\ref{proposition_phi_concon_cont} then ensures that the conditional expected free entropies asymptotically coincide.
\subsection{The Pinning Lemma}\label{ps_pinning}
Proposition \ref{proposition_mutual_contiguity} and Proposition \ref{proposition_phi_concon} establish that the quenched free entropy (densities) for $\sigmaIID$, $\sigmaNIS_{m}$ asymptotically coincide. The following pinning lemma illustrates \emph{why} working with $\sigmaNIS_{m}$ is desirable.
Recall that the product measure $\alpha=\bigotimes_{a,h}\gamma_{a,h}$ of the marginals $\gamma$ is used in the definition of $\ZF$, $\ZV$ for the Bethe free entropy.
The law $\alpha$ corresponds to the joint distribution of $\mu_{\gamma^*,\GTS_{\mR}(\sigmaIID)}$ on a random number of variables in the finite size case.
One of the main obstacles is to show that these joint distributions indeed asymptotically factorize, and this is exactly where the pinning lemma comes into play.
For $\sigmaR\in[q]^n$ with law $\mu$, $\ell\in\ints_{>0}$ and $v\in[n]^\ell$ let $\mu|_v$ be the law of $\sigmaR_v\in[q]^\ell$. For $\ell=1$ we use the shorthand $\mu|_{v(1)}=\mu|_v$. 
Further, let
$\iota_\circ(\mu,v)=\DKL(\mu|_v\|\bigotimes_{h}\mu|_{v(h)})$ and $\iota_\ell(\mu)=\expe[\iota_\circ(\mu,\vR)]$ with $\bm v\dequal\unif([n]^{\ell})$.
For $\sigmaP\in[q]^n$ and $\setP\setle[n]$ let $[\mu]^\downarrow_{\setP,\sigmaP}\in\mclp([q]^n)$ be the law of $\sigmaR|(\sigmaR(i))_{i\in\setP}=(\sigmaP(i))_{i\in\setP}$, if this is defined.
The next result generalizes Lemma 3.5 in \cite{coja2018}, corresponding to $\ell=2$, and states a stronger version that addresses the conditional relative entropy directly.
\begin{lemma}\label{lemma_pinning}
For $n\in\ints_{>0}$, $\mu\in\mclp([q]^n)$, $\ell\in\ints_{>0}$ and $\ThetaP\in(0,n]$ the following holds.
Let
$\thetaPR\dequal\unif([0,\ThetaP])$,
further i.i.d.~Bernoulli $\indPR\in\{0,1\}^n$ with success probability $\thetaPR/n\in[0,1]$,
$\setPR=\indPR^{-1}(1)$ and $\sigmaR\dequal\mu$ with $(\sigmaR,\setPR)\dequal\sigmaR\otimes\setPR$.
Then we have $\expe[\iota([\mu]^\downarrow_{\setPR,\sigmaR})]\le\binom{\ell}{2}\ln(q)/\ThetaP$.
\end{lemma}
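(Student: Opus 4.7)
My strategy is to decompose $\iota_\circ$ via the chain rule into a sum of $\binom{\ell}{2}$ pairwise conditional mutual informations under $\mu$, and then to bound each of them by $\ln(q)/\ThetaP$ via the standard entropy telescope along the pinning parameter $\thetaP$.

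\emph{Step 1 (chain rule).} For any $\nu\in\mclp([q]^n)$ and any $v\in[n]^\ell$, writing $v_{<h}:=(v_1,\dots,v_{h-1})$, the entropy chain rule gives $\iota_\circ(\nu,v)=\sum_{h=2}^\ell I_\nu(\sigmaR(v_h);\sigmaR(v_{<h}))$, and a further application of the chain rule (for mutual information) yields
\begin{align*}
\iota_\circ(\nu,v)=\sum_{1\le j<h\le\ell}I_\nu(\sigmaR(v_h);\sigmaR(v_j)\mid\sigmaR(v_{<j})).
\end{align*}
Apply this with $\nu=[\mu]^\downarrow_{\setPR,\sigmaR}$, average over $\vR\sim\unif([n]^\ell)$, and then use the elementary identity $\expe_{\sigmaR\sim\mu}[I_{[\mu]^\downarrow_{\setPR,\sigmaR}}(X;Y\mid Z)]=I_\mu(X;Y\mid Z,\sigmaR_{\setPR})$ (the definition of conditional mutual information, averaged over the conditioning variable) to obtain
\begin{align*}
\expe[\iota_\ell([\mu]^\downarrow_{\setPR,\sigmaR})]=\sum_{1\le j<h\le\ell}\expe[I_\mu(\sigmaR(v_h);\sigmaR(v_j)\mid\sigmaR(v_{<j}),\sigmaR_{\setPR})],
\end{align*}
where the outer expectation on the right is over $\thetaPR$, the Bernoulli$(\thetaPR/n)$ pinning $\setPR$, and i.i.d.\ uniform $v_1,\dots,v_h\in[n]$.

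\emph{Step 2 (single-pair bound).} Fix $1\le j<h\le\ell$ and set
\begin{align*}
F(\theta):=\expe[H_\mu(\sigmaR(v_h)\mid\sigmaR(v_{<j}),\sigmaR_{\setPR(\theta)})],
\end{align*}
with $\setPR(\theta)$ the Bernoulli$(\theta/n)$ pinning and $v_1,\dots,v_h\sim\unif([n])$ i.i.d. Then $F$ is non-increasing, $F(\ThetaP)\ge 0$, and $F(0)\le\ln q$ by the trivial single-coordinate entropy bound. Couple the family $(\setPR(\theta))_{\theta\in[0,\ThetaP]}$ via a single family $(U_i)_{i\in[n]}$ of $\unif([0,1])$ variables through $\setPR(\theta)=\{i:U_i\le\theta/n\}$; then each coordinate joins $\setPR(\theta+\delta)\setminus\setPR(\theta)$ independently with probability $\delta/n$. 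A first-order expansion via $H(X\mid Y,Z)=H(X\mid Y)-I(X;Z\mid Y)$ identifies
\begin{align*}
-F'(\theta)=\tfrac{1}{n}\sum_{i\in[n]}\expe[I_\mu(\sigmaR(v_h);\sigmaR(i)\mid\sigmaR(v_{<j}),\sigmaR_{\setPR(\theta)})]=\expe[I_\mu(\sigmaR(v_h);\sigmaR(v_j)\mid\sigmaR(v_{<j}),\sigmaR_{\setPR(\theta)})]
\end{align*}
for an independent $v_j\sim\unif([n])$. Integrating over $\theta$ gives
\begin{align*}
\expe_{\thetaPR}[\expe[I_\mu(\sigmaR(v_h);\sigmaR(v_j)\mid\sigmaR(v_{<j}),\sigmaR_{\setPR})]]=\frac{F(0)-F(\ThetaP)}{\ThetaP}\le\frac{\ln q}{\ThetaP}.
\end{align*}

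\emph{Step 3 and main obstacle.} Summing the Step 2 bound over the $\binom{\ell}{2}$ pairs $(j,h)$ with $1\le j<h\le\ell$ proves the claim. The delicate step is the derivative identification in Step 2, requiring the exchange of $d/d\theta$ with expectation. This can be bypassed by discretising to integer $\theta\in\{0,1,\dots,\lfloor\ThetaP\rfloor\}$ and coupling $\setPR(t)\subseteq\setPR(t+1)$ so that one fresh $\unif([n])$ coordinate is added per step; the telescope $\sum_t(F(t)-F(t+1))\le\ln q$ yields the same $\ln(q)/\ThetaP$ pair bound up to an $O(1/\ThetaP)$ discretisation error.
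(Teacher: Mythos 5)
Your proof is essentially correct and uses the same underlying idea as the paper (the derivative of a conditional entropy in the pinning probability is a conditional mutual information, integrated and telescoped), but via a genuinely different decomposition. The paper writes $\iota_\ell=\sum_{\ell'=1}^{\ell-1}\delta^*_{\ell'}$ with $\delta^*_{\ell'}=\iota^*_{\ell'+1}-\iota^*_{\ell'}$, identifies $\int_0^1\delta^*_{\ell'}(p)/(1-p)\,\mathrm{d}p=(\eta^*_{\ell'}(0)-\eta^*_{\ell'}(1))/n\le\ell'\ln q/n$, and sums $\sum_{\ell'}\ell'=\binom{\ell}{2}$; you instead push the chain rule all the way down to $\binom{\ell}{2}$ pairwise conditional mutual informations $I(\sigmaR(v_h);\sigmaR(v_j)\mid\sigmaR(v_{<j}),\sigmaR_{\setPR})$, so each single-coordinate entropy telescope gives $\le\ln q/\ThetaP$ outright. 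This is cleaner bookkeeping: every term in the sum is bounded uniformly, rather than by the $\ell'$-dependent bound $\ell'\ln q$, and it makes the $\binom{\ell}{2}$ manifest.

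Two corrections are needed in Step 2.\ First, the derivative identity is not an equality as written. The partial derivative in $p_i$ of a conditional cross entropy conditions on the partial observation $\bm\chi_{[n]\setminus\{i\}}$ (coordinate $i$ excluded, cf.\ the paper's Lemma \ref{lemma_pin_condentder}), and $I(\,\cdot\,;\sigmaR(i)\mid\,\cdot\,,\bm\chi_{[n]\setminus\{i\}})=(1-\theta/n)^{-1}I(\,\cdot\,;\sigmaR(i)\mid\,\cdot\,,\sigmaR_{\setPR(\theta)})$. Hence the precise statement is
\begin{align*}
-F'(\theta)=\frac{1}{n-\theta}\sum_{i\in[n]}\expe\bigl[I_\mu(\sigmaR(v_h);\sigmaR(i)\mid\sigmaR(v_{<j}),\sigmaR_{\setPR(\theta)})\bigr]
\ge\expe\bigl[I_\mu(\sigmaR(v_h);\sigmaR(v_j)\mid\sigmaR(v_{<j}),\sigmaR_{\setPR(\theta)})\bigr],
\end{align*}
with $\ge$ in place of $=$. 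This is benign (the factor $n/(n-\theta)\ge1$), and your integral conclusion $\le(F(0)-F(\ThetaP))/\ThetaP\le\ln q/\ThetaP$ still follows; just the chain from equality to equality should be replaced by a one-sided bound. Second, the discretization remark in Step 3 would introduce an $O(1/\ThetaP)$ slack, which is of the same order as the claimed bound and would therefore spoil the exact constant $\binom{\ell}{2}\ln q$; the continuous integral (for which the exchange of derivative and expectation is legitimate here, $F$ being a polynomial in $\theta/n$ for each fixed $v$) is the route to keep.
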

\subsection{The Planted Model Quenched Free Entropy}\label{ps_bethe}
We use the interpolation method to obtain the lower bound
in Theorem \ref{thm_bethe}.
\begin{proposition}\label{proposition_int}
We have $\expe[\phi_{\gamma^*}(\GTS_{\mR}(\sigmaIID))]\ge\bethebu(d)-\mclo(n^{-1/4})$.
\end{proposition}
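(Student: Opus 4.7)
The plan is a Guerra--Toninelli interpolation adapted to planted factor graphs, with the POS assumption $\nablaIbl(p,\gamma^*)\ge 0$ providing the sign control on the derivative. Fix any $\pi\in\mclp[*,\gamma^*][2]([q])$; at the end I will choose $\pi$ so that $\bethe_{p,\gamma^*,d}(\pi)\ge\bethebu(d)-n^{-1/4}$. For $t\in[0,1]$ I build an interpolating model $\GR^t$ on $[n]$ paired with $\sigmaIID\dequal\gamma^{*\otimes n}$ that carries $\mR^t\dequal\Po(tdn/k)$ copies of the usual planted $k$-ary factors of $\GTS(\sigmaIID)$ together with independent $\bm D^t_i\dequal\Po((1-t)d)$ \emph{cavity} unary factors at each variable $i\in[n]$, each built from a fresh $(\psiR,\hR)\dequal p\otimes\unif([k])$ and $k-1$ auxiliary i.i.d.~marginals drawn from $\pi$, so that its evaluation at $\sigmaIID(i)$ reproduces the $\hR$-th coordinate partial sum appearing in $\ZV$. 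Write $\Phi(t)=\expe[\phi_{\gamma^*}(\GR^t)]$.

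Two boundary values are routine. At $t=0$ the factor graph decouples across variables, so $\phi_{\gamma^*}(\GR^0)$ is a sum of $n$ i.i.d.~contributions whose expectation computes exactly to $\bethe_{p,\gamma^*,d}(\pi)$, up to the fluctuation $\|\gammaN[,\sigmaIID]-\gamma^*\|_\mrmtv=\mclo(n^{-1/2})$ absorbed via Proposition \ref{proposition_phi_concon}\ref{proposition_phi_concon_cont}; this gives $\Phi(0)=\bethe_{p,\gamma^*,d}(\pi)+\mclo(n^{-1/2})$. At $t=1$ the same Lipschitz estimate applied to $|\mR-\mR^1|/n=\mclo(n^{-1/2})$ gives $\Phi(1)=\expe[\phi_{\gamma^*}(\GTS_{\mR}(\sigmaIID))]+\mclo(n^{-1/2})$. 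Next, I differentiate using Poisson calculus: an increment $dt$ adds an expected $(d/k)\,dt$ planted $k$-ary factors while removing an expected $d\,dt$ cavity factors per variable. Rewriting each contribution in terms of Gibbs single-site marginals of $\mu_{\gamma^*,\GR^t}$, via the Nishimori identity Proposition \ref{proposition_mutual_contiguity}\ref{proposition_mutual_contiguity_nishi}, turns the three terms ($\ZF$ on $k$ sites, $\ZF$ on $k$ sites, $\ZFM$ on the cavity-flipped coordinate) into the three terms of $\nablaI$, so that after bookkeeping
\begin{align*}
\Phi'(t)=\expe\bigl[\nablaI(\hat\pi^t)\bigr]+\mclo(n^{-1/4}),
\end{align*}
where $\hat\pi^t\in\mclp[][2]([q])$ is the empirical law of the Gibbs single-site marginals of $\mu_{\gamma^*,\GR^t}$ at a random $k$-tuple of sites.

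It remains to invoke POS on $\hat\pi^t$. The Nishimori identity forces $\expe[\hat\pi^t]=\gamma^*$, so $\hat\pi^t\in\mclp[*,\gamma^*][2]([q])$. For $\nablaI(\hat\pi^t)\ge\nablaIbl(p,\gamma^*)\ge 0$ to be applicable, the $k$ marginals entering $\nablaI$ must be approximately independent; this is exactly what Lemma \ref{lemma_pinning} delivers once $\mclo(1)$ coordinates are pinned from the Gibbs measure itself, at a total-variation cost of order $\mclo(n^{-1/4})$ that merges into the error above. Hence $\Phi'(t)\ge-\mclo(n^{-1/4})$ uniformly in $t$, and integrating against $t\in[0,1]$ combined with the boundary identities yields $\Phi(1)\ge\bethe_{p,\gamma^*,d}(\pi)-\mclo(n^{-1/4})$, from which the choice of $\pi$ gives the claim. \textbf{The main obstacle} is the derivative step: matching the Poisson jump expansion of $\Phi'(t)$ to the exact three-term structure of $\nablaI$ requires repeated use of the Nishimori identity to convert annealed-looking quantities into quenched Gibbs marginals, and the pinning-induced factorization has to be carried uniformly through the $t$-integral without disturbing the Nishimori property; these two steps are precisely what sets the rate to $\mclo(n^{-1/4})$.
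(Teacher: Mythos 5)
Your overall strategy is the same as the paper's: a Guerra--Toninelli interpolation between the decoupled cavity model and the planted model, with the Poisson derivative expressed via Gibbs single-site marginals and the POS condition $\nablaIbl(p,\gamma^*)\ge 0$ controlling the sign, together with pinning to make the Gibbs marginals approximately independent. However there are two genuine gaps that would break the argument as written.

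First and most importantly, your claim that ``the Nishimori identity forces $\expe[\hat\pi^t]=\gamma^*$, so $\hat\pi^t\in\mclp[*,\gamma^*][2]([q])$'' is false. Conditionally on the graph, the empirical distribution of Gibbs marginals has mean $\gammaa_{\lawG[,G]}=\lawG[,G]|_*$, which is a \emph{random} quantity concentrating around $\gamma^*$ (Lemma \ref{lemma_gammaaR}) but not equal to it; in particular $\hat\pi^t\notin\mclp[*,\gamma^*][2]([q])$ in general, so you cannot directly invoke $\nablaI(\hat\pi^t)\ge\nablaIbl(p,\gamma^*)$. The paper explicitly flags this issue (introduction to Section \ref{pinning_limiting_marginal_distributions}: ``this is not sufficient for $\nablaIbl$ and $\bethebu$, being extremal only on $\mclp[*][2]([q])$, meaning that the expectation has to be exactly $\gamma^*$'') and resolves it by constructing a projection $\piG[,G]^\circ\in\mclp[*,\gamma^*][2]([q])$ (via the counterweight maps of Observation \ref{obs_gammaacounterweight}), bounding the Wasserstein distance to the projection (Lemma \ref{lemma_piapproxpstartwo}), and then using Lipschitz continuity of $\nablaI$ (Lemma \ref{lemma_nablaI_lipschitz}). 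This projection step is a nontrivial additional ingredient that your proposal omits, and without it the key inequality in your Step 3 has no justification.

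Second, the Nishimori identity Proposition \ref{proposition_mutual_contiguity}\ref{proposition_mutual_contiguity_nishi} holds with the Nishimori ground truth $\sigmaNIS$, \emph{not} with $\sigmaIID$, yet you build your interpolating model ``paired with $\sigmaIID$'' and then cite that identity to convert ground-truth contributions into Gibbs-marginal contributions. The paper runs the entire interpolation with $\sigmaNIS_{\mR}$ (see the definition of $\phiI$ in Section \ref{interpolation_overview}) and only afterwards transfers to $\sigmaIID$ via mutual contiguity and Lipschitz continuity of the expected free entropy (Corollary \ref{cor_phiTSIIDNIS}). You need to either do the same or explicitly bound the Nishimori-violation error for $\sigmaIID$; as written, the conversion step is unjustified. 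A smaller issue: you say pinning needs ``$\mclo(1)$ coordinates'', but the pinning bound that yields a vanishing factorization error is $\ThetaP=n^{3/4}=\omega(1)$, trading against the $\ThetaP/n$ cost to the free entropy (Proposition \ref{proposition_pin_qfed}); this balance is what produces the $\mclo(n^{-1/4})$ rate.
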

We use the Aizenman-Sims-Starr scheme to obtain the upper bound in Theorem \ref{thm_bethe}.
\begin{proposition}\label{proposition_ass}
There exists $\rho\in\reals_{>0}$ such that $\expe[\phi_{\gamma^*}(\GTS_{\mR}(\sigmaIID))]\le\bethebu(d)+\mclo(n^{-\rho})$.
\end{proposition}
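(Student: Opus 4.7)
The plan is to combine the Aizenman--Sims--Starr (ASS) interpolation between $n$ and $n+1$ variables with the pinning technique of Lemma \ref{lemma_pinning}, transferred to the Nishimori ground truth via Proposition \ref{proposition_mutual_contiguity}. First I would replace $\sigmaIID$ by $\sigmaNIS_{\mR}$: Proposition \ref{proposition_mutual_contiguity}\ref{proposition_mutual_contiguity_rnup}--\ref{proposition_mutual_contiguity_rndown} yields mutual contiguity with polynomial rate, and combined with the boundedness and Gaussian concentration of $\phi_{\gamma^*}$ from Proposition \ref{proposition_phi_concon} this gives $\expe[\phi_{\gamma^*}(\GTS_{\mR}(\sigmaIID))] = \expe[\phi_{\gamma^*}(\GTS_{\mR}(\sigmaNIS_{\mR}))] + \mclo(n^{-\rho})$. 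The reward is Nishimori symmetry (Proposition \ref{proposition_mutual_contiguity}\ref{proposition_mutual_contiguity_nishi}): conditional on $\GTS_{\mR}(\sigmaNIS_{\mR})$, the ground truth is itself a Gibbs sample, so Gibbs marginals anchor to $\gamma^*$ up to $o(1)$ via Proposition \ref{proposition_mutual_contiguity}\ref{proposition_mutual_contiguity_rncond} and the concentration of $\gammaN[,\sigmaIID]$.

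Next, I would write the quantity as an ASS difference. Using the natural Poisson coupling of $\mR_n \dequal \Po(dn/k)$ with $\mR_{n+1}$, going from $n$ to $n+1$ variables amounts to introducing one new variable together with $\Po(d)$ new incident factors while removing $\Po(d(k-1)/k)$ existing factors. Telescoping over a window $n, n+1, \ldots, n + n^{1-\rho'}$ and invoking Proposition \ref{proposition_phi_concon}\ref{proposition_phi_concon_cont} bounds $\expe[\phi_{\gamma^*,n}(\GTS_{\mR_n}(\sigmaNIS_{\mR_n,n}))]$ by the average increment plus $\mclo(n^{-\rho'})$. This increment splits cleanly into a $\xlnx(\ZV)$ contribution (new variable and its $\Po(d)$ factors) and a $-\frac{d(k-1)}{k\ZFabu_p}\xlnx(\ZF)$ correction (removed factors), matching exactly the two terms of $\bethe_{p,\gamma^*,d}(\pi)$.

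The crucial step is pinning. For a slowly diverging $\Theta = n^{\rho'}$, attach unary factors to a $\Bin(n,\Theta/n)$ random subset $\setPR$ of coordinates, pinning them to the corresponding Nishimori values. Lemma \ref{lemma_pinning} applied to $\mu = \mu_{\gamma^*,\GTS_{\mR}(\sigmaNIS_{\mR})}$ with $\ell = k$ gives expected joint-versus-product relative entropy at most $\binom{k}{2}\ln(q)/\Theta$ on a uniform $k$-tuple of coordinates. Since $\psibl > 0$, the maps $\gamma \mapsto \xlnx(\ZV(d,\psi,h,\gamma))$ and $\gamma \mapsto \xlnx(\ZF(\psi,\gamma))$ are Lipschitz with respect to total variation on $\mclp([q])^k$, so via Pinsker the increment equals its factorized version up to $\mclo(\Theta^{-1/2})$. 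In the factorized form, the random $k$-tuples of Gibbs marginals are conditionally i.i.d.\ draws from a random law $\piR \in \mclp[][2]([q])$; the Nishimori identity gives $\expe[\gammaR_\piR] = \gamma^* + o(1)$, so after a small perturbation onto $\mclp[*,\gamma^*][2]([q])$ the conditional increment is at most $\bethe_{p,\gamma^*,d}(\piR) \le \bethebu(d)$. Taking expectations closes the bound.

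The main obstacle will be extracting a \emph{polynomial} rate $n^{-\rho}$ uniform in $\mfkg$. The Nishimori replacement, the telescoping window $n^{1-\rho'}$, the pinning entropy cost $\Theta\ln(q)/n$, the factorization error $\Theta^{-1/2}$, and the continuity correction from $\expe[\gammaR_\piR] = \gamma^* + o(1)$ each contribute polynomial errors; one must optimize $\Theta = n^{\rho'}$ to balance them and obtain some $\rho > 0$ depending only on $\mfkg$. All Lipschitz constants entering the total-variation substitution, and the uniform continuity of $\bethe_{p,\gamma^*,d}$ on a neighborhood of $\mclp[*,\gamma^*][2]([q])$, depend only on $\psibl$, $k$, $q$ and $\degabu$, so uniformity in $\mfkg$ is inherited from the ingredients.
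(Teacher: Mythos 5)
The idea of combining Aizenman--Sims--Starr with pinning and Nishimori symmetry is indeed what the paper does. However, two steps in your plan would not go through as stated, and the first is a genuine gap.

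\textbf{The window telescoping is circular.} You propose to bound $\phi_n := \expe[\phi_{\gamma^*,n}(\GTS_{\mR_n}(\cdot))]$ by telescoping $(n+m)\phi_{n+m} - n\phi_n = \sum_{n'=n}^{n+m-1}\Phi_{\Delta,n'}$ over a window $m = n^{1-\rho'}$. Each increment $\Phi_{\Delta,n'}$ is at most $\bethebu(d) + \mclo(n'^{-\rho})$, but solving for $\phi_n$ gives
\[
\phi_n \le \frac{1}{m}\sum_{n'}\Phi_{\Delta,n'} - \frac{n+m}{m}\bigl(\phi_{n+m}-\phi_n\bigr),
\]
and the correction term is of order $n^{\rho'}\cdot|\phi_{n+m}-\phi_n|$. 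Proposition \ref{proposition_phi_concon}\ref{proposition_phi_concon_cont} controls the free entropy's dependence on the factor count $m$ and on $\gammaN[,\sigma]$ at a \emph{fixed} $n$, not the dependence on the number of variables $n$. Absent a separate argument that $\phi_n$ is already close to its limit, you cannot control $\phi_{n+m}-\phi_n$ well enough; and establishing that closeness is exactly what you are trying to prove. (Pairing with the lower bound from Proposition \ref{proposition_int} only yields a lower bound on $\phi_n$, not the upper bound you need.) The paper avoids this by telescoping from $n'=0$, where $0\cdot\phi_0=0$, so that $n\phi_n = \sum_{n'=0}^{n-1}\Phi_{\Delta,n'}\le n\bethebu(d) + c\sum_{n'<n}n'^{-\rho}$ immediately gives the result.

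\textbf{Nishimori replacement at the outset complicates the coupling.} You swap $\sigmaIID$ for $\sigmaNIS_{\mR}$ before ASS, so that the increments would involve planted graphs with ground truths $\sigmaNIS_{\mR_{n'},n'}$ at varying $n'$. The clean ASS coupling of the paper (Sections \ref{coupling_graphs}--\ref{coupling_pins}) hinges on taking $\sigmaR^+\dequal\sigmaIID_{n+1}$ and $\sigmaR^-=\sigmaR^+_{[n]}$ as exact restrictions, which trivially preserves the i.i.d.\ law but not the Nishimori law. The paper therefore runs ASS with $\sigmaIID$ throughout and imports Nishimori symmetry only locally, via Proposition \ref{proposition_pinG}\ref{proposition_pinG_IIDDKL}, transferring the pinning bound from $\sigmaNIS$ to $\sigmaIID$ by contiguity at the scale of the expectations under the pinned graph. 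Your route also compresses several nontrivial normalization steps that the paper handles separately (reweighted marginal distributions via Corollary \ref{cor_piCR}, projection onto $\mclp[*,\gamma^*][2]([q])$ via Lemma \ref{lemma_piapproxpstartwo}); the Nishimori identity alone gives $\expe[\gammaR_{\piR}]=\gamma^*+o(1)$, but passing to an exact element of $\mclp[*,\gamma^*][2]([q])$ requires the explicit counterweight construction, and the Wasserstein Lipschitz bounds on $\bethe$ do not follow from Pinsker alone as you imply.
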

Both methods require that $\mR\dequal\Po(dn/k)$ is Poisson distributed.
Further, we need the pinning lemma in both cases.
For this purpose we decorate the graphs with an additional type of factors, say pins, to turn $\mu_{\gamma^*,G}$ into $[\mu_{\gamma^*,G}]^\downarrow_{\setP,\sigmaP}$, which then ensures asymptotic independence.
Clearly, we have to ensure that the effect of this pinning procedure on the quenched free entropy is asymptotically negligible.

For the interpolation method we need yet another type of factors, say interpolators, to model the transition between the decoupled model underlying the Bethe free entropy and the graph. The derivative of this transition is closely related to $\nablaI_{p,\gamma^*}$, and it is its non-negativity that establishes the lower bound in the end.

The Aizenman-Sims-Starr scheme only relies on the standard factors and pins (and external fields).
Here, we make use of the fact that the quenched free entropy \emph{density} can be rewritten as the average difference of the quenched free entropies for $n+1$ and $n$. Hence, deriving the limit of the difference yields the limit of the quenched free entropy density.
The implications of this method are significantly stronger than Proposition \ref{proposition_ass} suggests.
In fact, we show that the quenched free entropy density \emph{converges to} the expected Bethe free entropy density, with the expectation taken over certain Gibbs marginal distributions $\bm\pi\in\mclp[*,\gamma^*][2]([q])$.

So, in a nutshell, the first part of the proof clarifies how exactly we can utilize the pinning lemma and justifies the application.
In the second part we implement the interpolation method using the fully decorated graphs, followed by the implementation of the Aizenman-Sims-Starr scheme with the slightly simpler graphs in the third part.
\section{Preliminaries}
In the remainder of this contribution we provide the proofs for all statements that have not yet been established.
Next to the obvious claims, we stated that $\gammaN[,\sigmaIID]$ concentrates around $\gamma^*$ while discussing the graphical channels in Section \ref{examples} and in Section \ref{assumptions}, which holds by Observation \ref{obs_gtiid}.
The limit $\phia$ in Section \ref{information_theoretic_threshold} is determined in Observation \ref{obs_phia}.
In Section \ref{condensation_threshold} we claimed that $\delta^*(0)=0$ and that $\delta^*$ is continuous in $d$, which is covered in Section \ref{adddisc_lipschitz_bounds}.
We turn to the claims in Section \ref{implications_extensions_related_work}.
We establish the main results by considering graphs with (normalized) external fields and general factor counts $\mR^*$, and establish the corresponding results explicitly.
The discussion of the special cases $q=1$, $k\le 1$, of simple hypergraphs, convergence in probability, more general external fields, the reweighted laws for the planted model and the reduction of Lemma \ref{lemma_pinning} to Lemma 3.5 in \cite{coja2018} can be found in the Section \ref{additional_discussion}.
So, for now we restrict to $q,k\ge 2$.

We explain the structure of the following discussion in Section \ref{proof_roadmap}, then we introduce the required notions and results from the literature in Section \ref{notions_notation}, and conclude with the proof of Proposition \ref{prop_validmodels} in Section \ref{proof_prop_validmodels}.
\subsection{A Roadmap to the Proofs}\label{proof_roadmap}
In Section \ref{preparations} we derive basic results that are required for the proofs of all main results.
Specifically, in Section \ref{decorated_factor_graphs} we introduce decorated graphs and establish basic properties.
In Section \ref{nishimori_ground_truth} we discuss basic properties of the Nishimori ground truth $\sigmaNIS$, including the proof of Proposition \ref{proposition_mutual_contiguity}.
In Section \ref{phi_concon} we establish boundedness, continuity and concentration for the free entropy, including the proof of Proposition \ref{proposition_phi_concon}.

Section \ref{thm_bethe_proof} is devoted to the proof of Theorem \ref{thm_bethe}.
Specifically, in Section \ref{pinning} we discuss the Gibbs measures of decorated graphs, establish the pinning lemma \ref{lemma_pinning} and apply it to the graphs, establish a result for reweighted marginal distributions of general measures and apply it to the graphs, and finally discuss projections of $\mclp[][2]([q])$ onto $\mclp[*][2]([q])$.
In Section \ref{interpolation} we then turn to the interpolation method including the proof of Proposition \ref{proposition_int}.
The discussion in Section \ref{ass} addresses the Aizenman-Sims-Starr scheme including the proof of Proposition \ref{proposition_ass}, where we also establish Theorem \ref{thm_bethe}.

In Section \ref{main_proofs} we present the proofs of the remaining main results.
We establish Theorem \ref{thm_infth} in Section \ref{main_proofs_dkl}, followed by the proof of Theorem \ref{thm_cond} in Section \ref{main_proofs_condth}, and conclude the proof of the main results with Theorem \ref{thm_mutinf} in Section \ref{main_proofs_mutinf}.

As mentioned above, the special cases $q=1$ and $k=1$ are discussed in Section \ref{additional_discussion}, where we also formalize the discussion in Section \ref{implications_extensions_related_work}.
\subsection{Notions, Notation and Results from the Literature}\label{notions_notation}
We consider a sufficiently rich probability space $(\Omega,\mclf,\prob)$.
Random quantities are denoted in bold, e.g.~$\bm a:\Omega\rightarrow\mcla$.
The $(\bm a,\bm b)$-derivative is the Radon-Nikodym derivative of $\bm a$ with respect to $\bm b$.
We use the Poisson distribution $\Po(\lambda)$, the binomial distribution $\Bin(n,p)$, the uniform distribution $\unif(\mcls))$ and the one-point mass $\opm[,\mcls,s]\in\mclp(\mcls)$ on $s\in\mcls$.
Further, we use $\mcla\dotcup\mclb$ for the disjoint union, $[n]=\ints\cap[1,n]$, $2^{\mcls}$ for the power set of $\mcls$, $\binom{\mcla}{a}\setle 2^{\mcls}$ for the $a$-subsets $\mclb\setle\mcla$ with $|\mclb|=a$, $\mcla[][\mclb]$ for the maps $f:\mclb\rightarrow\mcla$, and $[n]_m=\{x\in[n]^m:\forall i\in[n]\,|x^{-1}(i)|\le 1\}$ for the injections.
We consider spaces equipped with their canonical structure unless mentioned otherwise, mostly subspaces of $\reals^a$.
We use $\le$ for componentwise inequalities and $\equiv$ for componentwise equality.
We use $\dequal$ for equality in distribution.
A space $\mcla$ is a copy of a space $\mclb$ if it carries the same structure under some bijection, in which case we identify $\mcla$ with $\mclb$.
We identify $\mu\in\mclp(\ints)$ with its probability mass function $\mu:\ints\rightarrow[0,1]$.
We further use similar identifications to focus on the relevant arguments while avoiding technical routine discussions.

We (partially) suppress dependencies for brevity, e.g.~$f_a(x)=f_a=f$.
Clearly, this leads to ambiguities, e.g.~$\GTS$ may refer to $\GTS(\mR,\sigmaIID)$, $\GTS(m,\sigma)$ or any other combination.
Hence, when we omit a dependency, the dependency is the same quantity as in the definition and thereby uniquely identified.
Further, we keep the notation consistent to earn this degree of flexibility.
Finally, we may use $f_{a,x}=f_a(x)=f(a,x)$ interchangeably, for readability or to indicate the distinction between variables and parameters.
Similarly, we use mixed notation for random quantities $\bm x$ and their laws $\bm x\dequal\mu$, e.g.~$\DKL(\bm x_1\|\bm x_2)=\DKL(\mu_1\|\mu_2)$.

We extend $(\sigma_i)_{i\in[n]}\in[q]^n$ to maps, i.e.~for $v\in[n]^k$ let $\sigma_v=(\sigma_{v(h)})_h$ ,
and let $\mu|_v$ be the law of $\sigmaR_{v}$ with $\sigmaR\dequal\mu$, as in Section \ref{proof_strategy}.
If $v$ is the enumeration of $\mclv\setle[n]$, i.e.~the unique strictly increasing map $v:[|\mclv|]\rarr\mclv$,
we use the shorthands $\sigma_{\mclv}=\sigma_v$ and $\mu|_{\mclv}=\mu|_v$ and in particular
$\sigma_i=\sigma_{\{i\}}$, $\mu|_i=\mu_{\{i\}}$.
Further, let $\mu|_*=\sum_i\frac{1}{n}\mu|_i\in\mclp([q])$ be the law of $\sigmaR_{\bm i}\in[q]$, with $(\sigmaR,\bm i)\dequal\mu\otimes\unif([n])$.
We denote the total variation distance by $\|\mu-\nu\|_\mrmtv=\sup_{\mcle}|\mu(\mcle)-\nu(\mcle)|$ and let $\Gamma(\mu_1,\mu_2)\setle\mclp(\mclx[1]\times\mclx[2])$ denote the couplings of $\mu_1\in\mclp(\mclx[1])$ and $\mu_2\in\mclp(\mclx[2])$, i.e.~for $\nu\in\Gamma(\mu_1,\mu_2)$ we have $\nu|_1=\mu_1$ and $\nu|_2=\mu_2$.
\begin{observation}\label{obs_tv}
Notice that the following holds.
\begin{alphaenumerate}
\item\label{obs_tv_norm}
For $\mu\in\mclp([q])^2$ we have $\|\mu_1-\mu_2\|_\mrmtv=\frac{1}{2}\|\mu_1-\mu_2\|_1$.
\item\label{obs_tv_prod}
For $\mu\in(\mclp([q])^n)^2$ we have $\|\bigotimes_i\mu_{1,i}-\bigotimes_i\mu_{2,i}\|_\mrmtv\le\sum_i\|\mu_{1,i}-\mu_{2,i}\|_\mrmtv$.
\item\label{obs_tv_cond}
For $\bm x,\bm x'\in[q]$ and $\bm y(x)\in[q']$, $x\in[q]$, we have $\|(\bm x,\bm y(\bm x))-(\bm x',\bm y(\bm x'))\|_\mrmtv=\|\bm x-\bm x'\|_\mrmtv$.
\item\label{obs_tv_lipschitz}
For $\ell\in\ints_{>0}^2$ with $\ell_1\le\ell_2$, further $v_i\in[n]^{\ell(i)}$, $i\in[2]$, with $v_1=v_{2,[\ell(1)]}$, and $\mu\in\mclp([q]^n)^2$ we have
$\|\mu_1|_{v_1}-\mu_2|_{v_1}\|_\mrmtv\le\|\mu_1|_{v_2}-\mu_2|_{v_2}\|_\mrmtv$
and $\|\mu_1|_*-\mu_2|_*\|_\mrmtv\le\expe[\|\mu_1|_{\bm i}-\mu_2|_{\bm i}\|_\mrmtv]$.
\item\label{obs_tv_coupling}
For a coupling $\bm y$ of $\bm x_1$, $\bm x_2\in[q]$ we have $\|\bm x_1-\bm x_2\|_\mrmtv\le\prob[\bm y_1\neq\bm y_2]$ and there exists $\bm y\in\Gamma(\bm x_1,\bm x_2)$ with 
$\|\bm x_1-\bm x_2\|_\mrmtv=\prob[\bm y_1\neq\bm y_2]$.
\item\label{obs_tv_pinsker}
For $\bm x_1$, $\bm x_2\in[q]$ we have $\|\bm x_1-\bm x_2\|_\mrmtv\le\sqrt{\frac{1}{2}\DKL(\bm x_1\|\bm x_2)}$.
\end{alphaenumerate}
\end{observation}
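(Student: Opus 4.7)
The plan is to treat each of the six items separately, since each is a standard textbook fact about total variation distance that falls out of the variational characterization $\|\mu - \nu\|_\mrmtv = \sup_{\mcle} |\mu(\mcle) - \nu(\mcle)|$ combined with either a direct $\ell^1$ computation, a coupling argument, or the data processing inequality. For part (a), I take $\mcle = \{z \in [q] : \mu_1(z) \ge \mu_2(z)\}$; since both measures have total mass one, the positive and negative parts of $\mu_1 - \mu_2$ have equal $\ell^1$-mass, so $\|\mu_1 - \mu_2\|_1 = 2(\mu_1(\mcle) - \mu_2(\mcle))$, and the right-hand side is precisely $2\|\mu_1 - \mu_2\|_\mrmtv$.

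For (b) and (e) the starting point is the coupling representation $\|\bm x_1 - \bm x_2\|_\mrmtv = \inf_{\bm y \in \Gamma(\bm x_1, \bm x_2)} \prob[\bm y_1 \ne \bm y_2]$, with the infimum attained at the maximal coupling, which is exactly (e). For (b) it then suffices to couple each pair $(\mu_{1,i}, \mu_{2,i})$ via its maximal coupling, take the product coupling, and apply a union bound on $\bigcup_i \{\bm y_{1,i} \ne \bm y_{2,i}\}$. For (c) I would prove both directions: projection onto the first coordinate is deterministic, so data processing (immediate from (a)) gives $\|\bm x - \bm x'\|_\mrmtv \le \|(\bm x, \bm y(\bm x)) - (\bm x', \bm y(\bm x'))\|_\mrmtv$; conversely, fixing an optimal coupling $(\bm x, \bm x')$ and extending it by applying the kernel $x \mapsto \bm y(x)$ in a way that agrees on $\{\bm x = \bm x'\}$ yields a coupling of the joint distributions whose failure event equals $\{\bm x \ne \bm x'\}$, closing the loop via (e).

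Part (d) is a double application of contraction. For the first inequality, $\mu|_{v_1}$ is a marginal of $\mu|_{v_2}$, so every $\mcle \subseteq [q]^{\ell_1}$ lifts to the event $\mcle \times [q]^{\ell_2 - \ell_1}$ witnessing $|\mu_1|_{v_1}(\mcle) - \mu_2|_{v_1}(\mcle)| \le \|\mu_1|_{v_2} - \mu_2|_{v_2}\|_\mrmtv$. For the second, $\mu|_* = n^{-1} \sum_i \mu|_i$ together with the triangle inequality on $\|\cdot\|_1$ and part (a) gives $\|\mu_1|_* - \mu_2|_*\|_\mrmtv \le n^{-1} \sum_i \|\mu_1|_i - \mu_2|_i\|_\mrmtv$. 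Finally, (f) is Pinsker's inequality, which I would quote from a standard reference rather than reprove. None of the items presents any genuine difficulty; the only mild care point is (c), where one must establish both inequalities, so I expect no real obstacle to assembling the observation.
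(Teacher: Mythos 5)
Your proposal is correct, and for parts (a), (d), (e), (f) it coincides in substance with the paper's proof (which cites standard references for (a), (e), (f) and uses the contraction $\|f(\bm x_1)-f(\bm x_2)\|_\mrmtv\le\|\bm x_1-\bm x_2\|_\mrmtv$ together with part (a) and the triangle inequality for (d)). Where you genuinely diverge is in parts (b) and (c): the paper handles both by direct $\ell^1$ manipulation of the probability mass functions, writing $2\|\mu_1\otimes\mu_2-\nu_1\otimes\nu_2\|_\mrmtv$ as a sum of absolute differences, splitting via the triangle inequality, then inducting for (b), and remarking that (c) ``follows similarly, using Part (a), distributivity and normalization of the conditional laws.'' You instead invoke the maximal-coupling representation from (e) for both: take the product of per-coordinate maximal couplings and union-bound for (b); take an optimal coupling of the first marginals, extend it by the same kernel on the diagonal, and apply (e) in both directions for (c). Both routes are standard and correct; your coupling route is arguably slicker, especially for (c) where the paper's $\ell^1$ computation is left implicit, but it imposes a logical dependency of (b) and (c) on (e), whereas the paper's $\ell^1$ arguments for (b) and (c) are self-contained. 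One tiny imprecision in your (c): the failure event of the extended coupling is contained in, not necessarily equal to, $\{\tilde{\bm x}\neq\tilde{\bm x}'\}$ (the kernel outputs can coincide by chance off the diagonal), but this only strengthens the inequality you need, so it causes no gap.
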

\begin{proof}
Part \ref{obs_tv}\ref{obs_tv_norm} can be found on page 153 in \cite{janson2000}, Part \ref{obs_tv}\ref{obs_tv_coupling} on page 10 in \cite{villani2009}, Part \ref{obs_tv}\ref{obs_tv_pinsker} is Pinsker's inequality, e.g.~Equation (2.8) in \cite{coja2018}.
For Part \ref{obs_tv}\ref{obs_tv_prod} we have
\begin{align*}
2\|\mu_1\otimes\mu_2-\nu_1\otimes\nu_2\|_\mrmtv
&=\sum_{x}|\mu_1(x_1)\mu_2(x_2)-\nu_1(x_1)\nu_2(x_2)|\\
&\le\sum_{x}\mu_1(x_1)|\mu_2(x_2)-\nu_2(x_2)|
+\sum_{x}\nu_2(x_2)|\mu_1(x_1)-\nu_1(x_1)|,
\end{align*}
so the assertion holds for $n=2$. The general case follows by induction analogous to the above.
Part \ref{obs_tv}\ref{obs_tv_cond} follows similarly, using  
Part \ref{obs_tv}\ref{obs_tv_norm}, distributivity and normalization of the conditional laws.
For Part \ref{obs_tv}\ref{obs_tv_lipschitz} notice that
\begin{align*}
\|f(\bm x_1)-f(\bm x_2)\|_\mrmtv\le\|\bm x_1-\bm x_2\|_\mrmtv
\end{align*}
holds in general and specifically for restrictions.
The second part of the assertion follows from Part \ref{obs_tv}\ref{obs_tv_norm} and the triangle inequality.
\end{proof}
We use the uniform distribution $\unif(\mcls)$, the Binomial distribution $\Bin(n,p)$ and the Poisson distribution $\Po(\lambda)$.
\begin{observation}\label{obs_poisson}
Let $\me\in\reals_{\ge 0}$ and $\mR\dequal\Po(\me)$.
\begin{alphaenumerate}
\item\label{obs_poisson_expe}
We have $\prob[\mR=m]m=\me\prob[\mR=m-1]$.
\item\label{obs_poisson_bin}
Let $p\in[0,1]$, $\mR_1\dequal\Po(p\me)$, $\mR_2\dequal\Po((1-p)\me)$ and $\bm n(m)\dequal\Bin(m,p)$, then we have $(\mR_1+\mR_2,\mR_1)\dequal(\mR,\bm n(\mR))$.
\item\label{obs_poisson_dkl}
For $\me'\in\reals_{>0}$, $\mR'\dequal\Po(\me')$ we have $\DKL(\mR\|\mR')=\me'-\me+\me\ln(\me/\me')$.
\item\label{obs_poisson_prob}
There exist $c,c'\in\reals_{>0}$ with $\prob[|\mR-\me|\ge r]\le c'\exp(-\frac{cr^2}{\me+ r})$ for $r\in\reals_{\ge 0}$.
\end{alphaenumerate}
\end{observation}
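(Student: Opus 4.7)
The plan is to dispatch each of the four parts by a short direct computation from the definition of the Poisson pmf $\prob[\mR=m]=e^{-\me}\me^m/m!$ together with one standard device.

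For part \ref{obs_poisson}\ref{obs_poisson_expe}, I would simply substitute the pmf: $m\prob[\mR=m]=m\cdot e^{-\me}\me^m/m!=\me\cdot e^{-\me}\me^{m-1}/(m-1)!=\me\prob[\mR=m-1]$, with the case $m=0$ handled by the convention that the left side vanishes. For part \ref{obs_poisson}\ref{obs_poisson_bin} (Poisson splitting), I would compute the joint law of $(\mR_1+\mR_2,\mR_1)$ under independence: for $m'\le m$,
\begin{align*}
\prob[\mR_1=m',\mR_2=m-m']
=\frac{e^{-p\me}(p\me)^{m'}}{m'!}\cdot\frac{e^{-(1-p)\me}((1-p)\me)^{m-m'}}{(m-m')!},
\end{align*}
and recognize that this equals $\prob[\mR=m]\binom{m}{m'}p^{m'}(1-p)^{m-m'}=\prob[\mR=m,\bm n(\mR)=m']$, giving the claimed equality in distribution.

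For part \ref{obs_poisson}\ref{obs_poisson_dkl}, I would plug the pmf into the definition of the relative entropy, use $\ln(\prob[\mR=m]/\prob[\mR'=m])=\me'-\me+m\ln(\me/\me')$ (the $m!$ cancels), and take the expectation under $\mR\dequal\Po(\me)$, using $\expe[\mR]=\me$ and $\sum_m\prob[\mR=m]=1$; the case $\me=0$ is handled by the convention $0\ln 0=0$ giving $\DKL(\Po(0)\|\Po(\me'))=\me'$, matching the formula in the limit $\me\darr 0$.

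Part \ref{obs_poisson}\ref{obs_poisson_prob} is the only part that requires genuine estimation rather than algebraic identities; this is the main obstacle. I would use the standard Chernoff/Bernstein approach: for $t\in\reals$, the moment generating function is $\expe[e^{t\mR}]=\exp(\me(e^t-1))$, which together with Markov's inequality yields
\begin{align*}
\prob[\mR\ge\me+r]\le\exp(\me(e^t-1)-t(\me+r))
\end{align*}
for $t\ge 0$, and the analogous lower-tail bound for $t\le 0$. Optimizing $t$ (or using the standard bound $e^t-1-t\le t^2/(2(1-|t|/3))$ on the cumulant) produces the Bennett/Bernstein inequality $\prob[|\mR-\me|\ge r]\le 2\exp(-r^2/(2(\me+r/3)))$, which is of the claimed form $c'\exp(-cr^2/(\me+r))$ with absolute constants. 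The only delicate point is to ensure the constants $c,c'$ do not depend on $\me$; this follows because the Bernstein bound splits cleanly into a sub-Gaussian regime $r\le\me$ and a sub-exponential regime $r>\me$, both controlled by $r^2/(\me+r)$ up to an absolute constant.
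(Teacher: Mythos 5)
Your proposal is correct and follows essentially the same route as the paper: parts (a)–(c) by direct manipulation of the Poisson pmf, and part (d) by a Chernoff/Bernstein bound. The paper simply cites Theorem 2.1 and Remark 2.6 of Janson–Łuczak–Ruciński (which yields exactly the Bennett-type bound you derive) and reads off $c=1/2$, $c'=2$, noting as you do that $\me+r/3\le\me+r$ handles the passage to the stated form and that the $\me=r=0$ corner is vacuous.
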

\begin{proof}
The first three parts can be easily verified directly, the last part follows
from Theorem 2.1 with Remark 2.6 in \cite{janson2000} and $c=1/2$, $c'=2$, where we notice that
for $\me=r=0$ the exponent is $0$.
\end{proof}
For $x\in\reals$ let $\lceil x\rceil x=\min\ints_{\ge x}$ and $\lfloor x\rfloor=\max\ints_{\le x}$.
\subsection{Proof of Proposition \ref{prop_validmodels}}\label{proof_prop_validmodels}
Let $\gamma^*\in\mclp([q])$ and $\pi\in(\mclp[*,\gamma^*][2]([q]))^2$.
Let $p\in\mclp[-1]\cup\mclp[1]$ and $\psiR\dequal p$ with $\psiR(\sigma)=\bm a(1-\bm b\bm\Delta(\sigma))$ for $\sigma\in[q]^k$.
Using $\psiR^\circ(\sigma)=1-\bm b\bm\Delta(\sigma)$, linearity of $\ZF$, $\ZFM$ in $\psi$ and $\xlnx(at)=a\xlnx(t)+t\xlnx(a)$ yields
\begin{align*}
\nablaI(\pi)&=\expe\left[\bm a\left(\xlnx\left(\ZF(\psiR^\circ,\gammaR_{\pi,1})\right)
+(k-1)\xlnx\left(\ZF(\psiR^\circ,\gammaR_{\pi,2})\right)
-k\xlnx\left(\ZFM(\psiR^\circ,\hR,\gammaR_{\pi})\right)
\right)\right]+\nabla_0,\\
\nabla_0&=\expe\left[\ln(\bm a)\left(\ZF(\psiR^\circ,\gammaR_{\pi,1})
+(k-1)\ZF(\psiR^\circ,\gammaR_{\pi,2})
-k\ZFM(\psiR^\circ,\hR,\gammaR_{\pi})\right)\right]=0,
\end{align*}
by taking the expectation over $\gammaR_\pi$ for $\nabla_0$ using independence and then using $\pi\in(\mclp[*,\gamma^*][2]([q]))^2$.
Using the Taylor series expansion $\xlnx(1-t)=-t+\sum_{\ell\ge 2}\frac{1}{\ell(\ell-1)}t^\ell$ for $t<1$ and $|\bm b\bm\Delta|<1$ yields
\begin{align*}
\nablaI(\pi)&=\sum_{\ell\ge 2}\expe\left[\frac{\bm a\bm b^\ell}{\ell(\ell-1)}\left(
\ZF(\bm\Delta,\gammaR_{\pi,1})^\ell+(k-1)\ZF(\bm\Delta,\gammaR_{\pi,2})^\ell
-k\ZFM(\bm\Delta,\hR,\gammaR_{\pi})^\ell
\right)\right]+\nabla_0,\\
\nabla_0&=\expe\left[\bm a\bm b\left(k\ZFM(\bm\Delta,\hR,\gammaR_{\pi})-(k-1)\ZF(\bm\Delta,\gammaR_{\pi,2})-\ZF(\bm\Delta,\gammaR_{\pi,1})\right)\right]=0,
\end{align*}
as before. Using conditional independence of $\bm b$ and $\bm\Delta$, we notice that $\nablaI(\pi)\ge 0$ if for each $\ell\ge 2$ we have
$\expe[\bm b^{\ell}|\bm a]\bm P(\ell)\ge 0$, where
\begin{align*}
\bm P(\ell)=\expe[\ZF(\bm\Delta,\gammaR_{\pi,1})^\ell+(k-1)\ZF(\bm\Delta,\gammaR_{\pi,2})^\ell
-k\ZFM(\bm\Delta,\hR,\gammaR_{\pi})^\ell|\bm a].
\end{align*}
Hence, for $p\in\mclp[-1]$ we can restrict to $\ell=2\ell'$, $\ell'\in\ints_{>0}$.
Let $\bm\Delta(\sigma)=\prod_h\bm f_{-1,h}(\sigma(h))$ and $\bm s_{i,h}=\sum_\sigma\bm f_{-1,h}(\sigma)\gammaR_{\pi,i,h}(\sigma)$, then distributivity and conditional independence yields
\begin{align*}
\bm P(\ell)
=\prod_{h}\bar{\bm s}_{1,h}+(k-1)\prod_{h}\bar{\bm s}_{2,h}-k\bar{\bm s}_{1,h}\prod_{h'\neq h}\bar{\bm s}_{2,h'},
\end{align*}
where $\bar{\bm s}_{i,h}=\expe[\bm s_{i,h}^\ell|\bm a]\ge 0$ since $\ell=2\ell'$. Now, recall that $(\gammaR_{\pi,i,h},\bm f_{-1,h})\dequal(\gammaR_{\pi,i,1},\bm f_{-1,1})$ given $\bm a$ have the same law and hence $\bar{\bm s}_{i,h}=\bar{\bm s}_{i,1}$.
This shows that $\nablaI(\pi)\ge 0$ for $p\in\mclp[-1]$ since
\begin{align*}
x^k+(k-1)y^k-kxy^{k-1}=(x-y)^2\sum_{r=0}^{k-2}(k-1-r)x^ry^{k-2-r}\ge 0
\end{align*}
for $x,y\ge 0$.
Let $\bm\Delta(\sigma)=\sum_{i\ge 1}\prod_h\bm f_{1,h,i}(\sigma_h)$ for $p\in\mclp[1]$ and $\bm s_{i,j,h}=\sum_\sigma\bm f_{1,h,i}(\sigma)\gammaR_{\pi,j,h}(\sigma)$, then we have
\begin{align*}
\bm P(\ell)
&=\expe\left[\left(\sum_i\prod_h\bm s_{i,1,h}\right)^\ell+(k-1)\left(\sum_i\prod_h\bm s_{i,2,h}\right)^\ell-k\left(\sum_i\bm s_{i,1,\hR}\prod_{h'\neq\hR}\bm s_{i,2,h'}\right)^\ell\middle|\bm a\right]\\
&=\sum_{i\in\ints_{>0}^\ell}(\bm x_1(i)+(k-1)\bm x_2(i)-\sum_h\bm x_{3,h}(i)),
\end{align*}
where $\bm x_j(i)=\expe[\prod_{m=1}^\ell\prod_{h}\bm s_{i(m),j,h}|\bm a]=\bar{\bm s}_j(i)^k$, $\bar{\bm s}_j(i)=\expe[\prod_m\bm s_{i(m),j,1}|\bm a]$, for $j\in[2]$ since the factors are conditionally independent in $h$ and have the same law, which also shows that
$\bm x_{3,h}(i)=\expe[\prod_{m=1}^\ell(\bm s_{i(m),1,h}\prod_{h'\neq h}\bm s_{i(m),2,h'})|\bm a]
=\bar{\bm s}_1(i)\bar{\bm s}_2(i)^{k-1}$ and hence that $\nablaI(\pi)\ge 0$ using the previous observations and $\bar{\bm s}_j(i)\ge 0$, since $\bm f\ge 0$.
Now, the assertion follows from the observation that $\nablaI_{\bar p,\gamma^*}(\pi)=\sum_i\alpha(i)\nablaI_{p(i),\gamma^*}(\pi)$ for $\bar p=\sum_i\alpha(i)p(i)$.
\section{Preparations}\label{preparations}
Recall $q,k\in\ints_{\ge 2}$, $\psibl\in(0,1/q)$, $\psibu=1/\psibl$ and $\degabu\in\reals_{>0}$ from Section \ref{random_factor_graphs}.
Further, let $\deltam,\epsm:\ints_{>0}\rightarrow\reals_{>0}$ with
$\lim_{n\rightarrow\infty}\deltam(n)=0$, $\lim_{n\rightarrow\infty}\epsm(n)=0$ denote the bounds that we will use for $\mR^*$ from Section \ref{implications_extensions_related_work}.
Hence, the global parameters are $\mfkg=(q,k,\psibl,\degabu,\deltam,\epsm)$.
We keep $\mfkg$ fixed throughout the remainder and do not track dependencies on $\mfkg$.
However, we occasionally write $c_\mfkg$ to stress that $c$ only depends on $\mfkg$.

Without loss of generality we may assume that $\psibl$ is arbitrarily small and that $\degabu$ is arbitrarily large since this only increases the set of model parameters.
We further assume without loss of generality that $\epsm$ and $\deltam$ are non-increasing.
After we restricted to the $\deltam$-ball around $d$, the largest average degree to be considered is $d+\deltam(n)\le\degabu+\deltam(1)$.
Without loss of generality we take $\deltam(1)=\degabu$, so $\mbu[,n]=2\degabu n/k$ from Section \ref{ps_nishimori} is the desired maximal factor count.

We use the Landau notation as discussed in Section \ref{proof_strategy}, i.e.~only with respect to the asymptotics of the number $n$ of variables and for functions depending on $n$ and $\mfkg$ only.
\subsection{Decorated Factor Graphs}\label{decorated_factor_graphs}
Let $\domPsi=[\psibl,\psibu]^{[q]^k}$ be the domain of the weights and let $\domG_{n,m}=([n]^k\times\domPsi)^m$ be the domain of the graphs from Section \ref{random_factor_graphs}.
\subsubsection{Random Decorated Graphs}\label{random_decorated_graphs}
The decorated factor graphs are given by
\begin{bulletlist}
\item a weight function $\psiRa\in\domPsi$ with law $\lawpsi$ and expectation $\psiae=\expe[\psiRa]$,
\item a ground truth distribution $\gamma^*\in\mclp([q])$ with $\gamma^*\ge\psibl$,
\item an average degree $\degae\in[0,\degabu]$ such that $(\lawpsi,\gamma^*,\degae)\in\mfkP=\mfkA\times[0,\degabu]$,
\item a Gibbs marginal distribution $\pi\in\mclp[*,\gamma^*][2]([q])$,
\item an interpolation time $\tI\in[0,1]$,
\item a pinning bound $\ThetaP\in\reals_{\ge 0}$,
\item a number $n\in\ints_{>\ThetaP}$ of variables,
\item a number $m\in\ints_{\ge 0}$ of factors,
\item a ground truth $\sigma\in[q]^n$,
\item interpolator counts $\mI\in\ints_{\ge 0}^n$ with $\facsI_{\mI}=\{i,h):i\in[n],h\in[\mI_i]\}$,
\item pins $\setP\setle[n]$ and
\item a pinning assignment $\sigmaP\in[q]^n$,
\end{bulletlist}
which we will keep fixed throughout the remainder.
For $G=(v,\psi)\in\domG$
let $[G]^{\Gamma}_{\gamma^*}=G'=(v'_a,\psi'_a)_{a\in\mcla}$ be given by $\mcla=[m]\dotcup[n]$,
$G'_{[m]}=G$ and $(v'_a,\psi'_a)=(a,\gamma^*)$ for $a\in[n]$, i.e.~we attach the unary weight $\gamma^*$ to each variable.
Similarly, for $\psiI\in\domPsiI[\facsI]$, $\domPsiI=[\psibl,\psibu]^q$, let $[G]^\lrarr_{\mI,\psiI}=G'=(v'_a,\psi'_a)_{a\in\mcla}$ be given by $\mcla=[m]\dotcup\facsI$, $G'_{[m]}=G$ and $(v'_a,\psi'_a)=(i,\psiI_{i,h})$ for $a=(i,h)\in\facsI$, i.e.~to each variable $i\in[n]$ we attach $\mI_i$ unary interpolation weights $\psiI_{i,h}$, $h\in[\mI_i]$.
Finally, the pinned graph is $[G]^\darr_{\setP,\sigmaP}=G'=(v'_a,\psi'_a)_{a\in\mcla}$ given by 
$\mcla=[m]\dotcup\setP$, $G'_{[m]}=G$ and the unary wires-weight pairs $(i,\opm[,[q],\sigmaP(i)])$ for $i\in\setP$.

For $G'=[G]^\Gamma$ we let $[G']^\lrarr$ be the graph obtained from $G'$ by attaching interpolators analogously to the above, and also define other combinations analogously.
Further, we define the combined operators analogously, e.g.~$[G]^{\Gamma\downarrow}$ attaches external fields and pins.

For the interpolation weight in the null model let $(\psiRa,\hR,\gammaR)\dequal\lawpsi\otimes\unif([k])\otimes\pi^{\otimes k}$ and
\begin{align}\label{psiIR_def}
\psiIRa[,\lawpsi,\gamma^*,\pi]:[q]\rightarrow[\psibl,\psibu],\,
\sigma\mapsto\sum_{\tau\in[q]^k}\bmone\{\tau_{\hR}=\sigma\}\psiRa(\tau)\prod_{h\neq\hR}\gammaR(\tau_h).
\end{align}
With $\mfkp=(\lawpsi,\gamma^*,\pi,n,m,\mI,\setP)$ the null model $\GR_{\mfkp}=[\wR]^{\Gamma\lrarr\darr}_{\psiIR,\sigmaPR}$ is given by
$(\wR,\psiIR,\sigmaPR)\dequal\wR\otimes\psiIR\otimes\sigmaPR$,
$\wR_{\lawpsi,n,m}=(\vR_{n,m},\psiR_{\lawpsi,m})\dequal\wRa^{\otimes m}$,
$\wRa[,\lawpsi,n]=(\vRa[,n],\psiRa[,\lawpsi])\dequal\unif([n]^k)\otimes\lawpsi$,
$\psiIR_{\lawpsi,\gamma^*,\pi,n,\mI}\dequal\psiIRa[][\otimes\facsI]$ and
$\sigmaPR_n\dequal\unif([q]^n)$.
The standard weight, Gibbs measure, partition function and free entropy density of $G=[(v,\psi)]^{\Gamma\lrarr\darr}$ are
\begin{align*}
\psiG[,G](\sigma)&=\gamma^{*\otimes n}(\sigma)\bmone\{\sigma_{\setP}=\sigmaP_{\setP}\}
\prod_{a\in[m]}\psi_a(\sigma_{v(a)})\prod_{(i,h)\in\facsI}\psiI_{i,h}(\sigma_i),\\
\lawG[,G](\sigma)&=\frac{\psiG[,G](\sigma)}{\ZG(G)},\,
\sigmaRG[,G]\dequal\lawG[,G],\,
\ZG(G)=\sum_{\sigma\in[q]^n}\psiG[,G](\sigma),\,
\phiG(G)=\frac{1}{n}\ln(\ZG(G)).
\end{align*}
Let $\psiM[,\mfkp](\sigma)=\expe[\psiG[,\GR](\sigma)]$, $\ZM[,\mfkp]=\expe[\ZG(\GR)]$ and $\phiM[,\mfkp]=\expe[\phiG(\GR)]$ be the expectations, and
$\GTS_{\mfkp}(\sigma)$ the teacher-student model given by the $(\GTS(\sigma),\GR)$-derivative $G\mapsto\psiG[,G](\sigma)/\psiM(\sigma)$.

Now, let $\thetaPR_{\ThetaP}\dequal\unif([0,\ThetaP])$, for $\thetaP\in\reals_{\ge 0}$ let
$\indPRTa[,\theta,n]\in\{0,1\}$ be Bernoulli with success probability $\sucPT[,\theta,n]=\theta/n\in[0,1]$, $\indPRT[,\theta,n]\dequal\indPRTa^{\otimes n}$ and $\setPR_{\ThetaP,n}=\indPRT[,\thetaPR]^{-1}(1)$.
Let $\mR_{\degae,\tI,n}\dequal\Po(\tI\degae n/k)$, $\mIRa[,\degae,\tI]\dequal\Po((1-\tI)\degae)$, $\mIR_{\degae,\tI,n}\dequal\mIRa[][\otimes n]$, and let the joint distribution be given by $(\mR,\mIR,\setPR)\dequal\mR\otimes\mIR\otimes\setPR$.
Let $\degaR_{\degae,\tI,n}=k\mR/n$ be the average degree (with respect to the standard factors) and $\degaIR_{\degae,\tI,n}=\|\mIR\|_1/n$ be the average degree with respect to the interpolators.
Finally, we consider $\sigmaIID_{\gamma^*,n}$ to be independent of $(\mR,\mIR,\setPR)$.
\begin{remark}
Clearly, the expected average degree $\degae$  corresponds to the expected degree $d$ in Section \ref{random_factor_graphs} and to the limit $d^*$ for $\mR^*$ in Section \ref{implications_extensions_related_work}, hence we let $\degae=d=d^*$ in the remainder without further mention.
Similarly, the law $\lawpsi$ is clearly $p$ from Section \ref{random_factor_graphs}, so we let $p=\lawpsi$ in the remainder without further mention.
\end{remark}
\subsubsection{Factor Assignment Distribution}\label{factor_assignment_distribution}
Recall $\ZFa[,\lawpsi]$ from Equation (\ref{ZFaDef}) and $\ZFabu_{\lawpsi}$ from Equation (\ref{ZF_def}).
For $\gamma\in\mclp([q])$ and $\tau\in[q]^k$ let $\lawYgC[,\lawpsi,\gamma]\in\mclp([q]^k)$ be given by
\begin{align}\label{lawYgC_def}
\lawYgC[,\gamma](\tau)=\frac{1}{\ZFa(\gamma)}\psiae(\tau)\prod_{h\in[k]}\gamma(\tau_h).
\end{align}
We will see that $\lawYgC$ is the law of the assignment to a factor induced by $\sigma$ under $\GTS(\sigma)$.
Further, it is clearly closely related to $\psiIRa$.
\begin{observation}\label{obs_fad}
Let $\gamma\in\mclp([q])$ and notice that the following holds.
\begin{alphaenumerate}
\item\label{obs_fad_bounds}
We have $\psibl\le\psiRa,\psiae,\ZFa,\ZFabu\le\psibu$.
\item\label{obs_fad_poly}
The map $\ZFa$ is a $q$-variate polynomial of degree $k$ on a compact set and hence attains $\ZFabu$.
\item\label{obs_fad_lipschitz}
There exists $L_\mfkg\in\reals_{>0}$ such that $\ZFa$ is $L$-Lipschitz.
\item\label{obs_fad_maxbound}
There exists $c_\mfkg\in\reals_{>0}$ such that $\ZFa(\gamma)\ge\ZFabu-c\|\gamma-\gamma^*\|_\mrmtv^2$.
\item\label{obs_fad_lipschitz_lawYgC}
There exists $L_\mfkg\in\reals_{>0}$ such that $\lawYgC:\mclp([q])\rarr\mclp([q]^k)$ is $L$-Lipschitz.
\item\label{obs_fad_rnbound}
There exists $c_\mfkg\in\reals_{>0}$ such that $c\le r\le c^{-1}$ for the $(\lawYgC[,\gamma],\gamma^{\otimes k})$-derivative $r_\gamma$.
\item\label{obs_fad_rnlipschitz}
There exists $L_\mfkg\in\reals_{>0}$ such that $r:\mclp([q])\rarr\reals_{>0}^{[q]^k}$, $\gamma\mapsto r_\gamma$, is $L$-Lipschitz.
\item\label{obs_fad_rnambound}
There exists $c_\mfkg\in\reals_{>0}$ such that $c\le r_*\le c^{-1}$ for the $(\lawYgC[,\gamma]|_*,\gamma)$-derivative $r_{*,\gamma}$.
\item\label{obs_fad_rnamlipschitz}
There exists $L_\mfkg\in\reals_{>0}$ such that $r_*:\mclp([q])\rarr\reals_{>0}^{q}$, $\gamma\mapsto r_{*,\gamma}$, is $L$-Lipschitz.
\item\label{obs_fad_amstar}
We have $\lawYgC[,\gamma^*]|_*=\gamma^*$ and hence $r_{*,\gamma^*}\equiv 1$.
\end{alphaenumerate}
\end{observation}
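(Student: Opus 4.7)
The plan is to prove all ten parts in sequence, with parts (a)--(c), (e)--(g), and (i) essentially immediate from the explicit formulas, and the main content concentrated in the quadratic bound (d) and the critical-point identity (j).

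For part (a), the bound $\psibl \le \psiRa \le \psibu$ holds pointwise by definition of $\domPsi$, so the same bound passes to $\psiae = \expe[\psiRa]$ and then to $\ZFa(\gamma) = \expe[\psiRa(\sigmaR)]$ for any $\gamma \in \mclp([q])$, hence to $\ZFabu$. For (b) I would simply expand $\ZFa(\gamma) = \sum_{\tau} \psiae(\tau)\prod_{h}\gamma(\tau_h)$, which is a polynomial of degree $k$ in the $q$ variables $\gamma(z)$, $z\in[q]$; compactness of $\mclp([q])$ then gives a maximizer. For (c), the partial derivatives of $\ZFa$ are polynomials of degree $k-1$ with coefficients bounded by $\psibu$, hence bounded on the compact simplex, giving a uniform Lipschitz constant $L_{\mfkg}$. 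For (e), writing $\lawYgC[,\gamma](\tau)=\psiae(\tau)\prod_h\gamma(\tau_h)/\ZFa(\gamma)$, the numerator is a polynomial in $\gamma$ with bounded coefficients, and the denominator is bounded below by $\psibl > 0$ (part (a)) and Lipschitz (part (c)), so $1/\ZFa$ is Lipschitz; the product of bounded Lipschitz functions is Lipschitz.

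For (f) and (g), on the support of $\gamma^{\otimes k}$ the derivative equals $r_\gamma(\tau) = \psiae(\tau)/\ZFa(\gamma)$; we set it to this formula globally. Then part (a) gives $c \le r_\gamma \le c^{-1}$ with $c = \psibl/\psibu$, and Lipschitz continuity in $\gamma$ follows again from $1/\ZFa$ being Lipschitz. For (h) and (i), a direct calculation from the definition of $|_*$ gives
\[
r_{*,\gamma}(\sigma) = \frac{1}{k\,\ZFa(\gamma)}\sum_{h\in[k]}\sum_{\tau \in [q]^k,\,\tau_h=\sigma}\psiae(\tau)\prod_{h'\neq h}\gamma(\tau_{h'}),
\]
and the inner double sum lies in $[k\psibl, k\psibu]$, yielding the bound; Lipschitz continuity is obtained as in (g).

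The quadratic bound (d) is the first place where $\gamma^* \in \mfkA$ matters beyond boundedness. Since $\gamma^* \ge \psibl > 0$, $\gamma^*$ lies in the relative interior of $\mclp([q])$, and since $\ZFa$ attains its maximum $\ZFabu$ at $\gamma^*$, the gradient $\nabla \ZFa(\gamma^*)$ must be orthogonal to the tangent space $\{v \in \reals^q : \sum_z v_z = 0\}$ of the simplex. A second-order Taylor expansion then gives
\[
\ZFa(\gamma) - \ZFabu = \nabla\ZFa(\gamma^*)\cdot(\gamma-\gamma^*) + R(\gamma) = R(\gamma),
\]
where the first term vanishes because $\gamma - \gamma^*$ is tangent, and $|R(\gamma)| \le M\|\gamma-\gamma^*\|_2^2$ with $M$ a uniform bound on the (polynomial) second derivatives of $\ZFa$ on the simplex. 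Equivalence of norms $\|\cdot\|_2$ and $\|\cdot\|_\mrmtv$ on $\reals^q$ (with constants depending only on $q$) finishes the bound with some $c_\mfkg$.

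For part (j), I would argue similarly via the interior critical point: the Lagrange condition $\nabla\ZFa(\gamma^*) = \lambda \mathbf{1}$ on the simplex means $\partial_{\gamma(\sigma)}\ZFa(\gamma^*)$ is independent of $\sigma$. Differentiating the polynomial expansion of $\ZFa$ shows that
\[
\partial_{\gamma(\sigma)}\ZFa(\gamma^*) = \sum_{h}\sum_{\tau:\tau_h=\sigma}\psiae(\tau)\prod_{h'\neq h}\gamma^*(\tau_{h'}) = k\,\ZFa(\gamma^*)\,r_{*,\gamma^*}(\sigma),
\]
so $r_{*,\gamma^*}$ is constant in $\sigma$; normalization $\sum_\sigma\gamma^*(\sigma)r_{*,\gamma^*}(\sigma) = 1$ forces $r_{*,\gamma^*}\equiv 1$ and hence $\lawYgC[,\gamma^*]|_* = \gamma^*$. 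The only subtle point in the whole observation is ensuring the Taylor-expansion argument for (d) has constants uniform in $(\lawpsi,\gamma^*) \in \mfkA$; this is immediate because the coefficients of the polynomial $\ZFa$ are $\psiae(\tau) \in [\psibl,\psibu]$ and the simplex is a fixed compact set, so the second-derivative bound $M$ depends only on $\mfkg$.
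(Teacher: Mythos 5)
Your proof is correct and follows essentially the same route as the paper: explicit polynomial expansion for (a)–(c), Taylor expansion with vanishing tangential gradient and bounded Hessian for (d), the closed formula $r_\gamma(\tau)=\psiae(\tau)/\ZFa(\gamma)$ for (f)–(g), and the Lagrange/interior-critical-point argument for (j). The only cosmetic differences are that the paper bounds the quadratic remainder in (d) via $\|\cdot\|_1$ directly rather than passing through $\|\cdot\|_2$, and derives (h) from (f) together with $\gamma^{\otimes k}|_*=\gamma$ rather than a fresh direct bound; none of this affects correctness.
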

\begin{proof}
Recall that $\psiRa\in\domPsi$, $\psiae=\expe[\psiRa]$, $\ZFa(\gamma)=\expe[\psiae(\sigmaIID_{\gamma,k})]$ and $\ZFabu=\sup_\gamma\ZFa(\gamma)$ for Part \ref{obs_fad}\ref{obs_fad_bounds}.
For Part \ref{obs_fad}\ref{obs_fad_poly} notice that $\ZFa$ is the restriction of $f:\reals^q\rarr\reals$, $x\mapsto\sum_\tau\psiae(\tau)\prod_hx_{\tau(h)}$ to $\mclp([q])\setle\reals^q$.
Since we need the derivatives anyway, notice that the $\tau$-th partial derivative $f_\tau(x)$ of $f$ for $\tau\in[q]$ at $x\in\reals^q$, using the product rule, is given by
\begin{align}\label{obs_fad_proof_gradient}
f_\tau(x)=\sum_h\sum_{\tau'}\psiae(\tau')\bmone\{\tau'_h=\tau\}\prod_{h'\neq h}x_{\tau'(h')},
\end{align}
so $k\psibl\le f_\tau(\gamma)\le k\psibu$ for $\gamma\in\mclp([q])$.
Now, the (one-dimensional) fundamental theorem of calculus ensures that
\begin{align*}
|\ZFa(\gamma_1)-\ZFa(\gamma_2)|\le k\psibu\|\gamma_1-\gamma_2\|_1=2k\psibu\|\gamma_1-\gamma_2\|_\mrmtv
\end{align*}
for $\gamma\in\mclp([q])^2$. For Part \ref{obs_fad}\ref{obs_fad_maxbound} we compute the Hessian
\begin{align*}
H_{x,\tau}=\sum_{h\in[k]_2}\sum_{\tau'}\psiae(\tau')\bmone\{\tau'_{h}=\tau\}\prod_{h'\not\in\{h_1,h_2\}}x_{h'},\,\tau\in[q]^2.
\end{align*}
This yields $k(k-1)\psibl\le H_{\gamma}\le k(k-1)\psibu$.
So, using that $\ZFa(\gamma^*)=\ZFabu$ is the maximum for $\gamma^*$ in the interior, i.e.~the first derivative vanishes, yields the assertion using the first order Taylor approximation with the Lagrange form of the remainder and $c=2k(k-1)\psibu$.

For Part \ref{obs_fad}\ref{obs_fad_lipschitz_lawYgC}
we use the triangle inequality, boundedness and Lipschitz continuity of $\ZFa$ and Observation \ref{obs_tv} to obtain
\begin{align*}
\|\lawYgC[,\gamma_1]-\lawYgC[,\gamma_2]\|_\mrmtv
\le\frac{1}{2}\left|1-\frac{\ZFa(\gamma_1)}{\ZFa(\gamma_2)}\right|
+\psibu^2\|\gamma_1^{\otimes k}-\gamma_1^{\otimes k}\|_\mrmtv
\le 2k\psibu^2\|\gamma_1-\gamma_2\|_\mrmtv.
\end{align*}
Part \ref{obs_fad}\ref{obs_fad_rnbound} follows from Part \ref{obs_fad}\ref{obs_fad_bounds} with $c=\psibu^2$, Part \ref{obs_fad}\ref{obs_fad_rnlipschitz} from Part \ref{obs_fad}\ref{obs_fad_bounds} and Part \ref{obs_fad}\ref{obs_fad_lipschitz}
since
\begin{align*}
\|r_{\gamma_2}-r_{\gamma_1}\|_1=\sum_\tau\frac{\psiae(\tau)}{\ZFa(\gamma_1)\ZFa(\gamma_2)}|\ZFa(\gamma_1)-\ZFa(\gamma_2)|\le 2kq^k\psibu^4\|\gamma_2-\gamma_1\|_\mrmtv,\,\gamma\in\mclp([q]).
\end{align*}
Part \ref{obs_fad}\ref{obs_fad_rnambound} follows from Part \ref{obs_fad}\ref{obs_fad_rnbound} and $\gamma^{\otimes k}|_*=\gamma$.
For Part \ref{obs_fad}\ref{obs_fad_rnamlipschitz} we use the triangle inequality and Part \ref{obs_fad}\ref{obs_fad_bounds} to get
\begin{align*}
\|r_{*,\gamma_2}-r_{*,\gamma_1}\|_1\le\sum_{\tau,h}\frac{\psibu}{k}\sum_{\tau'\in[q]^{k}}\bmone\{\tau'_h=\tau\}\left|
\frac{\gamma_2^{\otimes[k]\setminus\{h\}}\left(\tau'_{[k]\setminus\{h\}}\right)}{\ZFa(\gamma_2)}-\frac{\gamma_1^{\otimes[k]\setminus\{h\}}\left(\tau'_{[k]\setminus\{h\}}\right)}{\ZFa(\gamma_1)}\right|.
\end{align*}
Relabeling, the triangle inequality and Observation \ref{obs_tv}\ref{obs_tv_prod} yield
\begin{align*}
\|r_{*,\gamma_2}-r_{*,\gamma_1}\|_1\le q\psibu\left(\psibu(k-1)\|\gamma_2-\gamma_1\|_\mrmtv+\psibu^2\left|\ZFa(\gamma_1)-\ZFa(\gamma_2)\right|\right),
\end{align*}
so Part \ref{obs_fad}\ref{obs_fad_lipschitz} completes the proof.
Finally, for Part \ref{obs_fad}\ref{obs_fad_amstar} we recall the partial derivative $f_\tau(\gamma^*)=k\ZFa(\gamma^*)\lawYgC[,\gamma^*]|_*(\tau)/\gamma^*(\tau)=k\ZFa(\gamma^*)r_{*,\gamma^*}(\tau)$ from Equation (\ref{obs_fad_proof_gradient}) and that $\gamma^*$ is a maximizer, so the derivatives in the directions $\opm[,[q],\tau]-\opm[,[q],q]$, $\tau\in[q-1]$, vanish and hence $r_{*,\gamma^*}(\tau)=r_{*,\gamma^*}(q)$ for all $\tau$, which completes the proof.
\end{proof}
\subsubsection{Expectations and Bounds}\label{GRM_bounds_expe}
We derive naive bounds and compute the expectations, which ensures that the teacher-student model is well-defined since $\psiM>0$ and $\phiG$, $\lawG$ are well-defined since $\ZG>0$.
\begin{observation}\label{obs_GRM_expebounds}
Let $M=m+\|\mI\|_1$.
\begin{alphaenumerate}
\item\label{obs_GRM_expeboundsI}
We have $\expe[\psiIRa]\equiv\ZFabu$ and $\psibl\le\psiIRa\le\psibu$.
\item\label{obs_GRM_expeboundsPsiG}
We have $\psibl^{M}\bmone\{\sigma_{\setP}=\sigmaP_{\setP}\}\gamma^{*\otimes n}(\sigma)
\le\psiG[,G](\sigma)\le\psibu^{M}\bmone\{\sigma_{\setP}=\sigmaP_{\setP}\}\gamma^{*\otimes n}(\sigma)$ for $G\in[\domG]^{\Gamma\lrarr\darr}$.
\item\label{obs_GRM_expeboundsZG}
We have $\psibl^{M}\gamma^{*\otimes\setP}(\sigmaP_{\setP})\le\ZG(G)\le\psibu^{M}\gamma^{*\otimes\setP}(\sigmaP_{\setP})$ for $G\in[\domG]^{\Gamma\lrarr\darr}$.
\item\label{obs_GRM_expeboundsLawG}
We have $\psibl^{2M}\prob[\sigmaIID=\sigma|\sigmaIID_\setP=\sigmaP_\setP]
\le\lawG[,G](\sigma)\le\psibu^{2M}\prob[\sigmaIID=\sigma|\sigmaIID_\setP=\sigmaP_\setP]$ for $G\in[\domG]^{\Gamma\lrarr\darr}$.
\item\label{obs_GRM_expeboundsPsiM}
We have $\psiM(\sigma)=q^{-|\setP|}\ZFabu^{\|\mI\|_1}\gamma^{*\otimes n}(\sigma)\ZFa(\gammaN[,\sigma])^m$.
\item\label{obs_GRM_expeboundsZM}
We have $\ZM=q^{-|\setP|}\ZFabu^{\|\mI\|_1}\expe[\ZFa(\gammaN[,\sigmaIID])^m]$.
\end{alphaenumerate}
\end{observation}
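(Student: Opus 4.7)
The plan is to treat parts~(a) and~(e) as the only substantive computations; the remaining four parts are mechanical book-keeping once these are in hand.

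For part~(a), I would first verify the pointwise bounds $\psibl\le\psiIRa\le\psibu$. Expanding Equation~(\ref{psiIR_def}) gives $\psiIRa(\sigma)=\sum_{\tau\in[q]^k,\,\tau_\hR=\sigma}\psiRa(\tau)\prod_{h\neq\hR}\gammaR_h(\tau_h)$, and since the coefficients $\prod_{h\neq\hR}\gammaR_h(\tau_h)$ sum to $1$ over the admissible $\tau$, this exhibits $\psiIRa(\sigma)$ as a convex combination of values in $[\psibl,\psibu]$. The substantive content is $\expe[\psiIRa]\equiv\ZFabu$. Taking expectations with $(\psiRa,\hR,\gammaR)$ independent and using $\expe[\gammaR_h(\tau_h)]=\gamma^*(\tau_h)$, which follows from $\pi\in\mclp[*,\gamma^*][2]([q])$, yields
\begin{align*}
\expe[\psiIRa(\sigma)]=\frac{1}{k}\sum_{h\in[k]}\sum_{\tau\in[q]^k,\,\tau_h=\sigma}\psiae(\tau)\prod_{h'\neq h}\gamma^*(\tau_{h'}).
\end{align*}
Multiplying and dividing by $\gamma^*(\sigma)$ identifies the right-hand side with $\ZFa(\gamma^*)\lawYgC[,\gamma^*]|_*(\sigma)/\gamma^*(\sigma)$. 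By Observation~\ref{obs_fad}\ref{obs_fad_amstar} this ratio equals $r_{*,\gamma^*}(\sigma)=1$, and $\ZFa(\gamma^*)=\ZFabu$ by the assumption $(p,\gamma^*)\in\mfkA$, so $\expe[\psiIRa(\sigma)]=\ZFabu$ for every $\sigma$. The main obstacle is really this last step: the conclusion is $\sigma$-independent only because $\gamma^*$ is a maximizer of $\ZFa$; without this optimality condition, the right-hand side would depend on $\sigma$ through the gradient of $\ZFa$ at $\gamma^*$.

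Parts~(b)--(d) follow mechanically from the factorised form of $\psiG[,G]$: it splits as the product of $\gamma^{*\otimes n}(\sigma)\bmone\{\sigma_\setP=\sigmaP_\setP\}$ and $M=m+\|\mI\|_1$ weight evaluations, each lying in $[\psibl,\psibu]$, yielding~(b). Summing over $\sigma$ and using the identity $\sum_\sigma\bmone\{\sigma_\setP=\sigmaP_\setP\}\gamma^{*\otimes n}(\sigma)=\gamma^{*\otimes\setP}(\sigmaP_\setP)$ gives~(c), and dividing gives~(d) after recognising $\bmone\{\sigma_\setP=\sigmaP_\setP\}\gamma^{*\otimes n}(\sigma)/\gamma^{*\otimes\setP}(\sigmaP_\setP)$ as $\prob[\sigmaIID=\sigma\mid\sigmaIID_\setP=\sigmaP_\setP]$ and collecting the factor $\psibu^M/\psibl^M=\psibu^{2M}$ (using $\psibu=\psibl^{-1}$).

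For parts~(e) and~(f) I would exploit the independence of $(\wR,\psiIR,\sigmaPR)$ in the null model and compute $\expe[\psiG[,\GR](\sigma)]$ factor by factor: the pinning contribution gives $\prob[\sigmaPR_\setP=\sigma_\setP]=q^{-|\setP|}$ since $\sigmaPR\sim\unif([q]^n)$; each of the $\|\mI\|_1$ interpolators contributes $\ZFabu$ by part~(a); and the computation already carried out in the proof of Observation~\ref{obs_psiexpenullprod} shows that each of the $m$ standard factors contributes $\expe[\psiR_a(\sigma_{\vR_a})]=\ZFa(\gammaN[,\sigma])$. Multiplying these ingredients yields~(e), and summing against $\sigmaIID\sim\gamma^{*\otimes n}$ gives~(f) by linearity of expectation.
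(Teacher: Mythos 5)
Your proposal is correct and follows essentially the same route as the paper's proof: part~(a) via independence, $\pi\in\mclp[*,\gamma^*][2]([q])$, $\ZFa(\gamma^*)=\ZFabu$, and Observation~\ref{obs_fad}\ref{obs_fad_amstar} (your convex-combination remark for the pointwise bounds is what the paper compresses into a citation of Observation~\ref{obs_fad}\ref{obs_fad_bounds}); parts~(b)--(d) by the factorised form of $\psiG$; and parts~(e)--(f) by independence, part~(a) and the computation from the proof of Observation~\ref{obs_psiexpenullprod}.
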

\begin{proof}
For Part \ref{obs_GRM_expebounds}\ref{obs_GRM_expeboundsI} we use independence, further $\pi\in\mclp[*,\gamma^*][2]([q])$ for the expectations, $\ZFa(\gamma^*)=\ZFabu$ for the normalization and Observation \ref{obs_fad}\ref{obs_fad_amstar} to obtain
\begin{align*}
\expe[\psiIRa(\tau)]=\sum_h\frac{1}{k}\sum_{\tau'}\bmone\{\tau'_h=\tau\}\psiae(\tau')\prod_{h'\neq h}\gamma^*(\tau'_{h'})
=\ZFabu r_{*,\gamma^*}(\tau)=\ZFabu,\tau\in[q],
\end{align*}
while $\psiIRa\in\domPsiI$ is immediate from Observation \ref{obs_fad}\ref{obs_fad_bounds}.
Part \ref{obs_GRM_expebounds}\ref{obs_GRM_expeboundsPsiG} follows with 
Part \ref{obs_GRM_expebounds}\ref{obs_GRM_expeboundsI} and
Observation \ref{obs_fad}\ref{obs_fad_bounds},
Part \ref{obs_GRM_expebounds}\ref{obs_GRM_expeboundsZG}
follows with Part \ref{obs_GRM_expebounds}\ref{obs_GRM_expeboundsPsiG},
Part \ref{obs_GRM_expebounds}\ref{obs_GRM_expeboundsLawG}
follows with Part \ref{obs_GRM_expebounds}\ref{obs_GRM_expeboundsPsiG} and
Part \ref{obs_GRM_expebounds}\ref{obs_GRM_expeboundsZG}.
Part \ref{obs_GRM_expebounds}\ref{obs_GRM_expeboundsPsiM} follows with independence,
Part \ref{obs_GRM_expebounds}\ref{obs_GRM_expeboundsI} and
the proof of Observation \ref{obs_psiexpenullprod} for the standard factors.
Part \ref{obs_GRM_expebounds}\ref{obs_GRM_expeboundsZM} follows from
Part \ref{obs_GRM_expebounds}\ref{obs_GRM_expeboundsPsiM} by summing over $\sigma$.
\end{proof}
Next, we provide bounds for $\degaR$ corresponding to the bounds for $\mR^*$ in Section \ref{implications_extensions_related_work}, a corollary to Observation \ref{obs_poisson}.
\begin{corollary}\label{cor_dega}
Let $r\in\reals_{\ge 0}$, $\mclb[][\circ]=(\tI\degae-r,\tI\degae+r)$ and
$\mclb[][\lrarr]=((1-\tI)\degae-r,(1-\tI)\degae+r)$.
We have $\|\mIR\|_1\dequal\Po((1-\tI)\degae n)$.
Further, there exists $c_\mfkg\in\reals_{>0}^2$ such that
\begin{align*}
\prob[\degaR\not\in\mclb[][\circ]],\,
\prob[\degaIR\not\in\mclb[][\lrarr]],\,
\expe[\bmone\{\degaR\not\in\mclb[][\circ]\}\degaR],\,
\expe[\bmone\{\degaIR\not\in\mclb[][\lrarr]\}\degaIR]\le c_2\exp\left(-\frac{c_1r^2n}{1+r}\right).
\end{align*}
\end{corollary}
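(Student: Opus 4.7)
The plan is to derive all four estimates by reduction to Observation \ref{obs_poisson}, exploiting that $\degaR$ and $\degaIR$ are linear rescalings of Poisson random variables. First, I would establish $\|\mIR\|_1 \dequal \Po((1-\tI)\degae n)$ directly from $\mIR \dequal \mIRa^{\otimes n}$ and the standard additivity of $n$ i.i.d.\ $\Po((1-\tI)\degae)$ summands. The remaining four bounds all follow by pulling out the multiplicative prefactor $k/n$ (for $\degaR$) or $1/n$ (for $\degaIR$) and applying the Poisson tail estimates of Observation \ref{obs_poisson}.

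For the tail probabilities I would unfold the definitions: the event $\degaR \notin \mclb[][\circ]$ is equivalent to $|\mR - \tI\degae n/k| \ge rn/k$, and Observation \ref{obs_poisson}\ref{obs_poisson_prob} applied with parameter $\me = \tI\degae n/k \le \degabu n/k$ and deviation $rn/k$ yields
\begin{align*}
\prob[|\mR - \tI\degae n/k| \ge rn/k] \le c'\exp\left(-\frac{c(rn/k)^2}{\tI\degae n/k + rn/k}\right) = c'\exp\left(-\frac{cr^2 n}{k(\tI\degae+r)}\right).
\end{align*}
Since $\tI\degae \le \degabu$, the denominator is at most $k(\degabu+r) \le k\max(1,\degabu)(1+r)$, so absorbing the $k$ and $\degabu$ factors into new constants depending only on $\mfkg$ puts the bound into the required form. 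The same argument applied to $\|\mIR\|_1 \dequal \Po((1-\tI)\degae n)$ covers $\degaIR$ verbatim.

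For the two expectations I would use Observation \ref{obs_poisson}\ref{obs_poisson_expe} to strip off the factor $\mR$. The upper tail telescopes as
\begin{align*}
\expe[\bmone\{\mR \ge \me + s\}\mR] = \sum_{m \ge \me+s} m\,\prob[\mR=m] = \me\sum_{m \ge \me+s}\prob[\mR=m-1] \le \me\,\prob[|\mR-\me|\ge s-1],
\end{align*}
while the lower tail gives $\expe[\bmone\{\mR \le \me-s\}\mR] \le \me\,\prob[|\mR-\me|\ge s]$ trivially. Combining and substituting $\me \le \degabu n/k$, $s = rn/k$ into the probability bound from the previous step yields the estimate for $\degaR$ in the regime where $s-1 \ge s/2$; for small $r$ the trivial bound $\expe[\bmone_A \degaR] \le \expe[\degaR] = \tI\degae \le \degabu$ suffices once $c_2$ is enlarged. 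The $\degaIR$ estimate is identical. There is no genuine obstacle here; the only subtlety is shepherding the constants so that all $\degae$- and $k$-dependent factors in the denominators can be absorbed into constants depending only on $\mfkg$ to reach the precise form $\exp(-c_1 r^2 n/(1+r))$ stated in the corollary.
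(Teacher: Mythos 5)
Your proposal is correct and follows essentially the same route as the paper: unfold $\degaR$, $\degaIR$ as rescaled Poisson variables and apply Observation \ref{obs_poisson}\ref{obs_poisson_prob} for the tail probabilities (absorbing $k$ and $\degabu$ into the constants via $\tI\degae+r\le\degabu(1+r)$), then reduce the weighted-indicator expectations to tail probabilities through the Poisson shift identity of Observation \ref{obs_poisson}\ref{obs_poisson_expe}, with a separate trivial bound $\expe[\degaR]\le\degabu$ for the regime of small $r$. The only cosmetic difference is that you split the upper and lower tails before applying the shift identity, whereas the paper applies it directly to the two-sided event; both handle the shift from $s$ to $s-1$ (respectively $rn/k$ to $(rn-k)/k$) by restricting to the regime where this shift is dominated by half the deviation.
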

\begin{proof}
We have $\|\mIR\|_1\dequal\Po((1-\tI)\degae n)$ by Observation \ref{obs_poisson}\ref{obs_poisson_bin}.
With $c\in\reals_{>0}^2$ from Observation \ref{obs_poisson}\ref{obs_poisson_prob} we have
\begin{align*}
\prob[|\degaR-\tI\degae|\ge r]=\prob\left[\left|\mR-\frac{\tI\degae n}{k}\right|\ge\frac{rn}{k}\right]\le c_2\exp\left(-\frac{c_1nr^2}{k(\tI\degae+r)}\right)
\end{align*}
and hence the first part follows with $c_1/(k\degabu)$, using $\tI\le 1$ and $\degabu\ge 1$.
For the third part fix $\rho\in\reals_{>0}$ large.
For $r\le\rho/\sqrt{n}$, $c_1>0$ notice that $\exp(-c_1r^2n/(1+r))\ge e^{-c_1\rho^2}$, so for $c_2\ge\degabu e^{c_1\rho^2}$ we have
\begin{align*}
\expe[\bmone\{|\degaR-\tI\degae|\ge r\}\degaR]
\le\expe[\degaR]=\tI\degae\le\degabu\le c_2\exp\left(-\frac{c_1r^2n}{1+r}\right).
\end{align*}
So, let $r\ge\rho/\sqrt{n}$. Using Observation \ref{obs_poisson}\ref{obs_poisson_expe} and the triangle inequality yields
\begin{align*}
\expe[\bmone\{|\degaR-\tI\degae|\ge r\}\degaR]
&=\tI\degae\prob\left[\left|\mR+1-\frac{\tI\degae n}{k}\right|\ge\frac{rn}{k}\right]
\le\degabu\prob\left[\left|\mR-\frac{\tI\degae n}{k}\right|\ge\frac{rn-k}{k}\right]
\end{align*}
and $rn-k\ge\frac{1}{2}rn+\frac{1}{2}\rho-k\ge\frac{1}{2}rn$ using $n\ge 1$, so the first part completes the proof, since the results for $\degaIR$ follow analogously.
\end{proof}
\subsubsection{Independent Factors}\label{GTSM_iid}
We extend Observation \ref{obs_psiexpenullprod} to decorated graphs.
Let $\gamma=\gammaN[,\sigma]$.
Further, let the teacher-student model wires-weight pair $\wTSa[,\lawpsi,n,\sigma]$ be given by the $(\wTSa,\wRa)$-derivative $(v,\psi)\mapsto\psi(\sigma_v)/\ZFa(\gamma)$.
For $\tau\in[q]$ the interpolation weight $\psiITSa[,\lawpsi,\gamma^*,\pi,\tau]$ is given by the $(\psiITSa,\psiIRa)$-derivative $\psi\mapsto\psi(\tau)/\ZFabu$.
Finally, let 
\begin{align*}
(\wTSM[\lawpsi,n,m,\sigma],\psiITSM[\lawpsi,\gamma^*,\pi,n,\mI,\sigma])\dequal\wTSa[][\otimes m]\otimes\bigotimes_{i\in[n]}\psiITSa[,\sigma(i)][\otimes\mI_i].
\end{align*}
\begin{observation}\label{obs_TSM_iid}
We have $\GTS(\sigma)\dequal[\wTSM[\sigma]]^{\Gamma\lrarr\darr}_{\bm a}$ with $\bm a=(\psiITSM[\sigma],\setP,\sigma)$.
\end{observation}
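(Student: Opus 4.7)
I would exhibit the $(\GTS(\sigma),\GR)$-derivative $\psiG[,G](\sigma)/\psiM(\sigma)$ as a product of three factors, each depending on only one of the independent components $\wR$, $\psiIR$, $\sigmaPR$ of $\GR=[\wR]^{\Gamma\lrarr\darr}_{\psiIR,\sigmaPR}$. Independence then lets the reweighting act component-wise, and each component matches a teacher-student object by definition.

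Substituting the definition of $\psiG[,G]$ from Section \ref{random_decorated_graphs} together with the closed form of $\psiM(\sigma)$ from Observation \ref{obs_GRM_expebounds}\ref{obs_GRM_expeboundsPsiM} and cancelling the common $\gamma^{*\otimes n}(\sigma)$ yields
\begin{align*}
\frac{\psiG[,G](\sigma)}{\psiM(\sigma)}
=\left(q^{|\setP|}\bmone\{\sigmaPR_\setP=\sigma_\setP\}\right)\cdot\prod_{a\in[m]}\frac{\psi_a(\sigma_{v_a})}{\ZFa(\gammaN[,\sigma])}\cdot\prod_{(i,h)\in\facsI}\frac{\psiI_{i,h}(\sigma_i)}{\ZFabu}.
\end{align*}
The first factor depends only on $\sigmaPR$, the second only on $\wR$, and the third only on $\psiIR$.

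Each of the three factors is a known derivative. The computation $\zeta_\sigma=\ZFa(\gammaN[,\sigma])$ from the proof of Observation \ref{obs_psiexpenullprod} says $\expe[\psiRa(\sigmaR_{\vRa})]=\ZFa(\gammaN[,\sigma])$, so $(v,\psi)\mapsto\psi(\sigma_v)/\ZFa(\gammaN[,\sigma])$ is the $(\wTSa,\wRa)$-derivative and the product over $a\in[m]$ is the $(\wTSM[\sigma],\wR)$-derivative with $\wTSM[\sigma]=\wTSa[][\otimes m]$. Observation \ref{obs_GRM_expebounds}\ref{obs_GRM_expeboundsI} gives $\expe[\psiIRa(\tau)]=\ZFabu$, so $\psi\mapsto\psi(\tau)/\ZFabu$ is the $(\psiITSa[,\tau],\psiIRa)$-derivative and the product over $(i,h)\in\facsI$ is the $(\psiITSM[\sigma],\psiIR)$-derivative with $\psiITSM[\sigma]=\bigotimes_{i\in[n]}\psiITSa[,\sigma(i)][\otimes\mI_i]$. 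Finally, reweighting $\sigmaPR\dequal\unif([q]^n)$ by $q^{|\setP|}\bmone\{\sigmaPR_\setP=\sigma_\setP\}$ (a valid density since $q^{|\setP|}\cdot q^{-|\setP|}=1$) pins the $\setP$-coordinates to $\sigma_\setP$.

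Combining the three mutually independent reweightings produces $\GTS(\sigma)\dequal[\wTSM[\sigma]]^{\Gamma\lrarr\darr}_{(\psiITSM[\sigma],\setP,\sigma)}$. No serious obstacle arises — once the factorization of the Radon-Nikodym derivative is in hand the argument is bookkeeping — but one must interpret the pin step at the level of the decorated graph: since $\psiG[,G]$ depends on the pin assignment only through its restriction to $\setP$, the values of $\sigma$ outside $\setP$ in $\bm a=(\psiITSM[\sigma],\setP,\sigma)$ are irrelevant to the graph, and so the distributional identity holds even though the off-$\setP$ coordinates of the pin vector have not been explicitly pinned.
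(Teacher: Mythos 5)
Your proposal is correct and follows essentially the same route as the paper's own proof: cancel $\gamma^{*\otimes n}(\sigma)$ in the $(\GTS(\sigma),\GR)$-derivative using Observation \ref{obs_GRM_expebounds}\ref{obs_GRM_expeboundsPsiM}, factor the derivative into three pieces acting on the independent components $\wR$, $\psiIR$, $\sigmaPR$, and handle the pin term via the observation that the pinned graph depends on the pin assignment only through its restriction to $\setP$, so the uniform off-$\setP$ coordinates of the reweighted $\sigmaPR$ are immaterial. The only cosmetic difference is that you present the factorization of the Radon--Nikodym derivative up front, whereas the paper writes the same product directly inside the expectation and then disposes of the $\sigmaPR$-factor by substitution and integration.
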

\begin{proof}
Let $\gamma=\gammaN[,\sigma]$ and
$r(v,\psi)=\psi(\sigma_v)/\ZFa(\gamma)$, $r^{\lrarr}_\tau(\psi)=\psi(\tau)/\ZFabu$ denote the derivatives. Observation \ref{obs_GRM_expebounds}\ref{obs_GRM_expeboundsPsiM} shows that $\gamma^{*\otimes n}(\sigma)$ cancels out in the $(\GTSM(\sigma),\GRM)$-derivative and further
\begin{align*}
\prob[\GTSM(\sigma)\in\mcle]=\expe\left[q^{|\setP|}\bmone\{\sigma_{\setP}=\sigmaPR_{\setP}\}\prod_{a}r(\wR_a)\prod_{i,h}r^{\lrarr}_{\sigma(i)}(\psiIR_{i,h})\bmone\left\{[\wR]^{\Gamma\lrarr\darr}_{\psiIR,\setP,\sigmaPR}\in\mcle\right\}\right].
\end{align*}
Recall that $[\cdot]^\darr_{\setP,\sigmaP}$ only depends on $\sigmaP$ through the values $\sigmaP_{\setP}$ to be pinned, so on the event $\sigma_{\setP}=\sigmaPR_{\setP}$ we have 
$[\wR]^{\Gamma\lrarr\darr}_{\psiIR,\setP,\sigmaPR}=[\wR]^{\Gamma\lrarr\darr}_{\psiIR,\setP,\sigma}$. After this substitution we can take the expectation over $\sigmaPR$ due to independence, i.e.~$\expe[q^{|\setP|}\bmone\{\sigma_{\setP}=\sigmaPR_{\setP}\}]=1$.
This completes the proof, due to independence.
\end{proof}
\begin{remark}\label{remark_stgonly}
Observation \ref{obs_TSM_iid} allows to discuss the standard graph $\wTSM$, the interpolators $\psiITSM[\sigma]$ and the pins separately in most situations.
We will make use of this convenient feature to reduce the (notational) complexity and increase the transparency.
For example, in the following we will discuss the law of the standard graph, further notions and properties. This discussion directly applies to $\wTSM$ and in this sense to $\GTSM(\sigma)$.
\end{remark}
\subsubsection{Factor Side Assignments}\label{GTSYM}
For $G=(v,\psi)\in\domG$ let $\tauG[,G,\sigma]=(\sigma_{v(a)})_{a\in[m]}$ be the assignment to the factors induced by $\sigma$ under $G$, and let $\tauTSM[\lawpsi,n,m,\sigma]=\tauG[,\GTSM(\sigma),\sigma]=\tauG[,\wTSM(\sigma),\sigma]$ be the induced ground truth factor assignment.
Using $\gamma=\gammaN[,\sigma]$, let $\domC[,\gamma]=\gamma^{-1}(\reals_{>0})\setle[q]$ and notice that $\tauG[,G,\sigma]\in(\domC[,\gamma]^k)^m$.

On the other hand, let $\tauTSa[,\lawpsi,\gamma]\dequal\lawYgC[,\gamma]$ with $\lawYgC[,\gamma]$ from Section \ref{factor_assignment_distribution} and notice that the support of $\tauTSa[,\gamma]$ is $\domC[,\gamma]^k$ by Observation \ref{obs_fad}\ref{obs_fad_rnbound}.
For $\tau\in\domC[,\gamma]^k$ let
$\wTSYa[,\lawpsi,n,\sigma,\tau]=(\vTSYa,\psiTSYa)\dequal\vTSYa\otimes\psiTSYa$, where
$\vTSYa[,n,\sigma,\tau]\dequal\bigotimes_h\unif(\sigma^{-1}(\tau_h))$ and
$\psiTSYa[,\lawpsi,\tau]$ is given by the $(\psiTSYa,\psiRa)$-derivative $\psi\mapsto\psi(\tau)/\psiae(\tau)$.
Finally, for $\tau\in(\domC[,\gamma]^k)^m$ let
$\wTSYM[\lawpsi,n,m](\sigma,\tau)
\dequal\bigotimes_{a\in[m]}\wTSYa[,\tau(a)]$.
\begin{observation}\label{obs_TSYM}
We have $(\tauTSM[\sigma],\wTSM(\sigma))\dequal(\bm\tau,\wTSYM(\sigma,\bm\tau))$, $\bm\tau\dequal\tauTSa[,\gamma][\otimes m]$, $\gamma=\gammaN[,\sigma]$.
\end{observation}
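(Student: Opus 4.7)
The plan is to exploit the independence structure of $\wTSM$ and $\tauTSM$ to reduce the claim to the single-factor case, and then verify the identity of laws by a direct Radon-Nikodym computation.

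First I would observe that by Observation \ref{obs_TSM_iid}, $\wTSM(\sigma)$ is a product of $m$ i.i.d.~copies of $\wTSa[,\sigma]$, and hence $\tauTSM[\sigma]=(\sigma_{\vTSa[,a]})_{a\in[m]}$ is also a product of $m$ i.i.d.~copies. On the right-hand side $\bm\tau\dequal\tauTSa[,\gamma]^{\otimes m}$ is i.i.d.~by construction, and conditional on $\bm\tau$ the factor graph $\wTSYM(\sigma,\bm\tau)$ is a product of independent $\wTSYa[,\sigma,\bm\tau(a)]$. Thus it suffices to prove the single-factor version: $(\sigma_{\vTSa[,\sigma]},\wTSa[,\sigma])\dequal(\tauTSa[,\gamma],\wTSYa[,\sigma,\tauTSa[,\gamma]])$.

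For the single-factor identity, I would compute both sides against a test event of the form $\{\tau\}\times A$ for $\tau\in\domC[,\gamma]^k$ and measurable $A\subseteq[n]^k\times\domPsi$. On the left, using the $(\wTSa,\wRa)$-derivative $(v,\psi)\mapsto\psi(\sigma_v)/\ZFa(\gamma)$ and splitting the integrand by inserting the trivial factor $\psiae(\tau)/\psiae(\tau)$, I get
\begin{align*}
\prob[\sigma_{\vTSa}=\tau,\wTSa\in A]=\frac{\psiae(\tau)}{\ZFa(\gamma)}\sum_v\bmone\{\sigma_v=\tau\}\frac{1}{n^k}\expe\!\left[\bmone\{(v,\psiRa)\in A\}\frac{\psiRa(\tau)}{\psiae(\tau)}\right].
\end{align*}
The inner expectation is exactly $\prob[(v,\psiTSYa[,\tau])\in A]$ by the defining Radon-Nikodym derivative of $\psiTSYa[,\tau]$. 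On the right, using $\prob[\tauTSa[,\gamma]=\tau]=\psiae(\tau)\prod_h\gamma(\tau_h)/\ZFa(\gamma)$ from \eqref{lawYgC_def} and $\prob[\vTSYa[,\sigma,\tau]=v]=\bmone\{\sigma_v=\tau\}/\prod_h|\sigma^{-1}(\tau_h)|=\bmone\{\sigma_v=\tau\}/\prod_h(n\gamma(\tau_h))$ for $\tau\in\domC[,\gamma]^k$, independence of $\vTSYa$ and $\psiTSYa$ gives the same expression. For $\tau\notin\domC[,\gamma]^k$ both sides are zero, completing the identification.

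The proof is essentially a bookkeeping exercise, so there is no serious obstacle; the only point requiring care is the restriction to $\tau\in\domC[,\gamma]^k$ so that the uniform laws $\unif(\sigma^{-1}(\tau_h))$ appearing in $\vTSYa$ are well-defined, and the matching of normalizing constants $\prod_h\gamma(\tau_h)$ between the density of $\tauTSa$ and the normalization of $\vTSYa$.
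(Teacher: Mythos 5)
Your proposal is correct and takes essentially the same approach as the paper: reduce to a single factor by independence (via Observation \ref{obs_TSM_iid}), then match the laws by unpacking the Radon–Nikodym derivatives and the $\lawYgC$ density, with the $\prod_h\gamma(\tau_h)$ from $\lawYgC[,\gamma]$ cancelling against the $\prod_h|\sigma^{-1}(\tau_h)|=\prod_h n\gamma(\tau_h)$ from $\vTSYa$. The paper carries out the cancellation in a single chain of equalities for a general event $\mcle$, whereas you compute both sides against product events $\{\tau\}\times A$; this is the same bookkeeping, just organized differently.
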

\begin{proof}
Due to independence we may restrict to $(\tauTSa,\wTSYa[,\tauTSa])$ and $(\sigma_{\vTSa},\wTSa)$, but then the assertion holds since
\begin{align*}
\prob[(\tauTSa,\wTSYa(\tauTSa))\in\mcle]
&=\sum_{\tau,v}\frac{\bmone\{\sigma_{v}=\tau\}}{\ZFa(\gamma)n^k}\expe\left[\psiRa(\tau)\bmone\{(\tau,v,\psiRa)\in\mcle\}\right]
=\prob[(\sigma_{\vTSa},\wTSa)\in\mcle].
\end{align*}
\end{proof}
In words, the law of $\tauTSM(\sigma)$ factorizes and $\wTSM(\sigma)$ conditional to $\tauTSM(\sigma)=\tau$ is given by $\wTSYM(\sigma,\tau)$.
Let $\GTSYM(\sigma,\tau)$ be given by the law of $\GTSM(\sigma)|\tauTSM[\sigma]=\tau$.
\subsubsection{Variable Degrees}\label{var_degrees}
For $\tau\in([q]^k)^m$ and $\tau'\in[q]^k$ let
\begin{align}\label{alphaM_def}
\alphaM[,\tau](\tau')=\frac{|\tau^{-1}(\tau')|}{m}=\frac{1}{m}\left|\left\{a\in[m]:\tau_a=\tau'\right\}\right|.
\end{align}
Notice that $\alphaM$ is not well-defined for $m=0$, but $m\alphaM$ is.
We reserve the preimage notation $\tau^{-1}(\tau')$ for $\tau'\in[q]^k$ (as opposed to $\tau'\in[q]$).
For $G=(v,\psi)\in\domG$ let
$\facsV[,G](i)=\{a\in[m]:i\in v_a([k])\}$
be the (factor) neighborhood of $i\in[n]$ and
$\degF[,G](i)=|\facsV[,G](i)|$ the (factor) degree.

Similarly, let 
$\wiresV[,G](i)=\{(a,h):v_a(h)=i\}$ be the (wire) neighborhood of $i$ and
$\degH[,G](i)=|\wiresV[,G](i)|$ the (wire) degree.
Finally, let $\sucD[,\lawpsi,n,\sigma](\sigma_i)=\prob[i\in\vTSa[,\sigma]([k])]$.
\begin{observation}\label{obs_degsucprob}
Let $i\in[n]$, $\gamma=\gammaN[,\sigma]$, $\mu=\lawYgC[,\gamma]$ and $\eta=\expe[|\vTSa[,\sigma][-1](i)|]$.
\begin{alphaenumerate}
\item\label{obs_degsucprob_supp}
We have $\sucD=1$ if $n=1$ and $\sucD\in(0,1)$ otherwise.
\item\label{obs_degsucprob_expe}
We have $\eta=\frac{k\mu|_*(\sigma_i)}{n\gamma(\sigma_i)}$.
\item\label{obs_degsucprob_bounds}
There exists $c_\mfkg\in\reals_{>0}$ with
$\frac{1}{cn}\le\sucD(\sigma_i)\le\eta\le\frac{c}{n}$ and
$\sucD(\sigma_i)\ge\eta-\frac{c}{n^2}$.
\end{alphaenumerate}
\end{observation}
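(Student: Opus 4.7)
The overall plan is to unpack the definition of $\vTSa$ and then use Observation~\ref{obs_fad}\ref{obs_fad_rnambound} to convert moments of $|\vTSa^{-1}(i)|$ into the stated explicit expressions. The starting point is that marginalising the $(\wTSa,\wRa)$-derivative over $\psi$ gives the closed form
\begin{align*}
\prob[\vTSa=v]=\frac{\psiae(\sigma_v)}{n^k\ZFa(\gamma)},\qquad v\in[n]^k,
\end{align*}
which, together with $\psibl\le\psiae\le\psibu$ and Observation~\ref{obs_fad}\ref{obs_fad_bounds}, shows $\prob[\vTSa=v]\in[c_1 n^{-k},c_2 n^{-k}]$ uniformly in $v$. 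Part~\ref{obs_degsucprob_supp} is then a one-liner: if $n=1$ then $\vTSa=(1,\ldots,1)$ deterministically, while for $n\ge 2$ both $\{v:i\in v([k])\}$ and its complement are non-empty, so the uniform positivity of $\prob[\vTSa=v]$ forces $\sucD(\sigma_i)\in(0,1)$.

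For part~\ref{obs_degsucprob_expe} I write $\eta=\sum_h\prob[\vTSa(h)=i]$ and reindex each term by $\tau=\sigma_v$; for each $\tau$ with $\tau_h=\sigma_i$ there are exactly $\prod_{h'\ne h}|\sigma^{-1}(\tau_{h'})|=n^{k-1}\prod_{h'\ne h}\gamma(\tau_{h'})$ preimages $v$ with $v(h)=i$, yielding
\begin{align*}
\prob[\vTSa(h)=i]=\frac{1}{n\,\ZFa(\gamma)}\sum_{\tau:\tau_h=\sigma_i}\psiae(\tau)\prod_{h'\ne h}\gamma(\tau_{h'})=\frac{\mu|_h(\sigma_i)}{n\gamma(\sigma_i)}
\end{align*}
after recognising the right-hand sum as $\gamma(\sigma_i)^{-1}\ZFa(\gamma)\mu|_h(\sigma_i)$ through the definition \eqref{lawYgC_def} of $\mu=\lawYgC[,\gamma]$. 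Summing over $h\in[k]$ and using $\mu|_*=k^{-1}\sum_h\mu|_h$ gives the asserted identity.

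Part~\ref{obs_degsucprob_bounds} then bundles four estimates. The bound $\sucD(\sigma_i)\le\eta$ is the union bound for $\prob[|\vTSa^{-1}(i)|\ge 1]$. The upper bound $\eta\le c/n$ follows from part~\ref{obs_degsucprob_expe} and Observation~\ref{obs_fad}\ref{obs_fad_rnambound}, which says the $(\mu|_*,\gamma)$-derivative is bounded above by a constant $c_\mfkg^{-1}$. The matching lower bound uses $\sucD(\sigma_i)\ge\max_h\prob[\vTSa(h)=i]$; since the maximum is at least the average and the average equals $\mu|_*(\sigma_i)/(n\gamma(\sigma_i))$, the same Radon-Nikodym bound yields $\sucD(\sigma_i)\ge c/n$. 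The refined inequality $\sucD(\sigma_i)\ge\eta-c/n^2$ comes from Bonferroni,
\begin{align*}
\sucD(\sigma_i)\ge\sum_h\prob[\vTSa(h)=i]-\sum_{h_1<h_2}\prob[\vTSa(h_1)=\vTSa(h_2)=i]=\eta-R,
\end{align*}
where each pairwise term is bounded crudely by $\psibu/(\psibl n^2)$ via the explicit formula for $\prob[\vTSa=v]$ (the summation in $v$ is now over $n^{k-2}$ wires). Summing over the $\binom{k}{2}$ pairs absorbs the combinatorial factor into $c_\mfkg$.

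No real obstacle arises; the only subtlety is to remember that $\gamma(\sigma_i)>0$ automatically (since $i$ is itself a preimage of $\sigma_i$ under $\sigma$) so the ratio $\mu|_*(\sigma_i)/\gamma(\sigma_i)$ in Observation~\ref{obs_fad}\ref{obs_fad_rnambound} is well defined, and to keep the constants independent of $(\lawpsi,\gamma^*,n,\sigma)$ so they may be absorbed into $\mfkg$-dependent $c$'s.
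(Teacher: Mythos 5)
Your proof is correct. Parts \ref{obs_degsucprob_supp} and \ref{obs_degsucprob_expe} match the paper's argument modulo cosmetics: you expand $\prob[\vTSa=v]=\psiae(\sigma_v)/(n^k\ZFa(\gamma))$ directly, whereas the paper routes through Observation~\ref{obs_TSYM} (the factor-side decomposition), but the reindexing by $\tau=\sigma_v$ and the identification of $\mu|_h$ from \eqref{lawYgC_def} is the same computation.

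Where you genuinely diverge is in part \ref{obs_degsucprob_bounds}, and your route is cleaner. The paper derives the lower bound $\sucD\gtrsim 1/n$ by computing $\sucD\ge\psibl^2\bigl(1-(n-1)^k/n^k\bigr)$ and expanding the binomial, which forces a separate treatment of $n=1$ (where $n-1\ge n/2$ fails); you instead observe $\sucD\ge\max_h\prob[\vTSa(h)=i]\ge\eta/k$ and feed in the same Radon--Nikodym lower bound from Observation~\ref{obs_fad}\ref{obs_fad_rnambound}, which works uniformly in $n$. Likewise, for $\sucD\ge\eta-c/n^2$ the paper computes $\eta-\sucD$ as an explicit expectation and manipulates $kn^{k-1}-(n^k-(n-1)^k)$, while you invoke the Bonferroni/inclusion-exclusion inequality and bound each pairwise probability $\prob[\vTSa(h_1)=\vTSa(h_2)=i]\le\psibu^2 n^{-2}$ crudely from the uniform density estimate. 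Both are correct and yield the same order; yours replaces two pages of binomial algebra with a one-line union-bound/Bonferroni pair and needs no case split, at the small cost of slightly less explicit constants. One small nit: you wrote the pairwise bound as $\psibu/(\psibl n^2)$; the paper's notation has $\psibu=\psibl^{-1}$, so this is exactly $\psibu^2/n^2$ — consistent, but worth stating in the paper's normalization to keep the $\mfkg$-dependence transparent.
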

\begin{proof}
With Observation \ref{obs_TSYM} we obtain $\sucD$ and $\eta$, i.e.
\begin{align*}
\sucD&=\prob[i\in\vTSYa[,\tauTSa]([k])]
=\prob[\exists h\in[k]\,\tauTSa[,h]=\sigma_i,\vTSYa[,\tauTSa](h)=i]
=\frac{\prob[\tauTSa\not\in([q]\setminus\{\sigma_i\})^k]}{n\gamma(\sigma_i)},\\
\eta &=\sum_h\prob[\vTSa[,h]=i]
=\sum_h\prob[\vTSYa[,\tauTSa,h]=i]
=\frac{k\mu|_*(\sigma_i)}{n\gamma(\sigma_i)}.
\end{align*}
With the union bound we have $\sucD(\sigma_i)\le\eta$ and with $c$ from Observation \ref{obs_fad}\ref{obs_fad_rnambound} further $\eta\le kc/n$.
For $n=1$ we clearly have $\sucD=1$, $\eta=k$ and hence both lower bounds hold for $c\ge k-1$.
For $n>1$ we have $n-1\ge n/2$ and hence
\begin{align*}
\sucD
&=\expe\left[\frac{\psiRa(\sigma_{\vRa})}{\ZFa(\gamma)}\bmone\left\{i\in\vRa([k])\right\}\right]
\ge\psibl^2\left(1-\frac{(n-1)^k}{n^k}\right)
=\frac{\psibl^2}{n^k}\sum_{\ell=0}^{k-1}\binom{k}{\ell}(n-1)^\ell\\
&\ge\frac{k\psibl^2(n-1)^{k-1}}{n^k}\ge\frac{k\psibl^2}{2^{k-1}n}.
\end{align*}
This also shows that $\sucD\in(0,1)$ for $n>1$.
The upper bound on the derivative gives
\begin{align*}
\eta-\sucD(\sigma_i)
&\le\psibu^2\expe\left[|\vRa^{-1}(i)|-\bmone\{i\in\vRa([k])\}\right]
=\frac{\psibu^2}{n^k}\left(kn^{k-1}-(n^k-(n-1)^k)\right)\\
&=\frac{\psibu^2}{n^k}\left(k\sum_{\ell=0}^{k-1}\binom{k-1}{\ell}(n-1)^\ell-\sum_{\ell=0}^{k-1}\binom{k}{\ell}(n-1)^\ell\right)\\
&=\frac{\psibu^2}{n^k}\sum_{\ell=0}^{k-1}(k-\ell-1)\binom{k}{\ell}(n-1)^\ell
\le\frac{\psibu^2}{n^2}\sum_{\ell=0}^{k-2}(k-\ell-1)\binom{k}{\ell}.
\end{align*}
\end{proof}
Next, we apply the bounds for the success probabilities to the degrees.
Let
$\bm d^\star_{\mrmw,n,m,\sigma,\tau}(i)\dequal\Bin(km\alphaM[,\tau]|_*(\sigma_i),\frac{1}{n\gammaN[,\sigma](\sigma_i)})$ and $\degFR[,\lawpsi,n,m,\sigma](i)=\Bin(m,\sucD(\sigma_i))$.
Notice that both degrees only depend on $i$ through $\sigma_i$.
\begin{observation}\label{obs_degrees}
Notice that the following holds for $i\in[n]$.
\begin{alphaenumerate}
\item\label{obs_degrees_wire}
We have $\degH[,\wTSYM(\sigma,\tau)](i)
\dequal\bm d^\star_\mrmw(i)$ and
$\expe[\degH[,\wTSM(\sigma)](i)]=m\expe[|\vTSa[,\sigma][-1](i)|]$.
\item\label{obs_degrees_factor}
We have $\degF[,\wTSM(\sigma)](i)\dequal\degFR(i)$ and
$\expe[\degF[,\wTSM(\sigma)](i)]=m\sucD[,\sigma](\sigma_i)$.
\end{alphaenumerate}
\end{observation}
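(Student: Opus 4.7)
The plan is to unpack each construction and recognize standard sums of independent Bernoullis.

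For part~(a), I would invoke Observation~\ref{obs_TSYM}, which represents the standard graph as a product $\wTSYM(\sigma,\tau)\dequal\bigotimes_{a\in[m]}\wTSYa[,\tau(a)]$ whose wires $\vTSYa[,\tau(a)]\dequal\bigotimes_h\unif(\sigma^{-1}(\tau(a)_h))$ are independent across both $a$ and $h$. Write
\begin{align*}
\degH[,\wTSYM(\sigma,\tau)](i)=\sum_{a\in[m]}\sum_{h\in[k]}\bmone\{\vTSYa[,\tau(a),h]=i\}.
\end{align*}
The indicator $\bmone\{\vTSYa[,\tau(a),h]=i\}$ is deterministically $0$ whenever $\tau(a)_h\neq\sigma_i$ (since then $i\notin\sigma^{-1}(\tau(a)_h)$), and otherwise is Bernoulli with success probability $1/|\sigma^{-1}(\sigma_i)|=1/(n\gammaN[,\sigma](\sigma_i))$. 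These are jointly independent, so the total is a binomial whose trial count equals $|\{(a,h):\tau(a)_h=\sigma_i\}|$. Recalling the definitions of $\alphaM$ and $\mu|_*$ in Section~\ref{notions_notation}, this trial count equals $km\alphaM[,\tau]|_*(\sigma_i)$, which yields the claimed law $\bm d^\star_\mrmw(i)$.

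For the expectation under $\wTSM(\sigma)$, simply use $\wTSM(\sigma)\dequal\wTSa[,\sigma][\otimes m]$ and linearity. Since $\degH[,\wTSM(\sigma)](i)=\sum_a|\vTSa[,\sigma,a][-1](i)|$ is a sum of $m$ i.i.d.~copies, its expectation is $m\expe[|\vTSa[,\sigma][-1](i)|]$, which is exactly the claim.

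Part~(b) is entirely analogous and actually simpler: because of the $m$-fold product structure of $\wTSM(\sigma)$, the indicators $\bmone\{i\in\vTSM_a([k])\}$, $a\in[m]$, are i.i.d.~Bernoulli with common success probability $\prob[i\in\vTSa[,\sigma]([k])]=\sucD[,\sigma](\sigma_i)$ by the definition of $\sucD$ in Section~\ref{var_degrees}. Summing gives $\degF[,\wTSM(\sigma)](i)\dequal\Bin(m,\sucD(\sigma_i))=\degFR(i)$, and the expectation $m\sucD(\sigma_i)$ follows immediately. The only ``obstacle'' worth flagging is purely bookkeeping, namely keeping the conditioning on $\tau$ straight when applying Observation~\ref{obs_TSYM}; once that is done, no calculation is needed beyond recognizing a sum of independent Bernoullis.
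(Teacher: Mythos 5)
Your proof is correct and follows the same route as the paper's: the paper simply records the counting identity $km\alphaM[,\tau]|_*(\sigma_i)=|\{(a,h):\tau_{a,h}=\sigma_i\}|$ and then appeals to Observations~\ref{obs_TSYM} and~\ref{obs_TSM_iid} for the independence structure, which is exactly the sum-of-independent-Bernoullis argument you spell out. Your version just makes the binomial recognitions explicit.
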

\begin{proof}
Notice that
\begin{align*}
km\alphaM[,\tau]|_*(\sigma_i)
&=\sum_h\sum_{\tau'}\bmone\{\tau'_h=\sigma_i\}\sum_{a}\bmone\{\tau_a=\tau'\}=|\{(a,h):\tau_{a,h}=\sigma_i\}|.
\end{align*}
Hence, Part \ref{obs_degrees}\ref{obs_degrees_wire} and Part \ref{obs_degrees}\ref{obs_degrees_factor} follow from Observation \ref{obs_TSYM} and Observation \ref{obs_TSM_iid}.
\end{proof}
By an abuse of notation we use the shorthands
$\bm d^\star_\mrmw(i)=\degH[,\wTSYM(\sigma,\tau)](i)$ and 
$\degFR(i)=\degF[,\wTSM(\sigma)](i)$.
Combining these observations does not only yield uniform bounds (also in the choice of $\sigma$!), we also obtain the law of the degrees and bounds under the Poisson number of factors, and uniform Lipschitz continuity of the degree in $\sigma$.
\begin{corollary}\label{cor_degbounds}
Let $i\in[n]$, $\gamma=\gammaN[,\sigma]$, $\mu=\lawYgC[,\gamma]$,
$m\alpha=m\alphaM[,\tau]$ and $\me=\tI\degae n/k$.
\begin{alphaenumerate}
\item\label{cor_degbounds_m}
We have $\bm d^*_\mrmf\le\bm d^*_\mrmw$.
Further, there exists $c_\mfkg\in\reals_{>0}$ such that
$\frac{km}{cn}\le\expe[\bm d^*_\mrmf]\le\expe[\bm d^*_\mrmw]\le\frac{ckm}{n}$ and $\expe[\bm d^*_\mrmw]-\expe[\bm d^*_\mrmf]\le\frac{ckm}{n^2}$.
\item\label{cor_degbounds_po}
We have $(\bm d^*_{\mrmf,\mR},\mR-\bm d^*_{\mrmf,\mR})\dequal\Po(\sucD\me,(1-\sucD)\me)$.
Further, there exists $c_\mfkg\in\reals_{>0}$ such that
$\frac{\tI\degae}{c}\le\expe[\bm d^*_{\mrmf,\mR}]\le\expe[\bm d^*_{\mrmw,\mR}]\le c\tI\degae$ and $\expe[\bm d^*_{\mrmw,\mR}]-\expe[\bm d^*_{\mrmf,\mR}]\le\frac{c\tI\degae}{n}$.
\item\label{cor_degbounds_conc}
Using $\delta=\|\gammaN[,\sigma]-\gamma^*\|_\mrmtv$, there exists $c_\mfkg\in\reals_{>0}$ such that
\begin{align*}
\left|\expe[\bm d^*_{\mrmw,m}]-\frac{km}{n}\right|&\le\frac{ckm}{n}\delta,\,
\left|\expe[\bm d^*_{\mrmw,\mR}]-\tI\degae\right|\le c\tI\degae\delta,\\
\left|\expe[\bm d^*_{\mrmf,m}]-\frac{km}{n}\right|&\le\frac{ckm}{n}\left(\delta+\frac{1}{n}\right),\,
\left|\expe[\bm d^*_{\mrmf,\mR}]-\tI\degae\right|\le c\tI\degae\left(\delta+\frac{1}{n}\right).
\end{align*}
\end{alphaenumerate}
\end{corollary}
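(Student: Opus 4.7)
All three parts follow by reading off the distributional identities in Observation~\ref{obs_degrees}, combining them with the pointwise success-probability estimates in Observation~\ref{obs_degsucprob}, and invoking the Lipschitz properties of $r_*$ from Observation~\ref{obs_fad}.

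For Part~\ref{cor_degbounds}\ref{cor_degbounds_m}, the stochastic domination $\bm d^*_\mrmf\le\bm d^*_\mrmw$ is immediate under the natural coupling: the wire degree counts each incidence of $i$ with an incident factor (with multiplicity), while the factor degree counts each such factor only once, so the former dominates pointwise. Taking expectations in Observation~\ref{obs_degrees} yields $\expe[\bm d^*_\mrmw(i)]=m\eta$ and $\expe[\bm d^*_\mrmf(i)]=m\sucD(\sigma_i)$ with $\eta=k\mu|_*(\sigma_i)/(n\gamma(\sigma_i))$. The three stated inequalities then follow directly by multiplying the bounds $\frac{1}{cn}\le\sucD\le\eta\le\frac{c}{n}$ and $\eta-\sucD\le c/n^2$ from Observation~\ref{obs_degsucprob}\ref{obs_degsucprob_bounds} by $m$, after absorbing the factor $k$ into $c$.

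For Part~\ref{cor_degbounds}\ref{cor_degbounds_po}, Observation~\ref{obs_degrees}\ref{obs_degrees_factor} shows that $\bm d^*_{\mrmf,m}(i)$ is conditionally $\Bin(m,\sucD(\sigma_i))$. Applying Poisson thinning (Observation~\ref{obs_poisson}\ref{obs_poisson_bin}) with $\mR\dequal\Po(\me)$ and $\me=\tI\degae n/k$ yields the claimed joint Poisson law $(\bm d^*_{\mrmf,\mR},\mR-\bm d^*_{\mrmf,\mR})\dequal\Po(\sucD\me,(1-\sucD)\me)$. The expectation bounds follow by averaging the Part~\ref{cor_degbounds}\ref{cor_degbounds_m} estimates over $\mR$ using $\expe[\mR]=\me$, which converts every $km/n$ into $\tI\degae$ and every $km/n^2$ into $\tI\degae/n$.

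For Part~\ref{cor_degbounds}\ref{cor_degbounds_conc}, the key inputs are $r_{*,\gamma^*}\equiv 1$ from Observation~\ref{obs_fad}\ref{obs_fad_amstar} and Lipschitz continuity of $r_*$ from Observation~\ref{obs_fad}\ref{obs_fad_rnamlipschitz}. Writing $\expe[\bm d^*_\mrmw(i)]=(km/n)\,r_{*,\gamma}(\sigma_i)$ with $\gamma=\gammaN[,\sigma]$, the triangle inequality gives $|\expe[\bm d^*_\mrmw]-km/n|\le(km/n)\,|r_{*,\gamma}(\sigma_i)-r_{*,\gamma^*}(\sigma_i)|\le L(km/n)\delta$, and the Poisson version is analogous after substituting $\tI\degae$ for $km/n$. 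The factor-degree estimates are then obtained by combining these wire-degree bounds with the gap $\expe[\bm d^*_\mrmw]-\expe[\bm d^*_\mrmf]\le ckm/n^2$ from Part~\ref{cor_degbounds}\ref{cor_degbounds_m} via the triangle inequality, which accounts for the additional $1/n$ term. There is no genuine obstacle in this corollary; the content is essentially bookkeeping that repackages the earlier observations for later asymptotic use, with the only point of care being to keep all constants uniform in $\mfkg$, which the cited results already guarantee.
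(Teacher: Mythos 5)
Your proposal is correct and follows essentially the same route as the paper: pointwise domination plus Observations~\ref{obs_degrees} and~\ref{obs_degsucprob} for Part~\ref{cor_degbounds}\ref{cor_degbounds_m}, Poisson thinning for Part~\ref{cor_degbounds}\ref{cor_degbounds_po}, and the identity $\expe[\bm d^*_\mrmw]=(km/n)r_{*,\gamma}(\sigma_i)$ together with $r_{*,\gamma^*}\equiv 1$ and the Lipschitz bound on $r_*$ for Part~\ref{cor_degbounds}\ref{cor_degbounds_conc}, with the factor case following by the triangle inequality. The only cosmetic difference is that the paper cites Jensen's inequality for the Poisson averaging step while your argument just uses linearity, which is in fact all that is needed.
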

\begin{proof}
Using $\wTSM=(\vTSM,\psiTSM)$ and $\eta=\expe[|\vTSa[][-1](i)|]$, for Part \ref{cor_degbounds}\ref{cor_degbounds_m} we have
\begin{align*}
\bm d^*_\mrmf=\sum_{a\in[m]}\bmone\{\exists h\in[k]\,\vTSM[a,h]=i\}
\le\sum_{a\in[m]}\left|\vTSM[a][-1](i)\right|,
=\bm d^*_\mrmw,
\end{align*}
further we have $\expe[\bm d^*_\mrmf]=m\sucD$ and $\expe[\bm d^*_\mrmw]=m\eta$ from Observation \ref{obs_degrees}, hence the bounds follow from Observation \ref{obs_degsucprob} by rescaling with $k\ge 2$.
The law in Part \ref{cor_degbounds}\ref{cor_degbounds_po} follows from Observation \ref{obs_degrees}\ref{obs_degrees_factor} and Observation \ref{obs_poisson}\ref{obs_poisson_bin}. The bounds are obtained by taking expectations in Part \ref{cor_degbounds}\ref{cor_degbounds_m}.
For Part \ref{cor_degbounds}\ref{cor_degbounds_conc} we use Observation \ref{obs_degsucprob}\ref{obs_degsucprob_expe}, Observation \ref{obs_fad}\ref{obs_fad_amstar} and Observation \ref{obs_fad}\ref{obs_fad_rnamlipschitz} to obtain
\begin{align*}
\left|\expe[\bm d^*_{\mrmw,m}]-\frac{km}{n}\right|
=\frac{km}{n}\left|\frac{\lawYgC[,\gammaN[,\sigma]]|_*(\sigma_i)}{\gammaN[,\sigma](\sigma_i)}-\frac{\lawYgC[,\gamma^*]|_*(\sigma_i)}{\gamma^*(\sigma_i)}\right|
\le\frac{ckm}{n}\delta.
\end{align*}
The remainder is now immediate from Jensen's inequality and Part \ref{cor_degbounds}\ref{cor_degbounds_m}.
\end{proof}
\subsubsection{Neighborhood Decomposition}\label{known_neighborhoods}
Let $\mcld[-]=([n]\setminus\{i\})^k$ be the factor neighborhoods excluding $i$ and $\mcld[+]=[n]^k\setminus\mcld[-]$ the neighborhoods covering $i$.
For $n>1$ let $\wR_{-\circ,\lawpsi,n,i}=(\vR_{-\circ,n,i},\psiR_{-\circ,\lawpsi})\dequal\unif(\mcld[-])\otimes\lawpsi$ and let $\wR^*_{-\circ,\lawpsi,n,i,\sigma}$ be given by the $(\wR^*_{-\circ},\wR_{-\circ})$-derivative $(v,\psi)\mapsto\psi(\sigma_v)/\expe[\psiR_{-\circ}(\sigma_{\vR_{-\circ}})]$.
Let $\wR_{+\circ,\lawpsi,n,i}=(\vR_{+\circ,n,i},\psiR_{+\circ,\lawpsi})\dequal\unif(\mcld[+])\otimes\lawpsi$ and let $\wR^*_{+\circ,\lawpsi,n,i,\sigma}$ be given by the $(\wR^*_{+\circ},\wR_{+\circ})$-derivative $(v,\psi)\mapsto\psi(\sigma_v)/\expe[\psiR_{+\circ}(\sigma_{\vR_{+\circ}})]$.
For given $d\in\ints\cap[0,m]$ let
\begin{align*}
(\wTS_{-,\lawpsi,n,m-d,i,\sigma},\wTS_{+,\lawpsi,n,d,i,\sigma})
\dequal\wTS[\otimes(m-d)]_{-\circ}\otimes\wTS[\otimes d]_{+\circ}.
\end{align*}
For given $\mcla\in\binom{[m]}{d}$
let $\alpha_+:[d]\rarr\mcla$, $\alpha_-:[m-d]\rarr[m]\setminus\mcla$ be the enumerations, and let $\wTS_{\mrma,\mcla}=(\wTS_{\mrma,\mcla}(a))_{a\in[m]}$ be given by $\wTS_{\mrma,\mcla}(a)=\wTS_{+}(\alpha_+^{-1}(a))$ for $a\in\mcla$ and $\wTS_{\mrma,\mcla}(a)=\wTS_{-}(\alpha_-^{-1}(a))$ for $a\in[m]\setminus\mcla$.
Finally, let
$\facsVRgD[,m,d]=\unif(\binom{[m]}{d})$ and
$\wTS_{\mrmd,\lawpsi,n,m,i,d,\sigma}=\wTS_{\mrma,\facsVRgD}$.
\begin{observation}\label{obs_known}
We have $\wTS_{\mrmd}(i,\degFR(i),\sigma)\dequal\wTSM(\sigma)$.
\end{observation}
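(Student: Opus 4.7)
The plan is to exploit the i.i.d.\ product structure of $\wTSM(\sigma)=(\wTSa[,1],\dots,\wTSa[,m])$ and partition the factors according to whether they cover the vertex $i$. For each $a\in[m]$, set $\bm 1_a=\bmone\{i\in\vTSa[,a]([k])\}$. Since the $\wTSa[,a]$ are i.i.d.\ with the common $(\wTSa,\wRa)$-derivative $(v,\psi)\mapsto\psi(\sigma_v)/\ZFa(\gammaN[,\sigma])$, the $\bm 1_a$ are i.i.d.\ Bernoulli with success probability $\sucD[,\sigma](\sigma_i)$ by definition of $\sucD$.

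The first key step is the Radon-Nikodym identification $\wTSa[,a]\mid\{\bm 1_a=1\}\dequal\wTS^*_{+\circ}$ and $\wTSa[,a]\mid\{\bm 1_a=0\}\dequal\wTS^*_{-\circ}$. This is a bookkeeping calculation: conditioning $\wTSa$ on $\{v\in\mcld[+]\}$ multiplies the derivative w.r.t.\ $\unif([n]^k)\otimes\lawpsi$ by $\bmone\{v\in\mcld[+]\}/\sucD(\sigma_i)$; rewriting this in terms of the reference measure $\unif(\mcld[+])\otimes\lawpsi$ (which introduces the factor $|\mcld[+]|/n^k$) produces a derivative proportional to $\psi(\sigma_v)$, and the normalizing constant one reads off is exactly $\expe[\psiR_{+\circ}(\sigma_{\vR_{+\circ}})]$, as follows from Fubini together with the identity $n^k\sucD(\sigma_i)\ZFa(\gammaN[,\sigma])=\sum_{v\in\mcld[+]}\psiae(\sigma_v)$ used in the proof of Observation \ref{obs_degsucprob}. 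The analogous computation handles the complementary event, giving $\wTS^*_{-\circ}$.

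The second step is a symmetry argument. Let $\bmcla^*=\{a\in[m]:\bm 1_a=1\}$. By i.i.d.\ symmetry of $(\bm 1_a)_a$ (equivalently, exchangeability of the positions in $[m]$), conditional on $|\bmcla^*|=d$ the random set $\bmcla^*$ is uniform on $\binom{[m]}{d}$, i.e.\ distributed as $\facsVRgD[,m,d]$; and the marginal law of $|\bmcla^*|$ is $\Bin(m,\sucD(\sigma_i))\dequal\degFR(i)$ by Observation \ref{obs_degrees}\ref{obs_degrees_factor}. Combined with the first step, conditional on $\bmcla^*=\mcla$ the factors $(\wTSa[,a])_{a\in[m]}$ are independent with $\wTSa[,a]\dequal\wTS^*_{+\circ}$ for $a\in\mcla$ and $\wTSa[,a]\dequal\wTS^*_{-\circ}$ for $a\notin\mcla$, which is precisely the definition of $\wTS_{\mrma,\mcla}$ arranged via the enumerations $\alpha_\pm$. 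Integrating first over the uniform choice of $\bmcla^*$ given its size and then over $|\bmcla^*|\dequal\degFR(i)$ reproduces the construction of $\wTS_{\mrmd}(i,\degFR(i),\sigma)$, yielding the claimed distributional equality.

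The only mildly delicate step is the Radon-Nikodym identification in the first paragraph; once that is pinned down, the rest is a straight application of exchangeability and the tower property, so I expect no real obstacle beyond a careful tracking of normalizations between $\unif([n]^k)$, $\unif(\mcld[+])$ and $\unif(\mcld[-])$.
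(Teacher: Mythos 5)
Your proposal is correct and follows essentially the same line of reasoning as the paper: you identify the conditional laws of the individual factors via a Radon–Nikodym bookkeeping calculation (matching the paper's verification that $\ZFa(\gammaN[,\sigma])p_a$ equals the respective restricted expectations), and you then use the i.i.d.\ structure and exchangeability to show that $\bmcla^*$ given its size is uniform while the size itself is $\Bin(m,\sucD(\sigma_i))\dequal\degFR(i)$. The only cosmetic difference is that you condition factor-by-factor on $\bm 1_a$ rather than jointly on the full indicator vector $b\in\{0,1\}^m$, which is equivalent given independence.
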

\begin{proof}
With $\wTSM=(\vTSM,\psiTSM)$, $\bmcla^*=\{a\in[m]:i\in\vTSM[a]([k])\}$ and
$\bm b^*=(\bmone\{a\in\bmcla^*\})_a$
we have $\bm b^*\dequal\bm b^{*\otimes m}_\circ$, where $\bm b^*_\circ\in\{0,1\}$ is given by the success probability $\sucD(\sigma_i)$.
So, for $b\in\{0,1\}^m$ using $p_a=\prob[\bm b^*_\circ=b_a]$, $\wR=(\vR,\psiR)$ from Section \ref{random_decorated_graphs} and
$\bm b=(\bmone\{i\in\vR_a([k])\})_a$ we have
\begin{align*}
\prob[\wTSM\in\mcle|\bmcla=\mcla]
=\expe\left[\prod_a\frac{\psiR_a(\sigma_{\vR(a)})\bmone\{\bm b_a=b_a\}}{\ZFa(\gammaN[,\sigma])p_a}\bmone\{\wR\in\mcle\}\right].
\end{align*}
Now, the $(\wR_{-\circ},\wRa)$-derivative is $(v,\psi)\mapsto\bmone\{i\not\in v([k])\}/\prob[i\not\in\vRa([k])]$, so the $(\wR^*_{-\circ},\wRa)$-derivative is
$(v,\psi)\mapsto\bmone\{i\not\in v([k])\}\psi(\sigma_v)/\expe[\bmone\{i\not\in\vRa([k])\}\psiRa(\sigma_{\vRa})]$. On the other hand, for any $a\in b^{-1}(0)$ we have
\begin{align*}
\ZFa(\gammaN[,\sigma])p_a
=\ZFa(\gammaN[,\sigma])\expe\left[\frac{\psiRa(\sigma_{\vRa})}{\ZFa(\gammaN[,\sigma])}\bmone\{i\not\in\vRa([k])\}\right]
=\expe[\bmone\{i\not\in\vRa([k])\}\psiRa(\sigma_{\vRa})].
\end{align*}
Similarly, the $(\wR_{+\circ},\wRa)$-derivative is $(v,\psi)\mapsto\bmone\{i\in v([k])\}\psi(\sigma_v)/\expe[\bmone\{i\in\vRa([k])\}\psiRa(\sigma_{\vRa})]$ and
$\ZFa(\gammaN[,\sigma])p_a
=\expe[\bmone\{i\in\vRa([k])\}\psiRa(\sigma_{\vRa})]$ for $a\in b^{-1}(1)$.
This shows that $\wTSM|\bmcla^*=\mcla$ and $\wTSM[\mrma,\mcla]$ have the same law.

Next, notice that $\wTSM\dequal\wTSM\circ\alpha$ for any permutation $\alpha\in[m]_m$ of the factors, which yields $\bmcla^*\dequal\alpha(\bmcla^*)$, hence that $\bmcla^*||\bmcla^*|=d$ is uniform and thereby has the same law as $\facsVRgD$.
This shows that $\wTSM||\bmcla^*|=d$ has the same law as $\wTSM[\mrmd]$ and thereby completes the proof.
\end{proof}
\begin{remark}\label{remark_known_noi}
Notice that $\wTSa[,-\circ]$ does not depend on $\sigma_i$ due to the definition of $\vR_{-\circ}$.
\end{remark}
\subsubsection{Standard Graphs}\label{standard_graphs}
In this section we discuss the relation of the decorated factor graphs from Section \ref{random_decorated_graphs} and the standard factor graphs from Section \ref{random_factor_graphs}.
For this purpose let $\GR_\mrmd$, $\GTS_\mrmd$ denote the decorated graphs and $\GR_\mrms$, $\GTS_\mrms$ the standard graphs.
\begin{observation}\label{obs_standard_graphs}
Assume that $\ThetaP=0$ and $\tI=1$.
Let
$\GRM[\mrmd]=\GRM[\mrmd,\mR,\mIR,\setPR]$,
$\GTSM[\mrmd]=\GTSM[\mrmd,\mR,\mIR,\setPR]$,
$\GRM[\mrms]=\GRM[\mrms,\mR]$ and
$\GTSM[\mrms]=\GTSM[\mrms,\mR]$.
Then we have $\GRM[\mrmd]\dequal[\GR_{\mrms}]^\Gamma$, 
$\GTSM[\mrmd](\sigma)\dequal[\GTSM[\mrms](\sigma)]^\Gamma$,
$\psiG[,\GRM[\mrmd]](\sigma)\dequal\gamma^{*\otimes n}(\sigma)\psi_{\GRM[\mrms]}(\sigma)$,
$\ZG(\GRM[\mrmd])\dequal Z_{\gamma^*}(\GR_{\mrms})$,
$\ZG(\GTSM[\mrmd](\sigma))\dequal Z_{\gamma^*}(\GTSM[\mrms](\sigma))$ and
$\lawG[,\GRM[\mrmd]]\dequal \mu_{\gamma^*,\GRM[\mrms]}$,
$\lawG[,\GTSM[\mrmd](\sigma)]\dequal \mu_{\gamma^*,\GTSM[\mrms](\sigma)}$.
\end{observation}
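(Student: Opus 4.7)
The plan is to verify that at the boundary values $(\tI,\ThetaP)=(1,0)$ the pinning and interpolation layers introduced in Section \ref{random_decorated_graphs} become trivial, so that the decorated model reduces to the standard model of Section \ref{random_factor_graphs} with only the external-field layer $[\,\cdot\,]^\Gamma$ attached.

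I would begin by unfolding the distributional assumptions. Since $\ThetaP=0$, the variable $\thetaPR\dequal\unif([0,0])$ equals $0$ a.s., so each coordinate of $\indPRT$ is Bernoulli with success probability $0/n$, giving $\setPR=\indPRT^{-1}(1)=\emptyset$ a.s. Since $\tI=1$, each $\mIRa\dequal\Po(0)$ vanishes a.s., so $\mIR\equiv 0$ and $\facsI_{\mIR}=\emptyset$. Finally $\mR\dequal\Po(\tI\degae n/k)=\Po(\degae n/k)$ matches the factor count defining $\GR_{n,\mR,p}$ in Section \ref{random_factor_graphs}.

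Next I would read off the graph-level identities directly from the construction of $[\,\cdot\,]^{\Gamma\lrarr\darr}$ in Section \ref{decorated_factor_graphs}. With the $\lrarr$ and $\darr$ layers contributing no factors, that construction reduces to $[G]^\Gamma$, so $\GRM[\mrmd]\dequal[\wR]^\Gamma$; and $\wR\dequal\wRa^{\otimes\mR}$ is by definition $\GR_\mrms$. The teacher-student case follows by the same route from Observation \ref{obs_TSM_iid}: the tuple $\bm a=(\psiITSM(\sigma),\setPR,\sigma)$ has empty pinning and interpolator components a.s., so $\GTSM[\mrmd](\sigma)\dequal[\wTSM(\sigma)]^{\Gamma\lrarr\darr}_{\bm a}=[\wTSM(\sigma)]^\Gamma$. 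Unfolding $\wTSM(\sigma)\dequal\wTSa^{\otimes\mR}$ and comparing with the proof of Observation \ref{obs_psiexpenullprod} shows that $\wTSM(\sigma)\dequal\GTSM[\mrms](\sigma)$.

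The pointwise weight identity is then immediate from the explicit formula in Section \ref{random_decorated_graphs}: dropping the now-empty pinning indicator and interpolator product leaves $\psiG[,G](\sigma)=\gamma^{*\otimes n}(\sigma)\prod_{a\in[m]}\psi_a(\sigma_{v_a})=\gamma^{*\otimes n}(\sigma)\psi_G(\sigma)$ for $G=(v,\psi)$, where $\psi_G$ is the standard graph weight from Equation~(\ref{weightG}). Summing over $\sigma$ yields $\ZG(G)=\sum_\sigma\gamma^{*\otimes n}(\sigma)\psi_G(\sigma)=\expe[\psi_G(\sigmaIID)]=Z_{\gamma^*}(G)$ by Equation~(\ref{weightG}), and the ratio $\lawG[,G]=\psiG[,G]/\ZG(G)$ then coincides with $\mu_{\gamma^*,G}$ term for term. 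Specialising to $G=\GR_\mrms$ and $G=\GTSM[\mrms](\sigma)$ delivers the four remaining identities. There is no genuine obstacle here: the observation is a definitional sanity check that the decorated formalism reduces to the model of Section \ref{random_factor_graphs} with external fields once interpolation is switched off $(\tI=1)$ and pinning is switched off $(\ThetaP=0)$.
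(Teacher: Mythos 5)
Your proof is correct and follows essentially the same route as the paper: both reduce the decorated graph to $[\,\cdot\,]^\Gamma$ at $(\tI,\ThetaP)=(1,0)$, both pass the identification to the teacher--student model through the Radon--Nikodym factorization (you via Observation \ref{obs_TSM_iid} combined with Observation \ref{obs_psiexpenullprod}, the paper via the explicit cancellation of $\gamma^{*\otimes n}(\sigma)$ in the derivative), and both then read off the weight, partition-function, and Gibbs-measure identities termwise from the definitions.
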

\begin{proof}
Notice that $\mIR\equiv 0$ and $\setPR=\emptyset$.
Further, we have $\GRM[\mrmd]\dequal[\GRM[\mrms]]^\Gamma$ by definition, hence
$\psiG[,\GRM[\mrmd]](\sigma)\dequal\gamma^{*\otimes n}(\sigma)\psi_{\GRM[\mrms]}(\sigma)$ and $\expe[\psiG[,\GRM[\mrmd]](\sigma)|\mR]=\gamma^{*\otimes n}(\sigma)\expe[\psi_{\GRM[\mrms]}(\sigma)|\mR]$, so
as in the proof of Observation \ref{obs_TSM_iid} $\gamma^{*\otimes n}(\sigma)$ cancels out in the $(\GTSM(\sigma),\GRM)$-derivative and thereby $\GTSM[\mrmd](\sigma)\dequal[\GTSM[\mrms](\sigma)]^\Gamma$.
Finally, notice that $Z_{\gamma^*}(G)=\sum_\sigma\gamma^{*\otimes n}(\sigma)\psi_G(\sigma)=\sum_\sigma\psi_{[G]^\Gamma}(\sigma)=\ZG([G]^\Gamma)$.
The remainder follows analogously.
\end{proof}
Notice that the result for $\phiG$, $\phi$ is implied.
\subsection{The Nishimori Ground Truth}\label{nishimori_ground_truth}
In this section we discuss the Nishimori ground truth $\sigmaNIS$ from Section \ref{ps_nishimori} and its relation to $\sigmaIID$.
In Section \ref{gtnis_decorated_graphs} we show that $\sigmaNIS$ satisfies the Nishimori condition for the decorated graph and prove Proposition \ref{proposition_mutual_contiguity}\ref{proposition_mutual_contiguity_nishi}.
In Section \ref{ground_truths} we discuss the color frequencies
$\gammaIID_{\gamma^*,n}=\gammaN[,\sigmaIID]$, $\gammaNIS_{\lawpsi,\gamma^*,n,m}=\gammaN[,\sigmaNIS]$ and the conditional laws $\sigmaIID|\gammaIID$, $\sigmaNIS|\gammaNIS$, including the proof of Proposition \ref{proposition_mutual_contiguity}\ref{proposition_mutual_contiguity_rncond}, the upper bound in 
Proposition \ref{proposition_mutual_contiguity}\ref{proposition_mutual_contiguity_rnup} and the lower bound in Proposition \ref{proposition_mutual_contiguity}\ref{proposition_mutual_contiguity_rndown}.
Finally, in Section \ref{gtnis_coupling} we bound the total variation distance of Nishimori ground truths for different values of $m$.
\subsubsection{Decorated Graphs}\label{gtnis_decorated_graphs}
Recall the $(\sigmaNIS,\sigmaIID)$-derivative $\hat r_{\lawpsi,\gamma^*,n,m}$ from Section \ref{ps_nishimori}.
\begin{observation}\label{obs_nishicond}
Notice that the following holds.
\begin{alphaenumerate}
\item\label{obs_nishicond_inv}
We have $\prob[\sigmaNIS=\sigma]=\psiM(\sigma)/\ZM$ and $\hat r(\sigma)=\ZFa(\gammaN[,\sigma])^m/\expe[\ZFa(\gammaN[,\sigmaIID])^m]$.
\item\label{obs_nishicond_joint}
The Radon-Nikodym derivative of $(\sigmaNIS,\GTSM(\sigmaNIS))$ with respect to $\sigmaIID\otimes\GRM$ is
\begin{align*}
(\sigma,G)\mapsto\frac{\psiG[,G](\sigma)}{\gamma^{*\otimes n}(\sigma)\ZM}.
\end{align*}
\item\label{obs_nishicond_GNIS}
The $(\GTSM(\sigmaNIS),\GRM)$-derivative is $G\mapsto\ZG(G)/\ZM$.
\item\label{obs_nishicond_dec}
We have $(\sigmaNIS,\GTSM(\sigmaNIS))\dequal(\sigmaRG[,\GTSM(\sigmaNIS)],\GTSM(\sigmaNIS))$.
\end{alphaenumerate}
\end{observation}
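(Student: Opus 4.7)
\emph{Proof proposal.} The four claims are routine consequences of the definition of $\sigmaNIS$ via its Radon--Nikodym derivative together with the factorizations of $\psiM$ and $\ZM$ from Observation \ref{obs_GRM_expebounds}. The plan is to prove them in the stated order, each building on the previous.

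For Part \ref{obs_nishicond_inv} I would first evaluate the standard expectations appearing in the definition of $\hat r$: by the factorization argument already used in the proof of Observation \ref{obs_psiexpenullprod}, $\expe[\psi_{\GR(m)}(\sigma)]=\ZFa(\gammaN[,\sigma])^m$, whence $\expe[Z_{\gamma^*}(\GR(m))]=\expe[\ZFa(\gammaN[,\sigmaIID])^m]$ by summing against $\gamma^{*\otimes n}$. This yields the second asserted identity $\hat r(\sigma)=\ZFa(\gammaN[,\sigma])^m/\expe[\ZFa(\gammaN[,\sigmaIID])^m]$. Multiplying by $\gamma^{*\otimes n}(\sigma)=\prob[\sigmaIID=\sigma]$ and comparing with the factorizations of $\psiM(\sigma)$ and $\ZM$ from Observation \ref{obs_GRM_expebounds}\ref{obs_GRM_expeboundsPsiM}\ref{obs_GRM_expeboundsZM} (whose common prefactors $q^{-|\setP|}\ZFabu^{\|\mI\|_1}$ cancel) gives the first identity $\prob[\sigmaNIS=\sigma]=\psiM(\sigma)/\ZM$.

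For Part \ref{obs_nishicond_joint} I would compute $\prob[(\sigmaNIS,\GTSM(\sigmaNIS))=(\sigma,G)]=\prob[\sigmaNIS=\sigma]\prob[\GTSM(\sigma)=G]$; Part \ref{obs_nishicond_inv} evaluates the first factor to $\psiM(\sigma)/\ZM$, and the defining $(\GTSM(\sigma),\GRM)$-derivative $\psiG[,G](\sigma)/\psiM(\sigma)$ gives the second as $(\psiG[,G](\sigma)/\psiM(\sigma))\prob[\GRM=G]$. The $\psiM(\sigma)$'s cancel, so the joint density equals $\psiG[,G](\sigma)\prob[\GRM=G]/\ZM$; dividing by $\prob[\sigmaIID=\sigma]\prob[\GRM=G]=\gamma^{*\otimes n}(\sigma)\prob[\GRM=G]$ yields the asserted $(\sigmaIID\otimes\GRM)$-derivative. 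Part \ref{obs_nishicond_GNIS} then follows by marginalizing out $\sigma$: the sum $\sum_\sigma\psiG[,G](\sigma)$ is $\ZG(G)$ by definition, giving the $(\GTSM(\sigmaNIS),\GRM)$-derivative $\ZG(G)/\ZM$.

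Finally, Part \ref{obs_nishicond_dec} (the Nishimori identity) is obtained by dividing the joint density from Part \ref{obs_nishicond_joint} by the marginal from Part \ref{obs_nishicond_GNIS}: the conditional density of $\sigmaNIS$ given $\GTSM(\sigmaNIS)=G$ is $\psiG[,G](\sigma)/\ZG(G)=\lawG[,G](\sigma)$, which is precisely the law of $\sigmaRG[,G]$. I anticipate no genuine obstacle; the entire observation is a careful unfolding of the decorated-graph definitions, and the only point requiring attention is tracking how the pin, interpolator, and external-field contributions are absorbed into $\psiM$ and $\ZM$ via Observation \ref{obs_GRM_expebounds}.
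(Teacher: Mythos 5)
Your proposal is correct and follows essentially the same route as the paper's proof: identify $\hat r(\sigma)$ via the factorization $\expe[\psi_{\GR(m)}(\sigma)]=\ZFa(\gammaN[,\sigma])^m$ (the paper routes this through Observation \ref{obs_standard_graphs} and Observation \ref{obs_GRM_expebounds}, you cite the factorization argument from Observation \ref{obs_psiexpenullprod} directly — same content, with the prefactors $q^{-|\setP|}\ZFabu^{\|\mI\|_1}$ cancelling between $\psiM$ and $\ZM$), then multiply the Radon–Nikodym derivatives for the joint law, marginalize over $\sigma$ for part \ref{obs_nishicond_GNIS}, and read off part \ref{obs_nishicond_dec} from the conditional density. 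Your explicit disintegration step for \ref{obs_nishicond_dec} (dividing the joint by the marginal to exhibit $\psiG[,G](\sigma)/\ZG(G)=\lawG[,G](\sigma)$) is just a slightly more spelled-out phrasing of the paper's remark that "the joint derivative is the product of the individual derivatives."
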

\begin{proof}
With $\GRM[\mrms]$ denoting the standard graph and using
Observation \ref{obs_standard_graphs} we have
\begin{align*}
\prob[\sigmaNIS=\sigma]
=\frac{\gamma^{*\otimes n}(\sigma)\expe[\psi_{\GRM[\mrms]}(\sigma)]}{\expe[Z_{\gamma^*}(\GRM[\mrms])]}
=\frac{\expe[\psiG[,[\GRM[\mrms]]^\Gamma](\sigma)]}{\expe[\ZG([\GRM[\mrms]]^\Gamma)]},
\end{align*}
i.e.~the ratio of the expectations for the decorated factor graph without interpolators and pins.
But with Observation \ref{obs_GRM_expebounds}\ref{obs_GRM_expeboundsPsiM} and
Observation \ref{obs_GRM_expebounds}\ref{obs_GRM_expeboundsZM} this gives
\begin{align*}
\prob[\sigmaNIS=\sigma]
=\frac{\gamma^{*\otimes n}(\sigma)\ZFa(\gammaN[,\sigma])^m}{\expe[\ZFa(\gammaN[,\sigmaIID])^m]}
=\frac{\psiM(\sigma)}{\ZM}.
\end{align*}
Using Part \ref{obs_nishicond}\ref{obs_nishicond_inv}, $(\sigmaIID,\GRM)\dequal\sigmaIID\otimes\GRM$ and for an event $\mcle$ we have
\begin{align*}
\prob[(\sigmaNIS,\GTSM(\sigmaNIS))\in\mcle]&=\expe\left[\frac{\psiM(\sigmaIID)}{\gamma^{*\otimes n}(\sigmaIID)\ZM}\frac{\psiG[,\GRM](\sigmaIID)}{\psiM(\sigmaIID)}\bmone\{(\sigmaIID,\GRM)\in\mcle\}\right].
\end{align*}
This shows that the $(\GTSM(\sigmaNIS),\GRM)$-derivative is $\sum_\sigma\gamma^{*\otimes n}(\sigma)\frac{\psiG[,G](\sigma)}{\gamma^{*\otimes n}(\sigma)\ZM}=\frac{\ZG(G)}{\ZM}$.
This also shows Part \ref{obs_nishicond}\ref{obs_nishicond_dec} since the joint derivative is the product of the individual derivatives. 
\end{proof}
This establishes Proposition \ref{proposition_mutual_contiguity}\ref{proposition_mutual_contiguity_nishi}
with Observation \ref{obs_standard_graphs} and Observation \ref{obs_nishicond}\ref{obs_nishicond_dec} since $\lawG[,[G]^\Gamma]=\mu_{\gamma^*,G}$ and $G\mapsto[G]^\Gamma$ is a bijection.
\begin{remark}\label{remark_nishixlnx}
Observation \ref{obs_nishicond}\ref{obs_nishicond_GNIS} establishes that $\expe[\phiG(\GTSM(\sigmaNIS))]=\frac{1}{n}\expe[\xlnx(\ZG(\GR))]/\expe[Z(\GR)]$.
\end{remark}
\subsubsection{Ground Truths}\label{ground_truths}
Since both ground truths $\sigmaIID$, $\sigmaNIS$ are invariant to decorations we assume that $\mI\equiv 0$ and $\setP=\emptyset$ in this section.
Using $\gamma=\gammaN[,\sigma]$ let $\mcls=\{\sigma'\in[q]^n:\gammaN[,\sigma']=\gamma\}$ and $\sigmaRgG[,n,\gamma]\dequal\unif(\mcls)$.
\begin{observation}\label{obs_gtiid}
Notice that the following holds.
\begin{alphaenumerate}
\item\label{obs_gtiid_cond}
We have $(\gammaIID,\sigmaIID)\dequal(\gammaIID,\sigmaRgG[,\gammaIID])$.
\item\label{obs_gtiid_prob}
There exist $c_\mfkg\in\reals_{>0}^2$ with $\prob[\|\gammaIID-\gamma^*\|_\mrmtv\ge r]\le c_2e^{-c_1r^2n}$ for $r\in\reals_{\ge 0}$.
\item\label{obs_gtiid_expe}
There exists $c\in\reals_{>0}$ such that $\expe[\|\gammaIID-\gamma^*\|_\mrmtv]\le c/\sqrt{n}$.
\item\label{obs_gtiid_var}
There exists $c\in\reals_{>0}$ such that $\expe[\|\gammaIID-\gamma^*\|_\mrmtv^2]\le c/n$.
\end{alphaenumerate}
\end{observation}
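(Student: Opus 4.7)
The proof splits cleanly into four short pieces and I would handle them in order, since parts (c) and (d) follow from the large deviation estimate in (b), while (b) is a standard Chernoff/Hoeffding computation.

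For part \ref{obs_gtiid_cond}, the key observation is that since $\sigmaIID\dequal\gamma^{*\otimes n}$ has the product structure, the probability of any configuration depends only on the color frequencies: for $\sigma\in[q]^n$ one has $\prob[\sigmaIID=\sigma]=\prod_{z\in[q]}\gamma^*(z)^{n\gammaN[,\sigma](z)}$, which depends on $\sigma$ only through $\gammaN[,\sigma]$. Hence conditional on $\gammaIID=\gamma$, the law of $\sigmaIID$ is uniform over $\{\sigma\in[q]^n:\gammaN[,\sigma]=\gamma\}$, which is exactly the law of $\sigmaRgG[,\gamma]$.

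For part \ref{obs_gtiid_prob}, I would fix $z\in[q]$ and note that $n\gammaIID(z)=\sum_{i\in[n]}\bmone\{\sigmaIID(i)=z\}\dequal\Bin(n,\gamma^*(z))$. By Hoeffding's inequality,
\begin{align*}
\prob\left[|\gammaIID(z)-\gamma^*(z)|\ge \tfrac{2r}{q}\right]\le 2\exp\left(-\tfrac{8r^2n}{q^2}\right).
\end{align*}
Using Observation \ref{obs_tv}\ref{obs_tv_norm} to write $\|\gammaIID-\gamma^*\|_\mrmtv=\tfrac{1}{2}\sum_z|\gammaIID(z)-\gamma^*(z)|$, the event $\{\|\gammaIID-\gamma^*\|_\mrmtv\ge r\}$ implies that some coordinate deviates by at least $2r/q$, so the union bound over $z\in[q]$ yields the claim with $c_1=8/q^2$ and $c_2=2q$. (Since $q\in\mfkg$, both constants are of the required form.)

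For parts \ref{obs_gtiid_expe} and \ref{obs_gtiid_var}, I would use the layer cake representation together with part \ref{obs_gtiid_prob}. For \ref{obs_gtiid_expe},
\begin{align*}
\expe[\|\gammaIID-\gamma^*\|_\mrmtv]=\int_0^1\prob[\|\gammaIID-\gamma^*\|_\mrmtv\ge r]\,dr\le\int_0^\infty c_2e^{-c_1r^2n}\,dr=\tfrac{c_2}{2}\sqrt{\tfrac{\pi}{c_1 n}},
\end{align*}
and analogously for \ref{obs_gtiid_var} via $\expe[X^2]=\int_0^\infty 2r\,\prob[X\ge r]\,dr$, giving a bound of order $1/n$. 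An even more direct route to \ref{obs_gtiid_var} is to use Cauchy-Schwarz $\|\gammaIID-\gamma^*\|_\mrmtv^2\le\tfrac{q}{4}\sum_z(\gammaIID(z)-\gamma^*(z))^2$ together with the binomial variance $\Var[\gammaIID(z)]=\gamma^*(z)(1-\gamma^*(z))/n\le 1/(4n)$, which avoids \ref{obs_gtiid_prob} altogether.

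No step presents a real obstacle; the only care required is tracking that all implicit constants depend only on $\mfkg$, which holds because $q$ is part of $\mfkg$ and the coefficients extracted from Hoeffding and the Gaussian tail integral depend only on $q$.
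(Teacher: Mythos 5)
Your proof is correct and follows the paper's approach exactly: part (b) by Hoeffding plus a union bound over colors (the paper's constants $c_1 = 8/q^2$, $c_2 = 2q$ are identical to yours), and parts (c) and (d) by integrating the tail bound. Your added explanation of (a) (which the paper dismisses as ``clear'') and the Cauchy--Schwarz alternative for (d) are both fine but not needed.
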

\begin{proof}
The first part is clear, further the union bound with Hoeffding's inequality yields
\begin{align*}
\prob[\|\gammaIID-\gamma^*\|_\mrmtv\ge r]
\le\prob[\exists\tau\in[q]|\gammaIID(\tau)-\gamma^*(\tau)|\ge 2r/q]
\le 2q\exp\left(-\frac{8}{q^2}r^2n\right).
\end{align*}
For the next part we have $\expe[\|\gammaIID-\gamma^*\|_\mrmtv]\le\int_0^\infty c_2e^{-c_1r^2n}\mathrm{d}r=\frac{c_2\sqrt{\pi}}{2\sqrt{c_1n}}$.
Similarly, for the last part we have
$\expe[\|\gammaIID-\gamma^*\|_\mrmtv^2]\le\int_0^\infty c_2e^{-c_1rn}\mathrm{d}r=\frac{c_2}{c_1n}$.
\end{proof}
Next, we establish the bounds for $\psiM$ and $\ZM$.
\begin{lemma}\label{lemma_firstmom}
Notice that the following holds for $m\le\mbu$.
\begin{alphaenumerate}
\item\label{lemma_firstmomZ}
There exists $c_\mfkg\in\reals_{>0}$ such that $c\ZFabu^m\le\ZM\le\ZFabu^m$.
\item\label{lemma_firstmomPsi}
There exists $c_\mfkg\in\reals_{>0}$ with
$\exp(-c\|\gammaN[,\sigma]-\gamma^*\|_\mrmtv^2n)\ZFabu^m\gamma^{*\otimes n}(\sigma)\le\psiM(\sigma)\le\ZFabu^m\gamma^{*\otimes n}(\sigma)$.
\end{alphaenumerate}
\end{lemma}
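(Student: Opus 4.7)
The plan is to read off $\psiM$ and $\ZM$ from the expectation formulas already established in Observation~\ref{obs_GRM_expebounds}\ref{obs_GRM_expeboundsPsiM} and Observation~\ref{obs_GRM_expebounds}\ref{obs_GRM_expeboundsZM}. Under the standing assumptions of Section~\ref{ground_truths} we have $\mI\equiv 0$ and $\setP=\emptyset$, so these collapse to $\psiM(\sigma)=\gamma^{*\otimes n}(\sigma)\,\ZFa(\gammaN[,\sigma])^m$ and $\ZM=\expe[\ZFa(\gammaIID)^m]$. Both upper bounds are then immediate from $\ZFa\le\ZFabu$, the defining property of the supremum.

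For the lower bound of Part~\ref{lemma_firstmomPsi}, I would set $\delta=\|\gammaN[,\sigma]-\gamma^*\|_\mrmtv\in[0,1]$ and combine two inputs: Observation~\ref{obs_fad}\ref{obs_fad_maxbound} gives $\ZFa(\gammaN[,\sigma])\ge\ZFabu-c\delta^2$, while Observation~\ref{obs_fad}\ref{obs_fad_bounds} together with $\psibu=\psibl^{-1}$ yields $\ZFa(\gammaN[,\sigma])/\ZFabu\ge\psibl^2$. Splitting on whether $c\delta^2\le\ZFabu/2$ handles the singularity of $\ln(1-x)$ at $x=1$: in the ``small $\delta$'' regime the elementary inequality $\ln(1-x)\ge-2x$ for $x\in[0,1/2]$ gives $m\ln(\ZFa/\ZFabu)\ge-2mc\delta^2/\ZFabu\ge-c'\delta^2 n$ via $m\le\mbu=2\degabu n/k$; in the ``large $\delta$'' regime one has $\delta^2\ge\ZFabu/(2c)\ge\psibl/(2c)$, so the crude constant bound $m\ln(\ZFa/\ZFabu)\ge 2m\ln(\psibl)\ge-c''n$ can be rewritten as $-(c''\cdot 2c/\psibl)\delta^2 n$, also of the desired form. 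Merging the two cases into a single $c_\mfkg$ gives Part~\ref{lemma_firstmomPsi}.

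For the lower bound of Part~\ref{lemma_firstmomZ} I would feed Part~\ref{lemma_firstmomPsi} (applied to $\sigma=\sigmaIID$) into $\ZM=\expe[\ZFa(\gammaIID)^m]$ to obtain $\ZM\ge\ZFabu^m\expe[\exp(-c\|\gammaIID-\gamma^*\|_\mrmtv^2 n)]$, and then show that the remaining expectation is bounded below by a positive constant depending only on $\mfkg$. By the Gaussian-type tail bound in Observation~\ref{obs_gtiid}\ref{obs_gtiid_prob}, choosing $t_0=t_0(\mfkg)$ with $c_2 e^{-c_1 t_0}\le 1/2$ gives $\prob[\|\gammaIID-\gamma^*\|_\mrmtv^2\le t_0/n]\ge 1/2$, whence $\expe[\exp(-c\|\gammaIID-\gamma^*\|_\mrmtv^2 n)]\ge \tfrac{1}{2}e^{-ct_0}$.

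The only mildly delicate point is the case split in Part~\ref{lemma_firstmomPsi}: one must exploit that the desired exponent is quadratic in $\delta$ (not linear), because only then can the ``bad'' regime $\delta^2\gtrsim 1$ be absorbed into a multiplicative constant. Once that observation is in place, the rest is routine bookkeeping using the already-proved properties of $\ZFa$ and the concentration of $\gammaIID$.
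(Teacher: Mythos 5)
Your proof is correct and follows essentially the same route as the paper: the same case split on $\delta^2$ in Part~\ref{lemma_firstmomPsi}, exploiting that the exponent is quadratic in $\delta$ to absorb the ``large $\delta$'' regime into a constant, and then deducing Part~\ref{lemma_firstmomZ} from Part~\ref{lemma_firstmomPsi} via concentration of $\gammaIID$. The only deviation is the last step of Part~\ref{lemma_firstmomZ}, where the paper applies Jensen's inequality ($\expe[e^{-cnX}]\ge e^{-cn\expe[X]}$) together with the second-moment bound of Observation~\ref{obs_gtiid}\ref{obs_gtiid_var}, whereas you use a median-type argument via the tail bound of Observation~\ref{obs_gtiid}\ref{obs_gtiid_prob}; both give a uniform constant lower bound.
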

\begin{proof}
Observation \ref{obs_GRM_expebounds}\ref{obs_GRM_expeboundsPsiM} yields $\psiM(\sigma)=\gamma^{*\otimes n}(\sigma)\ZFa(\gamma)^m\le\gamma^{*\otimes n}(\sigma)\ZFabu^m$, so $\ZM\le\ZFabu^m$, where $\gamma=\gammaN[,\sigma]$.
Next, with $\tilde c$ from Observation \ref{obs_fad}\ref{obs_fad_maxbound}
let $c_\mfkg=8\degabu\tilde c\xlnx(\psibu)/k$
and $\delta=\|\gammaN[,\sigma]-\gamma^*\|_\mrmtv$.
For $\delta^2\ge 1/(2\tilde c\psibu)$ we have
\begin{align*}
\frac{\psiM(\sigma)}{\gamma^{*\otimes n}(\sigma)\ZFabu^m}
\ge\left(\frac{\psibl}{\psibu}\right)^{\mbu}
=\psibu^{-4\degabu n/k}=\exp\left(-\frac{c}{2\tilde c\psibu}n\right)\ge e^{-c\delta^2n}.
\end{align*}
For $\delta^2\le 1/(2\tilde c\psibu)$ we have
$\frac{\ZFa(\gamma)}{\ZFabu}\ge 1-\frac{\tilde c\delta^2}{\psibl}\ge\frac{1}{2}$.
For $t\in\reals_{\ge 1/2}$ we have $\ln(t)\ge(2t-3)(1-t)$
and thereby
\begin{align*}
\ln\left(\frac{\ZFa(\gamma)}{\ZFabu}\right)
\ge-\left(1+\frac{2\tilde c\delta^2}{\psibl}\right)\frac{\tilde c\delta^2}{\psibl}
\ge-\frac{2\tilde c\delta^2}{\psibl},\,
\frac{\psiM(\sigma)}{\ZFabu^m\gamma^{*\otimes n}(\sigma)}
\ge\exp\left(-\frac{2\mbu\tilde c\delta^2}{\psibl}\right)
\ge e^{-c\delta^2n}.
\end{align*}
So, by Jensen's inequality and with $\tilde c$ from Observation \ref{obs_gtiid}\ref{obs_gtiid_var} we have
\begin{align*}
\frac{\ZM}{\ZFabu^m}\ge\expe\left[\exp\left(-c\|\gammaIID-\gamma^*\|_\mrmtv^2n\right)\right]
\ge\exp\left(-c\expe[\|\gammaIID-\gamma^*\|_\mrmtv^2]n\right)
\ge\exp(-c\tilde c).
\end{align*}
\end{proof}
Now, we prove the remainder of Proposition \ref{proposition_mutual_contiguity} and translate Observation \ref{obs_gtiid} to $\sigmaNIS$.
\begin{corollary}\label{cor_mutcont}
Notice that the following holds for $m\le\mbu$.
\begin{alphaenumerate}
\item\label{cor_mutcont_rnbu}
There exists $c_\mfkg\in\reals_{>0}$ such that $\hat r\le c$.
\item\label{cor_mutcont_rnbl}
There exists $c_\mfkg\in\reals_{>0}$ such that $\hat r(\sigma)\ge\exp(-c\|\gammaN[,\sigma]-\gamma^*\|_\mrmtv^2n)$.
\item\label{cor_mutcont_cond}
We have $(\gammaNIS,\sigmaNIS)\dequal(\gammaNIS,\sigmaRgG[,\gammaNIS])$.
\item\label{cor_mutcont_prob}
There exist $c_\mfkg\in\reals_{>0}^2$ with $\prob[\|\gammaNIS-\gamma^*\|_\mrmtv\ge r]\le c_2e^{-c_1r^2n}$ for $r\in\reals_{\ge 0}$.
\item\label{cor_mutcont_expe}
There exists $c\in\reals_{>0}$ such that $\expe[\|\gammaNIS-\gamma^*\|_\mrmtv]\le c/\sqrt{n}$.
\item\label{cor_mutcond_var}
There exists $c\in\reals_{>0}$ such that $\expe[\|\gammaNIS-\gamma^*\|_\mrmtv^2]\le c/n$.
\end{alphaenumerate}
\end{corollary}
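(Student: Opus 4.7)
The proof hinges on the explicit formula from Observation \ref{obs_nishicond}\ref{obs_nishicond_inv}, namely $\hat r(\sigma)=\ZFa(\gammaN[,\sigma])^m/\expe[\ZFa(\gammaIID)^m]$. Two features are crucial: $\hat r$ depends on $\sigma$ only through $\gammaN[,\sigma]$, and the numerator and denominator can be controlled uniformly via Lemma \ref{lemma_firstmom}. Since both $\sigmaIID$ and $\sigmaNIS$ are invariant under attaching interpolators and pins, we may assume $\mI\equiv 0$ and $\setP=\emptyset$, so that $\ZM=\expe[\ZFa(\gammaIID)^m]$ by Observation \ref{obs_GRM_expebounds}\ref{obs_GRM_expeboundsZM}.

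For Part \ref{cor_mutcont}\ref{cor_mutcont_rnbu}, the numerator satisfies $\ZFa(\gammaN[,\sigma])^m\le\ZFabu^m$ by definition of $\ZFabu$, and the denominator satisfies $\expe[\ZFa(\gammaIID)^m]\ge c\ZFabu^m$ by Lemma \ref{lemma_firstmom}\ref{lemma_firstmomZ} (using $\ZM=\expe[\ZFa(\gammaIID)^m]$ in this regime). Hence $\hat r\le 1/c$. For Part \ref{cor_mutcont}\ref{cor_mutcont_rnbl}, Lemma \ref{lemma_firstmom}\ref{lemma_firstmomPsi} applied to $\psiM(\sigma)=\gamma^{*\otimes n}(\sigma)\ZFa(\gammaN[,\sigma])^m$ yields $\ZFa(\gammaN[,\sigma])^m\ge\exp(-c\|\gammaN[,\sigma]-\gamma^*\|_\mrmtv^2 n)\ZFabu^m$, and combining with the trivial bound $\expe[\ZFa(\gammaIID)^m]\le\ZFabu^m$ gives the claim.

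For Part \ref{cor_mutcont}\ref{cor_mutcont_cond}, the key point is that $\hat r(\sigma)$ is constant on each level set $\{\sigma':\gammaN[,\sigma']=\gamma\}$ since it depends on $\sigma$ only through $\gammaN[,\sigma]$. Because $\sigmaIID|\gammaIID=\gamma$ is uniform on that level set by Observation \ref{obs_gtiid}\ref{obs_gtiid_cond}, tilting by the constant value of $\hat r$ on the level set preserves this uniformity, so $\sigmaNIS|\gammaNIS=\gamma\dequal\sigmaRgG[,\gamma]$ as well.

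Parts \ref{cor_mutcont}\ref{cor_mutcont_prob}, \ref{cor_mutcont_expe} and \ref{cor_mutcond_var} then follow by transferring the corresponding statements in Observation \ref{obs_gtiid} through the bounded derivative from Part \ref{cor_mutcont}\ref{cor_mutcont_rnbu}. Concretely, for any non-negative measurable $f$ of the color frequencies,
\begin{align*}
\expe[f(\gammaNIS)]=\expe[\hat r(\sigmaIID)f(\gammaIID)]\le c\expe[f(\gammaIID)],
\end{align*}
which applied with $f(\gamma)=\bmone\{\|\gamma-\gamma^*\|_\mrmtv\ge r\}$, $f(\gamma)=\|\gamma-\gamma^*\|_\mrmtv$ and $f(\gamma)=\|\gamma-\gamma^*\|_\mrmtv^2$ in turn yields the tail bound and the moment bounds. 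Nothing here is technically delicate; the only mild subtlety is making sure the derivative really depends on $\sigma$ solely through $\gammaN[,\sigma]$, which is exactly the content of Observation \ref{obs_nishicond}\ref{obs_nishicond_inv} and allows the reduction to the i.i.d. ground truth.
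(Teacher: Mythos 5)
Your proposal is correct and follows essentially the same route as the paper: bound $\hat r$ above and below via the explicit formula from Observation \ref{obs_nishicond}\ref{obs_nishicond_inv} together with Lemma \ref{lemma_firstmom}, observe that $\hat r$ depends on $\sigma$ only through $\gammaN[,\sigma]$ to get the conditional law, and then transfer the tail and moment bounds from Observation \ref{obs_gtiid} through the bounded Radon--Nikodym derivative. The only difference is that you spell out the change-of-measure identity $\expe[f(\gammaNIS)]=\expe[\hat r(\sigmaIID)f(\gammaIID)]$ explicitly, which the paper leaves implicit.
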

\begin{proof}
With Observation \ref{obs_nishicond}\ref{obs_nishicond_inv}, $\tilde c$ from Lemma \ref{lemma_firstmom}\ref{lemma_firstmomZ} and Lemma \ref{lemma_firstmom}\ref{lemma_firstmomPsi} we have $\hat r\le \tilde c^{-1}$.
With $c$ from Lemma \ref{lemma_firstmom}\ref{lemma_firstmomPsi} and Lemma 
\ref{lemma_firstmom}\ref{lemma_firstmomZ} we have $\hat r(\sigma)\ge\exp(-c\|\gammaN[,\sigma]-\gamma^*\|_\mrmtv^2n)$.
The next part is immediate from the result for $\hat r$ in Observation \ref{obs_nishicond}\ref{obs_nishicond_inv}, which also shows that $\psiM$ is invariant to permutations of $\sigma$.
The last parts follow from Part \ref{cor_mutcont}\ref{cor_mutcont_rnbu} applied to Observation \ref{obs_gtiid}.
\end{proof}
\subsubsection{Coupling Nishimori Ground Truths}\label{gtnis_coupling}
Since $\sigmaNIS$ is invariant to decorations we assume that $\mI\equiv 0$ and $\setP=\emptyset$.
In this section we derive a bound for $\|\sigmaNIS_{m+1}-\sigmaNIS_m\|_\mrmtv$, which then extends to any $\sigmaNIS_m$, $\sigmaNIS_{\tilde m}$ using the triangle inequality.
\begin{observation}\label{obs_niscoupling}
Notice that the following holds for $m\le\mbu$.
\begin{alphaenumerate}
\item\label{obs_niscoupling_rn}
There exists $c_\mfkg\in\reals_{>0}$ with $1-c\|\gammaN[,\sigma]-\gamma^*\|_\mrmtv^2\le r(\sigma)\le 1+\frac{c}{n}$ for the $(\sigmaNIS_{m+1},\sigmaNIS_m)$-derivative $r$.
\item\label{obs_niscoupling_tv}
There exists $c_\mfkg\in\reals_{>0}$ such that $\|\sigmaNIS_{m+1}-\sigmaNIS_m\|_\mrmtv\le c/n$.
\end{alphaenumerate}
\end{observation}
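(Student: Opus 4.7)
The plan is to compute the $(\sigmaNIS_{m+1},\sigmaNIS_m)$-derivative explicitly using Observation \ref{obs_nishicond}\ref{obs_nishicond_inv}:
\begin{align*}
r(\sigma)=\frac{\hat r_{m+1}(\sigma)}{\hat r_m(\sigma)}=\ZFa(\gammaN[,\sigma])\cdot\frac{\expe[\ZFa(\gammaIID)^m]}{\expe[\ZFa(\gammaIID)^{m+1}]}.
\end{align*}
Part \ref{obs_niscoupling_rn} then reduces to bounding the moment ratio on the right. The lower bound on $r$ is immediate: $\ZFa\le\ZFabu$ gives $\expe[\ZFa(\gammaIID)^{m+1}]\le\ZFabu\,\expe[\ZFa(\gammaIID)^m]$, and the quadratic maximizer bound $\ZFa(\gamma)\ge\ZFabu-c\|\gamma-\gamma^*\|_\mrmtv^2$ from Observation \ref{obs_fad}\ref{obs_fad_maxbound} yields $r(\sigma)\ge\ZFa(\gammaN[,\sigma])/\ZFabu\ge 1-c\|\gammaN[,\sigma]-\gamma^*\|_\mrmtv^2$.

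For the upper bound on $r$, I would apply the same quadratic bound to the factor $\ZFa(\gammaIID)$ inside the expectation to obtain
\begin{align*}
\expe[\ZFa(\gammaIID)^{m+1}]\ge\ZFabu\,\expe[\ZFa(\gammaIID)^m]-c\,\expe\!\left[\|\gammaIID-\gamma^*\|_\mrmtv^2\,\ZFa(\gammaIID)^m\right].
\end{align*}
The crude bound $\ZFa(\gammaIID)^m\le\ZFabu^m$ together with the variance estimate $\expe[\|\gammaIID-\gamma^*\|_\mrmtv^2]\le c/n$ from Observation \ref{obs_gtiid}\ref{obs_gtiid_var} controls the error term. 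Since $\sigmaNIS$ is invariant to decorations we may take $\mI\equiv 0$, $\setP=\emptyset$, so Observation \ref{obs_GRM_expebounds}\ref{obs_GRM_expeboundsZM} gives $\expe[\ZFa(\gammaIID)^m]=\ZM$, and Lemma \ref{lemma_firstmom}\ref{lemma_firstmomZ} supplies the matching lower bound $\ZM\ge c\ZFabu^m$ for $m\le\mbu$. Dividing and rearranging yields $r(\sigma)\le 1+c/n$, completing Part \ref{obs_niscoupling_rn}.

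Part \ref{obs_niscoupling_tv} then follows from the standard identity
\begin{align*}
\|\sigmaNIS_{m+1}-\sigmaNIS_m\|_\mrmtv=\tfrac{1}{2}\,\expe\!\left[|r(\sigmaNIS_m)-1|\right]
\end{align*}
combined with Part \ref{obs_niscoupling_rn}, which gives the uniform pointwise estimate $|r(\sigma)-1|\le c/n+c\|\gammaN[,\sigma]-\gamma^*\|_\mrmtv^2$. Taking the expectation under $\sigmaNIS_m$ and invoking the variance bound $\expe[\|\gammaNIS_m-\gamma^*\|_\mrmtv^2]\le c/n$ from Corollary \ref{cor_mutcont}\ref{cor_mutcond_var} delivers the desired $O(1/n)$ bound.

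The main obstacle is the upper bound in Part \ref{obs_niscoupling_rn}: the weight $\ZFa(\gammaIID)^m$ is raised to a power of order $n$, so \emph{a priori} the tilted law of $\gammaIID$ might concentrate on atypical color frequencies, inflating the error beyond $O(1/n)$. This is defused by the combination of the uniform upper bound $\ZFa\le\ZFabu$ (so the weight cannot amplify the unweighted variance) with the robust lower bound on the normalizing constant $\ZM\ge c\ZFabu^m$ from Lemma \ref{lemma_firstmom}, which together ensure that the tilting distorts the law of $\gammaIID$ by at most a bounded constant factor and thereby preserves the $1/n$ decay of its second moment.
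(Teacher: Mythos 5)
Your proposal is correct and follows essentially the same route as the paper: both reduce the derivative to $r(\sigma)=\ZFa(\gammaN[,\sigma])\cdot\ZM[,m]/\ZM[,m+1]$, bound the ratio of normalizing constants using the quadratic expansion of $\ZFa$ around the maximizer $\gamma^*$ (Observation~\ref{obs_fad}\ref{obs_fad_maxbound}), and control the error with the $O(1/n)$ second moment of the color frequencies plus the lower bound $\ZM\ge c\ZFabu^m$ from Lemma~\ref{lemma_firstmom}\ref{lemma_firstmomZ}. (The paper's form $\ZM[,m+1]/\ZM[,m]=\expe[\ZFa(\gammaNIS_m)]$ with Corollary~\ref{cor_mutcont}\ref{cor_mutcond_var} is your calculation in compressed form, since that corollary is itself derived from Observation~\ref{obs_gtiid}\ref{obs_gtiid_var} and Lemma~\ref{lemma_firstmom}.) One omission: when you ``divide and rearrange'' to pass from $r(\sigma)\le(1-C/n)^{-1}$ to $r(\sigma)\le 1+c/n$, the inequality $1/(1-t)\le 1+2t$ only applies for $t\le 1/2$, i.e., $n\ge 2C$; for $n$ below this threshold one must invoke the crude uniform bound $r(\sigma)\le\psibu^{2\mbu+2}$, which is a constant on that bounded regime and can be absorbed into $c$. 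The paper treats this small-$n$ case explicitly.
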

\begin{proof}
With Observation \ref{obs_GRM_expebounds}\ref{obs_GRM_expeboundsPsiM} and $c'$ from Observation \ref{obs_fad}\ref{obs_fad_maxbound} we obtain
\begin{align*}
\frac{\psiM[,m+1](\sigma)}{\psiM[,m](\sigma)}=\ZFa(\gammaN[,\sigma])\in[\ZFabu-c'\|\gammaN[,\sigma]-\gamma^*\|_\mrmtv^2,\ZFabu].
\end{align*}
This equality and $c''$ from Corollary \ref{cor_mutcont}\ref{cor_mutcond_var} further yield
\begin{align*}
\frac{\ZM[,m+1]}{\ZM[,m]}=\expe\left[\ZFa(\gammaNIS_m)\right]
\ge\ZFabu-c'\expe\left[\|\gammaNIS-\gamma^*\|_\mrmtv^2\right]
\ge\ZFabu-\frac{c'c''}{n}
\end{align*}
and the upper bound $\ZFabu$.
Hence, we have
\begin{align*}
\frac{\prob[\sigmaNIS_{m+1}=\sigma]}{\prob[\sigmaNIS_{m}=\sigma]}
\ge 1-\frac{c'}{\ZFabu}\|\gammaN[,\sigma]-\gamma^*\|_\mrmtv^2
\ge 1-\frac{c'}{\psibu}\|\gammaN[,\sigma]-\gamma^*\|_\mrmtv^2.
\end{align*}
For $n\le n_\circ$ with $n_\circ=2c'c''/\ZFabu\le 2c'c''\psibu$ we use Observation \ref{obs_nishicond}\ref{obs_nishicond_inv} to obtain
\begin{align*}
\frac{\prob[\sigmaNIS_{m+1}=\sigma]}{\prob[\sigmaNIS_{m}=\sigma]}
\le\psibu^{2\mbu}
\le 1+\frac{n_\circ\psibu^{4\degabu n_\circ/k}}{n}
\le 1+\frac{2c'c''\exp(1+\frac{8}{k}c'c''\xlnx(\psibu)\degabu)}{n}.
\end{align*}
For $n\ge n_\circ$ we use the bounds above and $1/(1-t)\le1+2t$ for $t\in[0,1/2]$ to obtain
\begin{align*}
\frac{\prob[\sigmaNIS_{m+1}=\sigma]}{\prob[\sigmaNIS_{m}=\sigma]}
\le\frac{1}{1-\frac{c'c''}{\ZFabu n}}
\le 1+\frac{2c'c''}{\ZFabu n}\le 1+\frac{2c'c''\psibu}{n},
\end{align*}
which completes the proof of Part \ref{obs_niscoupling}\ref{obs_niscoupling_rn}.
Combining this result with Observation \ref{obs_tv}\ref{obs_tv_norm} gives
\begin{align*}
\|\sigmaNIS_{m+1}-\sigmaNIS_m\|_\mrmtv
=\frac{1}{2}\expe\left[\left|r(\sigmaNIS_m)-1\right|\right]
\le\frac{c}{2}\left(\expe\left[\|\gammaNIS-\gamma^*\|_\mrmtv^2\right]+\frac{1}{n}\right),
\end{align*}
which completes the proof using Corollary \ref{cor_mutcont}\ref{cor_mutcond_var}.
\end{proof}
This completes the discussion of the Nishimori ground truth $\sigmaNIS$.
\subsubsection{Ground Truth Given the Graph}\label{ground_truth_given_graph}
In this section we consider arbitrary choices of $\mI$ and $\setP$.
Due to the Nishimori condition \ref{obs_nishicond}\ref{obs_nishicond_dec} the Nishimori ground truth $\sigmaNIS$ conditional to $\GTSM(\sigmaNIS)$ has the same distribution as the Gibbs spins $\sigmaG$. Hence, we only need to discuss the kernel for $\sigmaIID$ given $\GTSM(\sigmaIID)$.
For this purpose let $r_{\mrmg,\sigma}(G)=\psiG[,G](\sigma)/\psiM(\sigma)$ be the $(\GTSM(\sigma),\GRM)$-derivative, $r_\mrmg^*(G)=\expe[r_{\mrmg,\sigmaIID}(G)]$ and for $G\in[\domG]^{\Gamma\lrarr\darr}$ let $\sigmaIID_{\mrmg,G}\in[q]^n$ be given by the $(\sigmaIID_{\mrmg,G},\sigmaIID)$-derivative $r_{\mrms,G}(\sigma)=r_{\mrmg,\sigma}(G)/r_\mrmg^*(G)$.
\begin{observation}\label{obs_condiid}
Let $\GTSM=\GTSM(\sigmaIID)$ and $M=m+\|\mI\|_1$.
\begin{alphaenumerate}
\item\label{obs_condiid_bound}
The $(\GTSM,\GRM)$-derivative is $r_\mrmg^*$ with $\psibl^{2M}(q\psibl)^{|\setP|}\le r_\mrmg^*\le\psibu^{2M}q^{|\setP|}$.
\item\label{obs_condiid_law}
We have
$(\sigmaIID,\GTSM)\dequal(\sigmaIID_{\mrmg,\GTSM},\GTSM)$.
\end{alphaenumerate}
\end{observation}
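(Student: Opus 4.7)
The statement is purely a book-keeping result disintegrating the joint law of $(\sigmaIID,\GTSM)$ in two ways, so the strategy is to turn the crank using the Radon--Nikodym derivatives already in hand. For Part \ref{obs_condiid}\ref{obs_condiid_bound}, recall that $\GTSM(\sigma)$ is \emph{defined} by the $(\GTSM(\sigma),\GRM)$-derivative $r_{\mrmg,\sigma}=\psiG[,\cdot](\sigma)/\psiM(\sigma)$ (Section \ref{random_decorated_graphs}). Since $\sigmaIID$ and $\GRM$ are taken independent, the marginal of $\GTSM=\GTSM(\sigmaIID)$ with respect to $\GRM$ is obtained by averaging out $\sigmaIID$, giving exactly $r_\mrmg^\ast(G)=\expe[r_{\mrmg,\sigmaIID}(G)]$. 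This is an elementary change-of-measure argument that I would write out as a one-line computation over a generic event depending only on $\GTSM$.

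For the bounds on $r_\mrmg^\ast$, I would combine Observation \ref{obs_GRM_expebounds}\ref{obs_GRM_expeboundsPsiG} and Observation \ref{obs_GRM_expebounds}\ref{obs_GRM_expeboundsPsiM}. Writing $G=[(v,\psi)]^{\Gamma\lrarr\darr}_{\psiI,\setP,\sigmaP}$, both $\psiG[,G](\sigma)$ and $\psiM(\sigma)$ contain the factor $\gamma^{*\otimes n}(\sigma)$, which cancels in $r_{\mrmg,\sigma}(G)$. What remains yields
\begin{align*}
r_{\mrmg,\sigma}(G)\,\bmone\{\sigma_\setP=\sigmaP_\setP\}\in\left[\psibl^{2M}q^{|\setP|},\,\psibu^{2M}q^{|\setP|}\right]\bmone\{\sigma_\setP=\sigmaP_\setP\},
\end{align*}
with $r_{\mrmg,\sigma}(G)=0$ otherwise (enforced by the pin indicator inside $\psiG$). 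Taking expectations over $\sigmaIID\dequal\gamma^{*\otimes n}$ and using $\expe[\bmone\{\sigmaIID_\setP=\sigmaP_\setP\}]=\prod_{i\in\setP}\gamma^*(\sigmaP(i))$, which lies in $[\psibl^{|\setP|},1]$ by the assumption $\gamma^*\ge\psibl$, I obtain the stated two-sided bound on $r_\mrmg^\ast(G)$.

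For Part \ref{obs_condiid}\ref{obs_condiid_law}, I would verify equality of the joint laws by computing $\prob[(\sigmaIID,\GTSM)\in\mcle]$ and $\prob[(\sigmaIID_{\mrmg,\GTSM},\GTSM)\in\mcle]$ for an arbitrary measurable event $\mcle\setle[q]^n\times[\domG]^{\Gamma\lrarr\darr}$. Disintegrating the left-hand side via $\sigmaIID$ first gives
\begin{align*}
\prob[(\sigmaIID,\GTSM)\in\mcle]=\sum_\sigma\prob[\sigmaIID=\sigma]\,\expe\!\left[r_{\mrmg,\sigma}(\GRM)\bmone\{(\sigma,\GRM)\in\mcle\}\right].
\end{align*}
Disintegrating the right-hand side via $\GTSM$ first and then applying the definition of $\sigmaIID_{\mrmg,G}$ through its $(\sigmaIID_{\mrmg,G},\sigmaIID)$-derivative $r_{\mrmg,\sigma}(G)/r_\mrmg^\ast(G)$, the factor $r_\mrmg^\ast(\GRM)$ supplied by Part (a) cancels against the normalization, yielding the same expression. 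Fubini is applicable thanks to the uniform bounds just proven, so there is no integrability worry.

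\textbf{Expected obstacles.} There is no substantive analytic difficulty; the only care needed is to keep track of where the random pin values $\sigmaPR$ live inside $\GRM$ versus where the fixed locations $\setP$ live, and to ensure that the ``$0$ divided by $0$'' cases on $\{\sigma_\setP\neq\sigmaP_\setP\}$ are handled consistently (both derivatives vanish there, so one can restrict to the positivity set before dividing). Beyond that, the statement follows by routine use of the disintegration identities and the bounds in Observation \ref{obs_GRM_expebounds}.
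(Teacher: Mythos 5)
Your proposal is correct and follows essentially the same route as the paper: for Part \ref{obs_condiid}\ref{obs_condiid_bound} you expand $r_{\mrmg,\sigma}(G)$ using Observation \ref{obs_GRM_expebounds}, cancel $\gamma^{*\otimes n}(\sigma)$, bound the remaining factors by $\psibl^{\pm 2M}q^{|\setP|}$ on the pin event, and then integrate out $\sigmaIID$ using $\gamma^{*\otimes\setP}(\sigmaP_\setP)\in[\psibl^{|\setP|},1]$; for Part \ref{obs_condiid}\ref{obs_condiid_law} you compare the two disintegrations of the joint law through $r_\mrmg^*(\GRM)\,r_{\mrms,\GRM}(\sigmaIID)=r_{\mrmg,\sigmaIID}(\GRM)$. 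This matches the paper's proof in both structure and detail.
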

\begin{proof}
For $G=[(v,\psi)]^{\Gamma\lrarr\darr}$ and using Observation \ref{obs_GRM_expebounds} we have
\begin{align*}
r_\mrmg^*(G)
=\expe\left[\prod_a\frac{\psi_a(\sigmaIID_{v(a)})}{\ZFa(\gammaIID)}
\prod_{(i,h)\in\facsI}\frac{\psiI_{i,h}(\sigma_i)}{\ZFabu}
\prod_{i\in\setP}\frac{\bmone\{\sigmaIID_i=\sigmaP_i\}}{q^{-1}}\right]
\ge\psibl^{2M}(q\psibl)^{|\setP|}
\end{align*}
using $\gamma^{*\otimes\setP}(\sigmaP_\setP)\ge\psibl^{|\setP|}$
and the upper bound follows analogously with
$\gamma^{*\otimes\setP}(\sigmaP_\setP)\le 1$.
For the second part with $(\sigmaIID,\GRM)\dequal\sigmaIID\otimes\GRM$ we have
\begin{align*}
\prob\left[(\sigmaIID_{\mrmg,\GTSM},\GTSM)\in\mcle\right]
=\expe\left[r^*_\mrmg(\GRM)r_{\mrms,\GRM}(\sigmaIID)\bmone\left\{(\sigmaIID,\GRM)\in\mcle\right\}\right]
=\prob\left[(\sigmaIID,\GTSM)\in\mcle\right].
\end{align*}
\end{proof}
\subsubsection{Gibbs Spins}
In this section we consider arbitrary choices of $\mI$ and $\setP$.
Due to the Nishimori condition \ref{obs_nishicond}\ref{obs_nishicond_dec} we have $\sigmaNIS\dequal\sigmaRG[,\GTSM(\sigmaNIS)]$. Hence, we only need to discuss
$\sigmaRG[,\GTSM(\sigmaIID)]$.
\begin{observation}\label{obs_gibbsTSIID}
Let $\gammaR=\gammaN[,\sigmaR]$ with $\sigmaR=\sigmaRG[,\GTSM(\sigmaIID)]$ and $m\le\mbu$.
\begin{alphaenumerate}
\item\label{obs_gibbsTSIID_prob}
There exists $c_\mfkg\in\reals_{>0}^2$ such that $\prob[\|\gammaR-\gamma^*\|_\mrmtv\ge r]\le c_2e^{-c_1r^2n}$.
\item\label{obs_gibbsTSIID_expe}
There exists $c_\mfkg\in\reals_{>0}$ such that
$\expe[\|\gammaR-\gamma^*\|_\mrmtv]\le c/\sqrt{n}$.
\item\label{obs_gibbsTSIID_var}
There exists $c_\mfkg\in\reals_{>0}$ such that
$\expe[\|\gammaR-\gamma^*\|_\mrmtv^2]\le c/n$.
\end{alphaenumerate}
\end{observation}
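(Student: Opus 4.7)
The strategy is to transfer the concentration of $\gammaNIS$ from Corollary \ref{cor_mutcont}\ref{cor_mutcont_prob} to $\gammaR$ via the Nishimori identity and the pointwise lower bound for the $(\sigmaNIS,\sigmaIID)$-derivative $\hat r$. The Nishimori condition (Observation \ref{obs_nishicond}\ref{obs_nishicond_dec}) yields $\sigmaRG[,\GTSM(\sigmaNIS)]\dequal\sigmaNIS$, so the color frequencies of the Gibbs spins on the teacher-student graph with Nishimori ground truth are already distributed exactly as $\gammaNIS$, and hence concentrated. The only remaining task is to show that replacing the ground truth $\sigmaNIS$ by $\sigmaIID$ (which produces $\gammaR$) does not destroy this concentration.

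Fix an event $A\setle\mclp([q])$ and define $\phi:[q]^n\rarr[0,1]$ by
\[
\phi(\sigma_0)=\prob\bigl[\gammaN[,\sigmaRG[,\GTSM(\sigma_0)]]-\gamma^*\in A\bigr],
\]
with randomness only over the teacher-student graph on ground truth $\sigma_0$ and the Gibbs spin sampled from it. Conditioning on $\sigmaIID$ resp.\ $\sigmaNIS$ then gives $\prob[\gammaR-\gamma^*\in A]=\expe[\phi(\sigmaIID)]$ and $\prob[\gammaNIS-\gamma^*\in A]=\expe[\phi(\sigmaNIS)]$. By Proposition \ref{proposition_mutual_contiguity}\ref{proposition_mutual_contiguity_rndown} there exists $c'=c'_\mfkg$ with $\hat r(\sigma)\ge\exp(-c'\|\gammaN[,\sigma]-\gamma^*\|_\mrmtv^2 n)$, so on the event $\mcle_s=\{\|\gammaIID-\gamma^*\|_\mrmtv\le s\}$ we have $\hat r(\sigmaIID)\ge e^{-c's^2 n}$; since $\phi\ge 0$ this gives
\[
\expe[\phi(\sigmaIID)\bmone_{\mcle_s}]\le e^{c's^2 n}\expe[\hat r(\sigmaIID)\phi(\sigmaIID)]=e^{c's^2 n}\expe[\phi(\sigmaNIS)],
\]
and Observation \ref{obs_gtiid}\ref{obs_gtiid_prob} bounds $\prob[\mcle_s^c]\le c_2 e^{-c_1 s^2 n}$, so altogether
\[
\prob[\gammaR-\gamma^*\in A]\le e^{c's^2 n}\prob[\gammaNIS-\gamma^*\in A]+c_2 e^{-c_1 s^2 n}.
\]

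For Part \ref{obs_gibbsTSIID}\ref{obs_gibbsTSIID_prob}, pick $A=\{x\in\mclp([q]):\|x\|_\mrmtv\ge r\}$. Corollary \ref{cor_mutcont}\ref{cor_mutcont_prob} bounds the first term by $\tilde c_2 e^{(c's^2-\tilde c_1 r^2)n}$; choosing $s=\alpha r$ with $\alpha=\alpha_\mfkg>0$ small enough that $c'\alpha^2\le\tilde c_1/2$ makes both terms $\mclo(e^{-c^\circ r^2 n})$ with constants depending only on $\mfkg$. Parts \ref{obs_gibbsTSIID}\ref{obs_gibbsTSIID_expe} and \ref{obs_gibbsTSIID}\ref{obs_gibbsTSIID_var} then follow from Part \ref{obs_gibbsTSIID}\ref{obs_gibbsTSIID_prob} via the layer cake identity $\expe[\|\gammaR-\gamma^*\|_\mrmtv^\ell]=\ell\int_0^\infty r^{\ell-1}\prob[\|\gammaR-\gamma^*\|_\mrmtv\ge r]\mathrm{d}r$ for $\ell\in\{1,2\}$, verbatim as in the proof of Observation \ref{obs_gtiid}\ref{obs_gtiid_expe}, \ref{obs_gtiid_var}.

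The main obstacle is that $\hat r$ is only uniformly bounded above and not below, so we cannot directly dominate $\expe[\phi(\sigmaIID)]$ by a constant times $\expe[\phi(\sigmaNIS)]$. The pointwise lower bound $\hat r(\sigma)\ge e^{-c'\|\gammaN[,\sigma]-\gamma^*\|_\mrmtv^2 n}$ only yields a local comparison on the typical set $\mcle_s$ at the price of an $e^{c's^2 n}$ factor, which must be balanced against the Hoeffding-type tail of $\gammaIID$ on $\mcle_s^c$ by choosing $s$ proportional to $r$. Decorations (interpolators and pins) do not interfere with this argument, since $\hat r$ depends only on the ground truth law, the Nishimori condition holds for the decorated graph, and $\gammaIID$ is decoration-independent.
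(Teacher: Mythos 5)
Your proof is correct and takes essentially the same route as the paper: the paper also transfers concentration from $\gammaNIS$ to $\gammaR$ via the Nishimori identity and the lower bound $\hat r(\sigma)\ge\exp(-c'\|\gammaN[,\sigma]-\gamma^*\|_\mrmtv^2 n)$ from Corollary \ref{cor_mutcont}\ref{cor_mutcont_rnbl}, restricting to a truncation event on $\gammaIID$ at a threshold proportional to $r$ (the paper uses $r^*=\sqrt{\hat c_1/(2c')}\,r$ where you use $s=\alpha r$) to balance the $e^{c's^2n}$ price against the Gaussian tails. Parts (b) and (c) then follow in both treatments by integrating the tail bound, as in Observation \ref{obs_gtiid}.
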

\begin{proof}
Let $c^*\in\reals_{>0}^2$ be from Observation \ref{obs_gtiid}\ref{obs_gtiid_prob}, $\hat c\in\reals_{>0}^2$ be from Corollary \ref{cor_mutcont}\ref{cor_mutcont_prob} and $c'\in\reals_{>0}$ from Corollary \ref{cor_mutcont}\ref{cor_mutcont_rnbl}.
With $r^*=\sqrt{\frac{\hat c_1}{2c'}}r$ we have
\begin{flalign*}
\prob\left[\|\gammaR-\gamma^*\|_\mrmtv\ge r\right]
&\le e^{c'r^{*2}n}\prob\left[\|\gammaNIS-\gamma^*\|_\mrmtv\ge r,\|\gammaNIS-\gamma^*\|_\mrmtv< r^*\right]+c^*_2e^{-c^*_1r^{*2}n}\\
&\le\exp\left(\frac{1}{2}\hat c_1r^2n\right)\hat c_2e^{-\hat c_1r^2n}+c^*_2\exp\left(-\frac{c^*_1\hat c_1}{2c'}r^2n\right),
\end{flalign*}
which completes the proof with $c_2=\hat c_2+c^*_2$ and $c_1=\min(\hat c_1/2,c^*_1\hat c_1/(2c'))$. The remainder is completely analogous to the proof of Observation \ref{obs_gtiid}.
\end{proof}
\begin{remark}
We could also discuss $\GTSM[\mIR,\setPR](\sigmaIID)$ here to obtain permutation invariance of the posterior. Further, the expected color frequencies of the Gibbs spins could also go here.
\end{remark}
\subsubsection{Relative Entropies}\label{relative_entropies}
In this section we compare the various assignments using relative entropies.
\begin{observation}\label{obs_DKLgt}
Let $\GTSM=\GTSM(\sigmaIID)$ and $m\le\mbu$.
\begin{alphaenumerate}
\item\label{obs_DKLgt_NISIID}
There exists $c_\mfkg\in\reals_{>0}$ with $\DKL(\sigmaNIS\|\sigmaIID)\le c$.
\item\label{obs_DKLgt_IIDNIS}
There exists $c_\mfkg\in\reals_{>0}$ with $\DKL(\sigmaIID\|\sigmaNIS)\le c$.
\item\label{obs_DKLgt_IIDCG}
There exists $c_\mfkg\in\reals_{>0}$ such that $\expe[\expe[\DKL(\sigmaIID_{\mrmg,\GTSM}\|\sigmaRG[,\GTSM])|\GTSM]]\le c$.
\end{alphaenumerate}
\end{observation}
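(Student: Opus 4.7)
The plan is to exploit the $(\sigmaNIS, \sigmaIID)$-derivative $\hat r$ from Observation \ref{obs_nishicond}\ref{obs_nishicond_inv} for all three parts. Part \ref{obs_DKLgt}\ref{obs_DKLgt_NISIID} is immediate from the uniform upper bound $\hat r \le c$ in Corollary \ref{cor_mutcont}\ref{cor_mutcont_rnbu}: by definition $\DKL(\sigmaNIS\|\sigmaIID) = \expe[\ln \hat r(\sigmaNIS)] \le \ln c$.

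For Part \ref{obs_DKLgt}\ref{obs_DKLgt_IIDNIS}, I would write $\DKL(\sigmaIID \| \sigmaNIS) = -\expe[\ln \hat r(\sigmaIID)]$ and apply the pointwise lower bound $\hat r(\sigma) \ge \exp(-c\|\gammaN[,\sigma] - \gamma^*\|_\mrmtv^2 n)$ from Corollary \ref{cor_mutcont}\ref{cor_mutcont_rnbl}. This dominates the integrand by $c n \|\gammaIID - \gamma^*\|_\mrmtv^2$, and the second-moment bound $\expe[\|\gammaIID - \gamma^*\|_\mrmtv^2] \le c'/n$ from Observation \ref{obs_gtiid}\ref{obs_gtiid_var} cancels the factor $n$ to leave an absolute constant.

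Part \ref{obs_DKLgt}\ref{obs_DKLgt_IIDCG} reduces to Part \ref{obs_DKLgt}\ref{obs_DKLgt_IIDNIS} via a chain-rule argument on the joint laws of $(\sigmaIID, \GTSM(\sigmaIID))$ and $(\sigmaNIS, \GTSM(\sigmaNIS))$. Conditioning first on the ground truth, both joint distributions use the same transition kernel $\sigma \mapsto \GTSM(\sigma)$, so the second-stage conditional relative entropy vanishes and the joint KL collapses to $\DKL(\sigmaIID \| \sigmaNIS)$. Conditioning first on the graph instead, Observation \ref{obs_condiid}\ref{obs_condiid_law} identifies the conditional law of $\sigmaIID$ given $\GTSM(\sigmaIID) = G$ as $\sigmaIID_{\mrmg,G}$, while the Nishimori identity Observation \ref{obs_nishicond}\ref{obs_nishicond_dec} identifies the conditional law of $\sigmaNIS$ given $\GTSM(\sigmaNIS) = G$ as $\sigmaRG[,G]$; the joint KL therefore also reads $\DKL(\GTSM(\sigmaIID) \| \GTSM(\sigmaNIS)) + \expe[\DKL(\sigmaIID_{\mrmg,\GTSM} \| \sigmaRG[,\GTSM])]$. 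Dropping the non-negative marginal term on the right and invoking Part \ref{obs_DKLgt}\ref{obs_DKLgt_IIDNIS} to bound the left-hand side yields the claim.

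The only subtle step is the chain-rule bookkeeping in Part \ref{obs_DKLgt}\ref{obs_DKLgt_IIDCG}: both factorizations of the joint relative entropy have to be written out, and the Nishimori identity must be invoked to recognise $\sigmaRG[,\GTSM(\sigmaNIS)]$ as the genuine $\sigmaNIS$-posterior. Once those two identifications are in place, the bound follows by non-negativity of relative entropy and the constant supplied by Part \ref{obs_DKLgt}\ref{obs_DKLgt_IIDNIS}.
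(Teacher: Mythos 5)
Your arguments for parts~(a) and~(b) coincide with the paper's: $\DKL(\sigmaNIS\|\sigmaIID)=\expe[\ln\hat r(\sigmaNIS)]\le\ln c$ using Corollary~\ref{cor_mutcont}\ref{cor_mutcont_rnbu}, and $\DKL(\sigmaIID\|\sigmaNIS)=-\expe[\ln\hat r(\sigmaIID)]\le c\,n\,\expe[\|\gammaIID-\gamma^*\|_\mrmtv^2]\le cc'$ using Corollary~\ref{cor_mutcont}\ref{cor_mutcont_rnbl} and Observation~\ref{obs_gtiid}\ref{obs_gtiid_var}.

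For part~(c), however, you take a genuinely different and cleaner route. The paper's proof is computational: it writes out the $(\sigmaRG[,\GTSM],\sigmaIID_{\mrmg,\GTSM})$-derivative $r_{\mrms,G}(\sigma)=\expe\bigl[\hat r(\sigma)/\hat r(\sigmaRG[,G])\bigr]$, bounds it below by $\exp(-c''\|\gammaN[,\sigma]-\gamma^*\|_\mrmtv^2n)/c'$ using the same $\hat r$-estimates from Corollary~\ref{cor_mutcont}, and then controls $\expe[-\ln r_{\mrms,\GTSM}(\sigmaIID)]$ via Observation~\ref{obs_gtiid}\ref{obs_gtiid_var}. You instead apply the chain rule of relative entropy to the joint laws of $(\sigmaIID,\GTSM(\sigmaIID))$ and $(\sigmaNIS,\GTSM(\sigmaNIS))$: conditioning on the ground truth first shows the joint KL equals $\DKL(\sigmaIID\|\sigmaNIS)$ since both use the same kernel $\sigma\mapsto\GTSM(\sigma)$; conditioning on the graph first and invoking Observation~\ref{obs_condiid}\ref{obs_condiid_law} and the Nishimori identity~\ref{obs_nishicond}\ref{obs_nishicond_dec} identifies the joint KL as $\DKL(\GTSM(\sigmaIID)\|\GTSM(\sigmaNIS))+\expe[\DKL(\sigmaIID_{\mrmg,\GTSM}\|\sigmaRG[,\GTSM])]$. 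Dropping the non-negative marginal term and invoking part~(b) finishes. This buys a more conceptual proof that entirely sidesteps the explicit derivative computation; the paper's approach, on the other hand, yields the pointwise lower bound on $r_{\mrms,G}$ as a byproduct, which the authors may have had in mind for reuse. Both routes depend on the same two structural facts --- the Nishimori condition and the posterior identification --- so the blind proof is a valid alternative.
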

\begin{proof}
With $c'$ from Corollary \ref{cor_mutcont}\ref{cor_mutcont_rnbu} we have
$\DKL(\sigmaNIS\|\sigmaIID)=\expe[\ln(\hat r(\sigmaNIS))]\le\ln(c')$.
With $c'$ from Corollary \ref{cor_mutcont}\ref{cor_mutcont_rnbl} and $c''$ from Observation \ref{obs_gtiid}\ref{obs_gtiid_var} we have 
\begin{align*}
\DKL(\sigmaIID\|\sigmaNIS)=\expe\left[\ln\left(\hat r(\sigmaIID)^{-1}\right)\right]
\le c'n\expe\left[\|\gammaIID-\gamma^*\|_\mrmtv^2\right]
\le c'c''.
\end{align*}
Using the definitions, the $(\sigmaRG[,\GTSM],\sigmaIID_{\mrmg,\GTSM})$-derivative $r_{\mrms,G}$ can be composed of
\begin{align*}
r_{\mrms,G}(\sigma)
=\frac{\psiM(\sigma)r_\mrmg^*(G)}{\psiG[,G](\sigma)}
\cdot\frac{\psiG[,G](\sigma)}{\gamma^{*\otimes n}(\sigma)\ZG(G)}
=\expe\left[\frac{\hat r(\sigma)}{\hat r(\sigmaRG[,G])}\right].
\end{align*}
Now, with $c'$ from Corollary \ref{cor_mutcont}\ref{cor_mutcont_rnbu}, $c''$ from Corollary \ref{cor_mutcont}\ref{cor_mutcont_rnbl} and $\gammaR_{\mrmg,G}=\gammaN[,\sigmaRG[,G]]$ we have
\begin{align*}
r_{\mrms,G}(\sigma)\ge\exp\left(-c''\|\gammaN[,\sigma]-\gamma^*\|_\mrmtv^2 n\right)/c'.
\end{align*}
Hence, with the tower property, Observation \ref{obs_condiid} and $c'''$ from Observation \ref{obs_gtiid}\ref{obs_gtiid_var} we have
\begin{align*}
\expe[\expe[\DKL(\sigmaIID_{\mrmg,\GTSM}\|\sigmaRG[,\GTSM])|\GTSM]]
=\expe[-\ln(r_{\mrms,\GTSM}(\sigmaIID))]
\le\ln(c')+c''c'''.
\end{align*}
\end{proof}
\begin{remark}
Maybe we can also obtain the other relative entropy (but we don't need it).
\end{remark}
\subsection{Concentration and Continuity}\label{phi_concon}
In this section we prove Proposition \ref{proposition_phi_concon} and related results.
In Section \ref{phi_lipschitz} we establish boundedness and Lipschitz continuity of the free entropy on the factor graph level for general decorated graphs, yielding Proposition \ref{proposition_phi_concon}\ref{proposition_phi_concon_bu}.

In Section \ref{phiM_continuity} we establish Lipschitz continuity of $\expe[\phiG(\GRM)]$ for $\mI\equiv 0$ and $\setP=\emptyset$. Then we establish Lipschitz continuity for
$\expe[\phiG(\GTSYM[m,\mIR,\setPR](\sigma,\tau))]$ and
$\expe[\phiG(\GTSM[m,\mIR,\setPR](\sigma))]$ in Section \ref{phiTS_continuity}, yielding Proposition \ref{proposition_phi_concon}\ref{proposition_phi_concon_cont}.
In Section \ref{phiTSIIDNIS_asymptotics} and for $\mI\equiv 0$ and $\setP=\emptyset$ we show that
$\expe[\phiG(\GTSM[\mR](\sigmaNIS_{\mR}))]=\expe[\phiG(\GTSM[\mR](\sigmaIID))]+o(1)$ which explains why using $\sigmaNIS$ for Proposition \ref{proposition_int} is reasonable, and we further show that $\expe[\phiG(\GTSM[\mR^*](\sigmaIID))]=\expe[\phiG(\GTSM[\mR](\sigmaIID))]+o(1)$, which supports the corresponding claim in Section \ref{implications_extensions_related_work} regarding $\mR^*$ and Theorem \ref{thm_bethe}.
Based on these results we then establish concentration for $\mI\equiv 0$, $\setP=\emptyset$ in Section \ref{concentration}, yielding Proposition \ref{proposition_phi_concon}\ref{proposition_phi_concon_conc}.
\subsubsection{The Free Entropy}\label{phi_lipschitz}
Let $G=[w]^{\Gamma\lrarr\darr}_{\mI,\psiI,\setP,\sigma}$ with $w=(v,\psi)\in\domG_{n,m}$ and
$\tilde G=[\tilde w]^{\Gamma\lrarr\darr}_{\mItilde,\psiItilde,\setPtilde,\tilde\sigma}$
with $\tilde w=(\tilde v,\tilde \psi)\in\domG_{n,\tilde m}$.
Let $\mclv[1][\downarrow]=[n]\setminus(\setP\cup\setPtilde)$ be the unpinned variables, 
$\mclv[2][\downarrow]=\{i\in\setP\cap\setPtilde:\sigma_i=\tilde\sigma_i\}$ the variables pinned to the same value, and $\mclv[][\downarrow]=\mclv[1][\downarrow]\cup\mclv[2][\downarrow]$.
Further, let $m_\cap=\min(m,\tilde m)$, $\mI_{\cap}=(\min(\mI_i,\mItilde_i))_i$, $\facsI_\cap=\{(i,h):i\in[n],h\in[m_{\cap,i}]\}$ and
\begin{align*}
\mcla[=]&=\left\{a\in[m_\cap]:w_a=\tilde w_a,v_a([k])\setle\mclv[][\darr]\right\},\,
\facsI_{=}&=\left\{(i,h)\in\facsI_{\cap}:\psiI_{i,h}=\psiItilde_{i,h},i\in\mclv[][\darr]\right\}.
\end{align*}
Now, let $D=m-m_\cap+\sum_i(\mI_i-\mI_{\cap,i})$, $\tilde D=\tilde m-m_\cap+\sum_i(\mItilde_i-\mI_{\cap,i})$ be the excess factors, $D_\cap=m_\cap-|\mcla[=]|+|\facsI_\cap\setminus\facsI_=|$ the bad factors and let $\distG(G,\tilde G)=D+\tilde D+2D_\cap+n-|\mclv[][\darr]|$ be the distance of $G$ and $\tilde G$.
\begin{observation}\label{obs_phi_lipschitz}
There exists $c_\mfkg\in\reals_{>0}$ such that  $|\phiG(G)-\phiG(\tilde G)|\le\frac{c}{n}\distG(G,\tilde G)$ and $|\phiG(G)|\le\frac{c}{n}(m+\|\mI\|_1+|\setP|)$.
\end{observation}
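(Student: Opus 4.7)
The absolute bound is immediate from Observation \ref{obs_GRM_expebounds}\ref{obs_GRM_expeboundsZG}, which sandwiches $\ZG(G)$ between $\psibl^M\gamma^{*\otimes\setP}(\sigmaP_\setP)$ and $\psibu^M\gamma^{*\otimes\setP}(\sigmaP_\setP)$ for $M=m+\|\mI\|_1$. Since $\gamma^*(\sigmaP_i)\in[\psibl,1]$, this gives $|\ln\ZG(G)|\le(M+|\setP|)\ln\psibu$, and dividing by $n$ yields the second inequality with $c=\ln\psibu$.

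For the Lipschitz bound the plan is to route $G$ and $\tilde G$ through intermediate graphs $G^\circ,\tilde G^\circ$ obtained by deleting every standard factor outside $\mcla[=]$ and every interpolator outside $\facsI_=$, while leaving all pinnings intact. The first step is to observe that each single deletion multiplies $\ZG$ by the Gibbs expectation of the deleted weight, hence by a number in $[\psibl,\psibu]$, costing at most $\ln\psibu$ in $|\ln\ZG|$; a direct count from the definitions shows that the number of deletions going from $G$ to $G^\circ$ equals $(m-|\mcla[=]|)+(\|\mI\|_1-|\facsI_=|)=D+D_\cap$, and analogously $\tilde D+D_\cap$ for $\tilde G\to\tilde G^\circ$. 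These two legs therefore contribute at most $(D+\tilde D+2D_\cap)\ln\psibu$ to $|\ln\ZG(G)-\ln\ZG(\tilde G)|$.

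The decisive step is to reduce the remaining discrepancy $|\ln\ZG(G^\circ)-\ln\ZG(\tilde G^\circ)|$ to the pinning part $n-|\mclv[][\darr]|$. Setting $V=\mclv[][\darr]$, every surviving factor/interpolator in $G^\circ$ acts only on $\sigma_V$ by the definitions of $\mcla[=]$ and $\facsI_=$; the pinnings at $\setP\cap V=\mclv[2][\darr]$ constrain $\sigma_V$ and agree between $G^\circ$ and $\tilde G^\circ$ by definition of $\mclv[2][\darr]$, while the pinnings at $\setP\setminus\mclv[2][\darr]\subseteq V^c$ touch no surviving factor or interpolator. The sum defining $\ZG(G^\circ)$ therefore splits across $[n]=V\dotcup V^c$ as $\ZG(G^\circ)=Z^{(1)}\cdot\gamma^{*\otimes(\setP\setminus\mclv[2][\darr])}(\sigmaP)$, and an identical $Z^{(1)}$ appears in the analogous decomposition of $\ZG(\tilde G^\circ)$. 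The log-ratio thus reduces to a difference of sums of $\ln\gamma^*(\cdot)$ over two index sets, both lying in $[n]\setminus V$ of size $n-|\mclv[][\darr]|$; using $\gamma^*\ge\psibl$ this contribution is at most $2(n-|\mclv[][\darr]|)\ln\psibu$. Summing the three pieces gives $|\ln\ZG(G)-\ln\ZG(\tilde G)|\le\ln\psibu\cdot[D+\tilde D+2D_\cap+2(n-|\mclv[][\darr]|)]\le 2\ln\psibu\cdot\distG(G,\tilde G)$, so $c=2\ln\psibu$ works. The only real obstacle is purely bookkeeping: one has to verify that $\mcla[=]$ captures exactly those common standard factors whose neighborhoods fit in $V$ and that $\setP\setminus\mclv[2][\darr]\subseteq[n]\setminus V$, so that the factorization of $\ZG(G^\circ)$ and $\ZG(\tilde G^\circ)$ over $V\dotcup V^c$ is clean; both facts drop out mechanically from the definitions of $\mclv[1][\darr]$ and $\mclv[2][\darr]$.
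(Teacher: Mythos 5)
Your proof is correct and takes essentially the same route as the paper: drop the excess and bad standard factors and interpolators (accounting for $D+\tilde D+2D_\cap$), then use the factorization of $\ZG$ over $\mclv[][\darr]\dotcup([n]\setminus\mclv[][\darr])$ together with the agreement of the surviving wires-weight pairs and pinning values on $\mclv[2][\darr]$ to isolate a common factor $Z^{(1)}$, so that only the pins in $\setP\setminus\mclv[2][\darr]$ and $\setPtilde\setminus\mclv[2][\darr]$ (both of size at most $n-|\mclv[][\darr]|$, using $\gamma^*\ge\psibl$) remain. The paper performs the chain of inequalities inline and reaches $c=\ln\psibu$ by absorbing the pin contribution into a single one-sided bound, while you route through explicit core graphs $G^\circ,\tilde G^\circ$ and bound the pin term by $2(n-|\mclv[][\darr]|)\ln\psibu$, which costs you the extra factor of $2$; this is harmless.
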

\begin{proof}
Let $G$, $\tilde G$ be as in the definition of $\distG$.
First, we get rid of the excess factor and the bad factors, i.e.
\begin{align*}
\ZG(G)\ge\psibl^{D+D_\cap}\expe\left[\bmone\{\sigmaIID_{\setP\setminus\mclv[2][\darr]}=\sigma_{\setP\setminus\mclv[2][\darr]},\sigmaIID_{\mclv[2][\darr]}=\sigma_{\mclv[2][\darr]}\}\prod_{a\in\mcla[=]}\psi_a(\sigmaIID_{v_a})\prod_{(i,h)\in\facsI_=}\psiI_{i,h}(\sigmaIID_i)\right].
\end{align*}
Now, all but the first part of the indicator only depends on $\sigmaIID_{\mclv[][\darr]}$, so we can use independence, $\gamma^*\ge\psibl$ and further transition to $\tilde G$, i.e.
\begin{align*}
\ZG(G)\ge\psibl^{D+D_\cap+|\setP\setminus\mclv[2][\darr]|}\expe\left[\bmone\{\sigmaIID_{\mclv[2][\darr]}=\tilde\sigma_{\mclv[2][\darr]}\}\prod_{a\in\mcla[=]}\tilde\psi_a(\sigmaIID_{\tilde v_a})\prod_{(i,h)\in\facsI_=}\psiItilde_{i,h}(\sigmaIID_i)\right].
\end{align*}
This clearly gives $\phiG(G)\ge\frac{-\ln(\psibu)}{n}\distG(G,\tilde G)+\phiG(\tilde G)$, the upper bound follows analogously and hence the first part of the assertion holds with $c=\ln(\psibu)$. The second part holds due to Observation \ref{obs_GRM_expebounds}\ref{obs_GRM_expeboundsZG} and $\gamma^*\ge\psibl$.
\end{proof}
Proposition \ref{proposition_phi_concon}\ref{proposition_phi_concon_bu} follows from Observation \ref{obs_standard_graphs} applied to Observation \ref{obs_phi_lipschitz}.
\subsubsection{Continuity for the Null Model}\label{phiM_continuity}
In this section we establish Proposition \ref{proposition_phi_concon}\ref{proposition_phi_concon_cont} for the null model, implied by the following result for the decorated graph version.
Recall $\phiM=\expe[\phiG(\GRM[m])]$ from Section \ref{random_decorated_graphs}.
\begin{lemma}\label{lemma_contphiM}
There exists $L_\mfkg\in\reals_{>0}$ such that
$|\phiM(m_1)-\phiM(m_2)|\le L|\frac{km_1}{n}-\frac{km_2}{n}|$ for $m\in\ints_{\ge 0}^2$, $\mI\equiv 0$ and $\setP=\emptyset$.
\end{lemma}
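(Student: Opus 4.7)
The plan is to reduce this to Observation \ref{obs_phi_lipschitz} via a trivial coupling, exploiting the i.i.d.\ structure of the standard factors in the null model (with no pins and no interpolators, the graph $\GRM[m]$ is just a product of $m$ i.i.d.\ wires-weight pairs $\wRa$).

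First, I would assume without loss of generality that $m_1 \le m_2$. Using that $\wR_{m_2} \dequal \wRa^{\otimes m_2}$, construct a coupling of $\GRM[m_1]$ and $\GRM[m_2]$ on the same probability space by letting the first $m_1$ factors of both graphs coincide, and sampling the remaining $m_2 - m_1$ factors of $\GRM[m_2]$ independently. This coupling preserves the marginal laws, and with respect to the distance $\distG$ from Section \ref{phi_lipschitz} one has $\mclv^\downarrow = [n]$ (no pins on either side), $\mcla^= = [m_1]$, and hence $D_\cap = 0$, $\tilde D = m_2 - m_1$, $D = 0$; in other words $\distG(\GRM[m_1], \GRM[m_2]) = m_2 - m_1$ almost surely.

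Applying Observation \ref{obs_phi_lipschitz} with the constant $c_\mfkg$ provided there, I obtain the pathwise bound
\begin{align*}
|\phiG(\GRM[m_1]) - \phiG(\GRM[m_2])| \le \frac{c}{n}(m_2 - m_1)
\end{align*}
under the coupling. Taking expectations and using that the coupling preserves marginals yields
\begin{align*}
|\phiM(m_1) - \phiM(m_2)| \le \frac{c}{n}|m_2 - m_1| = \frac{c}{k}\left|\frac{km_1}{n} - \frac{km_2}{n}\right|,
\end{align*}
so the claim holds with $L = c/k$, which depends only on $\mfkg$.

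There is essentially no obstacle here: the only thing worth checking is that the coupling does lie in the regime where Observation \ref{obs_phi_lipschitz} applies, i.e.\ that with $\mI \equiv 0$ and $\setP = \emptyset$ the distance really collapses to the count of excess factors, which is immediate from the definitions of $\mclv^\downarrow$, $\mcla^=$ and $D_\cap$. The constants $m + \|\mI\|_1 + |\setP|$ appearing in the pointwise bound of Observation \ref{obs_phi_lipschitz} are not needed here because we are bounding a \emph{difference}, not the free entropy itself.
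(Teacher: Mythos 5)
Your proposal is correct and follows essentially the same route as the paper: the canonical coupling identifying the first $m_1$ factors, the computation $\distG(\GRM[m_1],\GRM[m_2])=m_2-m_1$, an application of Observation \ref{obs_phi_lipschitz}, and Jensen's inequality to pass to expectations, giving $L=c/k$.
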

\begin{proof}
For $\mI\equiv 0$ and $\setP=\emptyset$ we have $\distG(G,\tilde G)=m+\tilde m-2|\mcla[=]|$ in Section \ref{phi_lipschitz}.
Assume without loss of generality that $m_1\le m_2$ and consider the canonical coupling of $\GRM(m_1)=[\wR_{m_1}]^\Gamma$ and $\GRM(m_2)=[\wR_{m_2}]^\Gamma$, i.e.~$\wR_{m_1}=\wR_{m_2,[m_1]}$. Under this coupling we have $\mcla[=]=[m_1]$ and hence $\distG(\GRM(m_1),\GRM(m_2))=m_2-m_1$, so Jensen's inequality and Observation \ref{obs_phi_lipschitz} yield
\begin{align*}
|\phiM(m_1)-\phiM(m_2)|\le\expe\left[\left|\phiG(\GRM(m_1))-\phiG(\GRM(m_2))\right|\right]
\le\frac{c'}{n}(m_2-m_1)
=L\left|\frac{km_1}{n}-\frac{km_2}{n}\right|
\end{align*}
with $L=c'/k$, and thereby complete the proof.
\end{proof}
Proposition \ref{proposition_phi_concon}\ref{proposition_phi_concon_cont} for the null model follows from Observation \ref{obs_standard_graphs} applied to Lemma \ref{lemma_contphiM}.
\subsubsection{Continuity for the Teacher-Student Model}\label{phiTS_continuity}
In this section we establish a version of Proposition \ref{proposition_phi_concon}\ref{proposition_phi_concon_cont} for the expected free entropy $\phiTSYM(m,\sigma,\tau)=\expe[\phiG(\GTSYM[m,\mIR,\setPR](\sigma,\tau))]$ over the two-sided planted model and the more general decorated graphs.
The result for the teacher-student model then follows as a corollary.
\begin{lemma}\label{lemma_contphiTSYM}
Let $\gammaN[,\sigma]\ge\frac{1}{2}\psibl$, $m\le\mbu$, further $\tilde m\in\ints_{\ge 0}$, $\tilde\sigma\in[q]^n$ and $\tilde\tau\in(\domC[,\tilde\gamma]^k)^{\tilde m}$. There exists $L_\mfkg\in\reals_{>0}$ such that
\begin{align*}
\left|\phiTSYM(m,\sigma,\tau)-\phiTSYM(\tilde m,\tilde\sigma,\tilde\tau)\right|
\le\frac{L}{n}\left(\|n\gammaN[,\sigma]-n\gammaN[,\tilde\sigma]\|_1+\|m\alphaM[,\tau]-\tilde m\alphaM[,\tilde\tau]\|_1\right).
\end{align*}
\end{lemma}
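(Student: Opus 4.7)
My plan is to combine a symmetry reduction with an explicit coupling and a single application of Observation \ref{obs_phi_lipschitz}. The first step is to observe, via permutation invariance, that $\phiTSYM(m, \sigma, \tau)$ depends on $(\sigma, \tau)$ only through the integer-valued types $n\gammaN[,\sigma] \in \ints_{\ge 0}^q$ and $m\alphaM[,\tau] \in \ints_{\ge 0}^{[q]^k}$. Indeed, the joint law of $\GTSYM[m,\mIR,\setPR](\sigma, \tau)$ is invariant under any permutation of variables applied to $\sigma$ (combined with the same permutation on $\mIR$, $\setPR$, $\sigmaPR$, which are exchangeable) and under any permutation of factor indices applied to $\tau$ (the factor realizations being independent), and $\phiG$ is itself invariant under such joint relabelings.

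Having reduced to a statement about types, I choose maximally aligned representatives: $\sigma, \tilde\sigma \in [q]^n$ with $N_\sigma := |\{i : \sigma_i \neq \tilde\sigma_i\}| = \tfrac12\|n\gammaN[,\sigma] - n\gammaN[,\tilde\sigma]\|_1$, and, after pairing the $m_\cap := \min(m, \tilde m)$ common factor indices optimally, $N_\tau := |\{a \in [m_\cap] : \tau_a \neq \tilde\tau_a\}|$ with $2 N_\tau + |m - \tilde m| = \|m\alphaM[,\tau] - \tilde m\alphaM[,\tilde\tau]\|_1$. I then couple $\GTSYM[m,\mIR,\setPR](\sigma,\tau)$ and $\GTSYM[\tilde m,\mIR,\setPR](\tilde\sigma,\tilde\tau)$ on a single probability space: I use identical realizations of $\mIR$, $\setPR$, $\sigmaPR$; for each common index $a$ with $\tau_a = \tilde\tau_a$ I couple $\psiTSYa[,\tau_a]$ identically (its law depends only on $\tau_a$) and couple the wires $\vTSYa$ by the optimal coupling of $\unif(\sigma^{-1}(\tau_{a,h}))$ and $\unif(\tilde\sigma^{-1}(\tau_{a,h}))$; all remaining factors are sampled independently.

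Under this coupling, the graph distance $\distG$ of Observation \ref{obs_phi_lipschitz} decomposes into $|m - \tilde m|$ from excess factors, $2 N_\tau$ from mismatched common-index factors, an expected contribution of $\mclo(k m_\cap N_\sigma / n)$ from wire miscouplings in the matched factors (using $\gammaN[,\sigma] \ge \psibl/2$, so $|\sigma^{-1}(\tau_{a,h})| \ge n\psibl/2$ and the per-wire miscoupling probability is at most $2 N_\sigma / (n \psibl)$), an $\mclo(N_\sigma)$ contribution from the interpolators at positions with $\sigma_i \neq \tilde\sigma_i$ (since $\expe[\mIRa] = (1-\tI)\degae \le \degabu$), and at most $N_\sigma$ from pin mismatches at such positions. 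With $m \le \mbu = 2\degabu n / k$ the wire contribution is also $\mclo(N_\sigma)$, so
\begin{align*}
\expe[\distG] \le c_\mfkg \bigl( \|n\gammaN[,\sigma] - n\gammaN[,\tilde\sigma]\|_1 + \|m\alphaM[,\tau] - \tilde m\alphaM[,\tilde\tau]\|_1 \bigr).
\end{align*}
Combining with the bound $|\phiG(G) - \phiG(\tilde G)| \le (c/n) \distG(G,\tilde G)$ from Observation \ref{obs_phi_lipschitz} and Jensen's inequality yields the lemma.

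The main obstacle is the degenerate regime where either $N_\sigma$ is comparable to $n$ or $\tilde\gamma$ has some color of frequency of order $1/n$, so that $|\tilde\sigma^{-1}(z)|$ is too small for the per-wire coupling probability to be small. In that case, however, the right-hand side of the claimed inequality is already $\Omega(1)$, while the left-hand side is controlled by the trivial estimate $|\phiTSYM(m,\sigma,\tau)| + |\phiTSYM(\tilde m,\tilde\sigma,\tilde\tau)| = \mclo(1 + (m + \tilde m)/n)$ coming from the absolute bound in Observation \ref{obs_phi_lipschitz} together with $|m - \tilde m| \le \|m\alphaM[,\tau] - \tilde m\alphaM[,\tilde\tau]\|_1$ and $m \le \mbu$; choosing $L$ sufficiently large absorbs both regimes.
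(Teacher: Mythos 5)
Your proposal is correct, and the high-level strategy (reduce to types by permutation invariance, couple the two random graphs, bound $\expe[\distG]$, invoke Observation~\ref{obs_phi_lipschitz}) coincides with the paper's. What differs is the coupling construction, and the difference is substantial enough to note. The paper introduces a union graph $\GR_\cup$ on $n_\cup = n + (n-n_\cap)$ variables with $m_\cup$ factors, then \emph{extracts} both $\GR_1$ and $\GR_2$ by restricting the union wires to the two variable ranges $\nu_1([n])$ and $\nu_2([n])$ and resampling any wire that lands outside the relevant range; the verification that this extraction has the right marginal law is an explicit probability computation, and the bad-factor bound goes through the wire-degree $\bm d(i)$ of the padding variables $i\in[n_\cup]\setminus[n_\cap]$ via Observation~\ref{obs_degrees}. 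You instead couple directly on the original $n$ variables, using a TV-optimal coupling of $\unif(\sigma^{-1}(z))$ and $\unif(\tilde\sigma^{-1}(z))$ for each wire of each matched factor. Since $\gammaN[,\sigma]\ge\psibl/2$ forces $\max(|\sigma^{-1}(z)|,|\tilde\sigma^{-1}(z)|)\ge n\psibl/2$ and the symmetric-difference components are bounded by $N_\sigma$, the per-wire disagreement probability is $\le 2N_\sigma/(n\psibl)$, so the expected number of bad matched factors is $\mclo(km_\cap N_\sigma/n)=\mclo(N_\sigma)$ for $m_\cap\le\mbu$; this plays exactly the role of the paper's degree bound. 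Your accounting of the remaining $\distG$ contributions (excess factors $|m-\tilde m|$, mismatched factor assignments $2N_\tau$ with $2N_\tau+|m-\tilde m|=\|m\alphaM[,\tau]-\tilde m\alphaM[,\tilde\tau]\|_1$, interpolators and pins at positions $\sigma_i\ne\tilde\sigma_i$) is correct, and a matched factor with fully agreeing wires cannot touch a pin-mismatched variable, so it is indeed in $\mcla[=]$. Your approach is a bit more streamlined than the union-graph construction, while the paper's has the virtue of a completely explicit coupling verified by direct computation rather than an appeal to the abstract maximal-coupling lemma; both yield the same $\mclo(\|n\gamma-n\tilde\gamma\|_1+\|m\alpha-\tilde m\tilde\alpha\|_1)$ bound on $\expe[\distG]$. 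Incidentally, your worry about a degenerate regime where $|\tilde\sigma^{-1}(z)|$ is of order $1/n$ is unnecessary: the TV bound above only uses $|\sigma^{-1}(z)|\ge n\psibl/2$, so it holds regardless of how small $|\tilde\sigma^{-1}(z)|$ is — the fallback to the trivial estimate is harmless but not needed.
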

\begin{proof}
Let $\gamma=\gammaN[,\sigma]$, $\tilde\gamma=\gammaN[,\tilde\sigma]$,
$\alpha=\alphaM[,\tau]$ and $\tilde\alpha=\alphaM[,\tilde\tau]$.
First, we show that
$\phiTSYM(m,\sigma,\tau)=\phiTSYM(\tilde m,\tilde\sigma,\tilde\tau)$ for the special case that 
$\tilde m=m$, $\tilde\gamma=\gamma$ and $\tilde\alpha=\alpha$, i.e.~there exist permutations $\nu\in[n]_n$ and $\mu\in[m]_m$ such that $\tilde\sigma\circ\nu=\sigma$ and $\tilde\tau\circ\mu=\tau$.
Similar to the proof of Observation \ref{obs_known} we consider a permutation $\mu$ of the factors, and moreover a permutation $\nu$ of the variables.
For given $(v,\psi)\in\domG$ let $f(v,\psi)=(\tilde v,\tilde\psi)\in\domG$ be given by $\tilde v_{\mu(a),h}=\nu(v_{a,h})$ and $\tilde\psi_{\mu(a)}=\psi_a$.
Notice that $\wTSYM(\tilde\sigma,\tilde\tau)\dequal f(\wTSYM(\sigma,\tau))$ since $f$ is a simple relabeling of variables and factors.
Further, let $f^\lrarr(\psiI)=\psiItilde$ with $\psiItilde_{\nu(i),h}=\psiI_{i,h}$ and notice that $\psiITSM[\tilde\sigma]\dequal f^{\lrarr}(\psiITSM[\sigma])$.
Finally, using $\mItilde\circ\nu=\mI$ and $\setPtilde=\nu(\setP)$ we have
\begin{align*}
\phiG\left([(v,\psi)]^{\Gamma\lrarr\darr}_{\mI,\psiI,\setP,\sigma}\right)
=\phiG\left([(\tilde v,\tilde\psi)]^{\Gamma\lrarr\darr}_{\mItilde,\psiItilde,\setPtilde,\tilde\sigma}\right),
\end{align*}
i.e.~the free entropy is invariant to a relabeling of factors and variables.
This shows that
\begin{align*}
\phiG\left(\GTSYM[m,\mItilde,\setPtilde](\tilde\sigma,\tilde\tau)\right)
\dequal\phiG\left(\left[f(\wTSYM(\sigma,\tau))\right]^{\Gamma\lrarr\darr}_{\mItilde,f^{\lrarr}(\psiITSM[\sigma]),\setPtilde,\tilde\sigma}\right)
=\phiG\left(\GTSYM[m,\mI,\setP](\sigma,\tau)\right).
\end{align*}
Since both $\mIR$ and $\setPR=\indPRT[,\thetaPR]^{-1}(1)$ are obtained from i.d.d.~random variables (given $\thetaPR$), we have
$\GTSYM[m,\mIR\circ\nu,\nu(\setPR)](\tilde\sigma,\tilde\tau)\dequal\GTSYM[m,\mIR,\setPR](\tilde\sigma,\tilde\tau)$ and thereby
$\phiTSYM(m,\sigma,\tau)=\phiTSYM(m,\tilde\sigma,\tilde\tau)$.
This completes the proof of the special case and in particular shows that $\phiTSYM(m,n\gamma,m\alpha)=\phiTSYM(m,\sigma,\tau)$ is well-defined.

Hence, for the general case we assume without loss of generality that $m\le\tilde m$
and that $\sigma$, $\tau$, $\tilde\sigma$ and $\tilde\tau$ are ordered as follows.
Let $n_{\cap\Gamma}=(\min(n\gamma(\tau'),n\tilde\gamma(\tau')))_{\tau'\in[q]}$ and
$n_\cap=\|n_{\cap\Gamma}\|_1$.
Analogously, let $m_{\cap\mathrm{A}}=(\min(m\alpha(\tau'),\tilde m\tilde\alpha(\tau')))_{\tau'\in[q]^k}$ and $m_\cap=\|m_{\cap\mathrm{A}}\|_1$.
We assume that
$\sigma_{[n_\cap]}=\tilde\sigma_{[n_\cap]}$ and $\tau_{[m_\cap]}=\tilde\tau_{[m_\cap]}$.

Next, we consider the following union. Let $n_\cup=n+(n-n_\cap)$ and $\sigma_\cup=(\sigma,\tilde\sigma_{[n]\setminus[n_\cap]})$.
Analogously, let $m_\cup=m+(\tilde m-m_\cap)$ and $\tau_\cup=(\tau,\tilde\tau_{[\tilde m]\setminus[m_\cap]})$.
Let $\nu_1:[n]\rarr[n]$ be the identity, $\nu_2:[n]\rarr[n_\cap]\cup([n_\cup]\setminus[n])$ the enumeration, $\mu_1:[m]\rarr[m]$ the identity and $\mu_2:[\tilde m]\rarr[m_\cap]\cup([m_\cup]\setminus[m])$ the enumeration.
The union graph $\GR_\cup=[\wR_\cup]^{\Gamma\lrarr\darr}_{\mIR_\cup,\psiITSM[\cup],\setPR_\cup,\sigma_\cup}$ is given by
\begin{align*}
(\wR_\cup,\mIR_\cup,\psiITSM[\cup],\setPR_\cup)\dequal
\wR_\cup\otimes(\mIR_\cup,\psiITSM[\cup])\otimes\setPR_\cup,
\end{align*}
where $\wR_\cup\dequal\wTSYM[m_\cup](\sigma_\cup,\tau_\cup)$ and the remainder is given as follows.
The interpolator counts $\mIR_\cup$ are given by $\mIR_{\cup,[n]}\dequal\mIR_n$ and $\mIR_{\cup}\circ\nu_2=\mIR_{\cup}\circ\nu_1$, i.e.~we copy the values to the remaining positions.
Given $\mIR_\cup$ we have $\psiITSM[\cup]\dequal\psiTSM[n_\cup,\sigma_\cup]$  for the interpolation weights.
Similarly, for the pins let $\indPRT[\cup,\thetaP]\in\{0,1\}^{n_\cup}$ be given by $\indPRT[\cup,\thetaP,[n]]\dequal\indPRT[,n,\thetaP]$ and $\indPRT[\cup,\thetaP]\circ\nu_2=\indPRT[\cup,\thetaP]\circ\nu_1$. Further, let $\setPR=\indPRT[\cup,\thetaPR_n]^{-1}(1)$ with $\thetaPR_n\dequal\unif([0,\ThetaP])$ from Section \ref{random_decorated_graphs}.
In words, we obtain $\mIR_\cup$ and $\setPR_\cup$ by choosing the correct distribution on $[n]$ and copying the values to the remainder (yielding the correct distribution there), and then take the law $\GTSYM(\sigma_\cup,\tau_\cup)$.

Given a graph $G_\cup=[(v_\cup,\psi_\cup)]^{\Gamma\lrarr\darr}_{\mI_\cup,\psiI_\cup,\setP_\cup,\sigma_\cup}$ from $\GR_\cup$ and $i\in[2]$, let $\GR_i(G_\cup)=[(\vR,\psi)]^{\Gamma\lrarr\darr}_{\mI,\psiI,\setP,\sigma}$ be given by $\mI=\mI_\cup\circ\nu_i$, $\psiI=\psiI_\cup\circ\nu_i$, $\setP=\nu_i^{-1}(\setP_\cup)$, $\psi=\psi_\cup\circ\mu_i$, $\vR(a,h)=\nu_i^{-1}(v_{\cup}(\mu(a),h))$ if $v_{\cup}(\mu(a),h)\in\nu_i([n])$ and otherwise $\vR(a,h)\dequal\unif(\mcls[i])$ independent of everything else, where $\mcls[i]=\nu_i^{-1}(\mcls[\cup])$ and $\mcls[\cup]=\sigma_\cup^{-1}(\tau_\cup(\mu_i(a),h))$.
Notice that $\tau_\cup(\mu_1(a),h)=\tau(a,h)$ and further $\mcls[1]=\sigma^{-1}(\tau_{a,h})$. 
Analogously, we obtain $\mcls[2]=\tilde\sigma^{-1}(\tilde\tau_{a,h})$.

Now, we claim that $\GR_1(\GR_\cup)\dequal\GTSYM[m,\mIR,\setPR](\sigma,\tau)$ and $\GR_2(\GR_\cup)\dequal\GTSYM[\tilde m,\mIR,\setPR](\tilde\sigma,\tilde\tau)$. Due to the absence of dependencies and by construction it is straightforward to see that the pinning indicators (sets), the interpolator counts, the interpolation weights and the standard weights have the correct distribution, which leaves us with the (standard) neighborhoods.
But using $\GR_\cup=[(\vR_\cup,\psiR_\cup)]^{\Gamma\lrarr\darr}$,
$\GR_1(\GR_\cup)=[(\vR,\psiR)]^{\Gamma\lrarr\darr}$, for $a\in[m]$, $h\in[k]$ and $i\in\sigma^{-1}(\tau_{a,h})$ we have
\begin{align*}
\prob[\vR(a,h)=i]&=\prob[\vR_{\cup}(a,h)=i]+\prob[\vR_\cup(a,h)\not\in\sigma^{-1}(\tau_{a,h}),\vR(a,h)=i]\\
&=\frac{1}{|\sigma_\cup^{-1}(\tau_{a,h})|}+\frac{|\sigma_\cup^{-1}(\tau_{a,h})|-|\sigma^{-1}(\tau_{a,h})|}{|\sigma_\cup^{-1}(\tau_{a,h})|}\cdot\frac{1}{|\sigma^{-1}(\tau_{a,h})|}
=\frac{1}{|\sigma^{-1}(\tau_{a,h})|},
\end{align*}
and thereby also $\vR(a,h)\dequal\unif(\sigma^{-1}(\tau_{a,h}))$ has the correct distribution.
This shows that $\GR_1(\GR_\cup)\dequal\GTSYM[m,\mIR,\setPR](\sigma,\tau)$, and we obtain 
$\GR_2(\GR_\cup)\dequal\GTSYM[\tilde m,\mIR,\setPR](\tilde\sigma,\tilde\tau)$ analogously.

In the next step we want to apply Observation \ref{obs_phi_lipschitz}, hence we have to bound $\distG(\GR_1,\GR_2)$ using $\GR_i=\GR_i(\GR_\cup)=[(\vR_i,\psiR_i)]^{\Gamma\lrarr\darr}_{\mIR_i,\psiTSM[i],\setPR_i,\sigma_\cup\circ\nu_i}$. By construction we have $\mIR=\mIR_1=\mIR_2$ and $\setPR=\setPR_1=\setPR_2$ (almost surely), so $\mclv[1][\darr]=[n]\setminus\setPR$, $\mclv[2][\darr]=\setPR\cap[n_\cap]$, $\mclv[][\darr]=([n]\setminus\setPR)\cup[n_\cap]$, $\min(m,\tilde m)=m$, $m^\lrarr_\cap=\mIR$, $\mcla[\cap][\lrarr]=\mcla[\mIR][\lrarr]$, $D=0$, $\tilde D=\tilde m-m$, $D_\cap=m-|\mcla[=]|+|\mcla[\cap][\lrarr]\setminus\mcla[=][\lrarr]|$ and
$\distG(\GR_1,\GR_2)=\tilde m-m+2D_\cap+|\setPR\setminus[n_\cap]|$.
Notice that $\bmcla_=^\circ\setle\mcla[=]$ with
\begin{align*}
\bmcla_=^\circ=\left\{a\in[m_\cap]:\vR_{\cup,a}([k])\setle[n_\cap]\right\}
\end{align*}
and $\{(i,h):i\in[n_\cap],h\in[\mIR_i]\}\setle\mcla[=][\lrarr]$ by construction, so
\begin{align*}
D_\cap\le m-m_\cap+|\{a\in[m_\cap]:\exists h\in[k]\,\vR_\cup(a,h)>n_\cap\}|+\sum_{i=n_\cap+1}^n\mIR_i.
\end{align*}
Hence, we can upper bound the number of factors by the number of wires, which is then the total degree of the variables $[n_\cup]\setminus[n_\cap]$ with respect to the factors $[m_\cap]$, i.e.
\begin{align*}
D_\cap&\le m-m_\cap+|\{(a,h)\in[m_\cap]\times[k]:\vR_\cup(a,h)>n_\cap\}|+\sum_{i=n_\cap+1}^n\mIR_i\\
&=m-m_\cap+\sum_{i=n_\cap+1}^{n_\cup}\bm d(i)+\sum_{i=n_\cap+1}^n\mIR_i,\\
\bm d(i)&=|\{(a,h)\in[m_\cap]\times[k]:\vR_\cup(a,h)=i\}|.
\end{align*}
Notice that $\bm d(i)$ is exactly the (wire) degree of $i\in[n_\cup]$ in $\wTSYM[m_\cap](\sigma_\cup,\tau_{\cup,[m_\cap]})$, so the discussion in Section \ref{var_degrees} applies.
Further, notice that
\begin{align*}
\distG(\GR_1,\GR_2)\le\tilde m+m-2m_\cap +\sum_{i=n_\cap+1}^{n_\cup}\bm d(i)+\sum_{i=n_\cap+1}^n\mIR_i.
\end{align*}
Now, taking the expectation, using the coupling, Jensen's inequality and $c$ from Observation \ref{obs_phi_lipschitz} yields
\begin{align*}
\left|\phiTSYM(m,\sigma,\tau)-\phiTSYM(\tilde m,\tilde\sigma,\tilde\tau)\right|
&\le\frac{c}{n}\left(\tilde m+m-2m_\cap +\sum_{i=n_\cap+1}^{n_\cup}\expe[\bm d(i)]+\sum_{i=n_\cap+1}^n\expe[\mIR_i]\right).
\end{align*}
By definition we have $\expe[\mIR_i]=(1-\tI)\degae\le\degabu$, and by Observation \ref{obs_degrees}\ref{obs_degrees_wire} we have $\bm d(i)\dequal\Bin(|\mclh|,1/|\sigma_\cup^{-1}(\sigma')|)$ for $i\in[n_\cup]\setminus[n_\cap]$, with $\sigma'=\sigma_\cup(i)$ and $\mclh=\{(a,h)\in[m_\cap]\times[k]:\tau_\cup(a,h)=\sigma'\}$ from the proof of Observation \ref{obs_degrees}\ref{obs_degrees_wire}. This gives
\begin{align*}
\expe[\bm d(i)]=\frac{|\mclh|}{|\sigma_\cup^{-1}(\sigma')|}
\le\frac{km_\cap}{|\sigma^{-1}(\sigma')|}
\le\frac{k\mbu}{n\psibl/2}
=4\degabu\psibu.
\end{align*}
Using $\|n\gamma-n\tilde\gamma\|_1=2(n-n_\cap)=n_\cup-n_\cap$ and $\|m\alpha-\tilde m\tilde\alpha\|_1=m+\tilde m-2m_\cap$ yields
\begin{align*}
\left|\phiTSYM(m,\sigma,\tau)-\phiTSYM(\tilde m,\tilde\sigma,\tilde\tau)\right|
&\le\frac{c}{n}\left(\|m\alpha-\tilde m\tilde\alpha\|_1+(4\degabu\psibu+\degabu)\|n\gamma-n\tilde\gamma\|_1\right),
\end{align*}
and completes the proof with $L=c\degabu(4\psibu+1)$.
\end{proof}
\begin{remark}
We briefly reflect the proof of Lemma \ref{lemma_contphiTSYM}.
\begin{alphaenumerate}
\item
Initially, we discussed permutations. Since we have $\mItilde\neq\mI$ and $\setPtilde\neq\setP$ in general, considering the random quantities $\mIR$, $\setPR$ is convenient.
\item 
In reference to the upcoming Aizenman-Sims-Starr scheme, notice that the coupling construction works because we consider fixed $\tI$, $\degae$, $\ThetaP$ and $n$, i.e.~we have the same type of decorations for $\GR_1(\GR_\cup)$, $\GR_2(\GR_\cup)$.
\end{alphaenumerate}
\end{remark}
Now, we obtain the result for $\phiTSM(m,\sigma)=\expe[\phiG(\GTSM[m,\mIR,\setPR](\sigma))]$ as a corollary.
\begin{corollary}\label{cor_contphiTSM}
Let $\gammaN[,\sigma]\ge\frac{1}{2}\psibl$, $m\le\mbu$, further $\tilde m\in\ints_{\ge 0}$ and $\tilde\sigma\in[q]^n$. There exists $L_\mfkg\in\reals_{>0}$ such that
\begin{align*}
\left|\phiTSM(m,\sigma)-\phiTSM(\tilde m,\tilde\sigma)\right|
\le L\left(\|\gammaN[,\sigma]-\gammaN[,\tilde\sigma]\|_1+\left|\frac{km}{n}-\frac{k\tilde m}{n}\right|\right).
\end{align*}
\end{corollary}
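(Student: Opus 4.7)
The plan is to reduce to Lemma \ref{lemma_contphiTSYM} by averaging over the factor-side ground-truth assignment $\bm\tau$. Set $\gamma=\gammaN[,\sigma]$ and $\tilde\gamma=\gammaN[,\tilde\sigma]$. By Observation \ref{obs_TSYM}, applied separately on each side, we have $\phiTSM(m,\sigma)=\expe[\phiTSYM(m,\sigma,\bm\tau_m)]$ with $\bm\tau_m\dequal\lawYgC[,\gamma]^{\otimes m}$, and similarly $\phiTSM(\tilde m,\tilde\sigma)=\expe[\phiTSYM(\tilde m,\tilde\sigma,\tilde{\bm\tau}_{\tilde m})]$ with $\tilde{\bm\tau}_{\tilde m}\dequal\lawYgC[,\tilde\gamma]^{\otimes\tilde m}$, and these averages are independent of the particular joint law used.

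Assume without loss of generality that $m\le\tilde m$ (the other case is symmetric since Lemma \ref{lemma_contphiTSYM} only imposes a condition on the left-hand color frequencies, which are $\gammaN[,\sigma]\ge\psibl/2$ in our setting, regardless of which argument is called $m$ or $\tilde m$). Using Observation \ref{obs_fad}\ref{obs_fad_lipschitz_lawYgC}, pick an optimal coupling $(\bm y,\tilde{\bm y})$ of $\lawYgC[,\gamma]$ and $\lawYgC[,\tilde\gamma]$ with $\prob[\bm y\neq\tilde{\bm y}]=\|\lawYgC[,\gamma]-\lawYgC[,\tilde\gamma]\|_\mrmtv\le L'\|\gamma-\tilde\gamma\|_\mrmtv$, draw $m$ i.i.d.\ copies $(\bm y(a),\tilde{\bm y}(a))$ of this coupling, and independently $\tilde m-m$ further samples $\tilde{\bm y}(a)\dequal\lawYgC[,\tilde\gamma]$ for $a\in\{m+1,\dots,\tilde m\}$. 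Set $\bm\tau_m(a)=\bm y(a)$ and $\tilde{\bm\tau}_{\tilde m}(a)=\tilde{\bm y}(a)$. Crucially, $\tilde{\bm\tau}_{\tilde m}\in(\domC[,\tilde\gamma]^k)^{\tilde m}$ almost surely, so Lemma \ref{lemma_contphiTSYM} applies to every realisation.

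Applying Lemma \ref{lemma_contphiTSYM} pointwise and taking expectation under the coupling gives
\begin{align*}
|\phiTSM(m,\sigma)-\phiTSM(\tilde m,\tilde\sigma)|
\le\frac{L}{n}\Bigl(n\|\gamma-\tilde\gamma\|_1+\expe\bigl[\|m\alphaM[,\bm\tau_m]-\tilde m\alphaM[,\tilde{\bm\tau}_{\tilde m}]\|_1\bigr]\Bigr).
\end{align*}
A direct triangle inequality on the empirical counts yields
\begin{align*}
\|m\alphaM[,\bm\tau_m]-\tilde m\alphaM[,\tilde{\bm\tau}_{\tilde m}]\|_1
\le 2\sum_{a=1}^{m}\bmone\{\bm y(a)\neq\tilde{\bm y}(a)\}+(\tilde m-m),
\end{align*}
whose expectation is bounded by $2mL'\|\gamma-\tilde\gamma\|_\mrmtv+|\tilde m-m|=mL'\|\gamma-\tilde\gamma\|_1+|\tilde m-m|$ via Observation \ref{obs_tv}\ref{obs_tv_norm}. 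Using $m\le\mbu=2\degabu n/k$ to absorb the factor $m/n$ into a constant depending only on $\mfkg$, and combining with the first term, produces the claimed bound with some $L=L(\mfkg)\in\reals_{>0}$.

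The only mildly delicate point is the coupling step: one must ensure that the count-discrepancy $\|m\alpha-\tilde m\tilde\alpha\|_1$ is bounded not just in $|\tilde m-m|$ but also linearly in $\|\gamma-\tilde\gamma\|_1$ with a prefactor that, once divided by $n$, stays under control. This is where the combination of the Lipschitz estimate for $\lawYgC$ (Observation \ref{obs_fad}\ref{obs_fad_lipschitz_lawYgC}), the maximal coupling (Observation \ref{obs_tv}\ref{obs_tv_coupling}), and the boundedness $m/n=O(1)$ coming from $m\le\mbu$ all play together. No interpolation or pinning subtleties appear, since $\mIR$ and $\setPR$ are drawn from the same joint law on both sides and Lemma \ref{lemma_contphiTSYM} has already integrated them out.
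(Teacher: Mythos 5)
Your proposal is correct and follows essentially the same route as the paper's proof: both reduce to Lemma \ref{lemma_contphiTSYM} via Observation \ref{obs_TSYM}, use the maximal coupling of $\lawYgC[,\gamma]$ and $\lawYgC[,\tilde\gamma]$ together with Observation \ref{obs_fad}\ref{obs_fad_lipschitz_lawYgC}, and bound the expected count discrepancy by a triangle inequality, absorbing the factor $m/n$ via $m\le\mbu$. The only cosmetic difference is that the paper draws $\tilde m$ i.i.d.\ pairs from the coupling and restricts the first marginal to $[m]$, whereas you draw $m$ coupled pairs plus $\tilde m-m$ independent samples from $\lawYgC[,\tilde\gamma]$; the two constructions produce the same joint distribution.
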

\begin{proof}
Let $\gamma=\gammaN[,\sigma]$, $\tilde\gamma=\gammaN[\tilde\sigma]$ and assume without loss of generality that $m\le\tilde m$.
Using the coupling lemma \ref{obs_tv}\ref{obs_tv_coupling}, fix a coupling $\mu$ of $\lawYgC[,\gamma]$ and $\lawYgC[\tilde\gamma]$ and let $\tauR\dequal\mu^{\otimes\tilde m}$.
With Observation \ref{obs_TSYM} we have
\begin{align*}
\GTSM[m,\mIR,\setPR](\sigma)\dequal\GTSYM[m,\mIR,\setPR](\sigma,\tauR_{1,[m]}),\,
\GTSM[\tilde m,\mIR,\setPR](\sigma)\dequal\GTSYM[\tilde m,\mIR,\setPR](\tilde\sigma,\tauR_{2}).
\end{align*}
Now, with the tower property of the expectation, Jensen's inequality and $L^\star$ from Lemma \ref{lemma_contphiTSYM} we have
\begin{align*}
\left|\phiTSM(m,\sigma)-\phiTSM(\tilde m,\tilde\sigma)\right|
\le\frac{L^\star}{n}\expe\left[\|n\gamma-n\tilde\gamma\|_1+\|m\alphaR-\tilde m\tilde\alphaR\|_1\right]
\end{align*}
with $\alphaR=\alphaM[,\tauR_{1,[m]}]$ and $\tilde\alphaR=\alphaM[,\tauR_{2}]$.
The triangle inequality gives
\begin{align*}
\|m\alphaR-\tilde m\tilde\alphaR\|_1
\le\sum_{\tau'}\left(\sum_{a\in[m]}\left|\bmone\{\tauR_{1,a}=\tau'\}-\bmone\{\tauR_{2,a}=\tau'\}\right|+\sum_{a=m+1}^{\tilde m}\bmone\{\tauR_{2,a}=\tau'\}\right)
\end{align*}
and hence $\expe[\|m\alphaR-\tilde m\tilde\alphaR\|_1]\le 2m\prob[\tauR_{1,1}\neq\tauR_{2,1}]+\tilde m-m=2m\|\lawYgC[,\gamma]-\lawYgC[,\tilde\gamma]\|_\mrmtv+\tilde m-m$, so with $L'$ from Observation \ref{obs_fad}\ref{obs_fad_lipschitz_lawYgC} we have
\begin{align*}
\left|\phiTSM(m,\sigma)-\phiTSM(\tilde m,\tilde\sigma)\right|
\le 2L^\star\|\gamma-\tilde\gamma\|_\mrmtv+\frac{2L'L^\star}{k}\frac{km}{n}\|\gamma-\tilde\gamma\|_1+\frac{L^\star}{k}\left(\frac{k\tilde m}{n}-\frac{km}{n}\right),
\end{align*}
so the assertion holds with $L=\frac{2L^\star}{k}(k+2L'\degabu)$.
\end{proof}
Proposition \ref{proposition_phi_concon}\ref{proposition_phi_concon_cont} for the teacher-student model follows from Observation \ref{obs_standard_graphs} applied to Lemma \ref{cor_contphiTSM} for $\tI=1$ and $\ThetaP=0$.
\subsubsection{Teacher-Student Model Asymptotics}\label{phiTSIIDNIS_asymptotics}
Throughout this section we assume that $\mI\equiv 0$ and $\setP=\emptyset$ for convenience.
We discuss the behavior of the expected free entropies under random factor counts and random ground truths.
For this purpose let $\Gamma^+=(\lceil n\gamma^*(\tau)\rceil)_\tau$, $\Gamma^-=(\lfloor n\gamma^*(\tau)\rfloor)_\tau$, further let $\Gamma\in\ints_{\ge 0}^q$ be such that $\Gamma^-\le\Gamma\le\Gamma^+$ and $\|\Gamma\|_1=n$, so for $\gamma^\circ=\frac{1}{n}\Gamma$ we have $\gamma^\circ\in\mclp([q])$ and $\|\gamma^\circ-\gamma^*\|_\infty\le 1/n$.
Let $\sigma^\circ\in[q]^n$ be the non-decreasing assignment with $\gammaN[,\sigma^\circ]=\gamma^\circ$.
Finally, let $m^\circ=\lfloor\degae n/k\rfloor$ and recall $\mR^*$, $\epsm$, $\deltam$ from the introduction to Section \ref{preparations}.
\begin{corollary}\label{cor_phiTSIIDNIS}
Let $m\le\mbu$, $\mI\equiv 0$, $\setP=\emptyset$ and $\phiTSM[m](\sigma)=\expe[\phiG(\GTSM(\sigma))]$.
\begin{alphaenumerate}
\item\label{cor_phiTSIIDNIS_m}
There exists $c_\mfkg\in\reals_{>0}$ such that $|\expe[\phiTSM[m](\sigmaIID)]-\phiTSM[m](\sigma^\circ)|\le c/\sqrt{n}$ and the same holds for $\sigmaIID$ replaced by $\sigmaNIS$.
\item\label{cor_phiTSIIDNIS_p}
We have $\expe[\phiTSM[\mR](\sigmaIID)]=\phiTSM[m^\circ](\sigma^\circ)+\mclo(\epsm+\deltam+n^{-1/2})$ and the same holds for $\sigmaIID$ replaced by $\sigmaNIS_{\mR}$.
Further, this statement also holds for $\mR$ replaced by $\mR^*$.
\end{alphaenumerate}
\end{corollary}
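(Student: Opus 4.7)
The plan is to reduce both parts of the corollary to the Lipschitz continuity of Corollary~\ref{cor_contphiTSM}, combined with the concentration of the color frequency (Observation~\ref{obs_gtiid}, Corollary~\ref{cor_mutcont}) and of the factor count (Corollary~\ref{cor_dega}), while using the uniform $\mclo(m/n)$ boundedness of $\phiG$ from Observation~\ref{obs_phi_lipschitz} to dispose of the exceptional events.

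For part~\ref{cor_phiTSIIDNIS_m}, first note that $\|\gammaN[,\sigma^\circ]-\gamma^*\|_\infty\le 1/n$ by construction, hence $\gammaN[,\sigma^\circ]\ge\psibl/2$ once $n$ is sufficiently large. On the good event $\mcle=\{\|\gammaN[,\sigmaIID]-\gamma^*\|_\mrmtv<\psibl/4\}$ also $\gammaN[,\sigmaIID]\ge\psibl/2$ componentwise, so Corollary~\ref{cor_contphiTSM} combined with the triangle inequality yields
\[
|\phiTSM[m](\sigmaIID)-\phiTSM[m](\sigma^\circ)|\le 2L\|\gammaN[,\sigmaIID]-\gamma^*\|_\mrmtv+Lq/n.
\]
Taking expectations and invoking Observation~\ref{obs_gtiid}\ref{obs_gtiid_expe} gives an $\mclo(n^{-1/2})$ bound. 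On $\mcle^c$ one has $\prob[\mcle^c]=\mclo(e^{-cn})$ by Observation~\ref{obs_gtiid}\ref{obs_gtiid_prob}, while $|\phiTSM[m](\sigmaIID)|+|\phiTSM[m](\sigma^\circ)|=\mclo(1)$ uniformly by Observation~\ref{obs_phi_lipschitz}, so the contribution from $\mcle^c$ is exponentially small. The Nishimori case follows verbatim with Corollary~\ref{cor_mutcont}\ref{cor_mutcont_prob} and~\ref{cor_mutcont_expe} in place of the analogues for $\sigmaIID$.

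For part~\ref{cor_phiTSIIDNIS_p} with $\sigmaIID$ I would decompose
\begin{align*}
\expe[\phiTSM[\mR](\sigmaIID)]-\phiTSM[m^\circ](\sigma^\circ)
&=\expe[\phiTSM[\mR](\sigmaIID)-\phiTSM[\mR](\sigma^\circ)]\\
&\quad+\expe[\phiTSM[\mR](\sigma^\circ)-\phiTSM[m^\circ](\sigma^\circ)].
\end{align*}
On $\{\mR\le\mbu\}$ the first summand is controlled by part~\ref{cor_phiTSIIDNIS_m} applied conditionally on $\mR$ (the constant there depends only on $\mfkg$); on $\{\mR>\mbu\}$ Observation~\ref{obs_phi_lipschitz} together with Corollary~\ref{cor_dega} give an $\mclo(e^{-cn})$ contribution via $\expe[\bmone\{\mR>\mbu\}\mR]$. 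For the second summand Corollary~\ref{cor_contphiTSM} produces on $\{\mR\le\mbu\}$ the bound $|\phiTSM[\mR](\sigma^\circ)-\phiTSM[m^\circ](\sigma^\circ)|\le L|k\mR/n-km^\circ/n|$; since $\mR\dequal\Po(\degae n/k)$ and $|m^\circ-\degae n/k|\le 1$, Jensen's inequality yields $\expe[|k\mR/n-km^\circ/n|]=\mclo(n^{-1/2})$. The Nishimori version changes only the first summand and is again covered by part~\ref{cor_phiTSIIDNIS_m}. For $\mR^*$ the same decomposition applies, replacing Corollary~\ref{cor_dega} by the two hypotheses of Section~\ref{implications_extensions_related_work}: on $\{|\dR^*_n-d^*|\le\deltam(n)\}$ the Lipschitz estimate gives an $\mclo(\deltam(n))$ term, while the complementary event contributes $\mclo(\epsm(n))$ via the hypothesis $\expe[\bmone\{\cdot\}\dR^*_n]\le\epsm(n)$ combined with Observation~\ref{obs_phi_lipschitz}.

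The main (and essentially cosmetic) obstacle is that Corollary~\ref{cor_contphiTSM} requires both $\gammaN[,\sigma]\ge\psibl/2$ and $m\le\mbu$; at each step where either may fail one has to carve out the bad event and bound its contribution separately using the uniform $\mclo(m/n)$ bound of Observation~\ref{obs_phi_lipschitz} together with the exponential tail estimates already established. No new technique is needed beyond this careful bookkeeping.
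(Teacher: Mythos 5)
Your proposal is correct and follows essentially the same route as the paper: reduce to the Lipschitz estimate of Corollary~\ref{cor_contphiTSM}, feed in the concentration of $\gammaIID$ (Observation~\ref{obs_gtiid}), $\gammaNIS$ (Corollary~\ref{cor_mutcont}) and of $\degaR$ (Corollary~\ref{cor_dega}), and use the uniform $\mclo(m/n)$ bound of Observation~\ref{obs_phi_lipschitz} to clean up the exceptional events. One small inefficiency in part~\ref{cor_phiTSIIDNIS_m}: the good-event split on $\|\gammaN[,\sigmaIID]-\gamma^*\|_\mrmtv$ is superfluous, because Corollary~\ref{cor_contphiTSM} only constrains the \emph{first} argument $\sigma$ (via $\gammaN[,\sigma]\ge\psibl/2$ and $m\le\mbu$), while $\tilde\sigma$ and $\tilde m$ are arbitrary. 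Taking $\sigma=\sigma^\circ$ (which satisfies $\gamma^\circ\ge\psibl/2$ once $n\ge 2\psibu$) and $\tilde\sigma=\sigmaIID$ or $\sigmaNIS$, the Lipschitz bound holds with no exceptional event and Jensen plus Observation~\ref{obs_gtiid}\ref{obs_gtiid_expe} immediately gives the $\mclo(n^{-1/2})$ rate; the small-$n$ regime is absorbed by the crude bound from Observation~\ref{obs_phi_lipschitz}, exactly as you do. Your decomposition of part~\ref{cor_phiTSIIDNIS_p} into two summands and the paper's single triangle-inequality bound $E\le LE_1+LE_2$ are the same computation organized slightly differently, and both cover $\mR^*$ by the same device.
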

\begin{proof}
For $n\ge 2\psibu$ we have $\gamma^\circ\ge\psibl/2$.
Hence, with Jensen's inequality, $L$ from Corollary \ref{cor_contphiTSM} and $c^*$ from Observation \ref{obs_gtiid}\ref{obs_gtiid_expe} we have
\begin{align*}
|\expe[\phiTSM[m](\sigmaIID)]-\phiTSM[m](\sigma^\circ)|
\le L\expe[\|\gammaIID-\gamma^\circ\|_\mrmtv]
\le c'/\sqrt{n}
\end{align*}
with $c'=Lc^*$, and the same holds for $\sigmaIID$ replaced by $\sigmaNIS$ and $c'=L\hat c$ with $\hat c$ from Corollary \ref{cor_mutcont}\ref{cor_mutcont_expe}, so Part \ref{cor_phiTSIIDNIS}\ref{cor_phiTSIIDNIS_m} holds with $c=L\max(c^*,\hat c)=L\hat c$ for $n\ge 2\psibu$.
For $n\le 2\psibu$ we take $c'$ from Observation \ref{obs_phi_lipschitz} to obtain 
$|\expe[\phiTSM[m](\sigmaIID)]-\phiTSM[m](\sigma^\circ)|\le 2c'm/n\le 4c'\degabu/k\le c/\sqrt{n}$ with $c=\sqrt{2\psibu}4c'\degabu/k$.

For $\deltam$, $\epsm$ sufficiently large and using Corollary \ref{cor_dega} we may consider $\mR$ to be a special case of $\mR^*$.
With $c$ from Observation \ref{obs_phi_lipschitz} notice that
\begin{align*}
E&=|\expe[\phiTSM[\mR^*](\sigmaNIS_{\mR^*})]-\phiTSM[m^\circ](\sigma^\circ)|
\le\expe\left[\frac{c\mR^*}{n}\right]+\frac{cm^\circ}{n}
\le\frac{c}{k}\epsm+\frac{2c\degabu}{k}+\frac{c\degabu}{k}
\end{align*}
is uniformly bounded for all $n$.
For $n\ge 2\psibu$ recall that $\gamma^\circ\ge\psibl/2$ and $m^\circ\le\mbu$.
Using Jensen's inequality, $L$ as above, $\hat c$ from Corollary \ref{cor_mutcont}\ref{cor_mutcont_expe}, $d^\circ=km^\circ/n$ and the triangle inequality we obtain
$E\le LE_1+LE_2$ with
\begin{align*}
E_1&=\expe[\|\gammaNIS_{\mR^*}-\gamma^\circ\|_\mrmtv]
\le\expe[\|\gammaNIS_{\mR^*}-\gamma^*\|_\mrmtv]+\frac{q}{2n}\\
&\le\expe[\bmone\{|\bm d^*-\degae|\le\deltam\}\|\gammaNIS_{\mR^*}-\gamma^*\|_\mrmtv]
+\epsm+\frac{q}{2n}
\le\frac{\hat c}{\sqrt{n}}+\epsm+\frac{q}{2n},\\
E_2&=\expe[|\bm d^*-d^\circ|]
\le\expe[|\bm d^*-\degae|]+\frac{k}{n}\\
&\le\deltam+\expe[\bmone\{|\bm d^*-\degae|>\deltam\}\bm d^*]+\degae\prob[|\bm d^*-\degae|>\deltam]+\frac{k}{n}
\le\deltam+\epsm+\degabu\epsm+\frac{k}{n}.
\end{align*}
The result for $\sigmaIID$ follows analogously with $\hat c$ replaced by $c^*$ from Observation \ref{obs_gtiid}\ref{obs_gtiid_expe}.
\end{proof}
\subsubsection{Concentration}\label{concentration}
Throughout this section we assume that $\mI\equiv 0$ and $\setP=\emptyset$ for convenience.
First, we establish concentration for the models over i.i.d.~factors.
\begin{lemma}\label{lemma_mcdiarmid}
Let $\mI\equiv 0$, $\setP=\emptyset$ and $m\le\mbu$.
There exists $c_\mfkg\in\reals_{>0}^2$ such that
\begin{align*}
\prob\left[|\phiG(\GRM)-\expe[\phiG(\GRM)]|\ge r\right]\le c_2 e^{-c_1r^2n}
\end{align*}
for $r\in\reals_{\ge 0}$ and the same holds for $\GRM$ replaced by $\GTSYM(\sigma,\tau)$ and $\GTSM(\sigma)$.
\end{lemma}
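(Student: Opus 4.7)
The plan is to prove all three concentration statements by a single application of McDiarmid's bounded differences inequality, exploiting the fact that in each of the three models the graph is built from $m$ independent wires-weight pairs. For $\GRM$ the independence is by definition of the null model; for $\GTSM(\sigma)$ it follows from Observation~\ref{obs_TSM_iid}, which gives $\wTSM(\sigma)\dequal\wTSa[,\sigma][\otimes m]$; and for $\GTSYM(\sigma,\tau)$ it follows from the product form $\wTSYM(\sigma,\tau)\dequal\bigotimes_{a}\wTSYa[,\tau(a)]$ defined in Section~\ref{GTSYM}. In all three cases the underlying random object lives in $\domG_{n,m}$, i.e.~is an $m$-tuple of factors drawn independently from some law; only the law varies across the three cases.

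The key estimate is the bounded-differences constant. If $G,G'\in\domG_{n,m}$ agree in every factor except the $a$-th, then in the notation of Section~\ref{phi_lipschitz} we have $D=\tilde D=0$ and $\mclv[][\darr]=[n]$, while $|\mcla[=]|=m-1$, so $D_\cap=1$, and hence $\distG(G,G')=2$. Observation~\ref{obs_phi_lipschitz} therefore yields $|\phiG(G)-\phiG(G')|\le 2c_\mfkg/n$ uniformly, with a constant depending only on $\mfkg$. This is the same bound in all three models, because Observation~\ref{obs_phi_lipschitz} is deterministic and only depends on the graph structure, not on the law generating the factors.

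Now McDiarmid's bounded differences inequality, applied to the function $(w_1,\ldots,w_m)\mapsto\phiG((v,\psi))$ whose $m$ independent coordinates each have oscillation at most $2c_\mfkg/n$, gives
\begin{align*}
\prob\left[\left|\phiG-\expe[\phiG]\right|\ge r\right]
\le 2\exp\left(-\frac{r^2 n^2}{2mc_\mfkg^2}\right).
\end{align*}
Plugging in the uniform bound $m\le\mbu=2\degabu n/k$ yields the desired form with $c_1=k/(4\degabu c_\mfkg^2)$ and $c_2=2$, depending only on $\mfkg$. The identical argument applies to $\GTSYM(\sigma,\tau)$ (treating $\sigma,\tau$ as fixed parameters determining the coordinate laws) and to $\GTSM(\sigma)$.

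The only real thing to verify is the bounded-differences constant via Observation~\ref{obs_phi_lipschitz}, which I expect to be the main (if minor) obstacle: one must be careful that the Lipschitz estimate of Section~\ref{phi_lipschitz} is applicable with trivial interpolator/pin data ($\mI\equiv 0$, $\setP=\emptyset$), and that the distance computation above is correct. Everything else is a black-box application of McDiarmid together with the uniform bound on $m$.
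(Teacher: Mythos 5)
Your proof is correct and takes essentially the same route as the paper's: both compute the bounded-differences constant $2c_\mfkg/n$ via $\distG(G,\tilde G)=2$ from Observation~\ref{obs_phi_lipschitz}, apply McDiarmid's inequality to the $m$ i.i.d.\ coordinates, substitute $m\le\mbu$, and extend to the teacher-student and factor-side variants via Observation~\ref{obs_TSM_iid} and Observation~\ref{obs_TSYM}. The constants and the final bound match the paper's.
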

\begin{proof}
Recall the proof of Lemma \ref{lemma_contphiM}. For $\tilde m=m$ in Section \ref{phi_lipschitz} we have 
\begin{align*}
\distG(G,\tilde G)=2m-2|\mcla[=]|=2|\{a\in[m]:(v_a,\psi_a)\neq(\tilde v_a,\tilde\psi_a)\}|.
\end{align*}
So, for $|\mcla[=]|=m-1$ and $c'$ from Observation \ref{obs_phi_lipschitz} we have $|\phiG(G)-\phiG(\tilde G)|\le\frac{2c'}{n}$.
Since $\phiG(\GRM)=\phiG([\wR]^\Gamma)$ is a function of $m$ i.i.d.~pairs McDiarmid's inequality yields
\begin{align*}
\prob\left[|\phiG(\GRM)-\expe[\phiG(\GRM)]|\ge r\right]
\le 2\exp\left(-\frac{2 r^2}{m\left(\frac{2c'}{n}\right)^2}\right)
\le c_2 e^{-c_1r^2n}
\end{align*}
with $c_2=2$ and $c_1=\frac{k}{4c'^2\degabu}$.
Using Observation \ref{obs_TSM_iid} and Observation \ref{obs_TSYM}, the proofs for $\GTSM(\sigma)$ and $\GTSYM(\sigma,\tau)$ are completely analogous, with the same constants.
\end{proof}
\begin{remark}
This proof extends to any fixed $\setP$ (and $\sigmaP$) since this determines the pinning weights due to fixed $\sigma$, and to not too large $\|\mI\|_1$ analogously to the standard factors.
\end{remark}
Next, we establish concentration for random ground truths.
\begin{lemma}\label{lemma_concphiTSMIIDNIS}
Let $\mI\equiv 0$, $\setP=\emptyset$ and $m\le\mbu$.
There exists $c_\mfkg\in\reals_{>0}^2$ such that
\begin{align*}
\prob\left[|\phiG(\GTSM(\sigmaIID))-\expe[\phiG(\GTSM(\sigmaIID))]|\ge r\right]\le c_2 e^{-c_1r^2n}
\end{align*}
for $r\in\reals_{\ge 0}$ and the same holds for $\sigmaIID$ replaced by $\sigmaNIS$.
\end{lemma}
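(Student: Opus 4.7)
The plan is to decompose the fluctuation via the conditional mean $\phiTSM(m,\sigma) = \expe[\phiG(\GTSM(\sigma))]$ introduced in Section \ref{phiTS_continuity} and the triangle inequality:
\begin{align*}
|\phiG(\GTSM(\sigmaIID)) - \expe[\phiG(\GTSM(\sigmaIID))]| &\le |\phiG(\GTSM(\sigmaIID)) - \phiTSM(m,\sigmaIID)| \\
&\quad + |\phiTSM(m,\sigmaIID) - \expe[\phiTSM(m,\sigmaIID)]|,
\end{align*}
where the tower property yields $\expe[\phiTSM(m,\sigmaIID)] = \expe[\phiG(\GTSM(\sigmaIID))]$. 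The same decomposition applies verbatim with $\sigmaIID$ replaced by $\sigmaNIS$, so it suffices to produce a sub-Gaussian tail for each summand, uniformly in the law of the ground truth.

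The first summand is handled by conditioning on the ground truth. By Observation \ref{obs_TSM_iid}, for any fixed $\sigma$ the graph $\GTSM(\sigma)$ is an i.i.d.\ product, so Lemma \ref{lemma_mcdiarmid} gives $\prob[|\phiG(\GTSM(\sigma)) - \phiTSM(m,\sigma)| \ge r/2] \le c_2 e^{-c_1 r^2 n / 4}$ uniformly in $\sigma$, and integrating over the law of $\sigmaIID$ or $\sigmaNIS$ preserves the bound.

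For the second summand, the key point is the permutation invariance established inside the proof of Lemma \ref{lemma_contphiTSYM}: $\phiTSM(m,\sigma)$ depends on $\sigma$ only through $\gammaN[,\sigma]$, so write $\phiTSM(m,\sigma) = g(\gammaN[,\sigma])$. On the event $\gammaN[,\sigma] \ge \psibl/2$, Corollary \ref{cor_contphiTSM} gives the Lipschitz estimate $|g(\gamma) - g(\gamma^\circ)| \le L\|\gamma - \gamma^\circ\|_1$, where $\gamma^\circ = \gammaN[,\sigma^\circ]$ is the reference from Section \ref{phiTSIIDNIS_asymptotics}, for which $\|\gamma^\circ - \gamma^*\|_\infty \le 1/n$ guarantees $\gamma^\circ \ge \psibl/2$ for large $n$. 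Combining the Lipschitz estimate with the sub-Gaussian concentration of $\gammaIID$ around $\gamma^*$ (Observation \ref{obs_gtiid}\ref{obs_gtiid_prob}), respectively of $\gammaNIS$ around $\gamma^*$ (Corollary \ref{cor_mutcont}\ref{cor_mutcont_prob}), and absorbing the complementary event $\gammaN < \psibl/2$ of probability $\le c'_2 e^{-c'_1 n}$ via the uniform bound of Observation \ref{obs_phi_lipschitz}, yields $\prob[|g(\gammaIID) - g(\sigma^\circ)| \ge r/4] \le c''_2 e^{-c''_1 r^2 n}$ and likewise for $\gammaNIS$. Since Corollary \ref{cor_phiTSIIDNIS}\ref{cor_phiTSIIDNIS_m} gives $|\expe[\phiTSM(m,\sigmaIID)] - g(\sigma^\circ)| = \mclo(n^{-1/2})$ and the same bound for $\sigmaNIS$, a final triangle inequality closes the argument whenever $r$ is at least of order $n^{-1/2}$; for smaller $r$ the claimed bound is trivially true after enlarging $c_2$.

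The main subtlety will be the truncation to $\gammaN \ge \psibl/2$, which is needed because the Lipschitz estimate of Corollary \ref{cor_contphiTSM} has this hypothesis on the color frequencies. The assumption $\gamma^* \ge \psibl$ makes the complementary event exponentially unlikely, and the coarse bound of Observation \ref{obs_phi_lipschitz} handles it there; apart from this, the whole argument treats the $\sigmaIID$ and $\sigmaNIS$ cases in parallel, the only difference being which of Observation \ref{obs_gtiid}\ref{obs_gtiid_prob} or Corollary \ref{cor_mutcont}\ref{cor_mutcont_prob} is invoked.
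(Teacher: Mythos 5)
Your decomposition into the conditional fluctuation handled by Lemma \ref{lemma_mcdiarmid} and the ground-truth fluctuation handled by Corollary \ref{cor_contphiTSM} together with Observation \ref{obs_gtiid}\ref{obs_gtiid_prob} (resp.\ Corollary \ref{cor_mutcont}\ref{cor_mutcont_prob}), followed by the $\mclo(n^{-1/2})$ correction from Corollary \ref{cor_phiTSIIDNIS}\ref{cor_phiTSIIDNIS_m} and the absorption of small $r$ and small $n$ into the constants, is precisely the paper's argument. One small simplification you missed: the hypothesis $\gammaN[,\sigma]\ge\psibl/2$ in Corollary \ref{cor_contphiTSM} is only required on \emph{one} of the two assignments, and the fixed reference $\sigma^\circ$ already satisfies it for $n\ge 2\psibu$, so no truncation on the random $\gammaN[,\sigmaIID]$ is actually needed — the only case split is on $n$, not on the color frequencies of the random ground truth.
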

\begin{proof}
Let $\phiTSM(\sigma)=\expe[\phiG(\GTSM(\sigma))]$ and $\phiTSMe=\expe[\phiTSM(\sigmaIID)]$.
With $c^\circ$ from Corollary \ref{cor_phiTSIIDNIS} let $\rho=3c^\circ$,
and with $L$ from Corollary \ref{cor_contphiTSM} let $n_\circ=\max(2\psibu,(3qL/\rho)^2)$.
In the following we consider the case
$n\ge n_\circ$ and $r\ge\rho/\sqrt{n}$, then the case
$n\le n_\circ$, and finally the case $r\le\rho/\sqrt{n}$.

For $n\ge n_\circ$ and $r\ge\rho/\sqrt{n}$ the following holds.
Using $\|\gamma^\circ-\gamma^*\|_\infty\le 1/n\le1/n^\circ$ we have $\gamma^\circ\ge\psibl/2$ and hence Corollary \ref{cor_contphiTSM} applies and yields $|\phiTSM(\sigmaIID)-\phiTSM(\sigma^\circ)|\le L\|\gammaIID-\gamma^\circ\|_\mrmtv$.
Notice that $\gamma^\circ$ is also sufficiently close to $\gamma^*$ relative to $r$, to be precise we have $\|\gamma^\circ-\gamma^*\|_\mrmtv\le\frac{q}{2n}\le\frac{q\rho}{6qL\sqrt{n}}\le\frac{r}{6L}$. The same holds for the expected free entropy, i.e.~$|\phiTSM(\sigma^\circ)-\phiTSMe|\le\frac{c^\circ}{\sqrt{n}}=\frac{\rho}{3\sqrt{n}}\le\frac{1}{3}r$.
So, using the triangle inequalities suggested by the above yields
\begin{align*}
|\phiG(\GTSM(\sigmaIID))-\phiTSMe|
\le|\phiG(\GTSM(\sigmaIID))-\phiTSM(\sigmaIID)|+L\left(\|\gammaIID-\gamma^*\|_\mrmtv+\frac{r}{6L}\right)+\frac{1}{3}r.
\end{align*}
On $|\phiG(\GTSM(\sigmaIID))-\phiTSMe|\ge r$ we have
$\|\gammaIID-\gamma^*\|_\mrmtv\ge r/(6L)$ or $|\phiG(\GTSM(\sigmaIID))-\phiTSM(\sigmaIID)|\ge r/3$, so with $c_\Gamma^*$ from Observation \ref{obs_gtiid}\ref{obs_gtiid_prob} and $c_{\mrmm}$ from Lemma \ref{lemma_mcdiarmid} we have
\begin{align*}
P&=\prob\left[|\phiG(\GTSM(\sigmaIID))-\expe[\phiG(\GTSM(\sigmaIID))]|\ge r\right]\\
&\le c_{\Gamma,2}\exp\left(-\frac{c_{\Gamma,1}}{36L^2}r^2n\right)+c_{\mrmm,2}\exp\left(-\frac{c_{\mrmm,1}}{9}r^2n\right)
\le c'_2e^{-c_1r^2n}
\end{align*}
with $c'_2=c_{\Gamma,2}+c_{\mrmm,2}$ and $c_1=\min(\frac{c_{\Gamma,1}}{36L^2},\frac{c_{\mrmm,1}}{9})$.
For $n\le n_\circ$ with $c_\uarr$ from Observation \ref{obs_phi_lipschitz} we have
$|\phiG(\GTSM(\sigmaIID))-\phiTSMe|\le c_\uarr\mbu/n=r_\uarr$ with $r_\uarr=2c_\uarr\degabu/k$.
For $r\le r_\uarr$ we have
\begin{align*}
P&\le 1=c_2''\exp\left(-c_1r_\uarr^2n_\circ\right)
\le c''_2e^{-c_1r^2n}
\end{align*}
with $c''_2=\exp(c_1r_\uarr^2n_\circ)$, but for $r>r_\uarr$ we have $P=0\le c''_2e^{-c_1r^2n}$.
For $r\le\rho/\sqrt{n}$ we have
\begin{align*}
P&\le 1=e^{c_1r^2n}e^{-c_1r^2n}\le c'''_2e^{-c_1r^2n}
\end{align*}
with $c'''_2=e^{c_1\rho^2}$. Choosing $c_2=\max(c'_2,c''_2,c'''_2)$ completes the proof, since $c^*_\Gamma$ replaced by $\hat c_\Gamma$ from Corollary \ref{cor_mutcont}\ref{cor_mutcont_prob} yields the analogous result for $\sigmaNIS$.
\end{proof}
Finally, Proposition \ref{proposition_phi_concon}\ref{proposition_phi_concon_conc} follows from Observation \ref{obs_standard_graphs} applied to Lemma \ref{lemma_mcdiarmid} and to Lemma \ref{lemma_concphiTSMIIDNIS}.
\section{The Planted Model Quenched Free Entropy}\label{thm_bethe_proof}
We turn to the proof of Theorem \ref{thm_bethe}.
In Section \ref{pinning} we prove Lemma \ref{lemma_pinning}, apply the results to $\GTSM(\sigmaNIS)$ and $\GTSM(\sigmaIID)$ and show that the effect of the pinning on the quenched free entropy density is asymptotically negligible.

In Section \ref{interpolation} we implement the interpolation method and show Proposition \ref{proposition_int}. Thereafter, we can discard the interpolators once and for all, restricting to $\tI=1$ and $\mI\equiv 0$.
In Section \ref{ass} we implement the Aizenman-Sims-Starr scheme for the simplified model.
Finally, in Section \ref{proof_thm_bethe} we complete the proof.
\subsection{Pinned Measures and Their Marginal Distributions}\label{pinning}
This section is composed of four parts. The first part is dedicated to the proof of Lemma \ref{lemma_pinning}, which is based on \cite{coja2018}, \cite{montanari2008a} and covered by Sections \ref{pinning_conditional_entropy} to \ref{pinning_lemma_proof}. Then we further discuss the pinning of Gibbs measures in the Sections \ref{pinning_gibbs} and \ref{pinning_qfed}. In the third part, Section \ref{pinning_marginal_distributions}, we discuss the marginal distributions of (pinned) measures and prove another proposition for general (pinned) measures.
In the last part, Sections \ref{pinning_gibbs_marginal_distributions} and \ref{pinning_limiting_marginal_distributions}, we apply this proposition to decorated graphs and discuss projections onto $\mclp[*][2]([q])$.

In Section \ref{pinning_conditional_entropy} we introduce the underlying model, the erasure channel, and the corresponding conditional entropy of the assignment.
In Section \ref{pinning_condentder} we take the derivative of the conditional entropy with respect to the pinning probability, yielding the crucial connection to the mutual information.
In Section \ref{pinning_mi} we introduce the generalized mutual information.
Finally, in Section \ref{pinning_lemma_proof} we complete the proof of Lemma \ref{lemma_pinning}.

In Section \ref{pinning_gibbs} we apply the results to the Gibbs measure $\lawG$ of the decorated graphs. In Section \ref{pinning_qfed} we argue that the impact on the quenched free entropy density by adding pins for $\ThetaP=\ThetaP(n)=o(n)$ is asymptotically negligible.

Next, we introduce empirical marginal distributions in Section \ref{pinning_marginal_distributions}. We further introduce a conditional and a reweighted version of the marginal distribution and show that these asymptotically coincide if the empirical color frequencies concentrate and the measure is $\eps$-symmetric, which in particular holds for pinned measures.

In Section \ref{pinning_gibbs_marginal_distributions} we show that the empirical color frequencies of the Gibbs spins concentrate and hence in particular the proposition for general measures applies to the Gibbs measure induced by the graph.
Finally, in Section \ref{pinning_limiting_marginal_distributions} we introduce a projection of $\mclp[][2]([q])$ onto $\mclp[*][2]([q])$, and then show that the distance of the Gibbs marginal distribution to its projection vanishes.
\subsubsection{The Erasure Channel, Conditional Entropy and Random Conditioning}\label{pinning_conditional_entropy}
For $(\bm x,\bm y,\bm z)\in[q]^3$, $q\in\ints_{\ge 2}$,
the cross entropy, the entropy and the relative entropy are
\begin{align*}
H(\bm x\|\bm y)=\sum_x-\prob[\bm x=x]\ln(\prob[\bm y=x]), H(\bm x)=H(\bm x\|\bm x), \DKL(\bm x\|\bm y)=H(\bm x\|\bm y)-H(\bm x)
\end{align*}
respectively.
Notice that the definition of the relative entropy is consistent with the general case from Section \ref{information_theoretic_threshold}, and in particular both the cross entropy and the relative entropy are finite if and only if $\bm x$ is absolutely continuous with respect to $\bm y$.
The conditional cross entropy, the conditional entropy and the conditional relative entropy are
\begin{align*}
H(\bm x\|\bm y|\bm z)=\expe[\expe[H(\bm x\|\bm y)|\bm z]],\,
H(\bm x|\bm z)=H(\bm x\|\bm x|\bm z),\,
\DKL(\bm x\|\bm y|\bm z)=\expe[\expe[\DKL(\bm x\|\bm y)|\bm z]].
\end{align*}
The conditional mutual information is $I(\bm x,\bm y|\bm z)=\DKL(\bm x,\bm y\|\bm x\otimes\bm y|\bm z)$, which is consistent with the definition of the conditional mutual information for the graphical channels in Section \ref{examples}.
For now, we focus on the following conditional entropy.

Let $n\in\ints_{>0}$, $\mu\in\mclp([q]^n)$ and $\bm x_\mu\dequal\mu$ a random vector of values. Further, let $p\in[0,1]^n$ and let $\bm r\in\{0,1\}^n$ be the revealment given by $\bm r\dequal\bigotimes_i\bm r_i$ and Bernoulli variables $\bm r_i$ with success probability $p_i$.
Using the joint distribution $(\bm x,\bm r)\dequal\bm x\otimes\bm r$ let $\bm\chi=(\bm r_i\bm x_i)_i\in[q]_\circ^n$ with $[q]_\circ=[q]\cup\{0\}$ be the partial observation.
This approach reflects \cite{montanari2008a}.

Fix values $x\in[q]^n$, revealments $r\in\{0,1\}^n$ and let $\chi=(r_ix_i)_i$ be the partial observation, and $\mclr=r^{-1}(1)=\chi^{-1}([q])$ the revealed coordinates.
Further, fix known coordinates $\mclk\setle[n]$, tested coordinates $\mclt\setle[n]$ and selected coordinates $\mcls\setle[n]$.
Now, let
\begin{align*}
\eta_{n,\mu,p}(\mcls,x_{\mclk},\chi_{\mclt})
&=H(\bm x_{\mcls}|\bm x_{\mclk}=x_{\mclk},\bm\chi_{\mclt}=\chi_{\mclt}),\\
\overline\eta_{n,\mu,p}(\mcls,\mclk,\mclt)
&=H(\bm x_{\mcls}|\bm x_{\mclk},\bm\chi_{\mclt})
=\expe\left[\eta(\mcls,\bm x_{\mclk},\bm\chi_{\mclt})\right]
\end{align*}
be the (pointwise) entropy and the conditional entropy respectively.
As already indicated by the definition of $\vR$ in Section \ref{ps_pinning} we consider selections with repetition.
Hence, we establish that the definition above is indeed sufficient for our purposes and further derive a few useful basic properties.
\begin{observation}\label{obs_pin_condentbasic}
Notice that the following holds.
\begin{alphaenumerate}
\item\label{obs_pin_condentbasic_triv}
We have
$\eta(\mcls,x_\emptyset,\chi_{\mclt})=H(\bm x_{\mcls}|\bm\chi_{\mclt}=\chi_{\mclt})$,
further $\eta(\mcls,x_{\mclk},\chi_\emptyset)
=H(\bm x_{\mcls}|\bm x_{\mclk}=x_{\mclk})$ and 
$\eta(\emptyset,\cdot,\cdot)=0$.
\item\label{obs_pin_condentbasic_rep}
Let $s,k,t\in\ints_{\ge 0}$, $\sigma\in[n]^s$, $\kappa\in[n]^k$ and $\tau\in[n]^t$ such that $\sigma([s])=\mcls$, $\kappa([k])=\mclk$ and $\tau([t])=\mclt$.
Then we have 
$H(\bm x_{\sigma}|\bm x_{\kappa}=x_{\kappa},\bm\chi_{\tau}=\chi_{\tau})
=\eta(\mcls,x_{\mclk},\chi_{\mclt})$.
\item\label{obs_pin_condentbasic_normpoint}
We have
$\eta(\mcls,x_{\mclk},\chi_{\mclt})
=\eta(\mcls\setminus\mclk,x_{\mclk},\chi_{\mclt\setminus\mclk})
=\eta(\mcls\setminus\mclk[][*],x_{\mclk[][*]},\chi_\emptyset)$ with $\mclk[][*]=\mclk\cup(\mclt\cap\mclr)$.
\item\label{obs_pin_condentbasic_normexpe}
For $\mcls=\mcls[1]\dotcup\mcls[2]$ we have
\begin{align*}
\overline\eta(\mcls,\mclk,\mclt)
=\overline\eta(\mcls[1]\setminus\mclk,\mclk,\mclt\setminus\mclk)
+\overline\eta(\mcls[2]\setminus\mclk,\mclk\cup\mcls[1],\mclt\setminus\mclk).
\end{align*}
\end{alphaenumerate}
\end{observation}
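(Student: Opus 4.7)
The plan is to handle the four parts in order, treating (a) and (b) as bookkeeping reductions, (c) as the genuine content (where the erasure structure is used), and (d) as a chain-rule corollary of (c). Throughout, the key observation is that $(\bm x,\bm r)\dequal\bm x\otimes\bm r$, so $\bm r$ is independent of $\bm x$, and that $\bm\chi_i$ is a deterministic function of $(\bm x_i,\bm r_i)$.

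Part (a) is immediate from the definition by specializing $\mclk=\emptyset$ or $\mclt=\emptyset$; the case $\mcls=\emptyset$ uses that $\bm x_\emptyset$ is deterministic, so its (conditional) entropy is $0$. For Part (b), it suffices to note that for any $\sigma\in[n]^s$ with $\sigma([s])=\mcls$, the random vectors $\bm x_\sigma$ and $\bm x_{\mcls}$ generate the same $\sigma$-algebra; concretely, the event $\{\bm x_\sigma=x_\sigma\}$ coincides with $\{\bm x_{\mcls}=x_{\mcls}\}$ because $x_\sigma$ is determined by $x_{\mcls}$ through the fixed reindexing, and conversely every coordinate in $\mcls$ appears at least once in $\sigma$. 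The same argument applies to $\kappa$ with $\mclk$ and to $\tau$ with $\mclt$ (for the latter with $\bm\chi$ in place of $\bm x$), so the pointwise conditional entropy is unaffected by repetitions in the indexing.

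Part (c) is the substantive step. For the first equality, first remove $\mclk$ from $\mcls$: under the event $\{\bm x_{\mclk}=x_{\mclk}\}$ the coordinates $\bm x_{\mcls\cap\mclk}$ are a.s.\ constant, and additively decomposing $H(\bm x_{\mcls\setminus\mclk},\bm x_{\mcls\cap\mclk}\,|\,\cdot)$ via the chain rule contributes $0$ from the constant part. Next, remove $\mclk$ from $\mclt$: for $i\in\mclk\cap\mclt$, on the event $\{\bm x_i=x_i\}$ the further condition $\bm\chi_i=\chi_i$ is equivalent to fixing $\bm r_i$ at the deterministic value $\bmone\{i\in\mclr\}$, and since $\bm r$ is independent of $\bm x$ this extra conditioning leaves the conditional law of $\bm x_{\mcls\setminus\mclk}$ unchanged. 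For the second equality, consider $i\in(\mclt\setminus\mclk)$: if $i\in\mclr$ then $\chi_i=x_i\in[q]$ so $\{\bm\chi_i=\chi_i\}=\{\bm r_i=1,\bm x_i=x_i\}$, which by independence of $\bm r$ conveys exactly the same information about $\bm x$ as $\{\bm x_i=x_i\}$; if $i\notin\mclr$ then $\chi_i=0$ so $\{\bm\chi_i=0\}=\{\bm r_i=0\}$ and this event is independent of $\bm x$, contributing nothing. Collecting the first case into the ``known'' argument yields $\mclk[][*]=\mclk\cup(\mclt\cap\mclr)$ and reduces $\chi_{\mclt}$ to $\chi_\emptyset$.

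For Part (d), the chain rule applied to $\mcls=\mcls[1]\dotcup\mcls[2]$ gives
\begin{align*}
\overline\eta(\mcls,\mclk,\mclt)
= H(\bm x_{\mcls[1]}\,|\,\bm x_{\mclk},\bm\chi_{\mclt})
+ H(\bm x_{\mcls[2]}\,|\,\bm x_{\mcls[1]},\bm x_{\mclk},\bm\chi_{\mclt}).
\end{align*}
Applying Part (c) to the first summand produces $\overline\eta(\mcls[1]\setminus\mclk,\mclk,\mclt\setminus\mclk)$. Applying Part (c) to the second summand, now with known set $\mclk\cup\mcls[1]$, produces $\overline\eta(\mcls[2]\setminus(\mclk\cup\mcls[1]),\mclk\cup\mcls[1],\mclt\setminus(\mclk\cup\mcls[1]))$; using disjointness $\mcls[1]\cap\mcls[2]=\emptyset$ simplifies the first index set to $\mcls[2]\setminus\mclk$, and a further application of the first equality of Part (c) (removing the part of $\mclt$ lying in the known set) rewrites the third index set as $\mclt\setminus\mclk$, giving the stated identity. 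The only delicate point is the independence argument in Part (c) translating $\bm\chi$-conditioning into $\bm x$-conditioning plus an independent $\bm r$-conditioning; everything else is pure $\sigma$-algebra bookkeeping.
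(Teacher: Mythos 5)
Your proof is correct and follows essentially the same route as the paper: parts (a) and (b) as definitional bookkeeping, part (c) via the independence $(\bm x,\bm r)\dequal\bm x\otimes\bm r$ to convert $\bm\chi$-conditioning into $\bm x$-conditioning on the revealed coordinates plus an ineffective $\bm r$-conditioning, and part (d) via the chain rule plus part (c). The only cosmetic difference is in (d): the paper normalizes $\overline\eta(\mcls,\mclk,\mclt)$ to $\overline\eta(\mcls^\circ,\mclk,\mclt^\circ)$ first and then applies the chain rule, whereas you apply the chain rule first and normalize each summand afterward; both orderings work.
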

\begin{proof}
Recall well-known properties of the conditional entropy, in particular that $H(\bm a|\bm b)=0$ if and only if $\bm a$ is determined by $\bm b$, and the chain rule.
Further, notice that the conditional entropy is exclusively a function of the laws, and that
\begin{align*}
\prob[\bm a=a,\bm a=a,\bm b=b,\bm c_1=c_1|\bm b=b,\bm b=b,\bm c_2=c_2]=\prob[\bm a=a|\bm b=b]
\end{align*}
whenever $\bm c_1=c_1$, $\bm c_2=c_2$ almost surely.
This shows Part \ref{obs_pin_condentbasic}\ref{obs_pin_condentbasic_triv} and
Part \ref{obs_pin_condentbasic}\ref{obs_pin_condentbasic_rep}.
Further, notice that
\begin{align*}
\eta(\mcls,x_{\mclk},\chi_{\mclt})
&=H(\bm x_{\mcls}|\bm x_{\mclk}=x_{\mclk},\bm\chi_{\mclt}=\chi_{\mclt})\\
&=H(\bm x_{\mcls}|\bm x_{\mclk}=x_{\mclk},\bm x_{\mclt\cap\mclr}=x_{\mclt\cap\mclr},\bm r_{\mclt}=r_{\mclt})
=H(\bm x_{\mcls\setminus\mclk[][*]}|\bm x_{\mclk[][*]}=x_{\mclk[][*]})\\
&=\eta(\mcls\setminus\mclk[][*],x_{\mclk[][*]},\chi_\emptyset)
\end{align*}
using $(\bm x,\bm r)=\bm x\otimes\bm r$, so Part \ref{obs_pin_condentbasic}\ref{obs_pin_condentbasic_normpoint} holds since this also holds for $\mcls[][\circ]=\mcls\setminus\mclk$, $\mclt[][\circ]=\mclt\setminus\mclk$, and 
$\mclk[][*]=\mclk\cup(\mclt[][\circ]\cap\mclr)$ and
$\mcls\setminus\mclk[][*]=\mcls[][\circ]\setminus\mclk[][*]$.
With $\mcls[1][\circ]=\mcls[1]\setminus\mclk$, $\mcls[2][\circ]=\mcls[2]\setminus\mclk$,
$\mcls[][\circ]=\mcls[1][\circ]\dotcup\mcls[2][\circ]$, Part \ref{obs_pin_condentbasic}\ref{obs_pin_condentbasic_normpoint} and the chain rule for the conditional entropy we have
\begin{align*}
\overline\eta(\mcls,\mclk,\mclt)
&=\overline\eta(\mcls[][\circ],\mclk,\mclt[][\circ])
=H(\bm x_{\mcls[1][\circ]}|\bm x_{\mclk},\bm\chi_{\mclt[][\circ]})
+H(\bm x_{\mcls[2][\circ]}|\bm x_{\mclk\cup\mcls[1][\circ]},\bm\chi_{\mclt[][\circ]}),
\end{align*}
which completes the proof of Part \ref{obs_pin_condentbasic}\ref{obs_pin_condentbasic_normexpe}.
\end{proof}
Based on Observation \ref{obs_pin_condentbasic} we assume that $\mcls\cap\mclk=\emptyset$ and $\mclt\cap\mclk=\emptyset$.
Notice that Observation \ref{obs_pin_condentbasic}\ref{obs_pin_condentbasic_normpoint} 
using $\bmclr=\mclt\cap\bm r^{-1}(1)$ yields the minimal form
\begin{align*}
\overline\eta(\mcls,\mclk,\mclt)=\expe\left[\eta\left(\mcls\setminus\bmclr,\bm x_{\mclk\cup\bmclr},\bm\chi_\emptyset\right)\right]
=\expe\left[\expe\left[H(\bm x_{\mcls\setminus\bmclr}|\bm x_{\mclk\cup\bmclr})\middle|\bmclr\right]\right].
\end{align*}
This representation reflects the approach in \cite{coja2018}.
\subsubsection{The Conditional Entropy Derivative}\label{pinning_condentder}
Let $\mcls\cap\mclk=\emptyset$ and $\mclt\cap\mclk=\emptyset$ in this section.
Let $\frac{\partial}{\partial x_i}f(x)$ denote the $i$-th partial derivative of $f$ at $x$.
\begin{lemma}\label{lemma_pin_condentder}
For $i\in[n]$ we have $\frac{\partial}{\partial p_i}\overline\eta_p(\mcls,\mclk,\mclt)=-\bmone\{i\in\mclt\}I(\bm x_{\mcls},\bm x_{i}|\bm x_{\mclk},\bm\chi_{\mclt\setminus\{i\}})$.
\end{lemma}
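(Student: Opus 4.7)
If $i \notin \mclt$, then $\bm r_i$ does not enter into the conditioning variables $(\bm x_{\mclk}, \bm\chi_{\mclt})$, so $\overline\eta_p(\mcls,\mclk,\mclt)$ is independent of $p_i$ and its derivative vanishes; the indicator on the right-hand side is also $0$, so the equality is trivial. Throughout the rest of the plan I therefore assume $i \in \mclt$ and write $\mclt' = \mclt \setminus \{i\}$; note $i \notin \mclk$ by the standing assumption $\mclt \cap \mclk = \emptyset$.

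The key observation is that the partial observation at coordinate $i$ is ``clean'': since $\bm x_i \in [q]$ and $\bm\chi_i = \bm r_i \bm x_i \in [q]_\circ$, the event $\{\bm\chi_i = 0\}$ coincides with $\{\bm r_i = 0\}$, and on the complement $\{\bm\chi_i \in [q]\}$ we have $\bm r_i = 1$ and $\bm\chi_i = \bm x_i$. Combined with $\bm r_i \perp (\bm x, \bm r_{[n] \setminus \{i\}})$, this lets me split the conditioning on $\bm\chi_i$ into a reveal/no-reveal case analysis and conclude
\begin{align*}
\overline\eta_p(\mcls,\mclk,\mclt)
= (1-p_i)\, H\!\left(\bm x_{\mcls} \,\middle|\, \bm x_{\mclk},\bm\chi_{\mclt'}\right)
+ p_i\, H\!\left(\bm x_{\mcls} \,\middle|\, \bm x_{\mclk\cup\{i\}},\bm\chi_{\mclt'}\right),
\end{align*}
by writing out the conditional entropy as a sum over the values of $\bm\chi_i$ and using $\prob[\bm\chi_i = 0] = 1-p_i$, $\prob[\bm\chi_i = x] = p_i\,\prob[\bm x_i = x]$ for $x \in [q]$, together with the identification of the conditional laws of $\bm x_{\mcls}$ in each case.

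Crucially, neither conditional entropy on the right-hand side depends on $p_i$: the first term involves only $\bm r_{\mclt'}$ (and $i \notin \mclt'$), and in the second term $\bm x_i$ has been moved into the conditioning on the deterministic side, so again $\bm r_i$ does not appear. Differentiating in $p_i$ therefore gives
\begin{align*}
\frac{\partial}{\partial p_i}\overline\eta_p(\mcls,\mclk,\mclt)
= -\Bigl[H\!\left(\bm x_{\mcls} \,\middle|\, \bm x_{\mclk},\bm\chi_{\mclt'}\right) - H\!\left(\bm x_{\mcls} \,\middle|\, \bm x_{\mclk\cup\{i\}},\bm\chi_{\mclt'}\right)\Bigr]
= -\,I\!\left(\bm x_{\mcls},\bm x_i \,\middle|\, \bm x_{\mclk},\bm\chi_{\mclt'}\right),
\end{align*}
by the definition of conditional mutual information as a difference of conditional entropies.

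The only delicate step is the reveal/no-reveal decomposition, where I have to be careful that conditioning on $\{\bm\chi_i = 0\}$ really conveys no information about $\bm x_i$ beyond what $(\bm x_{\mclk}, \bm\chi_{\mclt'})$ already does; this rests on the independence $\bm r_i \perp (\bm x, \bm r_{[n]\setminus\{i\}})$ and the fact that $0 \notin [q]$. Edge cases such as $i \in \mcls$ cause no issue, since conditioning $\bm x_{\mcls}$ on $\bm x_i$ then just collapses the $i$-coordinate, consistent with Observation~\ref{obs_pin_condentbasic}\ref{obs_pin_condentbasic_normpoint}.
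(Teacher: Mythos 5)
Your proof is correct and follows essentially the same route as the paper's: both isolate the $i$-th reveal to show $\overline\eta$ is affine in $p_i$, identify the two endpoint conditional entropies $H(\bm x_{\mcls}\mid\bm x_{\mclk},\bm\chi_{\mclt'})$ and $H(\bm x_{\mcls}\mid\bm x_{\mclk\cup\{i\}},\bm\chi_{\mclt'})$, and read off the difference as the conditional mutual information. The paper writes out the full sum over $r\in\{0,1\}^{\mclt}$ and differentiates the product of Bernoulli weights, while you condition directly on the value of $\bm\chi_i$ using $\bm r_i\perp(\bm x,\bm r_{[n]\setminus\{i\}})$; these are two phrasings of the same calculation.
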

\begin{proof}
With Observation \ref{obs_pin_condentbasic} and $\bmclr=\mclt\cap\bm r^{-1}(1)$ we have
\begin{align*}
\overline\eta(\mcls,\mclk,\mclt)
&=\expe\left[\expe\left[H(\bm x_{\mcls\setminus\bmclr}|\bm x_{\mclk\cup\bmclr})\middle|\bmclr\right]\right]\\
&=\sum_{r\in\{0,1\}^{\mclt}}\prod_{i\in\mclt}\prob[\bm r_i=r_i]H(\bm x_{\mcls\setminus r^{-1}(1)}|\bm x_{\mclk\cup r^{-1}(1)}).
\end{align*}
This shows that $\frac{\partial}{\partial p_i}\overline\eta(\mcls,\mclk,\mclt)=0$ for $i\in[n]\setminus\mclt$.
For $i\in\mclt$ let $\mclt[][\circ]=\mclt\setminus\{i\}$.
Then we have
\begin{align*}
\frac{\partial}{\partial p_i}\overline\eta(\mcls,\mclk,\mclt)
&=\sum_{r\in\{0,1\}^{\mclt}}\prob[\bm r_{\mclt[][\circ]}=r_{\mclt[][\circ]}]H(\bm x_{\mcls\setminus r^{-1}(1)}|\bm x_{\mclk\cup r^{-1}(1)})(r_i-(1-r_i))\\
&=\overline\eta(\mcls\setminus\{i\},\mclk\cup\{i\},\mclt[][\circ])
-\overline\eta(\mcls,\mclk,\mclt[][\circ])\\
&=\overline\eta(\mcls,\mclk\cup\{i\},\mclt[][\circ])
-\overline\eta(\mcls,\mclk,\mclt[][\circ])
=-I(\bm x_{\mcls},\bm x_{i}|\bm x_{\mclk},\bm\chi_{\mclt[][\circ]})\\
\end{align*}
since $I(\bm a,\bm b|\bm c)=H(\bm b|\bm c)-H(\bm b|\bm a,\bm c)$.
\end{proof}
\subsubsection{Mutual Information, Relative Entropy and the Product of the Marginals}
\label{pinning_mi}
The last sections were dedicated to the discussion of the conditional entropy.
Now, we turn to the following relative entropy.
For $(\bm a,\bm b)=((\bm a_h)_{h\in\mclh},\bm b)$ let
\begin{align*}
I(\bm a|\bm b)=\DKL\left(\bm a\middle\|\bigotimes_{h\in\mclh}\bm a_h\middle|\bm b\right)
=H\left(\bm a\middle\|\bigotimes_{h\in\mclh}\bm a_h\middle|\bm b\right)-H(\bm a|\bm b)
=\sum_{h\in\mclh}H(\bm a_h|\bm b)-H(\bm a|\bm b).
\end{align*}
Notice that for $\bm a=(\bm a_1,\bm a_2)$ this definition of $I(\bm a|\bm b)$ indeed coincides with the definition of the conditional mutual information $I(\bm a_1,\bm a_2|\bm b)$.
\begin{observation}\label{obs_pin_mi}
Let $(\bm a,\bm b)\in\mcla[][m]\times\mclb[][n]$ with $m,n\in\ints_{\ge 0}$ and $\mcla,\mclb\neq\emptyset$. Further, let $k,\ell\in\ints_{\ge 0}$, $v\in[m]^k$, $w\in[n]^{\ell}$, $\mclv=v([k])$ and $\mclw=w([\ell])$.
\begin{alphaenumerate}
\item\label{obs_pin_mi_norm}
For $\ell'\in\ints_{\ge 0}$, $w'\in[m]^{\ell'}$ with $w'([\ell'])\setle\mclw$ we have
\begin{align*}
I(\bm a_v,\bm b_{w'}|\bm b_{w})=I(\bm a_v|\bm b_{\mclw})
=\sum_{i\in[m]}|v^{-1}(i)|H(\bm a_i|\bm b_{\mclw})-H(\bm a_{\mclv}|\bm b_{\mclw}).
\end{align*}
\item\label{obs_pin_mi_partition}
For $j\in\ints_{\ge 0}$ and $\bigdotcup_{i\in[j]}\mclk[i]=[k]$ with $v'(i)=v_{\mclk[i]}$ we have
\begin{align*}
I(\bm a_v|\bm b_{w})=\sum_{i\in[j]}I(\bm a_{v'(i)}|\bm b_{\mclw})+I((\bm a_{v'(i)})_{i\in[j]}|\bm b_{\mclw}).
\end{align*}
\end{alphaenumerate}
\end{observation}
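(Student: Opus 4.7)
My plan is to reduce both statements to elementary bookkeeping with (conditional) entropies, using only two basic facts: (i) the conditional entropy $H(\bm{x}|\bm{y})$ depends on $\bm{y}$ only through the $\sigma$-algebra it generates, so that conditioning on a tuple and on its underlying set are interchangeable and repeated coordinates in the conditioned tuple may be collapsed; and (ii) if $\bm{c}$ is a deterministic function of $\bm{b}$ then $H(\bm{c}|\bm{b})=0$ and $H(\bm{x},\bm{c}|\bm{b})=H(\bm{x}|\bm{b})$. Both facts are immediate from the definitions of $H$ given at the start of Section~\ref{pinning_conditional_entropy}.

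For part (a), the first step will be to unfold $I(\cdot|\cdot)$ as in the definition just above the observation: viewing $(\bm{a}_v,\bm{b}_{w'})$ as a tuple indexed by $[k]\dotcup[\ell']$ yields
\[
I(\bm{a}_v,\bm{b}_{w'}|\bm{b}_w)=\sum_{h\in[k]}H(\bm{a}_{v(h)}|\bm{b}_w)+\sum_{h'\in[\ell']}H(\bm{b}_{w'(h')}|\bm{b}_w)-H(\bm{a}_v,\bm{b}_{w'}|\bm{b}_w).
\]
Since $w'([\ell'])\setle\mclw=w([\ell])$, each $\bm{b}_{w'(h')}$ is a coordinate of $\bm{b}_w$, so the $w'$-sum vanishes and the joint entropy collapses to $H(\bm{a}_v|\bm{b}_w)$ by fact~(ii). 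Grouping the remaining sum by the value $i=v(h)$ introduces the multiplicity $|v^{-1}(i)|$, replacing $\bm{b}_w$ by $\bm{b}_{\mclw}$ is legitimate by fact~(i), and collapsing $\bm{a}_v$ to $\bm{a}_{\mclv}$ in the joint entropy by fact~(i) yields the rightmost expression. The middle equality is then just the definition of $I(\bm{a}_v|\bm{b}_{\mclw})$ combined with $H(\bm{a}_v|\bm{b}_{\mclw})=H(\bm{a}_{\mclv}|\bm{b}_{\mclw})$.

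For part (b), I will expand both summands on the right using the definition. Writing $v'(i)=v_{\mclk_i}$ gives
\[
\sum_{i\in[j]}I(\bm{a}_{v'(i)}|\bm{b}_{\mclw})=\sum_{i\in[j]}\sum_{h\in\mclk_i}H(\bm{a}_{v(h)}|\bm{b}_{\mclw})-\sum_{i\in[j]}H(\bm{a}_{v'(i)}|\bm{b}_{\mclw})
\]
and
\[
I\bigl((\bm{a}_{v'(i)})_{i\in[j]}\bigm|\bm{b}_{\mclw}\bigr)=\sum_{i\in[j]}H(\bm{a}_{v'(i)}|\bm{b}_{\mclw})-H\bigl((\bm{a}_{v'(i)})_{i\in[j]}\bigm|\bm{b}_{\mclw}\bigr).
\]
Because $(\mclk_i)_{i\in[j]}$ partitions $[k]$, the double sum collapses to $\sum_{h\in[k]}H(\bm{a}_{v(h)}|\bm{b}_{\mclw})$, and the joint entropy of the grouped tuple equals $H(\bm{a}_v|\bm{b}_{\mclw})$ since the two tuples have the same components up to reordering. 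The $\sum_i H(\bm{a}_{v'(i)}|\bm{b}_{\mclw})$ terms cancel, leaving $I(\bm{a}_v|\bm{b}_{\mclw})$, which equals $I(\bm{a}_v|\bm{b}_w)$ by a final application of fact~(i).

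There is no substantive obstacle here; the statement is purely organisational. The only trap to avoid is to cleanly distinguish the index tuples $v$, $w$, $w'$ (which carry multiplicity and order) from their underlying sets $\mclv$, $\mclw$, and to recognize in part~(a) that $\bm{b}_{w'}$ is measurable with respect to $\bm{b}_w$ precisely because $w'([\ell'])\setle\mclw$.
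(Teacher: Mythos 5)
Your proof is correct and uses the same elementary entropy bookkeeping as the paper: expand $I(\cdot|\cdot)$ via the definition, exploit measurability to drop terms and pass between tuples and sets, and regroup by multiplicity or by the partition blocks. The only cosmetic difference is that in part (b) you expand the right-hand side and collapse, whereas the paper expands the left-hand side, but the manipulations are identical.
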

\begin{proof}
Part \ref{obs_pin_mi}\ref{obs_pin_mi_norm} is immediate from the properties of the conditional entropy since
\begin{align*}
I(\bm a_v,\bm b_{w'}|\bm b_{w})
&=\sum_{h\in[k]}H(\bm a_{v(h)}|\bm b_{w})+\sum_{h\in[\ell']}H(\bm b_{w'(h)}|\bm b_{w})
-H(\bm a_v,\bm b_{w'}|\bm b_w)\\
&=\sum_{i\in[m]}|v^{-1}(i)|H(\bm a_i|\bm b_{\mclw})-H(\bm a_{\mclv}|\bm b_{\mclw}).
\end{align*}
The second part is also immediate from the conditional entropy representation since
\begin{align*}
I(\bm a_v|\bm b_{w})
&=\sum_{h\in[k]}H(\bm a_{v(h)}|\bm b_{\mclw})-H(\bm a_v|\bm b_{\mclw})\\
&=\sum_{i\in[j]}I(\bm a_{v'(i)}|\bm b_{\mclw})+\sum_{i\in[j]}H(\bm a_{v'(i)}|\bm b_{\mclw})-H(\bm a_{\mclv}|\bm b_{\mclw})\\
&=\sum_{i\in[j]}I(\bm a_{v'(i)}|\bm b_{\mclw})+I((\bm a_{v'(i)})_{i\in[j]}|\bm b_{\mclw}).
\end{align*}
\end{proof}
As for the conditional entropy, Observation \ref{obs_pin_mi}\ref{obs_pin_mi_norm} yields a normalized form, and Observation \ref{obs_pin_mi}\ref{obs_pin_mi_partition} is a partitioning property of the mutual information.

Fix known coordinates $\mclk\setle[n]$, tested coordinates $\mclt\setle[n]$, further $s\in\ints_{\ge 0}$, a selection $\sigma\in[n]^s$ and let $\mcls=\sigma([s])$.
In the following we discuss the mutual information given by
\begin{align*}
\iota_{n,\mu,p}(\sigma,x_{\mclk},\chi_{\mclt})
&=I(\bm x_{\sigma}|\bm x_{\mclk}=x_{\mclk},\bm\chi_{\mclt}=\chi_{\mclt}),\\
\overline\iota_{n,\mu,p}(\sigma,\mclk,\mclt)
&=I(\bm x_{\sigma}|\bm x_{\mclk},\bm\chi_{\mclt})
=\expe[\iota(\sigma,\bm x_{\mclk},\bm \chi_{\mclt})].
\end{align*}
Observation \ref{obs_pin_mi}\ref{obs_pin_mi_norm} ensures that it is sufficient to consider sets $\mclk$, $\mclt$.
Next, we establish basic properties and build the connection to the conditional entropy.
\begin{observation}\label{obs_pin_mix}
Let $\mcld=\sigma^{-1}(\mcls\setminus\mclk)$, $\sigma^\circ=\sigma_{\mcld}$ and $\mclt[][\circ]=\mclt\setminus\mclk$.
\begin{alphaenumerate}
\item\label{obs_pin_mix_point}
Let $\mclk[][*]=\mclk\cup(\mclt\cap\mclr)$, $\mcld[][*]=\sigma^{-1}(\mcls\setminus\mclk[][*])$ and $\sigma^*=\sigma_{\mcld[][*]}$. Then we have
\begin{align*}
\iota(\sigma,x_{\mclk},\chi_{\mclt})
=\iota(\sigma^\circ,x_{\mclk},\chi_{\mclt[][\circ]})
=\iota(\sigma^\circ,x_{\mclk[][*]},\chi_\emptyset)
=\iota(\sigma^*,x_{\mclk[][*]},\chi_\emptyset).
\end{align*}
Further, we have $\iota(\sigma,x_{\mclk},\chi_{\mclt})=\sum_h\eta(\{\sigma(h)\},x_{\mclk},\chi_{\mclt})-\eta(\mcls,x_{\mclk},\chi_{\mclt})$.
\item\label{obs_pin_mix_expe}
We have $\overline\iota(\sigma,\mclk,\mclt)=\overline\iota(\sigma^\circ,\mclk,\mclt[][\circ])$ and $\overline\iota(\sigma,\mclk,\mclt)=\sum_h\overline\eta(\{\sigma(h)\},\mclk,\mclt)-\overline\eta(\mcls,\mclk,\mclt)$.
\end{alphaenumerate}
\end{observation}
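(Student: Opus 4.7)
The plan is to reduce everything to the two earlier normalization principles, namely Observation \ref{obs_pin_condentbasic}\ref{obs_pin_condentbasic_normpoint} for the pointwise entropy and Observation \ref{obs_pin_mi}\ref{obs_pin_mi_norm} for the mutual information, by carefully tracking which selections are redundant and how conditioning on $\bm\chi$ translates to conditioning on $\bm x$ under the independence $(\bm x,\bm r)\dequal\bm x\otimes\bm r$.

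For Part \ref{obs_pin_mix}\ref{obs_pin_mix_point}, I will establish the four expressions in a chain. The first equality $\iota(\sigma,x_{\mclk},\chi_{\mclt})=\iota(\sigma^\circ,x_{\mclk},\chi_{\mclt[][\circ]})$ is a direct application of Observation \ref{obs_pin_mi}\ref{obs_pin_mi_norm} with $\bm a=\bm x$, $\bm b=(\bm x,\bm\chi)$: entries of $\sigma$ that already lie in $\mclk$ are deterministic under the conditioning and contribute equally to both the sum of marginal entropies and the joint entropy, so they cancel, and coordinates of $\mclt$ that lie in $\mclk$ add no new information. For the second equality, on $\mclt[][\circ]=\mclt\setminus\mclk$ I observe that $\bm\chi_i=\bm r_i\bm x_i$ with $\bm r$ independent of $\bm x$, so conditioning on $\bm\chi_{\mclt[][\circ]}=\chi_{\mclt[][\circ]}$ is equivalent to conditioning on the event $\{\bm r_{\mclt[][\circ]}=r_{\mclt[][\circ]}\}\cap\{\bm x_{\mclt[][\circ]\cap\mclr}=x_{\mclt[][\circ]\cap\mclr}\}$. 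The first event concerns only $\bm r$, independent of $\bm x$, so it may be dropped. Since $\mclt[][\circ]\cap\mclr=\mclt\cap\mclr$, this rewrites the conditioning as $\bm x_{\mclk[][*]}=x_{\mclk[][*]}$ with $\mclk[][*]=\mclk\cup(\mclt\cap\mclr)$. The third equality is another application of the same normalization, since $\sigma^*$ is obtained from $\sigma^\circ$ by dropping the extra positions now absorbed into $\mclk[][*]$.

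The entropy-difference identity in Part \ref{obs_pin_mix}\ref{obs_pin_mix_point} is then immediate from the definition of $I(\bm a|\bm b)$ as $\sum_h H(\bm a_h|\bm b)-H(\bm a|\bm b)$ applied to $\bm a=\bm x_\sigma$: the per-coordinate terms are precisely $\eta(\{\sigma(h)\},x_{\mclk},\chi_{\mclt})$ by Observation \ref{obs_pin_condentbasic}\ref{obs_pin_condentbasic_rep}, and the joint entropy $H(\bm x_\sigma|\cdots)$ equals $H(\bm x_{\mcls}|\cdots)=\eta(\mcls,x_{\mclk},\chi_{\mclt})$ since duplicate entries contribute nothing to joint entropy. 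Part \ref{obs_pin_mix}\ref{obs_pin_mix_expe} then follows at once by taking expectations over $(\bm x_{\mclk},\bm\chi_{\mclt})$ in Part \ref{obs_pin_mix}\ref{obs_pin_mix_point}, using the tower property and linearity of expectation.

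The main point requiring care is the second equality in the chain: one must verify explicitly that the revealment pattern $\bm r_{\mclt[][\circ]}$, which appears implicitly in the conditioning on $\bm\chi_{\mclt[][\circ]}$, is independent of $\bm x_{\sigma}$ and the rest of $\bm x_{\mclk[][*]}$, so that dropping it preserves the conditional law of $\bm x_{\sigma^\circ}$. Apart from this, the argument is essentially bookkeeping, and the two normalization observations from Section \ref{pinning_mi} and Section \ref{pinning_conditional_entropy} carry the entire proof.
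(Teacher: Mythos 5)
Your proof is correct and follows essentially the same route as the paper, relying on the same two key reductions: the entropy-difference representation of $\iota$ from Observation \ref{obs_pin_mi}\ref{obs_pin_mi_norm}, and the pointwise normalization argument of Observation \ref{obs_pin_condentbasic}\ref{obs_pin_condentbasic_normpoint} exploiting $(\bm x,\bm r)\dequal\bm x\otimes\bm r$. The only difference is organizational: the paper derives the entropy-difference identity first and then pushes Observation \ref{obs_pin_condentbasic}\ref{obs_pin_condentbasic_normpoint} through each $\eta$-term to obtain the chain, whereas you normalize $\iota$ directly and prove the entropy identity separately afterward; both routes are sound.
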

\begin{proof}
Observation \ref{obs_pin_mi}\ref{obs_pin_mi_norm} implies 
$\iota(\sigma,x_{\mclk},\chi_{\mclt})=\sum_h\eta(\{\sigma(h)\},x_{\mclk},\chi_{\mclt})-\eta(\mcls,x_{\mclk},\chi_{\mclt})$, and further
Observation \ref{obs_pin_condentbasic}\ref{obs_pin_condentbasic_normpoint} yields
\begin{align*}
\iota(\sigma,x_{\mclk},\chi_{\mclt})
&=\sum_{i\in\mcls\setminus\mclk}|\sigma^{\circ-1}(i)|\eta(\{i\},x_{\mclk},\chi_{\mclt[][\circ]})-\eta(\mcls\setminus\mclk,x_{\mclk},\chi_{\mclt[][\circ]})\\
&=\sum_{i\in\mcls\setminus\mclk[][*]}|\sigma^{*-1}(i)|\eta(\{i\},x_{\mclk[][*]},\chi_{\emptyset})-\eta(\mcls\setminus\mclk[][*],x_{\mclk[][*]},\chi_{\emptyset}),
\end{align*}
which establishes the remainder of Part \ref{obs_pin_mix}\ref{obs_pin_mix_point}, and Part \ref{obs_pin_mix}\ref{obs_pin_mix_expe} follows by taking expectations.
\end{proof}
Based on Observation \ref{obs_pin_mix} we assume that $\mcls\cap\mclk=\emptyset$ and $\mclt\cap\mclk=\emptyset$.
Notice that Observation \ref{obs_pin_mix}\ref{obs_pin_mix_point} 
using $\bmclr=\mclt\cap\bm r^{-1}(1)$ yields the minimal form
\begin{align*}
\overline\iota(\sigma,\mclk,\mclt)
=\expe\left[\iota\left(\sigma,\bm x_{\mclk\cup\bmclr},\bm\chi_\emptyset\right)\right]
=\expe\left[\iota\left(\sigmaR^*,\bm x_{\mclk\cup\bmclr},\bm\chi_\emptyset\right)\right]
=\expe\left[\expe\left[I(\bm x_{\sigmaR^*}|\bm x_{\mclk\cup\bmclr})\middle|\bmclr\right]\right],
\end{align*}
where $\sigmaR^*=\sigma_{\bmcld}$ with $\bmcld=\sigma^{-1}(\mcls\setminus\bmclr)$.
This representation reflects the approach in \cite{coja2018}.
\begin{remark}
In the proof of Lemma \ref{lemma_pin_condentder} we have already seen a recursive structure.
Further, Observation \ref{obs_pin_mix}\ref{obs_pin_mix_expe} yields a representation of $\overline\iota$ as a linear combination of $\overline\eta$-terms, while Observation \ref{obs_pin_mi}\ref{obs_pin_mi_partition} applied to Lemma \ref{lemma_pin_condentder} yields a representation of the derivative as a linear combination of $\overline\iota$-terms (and hence $\overline\eta$-terms). This is one way to obtain all higher derivatives of $\overline\eta$ and $\overline\iota$.
\end{remark}
\subsubsection{The Pinning Lemma}\label{pinning_lemma_proof}
In this section we prove Lemma \ref{lemma_pinning}.
Recall the pinning operation $[\mu]^\darr_{\setP,\sigmaP}$ from Section \ref{ps_pinning} and let $\bm\mu_{n,\mu,p}=[\mu]^\darr_{\bm r^{-1}(1),\bm x}$.
\begin{lemma}\label{lemma_pin_iotapinmu}
We have $\overline\iota(\sigma,\emptyset,[n])=\expe[\expe[I(\bm x_{\bm\mu,\sigma})|\bm\mu]]$.
\end{lemma}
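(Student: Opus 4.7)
The plan is to identify the conditioning on the full partial observation $\bm\chi_{[n]}$ with the pinning operation that produces $\bm\mu$. First, I would observe that $\bm\chi_{[n]}$ and the pair $(\bm r,\bm x_{\bm r^{-1}(1)})$ generate the same $\sigma$-algebra: the revealment is recovered via $\bm r_i=\bmone\{\bm\chi_i\neq 0\}$, and on the revealed coordinates $\bm\chi_i=\bm x_i$. Combined with the independence $(\bm x,\bm r)\dequal\bm x\otimes\bm r$ and the definition of the pinning operation in Section~\ref{ps_pinning}, this means that for $\prob[\bm\chi_{[n]}=\chi]$-almost every $\chi$, the regular conditional law of $\bm x$ given $\bm\chi_{[n]}=\chi$ is precisely $[\mu]^\darr_{\mclr,\chi}$ with $\mclr=\chi^{-1}([q])$, which is exactly the realization of $\bm\mu$ associated with $\chi$.

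Next, I would unfold the conditional mutual information. By the expectation form derived at the end of Section~\ref{pinning_mi} (cf.\ Observation~\ref{obs_pin_mix}\ref{obs_pin_mix_expe}), we have $\overline\iota(\sigma,\emptyset,[n])=\expe[\iota(\sigma,\bm x_\emptyset,\bm\chi_{[n]})]$, where the pointwise term $\iota(\sigma,x_\emptyset,\chi)=I(\bm x_\sigma\mid\bm\chi_{[n]}=\chi)$ depends on $\chi$ only through the conditional law of $\bm x$ given $\bm\chi_{[n]}=\chi$. By the identification from the previous step, this conditional law is the pinned measure $\bm\mu(\chi)$, so the pointwise mutual information equals $I(\bm x_{\bm\mu(\chi),\sigma})$, viewed as the measurable functional $\nu\mapsto I(\nu|_{\sigma(1)},\dots,\nu|_{\sigma(s)})$ applied to $\nu=\bm\mu(\chi)$.

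Finally, substituting and taking expectation over $\bm\chi_{[n]}$ (equivalently over the random measure $\bm\mu$) yields $\overline\iota(\sigma,\emptyset,[n])=\expe[I(\bm x_{\bm\mu,\sigma})]$, and since $I(\bm x_{\bm\mu,\sigma})$ is already $\sigma(\bm\mu)$-measurable the inner conditional expectation $\expe[\,\cdot\mid\bm\mu]$ is trivial, giving the asserted equality.

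The main obstacle is not computational but one of bookkeeping: making explicit that the $\sigma$-algebras generated by $\bm\chi_{[n]}$ and by the random measure $\bm\mu$ coincide (up to null sets), and disambiguating the symbol $I(\bm x_{\bm\mu,\sigma})$ as a deterministic functional of the random measure $\bm\mu$ rather than an expectation over a fresh sample from $\bm\mu$. Once this is spelled out, the statement reduces to the tower property applied to the correct conditional law.
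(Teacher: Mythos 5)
Your proposal is correct and follows essentially the same route as the paper: identify the conditional law of $\bm x_\mu$ given the revealed coordinates with the pinned measure $[\mu]^\darr_{\mclr,\chi}$, observe that the pointwise conditional mutual information depends only on this conditional law (this is what Observation~\ref{obs_pin_mix}\ref{obs_pin_mix_point} with $\mclt=[n]$, $\mclk=\emptyset$ formalizes), and take the expectation over the randomness in $\bm\mu$. The only cosmetic difference is that you phrase the identification through $\sigma$-algebra equality of $\bm\chi_{[n]}$ and $(\bm r,\bm x_{\bm r^{-1}(1)})$, whereas the paper reduces directly via the minimal form $\overline\iota(\sigma,\emptyset,[n])=\expe[\iota(\sigma,\bm x_{\bmclr},\bm\chi_\emptyset)]$ and then applies the pointwise identity $\iota(\sigma,x_{\mclk},\chi_\emptyset)=I(\bm x_{[\mu]^\darr_{\mclk,x},\sigma})$.
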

\begin{proof}
Notice that
\begin{align*}
\iota(\sigma,x_{\mclk},\chi_\emptyset)=I(\bm x_{\mu,\sigma}|\bm x_{\mclk}=x_{\mclk})
=I(\bm x_{[\mu]^\darr_{\mclk,x},\sigma}),
\end{align*}
since by definition $[\mu]^\darr_{\mclk,x}$ is the law of $\bm x_\mu|\bm x_{\mu,\mclk}=x_{\mclk}$.
Now, the assertion follows from Observation \ref{obs_pin_mix}\ref{obs_pin_mix_point} since
$\overline\iota(\sigma,\emptyset,[n])
=\expe[\iota(\sigma,\bm x_{\bmclr},\bm\chi_\emptyset)]=\expe[\expe[I(\bm x_{\bm\mu,\sigma})|\bm\mu]]$.
\end{proof}
Fix $\ell\in\ints_{>0}$ and $p\in[0,1]$.
Let $p_/=(p)_{i\in[n]}$, $\bm v_{n,\ell}\dequal\unif([n]^{\ell})$, $\eta^*_{n,\mu,\ell}(p)=\expe[\overline\eta_{p_/}(\bm v([\ell]),\emptyset,[n])]$,
$\iota^*_{n,\mu,\ell}(p)=\expe[\overline\iota_{p_/}(\bm v,\emptyset,[n])]$ and
\begin{align*}
\delta^*_{n,\mu,\ell}(p)
=\iota^*_{n,\mu,\ell+1}(p)-\iota^*_{n,\mu,\ell}(p)
\end{align*}
\begin{lemma}\label{lemma_pin_pinder}
Let $\ell\in\ints_{>0}$ and $\bm v=\bm v_{n,\ell+1}\in[n]^{\ell+1}$.
\begin{alphaenumerate}
\item\label{lemma_pin_pinder_delta}
We have $\delta^*(p)=I(\bm x_{\bm v_{[\ell]}},\bm x_{\mu,\bm v_{\ell+1}}|\bm\chi_{p_/},\bm v)$.
\item\label{lemma_pin_pinder_der}
We have $\frac{\partial}{\partial p}\eta^*(p)=-\frac{n}{1-p}\delta^*(p)$.
\end{alphaenumerate}
\end{lemma}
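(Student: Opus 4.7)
The plan for Part \ref{lemma_pin_pinder_delta} is a direct application of the partition property, Observation \ref{obs_pin_mi}\ref{obs_pin_mi_partition}, to the binary split $[\ell+1]=[\ell]\dotcup\{\ell+1\}$. For any fixed $v\in[n]^{\ell+1}$ this yields
\begin{align*}
I(\bm x_v|\bm\chi_{p_/})
=I(\bm x_{v_{[\ell]}}|\bm\chi_{p_/})
+I(\bm x_{v_{\ell+1}}|\bm\chi_{p_/})
+I(\bm x_{v_{[\ell]}},\bm x_{v_{\ell+1}}|\bm\chi_{p_/}).
\end{align*}
The middle term vanishes because the definition from Section \ref{pinning_mi} collapses to $H-H=0$ whenever the first argument has a single component. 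Averaging over $\bm v\dequal\unif([n]^{\ell+1})$ and using $\bm v_{[\ell]}\dequal\bm v_{n,\ell}$ identifies the first term on the right with $\iota^*_{n,\mu,\ell}(p)$, so the difference $\delta^*(p)$ equals $\expe_{\bm v}[I(\bm x_{\bm v_{[\ell]}},\bm x_{\bm v_{\ell+1}}|\bm\chi_{p_/})]$, which coincides with $I(\bm x_{\bm v_{[\ell]}},\bm x_{\mu,\bm v_{\ell+1}}|\bm\chi_{p_/},\bm v)$ by independence of $\bm v$ from $(\bm x,\bm\chi)$.

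For Part \ref{lemma_pin_pinder_der} I would apply Lemma \ref{lemma_pin_condentder} coordinatewise. Since $\overline\eta_{p_/}$ depends on the scalar $p$ only through the common value $p_i=p$, the chain rule gives
\begin{align*}
\frac{d}{dp}\overline\eta_{p_/}(\mcls,\emptyset,[n])
=\sum_{i\in[n]}\left.\frac{\partial}{\partial p_i}\overline\eta_p(\mcls,\emptyset,[n])\right|_{p=p_/}
=-\sum_{i=1}^nI\left(\bm x_\mcls,\bm x_i\middle|\bm\chi_{p_/,[n]\setminus\{i\}}\right).
\end{align*}
The key identity to then establish is
\begin{align*}
I\left(\bm x_\mcls,\bm x_i\middle|\bm\chi_{p_/,[n]\setminus\{i\}}\right)
=\frac{1}{1-p}\,I\left(\bm x_\mcls,\bm x_i\middle|\bm\chi_{p_/,[n]}\right),
\end{align*}
which I would prove by conditioning on $\bm\chi_i$: with probability $p$ one has $\bm\chi_i\in[q]$, which pins $\bm x_i$ and kills the mutual information, while with probability $1-p$ one has $\bm\chi_i=0$, and since $\bm r_i$ is independent of everything else and $0\notin[q]$, the event $\{\bm\chi_i=0\}=\{\bm r_i=0\}$ leaves the joint law of $(\bm x,\bm\chi_{[n]\setminus\{i\}})$ unchanged. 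Taking expectation over $\bm v$ and rewriting $\sum_{i=1}^n(\cdot)=n\,\expe_{\bm w}(\cdot)$ with $\bm w\dequal\unif([n])$ independent of everything then yields $\frac{d}{dp}\eta^*(p)=-\frac{n}{1-p}\expe_{\bm v,\bm w}[I(\bm x_{\bm v([\ell])},\bm x_{\bm w}|\bm\chi_{p_/})]$. A final image-versus-sequence passage via Observation \ref{obs_pin_condentbasic}\ref{obs_pin_condentbasic_rep}, combined with the identification $(\bm v_{n,\ell},\bm w)\dequal\bm v_{n,\ell+1}$, matches this against $-\frac{n}{1-p}\delta^*(p)$ by Part \ref{lemma_pin_pinder_delta}.

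The main obstacle is the $(1-p)$-rescaling step: I must be meticulous that $\{\bm\chi_i=0\}$ genuinely decouples (which hinges on $0\notin[q]$ and on $\bm r_i$ being independent of $(\bm x,\bm r_{[n]\setminus\{i\}})$), and that the image-versus-sequence passage inside the mutual information is harmless. Both are routine but easy to botch.
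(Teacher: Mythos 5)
Your proof is correct and follows essentially the same route as the paper: Part (a) via the partition identity of Observation \ref{obs_pin_mi}\ref{obs_pin_mi_partition} with the single-component mutual information vanishing, and Part (b) via Lemma \ref{lemma_pin_condentder} followed by the $(1-p)$-rescaling from the independence of $\bm r_i$. The only superficial difference is that you state the rescaling by conditioning on $\bm\chi_i$ in the joint law, whereas the paper extracts the factor $(1-r_i)$ pointwise from $\iota_v(\chi)$ before integrating — the underlying mechanism is identical.
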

\begin{proof}
Notice that $\bm v_{[\ell]}\dequal\bm v_\circ$ with $\bm v_\circ=\bm v_{n,\ell}\in[n]^{\ell}$, let $\bm\chi=\bm\chi_{p_/}$ and $\bm a=(\bm\chi_{p_/},\bm v)$.
With Observation \ref{obs_pin_mi}\ref{obs_pin_mi_partition} we have
\begin{align*}
\delta^*(p)
&=I(\bm x_{\bm v}|\bm a)-I(\bm x_{\bm v_{[\ell]}}|\bm a)
=I(\bm x_{\bm v}|\bm a)-I(\bm x_{\bm v_{[\ell]}}|\bm a)
-I(\bm x_{\bm v_{\ell+1}}|\bm a)
=I(\bm x_{\bm v_{[\ell]}},\bm x_{\mu,\bm v_{\ell+1}}|\bm a),
\end{align*}
since $I(\bm b|\bm a)=\DKL(\bm b\|\bm b|\bm a)=0$ for $\bm b\in\mclb[][1]$.
For Part \ref{lemma_pin_pinder}\ref{lemma_pin_pinder_der} we use Lemma \ref{lemma_pin_condentder} and the chain rule to obtain
\begin{align*}
\frac{\partial}{\partial p}\eta^*(p)
&=\sum_{i\in[n]}\expe\left[\frac{\partial}{\partial p_i}\overline\eta_{p_/}(\bm v([\ell]),\emptyset,[n])\right]
=-\sum_{i\in[n]}I\left(\bm x_{\bm v([\ell])},\bm x_i|\bm\chi_{[n]\setminus\{i\}},\bm v\right)\\
&=-nI\left(\bm x_{\bm v([\ell])},\bm x_{\bm v_{\ell+1}}|\bm\chi_{[n]\setminus\{\bm v_{\ell+1}\}},\bm v\right)
=-nI\left(\bm x_{\bm v_{[\ell]}},\bm x_{\bm v_{\ell+1}}|\bm\chi_{[n]\setminus\{\bm v_{\ell+1}\}},\bm v\right).
\end{align*}
For given $v$ and $\chi$ with $i=v_{\ell+1}$ we have
\begin{align*}
\iota_v(\chi)
&=I(\bm x_{v_{[\ell]}},\bm x_{i}|\bm\chi=\chi)
=(1-r_i)I(\bm x_{v_{[\ell]}},\bm x_{i}|\bm x_{\mclr\setminus\{i\}}=x_{\mclr\setminus\{i\}})\\
&=(1-r_i)I(\bm x_{v_{[\ell]}},\bm x_{i}|\bm\chi_{[n]\setminus\{i\}}=\chi_{[n]\setminus\{i\}}).
\end{align*}
Due to independence this gives $I(\bm x_{v_{[\ell]}},\bm x_i|\bm\chi)=\expe[\iota_v(\bm\chi)]=(1-p)I(\bm x_{v_{[\ell]}},\bm x_i|\bm\chi_{[n]\setminus\{i\}})$. This completes the proof by taking the expectation over $\bm v$ and using the first part.
\end{proof}
An immediate consequence of Lemma \ref{lemma_pin_pinder} is a uniform bound for the integral over $\iota^*$.
\begin{corollary}\label{cor_pin_pinint}
For $\ell\in\ints_{>0}$ we have $\int_0^1\iota^*(p)\mathrm{d}p\le\binom{\ell}{2}\ln(q)/n$.
\end{corollary}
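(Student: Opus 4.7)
The plan is to bound $\iota^*_\ell(p)$ by telescoping in $\ell$ through $\delta^*$, then use integration by parts together with Lemma \ref{lemma_pin_pinder}(b) to convert the integral $\int_0^1 \delta^*_j(p)\,\mathrm{d}p$ into a boundary value of $\eta^*_j$, which is a conditional entropy and hence controlled by $j\ln(q)$.

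First I would observe that $\iota^*_1(p)=0$: in the notation of Section \ref{pinning_mi} the mutual information $I(\bm a|\bm b)$ of a single-component tuple $\bm a$ is zero, so $\overline\iota_{p_/}(\bm v_{n,1},\emptyset,[n])\equiv 0$. Telescoping then yields
\begin{align*}
\iota^*_\ell(p)=\sum_{j=1}^{\ell-1}\bigl(\iota^*_{j+1}(p)-\iota^*_j(p)\bigr)=\sum_{j=1}^{\ell-1}\delta^*_j(p),
\end{align*}
so it suffices to bound $\int_0^1\delta^*_j(p)\,\mathrm{d}p$ for each $j\in[\ell-1]$.

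Next, Lemma \ref{lemma_pin_pinder}\ref{lemma_pin_pinder_der} gives $\delta^*_j(p)=-\tfrac{1-p}{n}\tfrac{\partial}{\partial p}\eta^*_j(p)$. Integration by parts with $u=(1-p)/n$ and $v=\eta^*_j(p)$, using $u(1)=0$, yields
\begin{align*}
\int_0^1\delta^*_j(p)\,\mathrm{d}p
=\frac{\eta^*_j(0)}{n}-\int_0^1\frac{\eta^*_j(p)}{n}\,\mathrm{d}p
\le\frac{\eta^*_j(0)}{n},
\end{align*}
since $\eta^*_j(p)\ge 0$ (it is a conditional entropy). At $p=0$ no coordinate is revealed, so $\eta^*_j(0)=\expe[H(\bm x_{\mu,\bm v([j])})]\le j\ln(q)$ because $\bm v([j])$ has at most $j$ elements and each $\bm x_i\in[q]$. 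Summing over $j$ gives
\begin{align*}
\int_0^1\iota^*_\ell(p)\,\mathrm{d}p\le\sum_{j=1}^{\ell-1}\frac{j\ln(q)}{n}=\binom{\ell}{2}\frac{\ln(q)}{n},
\end{align*}
which is the claim.

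There is no substantial obstacle beyond correctly identifying the telescoping in $\ell$ and applying integration by parts; the vanishing boundary term at $p=1$ (where everything is revealed and $1-p=0$) and the nonnegativity of $\eta^*_j$ are what make the bound clean.
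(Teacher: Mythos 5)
Your proof is correct and follows essentially the same route as the paper: telescope $\iota^*_\ell=\sum_{j<\ell}\delta^*_j$, invoke Lemma \ref{lemma_pin_pinder}\ref{lemma_pin_pinder_der}, integrate, and bound $\eta^*_j(0)\le j\ln(q)$. The only difference is cosmetic: the paper uses the pointwise inequality $\delta^*(p)\le\delta^*(p)/(1-p)$ (valid since $\delta^*\ge 0$) to reduce directly to $\int_0^1-\tfrac{1}{n}(\eta^*_j)'(p)\,\mathrm{d}p=\eta^*_j(0)/n$, using $\eta^*_j(1)=0$, whereas you keep the exact identity and integrate by parts, dropping $\int_0^1\eta^*_j(p)/n\,\mathrm{d}p\ge 0$ instead; one can check the two discarded nonnegative quantities coincide (again by parts, since $\eta^*_j(1)=0$). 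A small note: your boundary term at $p=1$ vanishes already because the prefactor $1-p$ is zero there, so you do not actually need $\eta^*_j(1)=0$ — a tiny robustness your formulation has over the paper's.
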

\begin{proof}
With $\iota^*\equiv 0$ for $\ell=1$,
a telescoping sum and Lemma \ref{lemma_pin_pinder}\ref{lemma_pin_pinder_der} we have
\begin{align*}
\int_0^1\iota^*(p)\mathrm{d}p
&=\int_0^1\sum_{\ell'=1}^{\ell-1}\delta^*(p)\mathrm{d}p
\le\int_0^1\sum_{\ell'=1}^{\ell-1}\frac{\delta^*(p)}{1-p}\mathrm{d}p
=\frac{1}{n}\sum_{\ell'=1}^{\ell-1}(\eta^*_{\ell'}(0)-\eta^*_{\ell'}(1))\\
&=\frac{1}{n}\sum_{\ell'=1}^{\ell-1}H(\bm x_{\bm v_{\ell'}}|\bm v_{\ell'})
\le\frac{1}{n}\sum_{\ell'=1}^{\ell-1}\ln(q^{\ell'})
=\frac{\ln(q)}{n}\binom{\ell}{2}.
\end{align*}
\end{proof}
With $\bm\mu^*=[\mu]^\darr_{\setPR,\sigmaR}$ as defined in Lemma \ref{lemma_pinning} we have $\bm\mu^*\dequal\bm\mu_{\bm p_/}$ with $\bm p_/=(\bm p)_{i\in[n]}$ and $\bm p\dequal\unif([0,P])$, $P=\ThetaP/n$, so with Lemma \ref{lemma_pin_iotapinmu}, Corollary \ref{cor_pin_pinint} and $(\bm\mu^*,\bm v_\ell)\dequal\bm\mu^*\otimes\bm v_\ell$ we get
\begin{align*}
\expe[\expe[I(\bm x_{\bm\mu^*,\bm v_{\ell}})|\bm\mu^*,\bm v_\ell]]
&=\int_0^{P}\frac{n}{\ThetaP}\expe[\expe[I(\bm x_{\bm\mu,\bm v_\ell})|\bm\mu_{p_/},\bm v_\ell]]\mathrm{d}p
=\frac{n}{\ThetaP}\int_0^P\expe[\overline\iota_{p_/}(\bm v,\emptyset,[n])]\mathrm{d}p\\
&=\frac{n}{\ThetaP}\int_0^P\iota^*(p)\mathrm{d}p
\le\frac{n}{\ThetaP}\int_0^1\iota^*(p)\mathrm{d}p
\le\frac{\binom{\ell}{2}\ln(q)}{\ThetaP}.
\end{align*}
\subsubsection{Asymptotic Independence of Gibbs Spins}\label{pinning_gibbs}
Before we turn to the main result of this section, we briefly discuss a few basic results.
\begin{observation}\label{obs_pin_basic}
Let $\ThetaP\le n$ and $\thetaP\le\ThetaP$.
We have $|\indPRT^{-1}(1)|\dequal\Bin(n,\thetaP/n)$,
$\expe[\thetaPR]=\ThetaP/2$, $\expe[\indPRTa]=\ThetaP/(2n)$ and $\expe[|\setPR|]=\ThetaP/2$.
\end{observation}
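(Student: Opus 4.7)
The plan is to verify the four assertions directly from the definitions collected in Section \ref{random_decorated_graphs}, since each is essentially bookkeeping. First I would fix an arbitrary value $\thetaP \in [0,\ThetaP]$ and observe that, by construction, the vector $\indPRT[,\thetaP,n]$ consists of $n$ independent Bernoulli random variables, each with success probability $\sucPT[,\thetaP,n]=\thetaP/n$, which lies in $[0,1]$ thanks to the hypotheses $\thetaP \le \ThetaP \le n$. Hence $|\indPRT[,\thetaP,n]^{-1}(1)|$ is a sum of $n$ independent $\{0,1\}$-valued random variables and is therefore distributed as $\Bin(n,\thetaP/n)$, which proves the first claim.

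The remaining three identities are then routine consequences of the tower property and linearity of expectation. For $\expe[\thetaPR]$ I would simply use $\thetaPR\dequal\unif([0,\ThetaP])$, whose mean is $\ThetaP/2$. For $\expe[\indPRTa]$ I would condition on $\thetaPR$: by the tower property,
\begin{align*}
\expe[\indPRTa]
= \expe\bigl[\expe[\indPRTa \mid \thetaPR]\bigr]
= \expe[\sucPT[,\thetaPR,n]]
= \expe[\thetaPR/n]
= \ThetaP/(2n).
\end{align*}
Finally, since $\setPR = \indPRT[,\thetaPR]^{-1}(1)$, we have $|\setPR| = \sum_{i\in[n]}\indPRT_i$ with each coordinate distributed as $\indPRTa$, so linearity yields $\expe[|\setPR|] = n\cdot\ThetaP/(2n) = \ThetaP/2$.

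There is no substantive obstacle here; the only subtlety worth flagging is ensuring the success probability $\thetaP/n$ lies in $[0,1]$ (which is precisely the reason the hypotheses $\thetaP \le \ThetaP \le n$ appear in the statement), and that the conditional expectation step is legitimate because the Bernoulli coordinates are defined for every realization of $\thetaPR$. The observation will serve later as a convenient reference for probabilistic computations involving the pinning set $\setPR$, in particular when relating quantities phrased in terms of $\thetaPR$ and those phrased in terms of the revealed indices.
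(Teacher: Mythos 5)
Your proof is correct. The paper leaves this observation as an exercise to the reader, and your argument — identifying the i.i.d.\ Bernoulli structure of $\indPRT$ for fixed $\thetaP$ to get the binomial law, then applying the tower property and linearity of expectation for the remaining three identities — is exactly the routine calculation the authors had in mind, with the appropriate attention to the hypothesis $\thetaP\le\ThetaP\le n$ guaranteeing a valid success probability.
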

\begin{proof}
The proof is left as an exercise to the reader.
\end{proof}
The following result is one of the main reasons to work with the Nishimori ground truth.
Recall the notions from Section \ref{ps_pinning}, where we consider $\iota_\circ$, $\iota$ and $\vR$ for $\ell\in\ints_{\ge 0}$.
However, notice that $\iota_\circ\equiv 0$ for $\ell\le 1$.
\begin{proposition}\label{proposition_pinG}
Let $\GTSM(\sigma)=\GTSM[\setPR](\sigma)$, $\hat{\bm\mu}=\lawG[,\GTSM(\sigmaNIS)]$, $\bm\mu^*=\lawG[,\GTSM(\sigmaIID)]$ and $\ell\in\ints_{\ge 0}$.
\begin{alphaenumerate}
\item\label{proposition_pinG_NISDKL}
We have $\expe[\iota(\hat{\bm\mu})],\expe[\iota(\hat{\bm\mu}_{\mR,\mIR})]\le\binom{\ell}{2}\ln(q)/\ThetaP$.
\item\label{proposition_pinG_IIDDKL}
There exists $C_\mfkg\in(0,1)\times\reals_{>0}$ such that for $c\in(0,C_1]$ and $m\le\mbu$ we have
\begin{align*}
\expe[\iota(\bm\mu^*)],
\expe[\iota(\bm\mu^*_{\mR,\mIR})]\le 
C_2(\ell-1)\left(\frac{\ell}{\ThetaP}\right)^{c}.
\end{align*}
\end{alphaenumerate}
\end{proposition}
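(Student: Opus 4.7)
\emph{Part (a).} Let $G^0$ denote the decorated graph $\GTSM(\sigmaNIS)$ with its pin factors removed (equivalently, with $\setP=\emptyset$), and set $\mu_0 = \lawG[,G^0]$. Because the weight function of $\GTSM(\sigmaNIS)$ contains the pin factors multiplicatively as $\bmone\{\sigma_{\setPR}=\sigmaNIS_{\setPR}\}$ (with pin values equal to $\sigmaNIS$ at $\setPR$, by Observation \ref{obs_TSM_iid}), we have the identification $\hat{\bm\mu} = [\mu_0]^{\darr}_{\setPR,\sigmaNIS}$. The Nishimori identity Observation \ref{obs_nishicond}\ref{obs_nishicond_dec} applied to $G^0$ asserts $\sigmaNIS \mid G^0 \dequal \mu_0$. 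Combining this with the independence of $\setPR$ from $(\sigmaNIS,G^0)$ and its construction matching the hypothesis of Lemma \ref{lemma_pinning}, conditioning on $G^0$ reduces the claim to a direct application of that lemma: $\expe[\iota(\hat{\bm\mu}) \mid G^0] \le \binom{\ell}{2}\ln(q)/\ThetaP$. Taking the outer expectation concludes, and the variant with random $(\mR,\mIR)$ follows by additionally conditioning on them.

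\emph{Part (b): setup.} The plan is to transfer the bound of part (a) from $\sigmaNIS$ to $\sigmaIID$ via the Radon-Nikodym derivative $\hat r$ (Observation \ref{obs_nishicond}\ref{obs_nishicond_inv}). Setting $h(\sigma) = \expe[\iota(\lawG[,\GTSM(\sigma)])]$ we have $\expe[\iota(\bm\mu^*)] = \expe[h(\sigmaIID)] = \expe[h(\sigmaNIS)/\hat r(\sigmaNIS)]$, and Hölder's inequality with conjugate exponents $p = 1/c$ and $p' = 1/(1-c)$, $c\in(0,1)$, yields
\[
\expe[\iota(\bm\mu^*)] \le \expe[h(\sigmaNIS)^p]^{1/p} \cdot \expe[\hat r(\sigmaNIS)^{-p'}]^{1/p'}.
\]
The pointwise bound $\iota_\ell(\mu) \le \ell\ln(q)$ (a multivariate mutual information over $\ell$ coordinates in $[q]$) gives $h \le \ell\ln(q)$, hence $h^p \le h\cdot(\ell\ln q)^{p-1}$. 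Part (a) then bounds the first factor by $(\ell\ln q)^{1-c}(\binom{\ell}{2}\ln(q)/\ThetaP)^{c} \le C\,\ell\cdot(\ell/\ThetaP)^c$ for a constant $C = C(\mfkg)$.

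\emph{The second factor and the main obstacle.} The delicate step is to bound $\expe[\hat r(\sigmaNIS)^{-p'}]$ uniformly in $n$; the lower bound on $\hat r$ is \emph{not} uniform but rather degrades exponentially in $\|\gammaN-\gamma^*\|_\mrmtv^2 n$, and this is what forces the fractional power $(\ell/\ThetaP)^c$ rather than the clean rate of part (a). Changing measure gives $\expe[\hat r(\sigmaNIS)^{-p'}] = \expe[\hat r(\sigmaIID)^{1-p'}]$, and the lower bound $\hat r(\sigma)^{-1} \le \exp(c_0\,\|\gammaN[,\sigma]-\gamma^*\|_\mrmtv^2\, n)$ from Corollary \ref{cor_mutcont}\ref{cor_mutcont_rnbl} reduces the task to bounding the exponential moment $\expe[\exp(c_0(p'-1)\,\|\gammaIID-\gamma^*\|_\mrmtv^2\, n)]$. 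By Observation \ref{obs_gtiid}\ref{obs_gtiid_prob} the variable $\|\gammaIID-\gamma^*\|_\mrmtv^2\, n$ is sub-exponential with some rate $c' = c'(\mfkg) > 0$, so this moment is finite precisely when $c_0(p'-1) < c'$. Substituting $p' - 1 = c/(1-c)$, this condition reads $c < c'/(c_0+c')$, so I set $C_1 = c'/(c_0+c')\in(0,1)$; for $c\in(0,C_1]$ the second Hölder factor is $O(1)$, and combining with the preceding paragraph, together with $\ell \le 2(\ell-1)$ for $\ell\ge 2$ (while $\iota_\ell\equiv 0$ for $\ell\in\{0,1\}$ makes those cases vacuous), yields the claimed bound $C_2(\ell-1)(\ell/\ThetaP)^c$. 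The version for $\bm\mu^*_{\mR,\mIR}$ is handled by conditioning on $(\mR,\mIR)$ and splitting on $\{\mR \le \mbu\}$: the argument applies verbatim on this event, while on its complement Corollary \ref{cor_dega} gives super-polynomially small probability and the uniform bound $\iota \le \ell\ln q$ renders the contribution negligible.
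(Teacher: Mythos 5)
Your part (a) is essentially the paper's argument: both identify $\hat{\bm\mu}$ as the pinned Gibbs measure $[\mu_0]^{\darr}_{\setPR,\sigmaNIS}$ of the unpinned graph, and both invoke the Nishimori identity \ref{obs_nishicond}\ref{obs_nishicond_dec} to see that, conditionally on the unpinned graph, the pinning values $\sigmaNIS$ have the law $\mu_0$ — which is exactly the joint-law hypothesis needed to apply Lemma \ref{lemma_pinning} after conditioning and then averaging.

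For part (b) you take a genuinely different route, and it has a real (if easily repaired) gap precisely where you set $C_1$. The paper uses a hard truncation on $\{\|\gammaNIS-\gamma^*\|_\mrmtv<r/\sqrt{n}\}$, where $1/\hat r\le e^{c_0 r^2}$, pays $(\ell-1)\ln q$ on the complement, and then tunes $r$ so that both terms equal $(\ell/\ThetaP)^{C_1}$. You instead change measure via H\"older with exponents $(1/c,1/(1-c))$; your treatment of the first H\"older factor is correct. The problem is the second: with your own choice $C_1=c'/(c_0+c')$ the exponent $c_0(p'-1)=c_0\cdot c/(1-c)$ reaches the sub-exponential rate $c'$ of $X=\|\gammaIID-\gamma^*\|_\mrmtv^2 n$ exactly at $c=C_1$. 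Your own criterion ("finite precisely when $c_0(p'-1)<c'$") excludes the endpoint, and the tail bound $\prob[X\ge t]\le c_2 e^{-c' t}$ together with $X\le n$ only gives $\expe[e^{c'X}]=O(n)$, so the second H\"older factor injects an $n^{1-c}$ at $c=C_1$ — the bound you claim does not follow there. The paper's tuning of $r$ is what lets it hit the exponent $C_1$ exactly; your argument needs $C_1$ to be any strict fraction of $c'/(c_0+c')$, after which $\sup_{c\in(0,C_1]}\expe[\hat r(\sigmaIID)^{-c/(1-c)}]<\infty$ and the bound holds with $C_2$ uniform in $c$. Since the proposition only asserts the existence of some $C_1\in(0,1)$, this is a one-line repair, but as written your claim "for $c\in(0,C_1]$ the second H\"older factor is $O(1)$" is not established at the endpoint. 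Your disposal of the random factor count via Corollary \ref{cor_dega} and of $\ell\in\{0,1\}$ is correct.
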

\begin{proof}
Notice that for any $G$ we have $\lawG[,[G]^\darr_{\setP,\sigma}]=[\lawG[,G]]^\darr_{\setP,\sigma}$ and that $\setPR$ as defined in Section \ref{random_decorated_graphs} coincides with $\setPR$ from Lemma \ref{lemma_pinning}, so with $(\setPR,\sigmaRG[,G])\dequal\setPR\otimes\sigmaRG[,G]$, $\bm G(G)=[G]^\darr_{\setPR,\sigmaRG[,G]}$ we have $\lawG[,\bm G(G)]=[\lawG[,G]]^\darr_{\setPR,\sigmaRG[,G]}$ and hence Lemma \ref{lemma_pinning} yields
$\expe[\iota(\lawG[,\bm G(G)])]\le\binom{\ell}{2}\ln(q)/\ThetaP$.
Since this holds for any $G$, the expectation for the unpinned graph $\bm G^\circ(\sigma)=[\wTSM(\sigma)]^{\Gamma\lrarr}_{\mI,\psiITSM}$ is also bounded by
$\expe[\iota(\lawG[,\bm G(\bm G^\circ(\sigma))])]\le\binom{\ell}{2}\ln(q)/\ThetaP$.
Notice that by Observation \ref{obs_TSM_iid} the graphs $\GTSM(\sigma)$ and $\bm G(\bm G^\circ(\sigma))$ differ exactly in the choice of the pinning assignment, i.e.~$\sigma$ for the former and $\sigmaRG[,\bm G^\circ(\sigma)]$ for the latter.
Since this bound holds for any $\sigma$, it holds for $\sigmaNIS$.
But with Observation \ref{obs_nishicond}\ref{obs_nishicond_dec} (for $\setP=\emptyset$) we have
$(\sigmaRG[,\bm G^\circ(\sigmaNIS)],\bm G^\circ(\sigmaNIS))\dequal(\sigmaNIS,\bm G^\circ(\sigmaNIS))$ and using Observation \ref{obs_TSM_iid} further
\begin{align*}
\bm G(\bm G^\circ(\sigmaNIS))
=[\bm G^\circ(\sigmaNIS)]^\darr_{\setPR,\sigmaRG[,\bm G^\circ(\sigmaNIS)]}
\dequal[\bm G^\circ(\sigmaNIS)]^\darr_{\setPR,\sigmaNIS}
\dequal\GTSM(\sigmaNIS).
\end{align*}
This shows that $\expe[\iota(\hat{\bm\mu})]\le\binom{\ell}{2}\ln(q)/\ThetaP$,
and $\expe[\iota(\hat{\bm\mu}_{\mR,\mIR})]\le\binom{\ell}{2}\ln(q)/\ThetaP$ follows by taking expectations.

Now, let $r\in\reals_{>0}$, $c_\mrmm\in\reals_{>0}^2$ from Corollary \ref{cor_dega}, $c^*\in\reals_{>0}^2$ from Observation \ref{obs_gtiid}\ref{obs_gtiid_prob} and $\hat c\in\reals_{>0}$ from Corollary \ref{cor_mutcont}\ref{cor_mutcont_rnbl}.
Then for $\ell>0$ we have
\begin{align*}
\iota_\circ(\mu,v)=\sum_hH(\mu|_{v(h)})-H(\mu|v)=\sum_{h>1}H(\mu|_{v(h)})-H(\mu|_v|\mu|_{v(1)})\le(\ell-1)\ln(q),
\end{align*}
so we have $\iota(\mu)\le(\ell-1)\ln(q)$.
Hence, for $E_1=\expe[\iota(\bm\mu^*)]$, $E_2=\expe[\iota(\bm\mu^*_{\mR,\mIR})]$ and $\ell\ge\ThetaP$ we have $E_1,E_2\le c_2(\ell-1)(\ell/\ThetaP)^{c_1}$ for $c\in\reals_{>0}\times\reals_{\ge\ln(q)}$.
Otherwise, we have
\begin{align*}
E_1&\le
e^{\hat cr^2}\expe\left[\bmone\{\|\gammaNIS-\gamma^*\|_\mrmtv<r/\sqrt{n}\}\iota(\hat{\bm\mu})\right]+\delta,\\
E_2&\le 
e^{\hat cr^2}\expe\left[\bmone\{\|\gammaNIS-\gamma^*\|_\mrmtv<r/\sqrt{n},\mR\le\mbu\}\iota(\hat{\bm\mu}_{\mR,\mIR})\right]+\delta,
\end{align*}
with $\delta=(\ell-1)\ln(q)(c^*_2e^{-c^*_1r^2}+c_{\mrmm,2}e^{-\tilde cn})$ and $\tilde c=c_{\mrmm,1}\degabu^2/(1+\degabu)$ obtained by choosing $r=\degabu$ in Corollary \ref{cor_dega} to enforce
$\degaR\le\tI\degae+\degabu\le 2\degabu$.
With Part \ref{proposition_pinG}\ref{proposition_pinG_NISDKL} and $i\in[2]$ we have
\begin{align*}
E_i&\le 
e^{\hat cr^2}\frac{\ln(q)}{\ThetaP}\binom{\ell}{2}+\delta
=(\ell-1)\ln(q)\left(\frac{\ell e^{\hat cr^2}}{2\ThetaP}+c^*_2e^{-c^*_1r^2}+c_{\mrmm,2}e^{-\tilde cn}\right).
\end{align*}
In order to compensate the last contribution notice that $\ell/\ThetaP\ge 1/n$, $e^{\hat cr^2}\ge 1$ and hence
\begin{align*}
E_i&\le 
(\ell-1)\ln(q)\left(c'\frac{\ell e^{\hat cr^2}}{\ThetaP}+c^*_2e^{-c^*_1r^2}\right),\,
c'=\frac{1}{2}+c_{\mrmm,2}\max_{n>0}ne^{-\tilde cn}.
\end{align*}
So, with $C_2=2\ln(q)\max(c',c^*_2)$,
$r=\sqrt{\frac{1}{\hat c+c^*_1}\ln(\frac{\ThetaP}{\ell})}>0$, $C_1=\frac{c_1^*}{\hat c+c_1^*}$ and for $\ell>0$, $c\in(0,C_1]$ we have $E_i\le c_2(\ell-1)(\ell/\ThetaP)^{c}$, and
Part \ref{proposition_pinG}\ref{proposition_pinG_IIDDKL} holds since the assertion is trivial for $\ell=0$.
\end{proof}
\begin{remark}\label{remark_epssym}
The expectation over the relative entropy in Section \ref{ps_pinning} can be defined for general $f$-divergences, in particular for the total variation.
Analogously to $\iota_\circ(\mu,v)$ let
$\nu_\circ(\mu,v)=\|\mu|_v-\bigotimes_h\mu|_{v(h)}\|_\mrmtv$ and further
$\nu_\ell(\mu)=\expe[\nu_\circ(\mu,\vR)]$.
For $\eps\in\reals_{\ge 0}$ let
$\setES[,n,\eps,\ell]=\{\mu\in\mclp([q]^n):\nu(\mu)\le\eps\}$ be the $(\eps,\ell)$-symmetric measures.
Pinsker's inequality \ref{obs_tv}\ref{obs_tv_pinsker} yields $\nu_\circ(\mu,v)\le\sqrt{\iota_\circ(\mu,v)/2}$ and hence $\nu(\mu)\le\sqrt{\iota(\mu)/2}$ using Jensen's inequality.
\end{remark}
\subsubsection{Pinning Impact on the Quenched Free Entropy}\label{pinning_qfed}
We bound the distance of the pinned and the unpinned quenched free entropy.
\begin{proposition}\label{proposition_pin_qfed}
There exists $c_\mfkg\in\reals_{>0}$ such that
\begin{align*}
0\le\expe[\phiG(\GTSM[\mR,\mIR,\emptyset](\sigmaIID))]-\expe[\phiG(\GTSM[\mR,\mIR,\setPR](\sigmaIID))]\le\frac{c\ThetaP}{n}
\end{align*}
and the same result holds for $\sigmaIID$ replaced by $\sigmaNIS_{\mR}$.
\end{proposition}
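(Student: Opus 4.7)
The plan is to apply the Lipschitz bound of Observation~\ref{obs_phi_lipschitz} to the natural coupling in which $G^\circ=\GTSM[\mR,\mIR,\emptyset](\sigmaIID)$ and $G^\darr=\GTSM[\mR,\mIR,\setPR](\sigmaIID)$ share the same $\mR$, $\mIR$, $\setPR$, $\sigmaIID$ and the same underlying wires, factor weights and interpolator weights, so that they differ only by the unary pins on $\setPR$. The lower bound is almost immediate: the pinned weight satisfies $\psiG[,G^\darr](\sigma)=\psiG[,G^\circ](\sigma)\bmone\{\sigma_{\setPR}=\sigmaIID_{\setPR}\}$, hence $\ZG(G^\darr)\le\ZG(G^\circ)$ and $\phi_{\gamma^*}(G^\darr)\le\phi_{\gamma^*}(G^\circ)$ almost surely, which passes to the expectations.

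For the upper bound I would combine the almost-sure inequality $\phi_{\gamma^*}(G^\circ)\ge\phi_{\gamma^*}(G^\darr)$ with Observation~\ref{obs_phi_lipschitz} to obtain $0\le\phi_{\gamma^*}(G^\circ)-\phi_{\gamma^*}(G^\darr)\le c\,\distG(G^\circ,G^\darr)/n$ a.s., and then control $\expe[\distG(G^\circ,G^\darr)]$ by $\mclo(\ThetaP)$. Under this coupling the distance decomposes into three contributions: $|\setPR|$ for the newly pinned variables, $\sum_{i\in\setPR}\mIR_i$ for the interpolator factors attached to pinned variables, and $|\{a:\vTSM_a([k])\cap\setPR\ne\emptyset\}|$ for the standard factors whose neighborhood meets $\setPR$. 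Observation~\ref{obs_pin_basic} gives $\expe[|\setPR|]=\ThetaP/2$. By independence of $\mIR$ and $\setPR$ together with $\mIR_i\dequal\Po((1-\tI)\degae)$, the interpolator contribution is bounded by $\expe[|\setPR|](1-\tI)\degae\le\degabu\ThetaP/2$. For the third contribution, conditional on $(\sigmaIID,\setPR)$ the $\mR$ standard factors are i.i.d.\ by Observation~\ref{obs_TSM_iid}, and the union bound together with the uniform estimate $\sucD\le c/n$ from Observation~\ref{obs_degsucprob}\ref{obs_degsucprob_bounds} shows that each hits $\setPR$ with probability at most $c|\setPR|/n$; taking expectations over $\mR$ and $\setPR$ yields an $\mclo(\degabu\ThetaP)$ bound. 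Summing the three contributions and dividing by $n$ gives the desired $\mclo(\ThetaP/n)$ upper bound.

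The argument for $\sigmaNIS_\mR$ in place of $\sigmaIID$ goes through verbatim: the pointwise weight domination does not depend on the law of the ground truth, and the three expectation bounds above use only the marginal laws of $\mR$, $\mIR$, $\setPR$ together with a ground-truth-uniform estimate on $\sucD$, so no Nishimori identity is needed here. The only delicate step is the third contribution, where one must ensure that the per-factor neighborhood-hit probability is $\mclo(1/n)$ uniformly in the random ground truth --- this is precisely the content of Observation~\ref{obs_degsucprob}\ref{obs_degsucprob_bounds} and is the main subtlety of the proof.
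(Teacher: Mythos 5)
Your proposal is correct and follows essentially the same route as the paper: couple the unpinned and pinned graphs so they differ only in the pins, get the lower bound from the pointwise weight domination, and control the upper bound via Observation~\ref{obs_phi_lipschitz} applied to $\distG$, estimating the three contributions (newly pinned variables, interpolators at pinned variables, standard factors meeting $\setPR$) through Observation~\ref{obs_pin_basic}, independence of $\mIR$ and $\setPR$, and the degree/success-probability bounds of Section~\ref{var_degrees}. Your union-bound phrasing of the third contribution is the same calculation as the paper's degree-sum bound from Corollary~\ref{cor_degbounds}\ref{cor_degbounds_m}; and you correctly include the $|\setPR|$ term coming from the $n-|\mclv[][\darr]|$ part of $\distG$, which the paper's displayed formula actually omits, though this only affects the constant and not the $\mclo(\ThetaP/n)$ rate.
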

\begin{proof}
Let $\bm G^*=\GTSM[\mR,\mIR,\emptyset](\sigmaIID)$ and $\bm G^\darr=\GTSM[\mR,\mIR,\setPR](\sigmaIID)$.
Recall that $\bm G^\darr\dequal[\bm G^*]^\darr_{\setPR,\sigmaIID}$ from Observation \ref{obs_TSM_iid}, so given $(\sigmaIID,\bm G^*$ we can obtain $\bm G^\darr$ by choosing $\setPR$, which means that $\bm G^\darr$ and $\bm G^*$ then exactly differ in the pins.
Hence, using this coupling we have $\phiG(\bm G^\darr)\le\phiG(\bm G^*)$ and
using the notions from Section \ref{phi_lipschitz} with $\bm G^*=(\bm v^*,\bm\psi^*)$ further
\begin{align*}
\distG(\bm G^*,\bm G^\darr)=2\left(|\{a\in[\mR]:\vTSM[a]([k])\cap\setPR\neq\emptyset\}|+\sum_{i\in\setPR}\mIR_i\right).
\end{align*}
We further bound the distance using the sum over the (factor) degrees of $i\in\setPR$ and $c'$ from Corollary \ref{cor_degbounds}\ref{cor_degbounds_m} to obtain
\begin{align*}
\expe\left[\distG(\bm G^*,\bm G^\darr)\right]
\le 2\expe\left[\left(\frac{c'k\mR}{n}+(1-\tI)\degae\right)|\setPR|\right]
=\left(\tI c'+1-\tI\right)\degae\ThetaP,
\end{align*}
using Observation \ref{obs_pin_basic}.
With $c''$ from Observation \ref{obs_phi_lipschitz} we have
$|\expe[\phiG(\bm G^*)]-\expe[\phiG(\bm G^\darr)]|\le\frac{c\ThetaP}{n}$ for $c=c''(c'+1)\degabu$,
and the result for $\sigmaNIS$ follows analogously.
\end{proof}
\subsubsection{Reweighted Marginal Distributions}\label{pinning_marginal_distributions}
The empirical marginal distribution $\pi_\mu\in\mclp[][2]([q])$ of $\mu\in\mclp([q]^n)$ is given by
\begin{align*}
\pi(\mcle)=\frac{1}{n}|\{i\in[n]:\mu|_i\in\mcle\}|
\end{align*}
for an event $\mcle\setle\mclp([q])$.
Let $\sigmaR_\mu\dequal\mu$, $\gammaR_\pi\dequal\pi$ and $\gammaa_\pi=\expe[\gammaR]$. For $\tau\in[q]$ we have
\begin{align}\label{equ_expmarginal_isexpcolfreq}
\gammaa(\tau)=\sum_{i\in[n]}\frac{1}{n}\mu|_i(\tau)
=\mu|_*(\tau)=\expe\left[\gammaN[,\sigmaR](\tau)\right].
\end{align}
For $\sigma\in [q]^n$ and $\tau\in\sigma([n])$ let $\check\pi_{\mu,\sigma,\tau}\in\mclp[][2]([q])$ be given by
\begin{align*}
\check\pi(\mcle)=\frac{1}{|\sigma^{-1}(\tau)|}|\{i\in[n]:\sigma_i=\tau,\mu|_i\in\mcle\}.
\end{align*}
For $\tau\in\gammaa^{-1}(\reals_{>0})$ let $\hat\pi_{\mu,\tau}\in\mclp[][2]([q])$ be given by the $(\hat\pi,\pi)$-derivative $\gamma\mapsto\gamma(\tau)/\gammaa(\tau)$.
Recall the couplings $\Gamma(\check\pi,\hat\pi)$ from Section \ref{notions_notation} and for $\sigma\in[q]^n$, $\tau\in\sigma([n])\cap\gammaa^{-1}(\reals_{>0})$ let
\begin{align*}
\distW(\check\pi,\hat\pi)=\inf_{\rho\in\Gamma(\check\pi,\hat\pi)}\expe[\|\gammaR_{\rho,1}-\gammaR_{\rho,2}\|_\mrmtv],\,\gammaR_\rho\dequal\rho,
\end{align*}
be the Wasserstein distance of $\check\pi$ and $\hat\pi$.
Recall $\setES[,\eps,\ell]$ from Remark \ref{remark_epssym}.
\begin{proposition}\label{proposition_piCR}
Let $\delta,\eps,\epsES\in\reals_{>0}$ and $\mu\in\mclp([q]^n)$ be such that $\gammaa\ge\psibl/2$, $\mu\in\setES[,\epsES,2]$ and $\prob[\|\gammaN[,\sigmaR]-\gammaa\|_\mrmtv>\delta]\le\eps$.
Then there exists $c_\mfkg\in\reals_{>0}$ such that
\begin{align*}
\expe\left[\sum_{\tau\in\sigmaR([n])}\distW(\check\pi_{\sigmaR,\tau},\hat\pi_\tau)\right]
\le c(\delta+\eps+\epsES^{1/(2q+1)}).
\end{align*}
\end{proposition}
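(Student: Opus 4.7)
The plan is to bound $\distW(\check\pi_{\sigmaR,\tau},\hat\pi_\tau)$ for each $\tau$ separately by combining Kantorovich--Rubinstein duality with a TV-covering of $\mclp([q])$ and a covariance estimate derived from the $(2,\epsES)$-symmetry hypothesis. First I would dualise: since $\distW$ is Wasserstein--$1$ with the (bounded) total variation ground metric,
\begin{align*}
\distW(\check\pi_{\sigmaR,\tau},\hat\pi_\tau) = \sup_f\left|\int f\,d\check\pi_{\sigmaR,\tau} - \int f\,d\hat\pi_\tau\right|
\end{align*}
with $f:\mclp([q])\rarr[0,1]$ ranging over the $1$-Lipschitz test functions. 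Since $\mclp([q])$ is a compact $(q-1)$-dimensional simplex, for any $\delta'>0$ it decomposes into $K\le K_\mfkg(\delta')^{-(q-1)}$ disjoint Borel pieces $(B_k)_k$ of TV-diameter at most $\delta'$; approximating $f$ by a constant on each $B_k$ yields
\begin{align*}
\distW(\check\pi_{\sigmaR,\tau},\hat\pi_\tau) \le 2\delta' + \sum_k\left|\check\pi_{\sigmaR,\tau}(B_k) - \hat\pi_\tau(B_k)\right|,
\end{align*}
reducing the problem to a cellwise comparison.

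Second, I would translate each cell difference into a concentration estimate. Setting $B_k^*=\{i\in[n]:\mu|_i\in B_k\}$ and $N_{\tau,B_k}=|\{i\in B_k^*:\sigmaR_i=\tau\}|$, one has $\check\pi_{\sigmaR,\tau}(B_k)=N_{\tau,B_k}/N_\tau$ and $\hat\pi_\tau(B_k)=(n\gammaa(\tau))^{-1}\sum_{i\in B_k^*}\mu|_i(\tau)$, with matching expectations $\expe[N_{\tau,B_k}]=n\gammaa(\tau)\hat\pi_\tau(B_k)$ and $\expe[N_\tau]=n\gammaa(\tau)$. The covariance of $\bmone\{\sigmaR_i=\tau\}$ and $\bmone\{\sigmaR_j=\tau\}$ is bounded in absolute value by $\|\mu|_{\{i,j\}}-\mu|_i\otimes\mu|_j\|_\mrmtv$ (Observation~\ref{obs_tv}\ref{obs_tv_lipschitz}), so by the disjointness of the $B_k^*$ and the $(2,\epsES)$-symmetry hypothesis,
\begin{align*}
\sum_k\Var(N_{\tau,B_k}) \le \sum_{i,j\in[n]}\left\|\mu|_{\{i,j\}}-\mu|_i\otimes\mu|_j\right\|_\mrmtv \le n^2\epsES.
\end{align*}
A Cauchy--Schwarz step then gives $\sum_k\expe|N_{\tau,B_k}/n-\gammaa(\tau)\hat\pi_\tau(B_k)|\le\sqrt{K\epsES}$. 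The denominator is controlled via the hypothesis on $\gammaN[,\sigmaR]$: on the event $\|\gammaN[,\sigmaR]-\gammaa\|_\mrmtv\le\delta$, of probability at least $1-\eps$, one has $N_\tau\ge n\psibl/4$ (using $\gammaa\ge\psibl/2$ and $\delta$ small) and $|N_\tau/n-\gammaa(\tau)|\le\delta$, whereas on its $\eps$-complement the trivial bound $\sum_\tau\distW\le 2q$ suffices.

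Combining these ingredients and summing over $\tau\in\sigmaR([n])\setle[q]$ yields an estimate of the form $c_\mfkg(\delta'+\sqrt{K\epsES}+\delta+\eps)$; optimising the scale $\delta'\sim\epsES^\alpha$ against $K\sim(\delta')^{-(q-1)}$ produces the asserted form of the bound. The main obstacle is to recover exactly the exponent $1/(2q+1)$: the naive balance of $\delta'+\sqrt{\epsES}(\delta')^{-(q-1)/2}$ yields $\alpha=1/(q+1)$, which is too large, and the sharper rate should come from either a weighted covering that balances cell TV-diameter against the weight $\gammaa(\tau)\hat\pi_\tau(B_k)$, or from an improvement over Cauchy--Schwarz that exploits the $L^1$ constraint $\sum_k\hat\pi_\tau(B_k)=1$. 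A secondary obstacle is the book-keeping needed to verify that all constants depend only on $\mfkg$.
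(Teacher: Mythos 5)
Your approach is essentially the paper's: partition $\mclp([q])$ into TV-cells of diameter $\delta'$, compare empirical and reweighted cell masses, bound the aggregate cell-mass fluctuation via the $(2,\epsES)$-symmetry hypothesis, handle the denominator via $\gammaa\ge\psibl/2$ and the concentration of $\gammaN[,\sigmaR]$, and use the trivial bound $\sum_\tau\distW\le q$ on the $\eps$-exceptional event. Your Kantorovich--Rubinstein dualisation followed by cell-wise constant approximation is interchangeable with the explicit coupling the paper constructs after bounding $\|b(\check{\bm\gamma})-b(\hat{\bm\gamma})\|_\mrmtv$.

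The one genuine departure is in the concentration step, and here you have it \emph{better} than you think. The paper applies Chebyshev's inequality cell by cell, a union bound over the $q|\setb|$ cell-colour pairs, and then the trivial bound $q$ on the resulting bad event; this doubles the dependence on the cell count and yields the balance $r+\epsES r^{-2q}$, hence $r=\epsES^{1/(2q+1)}$. Your Cauchy--Schwarz step $\sum_k\expe|N_{\tau,B_k}/n-\gammaa(\tau)\hat\pi_\tau(B_k)|\le\sqrt{K\epsES}$ bounds the \emph{expected} $L^1$ discrepancy directly, without passing to a probability bound, giving the balance $\delta'+\sqrt{\epsES}(\delta')^{-(q-1)/2}$ and exponent $1/(q+1)$. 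The ``main obstacle'' you flag is a sign error in your own comparison: since $1/(q+1)>1/(2q+1)$ and one may take $\epsES\le 1$ (otherwise the claim holds trivially with any $c\ge q$), you have $\epsES^{1/(q+1)}\le\epsES^{1/(2q+1)}$, i.e.\ your bound is \emph{stronger} than the one to be proved, not weaker. No weighted covering or $L^1$-constraint refinement is needed.

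Two small book-keeping remarks. First, the lower bound $N_\tau\ge n\psibl/4$ requires $\delta\le\psibl/8$, which is not implied by $\delta<1$; you should either follow the paper and absorb the denominator term algebraically, writing $\check\pi(B_k)-\hat\pi(B_k)$ as a sum of a term proportional to $|\gammaN(\tau)-\gammaa(\tau)|$ and a term proportional to $||\check\seti|-\bar I|/n$, each divided by $\gammaa(\tau)\ge\psibl/2$, or else note separately that for $\delta>\psibl/8$ the trivial bound $q\le(8q/\psibl)\delta$ already suffices. Second, the covariance bound should be summed over $\tau$ as well as $k$ (with a factor of $2$ converting between absolute sums and TV), matching the paper's $\sum_{b,\tau}\Var(\cdot)\le 2n^2\epsES$; tracking the extra factor $q$ from the outer sum over $\tau\in\sigmaR([n])$ is harmless for a constant depending only on $\mfkg$.
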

\begin{proof}
If $\max(\delta,\eps,\epsES)\ge 1$, then the assertion holds with $c\ge q$ since the left hand side is at most $q$, so let $\delta,\eps,\epsES\in(0,1]$.
Notice that $\gammaa^{-1}(\reals_{>0})=\bigcup_{\sigma\in\mu^{-1}(\reals_{>0})}\sigma([n])$,
let $\sigma\in\mu^{-1}(\reals_{>0})$ and $\tau\in\sigma([n])$.
Let $a\in\ints_{>0}$, $\setb^*=(a^{-1}\ints)^{q-1}$, $\setq_\circ=[-1/(2a),1/(2a))^{q-1}$ and $\setq_b=b+\setq_\circ$ for $b\in\setb^*$, then $(\setq_b)_{b\in\setb^*}$ is a partition of $\reals^{q-1}$. This induces a partition $(\setq_b)_{b\in\setb}$ of $\setp([q])$, where
\begin{flalign*}
\setb&=\{(b^*,1-\|b^*\|_1):b^*\in\setb^*\}\cap\setp([q]),\\
\setq_b&=\{(b^*,1-\|b^*\|_1):b^*\in\setq^*_{b_{[q-1]}}\},\,b\in\setb.
\end{flalign*}
Notice that $|\setb|\le(a+1)^{q-1}=|\setb^*\cap[0,1]^{q-1}|$.
For $\gamma\in\setq_b$ we have $\tilde\gamma\in\setq_{\tilde b}$, where $\tilde\gamma=\gamma_{[q-1]}$ and $\tilde b=b_{[q-1]}$, hence
\begin{flalign*}
\|\gamma-b\|_\mrmtv=\frac{1}{2}\|\tilde\gamma-\tilde b\|_1+\frac{1}{2}|1-\|\tilde\gamma\|_1-(1-\|\tilde b\|_1)|
\le\|\tilde\gamma-\tilde b\|_1\le\frac{q-1}{2a}.
\end{flalign*}
Next, let $\seti_b=\{i\in[n]:\mu|_i\in\setq_b\}$ and $\check{\seti}_{\sigma,\tau,b}=\{i\in[n]:\sigma_i=\tau,\mu|_i\in\setq_b\}$.
Then we have
\begin{flalign*}
\pi(\setq_b)=\frac{|\seti_b|}{n},\,
\check\pi(\setq_b)=\frac{|\check{\seti}_{\sigma,\tau,b}|}{|\sigma^{-1}(\tau)|}.
\end{flalign*}
The expectation $\bar{I}_{\tau,b}=\expe[|\check{\seti}_{\sigmaR,\tau,b}|]$ is given by
\begin{flalign*}
\bar{I}_{\tau,b}=\sum_i\bmone\{\mu|_i\in\setq_b\}\prob[\sigmaR_{i}=\tau]
=\sum_i\bmone\{\mu|_i\in\setq_b\}\mu|_i(\tau)
=n\gammaa(\tau)\hat\pi(\setq_b).
\end{flalign*}
The variances, using $\mu\in\setES[,n,\epsES,2]$, are given by
\begin{flalign*}
V&=\sum_{b,\tau}\Var(|\check{\seti}_{\sigmaR,\tau,b}|)
=\sum_{b,\tau}\left(\expe\left[|\check{\seti}_{\sigmaR,\tau,b}|^2\right]-\expe\left[|\check{\seti}_{\sigmaR,\tau,b}|\right]^2\right)\\
&=\sum_{b,\tau}\sum_{v\in[n]^2}\bmone\left\{\mu|_{v(1)},\mu|_{v(2)}\in\setq_b\right\}\left(\prob\left[\sigmaR|_v=(\tau,\tau)\right]-\prob\left[\sigmaR|_{v(1)}=\tau\right]\prob\left[\sigmaR|_{v(2)}=\tau\right]\right)\\
&\le\sum_{v\in[n]^2}\sum_{\tau\in[q]^2}\left|\prob\left[\sigmaR|_v=\tau\right]-\prob\left[\sigmaR|_{v(1)}=\tau_1\right]\prob\left[\sigmaR|_{v(2)}=\tau_2\right]\right|\\
&=2\sum_{v\in[n]^2}\left\|\mu|_v-\mu|_{v(1)}\otimes\mu|_{v(2)}\right\|_\mrmtv
\le 2n^2\epsES,
\end{flalign*}
where in the extension of the summation region $b$ is determined by $\mu|_{v(1)}$, i.e.~the unique point $b$ with $\mu|_{v(1)}\in\setq_b$, while we drop the restriction $\mu|_{v(2)}\in\setq_b$.
With the union bound and Chebyshev's inequality we have
\begin{flalign*}
\prob\left[\sum_{b,\tau}\left||\check{\seti}_{\sigmaR,\tau,b}|-\bar I_{\tau,b}\right|\ge rn\right]
&\le\sum_{b,\tau}\prob\left[\left||\check{\seti}_{\sigmaR,\tau,b}|-\bar I_{\tau,b}\right|\ge\frac{rn}{q|\setb|}\right]
\le\frac{Vq^2|\setb|^2}{r^2n^2}
\le\frac{2q^2|\setb|^2\epsES}{r^2}
\end{flalign*}
for $r=\epsES^{1/(1+2q)}$. Next, we recall that the color frequencies concentrate and let
\begin{flalign*}
\mcls=\left\{\sigma\in[q]^n:\left\|\gammaN[,\sigma]-\gammaa\right\|_\mrmtv\le\delta,\,\sum_{b,\tau}\left||\check{\seti}_{\sigma,\tau,b}|-\bar I_{\tau,b}\right|< rn\right\}.
\end{flalign*}
Further, notice that $\distW(\check\pi,\hat\pi)\in[0,1]$ since $\|\cdot\|_\mrmtv\in[0,1]$, so
\begin{flalign*}
\expe\left[\sum_{\tau\in\sigmaR([n])}\distW\left(\check\pi_{\sigmaR,\tau},\hat\pi_{\tau}\right)\right]
\le\expe\left[\bmone\{\sigmaR\in\mcls\}\sum_{\tau\in\sigmaR([n])}\distW\left(\check\pi_{\sigmaR,\tau},\hat\pi_{\tau}\right)\right]+q\eps+\frac{2q^3|\setb|^2\epsES}{r^2}.
\end{flalign*}
For $\sigma\in\mcls$, $\tau\in\sigma([n])$ and $b\in\setb$ we have
\begin{flalign*}
\Delta_{\sigma,\tau}(b)&=\left|\check\pi(\setq_b)-\hat\pi(\setq_b)\right|
\le\left|\frac{|\check{\seti}|}{|\sigma^{-1}(\tau)|}-\frac{|\check{\seti}|}{n\gammaa(\tau)}\right|
+\left|\frac{|\check{\seti}|}{n\gammaa(\tau)}-\frac{\bar I}{n\gammaa(\tau)}\right|\\
&=\frac{1}{\gammaa(\tau)}\left(\frac{|\check{\seti}|}{|\sigma^{-1}(\tau)|}|\gammaa(\tau)-\gammaN[,\sigma](\tau)|
+\frac{1}{n}\left||\check{\seti}|-\bar I\right|\right).
\end{flalign*}
With $\sum_b\check{\seti}_b=|\sigma^{-1}(\tau)|$, $\gammaa\ge\psibl/2$ and
$\|\gammaN[,\sigma]-\gammaa\|_1=2\|\gammaN[,\sigma]-\gammaa\|_\mrmtv\le 2\delta$ we have
\begin{flalign*}
\sum_{b,\tau}\Delta_{\sigma,\tau}(b)<\frac{4\delta+2r}{\psibl}.
\end{flalign*}
Now, let $\check{\bm\gamma}\dequal\check\pi$ and $\hat{\bm\gamma}\dequal\hat\pi$.
For $\gamma\in\setp([q])$ let $b(\gamma)\in\setb$ be the unique index with $\gamma\in\setq_b$.
Then we have $\sum_b\Delta_{\sigma,\tau}(b)=2\|b(\check{\bm\gamma})-b(\hat{\bm\gamma})\|_\mrmtv$.
With the coupling lemma we obtain a coupling of $b(\check{\bm\gamma})$ and $b(\hat{\bm\gamma})$ that extends to a coupling of $\check{\bm\gamma}$ and $\hat{\bm\gamma}$ via $(\check{\bm\gamma}|b(\check{\bm\gamma})=\check b)\otimes(\hat{\bm\gamma}|b(\hat{\bm\gamma})=\hat b)$ given $(b(\check{\bm\gamma}),b(\hat{\bm\gamma}))=(\check b,\hat b)$ (by an abuse of notation in that we use the same notation for the coupling).
The triangle inequality yields $\|\check{\bm\gamma}-\hat{\bm\gamma}\|_\mrmtv\le\frac{q-1}{a}+\|b(\check{\bm\gamma})-b(\hat{\bm\gamma})\|_\mrmtv$ and hence
\begin{flalign*}
\prob\left[\left\|\check{\bm\gamma}-\hat{\bm\gamma}\right\|_\mrmtv>\frac{q-1}{a}\right]\le\prob[b(\check{\bm\gamma})\neq b(\hat{\bm\gamma})]
=\|b(\check{\bm\gamma})-b(\hat{\bm\gamma})\|_\mrmtv
=\frac{1}{2}\sum_b\Delta_{\sigma,\tau}(b)
<\frac{2\delta+r}{\psibl}.
\end{flalign*}
Using that $\|\cdot\|_\mrmtv\le 1$ this gives
\begin{flalign*}
\distW(\check\pi,\hat\pi)
\le\expe\left[\left\|\check{\bm\gamma}-\hat{\bm\gamma}\right\|_\mrmtv\right]
\le\frac{q-1}{a}+\frac{2\delta+r}{\psibl}.
\end{flalign*}
Hence, combining the results for $\sigmaR\in\mcls$ and $\sigmaR\not\in\mcls$ yields
\begin{align*}
\expe\left[\sum_{\tau\in\sigmaR([n])}\distW\left(\check\pi_{\sigmaR,\tau},\hat\pi_{\tau}\right)\right]
&\le q\left(\frac{q-1}{a}+\frac{2\delta+r}{\psibl}\right)+q\eps+\frac{2q^3(a+1)^{2(q-1)}\epsES}{r^2}.
\end{align*}
Now, let $a=\lfloor r^{-1}\rfloor$.
With $r=\epsES^{1/(2q+1)}\le 1$ we have $a\in\ints_{\ge 1}$, hence $a+1\le 2a$, $a\ge\frac{1}{2}(a+1)\ge\frac{1}{2r}$ and further
\begin{align*}
\expe\left[\sum_{\tau\in\sigmaR([n])}\distW\left(\check\pi_{\sigmaR,\tau},\hat\pi_{\tau}\right)\right]
&\le q\left(2(q-1)+\frac{1}{\psibl}+2^{2q-1}q^2\right)r+\frac{2q}{\psibl}\delta+q\eps.
\end{align*}
\end{proof}
\subsubsection{Reweighted Gibbs Marginal Distribution}\label{pinning_gibbs_marginal_distributions}
Using the notions from Section \ref{pinning_marginal_distributions} and for a decorated factor graph $G$ with Gibbs measure $\mu=\lawG[,G]$ let $\piG[,G]=\pi_{\mu}$, $\gammaaG[,G]=\gammaa_\mu$, $\piGC[,G,\sigma,\tau]=\check\pi_{\mu,\sigma,\tau}$ and $\piGR[,G,\tau]=\hat\pi_{\mu,\tau}$.
First, we focus on the expected Gibbs marginal $\gammaaG$ under various versions of the teacher-student model, and start with $\GTSM(\sigmaNIS)$.
\begin{lemma}\label{lemma_gammaaR}
Let $m\le\mbu$, $\gammaaR=\gammaa_{\GTSM}$ and $\gammaR=\gammaN[,\sigmaRG]$ with $\sigmaRG=\sigmaRG[,\GTSM]$, $\GTSM=\GTSM(\sigmaR)$ and $\sigmaR=\sigmaIID$ or $\sigmaR=\sigmaNIS$.
\begin{alphaenumerate}
\item\label{lemma_gammaaR_prob}
There exists $c\in\reals_{>0}^2$ such that $\prob[\|\gammaaR-\gamma^*\|_\mrmtv\ge r]\le c_2e^{-c_1r^2n}$.
\item\label{lemma_gammaaR_conc}
There exists $c\in\reals_{>0}^2$ such that $\prob[\|\gammaR-\gammaaR\|_\mrmtv\ge r]\le c_2e^{-c_1r^2n}$.
\item\label{lemma_gammaaR_expe}
There exists $c\in\reals_{>0}$ such that $\expe[\|\gammaaR-\gamma^*\|_\mrmtv]\le c/\sqrt{n}$.
\item\label{lemma_gammaaR_var}
There exists $c\in\reals_{>0}$ such that $\expe[\|\gammaaR-\gamma^*\|_\mrmtv^2]\le c/n$.
\end{alphaenumerate}
\end{lemma}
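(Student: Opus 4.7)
The plan is to deduce all four parts from the already-established concentration of $\gammaR$ around $\gamma^*$. In the case $\sigmaR=\sigmaIID$ this is Observation \ref{obs_gibbsTSIID}; in the case $\sigmaR=\sigmaNIS$ the Nishimori identity of Observation \ref{obs_nishicond}\ref{obs_nishicond_dec} yields $(\sigmaNIS,\GTSM(\sigmaNIS))\dequal(\sigmaRG[,\GTSM(\sigmaNIS)],\GTSM(\sigmaNIS))$, so that $\gammaR\dequal\gammaNIS$ and the corresponding sub-Gaussian bounds come from Corollary \ref{cor_mutcont}. The bridge to $\gammaaR$ is the identity $\gammaaR=\expe[\gammaR\mid\GTSM]$, which is just the definition $\gammaa_\mu=\mu|_*$ applied to the random Gibbs measure $\mu=\lawG[,\GTSM]$ (cf.\ Equation \eqref{equ_expmarginal_isexpcolfreq}).

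For Part \ref{lemma_gammaaR}\ref{lemma_gammaaR_expe} and Part \ref{lemma_gammaaR}\ref{lemma_gammaaR_var} I would apply Jensen's inequality conditional on $\GTSM$: since $\gamma\mapsto\|\gamma-\gamma^*\|_\mrmtv$ and its square are convex,
\begin{align*}
\|\gammaaR-\gamma^*\|_\mrmtv&\le\expe\left[\|\gammaR-\gamma^*\|_\mrmtv\mid\GTSM\right],\\
\|\gammaaR-\gamma^*\|_\mrmtv^2&\le\expe\left[\|\gammaR-\gamma^*\|_\mrmtv^2\mid\GTSM\right].
\end{align*}
Taking full expectations and plugging in the $\mclo(1/\sqrt n)$ and $\mclo(1/n)$ bounds from Observation \ref{obs_gibbsTSIID} (respectively Corollary \ref{cor_mutcont} in the Nishimori case) yields (c) and (d).

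For Part \ref{lemma_gammaaR}\ref{lemma_gammaaR_prob} the strategy is moment--Markov. Integrating the sub-Gaussian tail of $\gammaR$ against the boundedness $\|\gammaR-\gamma^*\|_\mrmtv\le 1$ yields moments of the shape $\expe[\|\gammaR-\gamma^*\|_\mrmtv^p]\le c_2\,p\,\Gamma(p/2)/(2(c_1 n)^{p/2})$ for every $p\ge 1$, and the same conditional Jensen argument bounds $\expe[\|\gammaaR-\gamma^*\|_\mrmtv^p]$ by the same quantity. Markov's inequality together with a Stirling estimate, optimized by choosing $p$ of order $r^2 n$, then produces the required exponential tail for $\gammaaR$. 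Part \ref{lemma_gammaaR}\ref{lemma_gammaaR_conc} follows at once from the triangle inequality $\|\gammaR-\gammaaR\|_\mrmtv\le\|\gammaR-\gamma^*\|_\mrmtv+\|\gammaaR-\gamma^*\|_\mrmtv$ and a union bound over the two tail events, using the sub-Gaussian concentration of $\gammaR$ together with Part \ref{lemma_gammaaR}\ref{lemma_gammaaR_prob}.

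The only delicate step is the moment--Markov transfer in (a): the constants must be tracked carefully through the $\Gamma$-function estimate so that the resulting exponent has the form $-c_1' r^2 n$ with $c_1'$ depending only on $\mfkg$, and the small-$r$ regime where the bound is trivially implied by $\prob\le 1$ must be absorbed into the multiplicative prefactor, exactly as in the proofs in Section \ref{phi_concon}. No genuinely new ideas beyond those already used in Observation \ref{obs_gibbsTSIID} and Corollary \ref{cor_mutcont} enter.
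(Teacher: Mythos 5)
Your overall decomposition is exactly the paper's: use $\gammaaR=\expe[\gammaR\mid\GTSM]$, push convex functions of the TV distance through the conditional expectation via Jensen, and import the concentration of $\gammaR$ from Observation \ref{obs_gibbsTSIID} in the $\sigmaIID$ case and from Corollary \ref{cor_mutcont} plus the Nishimori identity \ref{obs_nishicond}\ref{obs_nishicond_dec} in the $\sigmaNIS$ case; parts \ref{lemma_gammaaR}\ref{lemma_gammaaR_conc}--\ref{lemma_gammaaR}\ref{lemma_gammaaR_var} are handled identically.

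The one place where you diverge is part \ref{lemma_gammaaR}\ref{lemma_gammaaR_prob}. The paper applies Jensen once to the convex function $t\mapsto e^{yt^2}$, obtaining
$\expe\bigl[e^{(c_1/2)\|\gammaaR-\gamma^*\|_\mrmtv^2 n}\bigr]\le\expe\bigl[e^{(c_1/2)\|\gammaR-\gamma^*\|_\mrmtv^2 n}\bigr]$,
computes the latter exponential moment directly by a single tail integration (yielding $1+c_2$), and then applies Markov to the exponential. You instead control all $p$-th moments $\expe\bigl[\|\gammaaR-\gamma^*\|_\mrmtv^p\bigr]\le\expe\bigl[\|\gammaR-\gamma^*\|_\mrmtv^p\bigr]\le c_2\,p\,\Gamma(p/2)/\bigl(2(c_1 n)^{p/2}\bigr)$, then optimize $p\sim r^2n$ with a Stirling estimate and absorb the resulting $\sqrt{p}$ prefactor into $c_2'$ after halving the exponent constant. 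Both routes are correct and rely on the same underlying facts; the exponential-moment computation is cleaner because it avoids the Stirling bookkeeping, the case distinction at small $p$, and the need to fold a growing polynomial prefactor into the constants, while yours makes the $p$-dependence explicit at the cost of more arithmetic. Your note about the small-$r$ regime and the $\mfkg$-dependence of constants is the right thing to watch, and the step does go through.
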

\begin{proof}
Let $\overline D(G)=\|\gammaaG[,G]-\gamma^*\|_\mrmtv$ and $\bm D(G)=\|\gammaN[,\sigmaRG[,G]]-\gamma^*\|_\mrmtv$.
With Equation (\ref{equ_expmarginal_isexpcolfreq}) and Jensen's inequality we have
\begin{align*}
\overline D(G)^x&=\|\expe[\gammaN[,\sigmaRG[,G]]]-\gamma^*\|_\mrmtv^x\le\expe[\bm D(G)^x],
e^{y\overline D(G)^2}\le \expe\left[e^{y\bm D(G)^2}\right]
\end{align*}
for $x\in\reals_{\ge 1}$ and $y\in\reals_{\ge 0}$, so
$\expe[\overline D(\GTSM))^x]\le\expe[\bm D(\GTSM)^x]$ and $\expe[e^{y\overline D(\GTSM)^2}]\le\expe[e^{y\bm D(\GTSM)^2}]$.
With Observation \ref{obs_nishicond}\ref{obs_nishicond_dec} we have $\bm D(\GTSM(\sigmaNIS))\dequal\|\gammaNIS-\gamma^*\|_\mrmtv$, so Corollary \ref{cor_mutcont} applies, on the other hand we have $\bm D(\GTSM(\sigmaIID))=\|\gammaR-\gamma^*\|_\mrmtv$ with $\gammaR$ from Observation \ref{obs_gibbsTSIID}.
Hence, in both cases there exists $c\in\reals_{>0}^2$ such that
\begin{align*}
\prob[\bm D(\GTSM)\ge r]\le c_2e^{-c_1r^2n},\,
\expe[\bm D(\GTSM)]\le c_2/\sqrt{n},\,
\expe[\bm D(\GTSM)^2]\le c_2/n.
\end{align*}
Notice that $\overline D(\GTSM)=\|\gammaaR-\gamma^*\|_\mrmtv$
and recall that $\expe[\overline D(\GTSM)^x]\le\expe[\bm D(\GTSM)^x]$,
so Part \ref{lemma_gammaaR}\ref{lemma_gammaaR_expe} follows for $x=1$ and Part \ref{lemma_gammaaR}\ref{lemma_gammaaR_var} follows for $x=2$.
Further, notice that
\begin{align*}
\expe\left[\exp\left(\frac{c_1}{2}\bm D(\GTSM)^2n\right)\right]
&=\int_0^\infty\prob\left[\exp\left(\frac{c_1}{2}\bm D(\GTSM)^2n\right)>r\right]\mathrm{d}r\\
&=1+\int_1^\infty\prob\left[\bm D(\GTSM)>\sqrt{\frac{2\ln(r)}{c_1n}}\right]\mathrm{d}r\\
&\le 1+c_2\int_{1}^\infty e^{-2\ln(r)}\mathrm{d}r=1+c_2\int_1^\infty\frac{1}{r^2}\mathrm{d}r
=1+c_2.
\end{align*}
So, the bound above for $y=c_1n/2$ and Markov's inequality yield
\begin{align*}
\prob\left[\overline D(\GTSM)\ge r\right]=\prob\left[\exp\left(\frac{c_1}{2}\overline D(\GTSM)^2n\right)\ge\exp\left(\frac{c_1}{2}r^2n\right)\right]
\le(1+c_2)\exp\left(-\frac{c_1}{2}r^2n\right).
\end{align*}
This establishes Part \ref{lemma_gammaaR}\ref{lemma_gammaaR_prob}.
Let $c'\in\reals_{>0}^2$ be the constants from Part \ref{lemma_gammaaR}\ref{lemma_gammaaR_prob}. Then the triangle inequality and the union bound yields
\begin{align*}
\prob[\|\gammaR-\gammaaR\|_\mrmtv\ge r]\le\prob[\bm D(\GTSM)\ge r/2]+\prob[\overline D(\GTSM)\ge r/2]\le c_2e^{-c_1r^2n/4}+c'_2e^{-c'_1r^2n/4},
\end{align*}  
so Part \ref{lemma_gammaaR}\ref{lemma_gammaaR_conc} holds with $c'_1/4$ and $c_2+c'_2$.
\end{proof}
\begin{remark}\label{remark_mucolfreqconc}
Similar to $\iota(\mu)$ and $\nu(\mu)$ in Remark \ref{remark_epssym}, quantifying the $\ell$-wise dependencies of $\mu$, we may consider $P_\mu:[0,1]\rarr[0,1]$, $r\mapsto\prob[\|\gammaN[,\sigmaR_\mu]-\gammaa_\mu\|_\mrmtv\ge r]$ to quantify the concentration of the color frequencies.
As for Proposition \ref{proposition_pinG}, Lemma \ref{lemma_gammaaR}\ref{lemma_gammaaR_conc} suggests that $\expe[P(\bm\mu,r)]\le c_2e^{-c_1r^2n}$ for both $\bm\mu=\bm\mu^*$ and $\bm\mu=\hat{\bm\mu}$.
\end{remark}
Lemma \ref{lemma_gammaaR} facilitates the application of Proposition \ref{proposition_piCR}.
For this purpose let
\begin{align*}
D(\sigma,\mu)=\sum_{\tau\in\sigma([n])}\distW(\check\pi_{\mu,\sigma,\tau},\hat\pi_{\mu,\tau})
\end{align*}
for $\mu\in\mclp([q]^n)$ using the notions from Proposition \ref{proposition_piCR}.
\begin{corollary}\label{cor_piCR}
Let $m\le\mbu$, $\hat{\bm\mu}=\lawG[,\GTSM(\sigmaNIS)]$, $\bm\mu^*=\lawG[,\GTSM(\sigmaIID)]$ and $\sigmaR_\mu\dequal\mu$.
Further, let $C_\mfkg\in(0,1)\times\reals_{>0}$ be the constants from Proposition \ref{proposition_pinG}\ref{proposition_pinG_IIDDKL}.
\begin{alphaenumerate}
\item\label{cor_piCR_NIS}
There exists $c_\mfkg\in\reals_{>0}$ such that
$\expe[D(\sigmaNIS,\hat{\bm\mu})],\expe[D(\sigmaR_{\hat{\bm\mu}},\hat{\bm\mu})]\le c/\ThetaP$.
\item\label{cor_piCR_IID}
There exists $c_\mfkg\in\reals_{>0}$ with
$\expe[D(\sigmaIID,\bm\mu^*)]\le c/\ThetaP[c']$ for $c'\in(0,C_1]$, and
$\expe[D(\sigmaR_{\bm\mu^*},\bm\mu^*)]\le c/\ThetaP[c']$ for $c'\in(0,C_1/3]$.
\end{alphaenumerate}
\end{corollary}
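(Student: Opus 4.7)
The plan is to apply Proposition \ref{proposition_piCR} to $\mu=\hat{\bm\mu}$ and $\mu=\bm\mu^*$ realization by realization, supplying its three hypotheses from results already in hand: the lower bound $\gammaa_\mu\ge\psibl/2$ and the concentration of $\gammaN[,\sigmaR_\mu]$ around $\gammaa_\mu$ come from Lemma \ref{lemma_gammaaR}, while the $\eps_\mrms$-symmetry in the form $\nu(\mu)\le\sqrt{\iota(\mu)/2}$ (Pinsker, cf.\ Remark \ref{remark_epssym}) is supplied by Proposition \ref{proposition_pinG} with $\ell=2$. For part (a), the Nishimori identity Observation \ref{obs_nishicond}\ref{obs_nishicond_dec} gives $(\sigmaNIS,\GTSM(\sigmaNIS))\dequal(\sigmaRG[,\GTSM(\sigmaNIS)],\GTSM(\sigmaNIS))$, so $\expe[D(\sigmaNIS,\hat{\bm\mu})]=\expe[D(\sigmaR_{\hat{\bm\mu}},\hat{\bm\mu})]$ and only one quantity needs to be handled.

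For (a), I would restrict to the good event $\mcle=\{\|\gammaa_{\hat{\bm\mu}}-\gamma^*\|_\mrmtv\le r_n\}$ with $r_n$ small enough to guarantee $\gammaa_{\hat{\bm\mu}}\ge\psibl/2$; by Lemma \ref{lemma_gammaaR}\ref{lemma_gammaaR_prob}, $\prob[\mcle^c]$ is exponentially small and contributes negligibly to the expected $D$ (using the trivial bound $D\le q$). On $\mcle$, apply Proposition \ref{proposition_piCR} pointwise with $(\delta,\eps,\eps_\mrms)=(\delta_n,\eps_n,\nu(\hat{\bm\mu}))$, where $(\delta_n,\eps_n)$ are extracted from Lemma \ref{lemma_gammaaR}\ref{lemma_gammaaR_conc}, yielding a bound proportional to $\delta_n+\eps_n+\nu(\hat{\bm\mu})^{1/(2q+1)}$. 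Averaging over $\hat{\bm\mu}$ and using Jensen's inequality together with the chain $\expe[\nu(\hat{\bm\mu})]\le\sqrt{\expe[\iota(\hat{\bm\mu})]/2}\le\sqrt{\ln(q)/(2\ThetaP)}$ (Pinsker plus Proposition \ref{proposition_pinG}\ref{proposition_pinG_NISDKL}) produces the claimed rate in $\ThetaP^{-1}$ after absorbing the polynomial-in-$n$ prefactors (permitted since $\ThetaP\le n$) into the implicit constant $c_\mfkg$.

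For (b) the Nishimori identity is unavailable, so $\sigmaIID$ and $\sigmaR_{\bm\mu^*}$ must be treated separately. The good event is defined analogously using Lemma \ref{lemma_gammaaR} applied to $\bm\mu^*$, while the $\eps_\mrms$-symmetry now relies on the weaker Proposition \ref{proposition_pinG}\ref{proposition_pinG_IIDDKL}, which only yields $\expe[\iota(\bm\mu^*)]\le C_2\ThetaP^{-c}$ for any $c\in(0,C_1]$; this polynomial rate is the source of the exponent $c'\in(0,C_1]$ in $\expe[D(\sigmaIID,\bm\mu^*)]$. For $\expe[D(\sigmaR_{\bm\mu^*},\bm\mu^*)]$ one must additionally verify the concentration hypothesis for $\sigmaR_{\bm\mu^*}$ instead of $\sigmaIID$: transferring from $\sigmaIID$ to $\sigmaR_{\bm\mu^*}$ via Observation \ref{obs_gibbsTSIID} together with Pinsker's inequality and a Jensen step in moving to the exponent $c'<1$ accumulates a factor of three losses in the exponent, which explains the restriction to $c'\in(0,C_1/3]$. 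The main obstacle is the bookkeeping of the three parameters $(\delta,\eps,\eps_\mrms)$ across two nested layers of randomness—the inner $\sigmaR_\mu$ given the graph and the outer graph randomness—and in particular the careful use of the concavity of $t\mapsto t^{1/(2q+1)}$ to turn pointwise symmetry bounds into expected bounds without losing more than the stated rates.
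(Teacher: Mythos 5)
Your tool set---Proposition \ref{proposition_piCR}, $\eps$-symmetry via Proposition \ref{proposition_pinG} and Pinsker, concentration from Lemma \ref{lemma_gammaaR}, and the Nishimori identity for part (a)---matches the paper's. The paper's one structural difference is that it fixes a deterministic threshold $\epsES$, defines a good set $\mclm$ on which the three hypotheses of Proposition \ref{proposition_piCR} hold simultaneously, bounds $\prob[\bm\mu\not\in\mclm]$ by Markov together with the subgaussian tails of Lemma \ref{lemma_gammaaR}, and finishes with $D\le q$ on the complement; your pointwise-plus-Jensen averaging is a legitimate variant in spirit.

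There are, however, two genuine gaps. First, the rate in part (a) does not follow from your own chain: Proposition \ref{proposition_piCR} produces $\nu_2(\hat{\bm\mu})^{1/(2q+1)}$, and after the Jensen step your argument yields $\expe[\nu_2(\hat{\bm\mu})^{1/(2q+1)}]\le\expe[\nu_2(\hat{\bm\mu})]^{1/(2q+1)}$, hence with $\expe[\nu_2(\hat{\bm\mu})]\le\sqrt{\ln(q)/(2\ThetaP)}$ a final bound of order $\ThetaP^{-1/(2(2q+1))}$, which is nowhere near the $\ThetaP^{-1}$ you claim. The $(2q+1)$-th root degrades the \emph{exponent} of $\ThetaP$, and that cannot be absorbed into $c_\mfkg$: the constant may depend on $q$, the exponent must not. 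You need to reconcile the exponent your argument actually yields with the one you quote, and this is precisely where the truncation at a tuned deterministic $\epsES$ matters.

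Second, part (b) is structurally reversed. Proposition \ref{proposition_piCR} controls $\expe[D(\sigmaR_\mu,\mu)]$---the selection variable is always the Gibbs spin $\sigmaR_\mu$ of $\mu$---so what it gives directly for $\bm\mu^*$, after truncation at $\epsES=\ThetaP[-2C_1/3]$ and the Markov step with the polynomial bound from Proposition \ref{proposition_pinG}\ref{proposition_pinG_IIDDKL}, is $\expe[D(\sigmaR_{\bm\mu^*},\bm\mu^*)]\le c/\ThetaP[C_1/3]$. The \emph{stronger} exponent $C_1$ for $\expe[D(\sigmaIID,\bm\mu^*)]$ is then obtained afterwards by a change of measure comparing $(\sigmaIID,\bm\mu^*)$ with $(\sigmaNIS,\hat{\bm\mu})$, via the Radon--Nikodym derivative $\hat r$ from Corollary \ref{cor_mutcont}\ref{cor_mutcont_rnbl} together with Observation \ref{obs_gtiid}\ref{obs_gtiid_prob} and part (a). Observation \ref{obs_gibbsTSIID} plays no role here, and there is no transfer ``from $\sigmaIID$ to $\sigmaR_{\bm\mu^*}$'' to perform: the Gibbs-spin quantity $\expe[D(\sigmaR_{\bm\mu^*},\bm\mu^*)]$ is what Proposition \ref{proposition_piCR} already bounds.
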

\begin{proof}
For $\ThetaP\le 1$ and $c\in\reals_{\ge 0}\times\reals_{\ge q}$ we have
$\expe[D(\cdot)]\le q\le c_2\ThetaP[-c_1]$.
Otherwise, notice that $n>1$,
let $\delta=\eps=\ln(n)/\sqrt{n}$, $\epsES\in\reals_{>0}$ and
\begin{align*}
\mclm=\left\{\mu\in\mclp([q]^n):\|\gammaa_\mu-\gamma^*\|_\mrmtv\le\frac{\psibl}{4},\iota_2(\mu)\le\epsES,P(\mu,\delta)\le\eps\right\}
\end{align*}
with $P$ from Remark \ref{remark_mucolfreqconc}.
For $\mu\in\mclm$ we have $\|\gammaa_\mu-\gamma^*\|_\infty\le\psibl/2$, hence $\gammaa_\mu\ge\psibl/2$, and with $c_\mrmd\in\reals_{>0}$ from Proposition \ref{proposition_piCR} further
$\expe[D(\sigmaR_\mu,\mu)]\le c_\mrmd(\delta+\eps+\sqrt{\epsES/2})$ using Remark \ref{remark_epssym}.
With $c_\mrmc\in\reals_{>0}^2$ for both Lemma \ref{lemma_gammaaR}\ref{lemma_gammaaR_prob} and Lemma \ref{lemma_gammaaR}\ref{lemma_gammaaR_conc}, Proposition \ref{proposition_pinG}\ref{proposition_pinG_NISDKL} and Markov's inequality we have
\begin{align*}
\prob[\hat{\bm\mu}\not\in\mclm]
&\le c_{\mrmc,2}\exp\left(-\frac{c_{\mrmc,1}\psibl^2}{16}n\right)+\frac{\ln(q)}{\epsES\ThetaP}+\frac{c_{\mrmc,2}}{\eps}e^{-c_{\mrmc,1}\delta^2n}\\
&\le\tilde ce^{-n/\tilde c}+\frac{\tilde c}{\ThetaP[1/3]}+\frac{\tilde c\sqrt{n}}{\ln(n)}e^{-\ln(n)^2/\tilde c}\le\frac{c'}{\ThetaP[1/3]},
\end{align*}
where $\tilde c$ is the implied maximum, $\epsES=\ThetaP[-2/3]$, $c'=\tilde cc''_1+\tilde c+\tilde cc''_2$, $c''_1=\max_{n>1}n^{1/3}e^{-n/\tilde c}$ and $c''_2=\max_{n>1}n^{5/6}e^{-\ln(n)^2/\tilde c}/\ln(n)$ using $\ThetaP\le n$.
Using $D(\cdot)\le q$ gives
\begin{align*}
\expe\left[D\left(\sigmaR_{\hat{\bm\mu}},\hat{\bm\mu}\right)\right]
\le\frac{2c_\mrmd\ln(n)}{\sqrt{n}}+\frac{c_{\mrmd}}{\sqrt{2}\ThetaP[1/3]}+\frac{qc'}{\ThetaP[1/3]}\le\frac{c}{\ThetaP[1/3]}
\end{align*}
with $c=\max(c_\mrmd+qc'+2c_\mrmd\max_{n>1}\ln(n)/n^{1/6},q)$, so 
$\expe[D(\sigmaR_{\hat{\bm\mu}},\hat{\bm\mu})]\le c/\ThetaP[1/3]$ holds for all $\ThetaP\ge 0$.
The Nishimori condition \ref{obs_nishicond}\ref{obs_nishicond_dec} completes the proof of Part \ref{cor_piCR}\ref{cor_piCR_NIS}.
With $C$ from Proposition \ref{proposition_pinG}\ref{proposition_pinG_IIDDKL},
branching off in the discussion above yields
\begin{align*}
\prob[\bm\mu^*\not\in\mclm]
&\le c_{\mrmc,2}\exp\left(-\frac{c_{\mrmc,1}\psibl^2}{16}n\right)+\frac{C_22^{C_1}}{\epsES\ThetaP[C_1]}+\frac{c_{\mrmc,2}}{\eps}e^{-c_{\mrmc,1}\delta^2n}
\le\frac{c'}{\ThetaP[C_1/3]}
\end{align*}
with $\epsES=\ThetaP[-2C_1/3]$ and $c'$ obtained analogously to the above.
Repeating the remaining steps yields $c_\mfkg\in\reals_{>0}$ with $\expe[D(\sigmaR_{\bm\mu^*},\bm\mu^*)]\le c/\ThetaP[C_1/3]$.
For the second part of Corollary \ref{cor_piCR}\ref{cor_piCR_IID} and $\ThetaP>1$
let $\hat c$ from Corollary \ref{cor_mutcont}\ref{cor_mutcont_rnbl}, $c^*$ from Observation \ref{obs_gtiid}\ref{obs_gtiid_prob}, $c'$ from Part \ref{cor_piCR}\ref{cor_piCR_NIS} and $r=\sqrt{\ln(\ThetaP)/((c_1^*+\hat c)n)}$, then we have
\begin{align*}
\expe[D(\sigmaIID,\bm\mu^*)]\le e^{\hat c r^2n}\frac{c'}{\ThetaP}+qc_2^*e^{-c^*_1r^2n}
=\frac{c}{\ThetaP[\rho]}
\end{align*}
with $c=c'+qc^*_2$ and $\rho=c^*_1/(c^*_1+\hat c)$.
Finally, recall from the proof of Proposition \ref{proposition_pinG} that $\rho=C_1$.
\end{proof}
\subsubsection{Gibbs Marginal Distribution Projection}\label{pinning_limiting_marginal_distributions}
Let $\GTSM=\GTSM(\sigmaR)$ with $\sigmaR=\sigmaIID$ or $\sigmaR=\sigmaNIS$.
Recall that $\gammaaG[,\GTSM]$ is defined as the expected law under the empirical marginal distribution $\bm\pi=\piG[,\GTSM]$, given $\bm\pi$.
Lemma \ref{lemma_gammaaR}\ref{lemma_gammaaR_prob} ensures that $\gammaa_{\GTSM}$ is asymptotically close to $\gamma^*$ with very high probability, but this is not sufficient for $\nablaIbl$ and $\bethebu$, being extremal only on $\mclp[*][2]([q])$, meaning that the expectation has to be exactly $\gamma^*$.

Hence, we map $\bm\pi\in\mclp[][2]([q])$ to some $\bm\pi^\circ\in\mclp[*][2]([q])$ such that the Wasserstein distance $\distW(\bm\pi,\bm\pi^\circ)$ vanishes, which is sufficient because both $\nablaI$ and $\bethe$ will turn out to be Lipschitz continuous.
First, we identify a suitable counterweight to $\gammaa$.

Let $\alpha_\mfkg:\mclp([q])\rarr[0,1]$ and $f_\mfkg:\mclp([q])\rarr\mclp([q])$, $\gamma\mapsto[\gamma]_\mrmc$, be given as follows.
Let $\ell_\circ=\ell_\mrmc^{-1}(\psibl)$ with $\ell_\mrmc:[0,1)\rarr\reals_{\ge 0}$, $\ell\mapsto-(1+\xlnx(\ell))/\ln(\ell)$ and $\ell(\gamma)=\|\gamma-\gamma^*\|_2$ for $\gamma\in\mclp([q])$.
For $\ell(\gamma)=0$ let $[\gamma]_\mrmc=\gamma^*$ and $\alpha(\gamma)=0$.
For $\ell(\gamma)\in(0,\ell_\circ]$ let $[\gamma]_\mrmc=\gamma^*+\frac{\ell_\mrmc(\ell(\gamma))}{\ell(\gamma)}(\gamma^*-\gamma)$ and $\alpha(\gamma)=-\xlnx(\ell(\gamma))$.
For $\ell(\gamma)\ge\ell_\circ$ let $[\gamma]_\mrmc=\gamma^*+\frac{\psibl}{\ell(\gamma)}(\gamma^*-\gamma)$ and $\alpha(\gamma)=\ell(\gamma)/(\ell(\gamma)+\psibl)$.
\begin{observation}\label{obs_gammaacounterweight}
The maps $\alpha$ and $f$ are continuous with $\alpha(\gamma)[\gamma]_\mrmc+(1-\alpha(\gamma))\gamma=\gamma^*$ for $\gamma\in\mclp([q])$.
With $c=(e-1)/e$ we have $\ell_\circ\in[e^{-\psibu},e^{-c\psibu}]$ and $\alpha$ is increasing in $\ell(\gamma)$.
\end{observation}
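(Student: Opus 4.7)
The plan is to dispatch the four claims in sequence after first establishing that $\ell_\mrmc:(0,1)\to(0,\infty)$ is a strictly increasing bijection, so that $\ell_\circ=\ell_\mrmc^{-1}(\psibl)$ is unambiguously defined. Writing $\ell_\mrmc(\ell)=-1/\ln\ell-\ell$, differentiation gives $\ell_\mrmc'(\ell)=1/(\ell(\ln\ell)^2)-1$; the denominator $\ell(\ln\ell)^2$ attains its maximum on $(0,1)$ at $\ell=1/e^2$ with value $4/e^2$, so $\ell_\mrmc'\ge e^2/4-1>0$ everywhere. Combined with the limits $\ell_\mrmc(0^+)=0$ and $\ell_\mrmc(1^-)=+\infty$, this yields a strictly increasing bijection and hence a unique $\ell_\circ$.

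For the convex-combination identity, note that in each case $[\gamma]_\mrmc-\gamma=(1+r(\gamma))(\gamma^*-\gamma)$ with $r=0$, $r=\ell_\mrmc(\ell)/\ell$, or $r=\psibl/\ell$ respectively, so the identity reduces to $\alpha(\gamma)(1+r(\gamma))=1$. In case 3 this is $\alpha=\ell/(\ell+\psibl)$, matching the definition. In case 2, substituting $\alpha=-\ell\ln\ell$ into $\alpha(1+\ell_\mrmc(\ell)/\ell)=1$ gives $-\ln\ell\cdot(\ell+\ell_\mrmc(\ell))=1$, i.e.\ $\ell_\mrmc(\ell)=-1/\ln\ell-\ell$, which is again the definition. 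For continuity the two nontrivial junction points are $\ell=0$ and $\ell=\ell_\circ$: at $\ell=0$ the factor $(\gamma^*-\gamma)/\ell$ has unit Euclidean norm while $\ell_\mrmc(\ell)/\ell\to 0$, so $[\gamma]_\mrmc\to\gamma^*$ and $\alpha=-\ell\ln\ell\to 0$; at $\ell=\ell_\circ$ the defining equation $\ell_\mrmc(\ell_\circ)=\psibl$ makes the two formulas for $[\gamma]_\mrmc$ coincide, and the already-established convex-combination identity then forces the two formulas for $\alpha$ to coincide as well.

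For the bound $\ell_\circ\in[e^{-\psibu},e^{-c\psibu}]$, monotonicity of $\ell_\mrmc$ reduces matters to the two inequalities $\ell_\mrmc(e^{-\psibu})\le\psibl$ and $\ell_\mrmc(e^{-c\psibu})\ge\psibl$. The first is immediate: $\ell_\mrmc(e^{-\psibu})=1/\psibu-e^{-\psibu}=\psibl-e^{-\psibu}<\psibl$. The second, after substituting $c=(e-1)/e$ and setting $t=c\psibu$, rearranges to $te^{-t}\le 1/e$, which is the elementary fact that $x\mapsto xe^{-x}$ attains its maximum $1/e$ at $x=1$. This is the only place where the precise constant $c=(e-1)/e$ plays a role, and I expect it to be the single step that is not purely mechanical.

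Finally, to show that $\alpha$ is increasing in $\ell(\gamma)$, I would piece together the three regimes: $\alpha\equiv 0$ in case 1; on $(0,\ell_\circ]$ we have $\alpha=-\ell\ln\ell$ with derivative $-\ln\ell-1>0$ whenever $\ell<1/e$, which holds since $\ell_\circ\le e^{-c\psibu}<e^{-1}$ from $c\psibu=(e-1)\psibu/e>1$ (using $\psibu=1/\psibl>q\ge 2$); and on $[\ell_\circ,\infty)$ we have $\alpha=\ell/(\ell+\psibl)=1-\psibl/(\ell+\psibl)$ which is manifestly increasing. Since $\alpha$ is continuous at the two joins and monotone on each piece, it is monotone overall.
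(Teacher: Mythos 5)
Your proposal is correct in substance and follows essentially the same decomposition as the paper's proof, but it works out several steps in more detail (the strict monotonicity of $\ell_\mrmc$ via a derivative bound, the direct endpoint evaluation $\ell_\mrmc(e^{-\psibu})<\psibl\le\ell_\mrmc(e^{-c\psibu})$, and the algebraic reduction of the convex-combination identity to $\alpha(1+r)=1$, which the paper just calls ``by construction'').

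One slip should be fixed: in the continuity argument at $\ell=0$ you claim $\ell_\mrmc(\ell)/\ell\to 0$, but in fact $\ell_\mrmc(\ell)/\ell=-1/(\ell\ln\ell)-1\to+\infty$ as $\ell\to 0^+$. The decomposition you want is $[\gamma]_\mrmc-\gamma^*=\ell_\mrmc(\ell)\cdot\frac{\gamma^*-\gamma}{\ell}$, where $\frac{\gamma^*-\gamma}{\ell}$ is a unit vector in $\|\cdot\|_2$ and the scalar $\ell_\mrmc(\ell)\to 0$; you inadvertently divided the scalar by $\ell$ a second time. The conclusion $[\gamma]_\mrmc\to\gamma^*$ is still correct, so this is a bookkeeping error rather than a missing idea. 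It would also be cleaner to state explicitly that in case 1 ($\gamma=\gamma^*$) the identity $\alpha[\gamma]_\mrmc+(1-\alpha)\gamma=\gamma^*$ holds trivially because $\gamma^*-\gamma=0$, rather than trying to fit it into the $\alpha(1+r)=1$ reduction, which is only valid when $\gamma\neq\gamma^*$.
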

\begin{proof}
Clearly, the maps $\ell$ and $\ell_\mrmc$ are continuous. Further, $\ell_\mrmc$ is strictly increasing with $\ell_\mrmc(0)=0$ and $\ell_\mrmc(1)=\infty$, so $\ell_\circ\in(0,1)$ is well-defined.
Hence, we have $[\gamma]_\mrmc=\gamma^*+s(\gamma)(\gamma^*-\gamma)$ with $s(\gamma)=\min(\psibl,\ell_\mrmc(\ell(\gamma)))/\ell(\gamma)$ and thereby
\begin{align*}
\|[\gamma]_\mrmc-\gamma^*\|_\infty\le\|[\gamma]_\mrmc-\gamma^*\|_2
=\min(\psibl,\ell_\mrmc(\ell(\gamma)))\le\psibl,
\end{align*}
so $[\gamma]_\mrmc\ge 0$ and thereby $[\gamma]_\mrmc\in\mclp([q])$.
The map $s$ is clearly continuous for $\gamma\neq\gamma^*$. For $\ell(\gamma)\le\ell_\circ$ we further have
$\|[\gamma]_\mrmc-\gamma^*\|_2=\ell_\mrmc(\ell(\gamma))$ and hence $f$ is continuous.
Notice that $\ell_\mrmc(\ell_\circ)=\psibl$ implies $-\xlnx(\ell_\circ)=\ell_\circ/(\ell_\circ+\psibl)$ and hence $\alpha$ is clearly continuous for $\gamma\neq\gamma^*$, while continuity for $\gamma=\gamma^*$ follows from $-\xlnx(\ell(\gamma^*))=0$.
We have $\alpha(\gamma)[\gamma]_\mrmc+(1-\alpha(\gamma))\gamma=\gamma^*$ by construction.
With $\ell_\mrmc(\ell)\le -1/\ln(\ell)$ we have $\ell_\circ\ge e^{-\psibu}$, while the upper bound follows with $\ell_\mrmc(\ell)\ge -c/\ln(\ell)$.
With $\psibl\le 1/q\le 1/2$ we have $c\psibu>1$, so $\alpha$ is increasing since $\xlnx$ takes its unique minimum at $e^{-1}$.
\end{proof}
So, with the notation from Section \ref{pinning_marginal_distributions} for the general case let
\begin{align*}
\pi^\circ_\mu=(1-\alpha_\mu)\pi_\mu+\alpha_\mu\pi_\bullet
\end{align*}
with $\pi_\bullet=\opm[,\mclp([q]),\gamma]$, $\gamma=[\gammaa_\mu]_\mrmc$, and $\alpha_\mu=\alpha(\|\gammaa_\mu-\gamma^*\|_2)$.
For a decorated graph $G$ let $\piG[,G]^\circ=\pi^\circ_{\lawG[,G]}$ be the projection of $\piG[,G]$ onto $\mclp[*][2]([q])$.
\begin{lemma}\label{lemma_piapproxpstartwo}
Let $m\le\mbu$ and $\GTSM=\GTSM(\sigmaR)$ with $\sigmaR=\sigmaIID$ or $\sigmaR=\sigmaNIS$.
\begin{alphaenumerate}
\item\label{lemma_piapproxpstartwo_general}
For $\mu\in\mclp([q]^n)$ we have $\pi^\circ_\mu\in\mclp[*][2]([q])$ and $\distW(\pi_\mu,\pi^\circ_\mu)\le\alpha_\mu$.
\item\label{lemma_piapproxpstartwo_graph}
There exists $c_\mfkg\in\reals_{>0}$ such that
$\expe[\distW(\piG[,\GTSM],\piG[,\GTSM]^\circ)]\le c\sqrt{\ln(n+1)^3/n}$.
\end{alphaenumerate}
\end{lemma}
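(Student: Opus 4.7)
The plan splits naturally along the two parts of the statement.

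For Part~\ref{lemma_piapproxpstartwo_general} I will first verify the membership $\pi^\circ_\mu\in\mclp[*][2]([q])$ by a direct computation: linearity gives
\begin{align*}
\expe[\gammaR_{\pi^\circ_\mu}]=(1-\alpha_\mu)\gammaa_\mu+\alpha_\mu[\gammaa_\mu]_\mrmc,
\end{align*}
and this equals $\gamma^*$ by the defining identity of the counterweight in Observation~\ref{obs_gammaacounterweight}. For the Wasserstein bound I will exhibit an explicit coupling: draw $\gammaR\dequal\pi_\mu$ and, independently, $\bm b\dequal\Bin(1,\alpha_\mu)$, then return $(\gammaR,\gammaR)$ when $\bm b=0$ and $(\gammaR,[\gammaa_\mu]_\mrmc)$ when $\bm b=1$. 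The two marginals are $\pi_\mu$ and $(1-\alpha_\mu)\pi_\mu+\alpha_\mu\pi_\bullet=\pi^\circ_\mu$ as required, and the cost is $\expe[\|\gammaR_1-\gammaR_2\|_\mrmtv]=\alpha_\mu\expe[\|\gammaR-[\gammaa_\mu]_\mrmc\|_\mrmtv]\le\alpha_\mu$ since the total variation distance is bounded by one.

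For Part~\ref{lemma_piapproxpstartwo_graph} I will apply Part~\ref{lemma_piapproxpstartwo_general} pointwise to $\mu=\lawG[,\GTSM]$, reducing the task to bounding $\expe[\bm\alpha]$ with $\bm\alpha:=\alpha(\bm\ell)$ and $\bm\ell:=\|\gammaaG[,\GTSM]-\gamma^*\|_2$. The two ingredients I need are (i) the global monotonicity of $\alpha$ in $\ell$ together with the small-$\ell$ form $\alpha(\ell)=-\xlnx(\ell)=\ell\ln(1/\ell)$ on $(0,\ell_\circ]$, both from Observation~\ref{obs_gammaacounterweight}, and (ii) the tail bound from Lemma~\ref{lemma_gammaaR}\ref{lemma_gammaaR_prob}. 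Since $\|\gamma-\gamma'\|_2\le\|\gamma-\gamma'\|_1=2\|\gamma-\gamma'\|_\mrmtv$ on $\mclp([q])$, the tail bound transfers to $\prob[\bm\ell>r]\le c_2\exp(-c_1r^2n/4)$.

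I will then pick a threshold $r_n:=\sqrt{C\ln(n+1)/n}$ with $C$ chosen so that, for all $n\ge n_0$, both $r_n\le\min(1/e,\ell_\circ)$ and $\prob[\bm\ell>r_n]\le 1/n$ hold. Splitting at $\bm\ell\le r_n$ and using monotonicity of $\alpha$ then gives
\begin{align*}
\expe[\bm\alpha]\le\alpha(r_n)+\prob[\bm\ell>r_n]\le r_n\ln(1/r_n)+\tfrac{1}{n}\le\tfrac{\sqrt{C}}{2}\sqrt{\tfrac{\ln(n+1)^3}{n}}+\tfrac{1}{n},
\end{align*}
using $\ln(1/r_n)\le\tfrac{1}{2}\ln(n+1)$ in the last step, which is of the claimed order. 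The finitely many indices $n<n_0$ are absorbed into $c$ by enlarging it so the right-hand side exceeds one. I do not anticipate any genuine obstacle: the only mildly delicate points are confirming the global monotonicity of $\alpha$ across the transition at $\ell_\circ$ (already asserted in Observation~\ref{obs_gammaacounterweight}) and the elementary norm comparison above; everything else is routine.
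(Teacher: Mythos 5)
Your proposal is correct and follows essentially the same route as the paper: Part~\ref{lemma_piapproxpstartwo_general} via the two-component mixture coupling (draw from $\pi_\mu$ or $\pi_\bullet$ with probabilities $1-\alpha_\mu$ and $\alpha_\mu$) and the counterweight identity from Observation~\ref{obs_gammaacounterweight}, and Part~\ref{lemma_piapproxpstartwo_graph} by splitting at a threshold $r_n\asymp\sqrt{\ln n/n}$, using monotonicity of $\alpha$ together with $\alpha(\ell)=-\xlnx(\ell)$ in the small-$\ell$ regime, the norm comparison $\|\cdot\|_2\le 2\|\cdot\|_\mrmtv$, and the tail bound of Lemma~\ref{lemma_gammaaR}\ref{lemma_gammaaR_prob}. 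The only cosmetic differences are that your threshold is tuned to make the tail $O(1/n)$ rather than the paper's $O(1/\sqrt n)$ (both are absorbed by the target rate), and your extra requirement $r_n\le 1/e$ is not needed given the global monotonicity already asserted in Observation~\ref{obs_gammaacounterweight}; both are harmless.
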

\begin{proof}
With $(\bm b,\bm\gamma_0,\bm\gamma_1)\dequal\Bin(1,\alpha_\mu)\otimes\pi_\mu\otimes\pi_\bullet$ we have $\bm\gamma_{\bm b}\dequal\pi_\mu^\circ$ and
\begin{align*}
\expe[\bm\gamma_{\bm b}]
=\expe[\bmone\{\bm b=0\}\bm\gamma_0]+\expe[\bmone\{\bm b=1\}\bm\gamma_1]
=(1-\alpha_\mu)\gammaa_\mu+\alpha_\mu[\gammaa_\mu]_\mrmc=\gamma^*,
\end{align*}
so $\pi_\mu^\circ\in\mclp[*][2]([q])$.
Further, we have
\begin{align*}
\distW(\pi_\mu,\pi_\mu^\circ)
\le\expe[\|\gammaR_{\bm b}-\gammaR_0\|_\mrmtv]
=\alpha_\mu\expe[\|\gammaR_{1}-\gammaR_0\|_\mrmtv]
\le\alpha_\mu.
\end{align*}
With $c'\in(0,1]\times\reals_{\ge 1}$ from Lemma \ref{lemma_gammaaR}\ref{lemma_gammaaR_prob} let
$r=\sqrt{\ln(n)/(2c'_1n)}$ and let $n_{\circ,\mfkg}\in\ints_{\ge 3}$ be such that $2r\le\ell_\circ$ if $n\ge n_\circ$.
For $n\le n_0$ we have
\begin{align*}
\expe[\distW(\piG[,\GTSM],\piG[,\GTSM]^\circ)]\le q\le \sqrt{\frac{q^2n_\circ}{\ln(2)^3}}\sqrt{\frac{\ln(n+1)^3}{n}}.
\end{align*}
Otherwise, with $\distW(\cdot)\le 1$, the first part and $\gammaaR=\gammaaG[,\GTSM]$ we have
\begin{align*}
\expe[\distW(\piG[,\GTSM],\piG[,\GTSM]^\circ)]
\le\expe\left[\bmone\{\|\gammaaR-\gamma^*\|_2<2r\}\alpha(\|\gammaaR-\gamma^*\|_2)\right]+\prob[\|\gammaaR-\gamma^*\|_2\ge 2r].
\end{align*}
With $\|\gammaaR-\gamma^*\|_2\le 2\|\gammaaR-\gamma^*\|_\mrmtv$ and Observation \ref{obs_gammaacounterweight} we have
\begin{align*}
\expe[\distW(\piG[,\GTSM],\piG[,\GTSM]^\circ)]
\le\alpha(2r)+c'_2e^{-c'_1r^2n}
=-\xlnx(2r)+\frac{c'_2}{\sqrt{n}}.
\end{align*}
Hence, for any sufficiently large $\tilde c$ we have
\begin{align*}
\expe[\distW(\piG[,\GTSM],\piG[,\GTSM]^\circ)]
\le\tilde c\sqrt{\frac{\ln(n)}{n}}\ln\left(\frac{n}{\ln(n)}\right)+\tilde c\sqrt{\frac{\ln(n+1)^3}{n}}
\le 2\tilde c\sqrt{\frac{\ln(n+1)^3}{n}}.
\end{align*}
\end{proof}
\subsection{The Interpolation Method}\label{interpolation}
Let $\ThetaP_\mfkg:\ints_{>0}\rarr\reals_{>0}$ be such that $\ThetaP=\omega(1)$ and $\ThetaP\le n$.
This resolves all dependencies on $\ThetaP$ into dependencies on $n$.
Unless mentioned otherwise we assume that $\degae>0$.
\subsubsection{Overview}\label{interpolation_overview}
Recall $\ZF$ from Equation (\ref{ZF_def}) and the second contribution to the Bethe free entropy in Section \ref{bethe_main}, in particular $(\psiR_\circ,\gammaR_\circ)$. The interpolation method relies on the derivative of the function
\begin{align*}
\phiI_{\lawpsi,\gamma^*,\degae,\pi,n}(\tI)=\expe\left[\phiG(\GTSNIS)\right]+\tI\phi_\circ,\,
\phi_\circ=\frac{\degae(k-1)}{\ZFabu k}\expe\left[\xlnx\left(\ZF(\psiR_\circ,\gammaR_\circ\right)\right],
\end{align*}
using the shorthand $\GTSNIS=\GTSM[\mR,\mIR,\setPR](\sigmaNIS_{\mR})$.
Now, if the derivative is (asymptotically) non-negative, then we have $\phiI(0)\le\phiI(1)$, and realignment yields Proposition \ref{proposition_int}.
Hence, we determine the asymptotics of the derivative.
Recall $\piG[,G]$ from Section \ref{pinning_gibbs_marginal_distributions} and its projection $\piG[,G]^\circ\in\mclp[*][2]([q])$ from Section \ref{pinning_limiting_marginal_distributions}.
\begin{proposition}\label{proposition_int_derivpos}
We have $\frac{\partial}{\partial\tI}\phiI(\tI)=\frac{\degae}{\ZFabu k}\expe[\nablaI(\piG[,\GTSNIS]^\circ,\pi)]+\mclo(\ThetaP[-1/3])$.
\end{proposition}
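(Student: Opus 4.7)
The proof will proceed by differentiating $\phiI(\tI)$ explicitly and matching the result to $\nablaI$. Since $\phiI(\tI)$ depends on $\tI$ only through the Poisson rates $\tI\degae n/k$ of $\mR$ and $(1-\tI)\degae$ of each $\mIR[i]$, together with the explicit linear term $\tI\phi_\circ$, I will invoke the Poisson differentiation identity $\frac{d}{d\lambda}\expe[f(\Po(\lambda))] = \expe[f(\Po(\lambda)+1)] - \expe[f(\Po(\lambda))]$ once for the standard factors and once for each of the $n$ interpolator coordinates. This decomposes $\partial_\tI \phiI$ into three pieces: $(\degae n/k)$ times the expected change of $\phi_\mathrm{g}(\GTSNIS)$ upon inserting one fresh standard factor $\wRa = (\vRa, \psiRa)$, minus $\degae$ times the analogous expected change per variable $i$ obtained by inserting one extra interpolator $\psiIRa[,\sigmaNIS(i)]$, plus $\phi_\circ$.

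To identify these incremental changes with Gibbs expectations, I will pass to the null model via the Nishimori identity $(\sigmaNIS, \GTSNIS) \dequal (\sigmaRG[,\GTSNIS], \GTSNIS)$ of Observation \ref{obs_nishicond}\ref{obs_nishicond_dec}, which allows conditioning on the graph and replacing the ground truth by a Gibbs sample. After this switch, inserting an extra standard factor $(\vR, \psiR) \dequal \unif([n]^k) \otimes \lawpsi$ multiplies $\ZG$ by $\expe_{\sigmaR \sim \lawG[,\GTSNIS]}[\psiR(\sigmaR_\vR)]$, which, under the Nishimori-reweighted sampling of the pair $(\vR, \psiR)$ proportional to $\psiR(\sigmaNIS_\vR)$, yields after logarithm and expectation precisely $\expe[\xlnx(\ZF(\psiR, \bm\gamma^{(k)}))]/\ZFabu$, where $\bm\gamma^{(k)}$ is the joint Gibbs marginal of $\lawG[,\GTSNIS]$ at the $k$ endpoints of $\vR$. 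The same computation for an interpolator insertion produces $\expe[\xlnx(\ZFM(\psiR, \hR, \bm\gamma^{(k)}_\mathrm{mix}))]/\ZFabu$, where now one arm of $\bm\gamma^{(k)}_\mathrm{mix}$ is a Gibbs marginal at a uniform variable and the remaining $k-1$ arms come from the reference distribution $\pi$ built into $\psiIRa$. The constant $\phi_\circ$ is engineered exactly so that the $(k-1)\xlnx(\ZF)$ summand of $\nablaI$ emerges: it arises from differentiating the normaliser $\ZM = q^{-|\setPR|}\ZFabu^{\|\mIR\|_1}\expe[\ZFa(\gammaIID)^\mR]$, and $\tI\phi_\circ$ is precisely the affine correction that leaves only the $(k-1)$-weighted term surviving the telescoping.

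To factorize the joint Gibbs marginal $\bm\gamma^{(k)}$ into a product of one-point marginals, I will invoke Proposition \ref{proposition_pinG}\ref{proposition_pinG_NISDKL}: pinning guarantees $\expe[\iota_k(\lawG[,\GTSNIS])] = \mclo(\ThetaP[-1])$, and Pinsker's inequality as in Remark \ref{remark_epssym} converts this into a total variation error of $\mclo(\ThetaP[-1/2])$. The empirical color-conditional one-point marginals $\check\pi_{\lawG[,\GTSNIS], \sigmaNIS, \tau}$ are then replaced by the unconditional reweighted marginals $\piGR[,\GTSNIS,\tau]$ via Corollary \ref{cor_piCR}\ref{cor_piCR_NIS} at Wasserstein cost $\mclo(\ThetaP[-1/3])$; this is the dominant error and dictates the rate in the statement. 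A final application of Lemma \ref{lemma_piapproxpstartwo}\ref{lemma_piapproxpstartwo_graph} replaces $\piG[,\GTSNIS]$ by its projection $\piG[,\GTSNIS]^\circ \in \mclp[*][2]([q])$ with additional error $\mclo(\sqrt{\ln(n)^3/n}) = o(\ThetaP[-1/3])$ (absorbed since $\ThetaP \le n$). The three surviving contributions then assemble to exactly $\nablaI(\piG[,\GTSNIS]^\circ, \pi)$ scaled by $\degae/(\ZFabu k)$, where the factor $1/\ZFabu$ comes from the normalisation $\expe[\psiIRa] \equiv \ZFabu$ of Observation \ref{obs_GRM_expebounds}\ref{obs_GRM_expeboundsI}, and the factor $1/k$ comes from the $\unif([k])$ averaging inside $\psiIRa$.

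The main technical difficulty will be the telescoping cancellation in the second step: the derivative of the normaliser $\ZM$, the derivative of the Gibbs-reweighted logarithm inside $\xlnx$, and the explicit affine correction $\tI\phi_\circ$ must combine exactly, and the specific algebraic form of $\phi_\circ$ is determined by this bookkeeping. A secondary difficulty is establishing that $\gamma \mapsto \expe[\xlnx(\ZF(\psiR, \gammaR))]$ and $\gamma \mapsto \expe[\xlnx(\ZFM(\psiR, \hR, \gammaR))]$ are Lipschitz in $\gamma$ with constants depending only on $\mfkg$, so that the Wasserstein replacements $\check\pi \to \piG \to \piG^\circ$ translate into additive errors on the derivative; this follows from Observation \ref{obs_fad}\ref{obs_fad_lipschitz} together with the boundedness of $\xlnx$ on $[\psibl, \psibu]$ and Observation \ref{obs_GRM_expebounds}\ref{obs_GRM_expeboundsI}.
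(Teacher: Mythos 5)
Your overall architecture matches the paper's proof of this proposition: differentiate via the Poisson identity (Lemma~\ref{lemma_intderiv}), pass to Gibbs spins via the Nishimori condition (Observation~\ref{obs_nishicond}\ref{obs_nishicond_dec}), identify the standard-factor and interpolator increments with $\xlnx(\ZF(\cdot))$ and $\xlnx(\ZFM(\cdot))$ terms (Lemmas~\ref{lemma_intderiv_zfm} and \ref{lemma_intderiv_zf}), factorize the joint Gibbs marginal via Proposition~\ref{proposition_pinG}, and then replace $\piG[,\GTSNIS]$ by its projection $\piG[,\GTSNIS]^\circ$ using Lipschitz continuity of $\nablaI$. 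However, there are two genuine gaps.

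First, you do not address the ground-truth alignment. The Nishimori ground truth $\sigmaNIS_{m}$ itself depends on the factor count $m$, so the Poisson differentiation formula produces $\expe[n\phiG(\GTSM[\mR+1,\mIR,\setPR](\sigmaNIS_{\mR+1}))]-\expe[n\phiG(\GTSM[\mR,\mIR,\setPR](\sigmaNIS_{\mR}))]$, in which \emph{both} the graph and the ground truth change. Your description as ``the expected change upon inserting one fresh standard factor'' silently holds the ground truth fixed. The paper devotes Lemma~\ref{lemma_int_factor_nishieq} to precisely this, using the coupling of Nishimori ground truths (Observation~\ref{obs_niscoupling}) together with the Lipschitz continuity of the conditional quenched free entropy (Corollary~\ref{cor_contphiTSM}) to replace $\sigmaNIS_{\mR+1}$ by $\sigmaNIS_{\mR}$ at cost $\mclo(\ln(n)/\sqrt{n})$; without this step the reduction to a single inserted factor does not follow.

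Second, your invocation of Corollary~\ref{cor_piCR} is misplaced. After the Nishimori reduction, the inserted factor's wires $\vR$ are uniform over $[n]^k$, so the one-point Gibbs marginals $(\lawG[,\GTSNIS]|_{\vR(h)})_{h}$ are already i.i.d.\ draws from the ordinary empirical marginal distribution $\piG[,\GTSNIS]$---there is no ground-truth color conditioning. The color-conditional marginals $\check\pi_{\mu,\sigma,\tau}$ and their reweighted counterparts $\piGR$ arise only when wires are drawn from $\wTSYa$ conditional on the factor assignment $\tau$, which is the situation in the Aizenman--Sims--Starr scheme (Sections~\ref{assfactor_marginaldist} and \ref{assvar_marginaldist}), not here. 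Applying Corollary~\ref{cor_piCR} in the interpolation argument is not only unnecessary but would not typecheck against the objects actually produced; the correct and sufficient steps are Proposition~\ref{proposition_pinG}\ref{proposition_pinG_NISDKL} with Markov's inequality at scale $\delta=\ThetaP[-2/3]$ (which, together with Pinsker, is what produces the $\mclo(\ThetaP[-1/3])$ rate), followed by the projection step via Lemma~\ref{lemma_piapproxpstartwo}\ref{lemma_piapproxpstartwo_graph} and Lipschitz continuity of $\nablaI$ in the Wasserstein distance, which the paper proves from scratch as Lemma~\ref{lemma_nablaI_lipschitz} rather than from Observation~\ref{obs_fad}\ref{obs_fad_lipschitz}.
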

In order to establish Proposition \ref{proposition_int_derivpos},
we first compute the derivative of $\phiI$.
\begin{lemma}\label{lemma_intderiv}
We have $\frac{\partial}{\partial\tI}\phiI(\tI)=\frac{\degae}{k}\Delta^\circ+\phi_\circ-\degae\Delta^\lrarr$, where
\begin{align*}
\Delta^\circ&=\expe[n\phiG(\GTSM[\mR+1,\mIR,\setPR](\sigmaNIS_{\mR+1}))]-\expe[n\phiG(\GTSM[\mR,\mIR,\setPR](\sigmaNIS_{\mR}))],\\
\Delta^\lrarr&=\sum_{i\in[n]}\frac{1}{n}\left(\expe[n\phiG(\GTSM[\mR,\mIR+\opm[,[n],i],\setPR](\sigmaNIS_{\mR}))]-\expe[n\phiG(\GTSM[\mR,\mIR,\setPR](\sigmaNIS_{\mR}))]\right).
\end{align*}
\end{lemma}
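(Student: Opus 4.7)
The function $\phiI(\tI)$ is the sum of $\expe[\phiG(\GTSNIS)]$ and the linear term $\tI\phi_\circ$; the latter contributes exactly $\phi_\circ$ under differentiation, so the task reduces to computing $\frac{\partial}{\partial\tI}\expe[\phiG(\GTSNIS)]$ and matching it with $\frac{\degae}{k}\Delta^\circ-\degae\Delta^\lrarr$.

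The plan is to expose the $\tI$-dependence of $\GTSNIS$ by conditioning on the Poisson counts $(\mR,\mIR)$, which carry \emph{all} of the $\tI$-dependence (the pinning set $\setPR$ depends only on $\ThetaP$, and the graph randomness as well as the Nishimori ground truth distribution for a fixed size $m$ are both $\tI$-free). Introducing the auxiliary function
\begin{align*}
F(m,m^\lrarr)=\expe\left[\phiG\left(\GTSM[m,m^\lrarr,\setPR](\sigmaNIS_m)\right)\right],
\end{align*}
where the remaining expectation integrates out $\setPR$, $\sigmaNIS_m$ and the graph given the counts, independence of $\mR$ and $\mIR$ (and everything else) yields
\begin{align*}
\expe[\phiG(\GTSNIS)]=\sum_{m\ge 0}\sum_{m^\lrarr\in\ints_{\ge 0}^n}\prob[\mR=m]\,\prob[\mIR=m^\lrarr]\,F(m,m^\lrarr).
\end{align*}
The only $\tI$-dependent factors are the Poisson probabilities, with parameters $\lambda_1=\tI\degae n/k$ for $\mR$ and $\lambda_2=(1-\tI)\degae$ for each coordinate $\mIR_i$.

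The main tool is the standard Poisson shift identity: for $\bm r\dequal\Po(\lambda)$ and bounded $f$,
\begin{align*}
\frac{\partial}{\partial\lambda}\expe[f(\bm r)]=\expe[f(\bm r+1)]-\expe[f(\bm r)],
\end{align*}
obtained by termwise differentiation of $\lambda^me^{-\lambda}/m!$ and equivalent to Observation \ref{obs_poisson}\ref{obs_poisson_expe}. Applied to $\mR$ via the chain rule ($d\lambda_1/d\tI=\degae n/k$) this produces $\frac{\degae n}{k}(\expe[F(\mR+1,\mIR)]-\expe[F(\mR,\mIR)])$; applied coordinatewise to each $\mIR_i$, exploiting the product structure $\mIR\dequal\mIRa^{\otimes n}$ and $d\lambda_2/d\tI=-\degae$, it produces $-\degae\sum_{i\in[n]}(\expe[F(\mR,\mIR+\opm[,[n],i])]-\expe[F(\mR,\mIR)])$. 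Pulling the factor $n$ back inside the expectations and observing that the shift $m\mapsto m+1$ in $F$ replaces $\sigmaNIS_m$ by $\sigmaNIS_{m+1}$ in the shifted term, these two contributions match $\frac{\degae}{k}\Delta^\circ$ and $-\degae\Delta^\lrarr$ by the very definition of $\Delta^\circ$ and $\Delta^\lrarr$.

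I expect no conceptual obstacle: the lemma is essentially a Poisson gradient identity in two independent variables, and the real work is bookkeeping. The only routine technicality is justifying differentiation under the infinite sum, which follows from Observation \ref{obs_phi_lipschitz} — giving a bound of the form $|F(m,m^\lrarr)|\le c(m+\|m^\lrarr\|_1+\expe[|\setPR|])/n$ — combined with the fast Poisson tails, so that dominated convergence legitimises termwise differentiation uniformly in $\tI$ on compact subintervals of $[0,1]$. Once this is in place, the identification of the surviving terms with $\Delta^\circ$ and $\Delta^\lrarr$ is immediate from the definitions.
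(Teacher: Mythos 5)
Your proposal is correct and follows essentially the same route as the paper: both treat the $n+1$ independent Poisson variables $\mR$ and $\mIR_1,\dots,\mIR_n$ as carrying all $\tI$-dependence, apply the product rule, and use the Poisson shift identity $\frac{\partial}{\partial t}\expe[f(\bm x)]=a(\expe[f(\bm x+1)]-\expe[f(\bm x)])$ for $\bm x\dequal\Po(at+b)$ to produce the difference terms $\Delta^\circ$ and $\Delta^\lrarr$. You spell out the dominated-convergence justification via Observation~\ref{obs_phi_lipschitz} more explicitly than the paper, but this is the same argument.
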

\begin{proof}
We consider $n+1$ independent Poisson variables $\mR$, $\mIR$ depending on $\tI$, while the remainder does not. Hence, we use the product rule, which amounts to taking each derivative individually using Observation \ref{obs_phi_lipschitz} and Corollary \ref{cor_dega}. But for $\bm x\dequal\Po(at+b)$ we have
\begin{align*}
\frac{\partial}{\partial t}\expe[f(\bm x)]
=\sum_xf(x)\frac{\partial}{\partial t}\prob[\bm x=x]
=-a\expe[f(\bm x)]+a\expe[f(\bm x+1)].
\end{align*}
\end{proof}
The second contribution $\phi_\circ$ in Lemma \ref{lemma_intderiv} is exactly what we need.
For the other contributions recall $(\psiR,\hR,\gammaR_{\pi})$ from Equation (\ref{nablaI_def}) and let $\bm\pi=(\piG[,\GTSNIS],\pi)$.
\begin{lemma}\label{lemma_intderiv_zfm}
We have $\ZFabu\Delta^\lrarr=\expe[\xlnx(\ZFM(\psiR,\hR,\gammaR_{\piR}))]$.
\end{lemma}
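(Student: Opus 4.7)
The strategy is to reduce the free-entropy difference in $\Delta^\lrarr$ to a Gibbs average of a single unary weight and then apply the Nishimori identity to convert the planted-model reweighting of the added interpolator into precisely the factor that promotes $\ln$ into $\xlnx$.

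Fix $i\in[n]$ and couple $\bm G=\GTSM[\mR,\mIR,\setPR](\sigmaNIS_{\mR})$ with $\bm G^+_i=\GTSM[\mR,\mIR+\opm[,[n],i],\setPR](\sigmaNIS_{\mR})$ by adding exactly one extra interpolator at variable $i$. By Observation \ref{obs_TSM_iid}, conditional on $(\sigmaNIS_{\mR},\bm G)$ the added interpolator weight $\bm\psi^+$ is drawn from the planted law at color $\sigmaNIS_{\mR}(i)$, which by construction has Radon-Nikodym derivative $\psi\mapsto\psi(\sigmaNIS_{\mR}(i))/\ZFabu$ with respect to the null law $\psiIRa$. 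Writing $\bm\mu=\lawG[,\bm G]$, the partition-function ratio $\ZG(\bm G^+_i)/\ZG(\bm G)=\sum_\tau\bm\mu|_i(\tau)\bm\psi^+(\tau)$ yields
\begin{align*}
n\phiG(\bm G^+_i)-n\phiG(\bm G)=\ln\left(\sum_{\tau\in[q]}\bm\mu|_i(\tau)\,\bm\psi^+(\tau)\right).
\end{align*}

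Take expectations and reweight the added interpolator from its planted law back to the null law of an independent $\bm\psi^+_0\dequal\psiIRa$; this introduces an extra factor $\bm\psi^+_0(\sigmaNIS_{\mR}(i))/\ZFabu$. The key step is the Nishimori identity (Observation \ref{obs_nishicond}\ref{obs_nishicond_dec}): conditional on $\bm G$, the ground truth $\sigmaNIS_\mR$ has law $\bm\mu$, so conditioning on $(\bm G,\bm\psi^+_0)$ yields $\expe[\bm\psi^+_0(\sigmaNIS_{\mR}(i))\mid\bm G,\bm\psi^+_0]=\sum_\tau\bm\mu|_i(\tau)\bm\psi^+_0(\tau)$, a second copy of the log-argument. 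Hence
\begin{align*}
\expe[n\phiG(\bm G^+_i)-n\phiG(\bm G)]=\frac{1}{\ZFabu}\expe\left[\xlnx\left(\sum_{\tau\in[q]}\bm\mu|_i(\tau)\,\bm\psi^+_0(\tau)\right)\right].
\end{align*}

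Finally, expanding $\bm\psi^+_0$ via Equation (\ref{psiIR_def}) identifies the argument of $\xlnx$ as $\ZFM(\psiR,\hR,\gamma')$ with $\gamma'_{1,h'}=\bm\mu|_i$ for all $h'$ and $\gamma'_{2,h'}$ drawn i.i.d.\ from $\pi$; since only $\gamma'_{1,\hR}$ enters $\ZFM$, summing $\ZFabu\Delta^\lrarr$ over $i\in[n]$ with weight $1/n$ replaces $\bm\mu|_i$ by a sample from the empirical marginal distribution $\piG[,\GTSNIS]$, which exactly matches $\gammaR_{\piR}$ for $\piR=(\piG[,\GTSNIS],\pi)$. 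The main obstacle is the reweighting bookkeeping in the second step: one must carefully separate the planted and null laws of the added interpolator on the correct joint distribution of $(\sigmaNIS_{\mR},\bm G,\bm\psi^+_0)$ so that Nishimori produces a second copy of the log-argument and thereby converts $\ln$ into $\xlnx$; the remaining steps are unfoldings of definitions and linearity of expectation.
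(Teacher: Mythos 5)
Your proposal is correct and follows essentially the same route as the paper: couple by adding one interpolator at $i$, identify the free-entropy increment with the Gibbs-averaged unary weight $\sum_\tau\bm\mu|_i(\tau)\bm\psi^+(\tau)$, reweight the planted interpolator back to the null law to pick up the factor $\bm\psi^+_0(\sigmaNIS_{\mR}(i))/\ZFabu$, apply the Nishimori identity conditionally on $(\bm G,\bm\psi^+_0)$ to produce a second copy of the log-argument and hence $\xlnx$, then expand $\psiIRa$ via Equation (\ref{psiIR_def}) and average over $i$ to recover $\ZFM(\psiR,\hR,\gammaR_{\piR})$. The only cosmetic difference is that the paper first fixes the ground truth $\sigma$, establishes the per-vertex identity as a separate Lemma (Lemma~\ref{lemma_int_interpolator_i}), and then averages, whereas you work with the random $\sigmaNIS_{\mR}$ throughout and fold the averaging into a single argument.
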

The proof of Lemma \ref{lemma_intderiv_zfm} is presented in Section \ref{int_interpolator}.
The first contribution is demanding, because the joint Gibbs law is not a product measure. This is where Proposition \ref{proposition_pinG} comes into play.
\begin{lemma}\label{lemma_intderiv_zf}
We have $\ZFabu\Delta^\circ=\expe[\xlnx(\ZF(\psiR,\gammaR_{\piR,1}))]+\mclo(\ThetaP[-1/3])$.
\end{lemma}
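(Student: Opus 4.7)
The plan is to express $\Delta^\circ$ as a pure null-model quantity via the Nishimori identity $\expe[n\phiG(\GTSM[m,\mIR,\setPR](\sigmaNIS_m))]=\expe[\xlnx(\ZG(\GRM[m]))]/\ZM[m]$ of Observation \ref{obs_nishicond}\ref{obs_nishicond_GNIS} (cf.\ Remark \ref{remark_nishixlnx}), and then to peel off the single extra factor $(v_\circ,\psi_\circ)=\wRa[,m+1]$ distinguishing $\GRM[m+1]$ from $\GRM[m]$. Since this factor is independent of $\GRM[m]$, we have $\ZG(\GRM[m+1])=\ZG(\GRM[m])\cdot X$ with $X=\sum_\sigma\lawG[,\GRM[m]](\sigma)\,\psi_\circ(\sigma_{v_\circ})\in[\psibl,\psibu]$, and the elementary identity $\xlnx(ab)=a\,\xlnx(b)+b\,\xlnx(a)$ splits the conditional-on-$\mR=m$ contribution to $\Delta^\circ$ into a ``spurious'' piece
\[
\frac{\expe[X\,\xlnx(\ZG(\GRM[m]))]}{\ZM[m+1]}-\frac{\expe[\xlnx(\ZG(\GRM[m]))]}{\ZM[m]}
\]
and a main piece $\expe[\ZG(\GRM[m])\,\xlnx(X)]/\ZM[m+1]$.

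For the spurious piece I would integrate out the independent $\wRa$ first, yielding $\expe[X\mid\GRM[m]]=\sum_\sigma\lawG[,\GRM[m]](\sigma)\ZFa(\gammaN[,\sigma])=:Y(\GRM[m])$, and then reweight by $\ZG/\ZM[m]$ (once more via Observation \ref{obs_nishicond}\ref{obs_nishicond_GNIS}) to recognise it as $\mathrm{Cov}_{\bm G^\star}(Y(\bm G^\star),\ln\ZG(\bm G^\star))/\expe[Y(\bm G^\star)]$, where $\bm G^\star\dequal\GTSM[m,\mIR,\setPR](\sigmaNIS_m)$. By Nishimori a Gibbs sample from $\bm G^\star$ is equidistributed with $\sigmaNIS_m$, so the sub-Gaussian tail of Corollary \ref{cor_mutcont}\ref{cor_mutcont_prob} yields fourth-moment $O(n^{-2})$ for $\|\gammaN[,\cdot]-\gamma^*\|_\mrmtv$; combined with the quadratic bound of Observation \ref{obs_fad}\ref{obs_fad_maxbound} this gives $\Var_{\bm G^\star}(Y)=O(n^{-2})$ and $\expe[Y(\bm G^\star)]=\ZFabu+O(n^{-1})$, while Lemma \ref{lemma_concphiTSMIIDNIS} yields $\Var_{\bm G^\star}(\ln\ZG(\bm G^\star))=O(n)$. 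Cauchy--Schwarz then bounds the spurious piece by $O(n^{-1/2})\subseteq\mclo(\ThetaP[-1/3])$ since $\ThetaP\le n$.

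The main piece, after the same reweighting, equals $\expe[\xlnx(\sum_\sigma\lawG[,\bm G^\star](\sigma)\psi_\circ(\sigma_{v_\circ}))]/\expe[Y(\bm G^\star)]$ with $(\bm G^\star,\wRa)$ independent. To substitute the product of single-site marginals for the joint neighborhood marginal I would invoke Proposition \ref{proposition_pinG}\ref{proposition_pinG_NISDKL} at $\ell=k$, which yields $\expe[\iota_k(\lawG[,\bm G^\star])]\le\binom{k}{2}\ln(q)/\ThetaP$; Pinsker (Observation \ref{obs_tv}\ref{obs_tv_pinsker}) together with Jensen then bounds the expected total-variation distance between $\lawG[,\bm G^\star]|_{\vRa}$ and $\bigotimes_h\lawG[,\bm G^\star]|_{\vRa(h)}$ by $O(\ThetaP[-1/2])$, and Lipschitzness of $\xlnx$ on $[\psibl,\psibu]$ makes the substitution cost only $\mclo(\ThetaP[-1/2])\subseteq\mclo(\ThetaP[-1/3])$. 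The resulting integrand is $\xlnx(\ZF(\psi_\circ,(\lawG[,\bm G^\star]|_{\vRa(h)})_h))$; since $\vRa\dequal\unif([n]^k)$ has i.i.d.\ coordinates, conditionally on $\bm G^\star$ the tuple $(\lawG[,\bm G^\star]|_{\vRa(h)})_h$ is precisely $\gammaR_{\bm\pi,1}$ for $\bm\pi=(\piG[,\GTSNIS],\pi)$ in the notation of Equation~(\ref{nablaI_def}). Dividing by $\expe[Y(\bm G^\star)]=\ZFabu+O(n^{-1})$, absorbing the denominator error, taking the outer expectation over $\mR$ (discarding $\{\mR>\mbu\}$ at exponentially small cost via Corollary \ref{cor_dega} and Observation \ref{obs_phi_lipschitz}), and finally multiplying by $\ZFabu$ yields the claim.

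The principal obstacle is the quantitative cancellation in the spurious piece: the Cauchy--Schwarz rate $O(n^{-1/2})$ only just fits inside the $\mclo(\ThetaP[-1/3])$ budget, and the estimate $\Var_{\bm G^\star}(Y)=O(n^{-2})$ genuinely relies on the exponential tail of Corollary \ref{cor_mutcont}\ref{cor_mutcont_prob} rather than merely its $L^2$ consequence in Corollary \ref{cor_mutcont}\ref{cor_mutcond_var}; the pinning step and the identification with $\gammaR_{\bm\pi,1}$ are then relatively routine once Proposition \ref{proposition_pinG} is available.
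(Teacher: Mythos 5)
Your decomposition takes a genuinely different route from the paper. You transfer $\Delta^\circ$ to a pure null-model expectation via Observation~\ref{obs_nishicond}\ref{obs_nishicond_GNIS}, peel off the extra factor, and split with the identity $\xlnx(ab)=a\,\xlnx(b)+b\,\xlnx(a)$; the ``spurious'' piece then becomes a covariance, $\mathrm{Cov}_{\bm G^\star}(Y,\ln\ZG)/\expe[Y]$, which you attack by Cauchy--Schwarz. The paper (Lemma~\ref{lemma_int_factor_nishieq} and the rest of Section~\ref{int_factor}) instead first couples $\sigmaNIS_{\mR}$ and $\sigmaNIS_{\mR+1}$ via Observation~\ref{obs_niscoupling}, restricts to a typical event, and then only ever uses Lipschitz continuity of the \emph{expected} free entropies (Corollary~\ref{cor_contphiTSM}), never any concentration of $\phiG$ around its mean. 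Your treatment of the ``main piece'' via Proposition~\ref{proposition_pinG}\ref{proposition_pinG_NISDKL}, Pinsker and Lipschitzness of $\xlnx$ is essentially parallel to the paper's.

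The spurious piece is where the gap lies. You invoke Lemma~\ref{lemma_concphiTSMIIDNIS} for $\Var(\ln\ZG(\bm G^\star))=\mclo(n)$, but that lemma is stated and proved under $\mI\equiv 0$, $\setP=\emptyset$, whereas here $\bm G^\star=\GTSM[m,\mIR,\setPR](\sigmaNIS_m)$ carries random interpolators and random pins. Worse, the $\mclo(n)$ bound is simply false in the decorated setting for large $\ThetaP$: adding a pin at $i\in\setPR$ multiplies $\ZG$ by the Gibbs conditional probability that the spin at $i$ equals $\sigmaNIS_m(i)$, which is bounded away from $1$, so each pin moves $\ln\ZG$ by $\Theta(1)$. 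Since the random pinning mechanism of Section~\ref{random_decorated_graphs} has $\thetaPR\dequal\unif([0,\ThetaP])$, one computes $\Var(|\setPR|)=\ThetaP^2/12+\Theta(\ThetaP)$ and hence $\Var(\ln\ZG(\bm G^\star))=\Omega(\ThetaP^2)$. Your Cauchy--Schwarz then gives only $\sqrt{\mclo(n^{-2})\cdot\Omega(\ThetaP^2)}=\Omega(\ThetaP/n)$ for the spurious piece, which dominates $\ThetaP[-1/3]$ as soon as $\ThetaP\gg n^{3/4}$ --- and Lemma~\ref{lemma_intderiv_zf} is stated for every $\ThetaP\le n$. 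To salvage this route one would need to show that the $|\setPR|$-driven part of $\ln\ZG$ is effectively decorrelated from $Y-\ZFabu$ (which appears plausible but is not routine), or to avoid Cauchy--Schwarz entirely; the paper sidesteps the issue by coupling the Nishimori ground truths \emph{before} any logarithm enters, so that the $\sigmaNIS$-comparison only ever meets expected free entropies, where Corollary~\ref{cor_contphiTSM} suffices.
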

The proof of Lemma \ref{lemma_intderiv_zf} is presented in Section \ref{int_factor}.
Proposition \ref{proposition_int_derivpos} now follows by establishing Lipschitz continuity of $\nablaI$ and thereby justifying the transition to the projection $\piG[,\GTSNIS]^\circ$.
The proof is presented in Section \ref{proof_proposition_int_derivpos}.
The proof of Proposition \ref{proposition_int} and the respective version for graphs with external fields over random factor counts $\mR^*$ is presented in Section \ref{proof_proposition_int}.
\subsubsection{Adding an Interpolator}\label{int_interpolator}
Fix the variable $i\in[n]$ with the additional interpolator and let
\begin{align*}
\Delta^\lrarr_{\mathrm{v}}(i)&=\expe[n\phiG(\GTSM[\mR,\mIR+\opm[,[n],i],\setPR](\sigmaNIS_{\mR}))]-\expe[n\phiG(\GTSM[\mR,\mIR,\setPR](\sigmaNIS_{\mR}))].
\end{align*}
\begin{lemma}\label{lemma_int_interpolator_i}
With $(\GR',\psiIR)\dequal\GTSNIS\otimes\psiIRa$ we have
\begin{align*}
\Delta^\lrarr_{\mathrm{v}}(i)
=\frac{1}{\ZFabu}\expe\left[\xlnx\left(\sum_{\tau\in[q]}\lawG[,\GR']|_i(\tau)\psiIR(\tau)\right)\right].
\end{align*}
\end{lemma}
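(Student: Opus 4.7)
The plan is to compute $n\phi_{\mathrm{g}}(G_1)-n\phi_{\mathrm{g}}(G_0)$ explicitly where $G_0=\GTSM[\mR,\mIR,\setPR](\sigmaNIS_{\mR})$ and $G_1=\GTSM[\mR,\mIR+\opm[,[n],i],\setPR](\sigmaNIS_{\mR})$, and then invoke the Nishimori condition to convert the resulting logarithm into $\xlnx$.

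First I would observe that $G_1$ is obtained from $G_0$ by attaching one additional unary interpolator at variable $i$. Concretely, by Observation \ref{obs_TSM_iid}, $G_1\dequal[G_0]^{\lrarr}$ where the added interpolator has weight $\bm\psi^\lrarr\dequal\psiITSa[,\sigmaNIS_{\mR}(i)]$, i.e.~its $(\bm\psi^\lrarr,\psiIRa)$-derivative is $\psi\mapsto\psi(\sigmaNIS_{\mR}(i))/\ZFabu$. Writing $\mu=\lawG[,G_0]$ and unrolling the partition function gives
\begin{align*}
\ZG(G_1)=\sum_{\sigma\in[q]^n}\psiG[,G_0](\sigma)\bm\psi^\lrarr(\sigma_i)
=\ZG(G_0)\sum_{\tau\in[q]}\mu|_i(\tau)\bm\psi^\lrarr(\tau),
\end{align*}
so $n\phi_{\mathrm{g}}(G_1)-n\phi_{\mathrm{g}}(G_0)=\ln(\sum_\tau\mu|_i(\tau)\bm\psi^\lrarr(\tau))$.

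Next I take the conditional expectation given $(\sigmaNIS_{\mR},G_0)$. Using the Radon-Nikodym derivative of $\bm\psi^\lrarr$ with respect to an independent $\psiIR\dequal\psiIRa$ yields
\begin{align*}
\expe\left[\ln\left(\sum_\tau\mu|_i(\tau)\bm\psi^\lrarr(\tau)\right)\middle|\sigmaNIS_{\mR},G_0\right]
=\frac{1}{\ZFabu}\expe\left[\psiIR(\sigmaNIS_{\mR}(i))\ln\left(\sum_\tau\mu|_i(\tau)\psiIR(\tau)\right)\middle|\sigmaNIS_{\mR},G_0\right].
\end{align*}
Now I invoke the Nishimori condition, Observation \ref{obs_nishicond}\ref{obs_nishicond_dec}, which asserts $(\sigmaNIS_{\mR},G_0)\dequal(\sigmaRG[,G_0],G_0)$; in particular, conditional on $G_0=G$, the coordinate $\sigmaNIS_{\mR}(i)$ has law $\lawG[,G]|_i=\mu|_i$. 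Taking the conditional expectation over $\sigmaNIS_{\mR}(i)$ thus collapses the prefactor into $\sum_s\mu|_i(s)\psiIR(s)$, and combining with the logarithm produces $\xlnx(\sum_\tau\mu|_i(\tau)\psiIR(\tau))$, so
\begin{align*}
\expe\left[\ln\left(\sum_\tau\mu|_i(\tau)\bm\psi^\lrarr(\tau)\right)\middle|G_0\right]
=\frac{1}{\ZFabu}\expe\left[\xlnx\left(\sum_{\tau}\lawG[,G_0]|_i(\tau)\psiIR(\tau)\right)\middle|G_0\right].
\end{align*}
Taking the outer expectation over $G_0\dequal\GTSNIS$ and renaming $G_0$ as $\GR'$ gives the claim. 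The only subtle step is the use of the Nishimori condition to turn the biasing factor $\psiIR(\sigmaNIS_{\mR}(i))/\ZFabu$ into an expectation against $\mu|_i$ (which is precisely what produces the extra $t$ in $\xlnx(t)=t\ln(t)$); the rest is bookkeeping of the decorated graph's partition function.
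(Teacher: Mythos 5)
Your proof is correct and follows essentially the same route as the paper's: compute the free-entropy increment as the $\log$ of the expected weight of the added interpolator under the Gibbs measure of $G_0$, replace $\psiITSa[,\sigmaNIS_{\mR}(i)]$ by $\psiIR(\sigmaNIS_{\mR}(i))\psiIR/\ZFabu$ via the Radon-Nikodym derivative, and then invoke Observation \ref{obs_nishicond}\ref{obs_nishicond_dec} to average $\sigmaNIS_{\mR}(i)$ against $\lawG[,G_0]|_i$, producing the $\xlnx$. The only cosmetic difference is that the paper routes the computation through the reusable quantity $\psiWgG$ while you unroll the partition function directly.
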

\begin{proof}
Using independence due to Observation \ref{obs_TSM_iid}, we have a coupling 
$(\bm G_-,\bm G_+)$ of 
$\GTSM[\mR,\mIR,\setPR](\sigmaNIS_{\mR})$ and 
$\GTSM[\mR,\mIR+\opm[,[n],i],\setPR](\sigmaNIS_{\mR})$,
i.e.~given $\sigmaNIS_{\mR}$ and $\bm G_-$ we attach the factor $(\mI_i+1)$ to $i$
equipped with a weight given by $\psiITSa[,\sigma(i)]$ to obtain $\bm G_+$.
Explicitly introducing the conditional expectation gives
\begin{align*}
\Delta^\lrarr_{\mathrm{v}}(i)&=\expe\left[\Delta^\lrarr_{\mathrm{vms},i}(\mR,\mIR,\setPR,\sigmaNIS_{\mR})\right],\,
\Delta^\lrarr_{\mathrm{vms},i}(m,\mI,\setP,\sigma)
=\expe[n\phiG(\bm G_+)-n\phiG(\bm G_-)].
\end{align*}
With $\delta(G,G')=n\phiG(G')-n\phiG(G)$ we have $\delta(G,G')=\ln(\ZG(G')/\ZG(G))$, and further $\ZG(G')=\sum_\sigma\psiG[,G'](\sigma)$. If $G'$ is an extension of $G$ as above, i.e.~obtained by adding factors $\mcla[+]$ with wire-weight pairs $w=(v_a,\psi_a)_{a\in\mcla[+]}$, then we have $\psiG[,G'](\sigma)=\psiG[,G](\sigma)\prod_{a\in\mcla[+]}\psi_a(\sigma_{v_a})$.
This gives $\delta(G,G')=\ln(\psiWgG[,G](w))$ with
\begin{align}\label{equ_psiWgG}
\psiWgG[,G](w)=\sum_\sigma\lawG[,G](\sigma)\prod_{a\in\mcla[+]}\psi_a(\sigma_{v(a)})
=\expe\left[\prod_{a\in\mcla[+]}\psi_a\left(\sigmaRG[,G,v(a)]\right)\right],
\end{align}
so the difference of the free entropies is the logarithm of the expected weight of the additional factors of $G'$, under the Gibbs measure of the (smaller) base graph $G$.
So, using $(\GR''(\sigma),\psiITS,\psiIR)\dequal\GTSM(\sigma)\otimes\psiITSa[,\sigma(i)]\otimes\psiIRa$ for brevity, we have
\begin{align*}
\Delta^\lrarr_{\mathrm{vms}}
=\expe\left[\ln\left(\psiWgG[,\GR''(\sigma)]\left(i,\psiITS\right)\right)\right]
=\expe\left[\frac{\psiIR(\sigma_i)}{\ZFabu}\ln\left(\psiWgG[,\GR''(\sigma)](i,\psiIR)\right)\right].
\end{align*}
Taking the expectation over $\sigmaNIS$, using $\GR=\GR''(\sigmaNIS)$ and
the Nishimori condition \ref{obs_nishicond}\ref{obs_nishicond_dec} yields
\begin{align*}
\expe[\Delta^\lrarr_{\mathrm{vms}}(\sigmaNIS)]
=\expe\left[\frac{\psiIR(\sigmaNIS_i)}{\ZFabu}\ln\left(\psiWgG[,\GR](i,\psiIR)\right)\right]
=\expe\left[\frac{\psiIR(\sigmaRG[,\GR,i])}{\ZFabu}\ln\left(\psiWgG[,\GR](i,\psiIR)\right)\right].
\end{align*}
For the leading coefficient we take the conditional expectation given $\GR$ and $\psiIR$, i.e.~the expectation over the Gibbs spins $\sigmaRG[,\GR]$ only, which exactly matches the definition of $\psiWgG$ and hence
\begin{align*}
\expe[\Delta^\lrarr_{\mathrm{vms}}(\sigmaNIS)]
&=\frac{1}{\ZFabu}\expe\left[\xlnx\left(\psiWgG[,\GR](i,\psiIR)\right)\right],
\psiWgG[,\GR](i,\psiIR)=\sum_{\tau\in[q]}\lawG[,\GR]|_i(\tau)\psiIR(\tau).
\end{align*}
\end{proof}
Now, recall that we have $\Delta^\lrarr=\expe[\Delta^\lrarr_{\mathrm{v}}(\bm i)]$ for $\bm i\dequal\unif([n])$ and that $\lawG[,G]|_{\bm i}\dequal\piG[,G]$.
Further, recall $(\psiRa,\hR,\gammaR)$ and the definition of $\psiIR\dequal\psiIRa$ from Equation (\ref{psiIR_def}), which gives
\begin{align*}
\sum_{\tau\in[q]}\lawG[,G]|_{\bm i}(\tau)\psiIR(\tau)
\dequal\sum_{\tau\in[q]}\lawG[,G]|_{\bm i}(\tau)\sum_{\tau'}\bmone\{\tau'_{\hR}=\tau\}\psiRa(\tau')\prod_{h\neq\hR}\gammaR_{h}(\tau'_h)
\dequal\ZFM(\psiR,\hR,\gammaR_{\pi'})
\end{align*}
with $\pi'=(\piG[,G],\pi)$, $(\bm i,\psiIR)\dequal\bm i\otimes\psiIR$ and $(\psiR,\hR,\gammaR_{\pi'})$ from the assertion in Lemma \ref{lemma_intderiv_zfm}.
This completes the proof by considering the conditional expectation given $\GR'$ in $\Delta^\lrarr$.
\subsubsection{Adding a Factor}\label{int_factor}
Using the shorthand $\phiTSM[m](\sigma)=\expe[\phiG(\GTSM[m,\mIR,\setPR](\sigma))]$ we may rewrite
$\Delta^\circ=\expe\left[n\phiTSM[\mR+1](\sigmaNIS_{\mR+1})]-\expe[n\phiTSM[\mR](\sigmaNIS_{\mR})\right]$.
In the first step we align the ground truths, i.e.~we replace $\sigmaNIS_{\mR+1}$ by $\sigmaNIS_{\mR}$, and introduce the following typical event.
With $r(n)=\ln(n)/\sqrt{n}$ and $\mclb[][\circ]$ from Corollary \ref{cor_dega} for $r$, let
$\mclb[][\Gamma]=\{\sigma\in[q]^n:\|\gammaN[,\sigma]-\gamma^*\|_\mrmtv<r\}$
and further $\mcle=\{\degaR\in\mclb[][\circ],\sigmaNIS_{\mR}\in\mclb[][\Gamma]\}$.
\begin{lemma}\label{lemma_int_factor_nishieq}
We have $\Delta^\circ=n\expe\left[\bmone\mcle(\phiTSM[\mR+1](\sigmaNIS_{\mR})-\phiTSM[\mR](\sigmaNIS_{\mR}))\right]+\mclo(r(n))$.
\end{lemma}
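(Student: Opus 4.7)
The plan is to algebraically reduce the lemma to two one-sided estimates. By definition $\Delta^\circ = \expe[n\phiTSM[\mR+1](\sigmaNIS_{\mR+1})] - \expe[n\phiTSM[\mR](\sigmaNIS_{\mR})]$; adding and subtracting $n\expe[\phiTSM[\mR+1](\sigmaNIS_{\mR})]$ I would write
\begin{align*}
\Delta^\circ - n\expe\!\left[\bmone_{\mcle}(\phiTSM[\mR+1](\sigmaNIS_{\mR}) - \phiTSM[\mR](\sigmaNIS_{\mR}))\right] = n\Delta_\sigma + n\Delta_\mcle,
\end{align*}
where $\Delta_\sigma = \expe[\phiTSM[\mR+1](\sigmaNIS_{\mR+1}) - \phiTSM[\mR+1](\sigmaNIS_{\mR})]$ and $\Delta_\mcle = \expe[\bmone_{\mcle^c}(\phiTSM[\mR+1](\sigmaNIS_{\mR}) - \phiTSM[\mR](\sigmaNIS_{\mR}))]$. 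The two pieces isolate the effect of swapping $\sigmaNIS_{\mR+1}$ for $\sigmaNIS_{\mR}$ and of the restriction to the typical event.

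Next, I would dispose of the truncation error $n\Delta_\mcle$ using that $\mcle$ is super-polynomially typical. For $r = \ln(n)/\sqrt{n}$, Corollary \ref{cor_dega} gives $\prob[\degaR \notin \mclb[][\circ]] \le c_2 e^{-c_1 r^2 n/(1+r)} = n^{-\Omega(\ln n)}$, and conditioning on $\mR \le \mbu$, Corollary \ref{cor_mutcont}\ref{cor_mutcont_prob} gives $\prob[\sigmaNIS_{\mR} \notin \mclb[][\Gamma]] \le c_2 e^{-c_1 r^2 n} = n^{-\Omega(\ln n)}$ (the contribution of $\mR > \mbu$ is exponentially small by Observation \ref{obs_poisson}\ref{obs_poisson_prob}). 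Combined with the uniform bound $|\phiTSM[m](\sigma)| \le cm/n + O(1)$ from Observation \ref{obs_phi_lipschitz} and a Cauchy--Schwarz estimate against $\expe[\mR^2] = O(n^2)$, this yields $n|\Delta_\mcle| = n^{1-\Omega(\ln n)} = \mclo(r(n))$.

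The heart of the proof is the bound on $n|\Delta_\sigma|$. I would condition on $\mR = m$ and work with the $(\sigmaNIS_{m+1}, \sigmaNIS_m)$-derivative $r_m$ from Observation \ref{obs_niscoupling}\ref{obs_niscoupling_rn}, which satisfies $(r_m - 1)_+ \le c/n$, $(r_m(\sigma) - 1)_- \le c\|\gammaN[,\sigma] - \gamma^*\|_\mrmtv^2$, and $\expe[r_m(\sigmaNIS_m)] = 1$. Centering by $\bar\phi_m = \expe[\phiTSM[m+1](\sigmaNIS_m)]$ and applying Cauchy--Schwarz,
\begin{align*}
\left|\expe\!\left[\phiTSM[m+1](\sigmaNIS_{m+1}) - \phiTSM[m+1](\sigmaNIS_m)\right]\right|
&= \left|\expe\!\left[(r_m(\sigmaNIS_m) - 1)(\phiTSM[m+1](\sigmaNIS_m) - \bar\phi_m)\right]\right| \\
&\le \sqrt{\expe[(r_m(\sigmaNIS_m) - 1)^2]}\cdot\sqrt{\Var(\phiTSM[m+1](\sigmaNIS_m))}.
\end{align*}
The first factor is $O(1/n)$: integrating the sub-Gaussian tail of Corollary \ref{cor_mutcont}\ref{cor_mutcont_prob} gives $\expe[\|\gammaNIS - \gamma^*\|_\mrmtv^4] = O(1/n^2)$, which combined with the pointwise bounds on $r_m - 1$ yields $\expe[(r_m - 1)^2] = O(1/n^2)$. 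The second factor is $O(1/\sqrt{n})$: on $\{\gammaN[,\sigmaNIS_m] \ge \psibl/2\}$ the Lipschitz estimate of Corollary \ref{cor_contphiTSM} gives $|\phiTSM[m+1](\sigmaNIS_m) - \phiTSM[m+1](\sigma^\circ)| \le L(\|\gammaN[,\sigmaNIS_m] - \gamma^*\|_1 + q/n)$ for a reference $\sigma^\circ$, so Corollary \ref{cor_mutcont}\ref{cor_mutcond_var} gives $\Var(\phiTSM[m+1](\sigmaNIS_m)) = O(1/n)$, while the complementary event contributes only an exponentially small term. Summing against the Poisson law of $\mR$ (the tail $\mR > \mbu$ absorbed via $|\phi| \le c\mR/n + O(1)$ and a Poisson moment bound) yields $|\Delta_\sigma| = O(n^{-3/2})$, hence $n|\Delta_\sigma| = O(n^{-1/2}) = \mclo(r(n))$.

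The main obstacle is precisely this last estimate. A direct total-variation coupling between $\sigmaNIS_{m+1}$ and $\sigmaNIS_m$ (which agree with probability $1 - c/n$ by Observation \ref{obs_niscoupling}\ref{obs_niscoupling_tv}) would only deliver $|\Delta_\sigma| = O(1/n)$ uniformly in $m$, hence $n|\Delta_\sigma| = O(1)$, a constant rather than a vanishing quantity. The extra factor $n^{-1/2}$ is extracted only by combining the mean-zero structure of $r_m - 1$ (enabling centering) with the Lipschitz-driven concentration of $\phiTSM[m+1](\sigmaNIS_m)$ around its mean. Care is also needed on the boundary event $\gammaN[,\sigmaNIS_m] < \psibl/2$ where Corollary \ref{cor_contphiTSM} does not apply; this contribution is absorbed by the sub-Gaussian tails of $\gammaNIS$ from Corollary \ref{cor_mutcont}\ref{cor_mutcont_prob}.
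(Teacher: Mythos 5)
Your proposal is correct but follows a genuinely different route for the critical step, and it's worth comparing. The paper also passes through a maximal-coupling of $\sigmaNIS_{\mR}$ and $\sigmaNIS_{\mR+1}$, but then argues \emph{pointwise}: on the typical event $\mcle'$ (where both coupled ground truths lie in $\mclb[][\Gamma]$), Corollary~\ref{cor_contphiTSM} gives $|\phiTSM[\mR+1](\sigmaNIS^+)-\phiTSM[\mR](\sigmaNIS^-)|\le L(2r(n)+k/n)=\mclo(r(n))$, so multiplying by $\prob[\sigmaNIS^+\neq\sigmaNIS^-]\le c/n$ (Observation~\ref{obs_niscoupling}\ref{obs_niscoupling_tv}) and by $n$ already produces $\mclo(r(n))$. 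You instead center the free entropy, invoke Cauchy--Schwarz, and trade off $\|r_m-1\|_{L^2}=\mclo(1/n)$ against $\Var(\phiTSM[m+1](\sigmaNIS_m))^{1/2}=\mclo(n^{-1/2})$. Both arguments rest on the \emph{same} Lipschitz estimate (Corollary~\ref{cor_contphiTSM}), but you use it to control a variance rather than a pointwise difference, and you pay with fourth-moment bounds on $\|\gammaNIS-\gamma^*\|_\mrmtv$ instead of a restriction to the typical event inside the coupling. Your bound is marginally sharper (losing no $\ln n$ factor), though both meet the required $\mclo(r(n))$.

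One caveat about your discussion of the ``main obstacle'': you suggest that a total-variation coupling ``would only deliver $n|\Delta_\sigma|=O(1)$,'' and present Cauchy--Schwarz as the way out. That is only true if one pairs the coupling with the crude uniform bound $|\phiTSM|=\mclo(1)$. The paper's coupling works fine precisely because it is combined with the Lipschitz estimate on the typical event, which shrinks the pointwise difference from $\mclo(1)$ to $\mclo(r(n))$. So the coupling route is not a dead end; your Cauchy--Schwarz argument is a valid and slightly sharper \emph{alternative}, not a rescue of a failing strategy. The rest of your truncation argument (super-polynomial decay of $\mcle^c$ via Corollaries~\ref{cor_dega} and~\ref{cor_mutcont}\ref{cor_mutcont_prob}, the uniform bound on the free entropy, and the Poisson moment bound for $\mR>\mbu$) matches the paper's handling and is sound.
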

\begin{proof}
Let $(\sigmaNIS^-_m,\sigmaNIS^+_m)$ be a coupling of $\sigmaNIS_m$, $\sigmaNIS_{m+1}$ from the coupling lemma \ref{obs_tv}\ref{obs_tv_coupling}
and further $\mcle[][\prime]=\{\degaR\in\mclb[][\circ],\sigmaNIS_{\mR}^-\in\mclb[][\Gamma],\sigmaNIS_{\mR}^+\in\mclb[][\Gamma]\}$.
Let $\bm\Phi^+=n\phiTSM[\mR+1](\sigmaNIS^+_{\mR})$ and $\bm\Phi^-=n\phiTSM[\mR](\sigmaNIS^-_{\mR})$.
With $c_\Phi$ from Observation \ref{obs_phi_lipschitz},
$c^\circ$ from Corollary \ref{cor_dega},
Observation \ref{obs_pin_basic}, $\ln(n)/\sqrt{n}\le 2/e$, $n\ge 1$ and $\ThetaP\le n$ we obtain
\begin{align*}
|\expe[\bmone\{\degaR\not\in\mclb[][\circ]\}\bm\Phi^+]|
&\le\expe\left[\bmone\left\{\degaR\not\in\mclb[][\circ]\right\}c_\Phi\left(\frac{\degaR n}{k}+1+(1-\tI)\degae n+\frac{\ThetaP}{2}\right)\right]
\le c'_2ne^{-c'_1\ln(n)^2},\\
c'_1&=\frac{c_1^\circ}{2},\,
c'_2=c_\Phi\left(\frac{1}{k}+1+\degabu+\frac{1}{2}\right)c^\circ_2,
\end{align*}
and we obtain the same bound for $\bm\Phi^-$.
On the event $\degaR\in\mclb[][\circ]$ we have $\mR+1\le\mbu$ since $\degabu$ is large.
But then with $\hat c$ from Corollary \ref{cor_mutcont}\ref{cor_mutcont_prob} we obtain
\begin{align*}
|\expe[\bmone\{\degaR\in\mclb[][\circ],\sigmaNIS^+_{\mR}\not\in\mclb[][\Gamma]\}\bm\Phi^+]|
&\le c_\Phi \hat c_2\left(\frac{\degabu n}{k}+1+\degabu n+\frac{\ThetaP}{2}\right)e^{-\hat c_1\ln(n)^2}
\le c''_2ne^{-c''_1\ln(n)^2},\\
c''_1&=\hat c_1,\,
c''_2=c_\Phi\hat c_2\left(\frac{\degabu}{k}+1+\degabu+\frac{1}{2}\right).
\end{align*}
The bound for $\sigmaNIS^-_{\mR}$ is the same, and the same bounds also follow for $\bm\Phi^-$.
This shows that
\begin{align*}
\Delta^\circ=n\expe\left[\bmone\mcle[][\prime](\phiTSM[\mR+1](\sigmaNIS^+_{\mR})-\phiTSM[\mR](\sigmaNIS^-_{\mR}))\right]+\mclo\left(ne^{-\tilde c\ln(n)^2}\right)
\end{align*}
for $\tilde c=\min(c'_1,c''_1)$.
Since $(\sigmaNIS^-,\sigmaNIS^+)$ is a coupling from the coupling lemma \ref{obs_tv}\ref{obs_tv_coupling}, we can use $c$ from Corollary \ref{obs_niscoupling}\ref{obs_niscoupling_tv} on $\mcle[][\prime]$.
Further, let $n_{\circ,\mfkg}$ be such that $\ln(n)/\sqrt{n}\le\psibl/4$ is $n\ge n_\circ$, then we have $\gammaN[,\sigmaNIS^-_{\mR}]\ge\psibl/2$ on $\mcle[][\prime]$ if $n\ge n_\circ$,
so using $c'$ from Corollary \ref{cor_contphiTSM} and $\|\gammaN[,\sigmaNIS^+_{\mR}]-\gammaN[,\sigmaNIS^-_{\mR}]\|_\mrmtv\le 2r(n)$ on $\mcle[][\prime]$ we obtain
\begin{align*}
|n\expe\left[\bmone\mcle[][\prime]\bmone\{\sigmaNIS^+_{\mR}\neq\sigmaNIS^-_{\mR}\}(\phiTSM[\mR+1](\sigmaNIS^+_{\mR})-\phiTSM[\mR](\sigmaNIS^-_{\mR}))\right]|
\le cc'\left(2r(n)+\frac{k}{n}\right)=\mclo\left(\frac{\ln(n)}{\sqrt{n}}\right).
\end{align*}
Now, we substitute $\sigmaNIS^+_{\mR}$ and then drop $\bmone\{\sigmaNIS^+_{\mR}=\sigmaNIS^-_{\mR},\sigmaNIS^+_{\mR}\in\mclb[][\Gamma]\}$ at expense $\mathcal O(1/n)$.
\end{proof}
With Lemma \ref{lemma_int_factor_nishieq} we obtain $\GTSM[\mR+1,\mIR,\setPR](\sigmaNIS_{\mR})$ from $\GTSNIS$ given $(\sigmaNIS_{\mR},\GTSNIS)$ by attaching a single additional standard factor $\mR+1$, since the ground truths coincide, as do the decorations.
So, we consider $\GTSNIS\otimes\wTSa[,\sigmaNIS_{\mR}]$, follow the steps in Section \ref{int_interpolator} to reduce this to $(\GTSNIS,\vR,\psiR)\dequal\GTSNIS\otimes\wRa$
and thereby, using $\gammaNIS_m=\gammaN[,\sigmaNIS]$, obtain
\begin{align*}
\Delta^\circ=\expe\left[\frac{\bmone\mcle}{\ZFa(\gammaNIS_{\mR})}\psiR(\sigmaNIS_{\mR,\vR})\ln\left(\psiWgG[,\GTSNIS](\vR,\psiR)\right)\right]+\mclo(r(n)).
\end{align*}
Now, recall that $\mcle$ covers $\sigmaNIS_{\mR}\in\mclb[][\Gamma]$, so with $c$ from Observation \ref{obs_fad}\ref{obs_fad_maxbound} and Observation \ref{obs_fad}\ref{obs_fad_bounds} we have
$1-\psibu cr(n)^2\le\ZFa(\gammaNIS_{\mR})/\ZFabu\le 1$, which yields
$\ZFa(\gammaNIS_{\mR})/\ZFabu=1+\mclo(r(n)^2)$ and
\begin{align*}
\Delta^\circ=(1+\mclo(r(n)^2))\expe\left[\frac{\bmone\mcle}{\ZFabu}\psiR(\sigmaNIS_{\mR,\vR})\ln\left(\psiWgG[,\GTSNIS](\vR,\psiR)\right)\right]+\mclo(r(n)).
\end{align*}
With $|\psiR(\sigmaNIS_{\mR,\vR})\ln(\psiWgG[,\GTSNIS](\vR,\psiR))/\ZFabu|\le\psibu^2\ln(\psibu)$
and the Nishimori condition \ref{obs_nishicond}\ref{obs_nishicond_dec}, analogously to Section \ref{int_interpolator}, we obtain
\begin{align*}
\Delta^\circ=\expe\left[\frac{\bmone\mcle}{\ZFabu}\xlnx\left(\psiWgG[,\GTSNIS](\vR,\psiR)\right)\right]+\mclo(r(n)).
\end{align*}
Notice that we can drop the restriction to $\mcle$ due to the uniform bound $\psibu^2\ln(\psibu)$ on the argument of the expectation at expense $\mclo(e^{-\tilde c\ln(n)^2})$ with $\tilde c$ from the proof of Lemma \ref{lemma_int_factor_nishieq}.
Finally, we turn to the application of Proposition \ref{proposition_pinG}.
With $\iota_\circ$ from Section \ref{pinning_gibbs} and $\hat{\bm\mu}$ from Proposition \ref{proposition_pinG} notice that $\expe[\iota_\circ(\hat{\bm\mu}_{\mR,\mIR},\vR)]=\expe[\iota_\ell(\hat{\bm\mu}_{\mR,\mIR})]$ for $\ell=k$ and hence we can use Proposition \ref{proposition_pinG}\ref{proposition_pinG_NISDKL}, Markov's inequality and the bound $\psibu^2\ln(\psibu)$ on the argument of the expectation with $\delta=\ThetaP[-2/3]$ to obtain
\begin{align*}
\Delta^\circ=\expe\left[\frac{\bmone\mcle}{\ZFabu}\xlnx\left(\psiWgG[,\GTSNIS](\vR,\psiR)\right)\right]+\mclo\left(r(n)+\ThetaP[-1/3]\right),\,
\mcle=\left\{\iota_\circ(\hat{\bm\mu}_{\mR,\mIR},\vR)<\delta\right\}.
\end{align*}
With $\nu_\circ$ from Remark \ref{remark_epssym} and by Observation \ref{obs_tv}\ref{obs_tv_coupling} there exists a coupling $(\bm\tau,\bm\tau')$ of $\mu|_v$ and $\bigotimes_h\mu|_{v(h)}$ such that $\prob[\bm\tau\neq\bm\tau^*]\le\nu_\circ(\mu,v)$ and hence
$|\expe[\psi(\bm\tau)-\psi(\bm\tau')]|
\le2\psibu\nu_\circ(\mu,v)$, $\psi\in\domPsi$.
So, with $\bm\zeta=\sum_\tau\bm\psi(\tau)\prod_h\lawG[,\GTSNIS]|_{\vR(h)}(\tau_h)$ we have
$|\psiWgG[,\GTSNIS](\vR,\psiR)-\bm\zeta|\le 2\psibu\nu_\circ(\hat{\bm\mu}_{\mR,\mIR},\vR)$.
Now, we can use Lipschitz continuity of $\xlnx$ on $[\psibl,\psibu]$ since both arguments live in this interval, i.e.~we obtain $L_\mfkg$ such that
$|\xlnx(\psiWgG[,\GTSNIS](\vR,\psiR))-\xlnx(\bm\zeta)|\le 2L\psibu\nu_\circ(\hat{\bm\mu}_{\mR,\mIR},\vR)$.
With Remark \ref{remark_epssym} we have
$|\xlnx(\psiWgG[,\GTSNIS](\vR,\psiR))-\xlnx(\bm\zeta)|
\le\sqrt{2}L\psibu\sqrt{\iota_\circ(\hat{\bm\mu}_{\mR,\mIR},\vR)}\le\sqrt{2}L\psibu\sqrt{\delta}$ on $\mcle$.
Then we drop the restriction to $\mcle$
and notice that $(\lawG[,\GTSNIS]|_{\vR(h)})_h\dequal\piG[,\GTSNIS]^{\otimes k}$ since $\vR$ is uniform, so
\begin{align*}
\ZFabu\Delta^\circ=\expe[\xlnx(\ZF(\psiR,\gammaR_{\piR,1}))]+\mclo\left(r(n)+\ThetaP[-1/3]\right).
\end{align*}
This completes the proof since $\ThetaP\le n$ and hence $\ThetaP[-1/3]=\omega(r(n))$.
\subsubsection{Proof of Proposition \ref{proposition_int_derivpos}}\label{proof_proposition_int_derivpos}
Combining Lemma \ref{lemma_intderiv}, Lemma \ref{lemma_intderiv_zfm} and Lemma \ref{lemma_intderiv_zf} gives
\begin{align*}
\frac{\partial}{\partial\tI}\phiI(\tI)
&=\frac{\degae}{k\ZFabu}\expe[\xlnx(\ZF(\psiR,\gammaR_{\piR,1}))]+\mclo\left(\ThetaP[-1/3]\right)
+\phi_\circ-\frac{\degae}{\ZFabu}\expe[\xlnx(\ZFM(\psiR,\hR,\gammaR_{\piR}))]\\
&=\frac{\degae}{\ZFabu k}\expe[\nablaI(\piR)]+\mclo\left(\ThetaP[-1/3]\right)
\end{align*}
using $\degae\in[0,\degabu]$ and $\psibl\le\ZFabu\le\psibu$ from Observation \ref{obs_fad}\ref{obs_fad_bounds}.
\begin{lemma}\label{lemma_nablaI_lipschitz}
There exists $L_\mfkg\in\reals_{>0}$ such that $|\nablaI(\pi_1,\pi_3)-\nablaI(\pi_2,\pi_3)|\le L\distW(\pi_1,\pi_2)$ for all $\pi\in\mclp[][2]([q])^3$.
\end{lemma}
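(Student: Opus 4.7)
The plan is to exploit three elementary Lipschitz facts and combine them via a coupling. First, by Observation \ref{obs_fad}\ref{obs_fad_bounds} the arguments of $\xlnx$ that appear inside $\nablaI$ all lie in $[\psibl,\psibu]$, so $\xlnx(t)=t\ln t$ is Lipschitz on this interval with some constant $L_{\xlnx}$ depending only on $\psibu$. Second, $\ZF(\psi,\cdot)$ and $\ZFM(\psi,h,\cdot)$ are Lipschitz in their distribution arguments (measured coordinate-wise in total variation): since $\psi\in[\psibl,\psibu]^{[q]^k}$, Observation \ref{obs_tv}\ref{obs_tv_prod} gives
\begin{align*}
|\ZF(\psi,\gamma)-\ZF(\psi,\tilde\gamma)|\le \psibu\sum_h\|\gamma_h-\tilde\gamma_h\|_\mrmtv,
\end{align*}
and the analogous bound for $\ZFM(\psi,h,\cdot)$ depends only on the entries $\gamma'_{1,h}$ and $\gamma'_{2,h'}$ for $h'\neq h$, so only one coordinate of the first row ever enters.

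Next, observe that in $\nablaI(\pi_1,\pi_3)-\nablaI(\pi_2,\pi_3)$ the middle summand $(k-1)\xlnx(\ZF(\psiR,\gammaR_{\cdot,2}))$ cancels (it depends only on $\pi_3$), so only the $\ZF(\psiR,\gammaR_{\cdot,1})$-term and the $\ZFM$-term contribute. For any coupling $\rho\in\Gamma(\pi_1,\pi_2)$, I would let $((\bar{\gammaR}_{1,h},\bar{\gammaR}_{2,h}))_{h\in[k]}\dequal\rho^{\otimes k}$, independent across $h$, yielding a coupling of $\pi_1^{\otimes k}$ with $\pi_2^{\otimes k}$ whose marginals match those of $\gammaR_{(\pi_1,\pi_3),1}$ and $\gammaR_{(\pi_2,\pi_3),1}$, and simultaneously a coupling of the first rows of $\gammaR_{(\pi_1,\pi_3)}$ and $\gammaR_{(\pi_2,\pi_3)}$ while leaving the (identical) second rows untouched. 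Chaining the two Lipschitz estimates then gives
\begin{align*}
\left|\xlnx(\ZF(\psiR,\bar{\gammaR}_{1,\cdot}))-\xlnx(\ZF(\psiR,\bar{\gammaR}_{2,\cdot}))\right|
&\le L_{\xlnx}\psibu\sum_{h\in[k]}\|\bar{\gammaR}_{1,h}-\bar{\gammaR}_{2,h}\|_\mrmtv,\\
k\left|\xlnx(\ZFM(\psiR,\hR,\gammaR_{(\pi_1,\pi_3)}))-\xlnx(\ZFM(\psiR,\hR,\gammaR_{(\pi_2,\pi_3)}))\right|
&\le kL_{\xlnx}\psibu\|\bar{\gammaR}_{1,\hR}-\bar{\gammaR}_{2,\hR}\|_\mrmtv.
\end{align*}
Taking expectations, using $\hR\dequal\unif([k])$ to convert the second right-hand side into $L_{\xlnx}\psibu\sum_h\expe\|\bar{\gammaR}_{1,h}-\bar{\gammaR}_{2,h}\|_\mrmtv$, and using that under $\rho^{\otimes k}$ each pair has the same law as $(\gammaR_1,\gammaR_2)\dequal\rho$, both bounds collapse to $kL_{\xlnx}\psibu\,\expe_\rho[\|\gammaR_1-\gammaR_2\|_\mrmtv]$.

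Adding the two contributions and taking the infimum over $\rho\in\Gamma(\pi_1,\pi_2)$ produces
\begin{align*}
|\nablaI(\pi_1,\pi_3)-\nablaI(\pi_2,\pi_3)|\le 2kL_{\xlnx}\psibu\,\distW(\pi_1,\pi_2),
\end{align*}
so the claim holds with $L=2kL_{\xlnx}\psibu$, a quantity depending only on $\mfkg$. There is no serious obstacle here; the only thing worth stating carefully is the asymmetry of the $\ZFM$-term — only one coordinate of the first row enters, so the naive bound with $k$ coordinates would overshoot by a factor of $k$, and it is precisely the averaging over the uniform index $\hR$ that restores the correct scaling and matches the $\ZF$-term.
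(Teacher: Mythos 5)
The proposal is correct and follows essentially the same route as the paper's proof: fix a coupling $\rho\in\Gamma(\pi_1,\pi_2)$, draw $\rho^{\otimes k}$ to couple $\gammaR_{\pi,1}$, chain the Lipschitz estimate for $\xlnx$ on $[\psibl,\psibu]$ with the coordinate-wise total-variation Lipschitz estimate for $\ZF$ and $\ZFM$, note the middle $\ZF$-summand depends only on $\pi_3$ and cancels, and average over $\hR\dequal\unif([k])$ to align the $\ZFM$-scaling with the $\ZF$-term before taking the infimum over couplings. The resulting constant $L=2kL_\xlnx\psibu$ coincides with the paper's $L=2k\ell\psibu$.
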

\begin{proof}
Let $\ell_\mfkg\in\reals_{>0}$ be such that $|\xlnx(t_1)-\xlnx(t_2)|\le\ell|t_1-t_2|$ for $t\in[\psibl,\psibu]^2$.
For a coupling $\rho\in\Gamma(\pi_1,\pi_2)$ let $(\psiR,\hR,\gammaR)\dequal\lawpsi\otimes\unif([k])\otimes(\rho^{\otimes k}\otimes\pi_3^{\otimes k})$ with $\gammaR\in(\mclp([q])^k)^3$, so with Jensen's inequality and the triangle inequality we have
\begin{align*}
\delta&=|\nablaI(\pi_1,\pi_3)-\nablaI(\pi_2,\pi_3)|\\
&\le\ell\expe[|\ZF(\psiR,\gammaR_1)-\ZF(\psiR,\gammaR_2)|]
+k\ell\expe[|\ZFM(\psiR,\hR,(\gammaR_1,\gammaR_3))-\ZFM(\psiR,\hR,(\gammaR_2,\gammaR_3))|].
\end{align*}
Expanding the definitions, using the triangle inequality, $\psiR\le\psibu$ and Observation \ref{obs_tv}\ref{obs_tv_prod} further yields
\begin{align*}
\delta\le\ell\psibu\expe\left[\left\|\bigotimes_h\gammaR_{1,h}-\bigotimes_h\gammaR_{1,h}\right\|_\mrmtv\right]+k\psibu\expe\left[\|\gammaR_{1,\hR}-\gammaR_{2,\hR}\|_\mrmtv\right]
\le L\expe\left[\|\gammaR_{1,1}-\gammaR_{2,1}\|_\mrmtv\right]
\end{align*}
with $L=2k\ell\psibu$, which completes the proof since this holds uniformly for all couplings $\rho$.
\end{proof}
Lemma \ref{lemma_nablaI_lipschitz} with Lemma \ref{lemma_piapproxpstartwo}\ref{lemma_piapproxpstartwo_graph}, $|\nablaI|\le 2k\xlnx(\psibu)$, Corollary \ref{cor_dega}
and $1/\ThetaP=\Omega(1/n)$ completes the proof.
\subsubsection{Proof of Proposition \ref{proposition_int}}\label{proof_proposition_int}
First, we derive the result for graphs with pins and external fields, but without interpolators.
\begin{lemma}\label{lemma_int_bethebuwithpins}
For $\tI=1$ we have
$\expe[\phiG(\GTSNIS)]\ge\bethebu+\mclo(\frac{1}{\ThetaP[1/3]}+\frac{\ThetaP}{n})$.
\end{lemma}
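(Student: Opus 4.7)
The strategy is to exploit the identity $\phiI(1)-\phiI(0)=\int_0^1\partial_\tI\phiI(\tI)\,\mathrm{d}\tI$ together with Proposition \ref{proposition_int_derivpos}, after fixing $\pi$ to be a maximizer of $\bethe_{\lawpsi,\gamma^*,\degae}$ on $\mclp[*,\gamma^*][2]([q])$. Such a maximizer exists, since $\mclp[*,\gamma^*][2]([q])$ is weakly compact (it is a closed subset of $\mclp(\mclp([q]))$, which is compact as $\mclp([q])$ is), and $\bethe(\pi)$ is continuous in the Wasserstein sense by the same kind of Lipschitz estimate used in Lemma \ref{lemma_nablaI_lipschitz}, together with boundedness of $\ZV/\ZFabu^{\dR}$ and the Lipschitz bounds of Observation \ref{obs_fad}.

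With this $\pi$ fixed, Proposition \ref{proposition_int_derivpos} reads
\begin{align*}
\partial_\tI\phiI(\tI)=\frac{\degae}{k\ZFabu}\expe\bigl[\nablaI(\piG[,\GTSNIS]^\circ,\pi)\bigr]+\mclo(\ThetaP[-1/3]).
\end{align*}
Since $\piG[,\GTSNIS]^\circ\in\mclp[*,\gamma^*][2]([q])$ by Lemma \ref{lemma_piapproxpstartwo}\ref{lemma_piapproxpstartwo_general}, the pair form of the positivity assumption (the form that Proposition \ref{prop_validmodels} actually proves for valid models, and which is the version on which the interpolation hinges) yields $\nablaI(\piG[,\GTSNIS]^\circ,\pi)\ge 0$. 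Integrating over $\tI\in[0,1]$ gives $\phiI(1)\ge\phiI(0)-\mclo(\ThetaP[-1/3])$.

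It remains to evaluate $\phiI(0)$. At $\tI=0$ one has $\mR=0$, so $\sigmaNIS_0\dequal\sigmaIID$, while $\mIRa\dequal\Po(\degae)$; the graph $\GTSNIS$ then reduces to external fields, pins, and a batch of $\mIR_i$ planted interpolators at each variable $i$, so its partition function factorizes as $\ZG=\prod_i\bm Z_i$. For unpinned $i$, the change of measure $\psiITSa[,\sigmaIID(i)]\dequal(\psiIRa(\sigmaIID_i)/\ZFabu)\cdot\psiIRa$ applied independently over the $\mIR_i$ interpolators, combined with $\sigmaIID_i\dequal\gamma^*$ and the identity $\sum_\sigma\gamma^*(\sigma)\prod_{h'}\psiIR_{i,h'}(\sigma)=\ZV[,\gamma^*](\mIR_i,\psiR,\hR,\gammaR)$, yields
\begin{align*}
\expe[\ln\bm Z_i]=\expe\bigl[\ZFabu^{-\dR}\xlnx(\ZV[,\gamma^*](\dR,\psiR,\hR,\gammaR))\bigr],\quad \dR\dequal\Po(\degae),
\end{align*}
which is precisely the first term of $\bethe(\pi)$. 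For pinned $i\in\setPR$ the term $\ln\bm Z_i$ is bounded uniformly in $\mIR_i$ by $\ln\gamma^*(\sigmaIID_i)+\mIR_i\ln\psibu$, and with $\expe[|\setPR|]=\ThetaP/2$ (Observation \ref{obs_pin_basic}) and $\expe[\mIR_i]\le\degabu$ this contributes $\mclo(\ThetaP/n)$ in total. Hence $\phiI(0)=\bethe(\pi)+\phi_\circ+\mclo(\ThetaP/n)$, using that $-\phi_\circ$ is the second term of $\bethe$ at $d=\degae$.

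Combining with $\phiI(1)=\expe[\phiG(\GTSNIS)]+\phi_\circ$ evaluated at $\tI=1$ (where $\mIR\equiv 0$), cancelling $\phi_\circ$, and using $\bethe(\pi)=\bethebu$ gives the claim. The main obstacle is the non-negativity of $\nablaI$ at the genuinely off-diagonal pair $(\piG[,\GTSNIS]^\circ,\pi)$, which is strictly stronger than the diagonal condition $\nablaIbl(\lawpsi,\gamma^*)\ge 0$ as written; checking that this stronger form is intended and that it is indeed satisfied by the valid models of Proposition \ref{prop_validmodels} (which its proof does establish) is the conceptual crux. A secondary technical care point is the clean accounting for the pinned variables inside $\phiI(0)$ via the uniform bounds on the weights.
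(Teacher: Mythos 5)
Your proposal is correct and takes essentially the same route as the paper: integrate $\partial_\tI\phiI$ over $[0,1]$ using Proposition \ref{proposition_int_derivpos} and the positivity of $\nablaI$ at pairs of elements of $\mclp[*,\gamma^*][2]([q])$, then evaluate $\phiI(0)$ variable-by-variable, identifying the unpinned contribution with the variable term of $\bethe(\pi)$ and bounding the pinned contribution by $\mclo(\ThetaP/n)$. The only deviation is cosmetic: the paper keeps $\pi\in\mclp[*,\gamma^*][2]([q])$ arbitrary and passes to the supremum at the very end (valid because the $\mclo$-constants depend only on $\mfkg$, not on $\pi$), rather than pre-fixing a maximizer via compactness; your explicit appeal to the pair form of the positivity condition is accurate and matches what the paper's inequality $\expe[\nablaI(\piG[,\GTSNIS]^\circ,\pi)]\ge\nablaIbl\ge 0$ already relies on.
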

\begin{proof}
From Proposition \ref{proposition_int_derivpos} we obtain $c_\mfkg\in\reals_{>0}$ such that
\begin{align*}
\frac{\partial}{\partial\tI}\phiI(\tI)\ge\nablaIbl-\frac{c}{\ThetaP[1/3]}\ge-\frac{c}{\ThetaP[1/3]}
\end{align*}
since $\nablaIbl\ge 0$ by assumption, so integration yields $\phi^{\lrarr}(1)-\phi^{\lrarr}(0)\ge-c/\ThetaP[1/3]$.
But for $\tI=0$ with $\check{\bm\psi}_i$ denoting the pin, i.e.~$\check{\bm\psi}_i\equiv 1$ for $i\not\in\setPR$ and $\check{\bm\psi}_i(\tau)=\bmone\{\tau=\sigmaNIS_{\mR}\}$ otherwise,
and using the notions from Observation \ref{obs_TSM_iid} we have
\begin{align*}
\phiI(0)&=\frac{1}{n}\expe\left[\ln\left(\prod_i\sum_\tau\gamma^*(\tau)\check{\bm\psi}_i(\tau)\prod_{h\in[\mIR_i]}\psiITSM[i,h](\tau)\right)\right]\\
&=\sum_i\frac{1}{n}\expe\left[\ln\left(\sum_\tau\gamma^*(\tau)\check{\bm\psi}_i(\tau)\prod_{h\in[\mIR_i]}\psiITSM[i,h](\tau)\right)\right]
\end{align*}
Notice that $\sigmaNIS_0\dequal\sigmaIID$ and $\mIR_i\dequal\Po(\degae)$, so
\begin{align*}
\phiI(0)&=\expe\left[\ln\left(\sum_\tau\gamma^*(\tau)\check{\bm\psi}_1(\tau)\prod_{h\in[\mIR_1]}\psiITSM[1,h](\tau)\right)\right].
\end{align*}
Notice that the argument of the logarithm is in $[\psibl^{\mIR_1+1},\psibu^{\mIR}]$ and the probability of $\check{\psiR}_1\equiv 1$ is $1-\expe[\thetaPR/n]=1-\ThetaP/(2n)$, so we have
\begin{align*}
\phiI(0)&=\left(1-\frac{\ThetaP}{2n}\right)\expe\left[\ln\left(\sum_\tau\gamma^*(\tau)\prod_{h\in[\mIR_1]}\psiITSM[1,h](\tau)\right)\right]+(\degae+1)\ln(\psibu)\mathcal O\left(\frac{\ThetaP}{n}\right)\\
&=\expe\left[\ln\left(\sum_\tau\gamma^*(\tau)\prod_{h\in[\mIR_1]}\psiITSM[1,h](\tau)\right)\right]+\mathcal O\left(\frac{\ThetaP}{n}\right).
\end{align*}
Next, we use the $(\psiITSM[1,h],\psiIRa)$-derivatives to recover the first contribution to the Bethe functional and hence
\begin{align*}
\expe\left[\phiG(\GTSNIS)\right]\ge\bethe(\pi)+\mclo\left(\frac{1}{\ThetaP[1/3]}+\frac{\ThetaP}{n}\right).
\end{align*}
\end{proof}
Based on Lemma \ref{lemma_int_bethebuwithpins} we restrict to $\tI=1$ and $\mI\equiv 0$ in the remainder, where we also discuss all $\degae\in[0,\degabu]$.
Next, we derive the result for graphs with external fields only.
Recall $m^\circ$ and $\sigma^\circ$ from Proposition \ref{proposition_phi_concon_cont}.
\begin{proposition}\label{proposition_int_external}
Let $\mI\equiv 0$ and $\setP=\emptyset$.
\begin{alphaenumerate}
\item\label{proposition_int_external_po}
We have $\expe[\phiG(\GTSM[\mR](\sigmaIID))]\ge\bethebu+\mclo(n^{-1/4})$.
\item\label{proposition_int_external_m}
For $d=km/n\le\degabu$ we have $\expe[\phiG(\GTSM(\sigmaIID))]\ge\bethebu(d)+\mclo(n^{-1/4})$.
\item\label{proposition_int_external_general}
We have $\expe[\phiG(\GTSM[\mR^*](\sigmaIID))]\ge\bethebu+\mclo(n^{-1/4}+\deltam+\epsm)$.
\end{alphaenumerate}
\end{proposition}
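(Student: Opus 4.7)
The plan is to take Lemma \ref{lemma_int_bethebuwithpins} as the core interpolation bound and then successively strip the auxiliary features (pins, Nishimori ground truth, Poisson number of factors) while keeping track of the losses. Recall that at $\tI=1$ we have $\mIR\equiv 0$ almost surely since $\mIRa\dequal\Po(0)$, so $\GTSNIS$ reduces to $\GTSM[\mR,0,\setPR](\sigmaNIS_{\mR})$.

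\emph{Step 1: Remove the pins.} By Proposition \ref{proposition_pin_qfed} (applied with $\mI\equiv 0$, so the bound $c\ThetaP/n$ holds)
\begin{align*}
\expe[\phiG(\GTSM[\mR,0,\emptyset](\sigmaNIS_{\mR}))]
\ge\expe[\phiG(\GTSM[\mR,0,\setPR](\sigmaNIS_{\mR}))]
\ge\bethebu+\mclo\!\left(\ThetaP[-1/3]+\tfrac{\ThetaP}{n}\right),
\end{align*}
where the first inequality uses that pinning can only decrease the free entropy.

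\emph{Step 2: Swap the Nishimori for the i.i.d.\ ground truth.} With $\mI\equiv 0$ and $\setP=\emptyset$, Corollary \ref{cor_phiTSIIDNIS}\ref{cor_phiTSIIDNIS_p} yields
\begin{align*}
\expe[\phiG(\GTSM[\mR](\sigmaIID))]
=\expe[\phiG(\GTSM[\mR](\sigmaNIS_{\mR}))]+\mclo(n^{-1/2}),
\end{align*}
since both expressions are within $\mclo(n^{-1/2})$ of the common deterministic reference $\phiTSM[m^\circ](\sigma^\circ)$. Combining Steps 1 and 2 and then choosing $\ThetaP=\lfloor n^{3/4}\rfloor$ (which satisfies $\ThetaP\le n$ and $\ThetaP\to\infty$) balances the two error terms and produces part~\ref{proposition_int_external_po}.

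\emph{Step 3: Pass to a deterministic factor count and to $\mR^*$.} For part~\ref{proposition_int_external_m}, the idea is to apply the whole machinery at $\degae=km/n$, so that Lemma \ref{lemma_int_bethebuwithpins} delivers $\bethebu(d)$ on the right-hand side. Then Proposition \ref{proposition_phi_concon}\ref{proposition_phi_concon_cont} together with the Poisson concentration from Corollary \ref{cor_dega} (giving $\expe[|k\mR/n-d|]=\mclo(n^{-1/2})$) implies that
\begin{align*}
\expe[\phiG(\GTSM[m](\sigmaIID))]
=\expe[\phiG(\GTSM[\mR](\sigmaIID))]+\mclo(n^{-1/2}),
\end{align*}
so the bound from part~\ref{proposition_int_external_po} transfers to fixed $m$ at the same rate. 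For part~\ref{proposition_int_external_general}, a completely analogous continuity argument against $\mR^*$ (using the hypotheses $\prob[|\dR^*-d^*|>\deltam]\le\epsm$ and $\expe[\bmone\{|\dR^*-d^*|>\deltam\}\dR^*]\le\epsm$ from Section \ref{implications_extensions_related_work}, coupled with Proposition \ref{proposition_phi_concon}\ref{proposition_phi_concon_cont} on the typical region and Proposition \ref{proposition_phi_concon}\ref{proposition_phi_concon_bu} on the tail) contributes the $\mclo(\deltam+\epsm)$ additional error, mirroring exactly the estimate carried out in the proof of Corollary \ref{cor_phiTSIIDNIS}\ref{cor_phiTSIIDNIS_p}.

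The only step requiring any care is the optimization balancing $\ThetaP[-1/3]$ against $\ThetaP/n$; everything else is a direct assembly of results already proved. The mildly nontrivial point is making sure that all error terms in Steps 1--3 remain uniform over $(p,\gamma^*,d)\in\mfkP$ (and, for part~\ref{proposition_int_external_general}, over admissible sequences $\mR^*$), which is automatic from how the constants were tracked through Sections \ref{preparations}--\ref{pinning}.
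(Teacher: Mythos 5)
Your proof follows the paper's argument almost exactly: the same chain (Lemma~\ref{lemma_int_bethebuwithpins} at $\tI=1$, Proposition~\ref{proposition_pin_qfed} to discard pins, Corollary~\ref{cor_phiTSIIDNIS}\ref{cor_phiTSIIDNIS_p} to swap $\sigmaNIS$ for $\sigmaIID$ and to pass between $\mR$, $m$, and $\mR^*$), with the same choice $\ThetaP=n^{3/4}$; the only organizational difference is that the paper establishes part~\ref{proposition_int_external_general} first and obtains~\ref{proposition_int_external_po} and~\ref{proposition_int_external_m} as special cases of it, while you proceed in the opposite order. Two small points you glossed over: Lemma~\ref{lemma_int_bethebuwithpins} is proved under the standing assumption $\degae>0$, so the case $\degae=0$ (where both sides vanish identically) must be dispatched separately; and the error in Corollary~\ref{cor_phiTSIIDNIS}\ref{cor_phiTSIIDNIS_p} is $\mclo(\epsm+\deltam+n^{-1/2})$ rather than $\mclo(n^{-1/2})$, which becomes $o(n^{-1/4})$ only after fixing $\deltam=\deltam^\circ$, $\epsm=\epsm^\circ$ as the paper does -- neither issue changes the final rate.
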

\begin{proof}
With Lemma \ref{lemma_int_bethebuwithpins} and Proposition \ref{proposition_pin_qfed} we have
\begin{align*}
\expe\left[\phiG(\GTSM[\mR](\sigmaNIS_{\mR}))\right]\ge\bethebu+\mclo\left(\frac{1}{\ThetaP[1/3]}+\frac{\ThetaP}{n}\right).
\end{align*}
This yields $\expe[\phiG(\GTSM[\mR](\sigmaNIS_{\mR}))]\ge\bethebu+\mclo(n^{-1/4})$ for $\ThetaP(n)=n^{3/4}\in(0,n]$ and $\degae\in(0,\degabu]$.
For $\degae=0$ notice that $\phiG(\GTSM[\mR](\sigmaNIS_{\mR}))=0$ and $\bethe\equiv 0$.
Without loss of generality let $\deltam\ge\deltam^\circ$ with $\deltam^\circ=\ln(n)/\sqrt{n}\le 1$ and $\epsm\ge\epsm^\circ$ with $\epsm^\circ=c_2e^{-c_1\ln(n)^2/2}$ and $c$, $c_2$ large, from Corollary \ref{cor_dega} since this does not affect the assertions.
Hence, we may take $\mR^*=\mR$, and then Corollary \ref{cor_phiTSIIDNIS}\ref{cor_phiTSIIDNIS_p} applied to $\mR^*$ and to $\mR$ yields
\begin{align*}
\expe[\phiG(\GTSM[\mR^*](\sigmaIID))]
&=\expe[\phiG(\GTSM[m^\circ](\sigma^\circ))]+\mclo(\epsm+\deltam+n^{-1/2})\\
&=\expe[\phiG(\GTSM[\mR](\sigmaNIS_{\mR}))]+\mclo(\epsm+\deltam+n^{-1/2})\\
&\ge\bethebu+\mclo(\epsm+\deltam+n^{-1/4}),
\end{align*}
which establishes Part \ref{proposition_int_external}\ref{proposition_int_external_general}.
Now, consider the special case $\deltam=\deltam^\circ=o(n^{-1/4})$ and $\epsm=\epsm^\circ=o(n^{-1/4})$, then  Part \ref{proposition_int_external}\ref{proposition_int_external_po} follows as a special case from  Part \ref{proposition_int_external}\ref{proposition_int_external_general}.
But also Part \ref{proposition_int_external}\ref{proposition_int_external_m} now follows as a special case from  Part \ref{proposition_int_external}\ref{proposition_int_external_general} by further considering $\degae=d$ and $\mR^*_{n'}=m^\circ_{n'}$ for $n'\in\ints_{>0}$, which in particular gives $\mR^*=m$.
\end{proof}
Observation \ref{obs_standard_graphs} yields the corresponding results for graphs without external fields and thereby completes the proof of Proposition \ref{proposition_int}.
\subsection{The Aizenman-Sims-Starr Scheme}\label{ass}
This section is dedicated to the proof of Proposition \ref{proposition_ass}, and hence Theorem \ref{thm_bethe}.
For the remainder of this contribution we fix $\tI=1$ and $\mI\equiv 0$, which also resolves any dependencies on $\pi$, $\psiI$ and $\psiIR$.
With $C_1\in(0,1)$ from Proposition \ref{proposition_pinG}\ref{proposition_pinG_IIDDKL}
let $c=C_1/3$, $\rho=c/(1+c)$ and $\ThetaP(n)=n^{1-\rho}$.
Notice that $\rho\in(0,1/4)$ and $\ThetaP\in[0,n]$.
Assume that $\degae>0$ unless mentioned otherwise.
\subsubsection{Overview}\label{ass_telescoping_sum}
We avoided the introduction of the projected Gibbs marginal distribution $\piG[,G]^\circ$ from Section \ref{pinning_limiting_marginal_distributions} in Section \ref{ps_bethe}, but now we can state the stronger version.
\begin{proposition}\label{proposition_ass_pinned}
We have $\expe[\phiG(\GTSM)]=\expe[\bethe(\piG[,\GTSM]^\circ)]+\mclo(n^{-\rho})$ with $\GTSM=\GTSM[\mR,\setPR](\sigmaIID)$.
\end{proposition}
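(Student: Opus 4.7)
I plan to prove Proposition \ref{proposition_ass_pinned} by implementing the Aizenman--Sims--Starr cavity scheme announced in Section \ref{ps_bethe}. Since $n\expe[\phiG(\GTSM)] = \expe[\ln \ZG(\GTSM)]$ telescopes into a sum of single-step cavity increments $\Delta(n') = \expe[\ln \ZG(\GTSM[n'+1])] - \expe[\ln \ZG(\GTSM[n'])]$ (with $\GTSM[0]$ the empty graph at a single variable), the proposition will reduce to the uniform single-step estimate $\Delta(n) = \expe[\bethe(\piG[,\GTSM]^\circ)] + \mclo(n^{-\rho})$, followed by Cesàro averaging and the uniform boundedness from Observation \ref{obs_phi_lipschitz}.

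To compute $\Delta(n)$ I will couple the $n$- and $(n+1)$-variable graphs via Poisson thinning of $\mR_{n+1} \sim \Po(\degae(n+1)/k)$. Each factor of the larger graph lands in $[n]^k$ with probability $(n/(n+1))^k$, yielding $\Po(\degae n/k)$ old factors on $[n]^k$ together with $\Po(\degae)$ new factors at variable $n+1$, up to Poisson-parameter mismatches of order $1/n$ that are absorbed by $\Po(\degae(k-1)/k)$ excess old factors. Writing the logarithmic ratio of partition functions via Equation (\ref{equ_psiWgG}), conditioning on the base graph $\GTSM$, and invoking the Nishimori condition Observation \ref{obs_nishicond}\ref{obs_nishicond_dec} to reweight the leading $\psi(\sigma_v)$ into $\xlnx$-form (exactly as in the derivations of Section \ref{int_interpolator} and Section \ref{int_factor}) recovers the normalizations $1/\ZFabu^{\dR}$ and $1/\ZFabu$ that appear in $\bethe$.

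The key step is the replacement of the joint Gibbs marginal $\lawG[,\GTSM]|_{\vR}$ on the $k$ wires of a new or excess factor by the tensor product $\bigotimes_h \lawG[,\GTSM]|_{\vR(h)}$. This is precisely what Proposition \ref{proposition_pinG}\ref{proposition_pinG_IIDDKL} delivers: after the random pinning of Section \ref{random_decorated_graphs}, the expected $\ell$-site Gibbs law is within $\mclo(\ThetaP^{-c})$ of its product-of-marginals in relative entropy for $\ell \le k+1$, hence by Pinsker's inequality (Remark \ref{remark_epssym}) also in total variation. Combining this with Lipschitz continuity of $\xlnx$ on $[\psibl,\psibu]$, the wire marginals become iid draws from $\piG[,\GTSM]$, so $\Delta(n)$ takes exactly the form defining $\bethe(\piG[,\GTSM])$. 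The transfer to the projection $\piG[,\GTSM]^\circ \in \mclp[*][2]([q])$ is then handled by Lemma \ref{lemma_piapproxpstartwo}\ref{lemma_piapproxpstartwo_graph} together with a Lipschitz estimate for $\bethe$ analogous to Lemma \ref{lemma_nablaI_lipschitz}, at cost $\mclo(\sqrt{\ln(n)^3/n})$.

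The hard part will be balancing the competing error sources: the asymptotic-independence rate $\mclo(\ThetaP^{-c})$ from Proposition \ref{proposition_pinG}, the pinning overhead $\mclo(\ThetaP/n)$ from Proposition \ref{proposition_pin_qfed} (needed to justify inserting the pins in the first place), the $\mclo(1/n)$ Poisson-mismatch correction, and the $\mclo(\sqrt{\ln(n)^3/n})$ projection error. The choices $c = C_1/3$ and $\ThetaP(n) = n^{1-\rho}$ with $\rho = c/(1+c)$ made at the beginning of Section \ref{ass} are tuned so that $\ThetaP^{-c}$ and $\ThetaP/n$ both equal $n^{-\rho}$ and dominate. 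A secondary subtlety is that the Cesàro averaging over $n'$ confronts a target $\expe[\bethe(\piG[,\GTSM[n']]^\circ)]$ that itself varies with $n'$; uniformity of the single-step bound in $\degae$ and in the model parameters (guaranteed throughout Section \ref{preparations}) allows each term in the telescoping sum to be re-expressed relative to the size-$n$ graph, absorbing the shift into the $\mclo(n^{-\rho})$ rate.
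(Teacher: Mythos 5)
Your high-level plan---Cesàro telescoping into single-step cavity increments, Poisson-thinning coupling of the $n$- and $(n{+}1)$-variable graphs, factorization of the joint Gibbs marginal via Proposition \ref{proposition_pinG} with Pinsker, and finally the projection via Lemma \ref{lemma_piapproxpstartwo}\ref{lemma_piapproxpstartwo_graph}---matches the paper's route through Lemma \ref{lemma_ass_PhiDelta}. However, there is a genuine gap in the middle step: you claim to ``invoke the Nishimori condition Observation \ref{obs_nishicond}\ref{obs_nishicond_dec} to reweight the leading $\psi(\sigma_v)$ into $\xlnx$-form (exactly as in the derivations of Section \ref{int_interpolator} and Section \ref{int_factor}).'' This does not transfer. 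In those sections the free entropy is taken over the \emph{Nishimori} ground truth $\sigmaNIS$, and Observation \ref{obs_nishicond}\ref{obs_nishicond_dec} says precisely that $\sigmaNIS$ conditional on $\GTSM(\sigmaNIS)$ has the Gibbs law; swapping $\sigmaNIS$ for a Gibbs spin is what produces the $\xlnx$ and the iid wire marginals. But Proposition \ref{proposition_ass_pinned} is stated for $\GTSM(\sigmaIID)$, and the Aizenman--Sims--Starr part of the paper deliberately works with $\sigmaIID$ throughout (the telescoping couples graphs of different sizes $n'$, and $\sigmaIID$ has a size-independent product law, whereas $\sigmaNIS_m$ depends on $m$ and on $n$). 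For $\sigmaIID$ the Nishimori identity simply does not hold.

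What actually closes the gap in the paper is the machinery of Section \ref{pinning_marginal_distributions} and Section \ref{pinning_gibbs_marginal_distributions}: after factorizing the wire marginals, the law of each marginal $\lawG[,\GTSM(\sigmaIID)]|_{\vR(h)}$ is the \emph{conditional} empirical marginal $\piGC[,\GTSM,\sigmaIID,\tau]$ (because the wire $\vR(h)$ is drawn uniformly from $\sigmaIID^{-1}(\tau_h)$), not a uniform draw from $\piG[,\GTSM]$ as $\bethe$ requires. Proposition \ref{proposition_piCR} and Corollary \ref{cor_piCR} bound the Wasserstein distance between $\piGC$ and the reweighted marginal $\piGR$, and then the proofs of Lemma \ref{lemma_assfactor_final} and Lemma \ref{lemma_assvar_final} use the concentration of $\gammaIID$ and of $\gammaaG$ (Lemma \ref{lemma_gammaaR}) to collapse the resulting Radon--Nikodym density to $1+\mclo(r)$, which is where the normalizations $1/\ZFabu^{\dR}$ and $1/\ZFabu$ actually come from. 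This chain $\piGC\to\piGR\to\piG$, together with its $\eps$-symmetry hypothesis in Proposition \ref{proposition_piCR}, is the non-trivial part of the ASS upper bound, and your proposal does not contain a substitute for it. Without it you cannot get from the $\sigmaIID$-conditioned wire laws to iid draws from $\piG[,\GTSM]$.

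A smaller inaccuracy: the bare Poisson-thinning count you describe does not immediately reproduce the paper's three-part decomposition (base graph, excess old factors, new factors at the cavity), which in the paper is built from Observation \ref{obs_known} and the explicit wires-weight-pair laws $\wR^*_{-\circ}$, $\wR^*_{+\circ}$ rather than from uniform thinning of neighborhoods in $[n+1]^k$; the teacher--student model tilts the neighborhood law by $\psi(\sigma_v)$ so the ``lands in $[n]^k$'' probability is not $(n/(n{+}1))^k$. Also, for $n'=0$ the initial term of the telescoping sum is the free entropy of the one-variable graph, not of an empty graph, which the paper absorbs into the $\mclo(n^{-1})$ remainder.
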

We establish Proposition \ref{proposition_ass_pinned} using the Aizenman-Sims-Starr scheme, which is based on the representation of the quenched free entropy density as the average change of the quenched free entropies, meaning
\begin{align*}
\expe\left[\phiG(\GTSM[n,\mR,\setPR](\sigmaIID))\right]
&=\sum_{n'=0}^{n-1}\frac{1}{n}\Phi_{\Delta,n'},\,
\Phi_{\Delta,n'}=\expe\left[(n'+1)\phiG\left(\GR_{+,n'}\right)\right]-\expe\left[n'\phiG\left(\GR_{-,n'}\right)\right],\\
\GR_{+,n}&=\GTSM[n+1,\mR_{n+1},\setPR_{n+1}](\sigmaIID_{n+1}),\,
\GR_{-,n}=\GTSM[n,\mR_{n},\setPR_{n}](\sigmaIID_{n}),
\end{align*}
using $\expe[n'\phiG(\GR_{-,n'})]=0$ for $n'=0$.
Intuitively, we observe that if $\Phi_{\Delta,n}$ converges, so does the quenched free entropy density, with the same limit.
Hence, the main focus of this section is to establish the following result.
\begin{lemma}\label{lemma_ass_PhiDelta}
We have $\Phi_{\Delta,n}=\expe[\bethe(\piG[,\GTSM]^\circ)]+\mclo(n^{-\rho})$ with $\GTSM=\GTSM[\mR,\setPR](\sigmaIID)$.
\end{lemma}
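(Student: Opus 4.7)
\textbf{Proof plan for Lemma \ref{lemma_ass_PhiDelta}.}
The strategy is the usual Aizenman--Sims--Starr argument, taking full advantage of the pinning apparatus assembled in Section \ref{pinning} to decouple Gibbs marginals on a random base graph.

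\emph{Step 1: A common coupling.}
Define a base graph $\bm G_\circ$ on $n$ variables equipped with external fields and pins as in Section \ref{random_decorated_graphs}, but with a slightly reduced Poisson factor count, chosen so that:
(i) $\GR_{-,n}$ is obtained from $\bm G_\circ$ by superimposing an independent $\Po(d(k-1)/k)$ number of teacher--student factors, each with the law $\wTSa[,\sigmaIID_n]$;
(ii) $\GR_{+,n}$ is obtained from $\bm G_\circ$ by appending variable $n{+}1$ with external field $\gamma^*$ and its pin (resampled jointly with the base), then attaching a $\Po(d)$ number of teacher--student factors, each uniformly hitting variable $n{+}1$ at a uniform position $\hR\in[k]$ and having the other endpoints uniform in $[n]$.
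This decomposition is justified by Poisson thinning on $\mR_{n+1}$ and by the fact that attaching an isolated variable with external field $\gamma^*$ does not change the partition function. The coupling errors (order $1/n$ terms coming from the difference $(n/(n{+}1))^k-1+k/(n{+}1)$) are absorbed in $\mclo(n^{-\rho})$ using Observation \ref{obs_phi_lipschitz} and Corollary \ref{cor_dega}.

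\emph{Step 2: Reduction to a single Gibbs expectation.}
By Equation (\ref{equ_psiWgG}) the differences of log-partition-functions equal $\ln\psiWgG[,\bm G_\circ]$ applied to the added factors. Repeating the Nishimori-style tilting from Section \ref{int_interpolator} (replacing the teacher--student weight by the null-model weight at the cost of a $\psi(\sigma_{v})/\ZFa$ factor and using the Nishimori identity from Observation \ref{obs_nishicond}\ref{obs_nishicond_dec} on the base) yields
\begin{align*}
\expe\!\left[\ln\tfrac{\ZG(\GR_{-,n})}{\ZG(\bm G_\circ)}\right]
&=\tfrac{1}{\ZFabu}\,\expe\!\left[\sum_{a=1}^{\bm D^-}\xlnx\!\left(\psiWgG[,\bm G_\circ](\vR_a,\psiR_a)\right)\right]+\mclo(n^{-\rho}),\\
\expe\!\left[\ln\tfrac{\ZG(\GR_{+,n})}{\ZG(\bm G_\circ)}\right]
&=\tfrac{1}{\ZFabu^{\dR^+}}\,\expe\!\left[\xlnx\!\left(\psiWgG^{+}[,\bm G_\circ]\right)\right]+\mclo(n^{-\rho}),
\end{align*}
where $\psiWgG^{+}$ is the sum of Gibbs weights over the $q$ colours of variable $n{+}1$, so that its argument matches $\ZV(\dR^{+},\psiR,\hR,\gammaR)$ once the Gibbs measure on the base is replaced by a product of marginals.

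\emph{Step 3: Decoupling by pinning and passage to the projected marginals.}
For the base with random pinning set $\setPR$ of expected size $\ThetaP/2$, Proposition \ref{proposition_pinG}\ref{proposition_pinG_IIDDKL} bounds the expected generalized mutual information $\iota_\ell(\lawG[,\bm G_\circ])$ by $\mclo(\ThetaP[-c])$ for any $c\le C_1$, which via Pinsker's inequality (Remark \ref{remark_epssym}) becomes a total-variation bound. This allows one to replace the joint Gibbs law on the $k\dR^{+}$ (respectively $k\bm D^-$) coordinates hit by the new factors by the product $\bigotimes\lawG[,\bm G_\circ]|_{\vR_a(h)}\dequal\bigotimes\gammaR_{a,h}$, with $\gammaR_{a,h}\dequal\piG[,\bm G_\circ]$; the Lipschitz continuity of $\xlnx$ on $[\psibl,\psibu]$ controls the replacement error. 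Substituting the Gibbs marginals $\piG[,\bm G_\circ]$ by their projections $\piG[,\bm G_\circ]^{\circ}\in\mclp[*][2]([q])$ via Lemma \ref{lemma_piapproxpstartwo}\ref{lemma_piapproxpstartwo_graph} costs $\mclo(\sqrt{\ln(n)^3/n})$, and transitioning from the \emph{conditional} empirical marginal that appears naturally around variable $n{+}1$ (since the attached factors condition on $\sigmaIID_{n+1}$) to the \emph{reweighted} empirical marginal via Corollary \ref{cor_piCR}\ref{cor_piCR_IID} costs $\mclo(\ThetaP[-C_1/3])=\mclo(n^{-\rho})$. After this step, one recognizes the two contributions as the $\ZV$-term and the $\ZF$-term in the definition of $\bethe_{p,\gamma^*,d}(\piG[,\bm G_\circ]^{\circ})$, where the $1/\ZFabu^{\dR}$ prefactor comes from the Nishimori tilt performed in Step 2.

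\emph{Step 4: Replacing the base by $\GTSM$.}
Finally, one needs $\expe[\bethe(\piG[,\bm G_\circ]^{\circ})]=\expe[\bethe(\piG[,\GTSM]^{\circ})]+\mclo(n^{-\rho})$. This follows from a short triangle-inequality argument: the base graph $\bm G_\circ$ differs from $\GTSM$ only by a $\Po(d(k-1)/k)$ number of factors on $n$ variables; concentration and Lipschitz continuity of the Gibbs marginals (Lemma \ref{lemma_gammaaR}, Corollary \ref{cor_piCR}) together with Lipschitz continuity of $\bethe$ on $\mclp[*][2]([q])$ (proved analogously to Lemma \ref{lemma_nablaI_lipschitz}, using Observation \ref{obs_fad} and boundedness of $\xlnx$ on $[\psibl^{\dR},\psibu^{\dR}]$ after conditioning on $\dR$) allow this swap.

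\emph{Main obstacle.} The delicate part is Step 3: the Poisson number of new factors touches $k\dR^{+}$ coordinates, which is only order one on average but unbounded; thus one has to convert the pinning bound $\iota_\ell(\lawG)\lesssim(\ell/\ThetaP)^{c}$ into a bound after integrating against a Poisson law, and simultaneously control the fluctuations of $1/\ZFabu^{\dR}\cdot\xlnx(\ZV)$ under $\dR\dequal\Po(d)$. The exponent $\rho=c/(1+c)$ with $c=C_1/3$ is precisely the balance between the pinning decoupling rate $\ThetaP[-c]$ and the cost $\ThetaP/n$ of introducing the pins in the first place (Proposition \ref{proposition_pin_qfed}).
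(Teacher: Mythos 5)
Your outline follows the paper's architecture: a coupled base graph, pinning to decouple Gibbs marginals, replacement of conditional by reweighted empirical marginals, and projection onto $\mclp[*][2]([q])$. But there are two genuine gaps.

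First, Step 2 is internally inconsistent. You propose to produce the $\xlnx$-form by ``Nishimori tilting\dots using the Nishimori identity from Observation \ref{obs_nishicond}\ref{obs_nishicond_dec} on the base.'' That identity is available only for the Nishimori ground truth $\sigmaNIS$, whereas your Step 1 constructs the coupling with $\sigmaIID$, and the statement being proved is formulated in terms of $\piG[,\GTSM[\mR,\setPR](\sigmaIID)]^\circ$, again with $\sigmaIID$. The paper avoids $\sigmaNIS$ in this section entirely: it first (Lemmas \ref{lemma_assfactor_gibbsproduct}, \ref{lemma_assvar_gibbsproduct}) gets a plain $\ln$ of $\ZF$ resp.\ $\ZV$, keeping the teacher--student Radon--Nikodym prefactor alive, then (Lemmas \ref{lemma_assfactor_piGC}--\ref{lemma_assfactor_piGR}, \ref{lemma_assvar_piGC}--\ref{lemma_assvar_piGR}) passes from the ground-truth-conditioned empirical marginals $\piGC$ to the reweighted ones $\piGR$ via Corollary \ref{cor_piCR}\ref{cor_piCR_IID}, and only at the end (Lemmas \ref{lemma_assfactor_final}, \ref{lemma_assvar_final}) does the leftover prefactor $\prod_a\psiR_a(\cdot)/\ZFa(\gammaIID)$ collapse into the $\xlnx/\ZFabu$ form after concentrating $\gammaIID$ and $\gammaaG$ around $\gamma^*$. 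You actually mention the $\piGC\to\piGR$ transition in Step 3, which is redundant if the Nishimori tilt had already been performed; this signals that the two mechanisms are being conflated. If you insist on Nishimori tilting, you owe an explicit argument that the resulting empirical marginal distribution can be swapped for the one under $\sigmaIID$ at cost $\mclo(n^{-\rho})$ — contiguity à la Corollary \ref{cor_mutcont}\ref{cor_mutcont_rnbu} gives only a one-sided comparison of the laws, so this is not immediate.

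Second, Step 1 understates the case analysis around the new variable. Your estimate $(n/(n+1))^k-1+k/(n+1)=\mclo(n^{-2})$ is the \emph{null-model} multi-wiring probability; under the reweighted teacher--student law the neighbourhood of each factor hitting $n+1$ is distorted by $\psi(\sigma_v)/\ZFa$, and one needs a separate reduction (in the paper, Lemma \ref{lemma_assvar_normalizedoneedge}) to show the planted factor wires to $n+1$ exactly once with the correct conditional law for the remaining $k-1$ coordinates, not just an upper bound on the bad probability. Relatedly, the ``slightly reduced Poisson factor count'' for $\bm G_\circ$ is not a fixed intensity: the thinning probability $\sucD^+$ depends on the color frequencies $\gammaN[,\sigmaR^+]$, so the increments are $\Po(\me^-_\Delta(\sigmaR^+))$ and $\Po(\me^+_{\Delta+}(\sigmaR^+))$, and the claim that these are $\Po(d(k-1)/k)$ and $\Po(d)$ requires restricting to typical $\sigmaR^+$ and an explicit Poisson coupling (Lemma \ref{lemma_ass_factor_count_asymptotics}, Lemma \ref{lemma_assfactor_normalized}, Lemma \ref{lemma_assvar_normalized}). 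These are not cosmetic: they fix both the exponent budget and the conditional law in the $\ZV$-term.
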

Similar to Section \ref{interpolation} and Section \ref{phi_concon}
we will control the difference $\Phi_\Delta$ of the expectations by introducing a coupling of $\GR_-$ and $\GR_+$, say $(\GR^-,\GR^+)$.
However, as opposed to the previous sections we now have to deal with an additional variable.
Since the average degree is $\degae$, i.e.~we expect the new variable to wire to $\degae$ factors, but the expected difference in the number of factors is only $\degae/k$, we will have to rewire factors - like in Section \ref{phi_concon}. But as opposed to Section \ref{phi_concon} we cannot afford rough estimates, and have to control the behavior on a very granular level instead.

We can partially recover the convenient situation in Section \ref{interpolation} by taking the intersection graph, or base graph, as a starting point and then enrich this graph to obtain $\GR^-$ and $\GR^+$ each, say a triplet $(\GR_\cap,\GR^-,\GR^+)$.
The expectations give $\degae n/k$ factors for $\GR^-$,
$\degae(n+1)/k$ factors for $\GR^+$, with roughly $\degae$ wired to $i=n+1$.
So, we can hope for $\frac{\degae(n+1)}{k}-\degae=\frac{\degae n}{k}-\frac{\degae(k+1)}{k}$ factors in $\GR_\cap$ and attaching the remaining factors to obtain $\GR^-$ and $\GR^+$ respectively. This coupling allows to rewrite
\begin{align*}
\Phi_\Delta(n)=\expe\left[\ln\left(\frac{\ZG(\GR^+)}{\ZG(\GR_\cap)}\right)\right]-\expe\left[\ln\left(\frac{\ZG(\GR^-)}{\ZG(\GR_\cap)}\right)\right].
\end{align*}
Since the coupling is fairly involved, we present it in three parts.
In Section \ref{coupling_graphs} we use the discussion in Section \ref{known_neighborhoods} to couple the standard factor graphs.
Then we couple the factor counts using Observation \ref{obs_poisson}.
Finally, we turn to the pins in Section \ref{coupling_pins} and combine the three parts.
In Section \ref{ass_base_graph} we show that the law of $\GR_\cap$ is close to $\GR^-$, which allows to recycle our results for the teacher-student model.
Then, in Section \ref{ass_factor_count_asymptotics} we show that our rough estimates for the expectations are asymptotically correct.

Next, we discuss the asymptotics of the two contributions to $\Phi_\Delta(n)$ separately.
We start with the easier $\GR^-$-contribution, since only factors are added, which is covered by Sections \ref{assfactor_typical} to \ref{assfactor_final}.
Then we discuss the $\GR^+$-contribution in Sections \ref{assvar_typical} to \ref{assvar_final}.

While the discussion of the $\GR^-$-contribution is conceptually similar to the discussion in Section \ref{int_factor}, there are several additional obstacles.
In Section \ref{assfactor_typical} we discuss the restriction to typical instances.
In Section \ref{assfactor_normalization} we introduce an approximation of the joint distribution to resolve dependencies.
In Section \ref{assfactor_gibbsproduct} we use Proposition \ref{proposition_pinG} to transition to independent Gibbs marginals.
In Section \ref{assfactor_marginaldist} we use Proposition \ref{cor_piCR} to resolve the dependencies of the Gibbs marginals on the ground truth.
Finally, in Section \ref{assfactor_final} we discuss the remaining asymptotics, followed by the Lipschitz continuity of the factor contribution to the Bethe free entropy, which allows to transition to the projected Gibbs marginal distributions from Section \ref{pinning_limiting_marginal_distributions}.

Sections \ref{assvar_typical} to \ref{assvar_final} are devoted to the respective steps for the $\GR^+$-contribution, approaching the variable contribution to the Bethe free entropy.
Finally, in Section \ref{proof_proposition_ass} we establish Lemma \ref{lemma_ass_PhiDelta}, Proposition \ref{proposition_ass_pinned}, Proposition \ref{proposition_ass} and the respective version for graphs with (normalized) external fields over general factor counts $\mR^*$.
In Section \ref{proof_thm_bethe} we derive Theorem \ref{thm_bethe} for graphs with (normalized) external fields over general factor counts $\mR^*$, and thereby complete the proof of Theorem \ref{thm_bethe}.
\subsubsection{Coupling Standard Graphs}\label{coupling_graphs}
For the sake of readability we suppress dependencies in the following sections unless required.
Fix a ground truth $\sigma^+\in[q]^{n+1}$ with $\sigma^-=\sigma^+_{[n]}$ and
$m_{\mathrm{a}}=(m_\cap,m^-_\Delta,m^+_{\Delta-},m^+_{\Delta+})\in\ints_{\ge 0}^4$,
meaning $m_\cap$ factors in the base graph with variables $[n]$, additional $m^-_\Delta$ factors in $\GR^-$, additional $m^+_{\Delta-}$ factors in $\GR^+$ that do not wire to the variable $i=n+1$ and $m^+_{\Delta+}$ factors that do wire to $i$.
From these atoms we obtain the derived factor counts, namely
\begin{align*}
m^-=m_\cap+m^-_\Delta,\,m^+_-=m_\cap+m^+_{\Delta-},\,m^+_\Delta=m^+_{\Delta-}+m^+_{\Delta+},
m^+=m_\cap+m^+_\Delta.
\end{align*}
Recall the discussion in Section \ref{known_neighborhoods}.
We consider the wires-weight pairs
\begin{align*}
(\wR_{\cap},\wR^-_\Delta,\wR^+_{\Delta-},\wR^+_{\Delta+})
\dequal\wTS[\otimes m_\cap]_{-\circ,n+1,i,\sigma^-}
\otimes\wTS[\otimes m^-_\Delta]_{-\circ,n+1,i,\sigma^-}
\otimes\wTS[\otimes m^+_{\Delta-}]_{-\circ,n+1,i,\sigma^-}
\otimes\wTS[\otimes m^+_{\Delta+}]_{+\circ,n+1,i,\sigma^+}.
\end{align*}
This yields the graph $\wR^-=(\wR_\cap,\wR^-_\Delta)$ on $n$ variables.
Mimicking Section \ref{known_neighborhoods} for $n+1$ variables, $m^+$ factors, $i=n+1$ and $d=m^+_\Delta$, let $\wR^+_-=(\wR_\cap,\wR^+_{\Delta-})$ be the pairs not connected to $i$ and $\wR^+_+=\wR^+_{\Delta+}$ the pairs connected to $i$.
For $\mcla\in\binom{[m^+]}{d}$ let $\wR_{\mrma,\mcla}^+$ be the corresponding relabeling of $(\wR^+_-,\wR^+_+)$ and let $\wR^+_{\mrmd}=\wR_{\mrma,\bmcla}^+$ with $\bmcla\dequal\unif(\binom{[m^+]}{d})$.
Further, recall the degree $\dR^+_{m^+}\dequal\Bin(m^+,\sucD^+)$ with success probability $\sucD^+=\sucD[,n+1,\sigma^+](\sigma^+_i)$ from Section \ref{var_degrees},
$\wTSM$ from Observation \ref{obs_TSM_iid} and $\wTSM[\mrmd]$ from Observation \ref{obs_known}.
\begin{lemma}\label{lemma_ass_graph_coupling}
Let $i=n+1$ and $d=m^+_{\Delta+}$.
\begin{alphaenumerate}
\item\label{lemma_ass_graph_coupling_capminus}
We have $\wR_{\cap,n,m_\cap}(\sigma^-)\dequal\wTSM[n,m_\cap](\sigma^-)$ and
$\wR^-_{n,m^-}(\sigma^-)\dequal\wTSM[n,m^-](\sigma^-)$.
\item\label{lemma_ass_graph_coupling_pluscond}
We have
$\wR^+_{\mrmd,n,m^+,d}(\sigma^+)\dequal\wTSM[\mrmd,n+1,m^+,i,d](\sigma^+)$.
\item\label{lemma_ass_graph_coupling_plusprob}
Let $(\mR_\cap,\mR^-_{\Delta-},\mR^+_{\Delta-},\bm d)\in\ints_{\ge 0}^4$, $\mR^+=\mR_\cap+\mR^+_{\Delta-}+\bm d$ and $\wR^+_n(\sigma^+)=\wR^+_{\mrmd,n,\mR^+,\bm d}(\sigma^+)$.
Then we have $\wR^+_{n}(\sigma^+)\dequal\wTSM[n+1,\mR^+](\sigma^+)$ if $(\mR^+,\bm d)\dequal(\mR^+,\bm d^+_{\bm m^+})$.
\end{alphaenumerate}
\end{lemma}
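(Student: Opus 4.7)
}
The three parts all reduce to matching the Radon–Nikodym derivatives that define the various biased wires–weight pairs, using Remark \ref{remark_known_noi} (that $\wTS_{-\circ}$ ignores $\sigma_i$) and Observation \ref{obs_known} (that a uniformly placed subset of connected factors reproduces $\wTSM$). No new calculation beyond these building blocks should be required.

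For Part \ref{lemma_ass_graph_coupling}\ref{lemma_ass_graph_coupling_capminus}, the plan is to compare the laws on the level of a single wires–weight pair. Since $i=n+1$, the support $\mcld[-]=([n+1]\setminus\{i\})^k=[n]^k$ coincides with the support $[n]^k$ used in the definition of $\wRa[,n]$; so $\unif(\mcld[-])=\unif([n]^k)$ and hence $\wR_{-\circ,n+1,i}\dequal\wRa[,n]$. I then verify that the two reweightings agree: the $(\wR^*_{-\circ,n+1,i,\sigma^-},\wRa[,n])$-derivative is $(v,\psi)\mapsto\psi(\sigma^-_v)/\expe[\psiR_{-\circ}(\sigma^-_{\vR_{-\circ}})]$, and the denominator evaluates to $\sum_v n^{-k}\expe[\psiR(\sigma^-_v)]=\ZFa(\gammaN[,\sigma^-])$ exactly as in the proof of Observation \ref{obs_psiexpenullprod}. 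This matches the $(\wTSa[,\sigma^-],\wRa)$-derivative, so $\wR_\cap\dequal\wTSM[n,m_\cap](\sigma^-)$ holds factorwise and hence jointly by independence. Concatenating with the $m^-_\Delta$ further i.i.d.\ pairs (with the same law) yields the statement for $\wR^-$.

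For Part \ref{lemma_ass_graph_coupling}\ref{lemma_ass_graph_coupling_pluscond}, I rerun the computation above for $\wR_{+\circ,n+1,i}$: here $\mcld[+]=[n+1]^k\setminus([n]^k)$ forces $i\in v([k])$, and the reweighted derivative matches that of $\wRa[,n+1]$ restricted to this set and biased by $\psi(\sigma^+_v)$. By Remark \ref{remark_known_noi} the minus-pairs depend on $\sigma^+$ only through $\sigma^-$, so the i.i.d.\ minus-pairs of $\wR^+_{\mrma,\mcla}$ and of $\wTSM[\mrma,\mcla]$ (from the construction in Section \ref{known_neighborhoods} applied to the ground truth $\sigma^+$) have the same law, and likewise for the plus-pairs. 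Averaging over $\bmcla\dequal\unif(\binom{[m^+]}{d})$ on both sides gives the claimed identity in distribution.

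For Part \ref{lemma_ass_graph_coupling}\ref{lemma_ass_graph_coupling_plusprob}, I condition on $(\mR^+,\bm d)$ and apply Part \ref{lemma_ass_graph_coupling}\ref{lemma_ass_graph_coupling_pluscond} to get $\wR^+_n(\sigma^+)|(\mR^+,\bm d)=(m,d)\dequal\wTSM[\mrmd,n+1,m,i,d](\sigma^+)$. Invoking Observation \ref{obs_known} (applied with $n+1$ variables and ground truth $\sigma^+$) then gives $\wTSM[\mrmd,n+1,m,i,\degFR[,n+1,m,\sigma^+](i)](\sigma^+)\dequal\wTSM[n+1,m](\sigma^+)$ conditional on $\mR^+=m$. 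Since by hypothesis $(\mR^+,\bm d)\dequal(\mR^+,\bm d^+_{\bm m^+})$, the conditional law of $\bm d$ given $\mR^+=m$ is precisely that of $\degFR[,n+1,m,\sigma^+](i)$, and averaging over $\mR^+$ yields $\wR^+_n(\sigma^+)\dequal\wTSM[n+1,\mR^+](\sigma^+)$. The only mild subtlety is bookkeeping the conditioning cleanly so that the application of Observation \ref{obs_known} is legitimate; this should be the main (and only real) obstacle.
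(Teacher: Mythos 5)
Your proposal is correct and follows the paper's own argument: part~\ref{lemma_ass_graph_coupling}\ref{lemma_ass_graph_coupling_capminus} via the observation that for $i=n+1$ the set $\mcld[-]=[n]^k$ makes $\wR_{-\circ,n+1,i}$ coincide with $\wRa[,n]$ and the reweightings then match, part~\ref{lemma_ass_graph_coupling}\ref{lemma_ass_graph_coupling_pluscond} via Remark~\ref{remark_known_noi} to reconcile the $\sigma^-$- and $\sigma^+$-biased minus-pairs, and part~\ref{lemma_ass_graph_coupling}\ref{lemma_ass_graph_coupling_plusprob} by conditioning on $(\mR^+,\bm d)$ and invoking Observation~\ref{obs_known}. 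The ``mild subtlety'' you flag at the end is not an obstacle: conditional on $\mR^+=m$ the hypothesis forces $\bm d\dequal\degFR[,n+1,m,\sigma^+](i)$, so Observation~\ref{obs_known} applies verbatim with $n+1$ variables and averaging over $\mR^+$ finishes exactly as you sketch.
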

\begin{proof}
Recall from the proof of Observation \ref{obs_known} that the
$(\wR^*_{-\circ},\wRa)$-derivative is given by
$(v,\psi)\mapsto\bmone\{i\not\in v([k])\}\psi(\sigma_v)/\expe[\bmone\{i\not\in\vRa([k])\}\psiRa(\sigma_{\vRa})]$, which yields $\wTSM[-\circ,n+1,i,\sigma^-]\dequal\wTSM[\circ,n,\sigma^-]$ since clearly $\vR_{\circ,n+1}|i\not\in\vR_{\circ,n+1}([k])\dequal\vR_{\circ,n}$, and thereby completes the proof of Part \ref{lemma_ass_graph_coupling}\ref{lemma_ass_graph_coupling_capminus}.
Part \ref{lemma_ass_graph_coupling}\ref{lemma_ass_graph_coupling_pluscond} holds by construction since we explicitly mimicked the construction in Section \ref{known_neighborhoods}.
Part \ref{lemma_ass_graph_coupling}\ref{lemma_ass_graph_coupling_plusprob} follows directly from Part \ref{lemma_ass_graph_coupling}\ref{lemma_ass_graph_coupling_pluscond} and Observation \ref{obs_known}.
\end{proof}
\begin{remark}\label{remark_asscoupfixedcounts}
Notice that the coupling of the graphs does not require Poisson counts, but they are very convenient to avoid case distinctions, as mentioned below.
In particular, we could take $\mR^+=m^+$, $\mR^-=m^-$ and let $\mR_\cap=\min(m^-,m^+-\bm d^+_{m^+})$.
\end{remark}
\subsubsection{Coupling Factor Counts}\label{coupling_counts}
In this section we introduce a coupling of $\mR_{n}$ and $\mR_{n+1}$ that meets the requirements of Lemma \ref{lemma_ass_graph_coupling}\ref{lemma_ass_graph_coupling_plusprob}.
For this purpose recall the Poisson parameter $\me_n=\degae n/k$ of $\mR_n$, let
$\me^-_n=\me_n$ and $\me^+_n=\me_{n+1}$.
Guided by Lemma \ref{lemma_ass_graph_coupling}\ref{lemma_ass_graph_coupling_plusprob} and Observation \ref{obs_poisson}\ref{obs_poisson_bin} let 
 $\me^+_{\Delta+}=\sucD^+\me^+$ and $\me^+_{-}=(1-\sucD^+)\me^+$.
Inspired by Remark \ref{remark_asscoupfixedcounts}, let $\me_\cap=\min(\me^-,\me^+_-)$, and denote the gaps by $\me^-_\Delta=\me^--\me_\cap$ and $\me^+_{\Delta-}=\me^+_--\me_\cap$.
So, the basic Poisson counts are
\begin{align*}
(\bm m_\cap,\bm m^-_\Delta,\bm m^+_{\Delta-},\bm m^+_{\Delta+})\dequal
\Po(\me_\cap)\otimes\Po(\me^-_\Delta)\otimes\Po(\me^+_{\Delta-})\otimes\Po(\me^+_{\Delta+}).
\end{align*}
Let $\mR^-=\mR_\cap+\mR^-_\Delta$, $\mR^+=\mR_\cap+\mR^+_{\Delta-}+\mR^+_{\Delta+}$ and $\wR_\cap$, $\wR^-$, $\wR^+$ from Lemma \ref{lemma_ass_graph_coupling}.
\begin{lemma}\label{lemma_ass_graph_couplingpo}
We have $\wR_{\cap,n,\mR_\cap(\sigma^+)}(\sigma^-)\dequal\wTSM[n,\mR_\cap(\sigma^+)](\sigma^-)$,
$\wR^-_{n,\mR^-}(\sigma^-)\dequal\wTSM[n,\mR^-](\sigma^-)$,
$\wR^+_{n}(\sigma^+)\dequal\wTSM[n+1,\mR^+](\sigma^+)$, further $\mR^-\dequal\mR_n$ and $\mR^+\dequal\mR_{n+1}$.
\end{lemma}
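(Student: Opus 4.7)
The plan is to reduce all four distributional identities to Lemma \ref{lemma_ass_graph_coupling} combined with the basic Poisson properties collected in Observation \ref{obs_poisson}. Since $(\mR_\cap,\mR^-_\Delta,\mR^+_{\Delta-},\mR^+_{\Delta+})$ are independent Poisson variables by construction, the additivity of independent Poissons immediately handles the two marginal-count statements: $\mR^-=\mR_\cap+\mR^-_\Delta$ has parameter $\me_\cap+\me^-_\Delta=\me^-=\degae n/k$ and $\mR^+=\mR_\cap+\mR^+_{\Delta-}+\mR^+_{\Delta+}$ has parameter $\me_\cap+\me^+_{\Delta-}+\me^+_{\Delta+}=\me^+_-+\me^+_{\Delta+}=(1-\sucD^+)\me^++\sucD^+\me^+=\degae(n+1)/k$, matching $\mR_n$ and $\mR_{n+1}$ respectively.

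For the first two graph-level identities I would condition on the underlying Poisson counts. Given $\mR_\cap=m_\cap$, the pair $\wR_{\cap,n,m_\cap}(\sigma^-)$ is distributed exactly as in Lemma \ref{lemma_ass_graph_coupling}\ref{lemma_ass_graph_coupling_capminus}, hence coincides in law with $\wTSM[n,m_\cap](\sigma^-)$; mixing over $\mR_\cap$ yields the first equality. The same argument applied conditionally on $\mR^-$, together with the independence of the increments $\wR_\cap$ and $\wR^-_\Delta$, gives the second. Note that Remark \ref{remark_known_noi} ensures $\sigma^+$ and $\sigma^-$ are interchangeable in the $(-\circ)$-components, so no ambiguity arises from the extra coordinate at $i=n+1$.

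The third identity is the key one, and here I would invoke Lemma \ref{lemma_ass_graph_coupling}\ref{lemma_ass_graph_coupling_plusprob}. Its hypothesis requires that the pair $(\mR^+,\mR^+_{\Delta+})$ have the joint law $(\mR_{n+1},\bm d^+_{\mR_{n+1}})$, where $\bm d^+_{m}\dequal\Bin(m,\sucD^+)$. To verify this I would regroup the three-term definition of $\mR^+$ into the two independent Poissons $\mR^+_{\Delta+}\dequal\Po(\sucD^+\me^+)$ and $\mR_\cap+\mR^+_{\Delta-}\dequal\Po((1-\sucD^+)\me^+)$ (using once more the additivity of independent Poissons for the second sum), at which point Observation \ref{obs_poisson}\ref{obs_poisson_bin} applied with $p=\sucD^+$ and $\me=\me^+$ delivers exactly the required joint distribution.

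There is no genuine obstacle; the proof is a short bookkeeping exercise. The only care needed is recognising that the lemma's binary Poisson-splitting lemma has to be applied \emph{after} collapsing the two independent summands $\mR_\cap$ and $\mR^+_{\Delta-}$ of the ``not incident to $i$'' part of $\mR^+$ into a single Poisson, and in noting that the same $\mR_\cap$ appears both in the base graph $\wR_\cap$ and inside $\mR^+$, which is consistent because Lemma \ref{lemma_ass_graph_coupling}\ref{lemma_ass_graph_coupling_plusprob} only constrains the marginal law of $(\mR^+,\bm d)$.
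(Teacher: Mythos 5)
Your proof is correct and follows essentially the same approach as the paper: Poisson additivity (Observation \ref{obs_poisson}\ref{obs_poisson_bin}) gives the marginal laws of $\mR^-$ and $\mR^+$, collapsing $\mR_\cap+\mR^+_{\Delta-}$ into a single Poisson and applying the splitting lemma gives the required joint law $(\mR^+,\mR^+_{\Delta+})\dequal(\mR^+,\bm d^+_{\mR^+})$ so that Lemma \ref{lemma_ass_graph_coupling}\ref{lemma_ass_graph_coupling_plusprob} applies, and Lemma \ref{lemma_ass_graph_coupling}\ref{lemma_ass_graph_coupling_capminus} (which already internally uses the fact you cite from Remark \ref{remark_known_noi}) handles the $\wR_\cap$ and $\wR^-$ identities by conditioning on the counts.
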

\begin{proof}
With Observation \ref{obs_poisson}\ref{obs_poisson_bin} we have $\mR^-\dequal\Po(\me^-)$ since $\me^-=\me_\cap+\me^-_{\Delta}$, further $\mR^+_-=\mR_\cap+\mR^+_{\Delta-}\dequal\Po(\me^+_-)$ since $\me^+_-=\me_\cap+\me^+_{\Delta-}$ and $\mR^+\dequal\Po(\me^+)$ since $\me^+=\me^+_{-}+\me^+_{\Delta+}$.
Hence, Observation \ref{obs_poisson}\ref{obs_poisson_bin} further yields that $(\mR^+,\mR^+_{\Delta+})\dequal(\mR^+,\bm d^+_{\mR^+})$, so Lemma \ref{lemma_ass_graph_coupling}\ref{lemma_ass_graph_coupling_plusprob} applies, and completes the proof with Lemma \ref{lemma_ass_graph_coupling}\ref{lemma_ass_graph_coupling_capminus}.
Notice that $\mR^+_-$ depends on $\sigma^+$ through $\sucD^+$ and hence $\mR_\cap=\mR_\cap(\sigma^+)$ depends on $\sigma^+$.
\end{proof}
\begin{remark}\label{remark_asscoupwnotation}
Notice that the notation for $\wR_\cap$, $\wR^-$ and $\wR^+$ is inconsistent, hence we change it as follows.
Let $\bm W_{\mrmp}(\sigma^+)=(\wR_{\cap,\mR_\cap(\sigma^+)}(\sigma^-),\wR^-_{\mR^-}(\sigma^-),\wR^+(\sigma^+))$ denote the pairs over random counts and
$\bm W_{\mrmm,M}(\sigma^-)=(\bm W_{\mrmp}|(\mR_\cap,\mR^-,\mR^+)=M)$ the pairs over given counts.
We let $(\wR_{\cap,m_\cap}(\sigma^-),\wR^-_{m^-}(\sigma^-),\wR^+_{m^+}(\sigma^+))=\bm W_{\mrmm,(m_\cap,m^-,m^+)}(\sigma^+)$ be the pairs for given counts and use
$\bm W_{\mrmp}(\sigma^+)\dequal(\wR_{\cap,\mR_\cap(\sigma^+)}(\sigma^-),\wR^-_{\mR^-}(\sigma^-),\wR^+_{\mR^+}(\sigma^+))$.

Also in the new notation we let $\mR^+_{\Delta+}\dequal\Po(\me^+_{\Delta+})$ be the factor degree of $i=n+1$ in $\wR^+_{\mR^+}(\sigma^+)$, so $\mR^+_-=\mR^+-\mR^+_{\Delta-}\dequal\Po(\me^+_-)$ factors are not wired to $i$ in $\wR^+_{\mR^+}(\sigma^+)$.
Similarly, we e.g.~still have $\wR^-_{\mR^-,[\mR_\cap(\sigma^+)]}(\sigma^-)=\wR_{\cap,\mR_\cap(\sigma^+)}(\sigma^-)$.
\end{remark}
\subsubsection{Coupling Pins}\label{coupling_pins}
Notice that $\ThetaP_{n+1}-\ThetaP_n>0$.
Let $\thetaPR^+_n\dequal\unif([0,\ThetaP_{n+1}])$ and let $\thetaPR^-_n=\thetaPR^-_{n,\thetaPR^+}$ be given by $\thetaPR^-_\theta=\theta$ for $\theta\in[0,\ThetaP_n]$ and $\thetaPR^-_\theta\dequal\unif([0,\ThetaP_n])$ otherwise.
Recall $\thetaPR$ from Section \ref{random_decorated_graphs}.
\begin{lemma}\label{lemma_ass_couptheta}
We have $\thetaPR^-\le\thetaPR^+$, $\thetaPR^-_n\dequal\thetaPR_n$ and $\thetaPR^+_n\dequal\thetaPR_{n+1}$.
\end{lemma}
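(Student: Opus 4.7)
The plan is to verify the three claims of Lemma \ref{lemma_ass_couptheta} essentially by unpacking the definitions, with a case split on whether $\thetaPR^+$ falls in $[0,\ThetaP_n]$ or in $(\ThetaP_n,\ThetaP_{n+1}]$.

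First, $\thetaPR^+_n\dequal\unif([0,\ThetaP_{n+1}])=\thetaPR_{n+1}$ holds by the very definition of $\thetaPR^+_n$, so claim three is immediate.

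Next, for $\thetaPR^-\le\thetaPR^+$, I would argue pointwise on the underlying probability space. On the event $\{\thetaPR^+\le\ThetaP_n\}$ the definition forces $\thetaPR^-=\thetaPR^+$, so trivially $\thetaPR^-\le\thetaPR^+$. On the complementary event $\{\thetaPR^+>\ThetaP_n\}$ we have by construction $\thetaPR^-\in[0,\ThetaP_n]$, hence $\thetaPR^-\le\ThetaP_n<\thetaPR^+$.

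Finally, for $\thetaPR^-_n\dequal\thetaPR_n=\unif([0,\ThetaP_n])$ I would condition on $\thetaPR^+$ and use the law of total probability. For any measurable $A\setle[0,\ThetaP_n]$,
\begin{align*}
\prob[\thetaPR^-\in A]
&=\prob[\thetaPR^+\le\ThetaP_n,\thetaPR^+\in A]+\prob[\thetaPR^+>\ThetaP_n,\thetaPR^-\in A]\\
&=\frac{|A|}{\ThetaP_{n+1}}+\frac{\ThetaP_{n+1}-\ThetaP_n}{\ThetaP_{n+1}}\cdot\frac{|A|}{\ThetaP_n}\cdot\frac{\ThetaP_n}{\ThetaP_n}
=\frac{|A|}{\ThetaP_n},
\end{align*}
where in the second term I used that conditional on $\thetaPR^+>\ThetaP_n$ the variable $\thetaPR^-$ is independently uniform on $[0,\ThetaP_n]$ by construction. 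Since $\thetaPR^-\le\ThetaP_n$ almost surely by the case split above, this identifies the law of $\thetaPR^-$ as $\unif([0,\ThetaP_n])$, completing the proof.

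There is no real obstacle here; the lemma is a bookkeeping check on the coupling of the pinning parameters, and the one thing worth being careful about is only that in the case $\thetaPR^+>\ThetaP_n$ the variable $\thetaPR^-$ is drawn uniformly on $[0,\ThetaP_n]$ \emph{independently} of $\thetaPR^+$, which is exactly what the defining clause $\thetaPR^-_\theta\dequal\unif([0,\ThetaP_n])$ stipulates.
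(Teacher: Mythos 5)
Your proof is correct and follows the same route as the paper: the first two claims are direct from the construction, and the law of $\thetaPR^-$ is identified by the same total-probability decomposition over $\{\thetaPR^+\le\ThetaP_n\}$ and its complement. The extraneous factor $\ThetaP_n/\ThetaP_n$ in your display is harmless, but could simply be removed.
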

\begin{proof}
By construction we have $\thetaPR^+_n\dequal\thetaPR_{n+1}$ and $\thetaPR^-\le\thetaPR^+$.
Further, for an event $\mcle\setle[0,\ThetaP_n]$ and with $(\bm u,\thetaPR^+)\dequal\bm u\otimes\thetaPR^+$, $\bm u\dequal\unif([0,\ThetaP_{n}])$, $I=\int\bmone\{t\in\mcle\}\mrmd t$ we have
\begin{align*}
\prob[\thetaPR^-\in\mcle]=\prob[\thetaPR^+\in\mcle]+\prob[\thetaPR^+>\ThetaP_n,,\bm u\in\mcle]
=\frac{I}{\ThetaP_{n+1}}+\frac{(\ThetaP_{n+1}-\ThetaP_n)I}{\ThetaP_{n+1}\ThetaP_n}
=\prob[\thetaPR_n\in\mcle].
\end{align*}
\end{proof}
For given $\theta^-\in[0,n]$, $\theta^+\in[0,n+1]$ with $\theta^-\le\theta^+$ we consider the success probabilities
\begin{align*}
p_\cap=\frac{\theta^-}{n+1},\,
p^-_\Delta=\frac{\theta^-}{n(n+1)-n\theta^-},\,
p^+_\Delta=\frac{\theta^+-\theta^-}{n+1-\theta^-}.
\end{align*}
Let $\indPR_{\cap\circ},\indPR_{\Delta\circ}^-,\indPR_{\Delta\circ}^+\in\{0,1\}$ be given by the success probabilities $p_\cap,p^-_\Delta,p^+_\Delta$ respectively,
$\indPR_{\times\mrmt,\theta^-,\theta^+}=(\indPR_{\cap\circ}\otimes\indPR_{\Delta\circ}^-\otimes\indPR_{\Delta\circ}^+)^{\otimes(n+1)}$,
$\indPR_{\times,n}=(\indPR_{\cap},\indPR_{\Delta}^-,\indPR_{\Delta}^+)=\indPR_{\times\mrmt,\thetaPR^-,\thetaPR^+}$ and
\begin{align*}
\setPR_\cap&=\{i\in[n]:\indPR_\cap(i)>0\},\,
\setPR^-=\{i\in[n]:\indPR_\cap(i)+\indPR^-_\Delta(i)>0\},\\
\setPR^+&=\{i\in[n+1]:\indPR_\cap(i)+\indPR^+_\Delta(i)>0\}.
\end{align*}
Recall the pinning set $\setPR_{n}$ from Section \ref{random_decorated_graphs}.
\begin{lemma}\label{lemma_ass_coupsetp}
We have $\setPR^-\dequal\setPR_n$ and $\setPR^+\dequal\setPR_{n+1}$.
\end{lemma}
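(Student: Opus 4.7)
The plan is to reduce the problem to the definition of $\setPR_n$ in Section \ref{random_decorated_graphs}, namely $\setPR_n=\indPRT[,\thetaPR_n]^{-1}(1)$ where the coordinates are i.i.d.~Bernoulli with success probability $\thetaPR_n/n$ conditional on $\thetaPR_n$. By Lemma \ref{lemma_ass_couptheta} we already have $\thetaPR^-\dequal\thetaPR_n$ and $\thetaPR^+\dequal\thetaPR_{n+1}$, so it suffices to show that given $(\thetaPR^-,\thetaPR^+)=(\theta^-,\theta^+)$, the indicators $(\bmone\{i\in\setPR^-\})_{i\in[n]}$ are i.i.d.~Bernoulli with success probability $\theta^-/n$, and $(\bmone\{i\in\setPR^+\})_{i\in[n+1]}$ are i.i.d.~Bernoulli with success probability $\theta^+/(n+1)$.

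First I would observe that, by the product structure $\indPR_{\times\mrmt,\theta^-,\theta^+}=(\indPR_{\cap\circ}\otimes\indPR^-_{\Delta\circ}\otimes\indPR^+_{\Delta\circ})^{\otimes(n+1)}$, conditional on $(\thetaPR^-,\thetaPR^+)$ the coordinates of $(\indPR_\cap,\indPR^-_\Delta,\indPR^+_\Delta)$ are i.i.d.~across $i$. Hence $(\bmone\{i\in\setPR^-\})_i$ and $(\bmone\{i\in\setPR^+\})_i$ are i.i.d.~collections of Bernoulli variables, and only the success probabilities need to be checked.

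For $\setPR^-$, the failure probability at each $i\in[n]$ is $(1-p_\cap)(1-p^-_\Delta)$. A direct computation gives
\begin{align*}
(1-p_\cap)(1-p^-_\Delta)
&=\frac{n+1-\theta^-}{n+1}\cdot\frac{n(n+1)-(n+1)\theta^-}{n(n+1)-n\theta^-}
=\frac{n+1-\theta^-}{n+1}\cdot\frac{(n+1)(n-\theta^-)}{n(n+1-\theta^-)}
=\frac{n-\theta^-}{n},
\end{align*}
so the success probability equals $\theta^-/n$, matching the definition of $\indPRTa[,\theta^-,n]$. Analogously, for $\setPR^+$, the failure probability at each $i\in[n+1]$ is
\begin{align*}
(1-p_\cap)(1-p^+_\Delta)
=\frac{n+1-\theta^-}{n+1}\cdot\frac{(n+1-\theta^-)-(\theta^+-\theta^-)}{n+1-\theta^-}
=\frac{n+1-\theta^+}{n+1},
\end{align*}
giving success probability $\theta^+/(n+1)$, matching $\indPRTa[,\theta^+,n+1]$.

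Combining these marginal computations with the conditional independence across $i$ yields, given $(\thetaPR^-,\thetaPR^+)$, the equalities in distribution $\indPR_\cap+\indPR^-_\Delta\dequal\indPRT[,\thetaPR^-,n]$ on $[n]$ and $\indPR_\cap+\indPR^+_\Delta\dequal\indPRT[,\thetaPR^+,n+1]$ on $[n+1]$ (read coordinate-wise as indicators of being nonzero). Integrating out $(\thetaPR^-,\thetaPR^+)$ using Lemma \ref{lemma_ass_couptheta} and recalling that $\setPR_n=\indPRT[,\thetaPR_n]^{-1}(1)$ yields the claim. There is no real obstacle here beyond the algebraic verification of the two failure-probability identities; the only mild subtlety is to make sure the product construction of $\indPR_{\times\mrmt}$ is invoked before conditioning on $(\thetaPR^-,\thetaPR^+)$, so that the across-$i$ independence is preserved.
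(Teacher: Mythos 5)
Your proof is correct and takes essentially the same approach as the paper: you reduce to checking the per-coordinate marginal success probability via the product structure of $\indPR_{\times\mrmt}$ conditional on $(\thetaPR^-,\thetaPR^+)$, and then integrate out using Lemma~\ref{lemma_ass_couptheta}. The only cosmetic difference is that you compute the complementary (failure) probability $(1-p_\cap)(1-p^\pm_\Delta)$ rather than the success probability $p_\cap+(1-p_\cap)p^\pm_\Delta$ as the paper does, which is algebraically equivalent.
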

\begin{proof}
Recall $\indPR_{\mrmt\circ}$ with success probability $\theta/n$ from Section \ref{random_decorated_graphs}.
For given $\theta^-\in[0,n]$, $\theta^+\in[0,n+1]$ with $\theta^-\le\theta^+$
let $(\indPR_{\cap\circ},\indPR_{\Delta\circ}^-,\indPR_{\Delta\circ}^+)=\indPR_{\cap\circ}\otimes\indPR_{\Delta\circ}^-\otimes\indPR_{\Delta\circ}^+$ and notice that
\begin{align*}
\prob[\indPR_{\cap\circ}+\indPR_{\Delta\circ}^->0]&=p_\cap+(1-p_\cap)p^-_\Delta=\frac{\theta^-}{n}=\prob[\indPR_{\mrmt\circ,\theta^-,n}=1],\\
\prob[\indPR_{\cap\circ}+\indPR_{\Delta\circ}^+>0]&=p_\cap+(1-p_\cap)p^+_\Delta=\frac{\theta^+}{n+1}=\prob[\indPR_{\mrmt\circ,\theta^+,n+1}=1].
\end{align*}
Now, let $n^-=n$, $n^+=n+1$ and $\indPR_{\times\mrmt}
=(\indPR_{\cap\mrmt},\indPR_{\Delta\mrmt}^-,\indPR_{\Delta\mrmt}^+)$.
Then the above shows that $\indPR^\pm_{\theta^\pm}=(\min(1,\indPR_{\cap\mrmt,i}+\indPR^\pm_{\Delta\mrmt,i}))_{i\in[n^\pm]}\dequal\indPR_{\mrmt\circ,\thetaP^\pm,n^\pm}^{\otimes n^\pm}$, so Lemma \ref{lemma_ass_couptheta} yields $\setPR^\pm=\indPR^{\pm-1}_{\thetaPR^\pm}(1)\dequal\setPR_{n^\pm}$.
\end{proof}
Finally, we complete the coupling with $\sigmaR^+\dequal\sigmaIID_{n+1}$ and $\sigmaR^-=\sigmaR^+_{[n]}$.
In order to clarify the dependency structure recall $\bm W_{\mrmp,n}(\sigma^+)$ from Remark \ref{remark_asscoupwnotation} and that it determines the factor counts.
On the other hand we have $\bm U_n=(\thetaPR^-,\thetaPR^+,\indPR_{\times})$ which determines $\setPR_\cap,\setPR^-,\setPR^+$.
The joint distribution is now given by $(\sigmaR^+,\bm W_{\mrmp}(\sigmaR^+),\bm U)\dequal(\sigmaR^+,\bm W_{\mrmp}(\sigmaR^+))\otimes\bm U$.
Now, the graphs are
$\GR^\pm(\sigmaR^\pm)
=[\wR^\pm_{n,\mR^\pm}(\sigmaR^\pm_n)]^{\Gamma\darr}_{\setPR^\pm_n,\sigma^\pm_n}$ and
$\GR_\cap=[\wR_{\cap,\mR_\cap(\sigmaR^+)}(\sigmaR^-)]^{\Gamma\darr}_{\setPR_{\cap},\sigmaR^-}$.
\begin{proposition}\label{proposition_ass_full_coupling}
We have
$\GR_\cap\dequal\GTSM[\mR_{\cap}(\sigmaR^+),\setPR_{\cap}](\sigmaR^-)$, further
$(\sigmaR^-,\GR^-(\sigmaR^-))\dequal(\sigmaIID,\GTSM[\mR,\setPR](\sigmaIID))$ and
$(\sigmaR^+,\GR^+(\sigmaR^+))
\dequal(\sigmaIID_{n+1},\GTSM[n+1,\mR_{n+1},\setPR_{n+1}](\sigmaIID_{n+1}))$.
\end{proposition}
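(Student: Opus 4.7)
The plan is to assemble the three equalities in distribution from the coupling pieces that were already verified in Lemma \ref{lemma_ass_graph_couplingpo} (standard graphs with Poisson counts), Lemma \ref{lemma_ass_couptheta} (pinning thresholds), and Lemma \ref{lemma_ass_coupsetp} (pinning sets), together with the ground-truth construction $\sigmaR^+\dequal\sigmaIID_{n+1}$, $\sigmaR^-=\sigmaR^+_{[n]}$. Since the decorated graph operator $[w]^{\Gamma\darr}_{\setP,\sigma}$ is a deterministic function of its inputs, it suffices to match the joint laws of $(\sigma,\wR,\setPR)$ on both sides of each equality.

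First I would record the factorization $(\sigmaR^+,\bm W_{\mrmp}(\sigmaR^+),\bm U)\dequal(\sigmaR^+,\bm W_{\mrmp}(\sigmaR^+))\otimes\bm U$ that was stated just before the proposition, and note that the analogous factorization $(\sigmaIID_{n^\pm},\wTSM[n^\pm,\mR_{n^\pm}](\sigmaIID_{n^\pm}),\setPR_{n^\pm})\dequal(\sigmaIID_{n^\pm},\wTSM[n^\pm,\mR_{n^\pm}](\sigmaIID_{n^\pm}))\otimes\setPR_{n^\pm}$ holds by construction in Section \ref{random_decorated_graphs}. This reduces each of the three claimed equalities to matching the $(\sigma,\wR)$-marginal and the pinning set separately.

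For the $\GR^-$ identity, the ground truth satisfies $\sigmaR^-\dequal\sigmaIID_n$ (marginal of $\sigmaIID_{n+1}$), and conditional on $\sigmaR^-$ Lemma \ref{lemma_ass_graph_couplingpo} gives $\wR^-_{\mR^-}(\sigmaR^-)\dequal\wTSM[n,\mR_n](\sigmaR^-)$. Independently, Lemma \ref{lemma_ass_coupsetp} gives $\setPR^-\dequal\setPR_n$, so applying $[\cdot]^{\Gamma\darr}_{\setPR^-,\sigmaR^-}$ on both sides yields the claim. The argument for $\GR^+$ is identical, using $\sigmaR^+\dequal\sigmaIID_{n+1}$, the $\wR^+$-part of Lemma \ref{lemma_ass_graph_couplingpo}, and the $\setPR^+\dequal\setPR_{n+1}$ half of Lemma \ref{lemma_ass_coupsetp}. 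For $\GR_\cap$, conditional on $\sigmaR^+$ we have $\mR_\cap=\mR_\cap(\sigmaR^+)$ and Lemma \ref{lemma_ass_graph_couplingpo} gives $\wR_{\cap,\mR_\cap(\sigmaR^+)}(\sigmaR^-)\dequal\wTSM[n,\mR_\cap(\sigmaR^+)](\sigmaR^-)$; combined with $\setPR_\cap\dequal\setPR_{n,\thetaPR^-_n}$ (read off from the coupling of Section \ref{coupling_pins} by projecting to the first coordinate of $\indPR_\times$) and independence of $\bm U$ from $(\sigmaR^+,\bm W_{\mrmp}(\sigmaR^+))$, we obtain the decorated graph on the right-hand side.

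The only non-routine step is verifying that $\setPR_\cap$ as defined through the joint atom $\indPR_{\cap}$ really has the same law as a standalone pinning set with threshold $\thetaPR^-$ (so that $[\cdot]^{\Gamma\darr}_{\setPR_\cap,\sigmaR^-}$ matches the operation appearing in the definition of $\GTSM[\cdot,\setPR_\cdot]$). This is the main bookkeeping obstacle, but it is essentially handled by the computation in the proof of Lemma \ref{lemma_ass_coupsetp}: the Bernoulli success probability $p_\cap=\thetaPR^-/(n+1)$ for the $\cap$-coordinate, after conditioning on $\thetaPR^-$, together with independence across $i\in[n]$, directly produces a Bernoulli$(\thetaPR^-/n)$ thinning after normalization, matching the definition of $\setPR_{n,\thetaPR^-}$. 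Once this is checked, the three equalities follow by assembling the pieces.
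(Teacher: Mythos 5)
Your overall assembly—conditioning on the ground truth, matching the factor pairs via Lemma \ref{lemma_ass_graph_couplingpo}, the pinning sets via Lemma \ref{lemma_ass_coupsetp}, and then applying the deterministic decorator using Observation \ref{obs_TSM_iid} together with the independence of $\bm U$—is essentially the paper's proof, and the arguments for the $\GR^-$ and $\GR^+$ identities are sound.

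There is, however, a genuine error in your treatment of the $\GR_\cap$ identity. You claim $\setPR_\cap\dequal\setPR_{n,\thetaPR^-_n}$, and justify it by saying the Bernoulli$(\thetaPR^-/(n+1))$ indicators for the $\cap$-coordinate yield a Bernoulli$(\thetaPR^-/n)$ thinning ``after normalization.'' This is false: by the construction in Section~\ref{coupling_pins}, $\setPR_\cap$ is the set of $i\in[n]$ where i.i.d.\ Bernoulli$(\thetaPR^-/(n+1))$ indicators succeed, whereas $\setPR_{n,\thetaPR^-}$ uses success probability $\thetaPR^-/n$. No ``normalization'' converts one into the other; the marginal laws of these two sets are simply different. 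It is only the composite $\setPR^-=\{i:\indPR_\cap(i)+\indPR^-_\Delta(i)>0\}$ whose success probability combines to $\thetaPR^-/n$, which is exactly the content of Lemma~\ref{lemma_ass_coupsetp}.

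Fortunately, the claim you flagged as ``the only non-routine step'' is not needed at all. The first identity does not assert anything about the marginal law of $\setPR_\cap$; it asserts $\GR_\cap\dequal\GTSM[\mR_{\cap}(\sigmaR^+),\setPR_{\cap}](\sigmaR^-)$ with the \emph{same} $\setPR_\cap$ appearing as an argument on both sides. Unwinding the notation, the statement is that the conditional law of $\GR_\cap$ given $(\mR_\cap(\sigmaR^+),\setPR_\cap,\sigmaR^-)=(m,\setP,\sigma)$ is $\GTSM[m,\setP](\sigma)$. That follows directly from $\wR_{\cap,n,\mR_\cap(\sigma^+)}(\sigma^-)\dequal\wTSM[n,\mR_\cap(\sigma^+)](\sigma^-)$ (Lemma~\ref{lemma_ass_graph_couplingpo}), the independence $\setPR_\cap\perp(\sigmaR^+,\bm W_{\mrmp}(\sigmaR^+))$, and the identification $\GTSM[m,\setP](\sigma)\dequal[\wTSM[n,m](\sigma)]^{\Gamma\darr}_{\setP,\sigma}$ from Observation~\ref{obs_TSM_iid}; the distribution of $\setPR_\cap$ itself never enters. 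So delete the paragraph about $\setPR_\cap$ being a standalone pinning set—it asserts something wrong, and removing it makes the proof both correct and shorter.
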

\begin{proof}
The result is immediate from Lemma \ref{lemma_ass_graph_couplingpo}, Lemma \ref{lemma_ass_coupsetp} and Observation \ref{obs_TSM_iid}.
\end{proof}
Notice that $\GR^-(\sigmaR^-)$ and $\GR^+(\sigmaR^+)$ are conditionally independent given $(\thetaPR^-,\thetaPR^+,\sigmaR^+,\GR_\cap)$ and obtained as follows.
For $\GR^-(\sigmaR^-)$ we choose $\mR^-_{\Delta}(\sigmaR^+)$ and the additional standard factors i.i.d.~from $\wTSa[,n,\sigmaR^-]$. Further, we perform a second sweep of pinning with probability $p^-_{\Delta,\thetaPR^-}$, i.e.~pinning each (unpinned) variable $i\in[n]$ to $\sigmaR^-(i)$ independently with probability $p^-_{\Delta,\thetaPR^-}$.

For $\GR^+(\sigmaR^+)$ we choose $\mR^+_{\Delta-}(\sigmaR^+)$ and the corresponding additional standard factors i.i.d.~from $\wTSa[,n,\sigmaR^-]$.
Further, we choose $\mR^+_{\Delta^+}(\sigmaR^+)$ and the corresponding additional standard factors independently from $\wTSM[+\circ,n+1,i,\sigmaR^+]$.
Formally, we also have to randomly relabel all factors.
Finally, we perform a second sweep of pinning with probability $p^+_{\Delta,\thetaPR^-,\thetaPR^+}$ for the variables $[n]$ and pin $i=n+1$ with probability $\thetaPR^+/(n+1)$.

Since both $\GR^-(\sigmaR^-)$ and $\GR^+(\sigmaR^+)$ are obtained from $\GR_\cap$ exclusively by adding factors, the ratios in
$\Phi_\Delta(n)=\Phi_{\mrmv}(n)-\Phi_{\mrmf}(n)$ with
\begin{align}\label{equ_PhiDeltContributions}
\Phi_{\mrmv}(n)=\expe\left[\ln\left(\frac{\ZG(\GR^+(\sigmaR^+))}{\ZG(\GR_\cap)}\right)\right],\,
\Phi_{\mrmf}(n)=\expe\left[\ln\left(\frac{\ZG(\GR^-(\sigmaR^+))}{\ZG(\GR_\cap)}\right)\right],
\end{align}
can be understood as the expected additional weight caused by the new factors under the Gibbs spins $\sigmaRG[,\GR_\cap]$, as in Section \ref{int_interpolator} and Section \ref{int_factor}.
\subsubsection{The Base Graph}\label{ass_base_graph}
We define a coupling for the pairs $(\sigmaR^-,\GR_\cap)$ and $(\sigmaIID_n,\GTSM[n,\mR_n,\setPR_n](\sigmaIID_n))$.
For this purpose we start with a coupling of $(\sigmaR^-,\mR_\cap(\sigmaR^+),\setPR_\cap)$ and 
$(\sigmaIID_n,\mR_n,\setPR_n)$.
For $\sigma\in[q]^{n+1}$ let $(\mR'_{\cap,\sigma},\mR'_\sigma)$ be a coupling of $\mR_\cap(\sigma)$ and $\mR^-_n$ from the coupling lemma \ref{obs_tv}\ref{obs_tv_coupling}.
This conditional law and $(\sigmaR'_+,\setPR'_\cap,\setPR')\dequal(\sigmaR^+,\setPR_\cap,\setPR^-)$ induce
$\bm a=(\sigmaR'_+,\mR'_\cap(\sigmaR'_+),\mR'(\sigmaR'_+),\setPR'_\cap,\setPR')$, which further determines $\sigmaR'_\cap=\sigmaR'=\sigmaR'_{+,[n]}$.
For given $a=(\sigma_+,m_\cap,m,\setP_\cap,\setP)$ with $\sigma=\sigma_{+,[n]}$ we obtain the graphs as follows.
For $m_\cap=m$ and $\setP_\cap=\setP$ let $\GR'_\cap(a)=\GR'(a)\dequal\GTSM[m,\setP](\sigma)$, otherwise let $(\GR'_\cap(a),\GR'(a))\dequal\GTSM[m_\cap,\setP_\cap](\sigma)\otimes\GTSM[m,\setP](\sigma)$.
\begin{lemma}\label{lemma_ass_base_graph}
We have $(\sigmaR'_\cap,\GR'_\cap(\bm a))\dequal(\sigmaR^-,\GR_\cap)$ and
$(\sigmaR',\GR'(\bm a))\dequal(\sigmaIID,\GTSM[\mR,\setPR](\sigmaIID))$.
Further, we have $\prob[(\sigmaR'_\cap,\GR'_\cap(\bm a))\neq(\sigmaR',\GR'(\bm a))]=\mclo(n^{-\rho})$.
\end{lemma}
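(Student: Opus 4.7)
The proof has two clean pieces: verifying the two marginal distributions and controlling the probability that the coupling fails to align the graphs. I would begin with the marginals, which are essentially built into the construction. By Proposition \ref{proposition_ass_full_coupling} we have $\GR_\cap \dequal \GTSM[\mR_\cap(\sigmaR^+),\setPR_\cap](\sigmaR^-)$, and the construction of $\bm a$ directly produces draws with the matching conditional laws: $(\sigmaR'_+,\setPR'_\cap) \dequal (\sigmaR^+,\setPR_\cap)$ by definition, and conditionally on $\sigmaR'_+$ the count $\mR'_{\cap,\sigmaR'_+}$ has the law of $\mR_\cap(\sigmaR'_+)$. Unpacking the case distinction defining $\GR'_\cap(\bm a)$ then gives $(\sigmaR'_\cap,\GR'_\cap(\bm a))\dequal(\sigmaR^-,\GR_\cap)$. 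The second marginal follows the same template, additionally using that $\mR'(\sigmaR'_+) \dequal \mR_n$ is conditionally independent of $\sigmaR'_+$ and that $\setPR' \dequal \setPR_n$ by Lemma \ref{lemma_ass_coupsetp}, together with Observation \ref{obs_TSM_iid}.

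For the failure probability, since $\sigmaR'_\cap = \sigmaR'_{+,[n]} = \sigmaR'$ holds deterministically, everything reduces to bounding $\prob[\GR'_\cap(\bm a)\neq\GR'(\bm a)]$. The case distinction defining the graphs couples them perfectly once both $\mR'_{\cap,\sigmaR'_+} = \mR'_{\sigmaR'_+}$ and $\setPR'_\cap = \setPR'$ hold; the coupling lemma Observation \ref{obs_tv}\ref{obs_tv_coupling} for the Poisson counts (applied pointwise in $\sigmaR'_+$) and the inclusion $\setPR_\cap\subseteq\setPR^-$ for the pins yield
\begin{align*}
\prob\!\left[(\sigmaR'_\cap,\GR'_\cap(\bm a))\neq(\sigmaR',\GR'(\bm a))\right]
\le \expe\!\left[\|\mR_\cap(\sigmaR'_+)-\mR_n\|_{\mrmtv}\right]
+\prob\!\left[\exists i\in[n]:\indPR^-_\Delta(i)>0\right].
\end{align*}

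The main obstacle is the Poisson term, because the naive coupling via a $\Po(\me^--\me_\cap)$ increment would only give $\mclo(1)$. I plan to exploit that $\me_\cap(\sigma^+) = \min(\me^-,(1-\sucD^+(\sigma^+_{n+1}))\me^+)$ lies within $\mclo(1)$ of $\me^-$ uniformly in $\sigma^+$: indeed $\me^+-\me^- = \degae/k$, while $\sucD^+\me^+ = \mclo(1)$ uniformly in $\sigma^+$ by the bound $\sucD^+ \le c/(n+1)$ from Observation \ref{obs_degsucprob}\ref{obs_degsucprob_bounds}. Pinsker's inequality \ref{obs_tv}\ref{obs_tv_pinsker} applied to the closed-form Poisson relative entropy in Observation \ref{obs_poisson}\ref{obs_poisson_dkl} then sharpens the naive bound to $\|\mR_\cap(\sigma^+)-\mR_n\|_{\mrmtv} = \mclo(n^{-1/2})$ uniformly, since $\me^-$ grows linearly in $n$ and a second-order expansion of the KL formula at $\me^-$ gives $\mclo(1/n)$. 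The pinning contribution is routine: from Section \ref{coupling_pins} each $\indPR^-_\Delta(i)$ is Bernoulli with $p^-_\Delta \le \ThetaP_n/(n(n+1)-n\ThetaP_n) = \mclo(n^{-1-\rho})$ using $\ThetaP_n = n^{1-\rho} \ll n$, so a union bound over $i\in[n]$ yields $\mclo(n^{-\rho})$. Combining the two estimates with $\rho < 1/2$ gives the claimed $\mclo(n^{-\rho})$.
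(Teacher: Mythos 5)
Your proof is correct and follows essentially the same route as the paper's: the marginal laws are read off the construction together with Proposition~\ref{proposition_ass_full_coupling}; the failure probability is split into the factor-count and pinning contributions, with the former bounded by Pinsker's inequality applied to the closed-form Poisson relative entropy (the paper uses $\ln(1-t)\le -t$ where you invoke a second-order expansion, but the two yield the identical $\bm\Delta^2/\me^-$ bound) and the latter by a union/Markov bound on $\expe[|\indPR_\Delta^{--1}(1)|]$. The only cosmetic difference is that you cite $\rho<1/2$ whereas the paper invokes the sharper standing choice $\rho<1/4$ via $n^{-\rho}=\omega(n^{-1/4})$; both suffice to absorb the $\mclo(n^{-1/2})$ count term.
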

\begin{proof}
We have $(\sigmaR'_+,\sigmaR'_\cap,\setPR'_\cap,\GR'_\cap(\bm a))\dequal(\sigmaR^+,\sigmaR^-,\setPR_\cap,\GTSM[\mR_\cap(\sigmaR^+),\setPR_\cap](\sigmaR^-))$ by definition, and further
$(\sigmaR',\setPR',\GR'(\bm a))\dequal(\sigmaR^-,\setPR^-,\GTSM[\mR^-,\setPR^-](\sigmaR^-))$, so the first two assertions hold by Proposition \ref{proposition_ass_full_coupling}.
Since the graphs coincide if the counts do, we have
$\prob[(\sigmaR'_\cap,\GR'_\cap(\bm a))\neq(\sigmaR',\GR'(\bm a))]
\le\prob[(\mR'_{\cap,\sigmaR^+},\setPR'_\cap)\neq(\mR'_{\sigmaR^+},\setPR')]
\le\prob[\mR'_{\cap,\sigmaR^+}\neq\mR'_{\sigmaR^+}]+\prob[\setPR'_\cap\neq\setPR']$.
For the latter we have $\prob[\setPR'_\cap\neq\setPR']\le 
\prob[|\indPR_\Delta^{--1}(1)|>0]$, and further
$\prob[|\indPR_\Delta^{--1}(1)|>0]\le\expe[|\indPR_\Delta^{--1}(1)|]$ by Markov's inequality, where
\begin{align}
\expe[|\indPR_\Delta^{--1}(1)|]
=n\expe[p^-_\Delta(\thetaPR^-)]\le\frac{n\ThetaP_n}{n(n+1)-n\ThetaP_n}=(1+o(1))\frac{\ThetaP_n}{n}.
\end{align}
For the factor counts we
use the definition, i.e.~the coupling lemma \ref{obs_tv}\ref{obs_tv_coupling}, Pinsker's inequality \ref{obs_tv}\ref{obs_tv_pinsker} and Observation \ref{obs_poisson}\ref{obs_poisson_dkl} to obtain
\begin{align*}
\prob[\mR'_{\cap,\sigmaR^+}\neq\mR'_{\sigmaR^+}]
&=\expe\left[\|\mR_\cap(\sigmaR^+)-\mR_n\|_\mrmtv\right]
\le\expe\left[\expe\left[\sqrt{\frac{1}{2}\DKL\left(\mR_\cap(\sigmaR^+)\middle\|\mR_n\right)}\middle|\sigmaR^+\right]\right]\\
&=\frac{1}{\sqrt{2}}\expe\left[\sqrt{\me_n-\me_\cap(\sigmaR^+)+\me_\cap(\sigmaR^+)\ln\left(\frac{\me_\cap(\sigmaR^+)}{\me_n}\right)}\right].
\end{align*}
The argument of the expectation vanishes for $\me_\cap(\sigmaR^+)=\me^-=\me_n$.
Otherwise, we have $\me_\cap(\sigmaR^+)=\me^+-\me^+_{\Delta+}(\sigmaR^+)<\me^-$, or equivalently $\bm\Delta>0$ with $\bm\Delta=\me^+_{\Delta+}(\sigmaR^+)-\degae/k$, and using $\ln(1-t)\le-t$ further
\begin{align*}
\prob[\mR_\cap(\sigmaR^+)\neq\mR_n]
&\le\frac{1}{\sqrt{2}}\expe\left[\bmone\left\{\bm\Delta>0\right\}\sqrt{\bm\Delta-\me_\cap(\sigmaR^+)\frac{\bm\Delta}{\me_n}}\right]\\
&=\frac{1}{\sqrt{2}}\expe\left[\bmone\left\{\bm\Delta>0\right\}\sqrt{\bm\Delta-\left(1-\frac{\bm\Delta}{\me_n}\right)\bm\Delta}\right]
=\frac{\expe[\bmone\{\bm\Delta>0\}\bm\Delta]}{\sqrt{2\me_n}}.
\end{align*}
With $\tilde c$ from Corollary \ref{cor_degbounds}\ref{cor_degbounds_po} we have $\bm\Delta\le\tilde c\degae-\frac{\degae}{k}$, hence
\begin{align*}
\prob[\mR_\cap(\sigmaR^+)\neq\mR_n]
\le\left(\tilde c-\frac{1}{k}\right)\frac{\sqrt{k}\degae}{\sqrt{2\degae n}}
\le\left(\tilde c-\frac{1}{k}\right)\frac{\sqrt{k\degabu}}{\sqrt{2n}}.
\end{align*}
This completes the proof since $\ThetaP/n=n^{-\rho}=\omega(n^{-1/4})$.
\end{proof}
\subsubsection{Factor Count Asymptotics}\label{ass_factor_count_asymptotics}
Let $c_{\mrmr,\mfkg}\in\reals_{>0}$ be large, $r(n)=c_{\mrmr}\sqrt{\ln(n)/n}$ and $\mclb[+][\Gamma]=\{\sigma^+\in[q]^{n+1}:\|\gammaN[,\sigma^+]-\gamma^*\|_\mrmtv\le r(n)\}$.
In this section we show that for typical spins $\mclb[+][\Gamma]$ and for sufficiently large $n$ the coupling of the graphs simplifies.
\begin{lemma}\label{lemma_ass_factor_count_asymptotics}
Let $\sigma\in\mclb[+][\Gamma]$.
There exists $c_\mfkg\in\reals_{>0}$
such that $\me^\pm_\Delta(\sigma)\le c$.
Further, there exists $n_{\circ,\mfkg}\in\ints_{>0}$ such that
for $n\ge n_\circ$ we have
$\me^+_{\Delta-}(\sigma)=0$ and
\begin{align*}
\left|\me^+_{\Delta+}(\sigma)-\degae\right|,\,
\left|\me^-_{\Delta}(\sigma)-\frac{\degae(k-1)}{k}\right|
\le cr(n).
\end{align*}
\end{lemma}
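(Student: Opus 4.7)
The plan is to reduce everything to estimating the single quantity $\sucD^+=\sucD[,n+1,\sigma](\sigma(i))$ for $i=n+1$, since the four Poisson parameters
$\me^-,\me^+,\me^+_{\Delta+},\me^+_-,\me_\cap,\me^\pm_\Delta$ are all explicit functions of $\sucD^+$ and $\degae$. The key point is that on $\mclb[+][\Gamma]$ the empirical color distribution $\gamma=\gammaN[,\sigma]$ is $\mclo(r(n))$-close to $\gamma^*$, and Observation \ref{obs_fad}\ref{obs_fad_amstar} together with the Lipschitz bound in Observation \ref{obs_fad}\ref{obs_fad_rnamlipschitz} yield $r_{*,\gamma}(\sigma(i))=1+\mclo(r(n))$.

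First, by Observation \ref{obs_degsucprob}\ref{obs_degsucprob_expe} applied on $n+1$ variables,
\begin{align*}
\eta=\frac{k\,r_{*,\gamma}(\sigma(i))}{n+1},
\end{align*}
and Observation \ref{obs_degsucprob}\ref{obs_degsucprob_bounds} gives $\sucD^+=\eta+\mclo(n^{-2})$. Multiplying by $\me^+=\degae(n+1)/k$ yields
\begin{align*}
\me^+_{\Delta+}(\sigma)=\degae\,r_{*,\gamma}(\sigma(i))+\mclo(n^{-1})=\degae+\mclo(r(n)),
\end{align*}
which already proves the first bound. In particular $\me^+_{\Delta+}(\sigma)\le c$ for an absolute $c_\mfkg$ (uniformly in $n$ for $\sigma\in\mclb[+][\Gamma]$).

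Next, compute
\begin{align*}
\me^+_-(\sigma)-\me^-=\me^+-\me^+_{\Delta+}(\sigma)-\me^-=\frac{\degae}{k}-\degae\,r_{*,\gamma}(\sigma(i))+\mclo(n^{-1}).
\end{align*}
Since $k\ge 2$ and $r_{*,\gamma}(\sigma(i))=1+\mclo(r(n))$, the right side equals $-\degae(k-1)/k+\mclo(r(n)+n^{-1})$, which is strictly negative for $n$ large enough (any $n_\circ$ making the error less than, say, $\degae(k-1)/(2k)$ suffices). Therefore, for $n\ge n_\circ$, $\me_\cap=\me^+_-$, hence $\me^+_{\Delta-}(\sigma)=0$ and
\begin{align*}
\me^-_\Delta(\sigma)=\me^--\me^+_-(\sigma)=\degae\left(r_{*,\gamma}(\sigma(i))-\tfrac{1}{k}\right)+\mclo(n^{-1})=\frac{\degae(k-1)}{k}+\mclo(r(n)),
\end{align*}
which is the second bound, and in particular $\me^-_\Delta(\sigma)\le c$ as well.

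There is no real obstacle here; the only thing to be careful about is that the Lipschitz/smoothness estimate used for $r_*$ lives on $\mclp([q])$ (so we need $\sigma\in\mclb[+][\Gamma]$ to guarantee proximity of $\gamma$ to $\gamma^*$, which in turn is guaranteed by the choice of $r(n)$ and $c_\mrmr$) and that the $\mclo(n^{-1})$ correction coming from $\sucD^+-\eta$ is dominated by $r(n)$ for $n$ large. Both are purely routine, so the entire statement follows from a single application each of Observation \ref{obs_degsucprob}, Observation \ref{obs_fad}\ref{obs_fad_amstar}, and Observation \ref{obs_fad}\ref{obs_fad_rnamlipschitz}, together with the definitions of the Poisson parameters from Section \ref{coupling_counts}.
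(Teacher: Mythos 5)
Your approach is essentially the same as the paper's: the paper invokes Corollary \ref{cor_degbounds}\ref{cor_degbounds_conc} directly, which packages exactly the combination of Observations \ref{obs_degsucprob}\ref{obs_degsucprob_expe}, \ref{obs_degsucprob}\ref{obs_degsucprob_bounds}, \ref{obs_fad}\ref{obs_fad_amstar} and \ref{obs_fad}\ref{obs_fad_rnamlipschitz} you unpack by hand, and the remaining deductions ($\me^+_{\Delta-}=0$ via $\me^+_-<\me^-$, the identity $|\me^-_\Delta-\degae(k-1)/k|=|\me^+_{\Delta+}-\degae|$) are the same.

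Two small gaps remain. First, the lemma's uniform bound $\me^\pm_\Delta(\sigma)\le c_\mfkg$ must hold for \emph{all} $n$, but your argument for $\me^-_\Delta$ and $\me^+_\Delta=\me^+_{\Delta-}+\me^+_{\Delta+}$ only applies once $n\ge n_\circ$ (before that you cannot assume $\me_\cap=\me^+_-$). The paper closes this by the trivial observation that $\me^\pm_\Delta\le\me^+\le\degabu(n_\circ+1)/k$ for $n\le n_\circ$; you should add something equivalent. Second, writing the error as $\mclo(n^{-1})$ rather than $\mclo(\degae/n)$, and then comparing it against a $\degae$-dependent threshold $\degae(k-1)/(2k)$, makes it look as though $n_\circ$ would have to depend on $\degae$, which is not allowed. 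Tracking the $\degae$ factor through $\me^+=\degae(n+1)/k$ (as the paper's cited bound $|\me^+_{\Delta+}-\degae|\le\tilde c\degae(r(n)+n^{-1})$ does) the $\degae$ cancels and $n_\circ$ is genuinely $\mfkg$-dependent only; the claim is fine, but your phrasing obscures why.
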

\begin{proof}
With $\tilde c$ from Corollary \ref{cor_degbounds}\ref{cor_degbounds_conc} and with $\bm d^*_{\mrmf,\mR}\dequal\Po(\sucD\me)$ from Corollary \ref{cor_degbounds}\ref{cor_degbounds_po} we have $|\me^+_{\Delta+}-\degae|\le\tilde c\degae(\|\gammaN[,\sigma^+]-\gamma^*\|_\mrmtv+n^{-1})\le\tilde c\degae(r(n)+n^{-1})$. Using $k\ge 2$ fix
\begin{align*}
n_\circ=\min\left\{n_0\in\ints_{\ge 3}:\sup_{n\ge n_0}\tilde c\left(r(n)+\frac{1}{n}\right)<\frac{k-1}{k}\right\}.
\end{align*}
For $n\le n^\circ$ we have $\me^\pm_\Delta\le\me^+\le\degabu(n_\circ+1)/k$.
For $n\ge n_\circ$ we have
$|\me^+_{\Delta+}-\degae|<\degae(k-1)/k$ and $|\me^+_{\Delta+}-\degae|\le 2\tilde c\degabu r(n)\le 2\tilde c\degabu c_{\mrmr}/\sqrt{e}$.
The former yields $\me^+_-=\me^+-\me^+_{\Delta+}<\me^+-\frac{\degae}{k}=\me^-$, so $\me^+_{\Delta-}=0$. Finally, notice that
\begin{align*}
\left|\me^-_{\Delta}-\frac{\degae(k-1)}{k}\right|
=\left|\me^--\me^+_--\frac{\degae(k-1)}{k}\right|
=\left|\me^+_{\Delta+}-\degae\right|.
\end{align*}
\end{proof}
\subsubsection{Typical Events for the Factor Contribution}\label{assfactor_typical}
Analogously to $\mclb[+][\Gamma]$ let $\mclb[-][\Gamma]=\{\sigma\in[q]^n:\|\gammaN[,\sigma]-\gamma^*\|_\mrmtv\le r(n)\}$, and $\mclb[][\circ]=(\degae-r(n),\degae+r(n))$.
Further, let $\Phi_{\mrmf}(n)=\expe[\bm\Phi]$ with $\bm\Phi=\ln(\ZG(\GR^-(\sigmaR^-))/\ZG(\GR_\cap))$ and $\degaR^-=k\mR^-/n$.
\begin{lemma}\label{lemma_assfactor_typ}
We have $\Phi_{\mrmf}(n)=\expe[\bmone\{\sigmaR^+\in\mclb[+][\Gamma],\sigmaR^-\in\mclb[-][\Gamma],\degaR^-\in\mclb[][\circ]\}\bm\Phi]+o(n^{-1})$.
\end{lemma}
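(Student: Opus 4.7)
\textbf{Proof plan for Lemma \ref{lemma_assfactor_typ}.}
The plan is to decompose $\Phi_{\mrmf}(n)$ according to the indicator of the typical event and its complement, and then show that the complement contributes only $o(n^{-1})$ by combining a polynomial a priori bound on $|\bm\Phi|$ with super-polynomial tail bounds on the probabilities of the bad events (which we make as small as needed by taking the constant $c_{\mrmr}$ in the definition of $r(n)$ sufficiently large in terms of $\mfkg$). The main observation is that the logarithmic derivative $\bm\Phi=\ln(\ZG(\GR^-(\sigmaR^-))/\ZG(\GR_\cap))=n(\phiG(\GR^-)-\phiG(\GR_\cap))$ can only grow polynomially in $n$, while the probability of the bad event decays like $n^{-\Omega(c_{\mrmr}^2)}$.

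First, I would derive an a priori bound on $|\bm\Phi|$. Since $\GR^-$ and $\GR_\cap$ share the ground truth $\sigmaR^-$, differ only in that $\GR^-$ carries $\mR^-_\Delta=\mR^--\mR_\cap$ additional standard factors and that $\setPR^-\supseteq\setPR_\cap$, the graph distance from Section~\ref{phi_lipschitz} satisfies $\distG(\GR^-,\GR_\cap)\le\mR^-_\Delta+2D_\cap+|\setPR^-\setminus\setPR_\cap|$ where $D_\cap$ counts the factors of $\GR_\cap$ touching a coordinate in $\setPR^-\setminus\setPR_\cap$. By Observation~\ref{obs_phi_lipschitz} this gives $|\bm\Phi|\le c_\mfkg\,\distG(\GR^-,\GR_\cap)\le c_\mfkg(\mR^-+|\setPR^-|)$, so in particular $\expe[\bm\Phi^2]\le c'_\mfkg\,\expe[(\mR^-+|\setPR^-|)^2]=\mclo(n^2)$ using that $\mR^-\dequal\Po(\degae n/k)$ and $|\setPR^-|\dequal\Bin(n,\thetaPR^-/n)$ from Observation~\ref{obs_pin_basic}.

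Next, I would bound the probability of the complement of the typical event. The three failure events are controlled as follows: $\prob[\sigmaR^+\notin\mclb[+][\Gamma]]\le c_2 e^{-c_1 r(n)^2(n+1)}=c_2\,n^{-c_1 c_{\mrmr}^2}$ by Observation~\ref{obs_gtiid}\ref{obs_gtiid_prob}; the same bound (with a different constant) holds for $\prob[\sigmaR^-\notin\mclb[-][\Gamma]]$; and Corollary~\ref{cor_dega} (applied with $\tI=1$, so $\degaR=\degaR^-$) gives $\prob[\degaR^-\notin\mclb[][\circ]]\le c_2\exp(-c_1r(n)^2n/(1+r(n)))=n^{-c_1c_{\mrmr}^2(1+o(1))}$. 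Choosing $c_{\mrmr}=c_{\mrmr}(\mfkg)$ so large that $c_1c_{\mrmr}^2\ge 5$ in every one of these inequalities, a union bound yields $\prob[\text{bad}]=\mclo(n^{-5})$.

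Finally, I would combine the two ingredients by Cauchy--Schwarz:
\begin{align*}
\left|\expe[\bmone\{\text{bad}\}\,\bm\Phi]\right|\le\sqrt{\prob[\text{bad}]\cdot\expe[\bm\Phi^2]}=\sqrt{\mclo(n^{-5})\cdot\mclo(n^2)}=\mclo(n^{-3/2})=o(n^{-1}),
\end{align*}
which is exactly the required estimate. I do not expect any serious obstacle; the only point that requires a little care is that $\bm\Phi$ depends on $\sigmaR^+$ not only through $\sigmaR^-=\sigmaR^+_{[n]}$ but also through the coupling splitting $\mR_\cap(\sigmaR^+)$, so one should establish the a priori $L^2$-bound on $\bm\Phi$ before restricting to any event, which is exactly what the uniform bound $|\bm\Phi|\le c_\mfkg(\mR^-+|\setPR^-|)$ achieves.
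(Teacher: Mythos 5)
Your proposal is correct and reaches the same conclusion, but by a genuinely different route than the paper. The paper's proof works with the conditional expectation $\overline{\bm\Phi}=\expe[\bm\Phi\mid\mR_\cap(\sigmaR^+),\sigmaR^+,\sigmaR^-,\degaR^-]$, bounds $|\overline{\bm\Phi}|\le\tilde c(2\mR^-+n)$, and then controls $\expe[\bmone\{\degaR^-\notin\mclb[][\circ]\}|\overline{\bm\Phi}|]$ by splitting the product $\bmone\{\cdot\}\mR^-$ and appealing to the \emph{first-moment} estimate $\expe[\bmone\{\degaR\notin\mclb[][\circ]\}\degaR]\le c_2\exp(-c_1r^2n/(1+r))$ from Corollary~\ref{cor_dega}, plus the corresponding probability bound. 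You instead take an unconditional $L^2$ bound $\expe[\bm\Phi^2]=\mclo(n^2)$ and decouple via Cauchy--Schwarz, so the dependence between the bad event and the size of $\mR^-$ never needs to be analyzed. Both work; the paper's route reuses the second-moment-free product bound already established in Corollary~\ref{cor_dega}, while yours trades that for a slightly cruder but very robust decoupling step. Two small points worth tidying: (i) because of the $1+r(n)$ in the denominator of the Poisson tail, your displayed $n^{-c_1c_\mrmr^2(1+o(1))}$ is really $n^{-c_1c_\mrmr^2(1-o(1))}$, so you should take $c_\mrmr$ large enough that (say) $c_1c_\mrmr^2/2\ge 5$ to absorb the small-$n$ deviation, and also absorb the finitely many small-$n$ cases into the implied constant so the bound holds for \emph{all} $n$ as the paper's $\mclo$ notation requires; (ii) the tightest distance identity for this pair is actually $\distG(\GR^-,\GR_\cap)=\mR^-_\Delta+2D_\cap+|\setPR^-\setminus\setPR_\cap|$ (an equality, since the two graphs literally share their first $\mR_\cap$ factors and the ground truth), but your crude bound $|\bm\Phi|\le c(\mR^-+|\setPR^-|)$ via Observation~\ref{obs_phi_lipschitz} applied separately to the two free entropies is already sufficient and more robust, so the exact form of the distance is not needed.
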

\begin{proof}
With $\mcle=\{\sigmaR^+\in\mclb[+][\Gamma],\sigmaR^-\in\mclb[-][\Gamma],\degaR^-\in\mclb[][\circ]\}$ we have
$\Phi_\mrmf(n)=\expe[\bmone\mcle\bm\Phi]+\eps$ with
\begin{align*}
\eps(n)=\expe\left[\bmone\lnot\mcle\overline{\bm\Phi}\right],\,
\overline{\bm\Phi}=\expe\left[\bm\Phi\middle|\mR_\cap(\sigmaR^+),\sigmaR^+,\sigmaR^-,\degaR^-\right].
\end{align*}
With Jensen's inequality we can consider the atypical events separately, i.e.
\begin{align*}
|\eps(n)|
\le\expe[\bmone\{\degaR^-\not\in\mclb[][\circ]\}|\overline{\bm\Phi}|]
+\expe[\bmone\{\sigmaR^+\not\in\mclb[+][\Gamma]\}|\overline{\bm\Phi}|]
+\expe[\bmone\{\sigmaR^-\not\in\mclb[-][\Gamma]\}|\overline{\bm\Phi}|].
\end{align*}
With $\bm\Phi=n\phiG(\ZG(\GR^-(\sigmaR^-)))-n\phiG(\ZG(\GR_\cap))$, Jensen's inequality, the triangle inequality, 
$\tilde c$ from Observation \ref{obs_phi_lipschitz}, Lemma \ref{lemma_ass_coupsetp}, Lemma \ref{lemma_ass_couptheta} and Observation \ref{obs_pin_basic} we have
\begin{align*}
|\overline{\bm\Phi}|
\le\tilde c\left(\mR^-+\frac{\ThetaP_n}{2}+\mR_\cap(\sigmaR^+)+\frac{n\ThetaP_n}{(n+1)2}\right)
\le\tilde c(2\mR^-+n).
\end{align*}
So, with Lemma \ref{lemma_ass_graph_couplingpo} and $c$ from Corollary \ref{cor_dega} we have
\begin{align*}
\expe[\bmone\{\degaR^-\not\in\mclb[][\circ]\}|\overline{\bm\Phi}|]
\le\frac{2\tilde cn}{k}\expe[\bmone\{\degaR\not\in\mclb[][\circ]\}\degaR]
+\tilde cn\prob[\degaR\not\in\mclb[][\circ]]
\le c'n\exp\left(-\frac{c_1r^2n}{1+r}\right)
\end{align*}
with $c'=\tilde cc_2(2+k)/k$. With $r=o(1)$ and $c_\mrmr^2>2/c_1$ we get
\begin{align*}
\expe[\bmone\{\degaR^-\not\in\mclb[][\circ]\}|\overline{\bm\Phi}|]
\le c'n\exp\left(-(1+o(1))c_1c_{\mrmr}^2\ln(n)\right)
=c'n^{-(1+o(1))c_1c_{\mrmr}^2+1}
=o(n^{-1}).
\end{align*}
With $c$ from Observation \ref{obs_gtiid}\ref{obs_gtiid_prob}, independence and $c_{\mrmr}^2>2/c_1$ we have
\begin{align*}
\expe[\bmone\{\sigmaR^+\not\in\mclb[+][\Gamma]\}|\overline{\bm\Phi}|]
\le\tilde c\left(\frac{2\degabu}{k}+1\right)c_2ne^{-c_1r^2(n+1)}
=\Theta\left(n^{1-c_1c_{\mrmr}^2}\right)=o(n^{-1}),
\end{align*}
and $\expe[\bmone\{\sigmaR^-\not\in\mclb[-][\Gamma]\}|\overline{\bm\Phi}|]=o(n^{-1})$ follows analogously.
\end{proof}
The following result further restricts the very typical event in Lemma \ref{lemma_assfactor_typ} to the typical event that no variables are pinned in the second sweep.
\begin{lemma}\label{lemma_assfactor_pin}
We have
\begin{align*}
\Phi_{\mrmf}(n)=\expe[\bmone\{\sigmaR^+\in\mclb[+][\Gamma],\sigmaR^-\in\mclb[-][\Gamma],\degaR^-\in\mclb[][\circ],\setPR^-=\setPR_\cap\}\bm\Phi]+\mclo(n^{-\rho}).
\end{align*}
\end{lemma}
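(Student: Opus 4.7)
The plan is to start from Lemma \ref{lemma_assfactor_typ} and show that further restricting the typical event to the sub-event $\setPR^-=\setPR_\cap$, where the second pinning sweep adds no new pins, costs only $\mclo(n^{-\rho})$. Write $\mcle$ for the typical event of Lemma \ref{lemma_assfactor_typ} and $\mcle_\star=\mcle\cap\{\setPR^-=\setPR_\cap\}$ for the further restricted event. It then suffices to show
\begin{align*}
|\expe[\bmone\{\mcle\setminus\mcle_\star\}\bm\Phi]|=\mclo(n^{-\rho}).
\end{align*}

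First I would use Observation \ref{obs_phi_lipschitz} to bound $|\bm\Phi|$ pointwise. Since $\GR^-(\sigmaR^-)$ arises from $\GR_\cap$ by attaching $\mR^-_\Delta$ additional standard factors together with the pins indexed by $\setPR^-\setminus\setPR_\cap$, the graph distance $\distG$ from Section \ref{phi_lipschitz} satisfies $\distG(\GR^-,\GR_\cap)\le 2(\mR^-_\Delta+|\setPR^-\setminus\setPR_\cap|)$, whence $|\bm\Phi|\le c(\mR^-_\Delta+|\setPR^-\setminus\setPR_\cap|)$ for some $c_\mfkg\in\reals_{>0}$. Splitting the two summands yields
\begin{align*}
|\expe[\bmone\{\mcle\setminus\mcle_\star\}\bm\Phi]|
\le c\,\expe[\bmone\{\sigmaR^+\in\mclb[+][\Gamma]\}\bmone\{\setPR^-\neq\setPR_\cap\}\mR^-_\Delta]
+c\,\expe[|\setPR^-\setminus\setPR_\cap|].
\end{align*}

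Next I would exploit the product structure of the coupling in Section \ref{coupling_pins}, namely that the pin indicator triple $\bm U=(\thetaPR^-,\thetaPR^+,\indPR_\times)$ is independent of $(\sigmaR^+,\bm W_{\mrmp}(\sigmaR^+))$; in particular $\{\setPR^-\neq\setPR_\cap\}$ is independent of $(\sigmaR^+,\mR^-_\Delta)$. Conditioning on $\sigmaR^+$ factorizes the first expectation into $\expe[\bmone\{\sigmaR^+\in\mclb[+][\Gamma]\}\me^-_\Delta(\sigmaR^+)]\prob[\setPR^-\neq\setPR_\cap]$, and Lemma \ref{lemma_ass_factor_count_asymptotics} bounds $\me^-_\Delta$ uniformly on $\mclb[+][\Gamma]$ by a constant. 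Repeating the Markov-type calculation from the proof of Lemma \ref{lemma_ass_base_graph} gives
\begin{align*}
\prob[\setPR^-\neq\setPR_\cap]\le\expe[|\indPR_\Delta^{--1}(1)|]=n\expe[p^-_\Delta(\thetaPR^-)]=\mclo\left(\frac{\ThetaP_n}{n}\right)=\mclo(n^{-\rho}),
\end{align*}
and the same bound applies to $\expe[|\setPR^-\setminus\setPR_\cap|]\le\expe[|\indPR_\Delta^{--1}(1)|]$. Combining these estimates with Lemma \ref{lemma_assfactor_typ} then completes the proof.

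I do not anticipate any serious obstacle. The one mildly delicate point is that $\mR^-_\Delta$ depends on $\sigmaR^+$ through its Poisson parameter $\me^-_\Delta(\sigmaR^+)$, so the factorization has to use independence of the pin block $\bm U$ from $(\sigmaR^+,\bm W_{\mrmp})$ rather than a naive independence between $\mR^-_\Delta$ and the second pinning sweep; beyond this, the argument is a routine combination of the bounds already established in Sections \ref{phi_concon}, \ref{coupling_pins} and \ref{ass_factor_count_asymptotics}.
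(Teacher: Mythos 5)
Your plan has the right overall structure (restrict to the typical event from Lemma \ref{lemma_assfactor_typ}, then show the correction term vanishes), but the pointwise bound on $\bm\Phi$ is not correct, and the gap is substantive. The claim
$\distG(\GR^-,\GR_\cap)\le 2(\mR^-_\Delta+|\setPR^-\setminus\setPR_\cap|)$
misreads the definition of $\distG$ from Section \ref{phi_lipschitz}. In that definition the excess factors enter as $\tilde D=\mR^-_\Delta$ (not doubled), the excess pins enter as $n-|\mclv[][\darr]|=|\setPR^-\setminus\setPR_\cap|$ (not doubled), and---crucially---the term $2D_\cap$ counts the \emph{common} factors $a\in[\mR_\cap(\sigmaR^+)]$ whose neighborhood $\vR_{\cap,a}([k])$ touches the newly pinned set $\setPR^-\setminus\setPR_\cap$; those factors fall out of $\mcla[=]$ because the pinned variables are no longer in $\mclv[][\darr]$. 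The correct expression is $\distG(\GR_\cap,\GR^-)=\mR^-_\Delta+2|\bmcla_{\neq}|+|\setPR^-\setminus\setPR_\cap|$ where $\bmcla_{\neq}=\{a\in[\mR_\cap(\sigmaR^+)]:\vR_{\cap,a}([k])\cap(\setPR^-\setminus\setPR_\cap)\neq\emptyset\}$, and $|\bmcla_{\neq}|$ can be as large as $\mR_\cap=\Theta(n)$ even when only one new pin is added. Your bound therefore does not hold pointwise, and the subsequent expectation decomposition omits the dominant contribution.

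The fix, which is what the paper does, is to keep the $2|\bmcla_{\neq}|$ term and control it in expectation: bound $|\bmcla_{\neq}|\le\sum_{i\in\setPR^-\setminus\setPR_\cap}d_{\mrmf,\wR_\cap}(i)$, take the conditional expectation given $(\sigmaR^+,\mR_\cap,\mR^-,\setPR_\cap,\setPR^-)$ using Corollary \ref{cor_degbounds}\ref{cor_degbounds_m} to get $\expe[|\bmcla_{\neq}|\mid\cdot]\le c\,\degaR^-\,|\setPR^-\setminus\setPR_\cap|$, use $\degaR^-\le\degabu+r(n)$ on the typical event, and finally absorb the factor $|\setPR^-\setminus\setPR_\cap|$ via the bound $\expe[|\setPR^-\setminus\setPR_\cap|]=\mclo(\ThetaP_n/n)=\mclo(n^{-\rho})$ that you already have. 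The independence argument you give (that the pin block $\bm U$ is independent of $(\sigmaR^+,\bm W_{\mrmp})$) and the use of Lemma \ref{lemma_ass_factor_count_asymptotics} to bound $\me^-_\Delta$ are both fine; only the distance bound and the resulting decomposition need to be repaired.
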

\begin{proof}
With Lemma \ref{lemma_assfactor_typ} it is sufficient to consider
$\expe[\bmone\mcle|\bm\Phi|]$, where
\begin{align*}
\mcle=\left\{\sigmaR^+\in\mclb[+][\Gamma],\sigmaR^-\in\mclb[-][\Gamma],\degaR^-\in\mclb[][\circ],\setPR^-\neq\setPR_\cap\right\}.
\end{align*}
With $\tilde c$ from Observation \ref{obs_phi_lipschitz} we have 
$\expe[\bmone\mcle|\bm\Phi|]
\le\tilde c\expe[\bmone\mcle\distG(\GR_\cap,\GR^-(\sigmaR^-))]$.
With the notions in Section \ref{phi_lipschitz} we have
$\mclv[1][\darr]=[n]\setminus\setPR^-$, $\mclv[2][\darr]=\setPR_\cap$, $\mclv[][\darr]=[n]\setminus\setPR_\Delta$ with $\setPR_\Delta=\setPR^-\setminus\setPR_\cap$, further $m_\cap=\mR_\cap(\sigmaR^+)$, $\mcla[=]=[\mR_\cap(\sigmaR^+)]\setminus\bmcla_{\neq}$,
\begin{align*}
\bmcla_{\neq}=\{a\in[\mR_\cap(\sigmaR^+)]:\vR_{\cap,a}([k])\cap\setPR_\Delta\neq\emptyset\},
\end{align*}
where $\vR_\cap$ are the neighborhoods of $\GR_\cap$, so $D=0$,
$\tilde D=\mR^--\mR_\cap(\sigmaR^+)=\mR^-_\Delta(\sigmaR^+)$, 
$D_\cap=|\bmcla_{\neq}|$
and hence $\distG(\GR_\cap,\GR^-(\sigmaR^-))
=\mR^-_{\Delta}(\sigmaR^+)+2|\bmcla_{\neq}|+|\setPR_\Delta|$.
Recall that
\begin{align*}
|\bmcla_{\neq}|\le\sum_{i\in\setPR_\Delta}d_{\mrmf,\wR_\cap}(i)
\le\sum_{i\in\setPR_\Delta}d_{\mrmf,\wR^-}(i)
\end{align*}
with $\wR_\cap=\wR_{\cap,\mR_\cap(\sigmaR^+)}(\sigmaR^-)$, 
$\wR^-=\wR^-_{\mR^-}(\sigmaR^-)$ and $d_{\mrmf,G}(i)$ from Section \ref{var_degrees}.
With $c$ from Corollary \ref{cor_degbounds}\ref{cor_degbounds_m} this gives
\begin{align*}
\expe\left[\distG(\GR_\cap,\GR^-(\sigmaR^-))\middle|\sigmaR^+,\mR_\cap(\sigmaR^+),\mR^-,\setPR_\cap,\setPR^-\right]
\le\mR^-_\Delta(\sigmaR^+)+2|\setPR_\Delta|c\degaR^-+|\setPR_\Delta|.
\end{align*}
On $\mcle$ we further have $\degaR\le\degabu+r(n)$.
With this bound, standard bounds, and taking conditional expectations we obtain
$\expe[\bmone\mcle|\bm\Phi|]\le\tilde cE_1+\tilde c(2c(\degabu+r(n))+1)E_2$, where
\begin{align*}
E_1=\expe[\bmone\{\sigmaR^+\in\mclb[+][\Gamma],\setPR^-\neq\setPR_\cap\}\me^-_\Delta(\sigmaR^+)],\,
E_2=\expe[|\setPR_\Delta|].
\end{align*}
With $c'$ from Lemma \ref{lemma_ass_factor_count_asymptotics} we have $\me^-_\Delta(\sigmaR^+)\le c'$, so
as in the proof of Lemma \ref{lemma_ass_base_graph} we have
\begin{align*}
E_1\le c'\prob[\setPR^-\neq\setPR_\cap]\le (1+o(1))c'\frac{\ThetaP_n}{n},\,
E_2\le(1+o(1))\frac{\ThetaP_n}{n}.
\end{align*}
This completes the proof since $\ThetaP/n=n^{-\rho}=\omega(n^{-1})$.
\end{proof}
\subsubsection{Normalization Step for the Factor Contribution}\label{assfactor_normalization}
In Section \ref{assfactor_typical} we restricted the expectation over the coupled graphs to typical events, now we change the underlying law.
In particular, we replace $\GR_\cap$ by $\GTSM(\sigma)=\GTSM[\mR,\setPR](\sigma)$ and $\mR^-_\Delta(\sigmaR^+)$ by a Poisson variable $\mR_\Delta\dequal\Po(\degae(k-1)/k)$.
Clearly, we obtain the additional wires-weight pairs given $\sigmaIID$ from Observation \ref{obs_TSM_iid}, i.e.~we consider
\begin{align}\label{equ_assfactor_jointtsm}
(\GTSM(\sigma),\mR_\Delta,\wTSM(\sigma))\dequal\GTSM(\sigma)\otimes\mR_\Delta\otimes\wTSa[,\sigma][\otimes\ints_{>0}].
\end{align}
Further, let $\bm\Phi=\ln(\psiWgG[,\GTSM(\sigmaIID)](\wTSM[\sigmaIID,[\mR_\Delta]]))$ with $\psiWgG$ from Equation (\ref{equ_psiWgG}).
\begin{lemma}\label{lemma_assfactor_normalized}
We have
$\Phi_{\mrmf}(n)=\expe[\bmone\{\sigmaIID\in\mclb[-][\Gamma]\}\bm\Phi]+\mclo(n^{-\rho})$.
\end{lemma}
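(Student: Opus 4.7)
The plan is to start from Lemma \ref{lemma_assfactor_pin}, which already restricts $\Phi_{\mrmf}(n)$ to the very typical event $\mcle_{\mrmp}=\{\sigmaR^+\in\mclb[+][\Gamma],\sigmaR^-\in\mclb[-][\Gamma],\degaR^-\in\mclb[][\circ],\setPR^-=\setPR_\cap\}$, up to an additive error of $\mclo(n^{-\rho})$. On $\mcle_{\mrmp}$ the second pinning sweep adds no pins, so $\GR^-(\sigmaR^-)$ is obtained from $\GR_\cap$ by attaching the $\mR^-_\Delta(\sigmaR^+)$ standard factors in $\wR^-_\Delta$, which (given $\sigmaR^-$) are i.i.d.~copies of $\wTSa[,\sigmaR^-]$. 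Mimicking Equation (\ref{equ_psiWgG}) in Section \ref{int_interpolator}, the ratio becomes $\bm\Phi_\cap=\ln\bigl(\psiWgG[,\GR_\cap](\wR^-_{\Delta})\bigr)$, and I can absorb the bound $|\bm\Phi_\cap|\le\mR^-_\Delta(\sigmaR^+)\ln(\psibu)$ together with the bounded conditional expectation $\me^-_\Delta(\sigmaR^+)\le c$ from Lemma \ref{lemma_ass_factor_count_asymptotics} whenever I pay a total variation cost.

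Next, I would swap the factor count $\mR^-_\Delta(\sigmaR^+)\dequal\Po(\me^-_\Delta(\sigmaR^+))$ for an independent $\mR_\Delta\dequal\Po(\degae(k-1)/k)$, using the coupling lemma \ref{obs_tv}\ref{obs_tv_coupling}: on $\sigmaR^+\in\mclb[+][\Gamma]$ we have $|\me^-_\Delta(\sigmaR^+)-\degae(k-1)/k|=\mclo(r(n))$ by Lemma \ref{lemma_ass_factor_count_asymptotics}, and Observation \ref{obs_poisson}\ref{obs_poisson_dkl} together with Pinsker's inequality \ref{obs_tv}\ref{obs_tv_pinsker} yields total variation distance $\mclo(r(n))$ between the two Poissons. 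Since under any coupling of the two counts the additional factors can be taken i.i.d.~from $\wTSa[,\sigmaR^-]$, and the per-factor contribution of $\bm\Phi_\cap$ is bounded by $\ln(\psibu)$, the cost of this swap is $\mclo(r(n))=\mclo(n^{-\rho})$ in expectation (using that $\mR_\Delta$ has bounded expectation for tail control).

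Now I would replace $(\sigmaR^-,\mR_\cap(\sigmaR^+),\setPR_\cap,\GR_\cap)$ by $(\sigmaIID,\mR,\setPR,\GTSM(\sigmaIID))$, exactly as in Lemma \ref{lemma_ass_base_graph}. The coupling constructed there gives a joint law under which the two tuples disagree with probability $\mclo(n^{-\rho})$. Since $\bm\Phi_\cap$ is uniformly bounded by $\mR_\Delta\ln(\psibu)$ and $\mR_\Delta$ has bounded moments independent of the graph side, the error of this swap is again $\mclo(n^{-\rho})$. After the swap, conditional on $(\sigmaIID,\GTSM(\sigmaIID))$ and $\mR_\Delta$, the added factors are i.i.d.~$\wTSa[,\sigmaIID]$, which is exactly Equation (\ref{equ_assfactor_jointtsm}), and $\bm\Phi_\cap$ turns into the $\bm\Phi$ of the statement. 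Finally, the remaining indicators $\{\sigmaR^+\in\mclb[+][\Gamma],\degaR^-\in\mclb[][\circ],\setPR^-=\setPR_\cap\}$ translate into analogous events for $(\sigmaIID,\mR,\setPR)$ which hold with probability $1-o(n^{-\rho})$ by Observation \ref{obs_gtiid}\ref{obs_gtiid_prob}, Corollary \ref{cor_dega}, and Observation \ref{obs_pin_basic}; dropping them, and keeping only $\bmone\{\sigmaIID\in\mclb[-][\Gamma]\}$, costs $\mclo(n^{-\rho})$ by the same bounded-$\bm\Phi$ argument.

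The main obstacle I expect is bookkeeping the swap errors uniformly in $\mR_\Delta$: $\bm\Phi$ is not deterministically bounded, and a naive TV-bound does not suffice, so I will need to combine the TV arguments with moment bounds (boundedness of $\expe[\mR_\Delta]$ and the Poisson tail) to make sure the $\mclo(r(n))$ and $\mclo(n^{-\rho})$ increments survive integration against the per-factor $\ln(\psibu)$ factor. Once this truncation/tail argument is set up once, it can be reused for both the count swap and the base-graph swap.
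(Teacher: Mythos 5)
Your overall plan is the right one and matches the paper's proof step for step: start from Lemma \ref{lemma_assfactor_pin}, represent the likelihood ratio on the typical event via $\psiWgG$ from Equation (\ref{equ_psiWgG}), swap the Poisson factor count, swap $\GR_\cap$ for $\GTSM(\sigmaIID)$ via Lemma \ref{lemma_ass_base_graph}, and drop the remaining indicators. The only mechanistic difference is the order of the last two steps, which is harmless.

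The one place where your proposed mechanism is genuinely weaker than what the proof needs — and which you partially anticipate but do not resolve — is the Poisson count swap. Since the representation $\bm\Phi_\cap=\ln\bigl(\psiWgG[,\GR_\cap](\wR^-_\Delta)\bigr)$ is a sum over the attached factors, the controlling quantity is the \emph{expected absolute difference} $\expe[|\mR^-_\Delta(\sigmaR^+)-\mR_\Delta|]$, not the total variation distance between the two Poisson laws. A Pinsker-type bound only controls $\prob[\mR^-_\Delta\neq\mR_\Delta]$ under a TV-optimal coupling, which does not directly bound $\expe[\bmone\{\mR^-_\Delta\neq\mR_\Delta\}|\bm\Phi_\cap|]$ because $\bm\Phi_\cap$ grows linearly in the (unbounded) count; you would need a further Cauchy--Schwarz or Wasserstein argument to close the gap, and then you would only get $\mclo(\sqrt{r(n)})$ rather than $\mclo(r(n))$ (still $o(n^{-\rho})$ since $\rho<1/4$, so the conclusion survives, but not for the reason you state). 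The paper sidesteps this entirely by using the Poisson thinning coupling from Observation \ref{obs_poisson}\ref{obs_poisson_bin}: given $\sigma^+$, set $\bm\delta(\sigma^+)\dequal\Po(|\me^-_\Delta(\sigma^+)-\me_\Delta|)$ and couple additively so that $|\mR^-_\Delta-\mR_\Delta|=\bm\delta(\sigma^+)$ exactly, giving $\expe[|\mR^-_\Delta-\mR_\Delta|]=|\me^-_\Delta(\sigma^+)-\me_\Delta|=\mclo(r(n))$ on $\mclb[+][\Gamma]$ by Lemma \ref{lemma_ass_factor_count_asymptotics}, and hence the per-factor bound $\ln(\psibu)\bm\delta(\sigma^+)$ integrates cleanly. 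You flagged that ``a naive TV-bound does not suffice,'' which is exactly right; the intended fix is the additive coupling, not a TV-plus-moment patch.

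One smaller imprecision: after the base-graph swap the coupling-specific events $\{\degaR^-\in\mclb[][\circ]\}$ and $\{\setPR^-=\setPR_\cap\}$ do not ``translate into analogous events'' for $(\sigmaIID,\mR,\setPR)$ — they are artifacts of the coupling and do not appear in the post-swap expression at all. They can be dropped before the swap at cost $\mclo(n^{-\rho})$ (not $o(n^{-\rho})$; note $\prob[\setPR^-\neq\setPR_\cap]$ is of order $\ThetaP/n=n^{-\rho}$, not smaller), using that the argument is bounded in conditional expectation over $\mR_\Delta$ by $\me_\Delta\ln(\psibu)$. This is what the paper does; your order would work too, provided you phrase the dropping as removing coupling artifacts rather than translating events.
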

\begin{proof}
Let $\bm\Phi'=\ln(\ZG(\GR^-(\sigmaR^-))/\ZG(\GR_\cap))$, and let $(\GR_\cap,\mR_\Delta,\wTSM(\sigmaR^-))$ be conditionally independent given $\sigmaR^+$.
As explained in Section \ref{coupling_pins} and analogously to Section \ref{int_factor} on
\begin{align*}
\mcle=\left\{\sigmaR^+\in\mclb[+][\Gamma],\sigmaR^-\in\mclb[-][\Gamma],\degaR^-\in\mclb[][\circ],\setPR^-=\setPR_\cap\right\}
\end{align*}
we have $\bm\Phi'\dequal\ln(\psiWgG[,\GR_\cap](\wTSM[\sigmaR^-,[\mR^-_{\Delta}(\sigmaR^+)]]))$, i.e.~there are no additional pins, the additional factors are independent of the remainder and i.i.d.~from the teacher-student model for the given ground truth.
For given $\sigma^+\in[q]^{n+1}$ let $\bm\delta(\sigma^+)\dequal\Po(|\me^-_\Delta(\sigma^+)-\me_\Delta|)$ with $\me_\Delta=\frac{\degae(k-1)}{k}$.
For $\me_\Delta^-(\sigma^+)\ge\me_\Delta$ and using Observation \ref{obs_poisson}\ref{obs_poisson_bin} we consider the coupling $\mR^-_\Delta(\sigma^+)=\mR_\Delta+\bm\delta(\sigma^+)$, and $\mR_\Delta=\mR^-_\Delta(\sigma^+)+\bm\delta(\sigma^+)$ otherwise.
This gives
\begin{align*}
\left|\ln\left(\psiWgG[,\GR_\cap]\left(\wTSM[\sigmaR^-,[\mR^-_{\Delta}(\sigmaR^+)]]\right)\right)-\ln\left(\psiWgG[,\GR_\cap]\left(\wTSM[\sigmaR^-,[\mR_{\Delta}]]\right)\right)\right|
\le\bm\delta(\bm\sigma^+)\ln(\psibu).
\end{align*}
With Lemma \ref{lemma_ass_factor_count_asymptotics} we can bound $\expe[\bm\delta(\sigma^+)]$ on $\mcle$, so with Lemma \ref{lemma_assfactor_pin} we have
\begin{align*}
\Phi_{\mrmf}(n)=\expe\left[\bmone\mcle\ln\left(\psiWgG[,\GR_\cap]\left(\wTSM[\sigmaR^-,[\mR_{\Delta}]]\right)\right)\right]+\mclo\left(r(n)+n^{-\rho}\right).
\end{align*}
Due to the independence of $\mR_\Delta$ from the remainder we can use the upper bound $\mR_\Delta\ln(\psi)$ on the argument of the expectation and then take the expectation with respect to $\mR_\Delta$ to obtain the upper bound
$c=\me_\Delta\ln(\psibu)\le\degabu\ln(\psibu)(k-1)/k$ given the rest.
This shows that reducing $\mcle$ to $\{\sigmaR^-\in\mclb[-][\Gamma]\}$ causes an error of $\mclo(n^{-\rho})$.
Also, with the coupling from Lemma \ref{lemma_ass_base_graph} we then get
\begin{align*}
\Phi_{\mrmf}(n)=\expe\left[\bmone\{\sigmaIID\in\mclb[-][\Gamma]\}\ln\left(\psiWgG[,\GTSM(\sigmaIID)]\left(\wTSM[\sigmaIID,[\mR_{\Delta}]]\right)\right)\right]+\mclo\left(n^{-\rho}\right)
\end{align*}
since $n^{-\rho}=\omega(r(n))$, which completes the proof.
\end{proof}
\subsubsection{Gibbs Marginal Product for the Factor Contribution}\label{assfactor_gibbsproduct}
Now, it is time to apply Proposition \ref{proposition_pinG}.
Using the distribution (\ref{equ_assfactor_jointtsm}) let
\begin{align*}
\bm\Phi=\ln\left(\sum_\tau\left(\bigotimes_{(a,h)\in[\mR_\Delta]\times[k]}\gammaR_{a,h}\right)(\tau)\prod_{a\in[\mR_\Delta]}\psiTSM[a](\tau_a)\right)
=\sum_{a\in[\mR_\Delta]}\ln\left(\ZF(\psiTSM[a],\gammaR_a)\right),
\end{align*}
where $\gammaR=(\lawG[,\GTSM(\sigmaIID)]|_{\vTSM(a,h)})_{a,h}$, $(\vTSM,\psiTSM)=\wTSM(\sigmaIID)$ and $\ZF$ is from Equation (\ref{ZF_def}).
\begin{lemma}\label{lemma_assfactor_gibbsproduct}
We have $\Phi_{\mrmf}(n)=\expe[\bmone\{\sigmaIID\in\mclb[-][\Gamma]\}\bm\Phi]+\mclo(n^{-\rho})$.
\end{lemma}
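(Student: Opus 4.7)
My plan is to replace the joint Gibbs law $\lawG[,\GTSM(\sigmaIID)]$ restricted to the concatenated neighborhood $\vR^\star = (\vTSM[a,h])_{a\in[\mR_\Delta],h\in[k]} \in [n]^{k\mR_\Delta}$ by the product of its single-coordinate marginals, and to quantify the resulting error via the pinning lemma in the form of Proposition \ref{proposition_pinG}\ref{proposition_pinG_IIDDKL}. Writing $A = \sum_\tau \lawG[,\GTSM(\sigmaIID)]|_{\vR^\star}(\tau) \prod_a \psiTSM[a](\tau_a)$ for the argument of the logarithm in Lemma \ref{lemma_assfactor_normalized} and $B = \sum_\tau \bigl(\bigotimes_{a,h}\lawG[,\GTSM(\sigmaIID)]|_{\vTSM[a,h]}\bigr)(\tau)\prod_a \psiTSM[a](\tau_a) = \prod_a \ZF(\psiTSM[a],\gammaR_a)$ for the product version, both $A,B$ lie in $[\psibl^{\mR_\Delta},\psibu^{\mR_\Delta}]$ by Observation \ref{obs_fad}\ref{obs_fad_bounds}. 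Lipschitz continuity of $\ln$ on this range (with constant $\psibu^{\mR_\Delta}$) together with the triangle inequality therefore yield the pointwise bound
\begin{align*}
\bigl|\ln(A)-\ln(B)\bigr| \,\le\, 2\,\psibu^{2\mR_\Delta}\,\nu_\circ\bigl(\lawG[,\GTSM(\sigmaIID)],\vR^\star\bigr),
\end{align*}
where $\nu_\circ$ is the total-variation analogue of $\iota_\circ$ from Remark \ref{remark_epssym}.

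The next step is to bring Proposition \ref{proposition_pinG} into play, which involves a uniform selector $\vR \dequal \unif([n]^{k\mR_\Delta})$ rather than the teacher–student selector $\vR^\star$. Conditionally on $\sigmaIID$, the factors $(\vTSM[a],\psiTSM[a])$ are i.i.d., and the marginal density of $\vTSM[a]$ with respect to $\unif([n]^k)$ is $\expe[\psiRa(\sigmaIID_v)]/\ZFa(\gammaN[,\sigmaIID]) \in [\psibl^2,\psibu^2]$ by Observation \ref{obs_fad}\ref{obs_fad_bounds}, so the joint density of $\vR^\star$ is bounded by $\psibu^{2\mR_\Delta}$. Using this change of measure, Pinsker's inequality from Remark \ref{remark_epssym}, and Jensen's inequality to move the expectation inside $\sqrt{\cdot}$, I obtain for each $m_\Delta \ge 0$
\begin{align*}
\expe\bigl[\nu_\circ(\lawG[,\GTSM(\sigmaIID)],\vR^\star)\,\bigm|\,\mR_\Delta=m_\Delta\bigr] \,\le\, \psibu^{2m_\Delta}\sqrt{\tfrac{1}{2}\,\expe\bigl[\iota_{km_\Delta}(\lawG[,\GTSM(\sigmaIID)])\bigr]}.
\end{align*}
After restricting to $\{\mR\le\mbu\}$ at an $\mclo(e^{-cn})$ cost via Poisson concentration, Proposition \ref{proposition_pinG}\ref{proposition_pinG_IIDDKL} with $\ell=km_\Delta$ delivers $\expe[\iota_{km_\Delta}(\lawG[,\GTSM(\sigmaIID)])]\le C_2(km_\Delta-1)(km_\Delta/\ThetaP)^{c'}$ for any $c'\in(0,C_1]$.

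Combining the two bounds and taking the outer expectation over $\mR_\Delta \dequal \Po((k-1)\degae/k)$, the prefactor $\expe[\psibu^{4\mR_\Delta}(k\mR_\Delta)^{(1+c')/2}]$ is bounded uniformly in $\mfkg$ by the Poisson moment generating function evaluated at $4\ln(\psibu)$ together with finite polynomial moments, so the overall estimate reads
\begin{align*}
\expe\bigl[\bmone\{\sigmaIID\in\mclb[-][\Gamma]\}\,|\ln(A)-\ln(B)|\bigr] \,=\, \mclo(\ThetaP[-c'/2]) \,=\, \mclo(n^{-(1-\rho)c'/2}).
\end{align*}
Choosing $c'=2\rho/(1-\rho)=2C_1/3\in(0,C_1]$, which is consistent with $\rho=C_1/(3+C_1)$ fixed at the top of Section \ref{ass}, yields the rate $\mclo(n^{-\rho})$, and subtracting from Lemma \ref{lemma_assfactor_normalized} produces the claim. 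The principal obstacle is bookkeeping: one must check that the exponential-in-$\mR_\Delta$ factor $\psibu^{4\mR_\Delta}$ coming from the Lipschitz step and the teacher-student-to-uniform change of measure is genuinely absorbed by the Poisson tail of $\mR_\Delta$, and that the polynomial growth in $\ell=km_\Delta$ in Proposition \ref{proposition_pinG}\ref{proposition_pinG_IIDDKL} remains summable against the Poisson weights; everything else is a direct application of tools already assembled in Sections \ref{pinning} and \ref{assfactor_typical}--\ref{assfactor_normalization}.
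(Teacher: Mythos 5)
Your proposal is correct and relies on the same core ingredients as the paper's proof: Lipschitz continuity of the logarithm on $[\psibl^{\mR_\Delta},\psibu^{\mR_\Delta}]$ to reduce to a total-variation distance, followed by Proposition~\ref{proposition_pinG}\ref{proposition_pinG_IIDDKL} to control that distance via the pinning lemma.

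The one genuine organizational difference is worth recording. The paper first resolves the Radon–Nikodym derivative of $\wTSM$ to obtain $\bm\Phi^*$ with the \emph{uniform} selector $\vR'$, then introduces the threshold event $\mcle=\{\iota_\circ(\bm\mu^*,\vR')\le\delta\}$ with $\delta=\ThetaP^{-2\eps}$, bounds the complement via Markov's inequality and the uniform bound $|\bm\Phi^*|\le\psibu^{3\mR_\Delta}$, and on $\mcle$ uses the pointwise estimate $|\bm\Phi^*-\bm\Phi|\le\sqrt{2}\psibu^{4\mR_\Delta}\sqrt\delta$. You instead keep the teacher-student selector $\vR^\star$ throughout, absorb the bias of $\vR^\star$ relative to $\unif([n]^{k\mR_\Delta})$ into a factor $\psibu^{2\mR_\Delta}$ via a direct change of measure, and then avoid any threshold event by applying Pinsker and Jensen to move the outer expectation inside the square root: $\expe[\nu_\circ]\le\sqrt{\expe[\iota_\circ]/2}$. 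Your choice $c'=2\rho/(1-\rho)=2C_1/3\in(0,C_1]$ matches the paper's $\rho=C_1/(3+C_1)$ and recovers exactly the rate $n^{-\rho}$, and the $\psibu^{4\mR_\Delta}$ prefactor is absorbed by the Poisson moment generating function as you say. Both routes are valid; the Jensen route is cleaner (one fewer case split), while the paper's threshold-plus-Markov route keeps the good-event structure it reuses verbatim elsewhere. The small detour through $\{\mR\le\mbu\}$ in your argument is redundant — Proposition~\ref{proposition_pinG}\ref{proposition_pinG_IIDDKL} already covers $\bm\mu^*_{\mR,\mIR}$ over the random Poisson count — but it is harmless.
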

\begin{proof}
Resolving the Radon-Nikodym derivative of the additional pairs in Lemma \ref{lemma_assfactor_normalized} yields
$\Phi_{\mrmf}(n)=\expe[\bmone\{\sigmaIID\in\mclb[-][\Gamma]\}\bm\Phi^*]+\mclo(n^{-\rho})$ with $\wR=(\vR,\psiR)\dequal(\unif([n]^k)\otimes\lawpsi)^{\otimes\ints_{>0}}$ and
\begin{align*}
\bm\Phi^*=\prod_{a\in[\mR_\Delta]}\frac{\psiR_a(\sigmaIID_{\vR(a)})}{\ZFa(\gammaIID)}\ln\left(\psiWgG[,\GTSM(\sigmaIID)](\wR_{[\mR_\Delta]})\right).
\end{align*}
Recall that for $\vR'=\vR_{[\mR_\Delta]}$ and $\alphaR^*=\bm\mu^*|_{\vR'}$ with $\bm\mu^*=\lawG[,\GTSM(\sigmaIID)]$ we have
\begin{align*}
\psiWgG[,\GTSM(\sigmaIID)](\wR_{[\mR_\Delta]})
&=\sum_{\tau\in([q]^k)^{\mR_\Delta}}\alphaR^*(\tau)\prod_{a\in[\mR_\Delta]}\psiR_a(\tau_a).
\end{align*}
Let $C$ from Proposition \ref{proposition_pinG}\ref{proposition_pinG_IIDDKL}, $\eps=C_1/3$ and $\delta=\ThetaP[-2\eps]$.
Using $\iota_\circ$ from Section \ref{pinning_gibbs} let
$\mcle=\{\sigmaIID\in\mclb[-][\Gamma],\iota_\circ(\bm\mu^*,\vR')\le\delta\}$ and notice that
$\iota_\circ(\bm\mu^*,\vR')=0<\delta$ on $\mR_\Delta=0$.
Hence, the bound
$|\bm\Phi^*|\le\ln(\psibu^{\mR_\Delta})\psibu^{2\mR_\Delta}\le\psibu^{3\mR_\Delta}$
and Markov's inequality conditional to $\mR_\Delta$ give
$\Delta\le\eps'+\mclo(n^{-\rho})$, where
$\Delta=|\Phi_{\mrmf}-\expe[\bmone\mcle\bm\Phi^*]|$ and
\begin{align*}
\eps'&=\expe\left[\bmone\{\mR_\Delta>0\}\frac{C_2(k\mR_\Delta-1)}{\delta}\left(\frac{k\mR_\Delta}{\ThetaP}\right)^{C_1}\psibu^{3\mR_\Delta}\right].
\end{align*}
Standard bounds imply
$\eps'\le\tilde c\expe[\exp(\tilde c\mR_{\Delta})]/\ThetaP[\eps]$ for some $\tilde c\in\reals_{>0}$.
The canonical coupling of $\mR_\Delta\dequal\Po(\degae(k-1)/k)$ and $\mR_{\Delta\uarr}\dequal\Po(\degabu(k-1)/k)$ from Observation \ref{obs_poisson}\ref{obs_poisson_bin} gives $\mR_\Delta\le\mR_{\Delta\uarr}$ and hence
$\eps'\le\tilde c\expe[\exp(\tilde c\mR_{\Delta\uarr})]/\ThetaP[\eps]$.
Finally, the moment generating function of the Poisson distribution gives $\eps'=\mclo(\ThetaP[-\eps])$.
But $\rho=\eps/(1+\eps)$ and $\ThetaP=n^{1-\rho}$ yields $\ThetaP[\eps]=n^{\rho}$,
thereby $\delta=n^{-2\rho}$ and $\eps',\Delta=\mclo(n^{-\rho})$.
Now, with $\alphaR=\bigotimes_{(a,h)\in[\mR_\Delta]\times[k]}\bm\mu^*|_{\vR'(a,h)}$ and
\begin{align*}
\bm\Phi=\prod_{a\in[\mR_\Delta]}\frac{\psiR_a(\sigmaIID_{\vR(a)})}{\ZFa(\gammaIID)}\ln\left(\sum_{\tau\in([q]^k)^{\mR_\Delta}}\alphaR(\tau)\prod_{a\in[\mR_\Delta]}\psiR_a(\tau_a)\right),
\end{align*}
notice that the arguments of the logarithm for both $\bm\Phi^*$ and $\bm\Phi$ are in $[\psibl^{\mR_\Delta},\psibu^{\mR_\Delta}]$ and that the logarithm is $\psibu^{\mR_\Delta}$-Lipschitz on this domain, so
\begin{align*}
|\bm\Phi^*-\bm\Phi|\le\psibu^{3\mR_\Delta}\left|
\sum_{\tau\in([q]^k)^{\mR_\Delta}}\alphaR^*(\tau)\prod_{a\in[\mR_\Delta]}\psiR_a(\tau_a)
-\sum_{\tau\in([q]^k)^{\mR_\Delta}}\alphaR(\tau)\prod_{a\in[\mR_\Delta]}\psiR_a(\tau_a)
\right|.
\end{align*}
This yields $|\bm\Phi^*-\bm\Phi|\le 2\psibu^{4\mR_\Delta}\|\alphaR^*-\alphaR\|_\mrmtv=2\psibu^{4\mR_\Delta}\nu_\circ(\bm\mu^*,\vR')\le\sqrt{2}\psibu^{4\mR_\Delta}\sqrt{\iota_\circ(\bm\mu^*,\vR')}$ with standard bounds and Remark \ref{remark_epssym}.
Since we have the same bound $|\bm\Phi|\le\psibu^{3\mR_\Delta}$, we can spare another $\eps'$ from above to obtain
\begin{align*}
\Phi_{\mrmf}(n)
=\expe[\bmone\mcle\bm\Phi]+\mclo\left(\sqrt{\delta}+n^{-\rho}\right)
=\expe[\bmone\{\sigmaIID\in\mclb[-][\Gamma]\}\bm\Phi]+\mclo\left(n^{-\rho}\right).
\end{align*}
The assertion follows by reintroducing $\wTSM$ using the Radon-Nikodym derivative in $\bm\Phi$.
\end{proof}
With $(\psiTSM[a],\gammaR_a)_a$ being i.i.d.~given $\sigmaIID$
and $\expe[\me_\Delta]=\degae(k-1)/k$
Lemma \ref{lemma_assfactor_gibbsproduct} yields
\begin{align*}
\Phi_{\mrmf}(n)=\frac{\degae(k-1)}{k}\expe\left[\bmone\{\sigmaIID\in\mclb[-][\Gamma]\}\ln(\ZF(\psiTSM,\gammaR))\right]+\mclo\left(n^{-\rho}\right),
\end{align*}
with $\wTSM=(\vTSM,\psiTSM)\dequal\wTSa[,\sigmaIID]$, $\gammaR=(\lawG[,\GTSM(\sigmaIID)]|_{\vTSM(h)})_{h\in[k]}$ and $(\wTSM,\GTSM(\sigmaIID))$ conditionally independent given $\sigmaIID$.
\subsubsection{Marginal Distribution for the Factor Contribution}\label{assfactor_marginaldist}
Now, we work towards the discussion in Section \ref{pinning_gibbs_marginal_distributions}.
Let $\gamma=\gammaN[,\sigma]$, $\GTSM(\sigma)=\GTSM[\mR,\setPR](\sigma)$ and
$\wTSYa$, $\tauTSa$, $\domC$ from Section \ref{GTSYM}.
Let
$(\vTSYM[\sigma,\tau],\psiTSYM[\tau],\gammaR_{\sigma,\tau})\dequal\wTSYa[,\sigma,\tau]\otimes\bigotimes_{h\in[k]}\piGC[,\GTSM(\sigma),\sigma,\tau(h)]$
for $\tau\in\domC[,\gamma]^k$, further
$(\tauTSM(\sigma),\GTSM(\sigma))\dequal\tauTSa[,\sigma]\otimes\GTSM(\sigma)$ and 
$\tauTSM=\tauTSM(\sigmaIID)$.
\begin{lemma}\label{lemma_assfactor_piGC}
We have $\Phi_{\mrmf}(n)=\frac{\degae(k-1)}{k}\expe[\bmone\{\sigmaIID\in\mclb[-][\Gamma]\}\ln(\ZF(\psiTSYM[\tauTSM],\gammaR_{\sigmaIID,\tauTSM}))]+\mclo(n^{-\rho})$.
\end{lemma}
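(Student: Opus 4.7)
The plan is to establish Lemma \ref{lemma_assfactor_piGC} as an exact identification of laws rather than an approximation argument, so that the $\mclo(n^{-\rho})$ error in the conclusion is inherited unchanged from the starting expression after Lemma \ref{lemma_assfactor_gibbsproduct}. Concretely, starting from
\begin{align*}
\Phi_{\mrmf}(n)=\frac{\degae(k-1)}{k}\expe\left[\bmone\{\sigmaIID\in\mclb[-][\Gamma]\}\ln(\ZF(\psiTSM,\gammaR))\right]+\mclo(n^{-\rho})
\end{align*}
with $\wTSM=(\vTSM,\psiTSM)\dequal\wTSa[,\sigmaIID]$, $\gammaR=(\lawG[,\GTSM(\sigmaIID)]|_{\vTSM(h)})_h$ and $(\wTSM,\GTSM(\sigmaIID))$ conditionally independent given $\sigmaIID$, I will show that the joint law of $(\sigmaIID,\tauTSM,\GTSM(\sigmaIID),\psiTSM,\gammaR)$ restricted to the typical event agrees with that of $(\sigmaIID,\tauTSM,\GTSM(\sigmaIID),\psiTSYM[\tauTSM],\gammaR_{\sigmaIID,\tauTSM})$ from the lemma statement.

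The first step is to apply Observation \ref{obs_TSYM} with $m=1$, which reexpresses the pair $(\tauTSM,\wTSM)$ given $\sigmaIID=\sigma$ as $(\bm\tau,\wTSYa[,\sigma,\bm\tau])$ with $\bm\tau\dequal\tauTSa[,\sigma]$; conditioning further on $\tauTSM=\tau$ we obtain $\wTSM\dequal\wTSYa[,\sigma,\tau]=\vTSYa\otimes\psiTSYa$. This product structure is the crucial point: it makes $\vTSM$ and $\psiTSM$ conditionally independent given $(\sigmaIID,\tauTSM)$, identifies $\psiTSM\dequal\psiTSYa[,\tauTSM]=\psiTSYM[\tauTSM]$, and identifies $\vTSM(h)\dequal\unif(\sigma^{-1}(\tau_h))$ independently across $h$. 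Combined with the assumed conditional independence of $(\wTSM,\GTSM(\sigmaIID))$ given $\sigmaIID$, the triple $(\vTSM,\psiTSM,\GTSM(\sigmaIID))$ is mutually conditionally independent given $(\sigmaIID,\tauTSM)$. For any realization of $\GTSM(\sigmaIID)=G$ and $(\sigma,\tau)$, the definition of $\piGC[,G,\sigma,\tau_h]$ in Section \ref{pinning_marginal_distributions} is exactly the law of $\lawG[,G]|_{\bm i}$ for $\bm i\dequal\unif(\sigma^{-1}(\tau_h))$, so independence across $h$ yields $\gammaR\dequal\bigotimes_h\piGC[,G,\sigma,\tau_h]$, matching the definition of $\gammaR_{\sigmaIID,\tauTSM}$ verbatim.

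The only item requiring care is well-definedness on the typical event. On $\{\sigmaIID\in\mclb[-][\Gamma]\}$ we have $\|\gammaIID-\gamma^*\|_\mrmtv\le r(n)$ which, for $n$ large enough that $r(n)<\psibl/2$, gives $\gammaIID\ge\psibl/2>0$ componentwise; hence $\domC[,\gammaIID]=[q]$, so $\sigma^{-1}(\tau_h)\ne\emptyset$ for every $\tau_h$ drawn from $\tauTSa[,\sigmaIID]$, both $\piGC[,G,\sigma,\tau_h]$ and the Radon–Nikodym derivative $\psi\mapsto\psi(\tau)/\psiae(\tau)$ defining $\psiTSYM[\tau]$ are well-defined, and $\ZF(\psiTSYM[\tauTSM],\gammaR_{\sigmaIID,\tauTSM})\in[\psibl,\psibu]$ keeps the logarithm bounded. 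This is the only technical checkpoint and presents no real obstacle; the equality of expectations on the typical event then follows directly from the coupling of laws, and the asserted $\mclo(n^{-\rho})$ error is inherited from the input expression.
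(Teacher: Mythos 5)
Your proposal is correct and takes essentially the same approach as the paper: both proofs apply Observation \ref{obs_TSYM} to pass to the factor-side assignment $\tauTSM$, exploit the product structure $\wTSYa = \vTSYa\otimes\psiTSYa$ together with the assumed conditional independence of $(\wTSM,\GTSM(\sigmaIID))$ given $\sigmaIID$, and then recognize the uniform choice of $\vTSM(h)$ on $\sigma^{-1}(\tau_h)$ as precisely the definition of $\piGC$. The paper expands the sum over $v$ explicitly while you frame it as an exact identification of laws, but this is only a stylistic difference.
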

\begin{proof}
With Lemma \ref{lemma_assfactor_gibbsproduct}, Observation \ref{obs_TSYM}, independence and $\bm\mu^*_\sigma=\lawG[,\GTSM(\sigma)]$ we have
\begin{align*}
\Phi_{\mrmf}(n)&=\frac{\degae(k-1)}{k}\expe\left[\bmone\{\sigmaIID\in\mclb[-][\Gamma]\}E(\sigmaIID,\tauTSM)\right]+\mclo(n^{-\rho}),\\
E(\sigma,\tau)
&=\expe\left[\ln\left(\ZF\left(\psiTSYM[\tau],\left(\bm\mu^*_\sigma|_{\vTSYM[\sigma,\tau](h)}\right)_{h\in[k]}\right)\right)\right],\,\tau\in\domC[,\gamma]^k,\,\gamma=\gammaN[,\sigma].
\end{align*}
Next, we use independence, expand the definition of $\vTSYM[\sigma,\tau]$ and obtain
\begin{align*}
E(\sigma,\tau)
=\expe\left[\sum_v\prod_{h\in[k]}\frac{\bmone\{\sigma_{v(h)}=\tau_h\}}{|\sigma^{-1}(\tau_h)|}\ln\left(\ZF\left(\psiTSYM[\tau],\left(\bm\mu^*_\sigma|_{v(h)}\right)_{h\in[k]}\right)\right)\right].
\end{align*}
The definition of $\piGC[,G,\sigma,\tau(h)]$ completes the proof.
\end{proof}
Now, we can combine Lemma \ref{lemma_assfactor_piGC} with Corollary \ref{cor_piCR}.
Hence, we introduce the reweighted marginals $(\vTSYM[\sigma,\tau],\psiTSYM[\tau],\hat\gammaR_{\sigma,\tau})\dequal\wTSYa[,\sigma,\tau]\otimes\bigotimes_{h\in[k]}\piGR[,\GTSM(\sigma),\tau(h)]$.
\begin{lemma}\label{lemma_assfactor_piGR}
We have $\Phi_{\mrmf}(n)=\frac{\degae(k-1)}{k}\expe[\bmone\{\sigmaIID\in\mclb[-][\Gamma]\}\ln(\ZF(\psiTSYM[\tauTSM],\hat\gammaR_{\sigmaIID,\tauTSM}))]+\mclo(n^{-\rho})$.
\end{lemma}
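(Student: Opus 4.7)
The plan is to bound the difference between the two expectations in Lemma \ref{lemma_assfactor_piGC} and Lemma \ref{lemma_assfactor_piGR} by an $\mclo(n^{-\rho})$ term, using Lipschitz continuity of $\ln\circ\ZF$ on the interior simplex together with Corollary \ref{cor_piCR}\ref{cor_piCR_IID}. The key observation is that the two statements differ only in replacing the conditional Gibbs marginals $\piGC[,\GTSM(\sigma),\sigma,\tau']$ by the reweighted ones $\piGR[,\GTSM(\sigma),\tau']$, and Corollary \ref{cor_piCR} exactly quantifies the expected Wasserstein distance between these two laws.

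First I would establish Lipschitz continuity: for all $\psi\in\domPsi$ and $\gamma_1,\gamma_2\in\mclp([q])^k$ there is $L_\mfkg\in\reals_{>0}$ with
\begin{align*}
|\ln\ZF(\psi,\gamma_1)-\ln\ZF(\psi,\gamma_2)|\le L\sum_{h\in[k]}\|\gamma_{1,h}-\gamma_{2,h}\|_\mrmtv.
\end{align*}
This follows because Observation \ref{obs_fad}\ref{obs_fad_bounds} confines $\ZF$ to $[\psibl,\psibu]$ (so $\ln$ is $\psibl^{-1}$-Lipschitz there), while the multilinearity of $\ZF$ in $\gamma$ together with $\psiR\le\psibu$ and Observation \ref{obs_tv}\ref{obs_tv_prod} yields $|\ZF(\psi,\gamma_1)-\ZF(\psi,\gamma_2)|\le 2\psibu\sum_h\|\gamma_{1,h}-\gamma_{2,h}\|_\mrmtv$.

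Next I would pass to a convenient coupling: since the integrand in Lemma \ref{lemma_assfactor_piGC} depends on $\gammaR$ only through its law given $(\sigmaIID,\GTSM(\sigmaIID),\tauTSM)$, I am free to replace the product coupling by a product of Wasserstein-optimal couplings, pairing $\gammaR_h$ with $\hat\gammaR_h$ so that $\expe[\|\gammaR_h-\hat\gammaR_h\|_\mrmtv\mid\sigmaIID,\GTSM(\sigmaIID),\tauTSM]=\distW(\piGC[,\GTSM(\sigmaIID),\sigmaIID,\tauTSM(h)],\piGR[,\GTSM(\sigmaIID),\tauTSM(h)])$. Combining this with the Lipschitz bound, Jensen's inequality, and the trivial estimate $\sum_h\distW(\piGC[,\cdot,\tauTSM(h)],\piGR[,\cdot,\tauTSM(h)])\le k\sum_{\tau'\in\sigmaIID([n])}\distW(\piGC[,\cdot,\tau'],\piGR[,\cdot,\tau'])=kD(\sigmaIID,\lawG[,\GTSM(\sigmaIID)])$ (using that each $\tauTSM(h)\in\sigmaIID([n])$ almost surely by Observation \ref{obs_TSYM}) controls the error by a constant multiple of $\expe[D(\sigmaIID,\bm\mu^*)]$.

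Finally, I would apply Corollary \ref{cor_piCR}\ref{cor_piCR_IID} with exponent $c'=C_1/3$, yielding $\expe[D(\sigmaIID,\bm\mu^*)]=\mclo(\ThetaP[-C_1/3])$. Since $\ThetaP(n)=n^{1-\rho}$ and $\rho=(C_1/3)/(1+C_1/3)$, a short computation gives $(1-\rho)C_1/3=\rho$, so the error is $\mclo(n^{-\rho})$, which matches the claim. The only mild subtlety will be to verify that $\piGR[,\GTSM(\sigmaIID),\tauTSM(h)]$ is everywhere well-defined, i.e.~that $\gammaaG[,\GTSM(\sigmaIID)](\tau)>0$ for every $\tau\in\sigmaIID([n])$; this is immediate since Observation \ref{obs_GRM_expeboundsLawG} ensures that the Gibbs measure $\lawG[,\GTSM(\sigmaIID)]$ has full support on the set $\{\sigma:\sigma_{\setPR}=\sigmaIID_{\setPR}\}$, so each marginal (and hence its average $\gammaaG$) is strictly positive on $\sigmaIID([n])$.
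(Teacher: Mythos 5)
Your argument is correct and mirrors the paper's proof step for step: Lipschitz control of $\ln\circ\ZF$ on $[\psibl,\psibu]$ (constant $2\psibu^2$), an arbitrary/optimal coupling reducing the error to the Wasserstein quantity $D(\sigmaIID,\lawG[,\GTSM(\sigmaIID)])$ via $\tauTSM(h)\in\sigmaIID([n])$, and Corollary \ref{cor_piCR}\ref{cor_piCR_IID} together with the exponent bookkeeping $(1-\rho)\cdot C_1/3=\rho$. The only small detail the paper additionally records is that Corollary \ref{cor_piCR} is stated for deterministic $m\le\mbu$, so the event $\mR>\mbu$ has to be absorbed via the trivial bound $D\le q$ and $\prob[\mR>\mbu]=o(1/n)$; the paper also takes $c'=C_1$ (giving $o(n^{-\rho})$) rather than your $c'=C_1/3$ (giving $\mclo(n^{-\rho})$), but both suffice for the stated claim.
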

\begin{proof}
Fix $\sigma$, $\tau$, $G=[w]^{\Gamma\darr}_{\setP,\sigma}$ with $w\in\domG$ and
$\psi\in\domPsi$. Let $\check E(\sigma,\tau,G,\psi)=\expe[\ln(\ZF(\psi,\gammaR))]$ with
$\gammaR\dequal\bigotimes_h\piGC[,G,\sigma,\tau(h)]$ and 
$\hat E(\sigma,\tau,G,\psi)=\expe[\ln(\ZF(\psi,\gammaR))]$ with
$\gammaR\dequal\bigotimes_h\piGR[,G,\tau(h)]$.
Let $\pi_h\in\Gamma(\piGC[,G,\sigma,\tau(h)],\piGR[,G,\tau(h)])$ be a coupling for $h\in[k]$ and $(\check\gammaR,\hat\gammaR)\dequal\bigotimes_h\pi_h$ with $\check\gammaR,\hat\gammaR\in\mclp([q])^k$.
We have
$\ZF(\psi,\check\gammaR),\ZF(\psi,\hat\gammaR)\in[\psibl,\psibu]$, so the logarithm is $\psibu$-Lipschitz on this domain, and thereby using Observation \ref{obs_tv}\ref{obs_tv_prod} we obtain
\begin{align*}
\Delta(\sigma,\tau,G,\psi)
&=\left|\check E-\hat E\right|
\le\psibu\expe\left[\sum_{\tau'}\psi(\tau')\left|\prod_h\check\gammaR_h(\tau'_h)-\prod_h\hat\gammaR_h(\tau'_h)\right|\right]\\
&\le 2\psibu^2\sum_h\|\check\gammaR_h-\hat\gammaR_h\|_\mrmtv.
\end{align*}
Since this holds for any choice of coupling we have
$\Delta\le 2\psibu^2\sum_h\distW(\piGC[,G,\sigma,\tau(h)],\piGR[,G,\tau(h)])$.
With $\tau\in\sigma([n])^k$
and $D(\sigma,\mu)$ from Corollary \ref{cor_piCR} this yields
$\Delta\le 2k\psibu^2 D(\sigma,\lawG[,G])$.
Hence, taking the expectation and using $c$, $C_1$ from Corollary \ref{cor_piCR}\ref{cor_piCR_IID} with $\gammaR_{\sigma,\tau}$ from Lemma \ref{lemma_assfactor_piGC} gives
\begin{align*}
\Delta&=\left|\expe[\bmone\{\sigmaIID\in\mclb[-][\Gamma]\}\ln(\ZF(\psiTSYM[\tauTSM],\gammaR_{\sigmaIID,\tauTSM}))]
-\expe[\bmone\{\sigmaIID\in\mclb[-][\Gamma]\}\ln(\ZF(\psiTSYM[\tauTSM],\hat\gammaR_{\sigmaIID,\tauTSM}))]\right|\\
&\le 2k\psibu^2\expe\left[D(\sigmaIID,\GTSM(\sigmaIID))\right]
\le 2k\psibu\left(\frac{c}{\ThetaP[C_1]}+q\prob[\mR>\mbu]\right).
\end{align*}
Recall that $\prob[\mR>\mbu]=o(1/n)$, $\ThetaP=n^{1-\rho}$ with $\rho=c/(1+c)$, $c=C_1/3$, and notice that $(1-\rho)C_1=3\rho>\rho$, so $\Delta=o(n^{-\rho})$ and hence the assertion holds with Lemma \ref{lemma_assfactor_piGC}.
\end{proof}
\subsubsection{The Factor Contribution}\label{assfactor_final}
In this section we complete the discussion of $\Phi_\mrmf$.
First, we resolve the reweighting, then we turn to the projection onto $\mclp[*][2]([q])$.
Let $(\psiR,\gammaR)\dequal\lawpsi\otimes\piG[,\GTSM(\sigmaIID)]^{\otimes k}$
with $\GTSM(\sigmaIID)=\GTSM[\mR,\setPR](\sigmaIID)$.
\begin{lemma}\label{lemma_assfactor_final}
We have $\Phi_{\mrmf}(n)=\frac{\degae(k-1)}{k\ZFabu}\expe[\xlnx(\ZF(\psiR,\gammaR))]+\mclo(n^{-\rho})$.
\end{lemma}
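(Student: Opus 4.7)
The plan is to start from the expression in Lemma \ref{lemma_assfactor_piGR} and resolve all three Radon--Nikodym reweightings that are built into $\tauTSM$, $\psiTSYM[\tauTSM]$ and $\hat\gammaR_{\sigmaIID,\tauTSM}$ in one stroke, then identify the resulting sum with $\xlnx(\ZF(\psiR,\gammaR))/\ZFabu$ up to factors that are close to one on a typical event.

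Fix a ground truth $\sigma$ with $\gamma=\gammaN[,\sigma]$ and a graph $G=[w]^{\Gamma\darr}_{\setP,\sigma}$ with Gibbs marginal expectation $\gammaa=\gammaaG[,G]$. Recall from Section \ref{GTSYM} that $\tauTSa[,\gamma]\dequal\lawYgC[,\gamma]$, and from Equation (\ref{lawYgC_def}) together with the definition of $\psiTSYa$ that the joint $(\tauTSM,\psiTSYM[\tauTSM])$ has $(\gamma^{\otimes k}\otimes\lawpsi)$-density $(\tau,\psi)\mapsto\psi(\tau)/\ZFa(\gamma)$; likewise $\hat\gammaR_{\sigma,\tau}$ has $\piG[,G]^{\otimes k}$-density $(\gamma_h)_h\mapsto\prod_h\gamma_h(\tau_h)/\gammaa(\tau_h)$ by the definition in Section \ref{pinning_marginal_distributions}. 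Taking $(\tau,\psiRa,\gammaR)\dequal\gamma^{\otimes k}\otimes\lawpsi\otimes\piG[,G]^{\otimes k}$ independent, the expectation inside Lemma \ref{lemma_assfactor_piGR} rewrites as
\begin{align*}
E(\sigma,G)
&=\sum_{\tau\in[q]^k}\expe\left[\frac{\psiRa(\tau)}{\ZFa(\gamma)}\prod_{h\in[k]}\frac{\gamma(\tau_h)\gammaR_h(\tau_h)}{\gammaa(\tau_h)}\ln(\ZF(\psiRa,\gammaR))\right].
\end{align*}
Since $\ZF(\psiRa,\gammaR)=\sum_\tau\psiRa(\tau)\prod_h\gammaR_h(\tau_h)$, the target integrand is $\xlnx(\ZF(\psiRa,\gammaR))/\ZFabu=\sum_\tau\psiRa(\tau)\prod_h\gammaR_h(\tau_h)\ln(\ZF(\psiRa,\gammaR))/\ZFabu$. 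Hence, in the integrand of $E(\sigma,G)$ the $\tau$-dependent coefficient $W(\tau)=\frac{1}{\ZFa(\gamma)}\prod_h\gamma(\tau_h)/\gammaa(\tau_h)$ must be compared with the constant $1/\ZFabu$.

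The key approximations take place on the typical event $\mcle=\{\sigmaIID\in\mclb[-][\Gamma],\,\|\gammaa-\gamma^*\|_\mrmtv\le r(n)\}$. On $\mcle$ both $\gamma$ and $\gammaa$ exceed $\psibl/2$ coordinatewise, so using Observation \ref{obs_fad}\ref{obs_fad_maxbound} we get $|1/\ZFa(\gamma)-1/\ZFabu|=\mclo(r(n)^2)$, and $|\prod_h\gamma(\tau_h)/\gammaa(\tau_h)-1|=\mclo(\|\gamma-\gammaa\|_\infty)$ by elementary perturbation (each factor lies in $1\pm(2/\psibl)\|\gamma-\gammaa\|_\infty$). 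Combining via the triangle inequality yields $\max_\tau|W(\tau)-1/\ZFabu|=\mclo(\|\gamma-\gamma^*\|_\mrmtv+\|\gammaa-\gamma^*\|_\mrmtv)$, and since $\sum_\tau\psiRa(\tau)\prod_h\gammaR_h(\tau_h)\le\psibu$ and $|\ln\ZF(\psiRa,\gammaR)|\le\ln\psibu$, the resulting pointwise discrepancy between $E(\sigma,G)$ and $\expe[\xlnx(\ZF(\psiRa,\gammaR))]/\ZFabu$ is $\mclo(\|\gamma-\gamma^*\|_\mrmtv+\|\gammaa-\gamma^*\|_\mrmtv)$. Plugging $(\sigma,G)=(\sigmaIID,\GTSM(\sigmaIID))$, taking expectations, and invoking Observation \ref{obs_gtiid}\ref{obs_gtiid_expe} and Lemma \ref{lemma_gammaaR}\ref{lemma_gammaaR_expe} produces an error of $\mclo(n^{-1/2})$. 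Finally, the complement $\mcle^c$ contributes $\mclo(n^{-c})$ for every $c$ by Observation \ref{obs_gtiid}\ref{obs_gtiid_prob} and Lemma \ref{lemma_gammaaR}\ref{lemma_gammaaR_prob} together with the uniform bound $\ln\psibu$ on the logarithm (so we can drop the indicator for $\mclb[-][\Gamma]$ inherited from Lemma \ref{lemma_assfactor_piGR}). Since $\rho<1/4$, we have $n^{-1/2}=o(n^{-\rho})$, establishing the claim.

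The main obstacle is the bookkeeping of the three simultaneous reweightings in step one — it is crucial that the $\psiae(\tau)$ factor from the $(\lawYgC,\gamma^{\otimes k})$-derivative is cancelled exactly by the $1/\psiae(\tau)$ factor from the $(\psiTSYa,\psiRa)$-derivative, so that only $\psi(\tau)/\ZFa(\gamma)$ survives. After that, the error analysis is a routine concentration argument, driven by the fact that both $\gammaN[,\sigmaIID]$ and $\gammaaG[,\GTSM(\sigmaIID)]$ cluster around $\gamma^*$ at rate $n^{-1/2}$, well below the $n^{-\rho}$ threshold.
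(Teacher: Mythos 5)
Your proposal is correct and follows essentially the same route as the paper: resolve the three Radon–Nikodym reweightings (with the crucial cancellation of $\psiae(\tau)$ between the $\lawYgC$-density and the $\psiTSYa$-derivative), reduce to comparing the resulting weight $\frac{\prod_h\gammaIID(\tau_h)}{\ZFa(\gammaIID)\prod_h\gammaaR(\tau_h)}$ with $1/\ZFabu$, and control the error via concentration of $\gammaIID$ and $\gammaaR$ around $\gamma^*$. The one cosmetic difference is that the paper bounds the discrepancy by $\mclo(r(n))$ uniformly on the typical event $\mcle$ with $r(n)=c_\mrmr\sqrt{\ln(n)/n}$, whereas you integrate the pointwise bound directly using the expectation estimates in Observation \ref{obs_gtiid}\ref{obs_gtiid_expe} and Lemma \ref{lemma_gammaaR}\ref{lemma_gammaaR_expe} — both yield an error that is $o(n^{-\rho})$. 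One small caveat worth making explicit: Lemma \ref{lemma_gammaaR}\ref{lemma_gammaaR_prob} and \ref{lemma_gammaaR}\ref{lemma_gammaaR_expe} require $\mR\le\mbu$, so the contribution of $\{\mR>\mbu\}$ (which is $o(1/n)$ by Corollary \ref{cor_dega}) should be split off, as the paper does.
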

\begin{proof}
Let $\gammaIID=\gammaN[,\sigmaIID]$ and $\gammaaR=\gammaaG[,\GTSM(\sigmaIID)]$.
With $c$ from Lemma \ref{lemma_gammaaR}\ref{lemma_gammaaR_prob} we have
\begin{align*}
\prob[\|\gammaaR-\gamma^*\|_\mrmtv\ge r]\le c_2e^{-c_1c_\mrmr\ln(n)}+\prob[\mR>\mbu]=o(1/n)
\end{align*}
since $c_\mrmr>1/c_1$ is large.
Lemma \ref{lemma_assfactor_piGR}, using that the argument to the logarithm is in $[\psibl,\psibu]$ (and the leading coefficient in $[0,\degabu]$), with $\mcle=\{\sigmaIID\in\mclb[-][\Gamma],\|\gammaaR-\gamma^*\|_\mrmtv\le r\}$ yields
\begin{align*}
\Phi_\mrmf(n)
=\frac{\degae(k-1)}{k}\expe[\bmone\mcle\ln(\ZF(\psiTSYM[\tauTSM],\hat\gammaR_{\sigmaIID,\tauTSM}))]+\mclo(n^{-\rho}).
\end{align*}
Resolving the Radon-Nikodym derivatives gives
$\Phi_\mrmf(n)=\frac{\degae(k-1)}{k}\expe[\bmone\mcle\bm\Phi]+\mclo(n^{-\rho})$ with
\begin{align*}
\bm\Phi&=\sum_\tau\frac{\psiae(\tau)\prod_h\gammaIID(\tau_h)}{\ZFa(\gammaIID)}\cdot\frac{\psiR(\tau)}{\psiae(\tau)}
\cdot\prod_h\frac{\gammaR_h(\tau_h)}{\gammaaR(\tau_h)}\ln(\ZF(\psiR,\gammaR))\\
&=\sum_\tau\frac{\psiR(\tau)\prod_h\gammaR_h(\tau_h)\prod_h\gammaIID(\tau_h)}{\ZFa(\gammaIID)\prod_h\gammaaR(\tau_h)}\ln(\ZF(\psiR,\gammaR)).
\end{align*}
On $\mcle$ we have $\gammaIID(\tau_h)/\gamma^*(\tau_h)\le 1+\psibu\|\gammaIID-\gamma^*\|_\infty$, which with the corresponding lower bound yields
$\gammaIID(\tau_h)/\gamma^*(\tau_h)=1+\mclo(r)$.
With Observation \ref{obs_fad}\ref{obs_fad_maxbound} and Observation \ref{obs_fad}\ref{obs_fad_bounds} we further have
$\ZFa(\gammaIID)/\ZFabu=1+\mclo(r^2)$.
Analogously to $\gammaIID$ we get $\gammaaR(\tau_h)/\gamma^*(\tau_h)=1+\mclo(r)$, so
\begin{align*}
\bm\Phi=(1+\mclo(r))\sum_\tau\frac{\psiR(\tau)\prod_h\gammaR_h(\tau_h)\prod_h\gamma^*(\tau_h)}{\ZFabu\prod_h\gamma^*(\tau_h)}\ln(\ZF(\psiR,\gammaR))
=(1+\mclo(r))\frac{\xlnx(\ZF(\psiR,\gammaR))}{\ZFabu}.
\end{align*}
With $|\bm\Phi|\le\psibu^2\ln(\psibu)$
and $r=o(n^{-\rho})$ we have
$\Phi_\mrmf(n)=\frac{\degae(k-1)}{k\ZFabu}\expe[\bmone\mcle\xlnx(\ZF(\psiR,\gammaR))]+\mclo(n^{-\rho})$.
Since the argument is still uniformly bounded, resolving $\mcle$ comes at a cost $o(1/n)$.
\end{proof}
Next, we show that we can replace $\piG[,\GTSM(\sigmaIID)]$ by its projection $\piG[,\GTSM(\sigmaIID)]^\circ$ by using Lemma \ref{lemma_piapproxpstartwo}. For this purpose we show that the factor contribution
\begin{align*}
\bethe_{\mrmf}:\mclp[][2]([q])\rarr\reals,\,
\pi\mapsto\frac{\degae(k-1)}{k\ZFabu}\expe\left[\xlnx\left(\ZF(\psiR,\gammaR_\pi)\right)\right],\,
(\psiR,\gammaR_\pi)\dequal\lawpsi\otimes\pi^{\otimes k},
\end{align*}
to the Bethe functional is Lipschitz in $\pi$ with respect to $\distW$.
\begin{lemma}\label{lemma_bethelipschitzfactor}
There exists $L_\mfkg$ such that $\bethe_{\mrmf}$ is $L$-Lipschitz.
\end{lemma}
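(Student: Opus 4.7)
The plan is to mirror the strategy of Lemma \ref{lemma_nablaI_lipschitz}, but now for the single term $\expe[\xlnx(\ZF(\psiR,\gammaR_\pi))]$ rather than the full $\nablaI$. Fix $\pi_1,\pi_2\in\mclp[][2]([q])$ and an arbitrary coupling $\rho\in\Gamma(\pi_1,\pi_2)$. I would couple the two evaluations by choosing $(\psiR,(\gammaR_1,\gammaR_2))\dequal\lawpsi\otimes\rho^{\otimes k}$, so that marginally $\gammaR_i\dequal\pi_i^{\otimes k}$ for $i\in[2]$ while the pairs $(\gammaR_{1,h},\gammaR_{2,h})$ are independent copies of the coupling $\rho$.

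The first step is to pass from $\xlnx$ to $\ZF$. By Observation \ref{obs_fad}\ref{obs_fad_bounds}, $\ZF(\psi,\gamma)\in[\psibl,\psibu]$ for $\psi\in\domPsi$ and $\gamma\in\mclp([q])^k$, and on this compact interval $\xlnx$ is Lipschitz with some constant $\ell_\mfkg\in\reals_{>0}$. Hence
\begin{align*}
\left|\xlnx(\ZF(\psiR,\gammaR_1))-\xlnx(\ZF(\psiR,\gammaR_2))\right|
\le\ell\,\left|\ZF(\psiR,\gammaR_1)-\ZF(\psiR,\gammaR_2)\right|.
\end{align*}
The second step is to bound the difference of the $\ZF$-values. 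Expanding $\ZF(\psiR,\gamma)=\sum_\tau\psiR(\tau)\prod_h\gamma_h(\tau_h)$, using $\psiR\le\psibu$, and Observation \ref{obs_tv}\ref{obs_tv_prod} gives
\begin{align*}
\left|\ZF(\psiR,\gammaR_1)-\ZF(\psiR,\gammaR_2)\right|
\le 2\psibu\left\|\bigotimes_h\gammaR_{1,h}-\bigotimes_h\gammaR_{2,h}\right\|_\mrmtv
\le 2\psibu\sum_{h\in[k]}\|\gammaR_{1,h}-\gammaR_{2,h}\|_\mrmtv.
\end{align*}

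The third step is to take expectations and use the coupling. Since each pair $(\gammaR_{1,h},\gammaR_{2,h})$ has law $\rho$, we get
\begin{align*}
\expe\left[\sum_{h\in[k]}\|\gammaR_{1,h}-\gammaR_{2,h}\|_\mrmtv\right]
=k\,\expe\left[\|\gammaR_{\rho,1}-\gammaR_{\rho,2}\|_\mrmtv\right].
\end{align*}
Combining the three displays and using $\ZFabu\ge\psibl$ from Observation \ref{obs_fad}\ref{obs_fad_bounds}, we obtain
\begin{align*}
|\bethe_{\mrmf}(\pi_1)-\bethe_{\mrmf}(\pi_2)|
\le\frac{\degabu(k-1)}{k\psibl}\cdot 2k\ell\psibu\,\expe[\|\gammaR_{\rho,1}-\gammaR_{\rho,2}\|_\mrmtv].
\end{align*}
Since this holds for every coupling $\rho$, taking the infimum over $\rho\in\Gamma(\pi_1,\pi_2)$ yields $|\bethe_{\mrmf}(\pi_1)-\bethe_{\mrmf}(\pi_2)|\le L\distW(\pi_1,\pi_2)$ with $L=2(k-1)\degabu\ell\psibu/\psibl\in\reals_{>0}$, which depends only on $\mfkg$.

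There is no real obstacle here; the only point requiring a touch of care is to make sure the coupling of $\gammaR_1$ and $\gammaR_2$ is built coordinate-wise from $\rho$ so that after taking expectations every summand contributes exactly $\expe[\|\gammaR_{\rho,1}-\gammaR_{\rho,2}\|_\mrmtv]$, matching the definition of $\distW$. This is essentially the same device used in Lemma \ref{lemma_nablaI_lipschitz}, restricted to the single $\ZF$-term.
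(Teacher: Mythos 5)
Your proof is correct and is essentially the same as the paper's: same coordinatewise coupling from $\Gamma(\pi_1,\pi_2)$, same reduction through Lipschitz continuity of $\xlnx$ on $[\psibl,\psibu]$, and same use of Observation \ref{obs_tv}\ref{obs_tv_prod} to split the total variation of the product, followed by taking the infimum over couplings. The resulting constants agree (noting $\psibu=1/\psibl$).
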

\begin{proof}
Let $\pi_\circ\in\Gamma(\pi_1,\pi_2)$ be a coupling of $\pi\in\mclp[][2]([q])^2$ and
$(\psiR,\gammaR_1,\gammaR_2)\dequal\lawpsi\otimes\pi_\circ^{\otimes k}$ with $\gammaR_1,\gammaR_2\in\mclp([q])^k$.
Using that $\xlnx$ is $L'$-Lipschitz on $[\psibl,\psibu]$ with $L'=\ln(\psibu)+1$, we have
\begin{align*}
\Delta&=|\bethe_{\mrmf}(\pi_1)-\bethe_{\mrmf}(\pi_2)|
\le\frac{\degabu L'(k-1)}{k\psibl}\expe\left[\left|\ZF(\psiR,\gammaR_1)-\ZF(\psiR,\gammaR_2)\right|\right].
\end{align*}
With the triangle inequality, $\psiR\le\psibu$ and Observation \ref{obs_tv}\ref{obs_tv_prod} this gives 
\begin{align*}\Delta\le 2\degabu L'\psibu^2(k-1)\sum_h\frac{1}{k}\expe[\|\gammaR_{1,h}-\gammaR_{2,h}\|_\mrmtv]=2\degabu L'\psibu^2(k-1)\expe[\|\gammaR_{1,1}-\gammaR_{2,1}\|_\mrmtv].
\end{align*}
This completes the proof since $\pi_\circ\in\Gamma(\pi_1,\pi_2)$ was arbitrary.
\end{proof}
Now, we are finally ready to establish the easier part of Lemma \ref{lemma_ass_PhiDelta}.
\begin{lemma}\label{lemma_ass_PhiDeltaF}
We have $\Phi_{\mrmf}(n)=\expe[\bethe_{\mrmf}(\piG[,\GTSM]^\circ)]+\mclo(n^{-\rho})$ with $\GTSM=\GTSM[\mR,\setPR](\sigmaIID)$.
\end{lemma}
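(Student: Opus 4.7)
My plan is to chain three results already established in this section: Lemma \ref{lemma_assfactor_final}, Lemma \ref{lemma_bethelipschitzfactor}, and Lemma \ref{lemma_piapproxpstartwo}\ref{lemma_piapproxpstartwo_graph}. Unpacking the definition of $\bethe_{\mrmf}$, the conclusion of Lemma \ref{lemma_assfactor_final} can be rewritten as $\Phi_{\mrmf}(n) = \expe[\bethe_{\mrmf}(\piG[,\GTSM])] + \mclo(n^{-\rho})$, where I use that conditional on $\piG[,\GTSM]$, the pair $(\psiR,\gammaR) \dequal \lawpsi \otimes \piG[,\GTSM]^{\otimes k}$ has exactly the law entering the definition of $\bethe_{\mrmf}(\piG[,\GTSM])$. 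Hence the entire task reduces to transitioning from the unprojected Gibbs marginal distribution $\piG[,\GTSM]$ to its projection $\piG[,\GTSM]^{\circ} \in \mclp[*][2]([q])$.

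The transition is straightforward. By the $L$-Lipschitz continuity of $\bethe_{\mrmf}$ with respect to the Wasserstein distance (Lemma \ref{lemma_bethelipschitzfactor}) and Jensen's inequality,
\begin{align*}
\left|\expe[\bethe_{\mrmf}(\piG[,\GTSM])] - \expe[\bethe_{\mrmf}(\piG[,\GTSM]^{\circ})]\right| \le L\,\expe[\distW(\piG[,\GTSM], \piG[,\GTSM]^{\circ})].
\end{align*}
Since Lemma \ref{lemma_piapproxpstartwo}\ref{lemma_piapproxpstartwo_graph} is stated for deterministic $m \le \mbu$, I will split the expectation on $\{\mR \le \mbu\}$ and its complement. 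On the typical event I apply the lemma conditionally on $\mR$ to bound the expected Wasserstein distance by $c\sqrt{\ln(n+1)^3/n}$. On the complementary event I use the trivial bound $\distW(\cdot,\cdot) \le q$ together with the Poisson tail $\prob[\mR > \mbu] = o(1/n)$ (cf.\ Observation \ref{obs_poisson}\ref{obs_poisson_prob} applied to $\mR \dequal \Po(\degae n/k)$ with $\degae \le \degabu$), so the contribution from the complementary event is $o(1/n)$.

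It remains to verify that $\sqrt{\ln(n+1)^3/n} = o(n^{-\rho})$. This holds because $\rho = c/(1+c)$ with $c = C_1/3$ and $C_1 \in (0,1)$ from Proposition \ref{proposition_pinG}\ref{proposition_pinG_IIDDKL}, so $\rho < 1/4$, and in particular $1/2 - \rho > 0$, making any poly-logarithmic factor negligible against $n^{1/2-\rho}$. Therefore the Lipschitz error is $\mclo(n^{-\rho})$, and combining with Lemma \ref{lemma_assfactor_final} yields the claim. There is no substantive obstacle here: all the technical work was done in the preceding lemmas of this subsection, and the proof merely assembles them, with only the mild care described above needed to handle the Poisson-distributed factor count via tail truncation.
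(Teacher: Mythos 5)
Your proof is correct and takes essentially the same route as the paper: the paper's own proof invokes Lemma \ref{lemma_assfactor_final}, then Lemma \ref{lemma_bethelipschitzfactor} and Lemma \ref{lemma_piapproxpstartwo}\ref{lemma_piapproxpstartwo_graph}, with the remark "since $\distW\le q$ and $\prob[\mR>\mbu]=o(1/n)$" encoding precisely the truncation at $\mbu$ you describe. You merely spell out the conditioning on $\mR$, the Jensen step, and the verification that $\sqrt{\ln(n+1)^3/n}=o(n^{-\rho})$ (which holds since $\rho<1/4$), all of which the paper leaves implicit.
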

\begin{proof}
With Lemma \ref{lemma_assfactor_final} we have $\Phi_{\mrmf}(n)=\expe[\bethe_\mrmf(\piG[,\GTSM])]+\mclo(n^{-\rho})$.
Lemma \ref{lemma_bethelipschitzfactor} and Lemma \ref{lemma_piapproxpstartwo}\ref{lemma_piapproxpstartwo_graph} complete the proof, since $\distW\le q$ and $\prob[\mR>\mbu]=o(1/n)$.
\end{proof}
\subsubsection{Typical Events for the Variable Contribution}\label{assvar_typical}
Now, we turn to the variable contribution of the Bethe functional, respectively the contribution $\Phi_{\mrmv}(n)=\expe[\bm\Phi]$ from Equation (\ref{equ_PhiDeltContributions}) with $\bm\Phi=\ln(\ZG(\GR^+(\sigmaR^+))/\ZG(\GR_\cap))$.
Recall $r(n)$, $\mclb[+][\Gamma]$ from Section \ref{ass_factor_count_asymptotics},
$\mclb[][\circ]$ from Section \ref{assfactor_typical} and let $\degaR^+=k\mR^+/n$.
\begin{lemma}\label{lemma_assvar_typ}
We have $\Phi_{\mrmv}(n)=\expe[\bmone\{\sigmaR^+\in\mclb[+][\Gamma],\sigmaR^-\in\mclb[-][\Gamma],\degaR^+\in\mclb[][\circ]\}\bm\Phi]+o(n^{-1})$.
\end{lemma}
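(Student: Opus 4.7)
The plan is to mimic the proof of Lemma \ref{lemma_assfactor_typ} essentially verbatim, substituting $\GR^+(\sigmaR^+)$ for $\GR^-(\sigmaR^-)$ and carefully accounting for the extra variable $i=n+1$. Write $\Phi_\mrmv(n)=\expe[\bmone\mcle\bm\Phi]+\eps(n)$, where $\mcle=\{\sigmaR^+\in\mclb[+][\Gamma],\sigmaR^-\in\mclb[-][\Gamma],\degaR^+\in\mclb[][\circ]\}$ and
\[
\eps(n)=\expe\bigl[\bmone\lnot\mcle\,\overline{\bm\Phi}\bigr],\qquad
\overline{\bm\Phi}=\expe\bigl[\bm\Phi\,\big|\,\mR_\cap(\sigmaR^+),\mR^+,\sigmaR^+,\sigmaR^-,\degaR^+\bigr].
\]
By Jensen's inequality and the union bound it suffices to estimate $\expe[\bmone\lnot\mcle^{(j)}|\overline{\bm\Phi}|]$ separately for the three atypical events $\{\degaR^+\not\in\mclb[][\circ]\}$, $\{\sigmaR^+\not\in\mclb[+][\Gamma]\}$, $\{\sigmaR^-\not\in\mclb[-][\Gamma]\}$.

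The crude deterministic bound comes from Observation~\ref{obs_phi_lipschitz}: since $\GR^+(\sigmaR^+)$ and $\GR_\cap$ differ only by the addition of $\mR^+-\mR_\cap(\sigmaR^+)$ standard factors, at most $|\setPR^+\setminus\setPR_\cap|\le|\indPR_\Delta^{+-1}(1)|$ extra pins on $[n]$, plus a single pin and the external field at $i=n+1$, we get $\distG(\GR_\cap,\GR^+(\sigmaR^+))\le \tilde c(\mR^++|\indPR_\Delta^{+-1}(1)|+1)$ after crude estimates analogous to those in the proof of Lemma~\ref{lemma_assfactor_pin}. Taking the conditional expectation, Observation~\ref{obs_pin_basic} and Lemma~\ref{lemma_ass_coupsetp} give $|\overline{\bm\Phi}|\le\tilde c(2\mR^++n)$ for a suitable constant $\tilde c_\mfkg$.

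With this uniform bound in hand each atypical contribution is treated exactly as in the proof of Lemma~\ref{lemma_assfactor_typ}. For the degree event, Lemma~\ref{lemma_ass_graph_couplingpo} gives $\mR^+\dequal\Po(\degae(n+1)/k)$, so Corollary~\ref{cor_dega} (applied with $r=c_\mrmr\sqrt{\ln(n)/n}$ and $n$ replaced by $n+1$) yields $\expe[\bmone\{\degaR^+\not\in\mclb[][\circ]\}\,|\overline{\bm\Phi}|]\le c'n\exp(-c_1c_\mrmr^2\ln(n)(1+o(1)))=\mclo(n^{1-c_1c_\mrmr^2})$, which is $o(n^{-1})$ once $c_\mrmr^2>2/c_1$. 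For the ground truth event we use Observation~\ref{obs_gtiid}\ref{obs_gtiid_prob}: both $\sigmaR^+\dequal\sigmaIID_{n+1}$ and $\sigmaR^-\dequal\sigmaIID_{n}$ satisfy $\prob[\sigmaR^{\pm}\not\in\mclb[\pm][\Gamma]]\le c_2e^{-c_1r(n)^2n}=\mclo(n^{-c_1c_\mrmr^2})$, and combining with the bound $|\overline{\bm\Phi}|\le\tilde c(2\mR^++n)$ together with $\expe[\mR^+]=\Theta(n)$ (independence handled by freezing on the event under consideration, since $\sigmaR^\pm$ is independent of $\mR^+$) yields contributions of order $\mclo(n^{1-c_1c_\mrmr^2})=o(n^{-1})$.

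No essential new difficulty arises; the only bookkeeping issue is the additional variable $i=n+1$, contributing a bounded $\mclo(1)$ correction to $\distG$ which is absorbed into $\tilde c$, and ensuring the constant $c_\mrmr$ (already chosen ``large'' in Section~\ref{ass_factor_count_asymptotics}) is sufficiently large to produce $o(n^{-1})$ rather than $o(n^{-\rho})$. The main obstacle, if any, is simply verifying that the independence structure used to separate the contribution of $|\overline{\bm\Phi}|$ from the indicator of the atypical event still holds after the coupling of Section~\ref{coupling_pins}; this is ensured by conditioning on the counts and ground truths before applying Observation~\ref{obs_phi_lipschitz}, exactly as was done for $\Phi_\mrmf$.
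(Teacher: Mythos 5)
Your proof is essentially the same as the paper's, but the route you give to the uniform bound contains a technical slip that the authors themselves explicitly flag in the very next lemma. You invoke $\distG(\GR_\cap,\GR^+(\sigmaR^+))$ to bound $\overline{\bm\Phi}$, but $\distG$ from Section~\ref{phi_lipschitz} is only defined for two decorated graphs on the same variable set $[n]$, and here $\GR_\cap$ lives on $n$ variables while $\GR^+(\sigmaR^+)$ lives on $n+1$; the proof of Lemma~\ref{lemma_assvar_pin} opens with ``As opposed to the proof of Lemma~\ref{lemma_assfactor_pin} we cannot use Observation~\ref{obs_phi_lipschitz} since now the numbers of variables do not coincide,'' which is exactly the point you are glossing over. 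The bound you arrive at is nonetheless correct; the paper obtains it by writing $\bm\Phi=(n+1)\phiG(\GR^+(\sigmaR^+))-n\phiG(\GR_\cap)$, applying the \emph{boundedness} part of Observation~\ref{obs_phi_lipschitz}, namely $|\phiG(G)|\le\frac{c}{n}(m+\|\mI\|_1+|\setP|)$, to each term separately, and then using Lemma~\ref{lemma_ass_coupsetp}, Lemma~\ref{lemma_ass_couptheta} and Observation~\ref{obs_pin_basic} to conclude $|\overline{\bm\Phi}|\le\tilde c(2\mR^++n+1)$. Once the bound is in place, your treatment of the three atypical events via independence, Corollary~\ref{cor_dega}, and Observation~\ref{obs_gtiid}\ref{obs_gtiid_prob} is identical to the paper's and is correct. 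So the argument works, but you should replace the $\distG$ step by a direct triangle-inequality bound on the two free entropies; as written, the $\distG$-route is ill-defined, and carrying the same confusion into Lemma~\ref{lemma_assvar_pin} (where the uniform bound alone no longer suffices) would lead to a genuine error.
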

\begin{proof}
With $\mcle=\{\sigmaR^+\in\mclb[+][\Gamma],\sigmaR^-\in\mclb[-][\Gamma],\degaR^+\in\mclb[][\circ]\}$ we have
$\Phi_\mrmv(n)=\expe[\bmone\mcle\bm\Phi]+\eps$ with
\begin{align*}
\eps(n)=\expe\left[\bmone\lnot\mcle\overline{\bm\Phi}\right],\,
\overline{\bm\Phi}=\expe\left[\bm\Phi\middle|\mR_\cap(\sigmaR^+),\sigmaR^+,\sigmaR^-,\degaR^+\right].
\end{align*}
With Jensen's inequality we can consider the atypical events separately, i.e.
\begin{align*}
|\eps(n)|
\le\expe[\bmone\{\degaR^+\not\in\mclb[][\circ]\}|\overline{\bm\Phi}|]
+\expe[\bmone\{\sigmaR^+\not\in\mclb[+][\Gamma]\}|\overline{\bm\Phi}|]
+\expe[\bmone\{\sigmaR^-\not\in\mclb[-][\Gamma]\}|\overline{\bm\Phi}|].
\end{align*}
With $\bm\Phi=(n+1)\phiG(\ZG(\GR^+(\sigmaR^+)))-n\phiG(\ZG(\GR_\cap))$, Jensen's inequality, the triangle inequality, 
$\tilde c$ from Observation \ref{obs_phi_lipschitz},
Lemma \ref{lemma_ass_coupsetp}, Lemma \ref{lemma_ass_couptheta} and Observation \ref{obs_pin_basic} we have
\begin{align*}
|\overline{\bm\Phi}|
\le\tilde c\left(\mR^++\frac{\ThetaP_{n+1}}{2}+\mR_\cap(\sigmaR^+)+\frac{n\ThetaP_n}{(n+1)2}\right)
\le\tilde c(2\mR^++n+1).
\end{align*}
So, with Lemma \ref{lemma_ass_graph_couplingpo}, $c$ from Corollary \ref{cor_dega} and $n'=n+1$ we have
\begin{align*}
\expe[\bmone\{\degaR^+\not\in\mclb[][\circ]\}|\overline{\bm\Phi}|]
\le\frac{2\tilde cn'}{k}\expe[\bmone\{\degaR_{n'}\not\in\mclb[][\circ]\}\degaR_{n'}]
+\tilde cn'\prob[\degaR_{n'}\not\in\mclb[][\circ]]
\le c'n'\exp\left(-\frac{c_1r^2n'}{1+r}\right)
\end{align*}
with $c'=\tilde cc_2(2+k)/k$, so $\expe[\bmone\{\degaR^+\not\in\mclb[][\circ]\}|\overline{\bm\Phi}|]=o(1/n)$.
With $c$ from Observation \ref{obs_gtiid}\ref{obs_gtiid_prob}, independence and $c_{\mrmr}^2>2/c_1$ we have
\begin{align*}
\expe[\bmone\{\sigmaR^+\not\in\mclb[+][\Gamma]\}|\overline{\bm\Phi}|]
\le\tilde c\left(\frac{2\degabu}{k}+1\right)c_2n'e^{-c_1r^2n'}
=o(n^{-1}),
\end{align*}
and $\expe[\bmone\{\sigmaR^-\not\in\mclb[-][\Gamma]\}|\overline{\bm\Phi}|]=o(n^{-1})$ follows analogously.
\end{proof}
The following result further restricts the very typical event in Lemma \ref{lemma_assvar_typ} to the typical event that no variables are pinned in the second sweep.
\begin{lemma}\label{lemma_assvar_pin}
We have
\begin{align*}
\Phi_{\mrmv}(n)=\expe[\bmone\{\sigmaR^+\in\mclb[+][\Gamma],\sigmaR^-\in\mclb[-][\Gamma],\degaR^+\in\mclb[][\circ],\setPR^+=\setPR_\cap\}\bm\Phi]+\mclo(n^{-\rho}).
\end{align*}
\end{lemma}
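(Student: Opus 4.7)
The plan is to mirror the proof of Lemma \ref{lemma_assfactor_pin}: by Lemma \ref{lemma_assvar_typ} it suffices to show $\expe[\bmone\mcle|\bm\Phi|]=\mclo(n^{-\rho})$, where
\begin{align*}
\mcle=\{\sigmaR^+\in\mclb[+][\Gamma],\sigmaR^-\in\mclb[-][\Gamma],\degaR^+\in\mclb[][\circ],\setPR^+\neq\setPR_\cap\}.
\end{align*}

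The main obstacle is that $\GR^+(\sigmaR^+)$ lives on $n+1$ variables while $\GR_\cap$ lives on $n$, so Observation \ref{obs_phi_lipschitz} does not apply directly to the pair. I would circumvent this by introducing the intermediate graph $\tilde G$ on $n+1$ variables, obtained from $\GR_\cap$ by attaching only the external field $\gamma^*$ to the new variable $n+1$ (and no standard factor, pin, or interpolator). Since $\sum_\tau\gamma^*(\tau)=1$ and no factor of $\tilde G$ other than the external field touches $n+1$, summing $\psiG[,\tilde G]$ over the value at $n+1$ gives $\ZG(\tilde G)=\ZG(\GR_\cap)$, so
\begin{align*}
\bm\Phi=(n+1)\bigl(\phiG(\GR^+(\sigmaR^+))-\phiG(\tilde G)\bigr).
\end{align*}
Both graphs now sit on $n+1$ variables, and Observation \ref{obs_phi_lipschitz} yields $|\bm\Phi|\le\tilde c\,\distG(\tilde G,\GR^+(\sigmaR^+))$ for some $\tilde c_\mfkg\in\reals_{>0}$.

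Unfolding $\distG$ in the notation of Section \ref{phi_lipschitz} with $\setPR_\Delta=\setPR^+\setminus\setPR_\cap$ gives $\mclv[1][\darr]=[n+1]\setminus\setPR^+$, $\mclv[2][\darr]=\setPR_\cap$, $D=0$, $\tilde D=\mR^+_\Delta(\sigmaR^+)$, and $D_\cap=|\bmcla_\neq|$ with $\bmcla_\neq=\{a\in[\mR_\cap(\sigmaR^+)]:v_{\cap,a}([k])\cap\setPR_\Delta\neq\emptyset\}$; since the factors of $\wR_\cap$ wire only into $[n]$, one has $|\bmcla_\neq|\le\sum_{i\in\setPR_\Delta\cap[n]}\degF[,\wR^+](i)$ and $\distG=\mR^+_\Delta(\sigmaR^+)+2|\bmcla_\neq|+|\setPR_\Delta|$. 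On $\mcle$ we have $\degaR^+\le\degabu+r(n)$, so Corollary \ref{cor_degbounds}\ref{cor_degbounds_m} after conditioning reduces the task to bounding
\begin{align*}
E_1=\expe[\bmone\{\sigmaR^+\in\mclb[+][\Gamma],\setPR^+\neq\setPR_\cap\}\me^+_\Delta(\sigmaR^+)],\quad E_2=\expe[|\setPR_\Delta|].
\end{align*}

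For $E_1$, Lemma \ref{lemma_ass_factor_count_asymptotics} bounds $\me^+_\Delta(\sigmaR^+)$ by a constant uniformly on $\sigmaR^+\in\mclb[+][\Gamma]$, so (as in Lemma \ref{lemma_ass_base_graph}) $E_1=\mclo(\prob[\setPR^+\neq\setPR_\cap])=\mclo(\ThetaP/n)$ via Observation \ref{obs_pin_basic}. For $E_2$, the explicit success probabilities $p_\cap,p^+_\Delta$ from Section \ref{coupling_pins}, together with the coupling of $\thetaPR^\pm$ in Lemma \ref{lemma_ass_couptheta} (which implies $\expe[\thetaPR^+-\thetaPR^-]=(\ThetaP_{n+1}-\ThetaP_n)/2$), give
\begin{align*}
\expe[|\setPR_\Delta|]=\mclo\bigl(\ThetaP_{n+1}-\ThetaP_n+\ThetaP/n\bigr)=\mclo(n^{-\rho}),
\end{align*}
since $\ThetaP=n^{1-\rho}$ and so $\ThetaP_{n+1}-\ThetaP_n=\mclo(n^{-\rho})$ by the mean value theorem. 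Combining these bounds with $\ThetaP/n=n^{-\rho}$ yields the claim. The only genuinely new ingredient compared to Lemma \ref{lemma_assfactor_pin} is the partition-function identity $\ZG(\tilde G)=\ZG(\GR_\cap)$, which reduces the mismatched-variable-count issue to the standard framework of Section \ref{phi_lipschitz}.
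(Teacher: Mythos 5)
Your proof is correct, and it follows the same route as the paper's, with one genuine presentational improvement. The paper states that Observation \ref{obs_phi_lipschitz} ``cannot'' be used because the two graphs live on different numbers of variables, and instead replays the $\ZG$-ratio bounds from the \emph{proof} of that observation together with ``normalization of the external field for the last variable.'' You make that last phrase precise and structural by introducing the intermediate graph $\tilde G$ on $n+1$ variables (the base graph with an extra isolated external-field variable), observing $\ZG(\tilde G)=\ZG(\GR_\cap)$, and then applying Observation \ref{obs_phi_lipschitz} directly to the pair $(\tilde G,\GR^+(\sigmaR^+))$. This is exactly the same estimate the paper obtains, but packaged so that the black-box Lipschitz bound applies; it avoids reopening the proof of Observation \ref{obs_phi_lipschitz}. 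The remainder — unfolding $\distG$ with $\setPR_\Delta=\setPR^+\setminus\setPR_\cap$, bounding $|\bmcla_\neq|$ by a degree sum over the newly pinned variables in $[n]$, invoking Corollary \ref{cor_degbounds}\ref{cor_degbounds_m} and Lemma \ref{lemma_ass_factor_count_asymptotics} after conditioning, and finally controlling $E_1$ via $\prob[\setPR^+\neq\setPR_\cap]$ and $E_2=\expe[|\setPR_\Delta|]$ via the coupling of $\thetaPR^\pm$ and $\ThetaP_{n+1}-\ThetaP_n\le n^{-\rho}$ — matches the paper step for step. Two small points worth noting, neither of which is a gap: (i) the $\distG$ formula presupposes a positional matching of the common factors between $\tilde G$ and $\GR^+$, so you are implicitly choosing the relabeling bijection that aligns $\bmcla_\cap$ with $[\mR_\cap]$ (which is legitimate since $\phiG$ is label-invariant, but the paper makes this explicit and you should too); (ii) you bound $|\bmcla_\neq|$ via degrees in $\wR^+$ whereas the paper uses degrees in $\wR_\cap$ — both are valid upper bounds and Corollary \ref{cor_degbounds} applies to either, so the discrepancy is harmless.
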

\begin{proof}
With Lemma \ref{lemma_assvar_typ} it is sufficient to consider
$\expe[\bmone\mcle|\bm\Phi|]$, where
\begin{align*}
\mcle=\left\{\sigmaR^+\in\mclb[+][\Gamma],\sigmaR^-\in\mclb[-][\Gamma],\degaR^+\in\mclb[][\circ],\setPR^+\neq\setPR_\cap\right\}.
\end{align*}
As opposed to the proof of Lemma \ref{lemma_assfactor_pin} we cannot use Observation \ref{obs_phi_lipschitz} since now the numbers of variables do not coincide.
Let $\bmcla_\cap\dotcup\bmcla_+=[\mR^+]$ be the partition of the standard factors of $\GR^+(\sigmaR^+)$ such that $\bmcla_\cap$ is the relabeling of the standard factors $[\mR_\cap]$ in $\GR_\cap$. Further, let $\bmclv^\darr=\setPR^+\setminus\setPR_\cap$ be the additional pins and $\bmcla^\darr=\{a\in\bmcla_\cap:\vR^+_a([k])\cap\bmclv^\darr\neq\emptyset\}$ with $\vR^+$ being the neighborhoods in $\GR^+(\sigmaR^+)$.
The bounds from the proof of Observation \ref{obs_phi_lipschitz} and normalization of the external field for the last variable give
\begin{align*}
\ZG(\GR^+(\sigmaR^+))\le\psibu^{|\bmcla_+|}\ZG(\GR_\cap),\,
\ZG(\GR^+(\sigmaR^+))\ge\psibl^{|\bmcla_+|+2|\bmcla^\darr|+|\bmclv^\darr|}\ZG(\GR_\cap).
\end{align*}
This shows that $|\bm\Phi|\le\ln(\psibu)(\mR^+_\Delta+2|\bmcla^\darr|+|\bmclv^\darr|)$.
Bounding $|\bmcla^\darr|$ by the sum of the degrees of $i\in\bmclv^\darr\cap[n]$ in $\GR_\cap$  and taking the conditional expectation as in the proof of Lemma \ref{lemma_assfactor_pin} gives
the bound $\ln(\psibu)(\mR^+_\Delta+2c\frac{k\mR_\cap}{n}|\bmclv^\darr\cap[n]|+|\bmclv^\darr|)$ with $c$ from Corollary \ref{cor_degbounds}\ref{cor_degbounds_m}.
With $c'$ from Lemma \ref{lemma_ass_factor_count_asymptotics} we obtain the bound
$\ln(\psibu)(c'+2c\frac{n+1}{n}(\degae+r)|\bmclv^\darr\cap[n]|+|\bmclv^\darr|)$ on $\mcle$.
Hence, we obtain $c_\mfkg\in\reals_{>0}$ such that
\begin{align*}
\expe[\bmone\mcle|\bm\Phi|]\le c\prob[\setPR^+\neq\setPR]+c\expe[\bmone\{\setPR^+\neq\setPR\}|\setPR^+\setminus\setPR|]\le 2c\expe[|\setPR^+\setminus\setPR|].
\end{align*}
As in the proof of Lemma \ref{lemma_assfactor_pin}, we trace this back to the indicators $\indPR^+_\Delta$ for the variables $[n]$, and pinning probability $\thetaPR_{n+1}/(n+1)$ for $i=n+1$. This yields
\begin{align*}
\expe[\bmone\mcle|\bm\Phi|]
\le 2c\expe\left[np^+_\Delta(\thetaPR^-,\thetaPR^+)+\frac{\thetaPR^+}{n+1}\right]
\le c\left(\frac{n(\ThetaP_{n+1}-\ThetaP_n)}{n+1-\ThetaP_n}+\frac{\ThetaP_{n+1}}{n+1}\right).
\end{align*}
With $\ThetaP(n)=n^{1-\rho}$ we have $\ThetaP_{n+1}-\ThetaP_n=(1-\rho)\int_n^{n+1}t^{-\rho}\mrmd t\le n^{-\rho}$, and hence the assertion follows since $\expe[\bmone\mcle|\bm\Phi|]=\mclo(n^{-\rho})$.
\end{proof}
\subsubsection{Normalization Step for the Variable Contribution}\label{assvar_normalization}
Now, we simplify the underlying law using the typical behavior.
As before, we replace $\GR_\cap$ by $\GTSM(\sigma^-)=\GTSM[\mR,\setPR](\sigma^-)$,
and $\mR^+_\Delta(\sigmaR^+)$ by a Poisson variable $\dR\dequal\Po(\degae)$ reflecting the degree of $i=n+1$.
Recalling Lemma \ref{lemma_ass_factor_count_asymptotics}, we obtain the additional wires-weight pairs given $\sigmaR^+$ from Observation \ref{obs_known}, i.e.~we consider
\begin{align*}
(\GTSM(\sigma^-),\dR,\wTSM(\sigma^+))\dequal\GTSM(\sigma^-)\otimes\dR\otimes\wTSM[+\circ,n+1,i,\sigma^+][\otimes\ints_{>0}].
\end{align*}
Since we have an additional variable, we have to adjust the definition
\begin{align*}
\psiWgG[,G](v,\psi)=\expe\left[\prod_{a\in[d]}\psi_a(\sigmaR_{v(a)})\right],\,
\sigmaR\dequal\lawG[,G]\otimes\gamma^*,\,
(v,\psi)\in([n+1]^k\times\domPsi)^d,
\end{align*}
from Equation (\ref{equ_psiWgG}), where $G$ is still a decorated graph on $n$ variables.
\begin{lemma}\label{lemma_assvar_normalized}
We have
$\Phi_{\mrmv}(n)=\expe[\bmone\{\sigmaR^-\in\mclb[-][\Gamma]\}\ln(\psiWgG[,\GTSM(\sigmaR^-)](\wTSM[\sigmaR^+,[\dR]]))]+\mclo(n^{-\rho})$.
\end{lemma}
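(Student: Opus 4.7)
The plan is to mirror the proof of Lemma \ref{lemma_assfactor_normalized}, starting from the restriction to the typical event provided by Lemma \ref{lemma_assvar_pin} and successively normalizing the two $\sigmaR^+$-dependent ingredients of the ratio $\ZG(\GR^+(\sigmaR^+))/\ZG(\GR_\cap)$: the count $\mR^+_{\Delta+}(\sigmaR^+)$ of new factors attached to $i=n+1$ and the base graph $\GR_\cap$. The main conceptual point is that on the event from Lemma \ref{lemma_assvar_pin}, using Lemma \ref{lemma_ass_factor_count_asymptotics} for $n$ large one has $\me^+_{\Delta-}(\sigmaR^+)=0$ and hence $\mR^+_{\Delta-}(\sigmaR^+)=0$ almost surely, so the only difference between $\GR^+(\sigmaR^+)$ and $\GR_\cap$ is the attachment of the new variable $i=n+1$ (carrying its external field $\gamma^*$) together with $\mR^+_{\Delta+}(\sigmaR^+)$ standard factors drawn from $\wTSa[,+\circ,n+1,i,\sigmaR^+]$.

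With this simplification, writing out $\ZG$ and using the fact that the new variable $i$ appears in $\GR_\cap$ only through the extension by the external field $\gamma^*$, the ratio becomes
\begin{align*}
\frac{\ZG(\GR^+(\sigmaR^+))}{\ZG(\GR_\cap)}
=\sum_{\sigma'}\lawG[,\GR_\cap](\sigma')\gamma^*(\sigma'_i)\prod_{a\in[\mR^+_{\Delta+}]}\psi_a(\sigma'_{v_a})
=\psiWgG[,\GR_\cap]\bigl(\wR^+_{\Delta+}(\sigmaR^+)\bigr),
\end{align*}
using precisely the $[n+1]$-extended definition stated above the lemma. This is the exact analogue of Equation~(\ref{equ_psiWgG}) in Section~\ref{int_interpolator}, with the new wire distribution $\wTSa[,+\circ]$ (forcing the appearance of $i$) and the external field on $i$ taking the role of the Gibbs spin on the remaining $n$ variables. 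Since $\wR^+_{\Delta+}(\sigmaR^+)$ is conditionally i.i.d.\ from $\wTSa[,+\circ,n+1,i,\sigmaR^+]$ given $\sigmaR^+$ and independent of $\GR_\cap$, this step introduces no dependence bookkeeping beyond what is already captured by $\psiWgG$.

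Next, we replace $\mR^+_{\Delta+}(\sigmaR^+)\sim\Po(\me^+_{\Delta+}(\sigmaR^+))$ by $\dR\sim\Po(\degae)$. On $\sigmaR^+\in\mclb[+][\Gamma]$, Lemma~\ref{lemma_ass_factor_count_asymptotics} yields $|\me^+_{\Delta+}(\sigmaR^+)-\degae|\le cr(n)$, so Observation~\ref{obs_poisson}\ref{obs_poisson_dkl} together with Pinsker's inequality and the coupling lemma (exactly as used in Lemma~\ref{lemma_ass_base_graph}) give a coupling under which $\prob[\mR^+_{\Delta+}(\sigmaR^+)\neq\dR]=\mclo(r(n))$, conditionally on $\sigmaR^+$. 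Since $\psiWgG[,G](w)\in[\psibl^d,\psibu^d]$ uniformly for $d$ factors and the extra Poisson tails of $\dR$ and $\mR^+_{\Delta+}(\sigmaR^+)$ contribute at most $\mclo(e^{-\tilde c\ln(n)^2})$ as in Lemma~\ref{lemma_assfactor_normalized}, this substitution costs $\mclo(r(n))$. Then Lemma~\ref{lemma_ass_base_graph} couples $(\sigmaR^-,\GR_\cap)$ with $(\sigmaIID,\GTSM[\mR,\setPR](\sigmaIID))$ at a discrepancy of probability $\mclo(n^{-\rho})$, and on the coincidence event the integrand agrees exactly; combined with the uniform bound on $|\ln\psiWgG|$ after conditioning on $\dR$, this replaces $\GR_\cap$ by $\GTSM(\sigmaR^-)$ at cost $\mclo(n^{-\rho})$.

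Finally, the remaining typical-event indicators $\{\sigmaR^+\in\mclb[+][\Gamma],\degaR^+\in\mclb[][\circ],\setPR^+=\setPR_\cap\}$ may be dropped by the same argument as in the proof of Lemma~\ref{lemma_assfactor_normalized}: conditioning on $\dR$ makes the integrand uniformly bounded by $\ln(\psibu)\dR$, and Observation~\ref{obs_gtiid}\ref{obs_gtiid_prob}, Corollary~\ref{cor_dega}, and Observation~\ref{obs_pin_basic} bound the resulting expectations by $\mclo(n^{-\rho})$, since $r(n)^{-1}n^{-\rho}=\omega(1)$ with our choice $\rho<1/4$. The only subtle step is controlling the additional bookkeeping arising from the extra variable $i=n+1$: unlike the factor contribution, both sides of the ratio have different variable supports, so one must carefully verify that the $\gamma^*$ external field on $i$ is absorbed cleanly into $\psiWgG$ via the extended law $\lawG[,G]\otimes\gamma^*$ prescribed in the statement; this is precisely why $\psiWgG$ was redefined just above the lemma, and is what makes the argument go through without modifying the base graph's Gibbs measure.
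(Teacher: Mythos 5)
Your proposal follows essentially the same route as the paper: identify that on the typical event from Lemma~\ref{lemma_assvar_pin} (and using $\me^+_{\Delta-}(\sigmaR^+)=0$ for $n$ large, from Lemma~\ref{lemma_ass_factor_count_asymptotics}) the ratio $\ZG(\GR^+(\sigmaR^+))/\ZG(\GR_\cap)$ collapses to the extended $\psiWgG$ over the new variable's factors; replace $\mR^+_{\Delta+}(\sigmaR^+)$ by $\dR\dequal\Po(\degae)$; swap in $\GTSM(\sigmaR^-)$ for $\GR_\cap$ via Lemma~\ref{lemma_ass_base_graph}; and finally drop the superfluous indicators. This matches the paper step for step.

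The one place where you deviate is the Poisson count substitution. The paper uses the \emph{additive} coupling from Observation~\ref{obs_poisson}\ref{obs_poisson_bin}: write the larger count as the smaller plus an independent $\bm\delta\dequal\Po(|\me^+_{\Delta+}(\sigmaR^+)-\degae|)$, so that the nested factor structure gives a pointwise bound $|\ln\psiWgG(\cdot_{[\dR^+]})-\ln\psiWgG(\cdot_{[\dR]})|\le\bm\delta\ln(\psibu)$, and then $\expe[\bm\delta]=\mclo(r(n))$ by Lemma~\ref{lemma_ass_factor_count_asymptotics} closes the argument. You instead propose a total-variation coupling via Observation~\ref{obs_poisson}\ref{obs_poisson_dkl} and Pinsker's inequality, so that $\prob[\mR^+_{\Delta+}\neq\dR]=\mclo(r(n))$. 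That bound is correct, but it does not directly give the claimed error: on the disagreement event the integrand $\ln\psiWgG$ is only bounded by $\ln(\psibu)\max(\dR,\mR^+_{\Delta+})$, which is unbounded, so a pure TV bound is not enough. One has to either truncate the factor count (as you vaguely gesture at) or apply Cauchy--Schwarz together with $\expe[\dR^2]=\mclo(1)$, which yields $\mclo(\sqrt{r(n)})$ rather than $\mclo(r(n))$. Since $\rho<1/4$ and $r(n)=\Theta(\sqrt{\ln n/n})$, $\sqrt{r(n)}=o(n^{-\rho})$ and the final statement still holds, but the step as written glosses over this, and the reference to Lemma~\ref{lemma_assfactor_normalized} for the tail estimate is misattributed (that lemma itself uses the additive coupling, not a TV bound). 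Using the paper's additive coupling is both cleaner and gives the sharper $\mclo(r(n))$ error directly.
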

\begin{proof}
Let $\dR$ and $\wTSM(\sigma^+)$ be independent of anything else and
\begin{align*}
\mcle=\left\{\sigmaR^+\in\mclb[+][\Gamma],\sigmaR^-\in\mclb[-][\Gamma],\degaR^+\in\mclb[][\circ],\setPR^+=\setPR_\cap\right\}.
\end{align*}
For $n\ge n_\circ$ with $n_\circ$ from Lemma \ref{lemma_ass_factor_count_asymptotics} and on $\mcle$, as explained in Section \ref{coupling_pins} and conditional to $\sigmaR^+$, $\GR_\cap$, $\setPR^+$ and $\dR^+=\mR^+_{\Delta+}(\sigmaR^+)$, we obtain $\GR^+(\sigmaR^+)$ from $\GR_\cap$ by adding the variable $i=n+1$ with external field $\gamma^*$ and $\dR^+$ standard factors with 
wires-weight pairs from $(\vR^*,\psiR^*)=\wR^*=\wR^*_{\sigmaR^+,[\dR^+]}$. Hence, on $\mcle$ we have
\begin{align*}
\bm r=\frac{\ZG(\GR^+(\sigmaR^+))}{\GR_\cap}
\dequal\sum_{\sigma^+}\frac{\psiG[,\GR_\cap](\sigma^+_{[n]})}{\ZG(\GR_\cap)}\gamma^*(\sigma^+_i)\prod_{a\in[\dR^+]}\psiR^*_a(\sigma^+_{\vR^*(a)})
=\psiWgG[,\GR_\cap](\wTSM).
\end{align*}
Next, we couple $\dR^+$ and $\dR$ using $\bm\delta(\sigmaR^+)\dequal\Po(|\me^+_{\Delta+}(\sigmaR^+)-\degae|)$ as in the proof of Lemma \ref{lemma_assfactor_normalized} to obtain
\begin{align*}
\left|\ln\left(\psiWgG[,\GR_\cap]\left(\wTSM[\sigmaR^+,[\dR^+]]\right)\right)-\ln\left(\psiWgG[,\GR_\cap]\left(\wTSM[\sigmaR^+,[\dR]]\right)\right)\right|
\le\bm\delta(\bm\sigma^+)\ln(\psibu).
\end{align*}
With Lemma \ref{lemma_ass_factor_count_asymptotics} we can bound $\expe[\bm\delta(\sigma^+)]$ on $\mcle$, so with Lemma \ref{lemma_assvar_pin} we have
\begin{align*}
\Phi_{\mrmv}(n)=\expe\left[\bmone\mcle\ln\left(\psiWgG[,\GR_\cap]\left(\wTSM[\sigmaR^+,[\dR]]\right)\right)\right]+\mclo\left(r(n)+n^{-\rho}\right),
\end{align*}
since the expectations can be bounded by $c'\ln(\psibu)$ with $c'$ from Lemma \ref{lemma_ass_factor_count_asymptotics} and $\ln(\psibu)\degae$ respectively for $n\le n_\circ$.
This also shows that reducing $\mcle$ to $\{\sigmaR^-\in\mclb[-][\Gamma]\}$ causes an error of $\mclo(n^{-\rho})$, and that with the coupling from Lemma \ref{lemma_ass_base_graph} we get
\begin{align*}
\Phi_{\mrmv}(n)=\expe\left[\bmone\{\sigmaR^-\in\mclb[-][\Gamma]\}\ln\left(\psiWgG[,\GTSM(\sigmaR^-)]\left(\wTSM[\sigmaR^+,[\dR]]\right)\right)\right]+\mclo\left(n^{-\rho}\right).
\end{align*}
\end{proof}
In a second normalization step we simplify $\wTSM[\sigmaR^+,[\dR]]$ by establishing that $i=n+1$ typically does not wire more than once to the same factor.
As seen in Section \ref{assfactor_marginaldist}, it is reasonable to explicitly control the factor assignments.
With $\sigma^+\in[q]^{n+1}$, $\sigma^-=\sigma^+_{[n]}$, $\gamma^-=\gammaN[,\sigma^-]$ and $\sigma^\circ=\sigma^+_i$ let $(\tauR^+_{\circ,n,\sigma^+},\hR^+_{\circ,n,\sigma^+})\in[q]^k\times[k]$ be given by
\begin{align*}
\prob[\tauR^+_{\circ}=\tau,\hR^+_{\circ}=h]&=\frac{W(\tau,h)}{\ZFa^+(\sigma^\circ,\gamma^-)},\,\ZFa^+(\sigma^\circ,\gamma^-)=\sum_{\tau,h}W(\tau,h),\\
W(\tau,h)&=\bmone\{\tau_h=\sigma^\circ\}\frac{1}{k}\psiae(\tau)\prod_{h'\in[k]\setminus\{h\}}\gamma^-(\tau_{h'}).
\end{align*}
So, with
$\mclt[\sigma^+][+]=\{(\tau,h)\in[q]^k\times[k]:\tau_h=\sigma^\circ,\tau([k]\setminus\{h\})\setle\sigma^-([n])\}$
we have $(\tauR^+_{\circ},\hR^+_\circ)\in\mclt[][+]$ almost surely.
Further, for $\sigma^\circ\in\sigma^-([n])$ we have $\ZFa^+(\sigma^\circ,\gamma^-)=\ZFa(\gamma^-)\mu|_*(\sigma^\circ)/\gamma^-(\sigma^\circ)$ with $\mu=\lawYgC[,\gamma^-]$.
For $(\tau,h)\in\mclt[][+]$ let $(\vR^+_{\circ,\sigma^-,\tau,h},\psiR^+_{\circ,\tau})\dequal\unif(\mclv[][+])\otimes\psiTSYa[,\tau]$ with $\psiTSYa$ from Section \ref{GTSYM} and $\mclv[\sigma^-,\tau,h][+]=\{v\in[n+1]^k:v_h=n+1,\forall h'\in[k]\setminus\{h\}\,v(h')\in\sigma^{--1}(\tau_{h'})\}$.

For $d\in\ints_{\ge 0}$ and $(\tau,h)\in\mclt[\sigma^+][d]$
let $\wR^+_{\sigma^-,d,\tau,h}\dequal\bigotimes_{a\in[d]}(\vR^+_{\circ,\sigma^-,\tau(a),h(a)},\psiR^+_{\circ,\tau(a)})$ and
\begin{align}\label{equ_assvar_jointtsm}
\left(\GTSM[\mR,\setPR](\sigma^-),\dR,\tauR^+_{\sigma^+},\hR^+_{\sigma^+}\right)
\dequal\GTSM[\mR,\setPR](\sigma^-)\otimes\dR\otimes(\tauR^+_{\circ,\sigma^+},\hR^+_{\circ,\sigma^+})^{\otimes\ints_{>0}}.
\end{align}
Finally, let $\GTSM(\sigma^-)=\GTSM[\mR,\setPR](\sigma^-)$, $\tauR^+=\tauR^+_{\sigmaR^+,[\dR]}$, $\hR^+=\hR^+_{\sigmaR^+,[\dR]}$ and $\wR^+=\wR^+_{\sigmaR^-,\dR,\tauR^+,\hR^+}$.
\begin{lemma}\label{lemma_assvar_normalizedoneedge}
We have
$\Phi_{\mrmv}(n)=\expe[\bmone\{\sigmaR^-\in\mclb[-][\Gamma]\}\ln(\psiWgG[,\GTSM(\sigmaR^-)](\wR^+))]+\mclo(n^{-\rho})$.
\end{lemma}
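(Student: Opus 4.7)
The plan is to couple the $\dR$ i.i.d.\ copies $\wTSM[\sigmaR^+, [\dR]]$ of $\wR^*_{+\circ,\sigmaR^+}$ (from Section \ref{known_neighborhoods}), which appear in Lemma \ref{lemma_assvar_normalized}, with the ``no-multi-edge'' pairs $\wR^+$ defined in the statement. On the typical event that each of the $\dR$ factors covers the new variable $i = n+1$ exactly once, the two constructions match in distribution; on the atypical event, the uniform bound $|\bm\Phi| \leq \dR\ln(\psibu)$ combined with $\dR$ being Poisson with bounded mean will absorb the error.

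First I identify, for a single pair $(\vR,\psiR) \dequal \wR^*_{+\circ,\sigma^+}$, the conditional law given $\{|\vR^{-1}(i)| = 1\}$. Parameterizing such a $v \in \mcld[+]$ by the unique position $h$ with $v_h = i$ and the residual $v' \in [n]^{k-1}$, applying the $(\wR^*_{+\circ}, \wR_{+\circ})$-derivative from Section \ref{known_neighborhoods}, and marginalizing $v'$ using $|\sigma^{--1}(\tau_{h'})| = n\gamma^-(\tau_{h'})$, I find that $(h, \tau) = (h, \sigma^+_v)$ has law proportional to $\bmone\{\tau_h = \sigma^\circ\}\psiae(\tau)\prod_{h' \neq h}\gamma^-(\tau_{h'})$, that is, to $W(\tau, h)$ up to a $(\tau, h)$-independent constant; conditional on $(h, \tau)$, $v$ is uniform on $\mclv[\sigma^-, \tau, h][+]$ and the $(\psiR, \lawpsi)$-derivative is $\psi \mapsto \psi(\tau)/\psiae(\tau)$. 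This is exactly the law of $(\hR^+_\circ, \tauR^+_\circ, \vR^+_\circ, \psiR^+_\circ)$.

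For the bad-event bound, the $(\wR^*_{+\circ}, \wR_{+\circ})$-derivative is at most $\psibu^2$ by Observation \ref{obs_fad}\ref{obs_fad_bounds}, while under $\wR_{+\circ}$ the event $\{|\vR_{+\circ}^{-1}(i)| \geq 2\}$ has probability at most $\binom{k}{2}(n+1)^{k-2}/((n+1)^k - n^k) = \mclo(1/n)$, so the same bound holds under $\wR^*_{+\circ,\sigma^+}$. A union bound over the $\dR$ factors together with $\dR \dequal \Po(\degae)$, $\degae \leq \degabu$, gives that the probability of any multi-edge is $\mclo(1/n) = o(n^{-\rho})$; controlling $\expe[\dR\bmone\{\text{bad}\}]$ via the moment generating function of $\dR$ and the uniform bound $|\bm\Phi| \leq \dR\ln(\psibu)$ then limits the bad-event contribution to $\mclo(1/n)$. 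Combining this with Lemma \ref{lemma_assvar_normalized} and the coupling on the good event yields the claim. The main technical step is the distributional identification in the first paragraph; the bad-event control is routine.
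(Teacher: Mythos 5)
Your proposal is correct, and its crucial first step — identifying the conditional law of a single $\wR^*_{+\circ,\sigma^+}$ given $|\vR^{-1}(i)|=1$ with the law of $(\hR^+_\circ,\tauR^+_\circ,\vR^+_\circ,\psiR^+_\circ)$ via the parameterization of $\mclv[1]$ by $(h,v')$, marginalization of $v'$ over $\prod_{h'\neq h}\sigma^{--1}(\tau_{h'})$, and recognition of $W(\tau,h)$ — is the same identification the paper performs (at the end of its proof, where it shows $(\vTSM[\circ,1],\psiTSM[\circ,1])\dequal(\vR^+_\circ,\psiR^+_\circ)$ by matching normalization constants). Where you diverge is in the error control. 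The paper proceeds in two stages: first it restricts to the good event $\mcle$ that all $\dR$ copies land in $\mclv[1]$, incurring a cost $\ln(\psibu)\expe[\dR^2]P=\mclo(1/n)$ by a union bound; then it re-expresses $\expe[\bmone\mcle[][\circ]\bmone\mcle\bm\Phi]$ via the Radon–Nikodym derivative $r_1 r_\mrmv$ with respect to $\wR$, which introduces a residual normalization factor $R(\sigmaR^+)^{-\dR}$ (precisely the conditional good-event probability $(1-P)^{\dR}$) that must be removed with a separate moment-generating-function estimate. Your coupling collapses these two stages into one: couple each $\wR^*_{+\circ}$ with its $\mclv[1]$-conditioned version at total-variation distance $P=\mclo(1/n)$, take the product coupling over the $\dR$ copies, and bound the resulting difference by $2\ln(\psibu)\expe[\dR\bmone\{\exists a:\text{differ}\}]\le 2\ln(\psibu)\expe[\dR^2]P=\mclo(1/n)$ using the uniform bound $|\bm\Phi|\le\dR\ln(\psibu)$. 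This is a bit cleaner; you do not actually need the moment-generating function here, since $\expe[\dR^2]$ already suffices — that mention in your last paragraph is harmless but unnecessary. Both routes yield the required $\mclo(n^{-\rho})$ once combined with Lemma \ref{lemma_assvar_normalized}.
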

\begin{proof}
Let $n_+=n+1$ and $i=n+1$. Further, let $\mclv[0]=\mclv[1]\dotcup\mclv[2]$ with
\begin{align*}
\mclv[1]=\{v\in[n_+]^k:|v^{-1}(i)|=1\},\,
\mclv[2]=\{v\in[n_+]^k:|v^{-1}(i)|>1\}.
\end{align*}
For $a\in\{0,1\}$ let $(\vR_{\circ,a},\psiR_{\circ,a})\dequal\unif(\mclv[a])\otimes\lawpsi$ and let $(\vR^*_{\circ,a},\psiR^*_{\circ,a})$ be given by the Radon-Nikodym derivative $r_a(v,\psi)=\psi(\sigma^+_v)/z_a(\sigma^+)$ with $z_a(\sigma^+)=\expe[\psiR_{\circ,a}(\sigma^+_{\vR_{\circ,a}})]$.
With
$(\vR,\psiR)=(\vR_{\circ,0},\psiR_{\circ,0})$,
$P_\circ=\prob[\vR\in\mclv[2]]$,
$P=\prob[\vR^*_{\circ,0}\in\mclv[2]]$,
$\vR_\mrmu\dequal\unif([n_+]^k)$ and using
$r_a\in[\psibl^2,\psibu^2]$ we have
\begin{align*}
P
\le\psibu^2P_\circ
=\frac{\psibu^2\prob[\vR_\mrmu\in\mclv[2]]}{\prob[\vR_\mrmu\in\mclv[0]]}
\le\frac{\psibu^2\prob[\vR_\mrmu\in\mclv[2]]}{\prob[\vR_\mrmu\in\mclv[1]]}
=\frac{\psibu^2(n_+^k-n^k-kn^{k-1})}{kn^{k-1}}
\le\frac{c}{n},\,
c=\frac{\psibu^2 2^k}{k},
\end{align*}
similar to the proof of Observation \ref{obs_degsucprob}\ref{obs_degsucprob_bounds}.
Further, since the $(\vR_{\circ,1},\vR)$-derivative is $r_\mrmv(v)=\bmone\{v\in\mclv[1]\}/\prob[\vR\in\mclv[1]]$, the Radon-Nikodym derivative of $(\vR^*_{\circ,1},\psiR^*_{\circ,1})$ with respect to $(\vR,\psiR)$ is $r(v,\psi)=r_1(v,\psi)r_\mrmv(v)=\bmone\{v\in\mclv[1]\}\psi(\sigma^+_v)/z^\circ_1(\sigma^+)$ with $z^\circ_1(\sigma^+)=\expe[\bmone\{\vR\in\mclv[1]\}\psiR(\sigma^+_{\vR})]$.
Clearly, we have $z^\circ_1(\sigma^+)\le z_0(\sigma^+)$, and on the other hand
\begin{align*}
R(\sigma^+)=\frac{z_0(\sigma^+)}{z^\circ_1(\sigma^+)}=
1+\frac{\expe[\bmone\{\vR\in\mclv[2]\}\psiR_0(\sigma^+_{\vR})]}{z^\circ_1(\sigma^+)}
\le 1+\frac{\psibu P_\circ}{\psibl\prob[\vR\in\mclv[1]]}
=1+\frac{\psibu^2\prob[\vR_\mrmu\in\mclv[2]]}{\prob[\vR_\mrmu\in\mclv[1]]},
\end{align*}
so the bound for $P$ from above yields
$1\le R(\sigma^+)\le 1+\frac{c}{n}$.

Now, we turn back to $\Phi_\mrmv$. With Lemma \ref{lemma_assvar_normalized} we have
$\Phi_\mrmv(n)=\expe[\bmone\mcle[][\circ]\bm\Phi]+\mclo(n^{-\delta})$, where
\begin{align*}
\bm\Phi=f_{\GR}(\wTSM),\,
f_G(w)=\ln(\psiWgG[,G](w)),\,
\GR=\GTSM(\sigmaR^-),\,
\wTSM=\wTSM[\sigmaR^+,[\dR]].
\end{align*}
Notice that $|\bm\Phi|\le\dR\ln(\psibu)$, and that $\wTSM$ given $\sigmaR^+$, $\dR$ are $\dR$ i.i.d.~copies of $(\vR^*_{\circ,0},\psiR^*_{\circ,0})$ from above.
Hence, the bound on $\bm\Phi$ with the union bound yield
$\Delta=|\expe[\bmone\mcle[][\circ]\bmone\lnot\mcle\bm\Phi]|
\le\ln(\psibu)\expe[\dR^2]P$.
Recall that $\dR\dequal\Po(\degae)$, hence $\expe[\dR^2]=\degae+\degae^2\le\degabu(\degabu+1)$, and that $P\le c/n$, so $\Delta=\mclo(1/n)$ and
$\Phi_\mrmv(n)=\Phi^\circ_\mrmv(n)+\mclo(n^{-\delta})$
with $\Phi^\circ_\mrmv(n)=\expe[\bmone\mcle[][\circ]\bmone\mcle\bm\Phi]$.
Now, let $\wR=(\vR,\psiR)\dequal(\vR_{\circ,0},\psiR_{\circ,0})^{\otimes\ints_{>0}}$ and
$\wTSM[\sigma^+]\dequal(\vTSM[\circ,1],\psiTSM[\circ,1])^{\otimes\ints_{>0}}$
be independent of anything else.
Then, with the shorthand $\wTSM=\wTSM[\sigmaR^+]$ we have
\begin{align*}
\Phi^\circ_\mrmv(n)
&=\expe\left[\bmone\mcle[][\circ]\prod_{a\in[\dR]}(\bmone\{\vR_a\in\mclv[1]\}r_0(\wR_a))f_{\GR}(\wR_{[\dR]})\right]
=\expe\left[\bmone\mcle[][\circ]f_{\GR}(\wTSM[[\dR]])R(\sigmaR^+)^{-\dR}\right].
\end{align*}
Using $|f_{\GR}(\wTSM[[\dR]])|\le\dR c'$, $c'=\ln(\psibu)$, and $1\le\bm R\le 1+\frac{c}{n}$ with $\bm R=R(\sigmaR^+)$ further gives
\begin{align*}
\Delta
&=\left|\Phi^\circ_\mrmv(n)-\expe\left[\bmone\mcle[][\circ]f_{\GR}(\wTSM[[\dR]])\right]\right|
\le c'\expe\left[\dR\left(1-\bm R^{-\dR}\right)\right]
\le c'\expe\left[\dR\left(\left(1+\frac{c}{n}\right)^{\dR}-1\right)\right].
\end{align*}
With $\dR_{\uarr}\dequal\Po(\degabu)$, the standard coupling of $\dR$ and $\dR_\uarr$, Lipschitz continuity (for $\dR_\uarr>0$) and the moment generating function of $\Po(\degabu)$ we have
\begin{align*}
\Delta
\le\frac{c'c}{n}\expe\left[\dR_\uarr^2\left(1+\frac{c}{n}\right)^{\dR_\uarr-1}\right]
\le\frac{c'c}{n}\expe\left[e^{\dR_\uarr\lambda}\right]=\mclo(n^{-1}),\,
\lambda=2+\ln\left(1+\frac{c}{n}\right)\le 2+\frac{c}{n}.
\end{align*}
This shows that $\Phi_\mrmv(n)=\expe[\bmone\mcle[][\circ]f_{\GR}(\wTSM[[\dR]])]+\mclo(n^{-\rho})$.
Now, due to the conditional independence given $\sigmaR^+$ it suffices to show that $\wR^+$ and $\wTSM[[\dR]]$ given $\sigmaR^+$, $\dR$ have the same law.
Hence, for fixed $\sigma^+$ we have to show that
$(\vR^+,\psiR^+)\dequal(\vTSM,\psiTSM)$, where
\begin{align*}
(\vR^+,\psiR^+)=\left(\vR^+_{\circ,\tauR^+_\circ,\hR^+_\circ},\psiR^+_{\circ,\tauR^+_\circ}\right),\,
(\vTSM,\psiTSM)=\left(\vTSM[\circ,1],\psiTSM[\circ,1]\right).
\end{align*}
First, notice that the normalization constants coincide, i.e.
\begin{align*}
z_1&=\sum_v\frac{\bmone\{v\in\mclv[1]\}}{kn^{k-1}}\psiae(\sigma^+_v)
=\sum_{(\tau,h)\in\mclt[][+]}\frac{1}{k}\psiae(\tau)\sum_v\frac{\bmone\{v\in\mclv[][+]\}}{n^{k-1}}
=\ZFa^+,
\end{align*}
similar to the discussion in Section \ref{GTSYM}.
For $v\in\mclv[1]$ let $\tau(v)=\sigma^+_v$ and $h(v)\in[k]$ uniquely determined by $v(h(v))=i$.
Notice that we have $\tauR^+_\circ=\tau(\vR^+_\circ)$ and $\hR^+_\circ=h(\vR^+_\circ)$ by definition, and $\mclt[][+]=\{(\tau(v),h(v)):v\in\mclv[1]\}$.
So, for an event $\mcle$ and with $\psiR\dequal\lawpsi$ we have
\begin{align*}
\prob[(\vR^+,\psiR^+)\in\mcle]
&=\expe\left[\sum_{(\tau,h)\in\mclt[][+]}\sum_{v\in\mclv[\tau,h][+]}
\frac{\psiae(\tau)\prod_{h'\neq h}\gamma^-(\tau_{h'})\psiR(\tau)}
{k\ZFa^+\prod_{h'\neq h}(n\gamma^-(\tau_{h'}))\psiae(\tau)}
\bmone\{(v,\psiR)\in\mcle\}\right]\\
&=\expe\left[\sum_{(\tau,h)\in\mclt[][+]}\sum_{v\in\mclv[\tau,h][+]}
\frac{\psiR(\tau)}
{kn^{k-1}z_1}
\bmone\{(v,\psiR)\in\mcle\}\right]\\
&=\expe\left[r_1(\vR_{\circ,1},\psiR_{\circ,1})\bmone\{(\vR_{\circ,1},\psiR_{\circ,1})\in\mcle\}\right]
=\prob[(\vTSM,\psiTSM)\in\mcle].
\end{align*}
\end{proof}
\subsubsection{Gibbs Marginal Product for the Variable Contribution}\label{assvar_gibbsproduct}
We are ready to apply Proposition \ref{proposition_pinG}.
Using the distribution (\ref{equ_assvar_jointtsm}) and the corresponding shorthands let $(\vR^+,\psiR^+)=\wR^+$, $\gammaR^+=(\lawG[,\GTSM(\sigmaR^-)]|_{\vR^+(a,h)})_{a\in[\dR],h\neq\hR^+(a)}$, recall $\ZV$ from Section \ref{bethe_main} and let
$\bm\Phi=\ln\left(\ZV\left(\dR,\psiR^+,\hR^+,\gammaR^+\right)\right)$, where we dropped the redundant dependencies on $\gamma_{a,h(a)}$ in the definition of $\ZV$.
\begin{lemma}\label{lemma_assvar_gibbsproduct}
We have $\Phi_{\mrmv}(n)=\expe[\bmone\{\sigmaR^-\in\mclb[-][\Gamma]\}\bm\Phi]+\mclo(n^{-\rho})$.
\end{lemma}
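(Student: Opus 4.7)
\emph{Proof proposal.} The plan is to mirror Lemma \ref{lemma_assfactor_gibbsproduct}, starting from the representation of $\Phi_{\mrmv}(n)$ in Lemma \ref{lemma_assvar_normalizedoneedge}. The goal is to replace the joint Gibbs distribution appearing implicitly in $\psiWgG[,\GTSM(\sigmaR^-)](\wR^+)$ on the $\dR(k-1)$ wires of $\wR^+$ that point into $[n]$ by the product of the corresponding one-dimensional Gibbs marginals, which is precisely the content of $\ZV(\dR,\psiR^+,\hR^+,\gammaR^+)$. The special wire of each factor $a$, which always points to $i=n+1$ and is distributed according to $\gamma^*$ independently of $\lawG[,\GTSM(\sigmaR^-)]$, factors out of both expressions and plays the role of the ``outer'' sum $\sum_{\sigma\in[q]}\gamma^*(\sigma)$ in the definition of $\ZV$.

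First, I would expand $\psiWgG[,\GTSM(\sigmaR^-)](\wR^+)$ as an expectation over $(\sigmaRG,\sigma_i)\dequal\lawG[,\GTSM(\sigmaR^-)]\otimes\gamma^*$, and then rewrite $\wR^+$ via Radon-Nikodym derivatives as an expectation against $\dR$ independent uniform draws of the wires over $[n]$ (conditional on $\sigmaR^+$, $\dR$, $\tauR^+$, $\hR^+$), analogously to the transition to $\wR\dequal(\unif([n]^k)\otimes\lawpsi)^{\otimes\ints_{>0}}$ in the proof of Lemma \ref{lemma_assfactor_gibbsproduct}. After this step, the length-$\dR(k-1)$ vector $\vR^\star$ of non-special wires is uniformly distributed on $[n]^{\dR(k-1)}$, which is exactly the setting required to apply the pinning lemma via Proposition \ref{proposition_pinG}\ref{proposition_pinG_IIDDKL}.

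Next I would introduce the typical event $\mcle=\{\sigmaR^-\in\mclb[-][\Gamma],\iota_\circ(\lawG[,\GTSM(\sigmaR^-)],\vR^\star)\le\delta\}$ with $\delta=\ThetaP^{-2C_1/3}$ and the choice $C_1/3$ matching the one used in Lemma \ref{lemma_assfactor_gibbsproduct}. Off $\mcle$, Markov's inequality conditioned on $\dR$ together with Proposition \ref{proposition_pinG}\ref{proposition_pinG_IIDDKL} applied at $\ell=\dR(k-1)$ yields an error bounded by a constant times $\expe[\mathrm{poly}(\dR)\exp(\tilde c\dR)]/\ThetaP[C_1/3]$, which is $\mclo(\ThetaP[-C_1/3])=\mclo(n^{-\rho})$ since the moment generating function of $\dR\dequal\Po(\degae)$ is finite and $\rho=(C_1/3)/(1+C_1/3)$. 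On $\mcle$ the logarithms in question both have arguments in $[\psibl^{\dR},\psibu^{\dR}]$, so $\ln$ is Lipschitz with constant $\psibu^{\dR}$, and Pinsker's inequality as in Remark \ref{remark_epssym} (applied via Observation \ref{obs_tv}\ref{obs_tv_prod} to the $(k-1)\dR$-fold product) bounds the difference of the two arguments by a constant multiple of $\psibu^{(k-1)\dR}\sqrt{\delta}$, which after taking expectations in $\dR$ is again $\mclo(\ThetaP[-C_1/3])$.

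The main obstacle is the bookkeeping of the randomness of $\dR$: all the bounds grow exponentially in $\dR$, so both the Markov step and the Lipschitz step require tight control of exponential moments of $\dR$ combined with the polynomial-in-$\dR$ prefactors from Proposition \ref{proposition_pinG}\ref{proposition_pinG_IIDDKL}, in the same spirit as the estimate on $\eps'$ in the proof of Lemma \ref{lemma_assfactor_gibbsproduct}. A secondary point to verify carefully is that the identification of $\vR^\star$ given $\sigmaR^+$, $\dR$, $\tauR^+$, $\hR^+$ as a uniform vector on $[n]^{\dR(k-1)}$ (after resolving the Radon-Nikodym derivative to the factor weights $\psiR^+$ and incorporating the $\tauR^+$-dependence of the neighborhood sets $\mclv^+$ into a reweighting factor bounded by $\psibu^2$ per factor) really is clean; this follows along the lines of the last computation in the proof of Lemma \ref{lemma_assvar_normalizedoneedge}, but for the (simpler) case $\vR^+_{\circ,\sigma^-,\tau,h}$ restricted to the $k-1$ non-special coordinates, where the constraint $v_{h'}\in\sigma^{--1}(\tau_{h'})$ combines with the $\tau$-dependent normalization to yield a uniform wire and an ordinary weight reweighting.
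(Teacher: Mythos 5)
Your proposal follows the same route as the paper: resolve the Radon–Nikodym derivative from Lemma~\ref{lemma_assvar_normalizedoneedge} back to uniform wires, split each wire into the special slot pointing to $i=n+1$ (whose color is drawn from $\gamma^*$, giving the outer sum in $\ZV$) and $k-1$ uniform wires into $[n]$, restrict to the typical event $\iota_\circ(\bm\mu^*,\vR')\le\delta$ with $\delta=\ThetaP^{-2C_1/3}$ via Proposition~\ref{proposition_pinG}\ref{proposition_pinG_IIDDKL} and Markov's inequality conditional on $\dR$, control the exponential-in-$\dR$ prefactors by the Poisson moment generating function, and apply Lipschitz continuity of $\ln$ plus Pinsker's inequality on the event. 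The one small inaccuracy is the invocation of Observation~\ref{obs_tv}\ref{obs_tv_prod}: you are comparing a joint law to a product of its marginals, so the total-variation bound comes directly from $\nu_\circ(\bm\mu^*,\vR')\le\sqrt{\iota_\circ(\bm\mu^*,\vR')/2}$ in Remark~\ref{remark_epssym}, not from a product-vs-product estimate; this does not affect the conclusion.
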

\begin{proof}
Recall $n_+$, $i$, $\mclv[1]$, $(\vR_{\circ,1},\psiR_{\circ,1})$, $(\vR^*_{\circ,1},\psiR^*_{\circ,1})$, $r_1$, $\mcle[][\circ]$ from the proof of Lemma \ref{lemma_assvar_normalizedoneedge}, and that $(\vR^+_{\circ,\tauR^+_\circ,\hR^+_\circ},\psiR^+_{\circ,\tauR^+_\circ})\dequal(\vR^*_{\circ,1},\psiR^*_{\circ,1})$, all for given $\sigma^+$.
First, we resolve the reweighting, i.e.~we consider $(\sigmaR^+,\GTSM(\sigmaR^-),\dR,\wR)\dequal(\sigmaR^+,\GTSM(\sigmaR^-))\otimes\dR\otimes(\vR_{\circ,1},\psiR_{\circ,1})^{\otimes\ints_{>0}}$, and use Lemma \ref{lemma_assvar_normalizedoneedge} to obtain
$\Phi_{\mrmv}=\Phi^\circ_\mrmv+\mclo(n^{-\rho})$ with
$\Phi^\circ_\mrmv=\expe[\bmone\mcle[][\circ]\prod_{a\in[\bm d]}r_1\left(\wR_a\right)\ln(\psiWgG[,\GTSM(\sigmaR^-)](\wR_{[\dR]}))]$.
Next, notice that $V:[k]\times[n]^{k-1}\rarr\mclv[1]$ is a bijection, where $v'=V(h,v)$ is given by $v'_h=n+1$ and $v'\circ\eta=v$, with $\eta:[k-1]\rarr[k]\setminus\{h\}$ denoting the enumeration.
Further, notice that $\vR_{\circ,1}\dequal V(\hR,\vR)$ with $(\hR,\vR)\dequal\unif([k])\otimes\unif([n]^{k-1})$. So, with
\begin{align*}
\left(\sigmaR^+,\GTSM(\sigmaR^-),\dR,\hR,\vR,\psiR\right)
\dequal(\sigmaR^+,\GTSM(\sigmaR^-))\otimes\dR\otimes
\left(\unif([k])\otimes\unif([n]^{k-1})\otimes\lawpsi\right)^{\otimes\ints_{>0}}
\end{align*}
we have $\Phi_{\mrmv}^\circ=\expe[\bmone\mcle[][\circ]\bm\Phi^*]$, where
$\bm\Phi^*=\prod_{a\in[\bm d]}r_1\left(\wR_a\right)\ln(\bm Z^*)$,
$\wR=(V(\hR_a,\vR_a),\psiR_a)_a$,
\begin{align*}
\bm Z^*
=\psiWgG[,\GTSM(\sigmaR^-)](\wR_{[\dR]})
=\sum_{\sigma^\circ}\gamma^*(\sigma^\circ)\sum_{\tau}
\bm\alpha^*(\tau)
\prod_{a\in[\dR]}\psiR_a(t(\tau_a,\sigma^\circ,\hR_a)),
\end{align*}
further $\bm\alpha^*=\bm\mu^*|_{\vR'}$, $\vR'=\vR_{[\dR]}\in([n]^{k-1})^{\dR}$, $\bm\mu^*=\lawG[,\GTSM(\sigmaR^-)]$, and $\tau'=t(\tau,\sigma^\circ,h)\in[q]^k$ given by $\tau'_h=\sigma^\circ$ and $\tau'\circ\eta=\tau$ using the enumeration $\eta:[k-1]\rarr[k]\setminus\{h\}$.
Now, regarding $\bm\alpha^*$, the situation is very similar to the proof of Lemma \ref{lemma_assfactor_gibbsproduct}, in particular given $\dR$ we have $(\bm\mu^*,\vR')\dequal\bm\mu^*\otimes\unif([n])^{\otimes(k-1)\dR}$.
Hence, let $C$ from Proposition \ref{proposition_pinG}\ref{proposition_pinG_IIDDKL}, $\eps=C_1/3$ and $\delta=\ThetaP[-2\eps]$.
Using $\iota_\circ$ from Section \ref{pinning_gibbs} let
$\mcle=\{\sigmaR^-\in\mclb[-][\Gamma],\iota_\circ(\bm\mu^*,\vR')\le\delta\}$.
Hence, the bound
$|\bm\Phi^*|\le\ln(\psibu^{\dR})\psibu^{2\dR}\le\psibu^{3\dR}$
and Markov's inequality conditional to $\dR$ give
$\Delta\le\eps'+\mclo(n^{-\rho})$, where
$\Delta=|\Phi_{\mrmv}-\expe[\bmone\mcle\bm\Phi]|$ and
\begin{align*}
\eps'&=\expe\left[\bmone\{\dR>0\}\frac{C_2((k-1)\dR-1)}{\delta}\left(\frac{(k-1)\dR}{\ThetaP}\right)^{C_1}\psibu^{3\dR}\right].
\end{align*}
Standard bounds imply
$\eps'\le\tilde c\expe[\exp(\tilde c\dR)]/\ThetaP[\eps]$ for some $\tilde c\in\reals_{>0}$.
The canonical coupling of $\dR\dequal\Po(\degae)$ and $\Po(\degabu)$ gives
$\eps'=\mclo(\ThetaP[-\eps])$.
Recall that $\delta=n^{-2\rho}$ and $\eps',\Delta=\mclo(n^{-\rho})$ as in the proof of Lemma \ref{lemma_assfactor_gibbsproduct}.
Now, with $\alphaR=\bigotimes_{(a,h)\in[\dR]\times[k-1]}\bm\mu^*|_{\vR'(a,h)}$, further
\begin{align*}
\bm Z
=\sum_{\sigma^\circ}\gamma^*(\sigma^\circ)\sum_{\tau}
\bm\alpha(\tau)
\prod_{a\in[\dR]}\psiR_a(t(\tau_a,\sigma^\circ,\hR_a)),
\end{align*}
and $\bm\Phi=\prod_{a\in[\bm d]}r_1\left(\wR_a\right)\ln(\bm Z)$,
notice that $\bm Z^*,\bm Z\in[\psibl^{\dR},\psibu^{\dR}]$, so Lipschitz continuity of the logarithm gives
$|\bm\Phi^*-\bm\Phi|\le\psibu^{3\dR}|\bm Z^*-\bm Z|\le 2\psibu^{4\dR}\|\bm\alpha^*-\bm\alpha\|_\mrmtv$.
Remark \ref{remark_epssym} yields $|\bm\Phi^*-\bm\Phi|\le\sqrt{2}\psibu^{4\dR}\sqrt{\iota_\circ(\bm\mu^*,\vR')}\le\sqrt{2}\psibu^{4\dR}\sqrt{\delta}$ on $\mcle$ and hence
\begin{align*}
\Phi_{\mrmv}
=\expe[\bmone\mcle\bm\Phi]+\mclo\left(\sqrt{\delta}+n^{-\rho}\right)
=\expe[\bmone\{\sigmaR^-\in\mclb[-][\Gamma]\}\bm\Phi]+\mclo\left(n^{-\rho}\right).
\end{align*}
The assertion follows by reintroducing $\wR^+$ using the Radon-Nikodym derivative in $\bm\Phi$.
\end{proof}
\subsubsection{Marginal Distribution for the Variable Contribution}\label{assvar_marginaldist}
Now, we work towards the discussion in Section \ref{pinning_gibbs_marginal_distributions}.
Using the distribution (\ref{equ_assvar_jointtsm})
and for $\sigma^+$, $d$, $\tau$, $h$ let $(\wR^+,\gammaR_{\sigma^-,d,\tau,h})\dequal\wR^+\otimes\bigotimes_{a\in[d],h'\in[k]\setminus\{h(a)\}}\piGC[,\GTSM(\sigma^-),\sigma^-,\tau(a,h')]$, recall the shorthands, let $\gammaR=\gammaR_{\sigmaR^-,\dR,\tauR^+,\hR^+}$ and $(\vR^+,\psiR^+)=\wR^+$.
\begin{lemma}\label{lemma_assvar_piGC}
We have $\Phi_{\mrmv}(n)=\expe[\bmone\{\sigmaR^-\in\mclb[-][\Gamma]\}\ln(\ZV(\dR,\psiR^+,\hR^+,\gammaR))]+\mclo(n^{-\rho})$.
\end{lemma}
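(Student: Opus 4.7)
The plan is to deduce Lemma~\ref{lemma_assvar_piGC} from Lemma~\ref{lemma_assvar_gibbsproduct} by a single tower-property argument. The key observation is that, once we condition on all quantities except the wire positions $\vR^+$, the remaining randomness in $\vR^+$ averages the Gibbs marginals $\lawG[,\GTSM(\sigmaR^-)]|_{\vR^+(a,h')}$ exactly into the independent product of conditional Gibbs marginals appearing in the definition of $\gammaR=\gammaR_{\sigmaR^-,\dR,\tauR^+,\hR^+}$. In contrast to the companion step in Lemma~\ref{lemma_assfactor_piGR}, no appeal to Corollary~\ref{cor_piCR} is needed here: the replacement of $\gammaR^+$ by $\gammaR$ is an exact identity, and the stated $\mclo(n^{-\rho})$ is inherited in full from Lemma~\ref{lemma_assvar_gibbsproduct}.

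Concretely, I would start from
\[
\Phi_{\mrmv}(n)=\expe\!\left[\bmone\{\sigmaR^-\in\mclb[-][\Gamma]\}\ln\ZV(\dR,\psiR^+,\hR^+,\gammaR^+)\right]+\mclo(n^{-\rho}),
\]
with $\gammaR^+_{a,h'}=\lawG[,\GTSM(\sigmaR^-)]|_{\vR^+(a,h')}$ for $a\in[\dR]$ and $h'\neq\hR^+(a)$, and take the conditional expectation with respect to the tuple $(\sigmaR^+,\GTSM(\sigmaR^-),\dR,\tauR^+,\hR^+,\psiR^+)$. Both the indicator and every argument of $\ln\ZV(\dR,\psiR^+,\hR^+,\cdot)$ except the marginal slot are measurable with respect to this tuple, so by the tower property it suffices to identify the conditional law of $\gammaR^+$ with that of $\gammaR$.

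To this end, recall from Equation~(\ref{equ_assvar_jointtsm}) and the definition $(\vR^+_{\circ,\sigma^-,\tau,h},\psiR^+_{\circ,\tau})\dequal\unif(\mclv[\sigma^-,\tau,h][+])\otimes\psiTSYa[,\tau]$ that $\vR^+$ and $\psiR^+$ are conditionally independent given $(\sigmaR^-,\dR,\tauR^+,\hR^+)$. Hence under our conditioning the wires $\vR^+$ keep their prior law $\bigotimes_{a\in[\dR]}\unif(\mclv[\sigmaR^-,\tauR^+(a),\hR^+(a)][+])$. Since $\mclv[\sigma^-,\tau,h][+]$ is the Cartesian product of $\{n+1\}$ at coordinate $h$ and $(\sigmaR^-)^{-1}(\tau_{h'})$ at each coordinate $h'\neq h$, and since $(\tauR^+(a),\hR^+(a))\in\mclt[\sigmaR^+][+]$ forces these preimages to be nonempty, the coordinates $\vR^+(a,h')$ with $h'\neq\hR^+(a)$ are mutually independent, each uniform over $(\sigmaR^-)^{-1}(\tauR^+(a,h'))$. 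By the very definition of $\piGC[,G,\sigma,\tau']$ as the law of $\lawG[,G]|_{\bm i}$ with $\bm i\dequal\unif(\sigma^{-1}(\tau'))$, the family $(\gammaR^+_{a,h'})_{a,\,h'\neq\hR^+(a)}$ is therefore conditionally a product with marginals $\piGC[,\GTSM(\sigmaR^-),\sigmaR^-,\tauR^+(a,h')]$, matching the conditional law of $\gammaR$ verbatim.

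Substituting $\gammaR$ for $\gammaR^+$ under the conditional expectation preserves its value, and outer integration then gives the claim. The only genuine piece of bookkeeping is the conditional independence of $\vR^+$ and $\psiR^+$ after the structural data $(\sigmaR^+,\dR,\tauR^+,\hR^+)$ together with $\psiR^+$ have been pulled into the conditioning; this is immediate from the product structure of $\wR^+_{\sigma^-,d,\tau,h}$, so no further analytic input beyond Lemma~\ref{lemma_assvar_gibbsproduct} is required.
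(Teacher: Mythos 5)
Your proof is correct and takes essentially the same route as the paper: the paper's proof simply says the assertion is ``immediate using the definition of $\vR^+$ and $\piGC$'' as in Lemma~\ref{lemma_assfactor_piGC}, and what you have written is exactly a careful spelling-out of that one-line argument (condition on everything except the wire positions, observe that the constraint set $\mclv[\sigma^-,\tau,h][+]$ is a Cartesian product so the non-pinned coordinates of $\vR^+(a,\cdot)$ are independent uniforms over $(\sigma^-)^{-1}(\tau_{h'})$, and recognize the resulting averaged Gibbs marginals as the conditional marginals $\piGC$ by definition). Your observation that this step is an exact identity while Corollary~\ref{cor_piCR} only enters at the subsequent step (Lemma~\ref{lemma_assvar_piGR}) is accurate and matches the paper's structure.
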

\begin{proof}
As for Lemma \ref{lemma_assfactor_piGC}, the assertion is immediate using the definition of $\vR^+$ and $\piGC$.
\end{proof}
Now, let $(\wR^+,\hat\gammaR_{\sigma^-,d,\tau,h})\dequal\wR^+\otimes
\bigotimes_{a\in[d],h'\in[k]\setminus\{h(a)\}}\piGR[,\GTSM(\sigma^-),\tau(a,h')]$.
As before, we use the shorthand $\hat\gammaR=\hat\gammaR_{\sigmaR^-,\dR,\tauR^+,\hR^+}$.
\begin{lemma}\label{lemma_assvar_piGR}
We have $\Phi_{\mrmv}(n)=\expe[\bmone\{\sigmaR^-\in\mclb[-][\Gamma]\}\ln(\ZV(\dR,\psiR^+,\hR^+,\hat\gammaR))]+\mclo(n^{-\rho})$.
\end{lemma}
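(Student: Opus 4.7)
The plan is to mirror the argument used in Lemma \ref{lemma_assfactor_piGR}, now applied to $\ZV$ in place of $\ZF$. Starting from Lemma \ref{lemma_assvar_piGC}, I would fix $\sigma^-$, $d$, $\tau$, $h$, $G=[w]^{\Gamma\darr}_{\setP,\sigma^-}$ and $\psi\in\domPsi^d$ and, for each pair $(a,h')\in[d]\times([k]\setminus\{h(a)\})$, choose a coupling $\rho_{a,h'}\in\Gamma(\piGC[,G,\sigma^-,\tau(a,h')],\piGR[,G,\tau(a,h')])$. Assembling these couplings gives a joint coupling $(\check\gammaR,\hat\gammaR)$ whose marginals plug directly into the two versions of $\ZV(d,\psi,h,\cdot)$ appearing in Lemma \ref{lemma_assvar_piGC}.

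The next step is a Lipschitz estimate: with $\check Z=\ZV(d,\psi,h,\check\gammaR)$ and $\hat Z=\ZV(d,\psi,h,\hat\gammaR)$, Observation \ref{obs_fad}\ref{obs_fad_bounds} shows $\check Z,\hat Z\in[\psibl^d,\psibu^d]$, so $\ln$ is $\psibu^d$-Lipschitz on this interval. Expanding $\ZV$ as a sum over $\sigma^\circ\in[q]$ of $\gamma^*(\sigma^\circ)$ times a product over $a\in[d]$, and applying Observation \ref{obs_tv}\ref{obs_tv_prod} inside each factor to pass the telescoping from $\check\gammaR$ to $\hat\gammaR$, I get a bound of the form
\begin{align*}
|\ln\check Z-\ln\hat Z|\le 2(k-1)\psibu^{2d+1}\sum_{a\in[d]}\sum_{h'\neq h(a)}\|\check\gammaR_{a,h'}-\hat\gammaR_{a,h'}\|_{\mrmtv}.
\end{align*}
Since each $\rho_{a,h'}$ was arbitrary, this inequality upgrades to the Wasserstein distances $\distW(\piGC[,G,\sigma^-,\tau(a,h')],\piGR[,G,\tau(a,h')])$.

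Taking the expectation with $(\tauR^+(a),\hR^+(a))\in\mclt[][+]$ (so all relevant color indices lie in $\sigma^-([n])\cap\gammaa^{-1}(\reals_{>0})$), and then the expectation over $\GTSM(\sigmaR^-)$ and over $\dR\dequal\Po(\degae)$, I can bound the total error as at most a constant times
\begin{align*}
\expe\!\left[\psibu^{2\dR+1}\dR(k-1)\cdot D(\sigmaIID,\lawG[,\GTSM(\sigmaIID)])\right]+o(n^{-1}),
\end{align*}
where $D(\sigma,\mu)=\sum_{\tau\in\sigma([n])}\distW(\check\pi_{\mu,\sigma,\tau},\hat\pi_{\mu,\tau})$ as in Corollary \ref{cor_piCR}. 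The factor $\dR\psibu^{2\dR+1}$ has uniformly bounded expectation by the moment generating function of $\Po(\degabu)$ via the standard coupling with $\dR$, and is independent of $D(\sigmaIID,\lawG[,\GTSM(\sigmaIID)])$, so Corollary \ref{cor_piCR}\ref{cor_piCR_IID} gives a bound of order $\ThetaP[-C_1]=n^{-(1-\rho)C_1}=n^{-3\rho}$. Combined with Lemma \ref{lemma_assvar_piGC} and the event $\{\sigmaR^-\in\mclb[-][\Gamma]\}$ being preserved, this yields $\Phi_{\mrmv}(n)=\expe[\bmone\{\sigmaR^-\in\mclb[-][\Gamma]\}\ln(\ZV(\dR,\psiR^+,\hR^+,\hat\gammaR))]+\mclo(n^{-\rho})$.

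The main obstacle is the joint handling of (i) the random number of factors $\dR$, which makes both the Lipschitz constant of $\ln\ZV$ and the number of Wasserstein terms depend on $\dR$, and (ii) the restriction to $\sigmaR^-\in\mclb[-][\Gamma]$, which must be preserved so that the corollary of Proposition \ref{proposition_piCR} applies (requiring $\gammaa\ge\psibl/2$). Both are resolved by isolating the bounded-moment factor from $\dR$ via the Poisson moment generating function and using that on $\sigmaIID\in\mclb[-][\Gamma]$ the conditions of Corollary \ref{cor_piCR} hold; on the complement the contribution is already $o(1/n)$ by Observation \ref{obs_gtiid}\ref{obs_gtiid_prob} and the uniform bound $|\bm\Phi|\le\dR\ln(\psibu)$.
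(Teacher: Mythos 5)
Your proposal is correct and follows essentially the same route as the paper: pointwise couplings $\rho_{a,h'}\in\Gamma(\piGC,\piGR)$, a Lipschitz bound for $\ln\ZV$ on $[\psibl^{d},\psibu^{d}]$ using Observation~\ref{obs_tv}\ref{obs_tv_prod}, upgrading to $\distW$ and $D(\sigma^-,\lawG)$, then Corollary~\ref{cor_piCR}\ref{cor_piCR_IID} together with independence of $\dR$ and the Poisson moment generating function. Your Lipschitz constant $2(k-1)\psibu^{2d+1}$ carries an extraneous factor $(k-1)\psibu$ compared with the sharper $2\psibu^{2d}$ (the $(k-1)$ is already accounted for by the $d(k-1)$ terms in the sum), but this does not affect the $\mclo(n^{-\rho})$ conclusion.
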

\begin{proof}
As for Lemma \ref{lemma_assfactor_piGR} we consider the difference $\Delta=|\check E-\hat E|$ of the expectations $\check E=\expe[\ln(\ZV(d,\psi,h,\gammaR))]$ with $\gammaR\dequal\bigotimes_{a,h'\neq h(a)}\piGC[,G,\sigma^-,\tau(a,h')]$ and
$\hat E=\expe[\ln(\ZV(d,\psi,h,\gammaR))]$ with $\gammaR\dequal\bigotimes_{a,h'\neq h(a)}\piGR[,G,\tau(a,h')]$.
For couplings
$\pi_{a,h'}\in\Gamma(\piGC[,G,\sigma^-,\tau(a,h')],\piGR[,G,\tau(a,h')])$ with $h'\neq h(a)$ we define 
$(\check\gammaR,\hat\gammaR)\dequal\bigotimes_{a,h'\neq h(a)}\pi_{a,h'}$ analogously.
With $\psibl^d\le\ZV(d,\psi,h,\cdot)\le\psibu^d$ and Observation \ref{obs_tv}\ref{obs_tv_prod} we get
\begin{align*}
\Delta
&\le\psibu^{2d}\sum_{\sigma^\circ}\gamma^*(\sigma^\circ)\expe\left[\sum_{\tau}\left|\prod_{a,h'\neq h(a)}\check\gammaR_{a,h'}(\tau_{a,h'})-\prod_{a,h'\neq h(a)}\hat\gammaR_{a,h'}(\tau_{a,h'})\right|\right]\\
&\le 2\psibu^{2d}\sum_{a,h'\neq h(a)}\expe\left[\|\check\gammaR_{a,h'}-\hat\gammaR_{a,h'}\|_\mrmtv\right].
\end{align*}
Hence, we have
$\Delta\le 2\psibu^{2d}\sum_{a,h'\neq h(a)}\distW(\piGC[,G,\sigma^-,\tau(a,h')],\piGR[,G,\tau(a,h')])$, so with $D(\sigma,\mu)$ from Corollary \ref{cor_piCR} this yields
$\Delta\le 2d(k-1)\psibu^{2d}D(\sigma^-,\lawG[,G])$.
Taking the expectation and using Corollary \ref{cor_piCR}\ref{cor_piCR_IID} with $\gammaR$ from Lemma \ref{lemma_assvar_piGC} gives
\begin{align*}
\Delta&=\left|\expe[\bmone\{\sigmaR^-\in\mclb[-][\Gamma]\}\ln(\ZV(\dR,\psiR^+,\hR^+,\gammaR))]
-\expe[\bmone\{\sigmaR^-\in\mclb[-][\Gamma]\}\ln(\ZV(\dR,\psiR^+,\hR^+,\hat\gammaR))]\right|\\
&\le\expe[2\dR(k-1)\psibu^{2\dR}]\expe\left[D(\sigmaR^-,\GTSM(\sigmaR^-))\right]
=o(n^{-\rho})
\end{align*}
analogously to the proof of Lemma \ref{lemma_assfactor_piGR} with the standard coupling of $\dR$ and $\Po(\degae)$, so the assertion holds with Lemma \ref{lemma_assvar_piGC}.
\end{proof}
\subsubsection{The Variable Contribution}\label{assvar_final}
In this section we complete the discussion of $\Phi_\mrmv$.
First, we resolve the reweighting, then we turn to the projection onto $\mclp[*][2]([q])$.
Similar to Section \ref{bethe_main} let
\begin{align*}
(\dR,\psiR,\hR,\gammaR)\dequal\Po(\degae)\otimes(\lawpsi\otimes\unif([k])\otimes\piG[,\GTSM(\sigmaIID)]^{\otimes k})^{\otimes\ints_{>0}}
\end{align*}
with $\GTSM(\sigmaIID)=\GTSM[\mR,\setPR](\sigmaIID)$,
$\psiR=\psiR_{[\dR]}$, $\hR=\hR_{[\dR]}$ and $\gammaR=\gammaR_{[\dR]}$ by an abuse of notation.
\begin{lemma}\label{lemma_assvar_final}
We have $\Phi_{\mrmv}(n)=\expe[\ZFabu^{-\dR}\xlnx(\ZV(\dR,\psiR,\hR,\gammaR))]+\mclo(n^{-\rho})$.
\end{lemma}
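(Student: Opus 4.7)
The starting point is Lemma \ref{lemma_assvar_piGR}, which writes
\begin{align*}
\Phi_\mrmv(n) = \expe\bigl[\bmone\{\sigmaR^-\in\mclb[-][\Gamma]\}\ln\ZV(\dR,\psiR^+,\hR^+,\hat\gammaR)\bigr] + \mclo(n^{-\rho}).
\end{align*}
The strategy is to identify $\ZV(d,\psi,h,\gamma)/\ZFabu^d$ as the Radon--Nikodym derivative $\mrmd Q/\mrmd P$ of the joint law $Q$ of $(\dR,\psiR^+, \hR^+, \hat\gammaR)$ relative to the base product law $P$ of $(\dR,\psiR, \hR, \gammaR) \dequal \Po(\degae)\otimes(\lawpsi\otimes\unif([k])\otimes\piG[,\GTSM(\sigmaIID)]^{\otimes k})^{\otimes\ints_{>0}}$ appearing in the target right-hand side. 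Once this identification is in hand, the change-of-measure identity
\begin{align*}
\expe_Q[\ln\ZV(\dR,\psiR^+,\hR^+,\hat\gammaR)]
=\expe_P\bigl[(\mrmd Q/\mrmd P)\ln\ZV(\dR,\psiR,\hR,\gammaR)\bigr]
=\expe_P\bigl[\xlnx(\ZV(\dR,\psiR,\hR,\gammaR))/\ZFabu^{\dR}\bigr]
\end{align*}
converts the $\ln$ on the left directly into the required $\xlnx$ on the right.

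To verify the density identification, I would unfold the biased constructions of Sections \ref{assvar_normalization}--\ref{assvar_marginaldist}: for each factor $a$, $(\tauR^+,\hR^+)$ has density proportional to $\bmone\{\tau_h=\sigma^\circ\}\psiae(\tau)\prod_{h'\neq h}\gamma^-(\tau_{h'})$, normalized by $k\ZFa^+(\sigma^\circ,\gamma^-)$; $\psiR^+_\circ$ carries density $\psi(\tau)/\psiae(\tau)$ against $\lawpsi$; and each $\hat\gammaR_{a,h'}$, $h'\neq h_a$, carries $\gamma_{h'}(\tau_{h'})/\gammaaR(\tau_{h'})$ against $\piG[,\GTSM(\sigmaIID)]$. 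Marginalizing out the latent ground truth $\sigma^\circ\sim\gamma^*$ of the new variable $i=n+1$ and the factor-side labels $\tau$, a direct calculation shows
\begin{align*}
\frac{\mrmd Q}{\mrmd P}(\dR,\psi,h,\gamma)
=\sum_{\sigma^\circ}\gamma^*(\sigma^\circ)\prod_{a\in[\dR]}\frac{\sum_{\tau_a:\tau_{a,h_a}=\sigma^\circ}\psi_a(\tau_a)\prod_{h'\neq h_a}\gamma^-(\tau_{a,h'})\gamma_{a,h'}(\tau_{a,h'})/\gammaaR(\tau_{a,h'})}{\ZFa^+(\sigma^\circ,\gamma^-)}.
\end{align*}
On the typical subevent $\{\sigmaR^-\in\mclb[-][\Gamma],\,\|\gammaaR-\gamma^*\|_\mrmtv\le r(n)\}$, the identity $\ZFa^+(\sigma,\gamma^-)=\ZFa(\gamma^-)\lawYgC[,\gamma^-]|_*(\sigma)/\gamma^-(\sigma)$ combined with Observation \ref{obs_fad}, parts \ref{obs_fad_amstar}--\ref{obs_fad_rnamlipschitz}, gives $\ZFa^+(\sigma^\circ,\gamma^-)=\ZFabu(1+\mclo(r(n)))$ uniformly in $\sigma^\circ$, while $\gamma^*\ge\psibl$ and typicality of $\sigmaR^-$ together with Lemma \ref{lemma_gammaaR} force $\gamma^-/\gammaaR=1+\mclo(r(n))$ uniformly. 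Collecting these per-factor corrections yields $\mrmd Q/\mrmd P=(\ZV(\dR,\psi,h,\gamma)/\ZFabu^{\dR})(1+\mclo(k\dR r(n)))$.

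The remainder is error bookkeeping. The aggregate correction $(1+\mclo(k\dR r(n)))$ is benign because $\dR\dequal\Po(\degae)\le\Po(\degabu)$ has a finite moment generating function: the bound $(1+x)^{k\dR}\le e^{kx\dR}$ and the Poisson mgf give $\expe[(1+\mclo(r(n)))^{k\dR}]=1+\mclo(r(n))$, as in the proof of Lemma \ref{lemma_assvar_normalizedoneedge}. Combined with the deterministic bound $|\xlnx(\ZV)/\ZFabu^{\dR}|\le C^{\dR}\dR$ for some $C=C_\mfkg\in\reals_{>0}$, the total error from the change of measure is $\mclo(r(n))=o(n^{-\rho})$. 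Dropping the typicality indicator $\bmone\{\sigmaR^-\in\mclb[-][\Gamma]\}$ and the auxiliary event $\{\|\gammaaR-\gamma^*\|_\mrmtv\le r(n)\}$ is then also $o(n^{-\rho})$ by Observation \ref{obs_gtiid}\ref{obs_gtiid_prob} and Lemma \ref{lemma_gammaaR}\ref{lemma_gammaaR_prob}, choosing $c_\mrmr$ in $r(n)$ large enough that the subexponential tails beat the $C^{\dR}\dR$ growth (after taking expectation with respect to $\dR$). The main obstacle is the density computation of the second paragraph: since $\ZV$ couples all $\dR$ factors through the single shared $\sigma^\circ$, $\mrmd Q/\mrmd P$ is \emph{not} a product over factors, and the per-factor correction $(1+\mclo(r(n)))^k$ would aggregate fatally without the Poisson control on $\dR$; threading the uniform bounds from Observation \ref{obs_fad} through this coupled sum is the delicate part.
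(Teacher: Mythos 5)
Your proposal is correct and is essentially the paper's own argument: what you call identifying $\mrmd Q/\mrmd P$ is precisely the paper's step of "resolving the Radon--Nikodym derivatives'' into the per-factor product $\bm r$, and both proofs then approximate that density by $\ZV(\dR,\psiR,\hR,\gammaR)/\ZFabu^{\dR}$ on the typical event using the same uniform $1+\mclo(r(n))$ estimates (Observation \ref{obs_fad}\ref{obs_fad_amstar}--\ref{obs_fad_rnamlipschitz}, Lemma \ref{lemma_gammaaR}), control the aggregate correction with the Poisson moment generating function, and discard the typicality indicators at cost $o(n^{-\rho})$. The one cosmetic difference is that the paper keeps $\sigmaR^\circ$ unsummed in $\bm r$ and notes only at the end that $\ln\ZV$ is $\sigmaR^\circ$-free so summing over $\sigma^\circ$ produces $\ZV/\ZFabu^{\dR}$ and hence the $\xlnx$, whereas you sum over $\sigma^\circ$ up front — but the per-factor bounds threaded through the coupled sum are identical, so this is the same proof.
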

\begin{proof}
Let $i=n+1$, $\gammaR^-=\gammaN[,\sigmaR^-]$, $\sigmaR^\circ=\sigmaR^+_i$ and $\gammaaR=\gammaaG[,\GTSM(\sigmaR^-)]$.
With Lemma \ref{lemma_gammaaR}\ref{lemma_gammaaR_prob} we have
$\prob[\|\gammaaR-\gamma^*\|_\mrmtv\ge r]=o(1/n)$.
Lemma \ref{lemma_assvar_piGR} with $\mcle=\{\sigmaR^-\in\mclb[-][\Gamma],\|\gammaaR-\gamma^*\|_\mrmtv\le r\}$ yields
\begin{align*}
\Phi_\mrmv(n)=\expe[\bmone\mcle\bm\Phi]+\mclo(n^{-\rho}),\,
\bm\Phi=\ln(\ZV(\dR,\psiR^+,\hR^+,\hat\gammaR)),
\end{align*}
using $|\bm\Phi|\le\dR\ln(\psibu)$ and independence.
Using $(\dR,\psiR,\hR,\gammaR)$ with $\sigmaIID$ replaced by $\sigmaR^-$,
resolving the Radon-Nikodym derivatives and reusing the terms $1/k$ to introduce $\hR$ gives
$\Phi_\mrmv(n)=\expe[\bmone\mcle\bm\Phi]+\mclo(n^{-\rho})$, where
$\bm\Phi=\bm r\ln(\ZV(\dR,\psiR,\hR,\gammaR))$ and
\begin{align*}
\bm r&=\prod_{a\in[\dR]}\sum_{\tau}\frac{\bmone\{\tau_{\hR(a)}=\sigmaR^\circ\}\psiae(\tau)\prod_{h\neq\hR(a)}\gammaR^-(\tau_{h})}{\ZFa^+(\sigmaR^\circ,\gammaR^-)}\cdot\frac{\psiR_a(\tau)}{\psiae(\tau)}
\cdot\prod_{h\neq\hR(a)}\frac{\gammaR_{a,h}(\tau_{h})}{\gammaaR(\tau_{h})}\\
&=\prod_{a\in[\dR]}\sum_{\tau}
\frac{\bmone\{\tau_{\hR(a)}=\sigmaR^\circ\}\psiR_a(\tau)\prod_{h\neq\hR(a)}\gammaR_{a,h}(\tau_{h})\prod_{h\neq\hR(a)}\gammaR^-(\tau_{h})}
{\ZFa^+(\sigmaR^\circ,\gammaR^-)\prod_{h\neq\hR(a)}\gammaaR(\tau_{h})}.
\end{align*}
As in the proof of Lemma \ref{lemma_assfactor_final} we have
$\gammaR^-(\tau_h)/\gamma^*(\tau_h)=1+\mclo(r)$ and
$\gammaaR(\tau_h)/\gamma^*(\tau_h)=1+\mclo(r)$ on $\mcle$.
For $r(n)<\psibl/2$ we have $\gammaR^->0$ and hence
$\ZFa^+(\sigmaR^\circ,\gammaR^-)=\ZFa(\gammaR^-)\bm\mu|_*(\sigmaR^\circ)/\gammaR^-(\sigmaR^\circ)$ with $\bm\mu=\lawYgC[,\gammaR^-]$ as pointed out after the definition of $\ZFa^+$, above Equation (\ref{equ_assvar_jointtsm}).
As in the proof of Lemma \ref{lemma_assfactor_final} with Observation \ref{obs_fad}\ref{obs_fad_rnamlipschitz} and Observation \ref{obs_fad}\ref{obs_fad_amstar} this yields
$\ZFa^+(\sigmaR^\circ,\gammaR^-)/\ZFabu=(1+\mclo(r^2))(1+\mclo(r))=1+\mclo(r)$.
Hence, there exists $c_\mfkg\in\reals_{>0}$ such that
$\bm r\le(1+cr)^{\dR}\bm r^\circ$ and $\bm r^\circ\le(1+cr)^{\dR}\bm r$, where
\begin{align*}
\bm r^\circ
=\prod_{a\in[\dR]}\sum_{\tau}
\frac{\bmone\{\tau_{\hR(a)}=\sigmaR^\circ\}\psiR_a(\tau)\prod_{h\neq\hR(a)}\gammaR_{a,h}(\tau_{h})}{\ZFabu}.
\end{align*}
Now, with $\bm\Phi^\circ=\bm r^\circ\ln(\ZV(\dR,\psiR,\hR,\gammaR))$
and $|\bm\Phi^\circ|\le\ln(\psibu)\dR\psibu^{2\dR}$
we get
\begin{align*}
|\expe[\bmone\mcle\bm\Phi]-\expe[\bmone\mcle\bm\Phi^\circ]|\le\expe\left[\ln(\psibu)\dR\psibu^{2\dR}\left((1+cr)^{\dR}-(1+cr)^{-\dR}\right)\right]=\mclo(r).
\end{align*}
Notice that $\ln(\ZV(\dR,\psiR,\hR,\gammaR))$ does not depend on $\sigmaR^\circ$, so summing over $\sigmaR^\circ$ explicitly and using $\sigmaR^-\dequal\sigmaIID$ yields the assertion.
\end{proof}
Next, we show that we can replace $\piG[,\GTSM(\sigmaIID)]$ by its projection $\piG[,\GTSM(\sigmaIID)]^\circ$ by using Lemma \ref{lemma_piapproxpstartwo}. For this purpose we show that the variable contribution
\begin{align*}
\bethe_{\mrmv}:\mclp[][2]([q])\rarr\reals&,\,
\pi\mapsto\expe\left[\ZFabu^{-\dR}\xlnx\left(\ZV(\dR,\psiR,\hR,\gammaR_\pi)\right)\right],\\
(\dR,\psiR,\hR,\gammaR_\pi)
&\dequal\Po(\degae)\otimes(\lawpsi\otimes\unif([k])\otimes\pi^{\otimes k})^{\otimes\ints_{>0}},
\end{align*}
to the Bethe functional is Lipschitz in $\pi$ with respect to $\distW$.
\begin{lemma}\label{lemma_bethelipschitzvar}
There exists $L_\mfkg$ such that $\bethe_{\mrmv}$ is $L$-Lipschitz.
\end{lemma}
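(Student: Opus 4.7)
The plan is to mimic the blueprint of Lemma \ref{lemma_bethelipschitzfactor}, but now with an unbounded Poisson number of factors $\dR$ attached to the variable and the extra reweighting factor $\ZFabu^{-\dR}$. Fix $\pi_1,\pi_2\in\mclp[][2]([q])$ and a coupling $\rho\in\Gamma(\pi_1,\pi_2)$. Let $(\dR,\psiR,\hR)$ be as in the statement, and couple the two families of marginals by drawing the pairs $(\gammaR_{1,a,h},\gammaR_{2,a,h})_{a,h}$ i.i.d.\ from $\rho$, independently of $(\dR,\psiR,\hR)$. Write $\bethe_\mrmv(\pi_i)=\expe[\ZFabu^{-\dR}\xlnx(\ZV(\dR,\psiR,\hR,\gammaR_i))]$ with $\gammaR_i=(\gammaR_{i,a,h})_{a,h}$.

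The first step is to bound $\ZV$ and its Lipschitz constant in $\gamma$ for \emph{fixed} $(d,\psi,h)$. Each inner sum
\[
f_a(\gamma,\sigma)=\sum_{\tau\in[q]^k}\bmone\{\tau_{h_a}=\sigma\}\psi_a(\tau)\prod_{h'\neq h_a}\gamma_{a,h'}(\tau_{h'})
\]
lies in $[\psibl,\psibu]$ and is an integral against the product measure $\bigotimes_{h'\neq h_a}\gamma_{a,h'}$, so by Observation \ref{obs_tv}\ref{obs_tv_prod}
\[
|f_a(\gamma^{(1)},\sigma)-f_a(\gamma^{(2)},\sigma)|\le 2\psibu\sum_{h'\neq h_a}\|\gamma^{(1)}_{a,h'}-\gamma^{(2)}_{a,h'}\|_{\mrmtv}.
\]
Since $\ZV=\sum_\sigma\gamma^*(\sigma)\prod_{a\in[d]}f_a(\gamma,\sigma)$ with $f_a\in[\psibl,\psibu]$, a telescoping/product-rule argument (as used repeatedly in Sections \ref{interpolation} and \ref{ass}) yields $\ZV\in[\psibl^d,\psibu^d]$ and
\[
|\ZV(d,\psi,h,\gamma^{(1)})-\ZV(d,\psi,h,\gamma^{(2)})|\le 2d\psibu^d\sum_{a\in[d],\,h'\neq h_a}\|\gamma^{(1)}_{a,h'}-\gamma^{(2)}_{a,h'}\|_{\mrmtv}.
\]
Since $\xlnx$ is $(\ln(\psibu^d)+1)$-Lipschitz on $[\psibl^d,\psibu^d]$ and $\ZFabu^{-d}\le\psibl^{-d}$, combining these bounds gives, for some universal constant $C_0$,
\[
\ZFabu^{-d}|\xlnx(\ZV_1)-\xlnx(\ZV_2)|\le C_0\,d(d+1)(\psibu/\psibl)^d\sum_{a,h'\neq h_a}\|\gamma^{(1)}_{a,h'}-\gamma^{(2)}_{a,h'}\|_{\mrmtv}.
\]

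Taking expectation over the i.i.d.\ coupling and over $(\hR,\psiR)$ independent of the $\gammaR$'s, each of the $d(k-1)$ summands contributes $\expe[\|\gammaR_{1,a,h'}-\gammaR_{2,a,h'}\|_\mrmtv]$, which is $\le\expe[\|\gammaR_{\rho,1}-\gammaR_{\rho,2}\|_\mrmtv]$ for $(\gammaR_{\rho,1},\gammaR_{\rho,2})\dequal\rho$. Minimizing over $\rho$ produces a factor $\distW(\pi_1,\pi_2)$. Taking expectation over $\dR\dequal\Po(\degae)$ then gives
\[
|\bethe_\mrmv(\pi_1)-\bethe_\mrmv(\pi_2)|\le C_0(k-1)\,\expe\!\left[\dR^2(\dR+1)(\psibu/\psibl)^{\dR}\right]\distW(\pi_1,\pi_2).
\]
The only potential obstacle is the exponential-in-$\dR$ blow-up, but the Poisson moment generating function $\expe[e^{t\dR}]=\exp(\degae(e^t-1))$ is finite for every $t$ and monotone in $\degae\le\degabu$, so the prefactor is bounded by a constant $L_\mfkg$ depending only on $\mfkg=(q,k,\psibl,\degabu)$, completing the proof.
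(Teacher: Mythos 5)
Your proposal is correct and follows essentially the same route as the paper: couple the marginals i.i.d.\ from $\rho\in\Gamma(\pi_1,\pi_2)$, use Lipschitz continuity of $\xlnx$ on $[\psibl^d,\psibu^d]$ together with a total-variation bound for $\ZV$ (via Observation \ref{obs_tv}\ref{obs_tv_prod}), and control the $\dR$-dependent prefactor through the Poisson moment generating function. One small imprecision: the telescoping estimate actually yields $|\ZV_1-\ZV_2|\le 2\psibu^d\sum_{a,h'\neq h_a}\|\gamma^{(1)}_{a,h'}-\gamma^{(2)}_{a,h'}\|_\mrmtv$, without the extra factor of $d$ you wrote; your weaker bound is still finite after taking the Poisson expectation, so the conclusion is unaffected.
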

\begin{proof}
Let $\pi_\circ\in\Gamma(\pi_1,\pi_2)$ be a coupling of $\pi\in\mclp[][2]([q])^2$.
Further, let $(\gammaR_1,\gammaR_2)\dequal(\pi_\circ^{\otimes k})^{\otimes\ints_{>0}}$ with $\gammaR_1,\gammaR_2\in(\mclp([q])^k)^{\ints_{>0}}$.
With $(\dR,\psiR,\hR)$ from the definition of $\bethe_\mrmv$ let $(\dR,\psiR,\hR,\gammaR_1,\gammaR_2)\dequal\dR\otimes\psiR\otimes\hR\otimes(\gammaR_1,\gammaR_2)$.
With the Lipschitz continuity of $\xlnx$ yields
\begin{align*}
\Delta&=|\bethe_{\mrmv}(\pi_1)-\bethe_{\mrmv}(\pi_2)|
\le\expe\left[\psibu^{\dR}(\dR\ln(\psibu)+1)\left|\ZV(\dR,\psiR,\hR,\gammaR_1)-\ZV(\dR,\psiR,\hR,\gammaR_2)\right|\right].
\end{align*}
With the triangle inequality, $\psiR\le\psibu$ and Observation \ref{obs_tv}\ref{obs_tv_prod} this gives 
\begin{align*}
\Delta
&\le\expe\left[\psibu^{2\dR}(\dR\ln(\psibu)+1)\sum_{a\in[\dR]}\sum_{h'\neq\hR(a)}\|\gammaR_{1,a,h'}-\gammaR_{2,a,h'}\|_\mrmtv\right]\\
&=\expe\left[\psibu^{2\dR}(\dR\ln(\psibu)+1)\dR(k-1)]\expe[\|\gammaR_{1,1,1}-\gammaR_{2,1,1}\|_\mrmtv\right].
\end{align*}
This completes the proof since $\pi_\circ\in\Gamma(\pi_1,\pi_2)$ was arbitrary.
\end{proof}
Now, we finally obtain the asymptotics of $\Phi_\mrmv$.
\begin{lemma}\label{lemma_ass_PhiDeltaV}
We have $\Phi_{\mrmv}(n)=\expe[\bethe_{\mrmv}(\piG[,\GTSM]^\circ)]+\mclo(n^{-\rho})$ with $\GTSM=\GTSM[\mR,\setPR](\sigmaIID)$.
\end{lemma}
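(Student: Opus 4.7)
The plan is to mirror the short conclusion of Lemma \ref{lemma_ass_PhiDeltaF}. The ingredients have already been assembled: Lemma \ref{lemma_assvar_final} provides the asymptotic identity for $\Phi_{\mrmv}$ in terms of the (unprojected) empirical Gibbs marginal distribution, Lemma \ref{lemma_bethelipschitzvar} provides Lipschitz continuity of $\bethe_\mrmv$, and Lemma \ref{lemma_piapproxpstartwo}\ref{lemma_piapproxpstartwo_graph} provides the projection error in Wasserstein distance. The task is to glue these three facts together.

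First, I would rewrite Lemma \ref{lemma_assvar_final} as $\Phi_{\mrmv}(n) = \expe[\bethe_{\mrmv}(\piG[,\GTSM])] + \mclo(n^{-\rho})$. This is an application of the tower property: in the joint distribution $(\dR,\psiR,\hR,\gammaR)$ underlying Lemma \ref{lemma_assvar_final}, the Poisson count $\dR$, the weights $\psiR$ and the positions $\hR$ are independent of $\GTSM(\sigmaIID)$, while $\gammaR$ is built from $\piG[,\GTSM(\sigmaIID)]^{\otimes k}$. Conditioning on $\piG[,\GTSM(\sigmaIID)]$ and evaluating the inner expectation therefore yields exactly $\bethe_\mrmv(\piG[,\GTSM(\sigmaIID)])$ by definition.

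Second, I apply Lemma \ref{lemma_bethelipschitzvar} to obtain $L_\mfkg \in \reals_{>0}$ such that $\bethe_\mrmv$ is $L$-Lipschitz with respect to $\distW$, and Lemma \ref{lemma_piapproxpstartwo}\ref{lemma_piapproxpstartwo_graph} to obtain $c_\mfkg\in\reals_{>0}$ with $\expe[\distW(\piG[,\GTSM], \piG[,\GTSM]^\circ)] \le c\sqrt{\ln(n+1)^3/n}$ on $\mR \le \mbu$. Jensen's inequality then gives
\[
\left|\expe[\bmone\{\mR \le \mbu\}(\bethe_\mrmv(\piG[,\GTSM]) - \bethe_\mrmv(\piG[,\GTSM]^\circ))]\right|
\le Lc\sqrt{\ln(n+1)^3/n}.
\]
Since $\rho < 1/4 < 1/2$ by the choice at the start of Section \ref{ass}, this bound is $o(n^{-\rho})$.

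Third, I dispose of the atypical event $\mR > \mbu$. The integrand $\ZFabu^{-\dR}\xlnx(\ZV(\dR,\psiR,\hR,\gammaR_\pi))$ is uniformly bounded in $\pi$ by $c_\mfkg\dR(\psibu/\psibl)^{\dR}$, whose expectation under $\dR \dequal \Po(\degae)$ with $\degae \le \degabu$ is a finite constant depending only on $\mfkg$. Hence $\bethe_\mrmv$ is uniformly bounded, and by Corollary \ref{cor_dega} the contribution of $\mR > \mbu$ to both $\expe[\bethe_\mrmv(\piG[,\GTSM])]$ and $\expe[\bethe_\mrmv(\piG[,\GTSM]^\circ)]$ is $\mclo(1/n)$. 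Assembling the three ingredients yields the claim. I do not anticipate any serious obstacle: all non-trivial work has been carried out in the preceding subsections, and this step is purely combinatorial glue, entirely parallel to Lemma \ref{lemma_ass_PhiDeltaF}.
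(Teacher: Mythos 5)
Your proposal is correct and follows essentially the same route as the paper: restate Lemma \ref{lemma_assvar_final} as $\Phi_{\mrmv}(n)=\expe[\bethe_\mrmv(\piG[,\GTSM])]+\mclo(n^{-\rho})$, then combine the Lipschitz bound of Lemma \ref{lemma_bethelipschitzvar} with the Wasserstein projection bound of Lemma \ref{lemma_piapproxpstartwo}\ref{lemma_piapproxpstartwo_graph}, and kill the atypical event $\mR>\mbu$ using $\prob[\mR>\mbu]=o(1/n)$. The only cosmetic difference is how you handle the atypical event: the paper notes $\distW\le q$ so the Lipschitz bound already gives a uniform bound on the discrepancy there, while you invoke uniform boundedness of $\bethe_\mrmv$ directly — both are fine and equivalent in effect.
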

\begin{proof}
With Lemma \ref{lemma_assvar_final} we have $\Phi_{\mrmv}(n)=\expe[\bethe_\mrmv(\piG[,\GTSM])]+\mclo(n^{-\rho})$.
Lemma \ref{lemma_bethelipschitzvar} and Lemma \ref{lemma_piapproxpstartwo}\ref{lemma_piapproxpstartwo_graph} complete the proof, since $\distW\le q$ and $\prob[\mR>\mbu]=o(1/n)$.
\end{proof}
\subsubsection{Proof of Proposition \ref{proposition_ass}}\label{proof_proposition_ass}
First, we establish Lemma \ref{lemma_ass_PhiDelta} and Proposition \ref{proposition_ass_pinned}.
Then, we establish a stronger version of Proposition \ref{proposition_ass} for graphs with external fields.
\begin{proof}[Proof of Lemma \ref{lemma_ass_PhiDelta}]
Lemma \ref{lemma_ass_PhiDelta} follows from Lemma \ref{lemma_ass_PhiDeltaF} and Lemma \ref{lemma_ass_PhiDeltaV} with Equation (\ref{equ_PhiDeltContributions}).
\end{proof}
\begin{proof}[Proof of Proposition \ref{proposition_ass_pinned}]
With $\GTSM=\GTSM[\mR,\setPR](\sigmaIID)$ let $c_\mfkg\in\reals_{>0}$ be such that $|\Phi_{\Delta,n}-\expe[\bethe(\piG[,\GTSM]^\circ)]|\le cn^{-\rho}$.
Recall that $|\expe[n\phiG(\GTSM[n])]|\le c'(\frac{\degabu n}{k}+\frac{1}{2}n^{1-\rho})$
with $c'$ from Observation \ref{obs_phi_lipschitz} using Observation \ref{obs_pin_basic}, and
notice that $\Phi_{\Delta,0}=\expe[n\phiG(\GTSM[n])]$ for $n=1$.
With $|\bethe|\le\expe[\psibu^{\dR}\xlnx(\psibu^{\dR})]+\degae\psibu\xlnx(\psibu)\le c'$ for some $c'_{\mfkg}\in\reals_{>0}$ the telescoping sum with the triangle inequality yields
\begin{align*}
|\expe[\phiG(\GTSM)]-\expe[\bethe(\piG[,\GTSM]^\circ)]|
&\le\mclo(n^{-1})+\frac{c}{n}\sum_{n'=2}^{n-1}n'^{-\rho}
=\mclo(n^{-1})+\frac{c}{n}\int_{1}^{n-1}\lceil t^{-\rho}\rceil\mrmd t\\
&\le\mclo(n^{-1})+\frac{c}{n}\int_1^{n-1}t^{-\rho}\mrmd t
=\mclo(n^{-1})+\frac{c((n-1)^{1-\rho}-1)}{(1-\rho)n},
\end{align*}
which shows that $\expe[\phiG(\GTSM)]=\expe[\bethe(\piG[,\GTSM]^\circ)]+\mclo(n^{-\rho})$ and thereby completes the proof.
\end{proof}
In the remainder we let $\bm\pi=\piG[,\GTSM[\mR,\setPR](\sigmaIID)]^\circ\in\mclp[*][2]([q])$ be the projected marginal distributions \emph{including pins}.
On the other hand, we let $\setP=\emptyset$ in the remainder, where we also cover the case $\degae=0$.
Now, we turn to Proposition \ref{proposition_ass} for graphs with external fields.
\begin{proposition}\label{proposition_ass_external}
Notice that the following holds.
\begin{alphaenumerate}
\item\label{proposition_ass_external_po}
We have $\expe[\phiG(\GTSM[\mR](\sigmaIID))]=\expe[\bethe(\bm\pi)]+\mclo(n^{-\rho})$.
\item\label{proposition_ass_external_m}
For $d=km/n\le\degabu$ we have $\expe[\phiG(\GTSM[m](\sigmaIID))]=\expe[\bethe_d(\bm\pi)]+\mclo(n^{-\rho})$.
\item\label{proposition_ass_external_general}
We have $\expe[\phiG(\GTSM[\mR^*](\sigmaIID))]=\expe[\bethe(\bm\pi)]+\mclo(\deltam+\epsm+n^{-\rho})$.
\end{alphaenumerate}
\end{proposition}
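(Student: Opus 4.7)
The plan is to deduce all three parts from Proposition \ref{proposition_ass_pinned} together with the pin-removal bound of Proposition \ref{proposition_pin_qfed} and the factor-count continuity results of Section \ref{phi_concon}. Throughout I use the choice $\ThetaP(n) = n^{1-\rho}$ fixed at the start of Section \ref{ass}, so $\ThetaP/n = n^{-\rho}$.

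For part \ref{proposition_ass_external_po}, Proposition \ref{proposition_pin_qfed} applied to $\mR \dequal \Po(\degae n/k)$ immediately yields
\begin{align*}
\expe[\phiG(\GTSM[\mR,\emptyset](\sigmaIID))] = \expe[\phiG(\GTSM[\mR,\setPR](\sigmaIID))] + \mclo(n^{-\rho}),
\end{align*}
and combining with Proposition \ref{proposition_ass_pinned} gives the claim. The trivial case $\degae = 0$ is handled separately by noting $\mR = 0$ almost surely, $\phiG(\GTSM[0]) = 0$ and $\bethe \equiv 0$.

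Parts \ref{proposition_ass_external_m} and \ref{proposition_ass_external_general} are then reduced to part \ref{proposition_ass_external_po} by showing that the expected free entropy is insensitive to the choice of factor count, provided its mean equals $d$. Concretely, I would invoke Corollary \ref{cor_phiTSIIDNIS}\ref{cor_phiTSIIDNIS_p} once with $\mR^*_n = m$ (trivially satisfying $\eps_1 = \eps_2 = 0$) and once with $\mR^*_n = \mR \dequal \Po(dn/k)$ (whose relevant concentration bounds follow from Corollary \ref{cor_dega}): both $\expe[\phiG(\GTSM[m](\sigmaIID))]$ and $\expe[\phiG(\GTSM[\mR](\sigmaIID))]$ agree with the same reference value $\phiTSM[m^\circ](\sigma^\circ)$ up to $\mclo(n^{-1/2})$, so they differ by $\mclo(n^{-1/2}) = \mclo(n^{-\rho})$ since $\rho < 1/4$, which together with part \ref{proposition_ass_external_po} yields part \ref{proposition_ass_external_m}. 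For part \ref{proposition_ass_external_general}, applying Corollary \ref{cor_phiTSIIDNIS}\ref{cor_phiTSIIDNIS_p} with the given $\mR^*$ under the hypotheses $\prob[|\dR^*_n - d^*| > \eps_1(n)] \le \eps_2(n)$ and $\expe[\bmone\{|\dR^*_n - d^*| > \eps_1(n)\}\dR^*_n] \le \eps_2(n)$ with $\eps_1 \le \deltam$, $\eps_2 \le \epsm$, and comparing to the Poisson case above, yields
\begin{align*}
|\expe[\phiG(\GTSM[\mR^*](\sigmaIID))] - \expe[\phiG(\GTSM[\mR](\sigmaIID))]| = \mclo(\deltam + \epsm + n^{-1/2}),
\end{align*}
so that part \ref{proposition_ass_external_po} delivers part \ref{proposition_ass_external_general}.

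The heavy lifting has already been carried out in Proposition \ref{proposition_ass_pinned}, so no new techniques are needed here; the main obstacle is essentially bookkeeping. In particular, the only subtle point is the consistency of $\bm\pi$ across the three parts: by the convention fixed at the start of Section \ref{ass}, $\bm\pi$ denotes the projected Gibbs marginal of the pinned \emph{Poisson} model $\GTSM[\mR,\setPR](\sigmaIID)$ with parameter $\degae = d$, so the right-hand side in each part is $\expe[\bethe_d(\bm\pi)]$ evaluated at the same object. The proof therefore reduces to the left-hand comparisons sketched above.
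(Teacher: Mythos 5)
Your proposal is correct and follows essentially the same route as the paper: part~\ref{proposition_ass_external}\ref{proposition_ass_external_po} from Proposition~\ref{proposition_ass_pinned} combined with the pin-removal bound of Proposition~\ref{proposition_pin_qfed} at $\ThetaP=n^{1-\rho}$ (plus the trivial $\degae=0$ case), and parts \ref{proposition_ass_external_m} and \ref{proposition_ass_external_general} by reducing to part~\ref{proposition_ass_external_po} via the factor-count continuity in Corollary~\ref{cor_phiTSIIDNIS}\ref{cor_phiTSIIDNIS_p} — which is exactly what the paper means by ``follows similar to the proof of Proposition~\ref{proposition_int_external}, but easier since the transition from $\sigmaNIS$ to $\sigmaIID$ is not required.'' The one place where your bookkeeping is slightly loose is the claim that both comparisons to the reference $\phiTSM[m^\circ](\sigma^\circ)$ incur error $\mclo(n^{-1/2})$: Corollary~\ref{cor_phiTSIIDNIS}\ref{cor_phiTSIIDNIS_p} actually yields $\mclo(\epsm+\deltam+n^{-1/2})$, so you need to first fix the (WLOG lower-bounded) choices $\deltam^\circ=\ln(n)/\sqrt{n}$, $\epsm^\circ$ exponentially small as in the proof of Proposition~\ref{proposition_int_external}, which still leaves the error at $\mclo(n^{-1/4})=\mclo(n^{-\rho})$; your conclusion is therefore unaffected.
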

\begin{proof}
Proposition \ref{proposition_ass_pinned}, Proposition \ref{proposition_pin_qfed} and $\ThetaP=n^{1-\rho}$ yield 
$\expe[\phiG(\GTSM[\mR](\sigmaIID))]=\expe[\bethe(\bm\pi)]+\mclo(n^{-\rho})$ for $\degae>0$.
Recall from the proof of Proposition \ref{proposition_int_external} that 
$\phiG(\GTSM[\mR](\sigmaIID))=\phiG(\GTSM[\mR](\sigmaNIS_{\mR}))=0$ and $\bethe\equiv 0$ for $\degae=0$, so Part \ref{proposition_ass_external}\ref{proposition_ass_external_po} holds.
The remainder follows similar to the proof of Proposition \ref{proposition_int_external}, but easier since the transition from $\sigmaNIS$ to $\sigmaIID$ is not required.
\end{proof}
Observation \ref{obs_standard_graphs} yields the corresponding results for graphs without external fields and thereby completes the proof of Proposition \ref{proposition_ass}.
\subsubsection{Proof of Theorem \ref{thm_bethe}}\label{proof_thm_bethe}
The following result for graphs with external fields implies Theorem \ref{thm_bethe}.
Recall $\rho$ from Section \ref{ass} and that $\mI\equiv 0$, $\setP=\emptyset$, $\tI=1$ and $\ThetaP=0$, i.e.~we consider standard graphs with external fields only.
\begin{theorem}\label{thm_bethe_external}
Notice that the following holds.
\begin{alphaenumerate}
\item\label{thm_bethe_external_po}
We have $\expe[\phiG(\GTSM[\mR](\sigmaIID))]=\bethebu(\degae)+\mclo(n^{-\rho})$.
\item\label{thm_bethe_external_m}
For $d=km/n\le\degabu$ we have $\expe[\phiG(\GTSM[m](\sigmaIID))]=\bethebu(d)+\mclo(n^{-\rho})$.
\item\label{thm_bethe_external_general}
We have $\expe[\phiG(\GTSM[\mR^*](\sigmaIID))]=\bethebu(\degae)+\mclo(\deltam+\epsm+n^{-\rho})$.
\end{alphaenumerate}
\end{theorem}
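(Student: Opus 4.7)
My plan is to deduce Theorem \ref{thm_bethe_external} by sandwiching the quenched free entropy between matching bounds obtained earlier in Section \ref{thm_bethe_proof}. The upper bound comes from the Aizenman--Sims--Starr scheme via Proposition \ref{proposition_ass_external}, and the lower bound comes from the interpolation method via Proposition \ref{proposition_int_external}. The key observation that glues these together is that the random marginal distribution $\bm\pi$ produced by the Aizenman--Sims--Starr analysis already lives in $\mclp[*,\gamma^*][2]([q])$ by the construction of the projection from Section \ref{pinning_limiting_marginal_distributions}, so $\bethe(\bm\pi)\le\bethebu$ pointwise, and the statement we need is then immediate after taking expectations.

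Concretely, for Part \ref{thm_bethe_external}\ref{thm_bethe_external_po}, I would combine Proposition \ref{proposition_ass_external}\ref{proposition_ass_external_po}, which gives
\begin{align*}
\expe[\phiG(\GTSM[\mR](\sigmaIID))]=\expe[\bethe(\bm\pi)]+\mclo(n^{-\rho})
\le\bethebu(\degae)+\mclo(n^{-\rho}),
\end{align*}
with Proposition \ref{proposition_int_external}\ref{proposition_int_external_po}, which gives the matching lower bound
\begin{align*}
\expe[\phiG(\GTSM[\mR](\sigmaIID))]\ge\bethebu(\degae)+\mclo(n^{-1/4}).
\end{align*}
Since $\rho\in(0,1/4)$ by the choice of $\rho$ made at the start of Section \ref{ass}, we have $n^{-1/4}=o(n^{-\rho})$, so both error terms are absorbed into $\mclo(n^{-\rho})$ and equality with error $\mclo(n^{-\rho})$ follows. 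Parts \ref{thm_bethe_external_m} and \ref{thm_bethe_external_general} proceed identically, using the matching parts of Proposition \ref{proposition_ass_external} and Proposition \ref{proposition_int_external}; in Part \ref{thm_bethe_external_general} the error $\mclo(\deltam+\epsm+n^{-\rho})$ already dominates the interpolation error $\mclo(n^{-1/4}+\deltam+\epsm)$ for the same reason.

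Finally, to conclude the original statement of Theorem \ref{thm_bethe} from Section \ref{bethe_main}, I would apply Observation \ref{obs_standard_graphs} to pass from the standard graph with external field $\gamma^*$ to the ordinary factor graph formulation: the identity $\phi_{\gamma^*}(\GTSM[\mrms](\sigmaIID))\dequal\phiG([\GTSM[\mrms](\sigmaIID)]^\Gamma)$ transfers Part \ref{thm_bethe_external_po} of Theorem \ref{thm_bethe_external} directly to Theorem \ref{thm_bethe}, with the same constant $\rho$.

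Honestly there is no genuine obstacle here; all the substantial work has already been done in Section \ref{interpolation} and Section \ref{ass}. The only point requiring care is the bookkeeping of the error exponents: one must verify that $\rho<1/4$ so that the interpolation error $n^{-1/4}$ is swallowed by the coarser Aizenman--Sims--Starr error $n^{-\rho}$, and that $\bm\pi\in\mclp[*,\gamma^*][2]([q])$ (rather than merely $\mclp[][2]([q])$) so that $\bethe(\bm\pi)\le\bethebu$ is a legitimate pointwise inequality --- both of which are guaranteed, respectively, by the construction $\rho=c/(1+c)$ with $c=C_1/3<1$ in Section \ref{ass} and by Lemma \ref{lemma_piapproxpstartwo}\ref{lemma_piapproxpstartwo_general}.
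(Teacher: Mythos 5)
Your proof is correct and follows essentially the same route as the paper: the paper's own proof of Theorem \ref{thm_bethe_external} simply cites Proposition \ref{proposition_int_external}, Proposition \ref{proposition_ass_external}, the pointwise inequality $\bethe_d\le\bethebu(d)$ (valid because $\bm\pi\in\mclp[*,\gamma^*][2]([q])$), and $\rho\in(0,1/4)$. One small remark on your bookkeeping: what guarantees $\rho=c/(1+c)<1/4$ is $c=C_1/3<1/3$ (since $C_1<1$), not merely $c<1$ as you wrote, but the conclusion is of course correct.
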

\begin{proof}
The assertion follows from Proposition \ref{proposition_int_external}, Proposition \ref{proposition_ass_external}, $\bethe_d\le\bethebu(d)$, and $\rho\in(0,1/4)$ as discussed in the introduction of Section \ref{ass}.
\end{proof}
Observation \ref{obs_standard_graphs} yields the corresponding results for graphs without external fields and thereby completes the proof of Theorem \ref{thm_bethe}.
\section{Relative Entropy, Condensation and Mutual Information}\label{main_proofs}
In this section we derive Theorem \ref{thm_infth}, Theorem \ref{thm_cond} and Theorem \ref{thm_mutinf} from Theorem \ref{thm_bethe_external}, for both graphs with and without external fields over more general factor counts $\mR^*$.
We also establish Lipschitz continuity in the average degree for all key quantities, i.e.~the corresponding versions of Proposition \ref{proposition_phi_concon}\ref{proposition_phi_concon_cont}.
Let $\mI\equiv 0$, $\setP=\emptyset$ and $\rho$ from Section \ref{ass}.
\subsection{The Relative Entropy}\label{main_proofs_dkl}
\subsubsection{The Annealed Free Entropy}\label{phiA}
In this section we briefly discuss the properties of the annealed free entropy.
For this purpose recall $\phia(d)=\frac{d}{k}\ln(\ZFabu)$ from Section \ref{information_theoretic_threshold}.
\begin{observation}\label{obs_phia}
Let $\phi(m)=\frac{1}{n}\ln(\expe[\ZG(\GR)])$.
\begin{alphaenumerate}
\item\label{obs_phia_lipschitz}
There exists $c_\mfkg\in\reals_{>0}$ such that
$\phi$ is $kc/n$-Lipschitz and $|\phi(m)|\le ckm/n$.
\item\label{obs_phia_asymptotics_m}
We have $\phi(m)=\phia(km/n)+\mclo(1/n)$ for $m\le\mbu$.
\item\label{obs_phia_asymptotics_po}
We have $\expe[\phi(\mR^*)]=\phia(\degae)+\mclo(\epsm+\deltam+n^{-1})$, so $\expe[\phi(\mR)]=\phia(\degae)+\mclo(\sqrt{\ln(n)/n})$.
\end{alphaenumerate}
\end{observation}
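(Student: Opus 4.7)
The plan is to reduce all three claims to the closed form $\ZM=\expe[\ZFa(\gammaN[,\sigmaIID])^m]$ from Observation \ref{obs_GRM_expebounds}\ref{obs_GRM_expeboundsZM} (valid here since $\mI\equiv 0$ and $\setP=\emptyset$) and then to apply the uniform bounds and refined first-moment estimates already established for $\ZFa$ and $\ZM$.

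For Part \ref{obs_phia_lipschitz}, the bounds $\psibl\le\ZFa\le\psibu$ from Observation \ref{obs_fad}\ref{obs_fad_bounds} yield $\psibl^m\le\ZM\le\psibu^m$, hence $|\phi(m)|\le m\ln(\psibu)/n$, so $|\phi(m)|\le ckm/n$ with $c=\ln(\psibu)/k$. For Lipschitz continuity in the integer $m$, the ratio $\expe[\ZFa(\gammaN[,\sigmaIID])^{m+1}]/\expe[\ZFa(\gammaN[,\sigmaIID])^m]$ is a $\ZFa^m$-weighted mean of $\ZFa$ and therefore also lies in $[\psibl,\psibu]$; thus $|\phi(m+1)-\phi(m)|\le\ln(\psibu)/n=kc/n$, and telescoping extends this to arbitrary integer separations. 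For Part \ref{obs_phia_asymptotics_m}, the sharper bound $c'\ZFabu^m\le\ZM\le\ZFabu^m$ for $m\le\mbu$ from Lemma \ref{lemma_firstmom}\ref{lemma_firstmomZ} immediately gives, after taking logarithms and dividing by $n$, $\phi(m)=(m/n)\ln(\ZFabu)+\mclo(1/n)=\phia(km/n)+\mclo(1/n)$ uniformly in $m\le\mbu$.

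For Part \ref{obs_phia_asymptotics_po}, I will split on the typical event $\mcle=\{|\dR^*-\degae|\le\deltam\}$ with $\dR^*=k\mR^*/n$. Off $\mcle$, Part \ref{obs_phia_lipschitz} gives $|\phi(\mR^*)|\le c\dR^*$, so the hypothesis $\expe[\bmone\lnot\mcle\dR^*]\le\epsm$ from Section \ref{implications_extensions_related_work} controls this contribution by $\mclo(\epsm)$. On $\mcle$ we have $\mR^*\le\mbu$ (assuming without loss of generality $\deltam\le\degabu$), so Part \ref{obs_phia_asymptotics_m} applies and gives $\phi(\mR^*)=\phia(\dR^*)+\mclo(1/n)$; linearity of $\phia$ with slope bounded by $\ln(\psibu)/k$ replaces $\phia(\dR^*)$ by $\phia(\degae)$ at cost $\mclo(\deltam)$, and the remaining $\phia(\degae)\prob[\lnot\mcle]$ is $\mclo(\epsm)$. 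Combining yields the first claim. For the Poisson case $\mR^*=\mR$, Corollary \ref{cor_dega} with $r=C\sqrt{\ln(n)/n}$ and $C$ sufficiently large furnishes admissible $\deltam,\epsm$ with $\epsm=\mclo(n^{-1})$, hence the stated rate $\mclo(\sqrt{\ln(n)/n})$. The argument is essentially mechanical assembly; the only place requiring care is the bookkeeping in Part \ref{obs_phia_asymptotics_po}, namely matching each error term on and off $\mcle$ to the appropriate hypothesis ($\deltam$ on, $\epsm$ off) and the pointwise rate $n^{-1}$ from Part \ref{obs_phia_asymptotics_m}.
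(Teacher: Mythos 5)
Your proposal is correct and follows essentially the same route as the paper: Part (a) from the bounds $\psibl\le\ZFa\le\psibu$, Part (b) from Lemma \ref{lemma_firstmom}\ref{lemma_firstmomZ}, and Part (c) by splitting on the typical event $\{|\degaR^*-\degae|\le\deltam\}$ and then specializing to $\mR$ via Corollary \ref{cor_dega}. The only small variation is your self-contained weighted-mean argument for the Lipschitz bound in Part (a), whereas the paper invokes the same technique through the proofs of Observation \ref{obs_phi_lipschitz} and Lemma \ref{lemma_contphiM}; both come down to noting that adding one factor rescales the annealed partition function by a factor in $[\psibl,\psibu]$.
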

\begin{proof}
With the proofs of Observation \ref{obs_phi_lipschitz} and Lemma \ref{lemma_contphiM} we get $|\phi(m)|\le\frac{cm}{n}$ and $|\phi(m'_1)-\phi(m'_2)|\le\frac{c}{n}|m'_1-m'_2|$ for $m'\in\ints_{\ge 0}^2$ and $c=\ln(\psibu)$.
With Lemma \ref{lemma_firstmom}\ref{lemma_firstmomZ} we have $\phi(m)=\phia(km/n)+\mclo(1/n)$ for $m\le\mbu$. With Part \ref{obs_phia}\ref{obs_phia_lipschitz} and the expectation bound we have $\expe[\phi(\mR^*)]=\expe[\bmone\{|\degaR^*-\degae|\le\deltam\}\phi(\mR^*)]+\mclo(\epsm)$, so e.g.~with Part \ref{obs_phia}\ref{obs_phia_asymptotics_m}
and the probability bound we get
$\expe[\phi(\mR^*)]=\phia(\degae)+\mclo(\epsm+\deltam+n^{-1})$. The result for $\mR$ then follows with Corollary \ref{cor_dega} and $r=c'\sqrt{\ln(n)/n}$ for large $c'$.
\end{proof}
\subsubsection{Proof of Theorem \ref{thm_infth}}\label{proof_thm_infth}
The Nishimori ground truth establishes a finite size connection between the quenched free entropies, the annealed free entropy and the relative entropies.
\begin{observation}\label{obs_qfedinequ}
With $\phi(m)=\frac{1}{n}\ln(\expe[\ZG(\GR)])$ we have
\begin{align*}
\expe[\phiG(\GTSM(\sigmaNIS))]=\phi(m)+\DKL(\GTSM(\sigmaNIS)\|\GR)
\ge\phi(m)-\DKL(\GR\|\GTSM(\sigmaNIS))
=\expe[\phiG(\GR)].
\end{align*}
\end{observation}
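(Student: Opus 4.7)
The proof will be a direct calculation built on the Radon–Nikodym derivative identified in Observation \ref{obs_nishicond}\ref{obs_nishicond_GNIS}, which states that the $(\GTSM(\sigmaNIS),\GRM)$-derivative is the map $G\mapsto\ZG(G)/\ZM$. Since $\ZG(G)>0$ almost surely by Observation \ref{obs_GRM_expebounds}\ref{obs_GRM_expeboundsZG} (using $\gamma^*\ge\psibl>0$), this derivative is strictly positive and the reciprocal $G\mapsto\ZM/\ZG(G)$ is the $(\GRM,\GTSM(\sigmaNIS))$-derivative, so both relative entropies are well-defined.

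For the first equality, the plan is to expand
\begin{align*}
\DKL(\GTSM(\sigmaNIS)\|\GR)=\expe\left[\ln\left(\frac{\ZG(\GTSM(\sigmaNIS))}{\ZM}\right)\right]
=\expe[\ln\ZG(\GTSM(\sigmaNIS))]-\ln\ZM,
\end{align*}
and then use $\ln\ZM=\ln\expe[\ZG(\GR)]=n\phi(m)$ together with $\expe[\ln\ZG(\GTSM(\sigmaNIS))]=n\expe[\phiG(\GTSM(\sigmaNIS))]$ (up to the normalization convention in force). Rearranging yields the claimed identity for the Nishimori quenched free entropy in terms of $\phi(m)$ and the divergence from the null model.

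The second equality proceeds symmetrically. Applying the definition of $\DKL$ for the reciprocal derivative gives
\begin{align*}
\DKL(\GR\|\GTSM(\sigmaNIS))=\expe\left[\ln\left(\frac{\ZM}{\ZG(\GR)}\right)\right]
=\ln\ZM-n\expe[\phiG(\GR)],
\end{align*}
and a second rearrangement yields $\expe[\phiG(\GR)]=\phi(m)-\DKL(\GR\|\GTSM(\sigmaNIS))$. The middle inequality is then Gibbs' inequality $\DKL(\cdot\|\cdot)\ge 0$, applied to both divergences, giving $\DKL(\GTSM(\sigmaNIS)\|\GR)\ge 0\ge-\DKL(\GR\|\GTSM(\sigmaNIS))$.

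There is no real obstacle: the whole argument is bookkeeping around the change-of-measure identity of Observation \ref{obs_nishicond}\ref{obs_nishicond_GNIS}, combined with Gibbs' inequality. The only point requiring a brief sanity check is absolute continuity in both directions, which is immediate from the uniform bounds on $\ZG$ in Observation \ref{obs_GRM_expebounds}\ref{obs_GRM_expeboundsZG}.
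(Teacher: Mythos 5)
Your proof takes exactly the route the paper uses: read off the $(\GTSM(\sigmaNIS),\GR)$-derivative $G\mapsto\ZG(G)/\ZM$ from Observation~\ref{obs_nishicond}\ref{obs_nishicond_GNIS}, expand both relative entropies as expected log-derivatives, and invoke non-negativity of relative entropy for the middle inequality. The normalization caveat you flag is real---the displayed identity only balances if $\DKL$ is read as a divergence density, i.e.\ rescaled by $1/n$---and the paper leaves this equally implicit, so this is a shared convention rather than a gap in your argument.
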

\begin{proof}
Notice that Observation \ref{obs_nishicond}\ref{obs_nishicond_GNIS} yields both
$\DKL(\GTSM(\sigmaNIS)\|\GR)=\expe[\phiG(\GTSM(\sigmaNIS))]-\phi(m)$ and
$\DKL(\GR\|\GTSM(\sigmaNIS))=\phi(m)-\expe[\phiG(\GR)]$.
\end{proof}
The asymptotics from Theorem \ref{thm_bethe_external} using Corollary \ref{cor_phiTSIIDNIS} and from Observation \ref{obs_phia} with the first equality in Observation \ref{obs_qfedinequ} yield the asymptotics of $\DKL(\GTSM(\sigmaNIS)\|\GR)$.
Now, we obtain Theorem \ref{thm_infth} for graphs with external fields using the results of Section \ref{nishimori_ground_truth}.
\begin{theorem}\label{thm_infth_external}
Let $\delta(m)=\frac{1}{n}\DKL(\sigmaIID,\GTSM(\sigmaIID)\|\sigmaRG[,\GR],\GR)$ and
$\delta^*(d)=\bethebu(d)-\phia(d)$.
\begin{alphaenumerate}
\item\label{thm_infth_external_asymptotics_m}
We have $\delta(m)=\delta^*(km/n)+\mclo(n^{-\rho})$ for $km/n\le\degabu$.
\item\label{thm_infth_external_asymptotics_po}
We have $\expe[\delta(\mR^*)]=\delta^*(\degae)+\mclo(\epsm+\deltam+n^{-\rho})$, so $\expe[\delta(\mR)]=\delta^*(\degae)+\mclo(n^{-\rho})$.
\end{alphaenumerate}
\end{theorem}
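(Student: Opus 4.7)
The plan is to begin by computing the Radon--Nikodym derivative of the planted joint law $(\sigmaIID, \GTSM(\sigmaIID))$ with respect to $(\sigmaRG[,\GR], \GR)$ in closed form. By Observation \ref{obs_TSM_iid}, the planted joint density factorises as $\gamma^{*\otimes n}(\sigma)\, f_{\GR}(G)\, \psiG[,G](\sigma)/\psiM(\sigma)$, while the joint density of $(\sigmaRG[,\GR], \GR)$ factorises as $f_{\GR}(G)\, \psiG[,G](\sigma)/\ZG(G)$. The factor $\psiG[,G](\sigma)$ cancels, leaving the derivative $r(\sigma, G) = \gamma^{*\otimes n}(\sigma)\ZG(G)/\psiM(\sigma)$. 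Since we are in the setting $\mI \equiv 0$, $\setP = \emptyset$, Observation \ref{obs_GRM_expebounds}\ref{obs_GRM_expeboundsPsiM} reduces $\psiM(\sigma)$ to $\gamma^{*\otimes n}(\sigma)\ZFa(\gammaN[,\sigma])^m$, so the $\gamma^{*\otimes n}(\sigma)$ cancels under the logarithm and I obtain the clean decomposition
\begin{align*}
\delta(m) = \expe[\phiG(\GTSM[m](\sigmaIID))] - \frac{m}{n}\, \expe[\ln \ZFa(\gammaIID)].
\end{align*}
Mutual absolute continuity is immediate from $\gamma^* \ge \psibl > 0$ and the uniform positivity of $\psiM$ and $\ZG$ provided by Observation \ref{obs_GRM_expebounds}.

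For part (a), the first term on the right is exactly the quantity controlled by Theorem \ref{thm_bethe_external}\ref{thm_bethe_external_m}, yielding $\bethebu(km/n) + \mclo(n^{-\rho})$. For the second term I combine the quadratic upper bound $\ZFabu - c\|\gamma-\gamma^*\|_\mrmtv^2 \le \ZFa(\gamma) \le \ZFabu$ from Observation \ref{obs_fad}\ref{obs_fad_maxbound} with the variance estimate $\expe[\|\gammaIID - \gamma^*\|_\mrmtv^2] = \mclo(1/n)$ from Observation \ref{obs_gtiid}\ref{obs_gtiid_var} to obtain $\expe[\ln \ZFa(\gammaIID)] = \ln \ZFabu + \mclo(1/n)$; multiplying by $m/n \le \degabu/k$ gives $\frac{m}{n}\expe[\ln \ZFa(\gammaIID)] = \phia(km/n) + \mclo(1/n)$, and subtracting completes (a).

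For part (b), linearity in $m$ of the second term allows me to pull the expectation inside:
\begin{align*}
\expe[\delta(\mR^*)] = \expe[\phiG(\GTSM[\mR^*](\sigmaIID))] - \frac{\expe[\mR^*]}{n}\, \expe[\ln \ZFa(\gammaIID)].
\end{align*}
Theorem \ref{thm_bethe_external}\ref{thm_bethe_external_general} handles the first term with error $\mclo(\epsm + \deltam + n^{-\rho})$. For the prefactor, the moment hypothesis $\expe[\bmone\{|\dR^* - \degae| > \deltam\}\dR^*] \le \epsm$ together with the typical-event bound $|\dR^* - \degae| \le \deltam$ yields $k\expe[\mR^*]/n = \degae + \mclo(\epsm + \deltam)$, which combined with $\expe[\ln \ZFa(\gammaIID)] = \ln \ZFabu + \mclo(1/n)$ gives $\frac{\expe[\mR^*]}{n}\expe[\ln \ZFa(\gammaIID)] = \phia(\degae) + \mclo(\epsm + \deltam + 1/n)$. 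Subtracting delivers $\expe[\delta(\mR^*)] = \delta^*(\degae) + \mclo(\epsm + \deltam + n^{-\rho})$; the statement for $\mR$ follows by specialising to $\mR^* = \mR$ with $\epsm, \deltam$ of order $\sqrt{\ln(n)/n}$, which is $o(n^{-\rho})$ since $\rho < 1/4$, exactly as in Observation \ref{obs_phia}\ref{obs_phia_asymptotics_po}.

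There is no substantive obstacle: once the Radon--Nikodym derivative has been identified, everything reduces to algebraic bookkeeping on top of Theorem \ref{thm_bethe_external} and the finite-size estimates for $\gammaIID$ already established. The only mildly delicate step is that the Gibbs-marginal correction $\expe[\ln \ZFa(\gammaIID)]$ must be controlled to within $\mclo(1/n)$ rather than merely $\mclo(1/\sqrt{n})$, which is precisely what the quadratic bound in Observation \ref{obs_fad}\ref{obs_fad_maxbound} — leveraging that $\gamma^*$ is a maximiser of $\ZFa$ — is designed to deliver.
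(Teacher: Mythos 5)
Your proof is correct and takes essentially the same route as the paper's: you compute the same Radon--Nikodym derivative, arrive at the identical decomposition $\delta(m)=\expe[\phiG(\GTSM[m](\sigmaIID))]-\frac{m}{n}\expe[\ln\ZFa(\gammaIID)]$, and invoke Theorem \ref{thm_bethe_external} for the first term. The only cosmetic difference is that the paper rewrites the second term as $\phi(m)-\delta'(m)$ with $\phi(m)=\frac{1}{n}\ln\ZM$ and $\delta'(m)=\frac{1}{n}\DKL(\sigmaIID\|\sigmaNIS)$ and cites Observation \ref{obs_phia}\ref{obs_phia_asymptotics_m} together with Observation \ref{obs_DKLgt}\ref{obs_DKLgt_IIDNIS}, whereas you estimate $\expe[\ln\ZFa(\gammaIID)]$ directly from Observations \ref{obs_fad}\ref{obs_fad_maxbound} and \ref{obs_gtiid}\ref{obs_gtiid_var}; both routes rest on the same primitives and give the same $\mclo(1/n)$ error.
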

\begin{proof}
The Radon-Nikodym derivative of
$(\sigmaIID,\GTSM(\sigmaIID))$ with respect to $(\sigmaRG[,\GR],\GR)$ is
\begin{align*}
(\sigma,G)\mapsto\frac{\gamma^{*\otimes n}(\sigma)\psiG[,G](\sigma)\ZG(G)}{\psiM(\sigma)\psiG[,G](\sigma)}=\frac{\ZG(G)}{\hat r(\sigma)\ZM},
\end{align*}
and thereby $\delta(m)=\phi^*(m)-\frac{m}{n}\expe[\ln(\ZFa(\gammaIID))]=\phi^*(m)-\phi(m)+\delta'(m)$ using Observation \ref{obs_GRM_expebounds}\ref{obs_GRM_expeboundsPsiM} and
with $\phi^*(m)=\expe[\phiG(\GTSM(\sigmaIID))]$,
$\phi(m)=\frac{1}{n}\ln(\ZM)$ and $\delta'(m)=\frac{1}{n}\DKL(\sigmaIID\|\sigmaNIS)$.
For Part \ref{thm_infth_external}\ref{thm_infth_external_asymptotics_m} we combine Theorem \ref{thm_bethe_external}\ref{thm_bethe_external_m} with Observation \ref{obs_phia}\ref{obs_phia_asymptotics_m} and Observation \ref{obs_DKLgt}\ref{obs_DKLgt_IIDNIS}.
For Part \ref{thm_infth_external}\ref{thm_infth_external_asymptotics_po}
we use Observation \ref{obs_DKLgt}\ref{obs_DKLgt_IIDNIS}, Observation \ref{obs_GRM_expebounds}\ref{obs_GRM_expeboundsPsiM} and \ref{obs_GRM_expebounds}\ref{obs_GRM_expeboundsZM} to obtain
\begin{align*}
0\le\expe[\delta'(\mR^*)]\le\frac{c}{n}+\expe\left[\bmone\{\mR^*>\mbu\}\frac{2\ln(\psibu)\mR^*}{n}\right]=\mclo\left(\frac{1}{n}+\epsm\right).
\end{align*}
Now, the assertion follows with Theorem \ref{thm_bethe_external}\ref{thm_bethe_external_general}, Observation \ref{obs_phia}\ref{obs_phia_asymptotics_po} and Corollary \ref{cor_dega}.
\end{proof}
Let $\GR_{\circ,m}$, $\GTSM[\circ,m](\sigmaIID)\in\domG$ be the graphs without external fields from Section \ref{random_factor_graphs}.
We use the shorthands $\GR=\GR_{\mR^*}$, $\GR_{\circ}=\GR_{\circ,\mR^*}$, $\GTSM(\sigmaIID)=\GTSM[\mR^*](\sigmaIID)$ and $\GTSM[\circ](\sigmaIID)=\GTSM[\circ,\mR^*](\sigmaIID)$.
Recall $Z_{\gamma^*}(G)$ from Section \ref{factor_graphs} and $\sigmaR_{\gamma^*,G}$ from Section \ref{information_theoretic_threshold} for $G\in\domG$.
Notice that the expectation in Theorem \ref{thm_infth_external}\ref{thm_infth_external_asymptotics_po} recovers
\begin{align*}
n\expe[\delta(\mR^*)]
=\DKL(\sigmaIID,\GTSM(\sigmaIID)\|\sigmaRG[,\GR],\GR|\mR^*)
=\DKL(\sigmaIID,\GTSM(\sigmaIID)\|\sigmaRG[,\GR],\GR).
\end{align*}
Let $r(\sigma,[G]^{\Gamma})=\ZG(G)/(\hat r(\sigma)\ZM)$ be the 
Radon-Nikodym derivative of $(\sigmaIID,\GTSM(\sigmaIID))$ with respect to $(\sigmaRG[,\GR],\GR)$ from the proof of Theorem \ref{thm_infth_external}. Further, let
\begin{align*}
r_\circ(\sigma,G)
=\frac{\gamma^{*\otimes n}(\sigma)\psiG[,G](\sigma)Z_{\gamma^*}(G)}{\expe[\psiG[,\GR_\circ(m)](\sigma)]\gamma^{*\otimes n}(\sigma)\psiG[,G](\sigma)}
=\frac{\gamma^{*\otimes n}(\sigma)\ZG([G]^{\Gamma})}{\psiM(\sigma)}
=r(\sigma,[G]^{\Gamma})
\end{align*}
be the Radon-Nikodym derivative of $(\sigmaIID,\GTSM[\circ](\sigmaIID))$ with respect to $(\sigmaG_{\gamma^*,\GR_\circ},\GR_{\circ})$.
Combining this with Observation \ref{obs_standard_graphs} completes the proof of Theorem \ref{thm_infth} since
\begin{align*}
\DKL(\sigmaIID,\GTSM(\sigmaIID)\|\sigmaRG[,\GR],\GR)
&=\expe\left[\ln\left(r\left(\sigmaIID,[\GTSM[\circ](\sigmaIID)]^{\Gamma}\right)\right)\right]
=\expe\left[\ln\left(r_\circ\left(\sigmaIID,\GTSM[\circ](\sigmaIID)\right)\right)\right]\\
&=\DKL(\sigmaIID,\GTSM[\circ](\sigmaIID)\|\sigmaR_{\gamma^*,\GR_\circ},\GR_\circ).
\end{align*}
\subsection{The Condensation Threshold}\label{main_proofs_condth}
In this section we establish Theorem \ref{thm_cond}.
First, we show Theorem \ref{thm_cond}\ref{thm_cond_r} in Section \ref{condth_rs}, followed by
the proof of Theorem \ref{thm_cond}\ref{thm_cond_c} in Section \ref{condth_cond}.
\subsubsection{The Replica Symmetric Regime}\label{condth_rs}
Recall that $(\lawpsi,\gamma^*,\degae)\in\mfkPr$ means that $\bethebu(\degae)=\phia(\degae)$.
\begin{lemma}\label{lemma_condth_rs}
Assume that $\bethebu(\degae)=\phia(\degae)$ and let $\phi(m)=\expe[\phiG(\GR)]$.
\begin{alphaenumerate}
\item\label{lemma_condth_rs_m}
We have $\phi(m)=\phia(\degae)+\mclo(n^{-\rho/2})$ if $\degae=km/n$.
\item\label{lemma_condth_rs_p}
We have $\expe[\phi(\mR^*)]=\phia(\degae)+\mclo(\deltam+\epsm+n^{-\rho/2})$, so
$\expe[\phi(\mR)]=\phia(\degae)+\mclo(n^{-\rho/2})$.
\end{alphaenumerate}
\end{lemma}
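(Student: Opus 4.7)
The plan is to introduce the interpolating quantity
\begin{align*}
\Psi(\beta) \;=\; \frac{1}{n} \ln \expe\bigl[\ZG(\GR)^\beta\bigr], \qquad \beta \in [0,1],
\end{align*}
which is convex (as a scaled cumulant generating function of $\ln \ZG(\GR)$) and satisfies $\Psi(0)=0$, $\Psi(1) = \frac{1}{n}\ln\ZM =: \phi_{\mathrm{a}}(m)$, $\Psi'(0) = \expe[\phiG(\GR)] = \phi(m)$, together with $\Psi'(1) = \expe[\phiG(\GTSM(\sigmaNIS))] =: \phi_N$ via the Nishimori identity (Observation \ref{obs_nishicond}\ref{obs_nishicond_GNIS}). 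A direct Fubini computation then yields the twin identities
\begin{align*}
\phi_N - \phi_{\mathrm{a}}(m) \;=\; \int_0^1 \gamma\, \Psi''(\gamma)\,d\gamma, \qquad \phi_{\mathrm{a}}(m) - \phi(m) \;=\; \int_0^1 (1-\gamma)\, \Psi''(\gamma)\,d\gamma.
\end{align*}
Since $\Psi''(\beta) = n\,\Var_{Q_\beta}(\phiG(\GR))$, where $Q_\beta$ is the exponentially tilted measure proportional to $\ZG(\GR)^\beta$ under $\GR$, both integrands are nonnegative, which recovers the chain $\phi \le \phi_{\mathrm{a}} \le \phi_N$ already implicit in Observation \ref{obs_qfedinequ}.

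For part (a) under the RS hypothesis $\bethebu(\degae)=\phia(\degae)$, Theorem \ref{thm_bethe_external}\ref{thm_bethe_external_m} combined with Corollary \ref{cor_phiTSIIDNIS}\ref{cor_phiTSIIDNIS_m} yields $\phi_N = \phia(\degae)+\mclo(n^{-\rho})$, while Observation \ref{obs_phia}\ref{obs_phia_asymptotics_m} delivers $\phi_{\mathrm{a}}(m)=\phia(\degae)+\mclo(n^{-1})$. Subtracting gives
\begin{align*}
\int_0^1 \gamma\, \Psi''(\gamma)\,d\gamma \;=\; \phi_N - \phi_{\mathrm{a}}(m) \;=\; \mclo(n^{-\rho}).
\end{align*}
Coupling this with a uniform bound $\|\Psi''\|_\infty \le C_\mfkg$, the elementary threshold split at $a\in(0,1]$ reads
\begin{align*}
\int_0^1 (1-\gamma)\Psi''(\gamma)\,d\gamma \;\le\; C_\mfkg\, a \;+\; \frac{1}{a}\int_a^1 \gamma\, \Psi''(\gamma)\,d\gamma \;\le\; C_\mfkg\, a \;+\; \mclo(n^{-\rho}/a),
\end{align*}
and optimising $a = \Theta(n^{-\rho/2})$ produces $\phi_{\mathrm{a}}(m) - \phi(m) = \mclo(n^{-\rho/2})$, which combined with the annealed asymptotics closes part (a).

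The uniform bound $\|\Psi''\|_\infty \le C_\mfkg$ is the crux of the argument, and I expect it to be the main technical obstacle. It amounts to $\Var_{Q_\beta}(\phiG(\GR)) \le C_\mfkg/n$ uniformly for $\beta \in [0,1]$. At $\beta=0$ this follows from Efron–Stein applied to $\phiG$ as a $\mclo(1/n)$-Lipschitz function of the i.i.d.\ standard factors of $\GR$ (a variance incarnation of Lemma \ref{lemma_mcdiarmid}); at $\beta=1$ the Nishimori identity replaces $Q_1$ by $\GTSM(\sigmaNIS)$, for which Lemma \ref{lemma_concphiTSMIIDNIS} supplies the same bound. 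For intermediate $\beta$ the tilt $\ZG(\GR)^\beta$ introduces dependencies between factors, but the bounded-differences structure of $\phiG$ (Observation \ref{obs_phi_lipschitz}) persists, and I expect the uniform variance estimate to follow either from a martingale-difference argument carried out under $Q_\beta$ or by an interpolation between the two endpoints exploiting the explicit factorisation of $\ZG^\beta$ via replicas. Securing this step is where the delicate work lies; once it is in place, the twin identities and the elementary threshold optimisation provide a completely mechanical route to the claimed $n^{-\rho/2}$ rate, with the square-root loss being an unavoidable artefact of the split.

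Part (b) is then a routine averaging argument. The Lipschitz estimate $|\phi(m_1)-\phi(m_2)| \le L\,|km_1/n - km_2/n|$ from Lemma \ref{lemma_contphiM}, coupled with the crude uniform bound $|\phi(m)| \le c\,km/n$ of Observation \ref{obs_phia}\ref{obs_phia_lipschitz} for the atypical range, reduces $\expe[\phi(\mR^*)]$ to $\phi(\lfloor\degae n/k\rfloor)$ at cost $\mclo(\epsm + \deltam)$ by restricting to the high-probability window $|\degaR^*-\degae| \le \deltam$; invoking part (a) at the representative count completes the argument. The specialisation to $\mR\dequal\Po(\degae n/k)$ follows by choosing the Poisson tail radius $r = \Theta(\sqrt{\ln n/n})$ in Corollary \ref{cor_dega}, the resulting error being absorbed into the $n^{-\rho/2}$ bound exactly as in the proof of Corollary \ref{cor_phiTSIIDNIS}\ref{cor_phiTSIIDNIS_p}.
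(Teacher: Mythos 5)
Your proposal takes a genuinely different route from the paper, and the high-level architecture is clean: the twin Taylor identities $\phi_N-\phia[,n](m)=\int_0^1\gamma\Psi''(\gamma)\,d\gamma$ and $\phia[,n](m)-\phi(m)=\int_0^1(1-\gamma)\Psi''(\gamma)\,d\gamma$ are correct, as is the threshold split $\int_0^1(1-\gamma)\Psi''\le C a+\frac{1}{a}\int_0^1\gamma\Psi''$, and the optimization over $a$ does reproduce the $n^{-\rho/2}$ rate. However, the argument hinges entirely on the uniform bound $\|\Psi''\|_\infty\le C_\mfkg$, equivalently $\Var_{Q_\beta}(\phiG(\GR))\le C/n$ for all $\beta\in[0,1]$, and this step is a genuine gap that I do not think can be filled in the way you sketch.

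The problem is that the tilted measure $Q_\beta\propto\ZG(\GR)^\beta$ has no product structure over the $m$ factors when $\beta\in(0,1)$. Both endpoints are special: at $\beta=0$ the factors are i.i.d.\ by construction, and at $\beta=1$ Observation \ref{obs_nishicond}\ref{obs_nishicond_GNIS} together with Observation \ref{obs_TSM_iid} re-expresses $Q_1$ as the Nishimori teacher--student model, in which the factors are again conditionally i.i.d.\ given the ground truth. Your variance bounds at $\beta=0,1$ exploit exactly this independence through Efron--Stein/McDiarmid (Lemma \ref{lemma_mcdiarmid}, Lemma \ref{lemma_concphiTSMIIDNIS}). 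For intermediate $\beta$ no such representation is available; a bounded-differences or martingale-difference argument requires control of the \emph{conditional} laws under $Q_\beta$, which are not close to the conditional laws of the base measure, and a replica factorisation only makes rigorous sense for integer $\beta$. Indeed, near the condensation threshold the map $\beta\mapsto\Psi_n(\beta)$ is expected to develop a kink in the limit $n\to\infty$, so one should not expect $\Psi''_n$ to stay $O(1)$ uniformly in $\beta$ without using the RS hypothesis in a more essential way than you do.

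The paper circumvents the intermediate $\beta$ entirely. It first uses Theorem \ref{thm_bethe_external}, Corollary \ref{cor_phiTSIIDNIS} and $\bethebu(\degae)=\phia(\degae)$ to pin $\phi_N$ within $r=cn^{-\rho}$ of $\phia(\degae)$, then uses Lemma \ref{lemma_concphiTSMIIDNIS} to show that $\phiG(\GTSM(\sigmaNIS))$ lands in the $2r$-window around $\phia(\degae)$ with probability at least $1/2$. Transferring back to $\GR$ via the Radon--Nikodym derivative of Observation \ref{obs_nishicond}\ref{obs_nishicond_GNIS}, this controls the first moment of the truncation $\bm Z=\ZG(\GR)\bmone\mcle[\mrma]$, while the very definition of the truncation event controls its second moment. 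Paley--Zygmund then yields $\prob[\phiG(\GR)\ge\phia(\degae)-\ln(4)/n]\ge\frac{1}{16}e^{-2rn}$, and a final application of McDiarmid concentration for $\GR$ at radius $r_\circ\sim n^{-\rho/2}$ (calibrated so that the tail $c^\circ_2e^{-c^\circ_1 r_\circ^2 n}$ is dominated by $\frac{1}{16}e^{-2rn}$) forces $\phi(m)\ge\phia(\degae)-\mclo(n^{-\rho/2})$; the matching upper bound is Jensen. Thus the paper only needs concentration at the two endpoints and trades the uniform variance control you rely on for a truncated second-moment inequality, which is where the square-root loss actually originates.

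Your part (b) is fine and essentially matches the paper: restrict to $|\degaR^*-\degae|\le\deltam$ at cost $\mclo(\epsm+\deltam)$ using boundedness and Lipschitz continuity of $\phi$, then invoke part (a) at $m^\circ=\lfloor\degae n/k\rfloor$.
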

\begin{proof}
Using Theorem \ref{thm_bethe_external}\ref{thm_bethe_external_m}, Corollary \ref{cor_phiTSIIDNIS}\ref{cor_phiTSIIDNIS_m} and $\bethebu(\degae)=\phia(\degae)$ let $c_\mfkg\in\reals_{>0}$ be such that
$|\hat\phi(m)-\phia(\degae)|\le r$, where $\hat\phi(m)=\expe[\phiG(\GTSM(\sigmaNIS))]$ and $r=cn^{-\rho}$.
With $\hat c$ from Lemma \ref{lemma_concphiTSMIIDNIS} and $\hat{\mcle}=\{|\GTSM(\sigmaNIS)-\hat\phi(m)|<r\}$ we have $\prob[\lnot\hat{\mcle}]\le\hat c_2\exp(-\hat c_1n^{1-2\rho})$.
Further, with
\begin{align*}
n_{\circ,\mfkg}=\left(\frac{\ln(2\hat c_2)}{\hat c_1}\right)^{1/(1-2\rho)}
\end{align*}
we have $\prob[\hat{\mcle}]\ge 1/2$ for $n\ge n_\circ$
(for $n\le n_\circ$ we use $|\phi(m)-\phia(\degae)|\le\frac{1}{k}\ln(\psibu)\degabu n_\circ^{\rho/2}n^{-\rho/2}$).
Notice that $\hat{\mcle}_{\mrma}=\{|\GTSM(\sigmaNIS)-\phia(\degae)|<2r\}$ holds on $\hat{\mcle}$ by the triangle inequality,
so with
$\mcle[\mrma]=\{|\phiG(\GR)-\phia(\degae)|<2r\}$,
$\bm Z=\ZG(\GR)\bmone\mcle[\mrma]$ and $\overline Z=\expe[\bm Z]$ Observation \ref{obs_nishicond}\ref{obs_nishicond_GNIS} yields
\begin{align*}
\overline Z=\ZM\expe\left[\frac{\ZG(\GR)}{\ZM}\bmone\mcle[\mrma]\right]
=\ZM\prob[\hat{\mcle}_{\mrma}]\ge\frac{1}{2}\ZM.
\end{align*}
Further, we have
$\bm Z^2=\exp(2n\phiG(\GR))\bmone\mcle[\mrma]\le\exp(2n\phia(\degae)+2rn)=e^{2rn}\ZM^2$,
using $\mcle[\mrma]$ and the definition of $\phia$. Now, the Paley-Zygmund inequality yields
\begin{align*}
\prob\left[\bm Z\ge\frac{1}{2}\overline Z\right]
\ge\frac{\overline Z^2}{4\expe[\bm Z^2]}
\ge\frac{\ZM^2}{16\ZM^2}e^{-2rn}>0.
\end{align*}
Using $\bm Z\le\ZG(\GR)$ and $\overline Z\ge\frac{1}{2}\ZM$ gives $\ZG(\GR)\ge\frac{1}{4}\ZM$ on $\bm Z\ge\frac{1}{2}\overline Z$, so
\begin{align*}
P=\prob\left[\phiG(\GR)\ge\phia(\degae)-\frac{\ln(4)}{n}\right]
=\prob\left[\ZG(\GR)\ge\frac{1}{4}\ZM\right]\ge\frac{\ZM^2}{16\ZM^2}e^{-2rn}
\frac{1}{16}e^{-2rn}>0.
\end{align*}
Now, with $c^\circ$ from Lemma \ref{lemma_mcdiarmid} and $r_\circ=\sqrt{\frac{1}{c^\circ_1n}\ln(\frac{2c^\circ_2}{P})}$ we have
\begin{align*}
\prob\left[\phiG(\GR)\ge\phia(\degae)-\frac{\ln(4)}{n},|\phiG(\GR)-\phi(m)|<r_\circ\right]
\ge P-c^\circ_2e^{-c^\circ_1r_\circ^{2}n}=\frac{1}{2}P>0.
\end{align*}
On this event we have $\phi(m)\ge\phia(\degae)-r_\circ-\frac{\ln(4)}{n}$, which
establishes Part \ref{lemma_condth_rs}\ref{lemma_condth_rs_m} since $\phia(\degae)\ge\phi(m)$ by Observation \ref{obs_qfedinequ} and Lemma \ref{lemma_firstmom}\ref{lemma_firstmomZ}, and further
\begin{align*}
r_\circ+\frac{\ln(4)}{n}
=\sqrt{\frac{2}{c^\circ_1}r+\frac{\ln(32c^\circ_2)}{c^\circ_1n}}+\frac{\ln(4)}{n}
\le c'n^{-\rho/2},\,
c'=\sqrt{\frac{2c+\ln(32c^\circ_2)}{c^\circ_1}}+\ln(4).
\end{align*}
Observation \ref{obs_phi_lipschitz} and Lemma \ref{lemma_contphiM} give $\expe[\phi(\mR^*)]=\phi(\lfloor\degae n/k\rfloor)+\mclo(\deltam+\epsm+n^{-1})$, so Part \ref{lemma_condth_rs}\ref{lemma_condth_rs_m} completes the proof.
\end{proof}
Observation \ref{obs_standard_graphs} establishes Theorem \ref{thm_cond}\ref{thm_cond_r}.
\subsubsection{The Condensation Regime}\label{condth_cond}
Notice that as opposed to all other results, Theorem \ref{thm_cond}\ref{thm_cond_c} does not address the asymptotics, only the limits.
Hence, we do not discuss finite size approximations like Lemma \ref{lemma_condth_rs}\ref{lemma_condth_rs_m}. Let
\begin{align*}
\phiqbu(\degae)=\limsup_{n\rarr\infty}\expe[\phiG(\GR_{\mR^*})],\,
\phiqbl(\degae)=\liminf_{n\rarr\infty}\expe[\phiG(\GR_{\mR^*})].
\end{align*}
\begin{lemma}\label{lemma_condth_c}
There exists $c_\mfkg\in\reals_{>0}$ such that for $d\in[0,\degabu]$ we have
\begin{align*}
\phia(d)-\phiqbu(d)\ge c\sup_{d'\in[0,d]}(\bethebu(d')-\phiqbl(d'))^2.
\end{align*}
\end{lemma}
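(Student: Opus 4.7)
The plan is to derive the inequality in two steps: (i) approximate monotonicity $F(d)\ge F(d')$ for $d'\le d$, where $F(d):=\phia(d)-\phiqbu(d)\ge 0$; and (ii) a per-point quadratic lower bound $F(d')\ge c(\bethebu(d')-\phiqbl(d'))^2$. Together, these immediately yield the supremum bound.

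For (i), I would use the Poisson thinning of Observation~\ref{obs_poisson}\ref{obs_poisson_bin} to couple $\GR_{\mR_{d+\Delta d}}$ as the union of $\GR_{\mR_d}$ with a Poisson number of extra independent factors. Telescoping, the finite-$n$ increment of $\phia - \expe[\phi_G(\GR)]$ decomposes as a sum of per-factor Jensen gaps $\tfrac{1}{n}(\ln\ZFabu - \expe[\ln\psiWgG[,\GR_{m'}](\vRa,\psiRa)])$, each non-negative by Jensen's inequality applied to the outer logarithm. Taking $\liminf$ via Observation~\ref{obs_phia}\ref{obs_phia_asymptotics_po} gives $F(d)\ge F(d')$ for $d'\le d$.

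For (ii), Observation~\ref{obs_qfedinequ} gives
\[
\DKL(\GTSM(\sigmaNIS)\|\GR) = \expe[\phi_G(\GTSM(\sigmaNIS))] - \phi(m), \qquad
\DKL(\GR\|\GTSM(\sigmaNIS)) = \phi(m) - \expe[\phi_G(\GR)],
\]
where $\phi(m)=n^{-1}\ln\expe[\ZG(\GR)]$. By Theorem~\ref{thm_bethe_external}\ref{thm_bethe_external_po} combined with Corollary~\ref{cor_phiTSIIDNIS}, the first quantity tends to $\bethebu(d')-\phia(d')$, while along a subsequence realising $\phiqbl(d')$ the second tends to $\phia(d')-\phiqbl(d')$; their sum (the Jeffreys divergence) converges to $\bethebu(d')-\phiqbl(d')$. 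Pinsker (Observation~\ref{obs_tv}\ref{obs_tv_pinsker}) applied to each KL yields $\|\GR-\GTSM(\sigmaNIS)\|_\mrmtv^2 \le \tfrac{1}{2}\min(\DKL(\GTSM(\sigmaNIS)\|\GR),\DKL(\GR\|\GTSM(\sigmaNIS)))$; and on the high-probability concentration event from Proposition~\ref{proposition_phi_concon}\ref{proposition_phi_concon_conc}, the Radon--Nikodym derivative $\ZG(G)/\ZM$ between the two graph laws is controlled, giving a reverse-Pinsker bound $\DKL(\GTSM(\sigmaNIS)\|\GR)\le \mathrm{const}\cdot\|\GR-\GTSM(\sigmaNIS)\|_\mrmtv$. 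Chaining these bounds upgrades the Jeffreys-type limit into $\DKL(\GR\|\GTSM(\sigmaNIS))\ge c(\bethebu(d')-\phiqbl(d'))^2$, and taking $\liminf$ yields the desired quadratic lower bound on $F(d')$.

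The hardest part is the reverse-Pinsker / Bretagnolle--Huber step in (ii): globally the $(\GTSM(\sigmaNIS),\GR)$-Radon--Nikodym derivative $\ZG(G)/\ZM$ fluctuates exponentially in $n$, so the naive inequality $\DKL\le C\|\cdot\|_\mrmtv$ fails at the level of full graph laws. Restricting to the concentration event is natural, but requires delicate control over the complement (where KL contributions may be exponentially large) and a careful choice of truncation radius depending on $n$, ensuring the $o(1)$ corrections do not swamp the quadratic signal. The $\limsup$/$\liminf$ bookkeeping between $\phiqbu$ and $\phiqbl$, together with the subsequence selection for $\phiqbl$ in Step~(ii), must also be coordinated with the monotonicity from Step~(i) so that both estimates apply simultaneously along the selected subsequence.
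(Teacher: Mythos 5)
Both steps of your proposal have genuine gaps, and the approach departs fundamentally from the paper's.

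In step (i) the claimed monotonicity $F(d)\ge F(d')$ of the Jensen gap does not follow from per-factor Jensen. You would need $\expe[\psiWgG[,\GR_{m'}](\wRa)]\le\ZFabu$, but $\expe[\psiWgG[,\GR_{m'}](\wRa)]=\expe[\psiae(\sigmaRG[,\GR_{m'},\vRa])]$ is the expectation of $\psiae$ against a joint law on $[q]^k$ that is in general not a product measure: the Gibbs measure of the null model evaluated at $k$ independent uniformly random positions has nontrivial correlations, which is exactly the signature of condensation. Since $\ZFabu$ is only a maximum over product laws $\gamma^{\otimes k}$, the maximality of $\gamma^*$ gives no control here; the paper makes no such monotonicity claim. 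In step (ii) you correctly flag the reverse-Pinsker step as the hard part, and it is where the argument fails. On the concentration event the $(\GTSM(\sigmaNIS),\GR)$-derivative $\ZG(G)/\ZM$ is of size $\exp(-n(\bar\phi(m')-\phi(m'))+O(\eps n))$, which is exponentially far from $1$ exactly when the Jensen gap is positive, i.e., precisely in the condensed regime the lemma addresses, so any reverse-Pinsker constant grows linearly in $n$. Forward Pinsker is also vacuous at this scale: $\|\GR-\GTSM(\sigmaNIS)\|_\mrmtv^2\le 1$ while $\DKL(\GR\|\GTSM(\sigmaNIS))=\Theta(n)$, so chaining the two cannot produce an $O(1)$ quadratic inequality.

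The paper proceeds quite differently, via a direct rare-event probability comparison that sidesteps both issues. It fixes $d'\le d$ and couples $\GR_{m'}=R(\GR_m)$ by keeping the first $m'$ of $m$ factors (so no monotonicity claim is needed), then estimates $P=\prob[\phiG(\GR_{m'})\le\phi(m')+\eps\delta']$ in two incompatible ways, where $\delta'=\phi^*(m')-\phi(m')$ with $\phi^*$ the planted and $\phi$ the null quenched free entropy. McDiarmid concentration of $\phiG(\GR_{m'})$ forces $P\to 1$. On the other hand, restricting to the concentration event for $\phiG(\GR_m)$, the indicator is dominated by $\ZG(\GR_m)/e^{n(\phi(m)-\eps)}$, and the change of measure to $\GTSM[m](\sigmaNIS)$ via Observation~\ref{obs_nishicond}\ref{obs_nishicond_GNIS} converts this into a factor $e^{n(\bar\phi(m)-\phi(m)+\eps)}$ times a planted-model probability; mutual contiguity of $\sigmaNIS$ with $\sigmaIID$ and concentration of $\phiG(\GTSM[m'](\sigmaIID))$ around $\phi^*(m')$ then yield a penalty $e^{-c^*_1(1-\eps)^2\delta'^2 n}$. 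Consistency of the two estimates forces the limiting exponent to be non-negative, which after $\eps\to 0$ is exactly the claimed quadratic inequality.
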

\begin{proof}
Observation \ref{obs_qfedinequ}, Observation \ref{obs_phia}\ref{obs_phia_asymptotics_po}, Theorem \ref{thm_bethe_external}\ref{thm_bethe_external_general} and Corollary \ref{cor_phiTSIIDNIS}\ref{cor_phiTSIIDNIS_p} yield
\begin{align*}
\bethebu(d)\ge\phia(d)\ge\phiqbu(d)\ge\phiqbl(d).
\end{align*}
For $d'\in[0,d]$ with $\delta^*(d')=0$, where $\delta^*(d)=\bethebu(d)-\phia(d)$,
we have $\phiqbl(d')=\phiqbu(d')=\phia(d')=\bethebu(d')$
by Lemma \ref{lemma_condth_rs}\ref{lemma_condth_rs_p}, and hence $\phia(d)-\phiqbu(d)\ge c(\bethebu(d')-\phiqbl(d'))^2$ for all $c\in\reals$.
Hence, assume that $\delta^*(d')>0$, let $m'_n=\lfloor d'n/k\rfloor$ and $m_n=\lfloor dn/k\rfloor$. Notice that $m'_n\le m_n\le\degabu n/k$.
Fix $\eps\in(0,1)$ with $\eps<\delta^*(d)/2$ and let $\delta'(n)=\phi^*(m')-\phi(m')$ with $\phi^*(m)=\expe[\phiG(\GTSM(\sigmaIID))]$ and $\phi(m)=\expe[\phiG(\GR)]$.
With $\tilde c_\mfkg$ satisfying both Theorem \ref{thm_bethe_external}\ref{thm_bethe_external_general} and Observation \ref{obs_phia}\ref{obs_phia_asymptotics_po} for any small $\deltam\ge k/n$, $\epsm\ge 0$ and using $n^{-\rho}$,
let $n_{\circ}(\eps)=(\eps/\tilde c)^{-\rho}$, so for $n\ge n_\circ(\eps)$ we have $|\phi^*(m')-\bethebu(d')|\le\eps$ and $|\bar\phi(m')-\phia(d')|\le\eps$, where $\bar\phi(m)=\frac{1}{n}\ln(\ZM)$.
This yields $\delta'(n)>0$ since
\begin{align*}
\phi^*(m')\ge\bethebu(d')-\eps>\phia(d')+\eps\ge\bar\phi(m')\ge\phi(m').
\end{align*}
With $c^\circ$ from Lemma \ref{lemma_mcdiarmid}, $c^*$ from Lemma \ref{lemma_concphiTSMIIDNIS},
$\hat c$ from Corollary \ref{cor_mutcont}\ref{cor_mutcont_rnbu} and the canonical coupling $(\GR,\GR')$ of $\GR_m$ and $\GR_{m'}$, meaning $\GR'=R(\GR)$ with $R([w]^\Gamma)=[w_{[m']}]^{\Gamma}$, we have
\begin{align*}
P(n)&=\prob[\phiG(\GR')\le\phi(m')+\eps\delta']\\
&\le c^\circ_2e^{-c^\circ_1\eps^2n}+
\prob\left[\phiG(\GR')\le\phi(m')+\eps\delta',|\phiG(\GR)-\phi(m)|<\eps\right]\\
&\le c^\circ_2e^{-c^\circ_1\eps^2n}+
\expe\left[\frac{\ZG(\GR)}{\exp(n(\phi(m)-\eps))}\bmone\{\phiG(\GR')\le\phi(m')+\eps\delta'\}\right]\\
&=c^\circ_2e^{-c^\circ_1\eps^2n}+
e^{n(\bar\phi(m)-\phi(m)+\eps)}\prob\left[\phiG(R(\GTSM(\sigmaNIS)))\le\phi(m')+\eps\delta'\right],
\end{align*}
where we used Observation \ref{obs_nishicond}\ref{obs_nishicond_GNIS}.
Observation \ref{obs_TSM_iid} yields $R(\GTSM(\sigmaIID))\dequal\GTSM[m'](\sigmaIID)$,
so Corollary \ref{cor_mutcont}\ref{cor_mutcont_rnbu} with Lemma \ref{lemma_concphiTSMIIDNIS} yields
\begin{align*}
P&\le c^\circ_2e^{-c^\circ_1\eps^2n}+\hat c
e^{n(\bar\phi(m)-\phi(m)+\eps)}\prob\left[\phiG(\GTSM[m'](\sigmaIID)))\le\phi(m')+\eps\delta'\right]\\
&\le c^\circ_2e^{-c^\circ_1\eps^2n}+\hat cc^*_2\exp(n\beta_\eps(n)),\,
\beta_\eps(n)=\bar\phi(m)-\phi(m)+\eps-c^*_1(1-\eps)^2\delta'^2,
\end{align*}
where we used that $\phi(m')+\eps\delta'=\phi^*(m')-(1-\eps)\delta'$.
For $\beta(\eps)=\liminf_{n\rarr\infty}\beta_\eps(n)$ taking the limits yields
$\beta(\eps)=\phia(d)-\phiqbu(d)-c^*_1(1-\eps)^2(\bethebu(d')-\phiqbl(d'))^2+\eps$.
On the other hand, Lemma \ref{lemma_mcdiarmid} yields $P\ge 1-c^\circ_2\exp(-n\beta'_\eps(n))$ with $\beta'_\eps(n)=c^\circ_1\eps^2\delta'^2$.
Since we assume $\delta^*(d)>0$, we have $\beta'(\eps)=\liminf_{n\rarr\infty}\beta'_\eps(n)=c^\circ_1\eps^2(\bethebu(d')-\phiqbu(d'))^2>0$.
This shows that $\lim_{n\rarr\infty}P(n)=1$, which in turn yields $\beta(\eps)\ge 0$.
Since $\beta$ is a quadratic polynomial in $\eps$, and in particular continuous, we have $\beta(0)\ge 0$, so the assertion holds with $c^*_1$.
\end{proof}
Observation \ref{obs_standard_graphs} establishes Theorem \ref{thm_cond}\ref{thm_cond_c}.
\subsection{The Mutual Information}\label{main_proofs_mutinf}
We turn to the proof of the last main result.
As before, we show that the mutual information for graphs with external fields converges to $\iota^*(d)=\frac{d}{k\ZFabu}\expe[\xlnx(\psiR(\sigmaR))]-\bethebu(d)$ from Theorem \ref{thm_mutinf}, and then obtain Theorem \ref{thm_mutinf} as a corollary.
\begin{theorem}\label{thm_mi_external}
Let $\iota(m)=\frac{1}{n}I(\sigmaIID,\GTSM(\sigmaIID))$.
\begin{alphaenumerate}
\item\label{thm_mi_external_asymptotics_m}
We have $\iota(m)=\iota^*(km/n)+\mclo(n^{-\rho})$ for $km/n\le\degabu$.
\item\label{thm_mi_external_asymptotics_po}
We have $\expe[\iota(\mR^*)]=\iota^*(\degae)+\mclo(\epsm+\deltam+n^{-\rho})$, so $\expe[\iota(\mR)]=\iota^*(\degae)+\mclo(n^{-\rho})$.
\end{alphaenumerate}
\end{theorem}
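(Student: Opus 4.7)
The plan is to write $I(\sigmaIID,\GTSM(\sigmaIID))=T_1-T_2$ by tracking Radon-Nikodym derivatives in two stages. Let $r(\sigma,G)=\psiG[,G](\sigma)/\psiM(\sigma)$ denote the $(\GTSM(\sigma),\GR)$-derivative and $r^*_\mrmg(G)=\expe[r(\sigmaIID,G)]$ the $(\GTSM(\sigmaIID),\GR)$-derivative from Observation~\ref{obs_condiid}. Then the joint $(\sigmaIID,\GTSM(\sigmaIID))$ is absolutely continuous with respect to $\sigmaIID\otimes\GTSM(\sigmaIID)$ with derivative $r/r^*_\mrmg$, so
\begin{align*}
T_1=\expe[\ln r(\sigmaIID,\GTSM(\sigmaIID))],\qquad T_2=\expe[\ln r^*_\mrmg(\GTSM(\sigmaIID))]=\DKL(\GTSM(\sigmaIID)\,\|\,\GR).
\end{align*}
I aim to show $T_1/n=\frac{d}{k\ZFabu}\expe[\xlnx(\psiR(\sigmaR))]-\phia(d)+\mclo(n^{-1/2})$ and $T_2/n=\delta^*(d)+\mclo(n^{-\rho})$, so that the $-\phia(d)$ pieces cancel and $T_1/n-T_2/n$ collapses to the target $\iota^*(d)$.

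\textbf{Reducing $T_2$ via the Bayes chain rule.}
For $T_2$ I apply the chain rule for relative entropy to the joint divergence of Theorem~\ref{thm_infth_external}. Conditionally on $\GTSM(\sigmaIID)=G$ the law of $\sigmaIID$ is $\sigmaIID_{\mrmg,G}$ by Observation~\ref{obs_condiid}\ref{obs_condiid_law}, while the law of $\sigmaRG[,\GR]$ given $\GR=G$ is the Gibbs measure $\lawG[,G]$. The chain rule then reads $n\delta(m)=T_2+\expe[\DKL(\sigmaIID_{\mrmg,\GTSM}\,\|\,\lawG[,\GTSM])]$, and the conditional-KL term is uniformly bounded by Observation~\ref{obs_DKLgt}\ref{obs_DKLgt_IIDCG}. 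Consequently $T_2/n=\delta(m)+\mclo(1/n)=\delta^*(d)+\mclo(n^{-\rho})$ by Theorem~\ref{thm_infth_external}.

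\textbf{Computing $T_1$ directly.}
Under $\mI\equiv 0$ and $\setP=\emptyset$ the prefactors $\gamma^{*\otimes n}(\sigma)$ cancel, so $\ln r(\sigma,G)=\sum_a\ln\psi_a(\sigma_{v(a)})-m\ln\ZFa(\gammaN[,\sigma])$. Given $\sigmaIID=\sigma$ the $m$ wire-weight pairs are i.i.d.\ with law $\wTSa[,\sigma]$, so using the definition of the $(\wTSa,\wRa)$-derivative in the spirit of the proof of Observation~\ref{obs_psiexpenullprod} one obtains $\expe[\ln\psi_a(\sigma_{v(a)})\mid\sigmaIID=\sigma]=\expe[\xlnx(\psiR(\tauR))]/\ZFa(\gammaN[,\sigma])$ for $(\psiR,\tauR)\dequal\lawpsi\otimes\gammaN[,\sigma]^{\otimes k}$. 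Setting $F(\gamma)=\expe[\xlnx(\psiR(\tauR_\gamma))]/\ZFa(\gamma)-\ln\ZFa(\gamma)$ with $\tauR_\gamma\dequal\gamma^{\otimes k}$ gives $T_1/n=(m/n)\cdot\expe[F(\gammaIID)]$. Combining the concentration $\expe[\|\gammaIID-\gamma^*\|_\mrmtv^2]=\mclo(1/n)$ from Observation~\ref{obs_gtiid}\ref{obs_gtiid_var} with Lipschitz continuity of $F$ (a rational function of the polynomial quantities bounded in Observation~\ref{obs_fad}) yields $T_1/n=\frac{d}{k\ZFabu}\expe[\xlnx(\psiR(\sigmaR))]-\phia(d)+\mclo(n^{-1/2})$ whenever $km/n\le\degabu$.

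\textbf{Assembly and random factor counts.}
Summing the two estimates proves Part~\ref{thm_mi_external_asymptotics_m} since $\rho<1/2$ and the $-\phia(d)$ terms cancel. For Part~\ref{thm_mi_external_asymptotics_po}, I observe that $F(\gammaIID)$ does not depend on $m$, so $\expe[T_1(\mR^*)]=\expe[\mR^*]\cdot\expe[F(\gammaIID)]$, and the hypotheses on $\mR^*$ force $|\expe[\mR^*]k/n-\degae|=\mclo(\deltam+\epsm)$ (via the standard split $\expe[|\degaR^*-\degae|]\le\deltam+\epsm+\degabu\epsm$); for $\expe[T_2(\mR^*)]$ I take expectation of the chain-rule identity, use Theorem~\ref{thm_infth_external}\ref{thm_infth_external_asymptotics_po} for $\expe[\delta(\mR^*)]$, and split on $\{\mR^*\le\mbu\}$ to bound the conditional-KL term (its complement having probability $\le\epsm$). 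The main conceptual point is that although $\sigmaIID$ does not satisfy the Nishimori condition, its product structure makes the computation of $T_1$ trivial by conditioning, while the Nishimori-tied bound of Observation~\ref{obs_DKLgt}\ref{obs_DKLgt_IIDCG} is precisely what bridges this non-Nishimori input to the Nishimori-based Theorem~\ref{thm_infth_external} for $T_2$; everything else is assembly and the already-established concentration estimates.
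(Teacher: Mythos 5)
Your decomposition is correct and gives the stated bounds, but it is a genuinely different route from the paper's. The paper first conditions on the graph, writing $n\iota(m)=\DKL(\sigmaIID_{\mrmg,\GTSM}\|\sigmaR\mid\GTSM)$ and then splitting this conditional relative entropy into the ground-truth entropy $H(\gamma^*)$, a conditional \emph{cross entropy} $\overline\eta$ (involving $\ln(\psiG[,\GTSM](\sigmaIID)/\ZG(\GTSM))$), and the same conditional-KL remainder $\overline\delta$ that you bound via Observation~\ref{obs_DKLgt}\ref{obs_DKLgt_IIDCG}; the $\overline\eta$ term is then handled by combining a direct $\xlnx$ computation with Theorem~\ref{thm_bethe_external}. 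You instead split $I=T_1-T_2$ by factoring the joint Radon–Nikodym derivative as $r/r^*_\mrmg$, compute $T_1$ directly by conditioning on $\sigmaIID$ (which is pleasantly clean because the teacher-student factors are i.i.d.\ given the ground truth), and route $T_2=\DKL(\GTSM(\sigmaIID)\|\GR)$ through the chain rule and the already-established Theorem~\ref{thm_infth_external}. The two decompositions are algebraically equivalent — in your version the $\phia(d)$ pieces from $T_1$ and from $\delta^*(d)=\bethebu(d)-\phia(d)$ cancel to produce exactly the paper's formula — and both ultimately rest on the same three ingredients (Observation~\ref{obs_DKLgt}\ref{obs_DKLgt_IIDCG}, Observation~\ref{obs_TSM_iid} giving the i.i.d.\ structure, and Theorem~\ref{thm_bethe_external}). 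What you gain is modularity: Part~\ref{thm_mi_external_asymptotics_po} inherits its random-$\mR^*$ handling directly from Theorem~\ref{thm_infth_external}\ref{thm_infth_external_asymptotics_po} rather than being re-derived.

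One small under-specification in the $\mR^*$ step: you say the complement $\{\mR^*>\mbu\}$ is handled because it has probability at most $\epsm$. That alone is not quite sufficient, because the conditional-KL term $\expe[\DKL(\sigmaIID_{\mrmg,\GTSM}\|\lawG[,\GTSM])\mid\mR^*=m]$ grows linearly in $m$ (the Radon–Nikodym derivative satisfies $|\ln r_{\mrms,G}|\le 2m\ln\psibu$, cf.\ the derivative displayed in the proof of Observation~\ref{obs_DKLgt}), so a bare probability bound could still leave an unbounded contribution. You need to combine the pointwise bound $\DKL(\cdot)\le 2m\ln\psibu$ with the expectation bound $\expe[\bmone\{|\degaR^*-\degae|>\deltam\}\degaR^*]\le\epsm$ from Section~\ref{implications_extensions_related_work}, giving an error $\mclo(\epsm)$ after dividing by $n$. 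This is exactly what the paper does; with that fix your proof is complete.
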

We prove Theorem \ref{thm_mi_external} in three parts.
For this purpose recall the notions from Section \ref{pinning_conditional_entropy} and $\sigmaIID_{\mrmg}$ from Section \ref{ground_truth_given_graph}.
First, we split $\iota$ into three contributions, the ground truth entropy $H(\gamma^*)$, the conditional cross entropy $\overline\eta(m)=\expe[\expe[H(\sigmaRG[,\GTSM(\sigmaIID)]^*\|\sigmaRG[,\GTSM(\sigmaIID)])|\GTSM(\sigmaIID)]]$ and the conditional relative entropy $\overline\delta(m)=\expe[\expe[\DKL(\sigmaRG[,\GTSM(\sigmaIID)]^*\|\sigmaRG[,\GTSM(\sigmaIID)])|\GTSM(\sigmaIID)]]$.
\begin{lemma}\label{lemma_mi_decomp}
We have $\iota(m)=H(\gamma^*)-\overline\eta(m)+\overline\delta(m)$.
\end{lemma}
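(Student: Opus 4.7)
The plan is to decompose the mutual information using the standard identity $I(\bm a,\bm b)=H(\bm a)-H(\bm a\mid\bm b)$, compute the prior entropy explicitly, and then split the conditional entropy into a cross-entropy piece and a relative-entropy piece by pivoting against the Gibbs measure. These three contributions will match $H(\gamma^*)$, $-\overline\eta(m)$, and $\overline\delta(m)$ respectively, once the normalization by $n$ is absorbed into the definitions of $\overline\eta$ and $\overline\delta$.

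First I would write
\[
\tfrac{1}{n}I(\sigmaIID,\GTSM(\sigmaIID))=\tfrac{1}{n}H(\sigmaIID)-\tfrac{1}{n}H(\sigmaIID\mid\GTSM(\sigmaIID)).
\]
Because $\sigmaIID\dequal\gamma^{*\otimes n}$ has independent coordinates, the prior entropy factorizes into $\tfrac{1}{n}H(\sigmaIID)=H(\gamma^*)$, accounting for the leading term on the right-hand side.

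Next I would handle the conditional entropy by introducing the kernel $\sigmaIID_{\mrmg,G}$ from Section~\ref{ground_truth_given_graph}. Observation~\ref{obs_condiid}\ref{obs_condiid_law} gives $(\sigmaIID,\GTSM(\sigmaIID))\dequal(\sigmaIID_{\mrmg,\GTSM(\sigmaIID)},\GTSM(\sigmaIID))$, so $H(\sigmaIID\mid\GTSM(\sigmaIID))=\expe[H(\sigmaIID_{\mrmg,\GTSM(\sigmaIID)})]$. Applying the pointwise identity $H(\bm a)=H(\bm a\|\bm b)-\DKL(\bm a\|\bm b)$ with $\bm a=\sigmaIID_{\mrmg,G}$ (the quantity denoted by the $*$-superscript in the statement) and $\bm b=\sigmaRG[,G]$ decomposes this conditional entropy into the cross-entropy contribution $\overline\eta(m)$ and the relative-entropy contribution $\overline\delta(m)$. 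Substituting and rearranging yields $\iota(m)=H(\gamma^*)-\overline\eta(m)+\overline\delta(m)$.

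The only substantive point to verify is that $\sigmaIID_{\mrmg,G}$ is absolutely continuous with respect to $\sigmaRG[,G]$ $\prob$-almost surely, so that the relative entropy is finite and the identity $H(\bm a)=H(\bm a\|\bm b)-\DKL(\bm a\|\bm b)$ may legitimately be invoked. This is immediate from the two-sided density bounds in Observation~\ref{obs_condiid}\ref{obs_condiid_bound} and Observation~\ref{obs_GRM_expebounds}\ref{obs_GRM_expeboundsLawG}: both $\sigmaIID_{\mrmg,G}$ and $\sigmaRG[,G]$ are uniformly comparable to $\gamma^{*\otimes n}$ restricted to assignments agreeing with the pins of $G$, and hence share the same support. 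Everything else is routine information-theoretic algebra, so I do not anticipate any further obstacle.
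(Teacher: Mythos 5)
Your proof is correct and follows essentially the same route as the paper's: you decompose $I(\sigmaIID,\GTSM)$ into prior entropy minus conditional entropy, invoke Observation~\ref{obs_condiid}\ref{obs_condiid_law} to identify the posterior of $\sigmaIID$ given $\GTSM$ with $\sigmaIID_{\mrmg,\GTSM}$, and then split the resulting entropy into a cross entropy and a relative entropy against the Gibbs measure $\sigmaRG[,\GTSM]$, exactly as the paper does. The paper phrases the first step via the conditional relative entropy and the chain rule rather than the equivalent identity $I(\bm a,\bm b)=H(\bm a)-H(\bm a\mid\bm b)$, but this is only a cosmetic reordering; the underlying algebra and the reliance on the Nishimori-type identity from Section~\ref{ground_truth_given_graph} are identical.
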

The proof is presented in Section \ref{proof_lemma_mi_decomp}.
Then we determine the limit of $\overline\eta$.
\begin{lemma}\label{lemma_mi_ce}
Notice that the following holds.
\begin{alphaenumerate}
\item\label{lemma_mi_ce_m}
We have $\overline\eta(m)=H(\gamma^*)-\iota^*(km/n)+\mclo(n^{-\rho})$ for $km/n\le\degabu$.
\item\label{lemma_mi_ce_po}
We have $\expe[\overline\eta(\mR^*)]=H(\gamma^*)-\iota^*(\degae)+\mclo(\epsm+\deltam+n^{-\rho})$.
\end{alphaenumerate}
\end{lemma}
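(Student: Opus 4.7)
The plan is to turn $\overline\eta(m)$ into a quenched free entropy plus an explicit energetic correction and then invoke Theorem~\ref{thm_bethe_external} together with concentration of Gibbs marginals. Writing $\lawG[,G](\sigma)=\psiG[,G](\sigma)/\ZG(G)$ so that $-\ln\lawG[,G](\sigma)=-\ln\psiG[,G](\sigma)+n\phiG(G)$, integrating against the product of Gibbs marginals $\bigotimes_i\lawG[,G]|_i$ and expanding $\ln\psiG[,G](\sigma)=\sum_i\ln\gamma^*(\sigma_i)+\sum_{a\in[m]}\ln\psiTSM[a](\sigma_{\vTSM(a)})$ (valid because $\setP=\emptyset$, $\mI\equiv 0$) yields
\begin{align*}
\overline\eta(m)=\expe[\phiG(\GTSM(\sigmaIID))]-V(m)-F(m),
\end{align*}
where $V(m)=\tfrac{1}{n}\expe[\sum_i\sum_\sigma\lawG[,\GTSM(\sigmaIID)]|_i(\sigma)\ln\gamma^*(\sigma)]$ and $F(m)=\tfrac{1}{n}\expe[\sum_{a\in[m]}\sum_\tau\alphaR_a(\tau)\ln\psiTSM[a](\tau)]$ with $\alphaR_a=\bigotimes_h\lawG[,\GTSM(\sigmaIID)]|_{\vTSM(a,h)}$. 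Theorem~\ref{thm_bethe_external}\ref{thm_bethe_external_m} replaces the free-entropy contribution by $\bethebu(d)+\mclo(n^{-\rho})$, with $d=km/n$, so Part~(a) reduces to showing $V(m)=-H(\gamma^*)+\mclo(n^{-\rho})$ and $F(m)=\tfrac{d}{k\ZFabu}\expe[\xlnx(\psiR(\sigmaR))]+\mclo(n^{-\rho})$ with $(\psiR,\sigmaR)\dequal\lawpsi\otimes\gamma^{*\otimes k}$.

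For the variable term, Lemma~\ref{lemma_gammaaR}\ref{lemma_gammaaR_expe} yields $\expe[\|\lawG[,\GTSM(\sigmaIID)]|_*-\gamma^*\|_\mrmtv]=\mclo(n^{-1/2})$. Since $\gamma\mapsto\sum_\sigma\gamma(\sigma)\ln\gamma^*(\sigma)$ is $\ln(\psibu)$-Lipschitz on $\mclp([q])$ (because $\gamma^*\ge\psibl$), this gives $V(m)=\sum_\sigma\gamma^*(\sigma)\ln\gamma^*(\sigma)+\mclo(n^{-1/2})=-H(\gamma^*)+\mclo(n^{-\rho})$.

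The factor term $F(m)$ is the crux of the argument and mirrors Sections~\ref{assfactor_typical}--\ref{assfactor_final}. I would first enrich $\GTSM(\sigmaIID)$ with $\ThetaP=n^{1-\rho}$ pins at a cost of $\mclo(n^{-\rho})$ in $F(m)$; the pinning-impact bound on $F(m)$ (the analog of Proposition~\ref{proposition_pin_qfed} for the energy correction rather than the free entropy) follows from a coupling argument using the uniform $L^\infty$ bounds on Gibbs marginals from Observation~\ref{obs_GRM_expebounds}\ref{obs_GRM_expeboundsLawG} together with $|\ln\psiTSM|\le\ln(\psibu)$. Next, I would apply Proposition~\ref{proposition_pinG}\ref{proposition_pinG_IIDDKL} with $\ell=k$ to the pinned graph to replace $\alphaR_1$ by the joint Gibbs marginal $\lawG[,\GTSM]|_{\vTSM(1)}$ up to $\mclo(n^{-\rho})$ in total variation (via Remark~\ref{remark_epssym} and Pinsker), which again costs $\mclo(n^{-\rho})$ in $F(m)$. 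Then, as in Sections~\ref{assfactor_marginaldist}--\ref{assfactor_final}, I would transition from the conditional marginal $\piGC$ to the reweighted marginal $\piGR$ via Corollary~\ref{cor_piCR}\ref{cor_piCR_IID}, project $\piG[,\GTSM(\sigmaIID)]$ onto $\mclp[*][2]([q])$ via Lemma~\ref{lemma_piapproxpstartwo}, and use the concentration $\gammaIID\to\gamma^*$ (Observation~\ref{obs_gtiid}) together with Observation~\ref{obs_fad}\ref{obs_fad_maxbound} to replace $\ZFa(\gammaIID)$ by $\ZFabu$ and Observation~\ref{obs_fad}\ref{obs_fad_amstar} (which gives $\lawYgC[,\gamma^*]|_*=\gamma^*$) to collapse the remaining Radon--Nikodym reweightings, yielding $F(m)=\tfrac{d}{k\ZFabu}\expe[\xlnx(\psiR(\sigmaR))]+\mclo(n^{-\rho})$ and completing Part~(a). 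The hardest step is the pinning-impact bound on $F(m)$, since unlike for $\phiG$ one must additionally control how pinning perturbs the Gibbs marginals that enter $\alphaR_a$; but this is handled by a standard coupling argument combined with the uniform bounds above.

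For Part~(b), Lipschitz continuity of $\overline\eta(m)$ in $km/n$ at rate $\mclo(1/n)$ per factor follows from Corollary~\ref{cor_contphiTSM} applied to the free-entropy contribution and from analogous coupling arguments for $V(m)$ and $F(m)$, both of which are controlled by the uniform bounds $|\ln\gamma^*|,|\ln\psiTSM|\le\ln(\psibu)$. Combining this with the hypotheses $\prob[|\degaR^*-\degae|>\deltam]\le\epsm$ and $\expe[\bmone\{|\degaR^*-\degae|>\deltam\}\degaR^*]\le\epsm$ then yields $\expe[\overline\eta(\mR^*)]=H(\gamma^*)-\iota^*(\degae)+\mclo(\deltam+\epsm+n^{-\rho})$ exactly as in the proof of Observation~\ref{obs_phia}\ref{obs_phia_asymptotics_po}; the specialization to $\mR$ uses Corollary~\ref{cor_dega} with $r=c\sqrt{\ln(n)/n}$ for a sufficiently large $c$.
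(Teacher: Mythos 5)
Your first step contains an error that undermines the whole decomposition. The conditional cross entropy $\overline\eta(m)$ integrates $-\ln\lawG[,G]$ against the law of $\sigmaRG[,G]^*$ (the posterior ground truth given $G$), not against the product of Gibbs marginals $\bigotimes_i\lawG[,G]|_i$. These two measures are genuinely different: the posterior is not a product measure, and, unless $\sigma=\sigmaNIS$, it does not even coincide in law with the Gibbs measure $\sigmaRG[,G]$. Treating your formula $\overline\eta(m)=\expe[\phiG(\GTSM(\sigmaIID))]-V(m)-F(m)$ with $\alphaR_a=\bigotimes_h\lawG[,\GTSM(\sigmaIID)]|_{\vTSM(a,h)}$ as an identity is therefore wrong; at best it is an approximation whose error you never quantify. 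To carry it through, you would first need to replace the posterior by the Gibbs measure (a Nishimori-type step you never invoke) and then replace the joint Gibbs marginal on $\vTSM(a)$ by a product, and only then analyse the energy contribution via pinning. The latter is what you then propose, but you have to posit an ``analog of Proposition~\ref{proposition_pin_qfed} for the energy correction rather than the free entropy'', which is not proved in the paper and not proved by you; this is a real gap, not a routine coupling.

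The crucial observation you are missing is Observation~\ref{obs_condiid}\ref{obs_condiid_law}: jointly with $\GTSM$, the posterior $\sigmaRG[,\GTSM]^*$ has the same law as $\sigmaIID$. Hence
\begin{align*}
\overline\eta(m)=\expe\left[\phiG(\GTSM)\right]-\frac{1}{n}\expe\left[\ln\left(\psiG[,\GTSM](\sigmaIID)\right)\right],
\end{align*}
and $\psiG[,\GTSM(\sigma)](\sigma)$ --- the weight of the teacher-student graph evaluated at its own ground truth --- factorizes \emph{exactly} into $\gamma^{*\otimes n}(\sigma)\prod_a\psiR^*_a(\sigma_{\vR^*(a)})$ with i.i.d.\ teacher-student wires-weight pairs. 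This yields the $H(\gamma^*)$ term exactly and $F(m)=\frac{m}{n}\expe[\ln(\psiR^*(\sigmaIID_{\vR^*}))]$ exactly, with no Gibbs marginals, no pinning, no projection onto $\mclp[*][2]([q])$, and no ASS-style analysis. Resolving the Radon--Nikodym derivative of $\wTSa[,\sigmaIID]$ against the null pair and using concentration of $\gammaIID$ together with Observation~\ref{obs_fad}\ref{obs_fad_maxbound} then gives the claimed limit. Your pinning route, even if repaired, would need an entirely separate stability estimate for the energy term and, in the marginal projection step, would need to track the joint marginal on $\vTSM(a)$ rather than a product; none of the tools you cite is directly stated for that quantity. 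In short, the decomposition you write is not an identity, the key enabling fact (exact factorization at the ground truth) is overlooked, and the missing ``energy analogue'' of the pinning-impact lemma is a concrete unproven step.
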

The proof is presented in Section \ref{proof_lemma_mi_ce}.
Finally, we complete the proof of Theorem \ref{thm_mi_external} in Section \ref{proof_thm_mutinf}, where we also establish Theorem \ref{thm_mutinf}.
\subsubsection{The Entropy Decomposition}\label{proof_lemma_mi_decomp}
Using $\GTSM=\GTSM(\sigmaIID)$, recall that $(\sigmaIID,\GTSM)\dequal(\sigmaRG[,\GTSM]^*,\GTSM)$ from Observation \ref{obs_condiid}\ref{obs_condiid_law}, so by the chain rule of the relative entropy we have
$n\iota(m)=\DKL(\sigmaRG[,\GTSM]^*\|\sigmaR|\GTSM)$, using $(\sigmaR,\GTSM)\dequal\sigmaIID\otimes\GTSM$.
The decomposition into the (conditional) cross entropy and the entropy gives
$n\iota(m)=H(\sigmaRG[,\GTSM]^*\|\sigmaR|\GTSM)-H(\sigmaRG[,\GTSM]^*|\GTSM)$.
Using linearity of the cross entropy in the first component and independence, we can take the expectation over $\GTSM$ to obtain $H(\sigmaRG[,\GTSM]^*\|\sigmaR|\GTSM)=H(\sigmaIID)$ since $\sigmaRG[,\GTSM]^*\dequal\sigmaIID$.
We split the latter entropy into the cross entropy and the relative entropy with respect to $\sigmaRG$, yielding
$H(\sigmaRG[,\GTSM]^*|\GTSM)=H(\sigmaRG[,\GTSM]^*\|\sigmaRG[,\GTSM]|\GTSM)-\DKL(\sigmaRG[,\GTSM]^*\|\sigmaRG[,\GTSM]|\GTSM)$, and hence $\iota(m)=H(\gamma^*)-\overline\eta(m)+\overline\delta(m)$.
\subsubsection{The Cross Entropy Contribution}\label{proof_lemma_mi_ce}
Recall that $H(\sigmaRG[,G]^*\|\sigmaRG[,G])=\expe[-\ln(\psiG[,G](\sigmaRG[,G]^*)/\ZG(G))]$, so Observation \ref{obs_condiid}\ref{obs_condiid_law} yields
\begin{align*}
\overline\eta(m)=\expe\left[\phiG(\GTSM)\right]-\expe\left[\frac{1}{n}\ln\left(\psiG[,\GTSM](\sigmaIID)\right)\right].
\end{align*}
Unlike the partition function $\ZG$, the weight $\psiG[,\GTSM(\sigma)](\sigma)\dequal\gamma^{*\otimes n}(\sigma)\prod_{a\in[m]}\psiR^*_a(\sigma_{\vR^*(a)})$ factorizes, where
$(\vR^*,\psiR^*)\dequal\wTSa[,\sigma][\otimes m]$, and hence $\overline\eta(m)=\expe[\phiG(\GTSM)]+H(\gamma^*)-\frac{m}{n}\expe[\ln(\psiR^*(\sigmaIID_{\vR^*}))]$, where 
$(\vR^*,\psiR^*)\dequal\wTSa[,\sigmaIID]$.
Resolving the Radon-Nikodym derivative yields
\begin{align*}
\overline\eta(m)
=\expe[\phiG(\GTSM)]+H(\gamma^*)-\frac{m}{n}\expe\left[\frac{\xlnx(\psiR(\sigmaIID_{\vR}))}{\ZFa(\gammaIID)}\right]
\end{align*}
with $(\sigmaIID,\vR,\psiR)\dequal\gamma^{*\otimes n}\otimes\unif([n]^k)\otimes\lawpsi$.
Hence, with Observation \ref{obs_fad}\ref{obs_fad_maxbound}, Observation \ref{obs_gtiid}\ref{obs_gtiid_prob} and Theorem \ref{thm_bethe_external}\ref{thm_bethe_external_m} we obtain Part \ref{lemma_mi_ce}\ref{lemma_mi_ce_m}, since $\sigmaIID_{\vR}\dequal\sigmaR$.
For Part \ref{lemma_mi_ce}\ref{lemma_mi_ce_po} we notice that $|\expe[\degaR^*]-\degae|\le\deltam+\degabu\epsm$, hence Observation \ref{obs_fad}\ref{obs_fad_maxbound}, Observation \ref{obs_gtiid}\ref{obs_gtiid_prob} and Theorem \ref{thm_bethe_external}\ref{thm_bethe_external_general}
complete the proof.
\subsubsection{Proof of Theorem \ref{thm_mutinf}}\label{proof_thm_mutinf}
Part \ref{thm_mi_external}\ref{thm_mi_external_asymptotics_m} is immediate from Lemma \ref{lemma_mi_decomp}, Lemma \ref{lemma_mi_ce}\ref{lemma_mi_ce_m} and Observation \ref{obs_DKLgt}\ref{obs_DKLgt_IIDCG}.
Part \ref{thm_mi_external}\ref{thm_mi_external_asymptotics_po} follows from Lemma \ref{lemma_mi_decomp}, Lemma \ref{lemma_mi_ce}\ref{lemma_mi_ce_po}, Observation \ref{obs_DKLgt}\ref{obs_DKLgt_IIDCG} and the expectation bound for the relative entropy and $\mR^*>\mbu$,
since the proof of Observation \ref{obs_DKLgt} reveals that the $(\sigmaRG[,G],\sigmaRG[,G]^*)$-derivative is $r_G(\sigma)=\psiG[,G](\sigma)/(\gamma^{*\otimes n}(\sigma)\ZG(G))$, thereby establishing $|\ln(r_G(\sigma))|\le 2m\ln(\psibu)$ and further $\DKL(\sigmaRG[,G]^*\|\sigmaRG[,G])\le 2m\ln(\psibu)$ for $G\in\domG$.
This completes the proof of Theorem \ref{thm_mi_external}.
Theorem \ref{thm_mutinf} follows with Observation \ref{obs_standard_graphs} and analogously to the derivation of Theorem \ref{thm_infth} from Theorem \ref{thm_infth_external}.
\section{Additional Discussion}\label{additional_discussion}
In Section \ref{constant_weights} we discuss constant weights and the special cases $q=1$, $k=0$.
In Section \ref{reweighting_relative_entropies} we formalize the discussion of the planted model in Section \ref{implications_extensions_related_work}.
In Section \ref{adddisc_lipschitz_bounds} we formalize the discussion of the modes of convergence in Section \ref{implications_extensions_related_work}.
Then, in Section \ref{unary_weights} we discuss the last remaining special case $k=1$.
\subsection{Constant Weights}\label{constant_weights}
We consider weights
$\mclc=\{\lawpsi\in\mclp(\domPsi):\max_\tau\psiR_{\lawpsi}(\tau)=\min_\tau\psiR_{\lawpsi}(\tau)\}$ with $\psiR_{\lawpsi}\dequal\lawpsi$.
This covers the special cases $q=1$ and $k=0$.
Recall $\mclp[-1]$ and $\mclp[1]$ from Section \ref{assumptions}.
\begin{lemma}\label{lemma_constant_weights}
Theorem \ref{thm_bethe_external}, Theorem \ref{thm_infth_external}, Lemma \ref{lemma_condth_rs}, Lemma \ref{lemma_condth_c} and Theorem \ref{thm_mi_external} also hold whenever $\lawpsi\in\mclc\setle\mclp[-1]\cap\mclp[1]$.
\end{lemma}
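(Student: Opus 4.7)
The plan is to exploit the near-trivial nature of the constant-weight case and reduce every assertion to an explicit computation that bypasses Sections \ref{thm_bethe_proof}--\ref{main_proofs} entirely (which were restricted to $q,k\ge 2$). First I would establish the inclusion $\mclc\setle\mclp[-1]\cap\mclp[1]$: given $\lawpsi\in\mclc$ with $\psiR\equiv c\in[\psibl,\psibu]$, pick the representation $\bm a=c$, $\bm b=0$, with arbitrary admissible $\bm\Delta_i$. Then $\psiR_i=\bm a(1-\bm b\bm\Delta_i)\equiv c$, and every defining constraint in both $\mclp[-1]$ and $\mclp[1]$ is met vacuously once $\bm b=0$ (all conditional higher moments of $\bm b$ vanish, $|\bm b\bm\Delta_i|=0<1$, and the independence and sign/structural conditions are empty). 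Proposition \ref{prop_validmodels} then places $(\lawpsi,\gamma^*)$ in $\mfkA$ for any admissible $\gamma^*$, since $\ZFa\equiv c=\ZFabu$ trivially.

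Next, I would record the simplifications forced by $\lawpsi\in\mclc$: one has $\ZFa,\ZF,\ZFM\equiv c$, $\ZV(d,\cdot,\cdot,\cdot)\equiv c^d$, $\nablaI\equiv 0$, and for any decorated $G$
\[
\psiG[,G](\sigma)=c^{m+\|\mI\|_1}\bmone\{\sigma_\setP=\sigmaP_\setP\}\gamma^{*\otimes n}(\sigma),
\]
so that $\ZG(G)=c^{m+\|\mI\|_1}\gamma^{*\otimes\setP}(\sigmaP_\setP)$ and $\phiG(G)$ is a deterministic function of the decoration counts only. Since the $(\GTSM(\sigma),\GRM)$-derivative is identically $1$, one obtains $\GTSM(\sigma)\dequal\GRM$ for every $\sigma$; hence $(\sigmaIID,\GTSM(\sigmaIID))\dequal\sigmaIID\otimes\GR$, and $\sigmaRG[,\GR]$ is distributed as $\sigmaIID$ (up to pinning).

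Each main theorem then reduces to an elementary identity. The Bethe functional collapses to $\bethe(\pi)=\expe[\dR]\ln c-\frac{d(k-1)}{k}\ln c=\frac{d}{k}\ln c=\phia(d)$ for every $\pi$, so $\bethebu\equiv\phia$; Theorem \ref{thm_bethe_external} then follows from Observation \ref{obs_phia}, which already delivers the required rates in each of the three regimes ($m$ deterministic, $m=\mR$, $m=\mR^*$). Theorem \ref{thm_infth_external} holds with zero error because $\delta^*\equiv 0$ and the relative entropy between identical joint laws vanishes. Lemma \ref{lemma_condth_rs} reduces to Observation \ref{obs_phia}, and Lemma \ref{lemma_condth_c} is trivial since its right-hand side vanishes while its left-hand side is non-negative. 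For Theorem \ref{thm_mi_external} the mutual information is zero by independence, while $\iota^*(d)=\frac{d}{kc}\xlnx(c)-\frac{d}{k}\ln c=0$.

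The only thing to be careful about is the quantitative error rates, but since $\phiG(\GR)$ is affine in $m/n$ with slope $\ln c$ these reduce immediately to the control of $\expe[\mR^*]/n$ already provided by Observation \ref{obs_phia}. The special cases $q=1$ and $k=0$ are subsumed automatically: in both settings $\psiR$ is necessarily constant in $\tau$, so $\lawpsi\in\mclc$ by definition, and the uniform calculations above apply independently of $q$ and $k$.
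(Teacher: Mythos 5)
Your proposal contains a genuine gap: you have only handled the degenerate sub-case of $\mclc$ in which the random weight $\psiR$ is \emph{almost surely equal to a fixed scalar} $c$. But $\mclc$ consists of all laws $\lawpsi$ under which $\psiR$ is constant as a function of $\tau$ --- that is, $\psiR\equiv\bm c_\circ$ for a possibly \emph{nondegenerate random} value $\bm c_\circ\in[\psibl,\psibu]$ (which is, in fact, the only source of randomness once $q=1$). Your key steps silently assume $\bm c_\circ=c$ is deterministic. In particular, the assertion that ``the $(\GTSM(\sigma),\GRM)$-derivative is identically $1$'' and hence $\GTSM(\sigma)\dequal\GRM$ is false in the random case: the derivative is $G\mapsto\prod_a c_a/\expe[\bm c_\circ]^m$, which is $\equiv 1$ only when $\bm c_\circ$ is a.s.\ constant. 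Similarly, $\ZFa\equiv\bm c_\circ$ cannot hold, since $\ZFa$ is a deterministic expectation: one has $\ZFa\equiv\ZFabu=\expe[\bm c_\circ]$, not $\bm c_\circ$.

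Because of this, the main identities you derive are wrong for nondegenerate $\bm c_\circ$. The correct Bethe value is $\bethe_d(\pi)=\frac{d}{k\overline c}\expe[\xlnx(\bm c_\circ)]$ with $\overline c=\expe[\bm c_\circ]$, which exceeds $\phia(d)=\frac{d}{k}\ln\overline c$ strictly (Jensen) unless $\bm c_\circ$ is a.s.\ constant. Thus $\delta^*(d)>0$ for all $d>0$, the condensation threshold is $\dcond=0$ (not $\infty$), and the relative entropy in Theorem \ref{thm_infth_external} is nonzero (it equals $\DKL(\bm X^*\|\bm X)$, where $\bm X^*$ and $\bm X$ are the reweighted and non-reweighted i.i.d.\ sequences of random constants). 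Your claim that ``Lemma \ref{lemma_condth_c} is trivial since its right-hand side vanishes'' therefore fails: both sides are strictly positive, and one must actually verify the quadratic inequality. The paper does so by reducing to a comparison of relative entropies $\DKL(\bm c_\circ\|\bm c^*_\circ)$ and $\DKL(\bm c^*_\circ\|\bm c_\circ)$ using the representation $\frac{1}{t}=t\,g''(t)\le\psibu^2 g''(t)$ for the second derivatives of the two convex $f$-divergences, giving a uniform bound $\DKL(\bm c^*_\circ\|\bm c_\circ)\le\psibu^2\DKL(\bm c_\circ\|\bm c^*_\circ)$; no analogue of this appears in your argument. To complete the proof you would need to carry out this nondegenerate-weight analysis (the mutual-information conclusion $\iota^*\equiv 0$ does survive, since $\GTSM(\sigma)$ still does not depend on $\sigma$).
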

\begin{proof}
First, notice that $\mclc\setle\mclp[-1]\cap\mclp[1]$ holds by taking $\bm b_i=0$ and $\bm\Delta_i\equiv 0$ for $i\in\{-1,1\}$.
Next, for $\lawpsi\in\mclc$ we may assume without loss of generality that $\psiR_{\lawpsi}\equiv\bm c_\circ$ for some $\bm c_\circ\in[\psibl,\psibu]$.
Let $\bm c=\bm c_{\circ}^{\otimes\ints_{>0}}$, let $\bm c^*_\circ$ be given by the $(\bm c^*_\circ,\bm c_\circ)$-derivative $c\mapsto c/\overline c$ with $\overline c=\expe[\bm c_\circ]$, and let $\bm c^*\dequal\bm c_\circ^{*\otimes\ints_{>0}}$.
Then we have $\ZFabu=\overline c$, $\psiG[,\GR](\sigma)\dequal\gamma^{*\otimes n}(\sigma)\prod_{a\in[m]}\bm c_a$,
$\psiM(\sigma)=\gamma^*(\sigma)\overline c^m$ and
$\psiG[,\GTSM(\sigma)](\sigma')\dequal\gamma^{*\otimes n}(\sigma')\prod_{a\in[m]}\bm c^*_a$, $\sigma'\in[q]^n$, which gives
$\phi^*(m)=\expe\left[\phiG(\GTSM(\sigmaIID))\right]=\frac{m}{n}\expe[\ln(\bm c^*_\circ)]$.
On the other hand, due to normalization of $\gammaR$, $\gammaR_\circ$ we get 
\begin{align*}
\bethe_d\equiv\expe\left[\frac{\xlnx\left(\prod_{a\in[\dR]}\bm c_a\right)}{\overline c^{\dR}}\right]-\frac{d(k-1)}{k\overline c}\expe[\xlnx(\bm c_\circ)]
=\frac{d}{k\overline c}\expe[\xlnx(\bm c_\circ)]
\end{align*}
and thereby $\phi^*(m)=\bethebu(km/n)$, so Theorem \ref{thm_bethe_external} holds.
Notice that $\dcond\in\{0,\infty\}$ with $\dcond=\infty$ if and only if $\bm c_\circ=\overline c$ almost surely since $\phia(d)=d\ln(\overline c)/k$.
We further have $\lawG[,\GR]=\gamma^{*\otimes n}$ and thereby
\begin{align*}
\DKL(\sigmaIID,\GTSM(\sigmaIID)\|\sigmaRG[,\GR],\GR)
=\DKL(\GTSM(\sigma)\|\GR)=\expe\left[\ln\left(\frac{\prod_a\bm c^*_a}{\overline c^m}\right)\right]
=\bethebu\left(\frac{km}{n}\right)-\phia\left(\frac{km}{n}\right),
\end{align*}
which establishes Theorem \ref{thm_infth_external}.
Further, Theorem \ref{thm_mi_external} holds since  $(\sigmaIID,\GTSM(\sigmaIID))\dequal\sigmaIID\otimes\GTSM(\sigmaIID)$ and hence both sides vanish.
Notice that $\phi(m)=\expe[\phiG(\GR)]=\frac{m}{n}\expe[\ln(\bm c_\circ)]$, so Lemma \ref{lemma_condth_rs} and Lemma \ref{lemma_condth_c} hold for $\bm c_\circ=\overline c$ since then $\phi(m)=\phi^*(m)=\phia(km/n)$ for all $m$.
Otherwise, we have $\phiqbl(d)=\phiqbu(d)=\frac{d}{k}\expe[\ln(\bm c_\circ)]$ and hence
\begin{align*}
\delta(d)&=\phia(d)-\phiqbu(d)=\frac{d}{k}(\ln(\overline c)-\expe[\ln(\bm c_\circ)]),\\
\delta^*(d)&=\bethebu(d)-\phiqbl(d)=\frac{d}{k}(\expe[\ln(\bm c_\circ^*)]-\expe[\ln(\bm c_\circ)]).
\end{align*}
This yields $\delta(d)=r(d)\delta^*(d)^2$ with $r(d)=\delta(d)/\delta^*(d)^2\ge k\rho/\degabu$ and
\begin{align*}
\rho=\frac{\ln(\overline c)-\expe[\ln(\bm c_\circ)]}{(\expe[\ln(\bm c_\circ^*)]-\expe[\ln(\bm c_\circ)])^2}
=\frac{\DKL(\bm c_\circ\|\bm c^*_\circ)}{(\DKL(\bm c^*_\circ\|\bm c_\circ)+\DKL(\bm c_\circ\|\bm c^*_\circ))^2}.
\end{align*}
We follow \cite{sason2016} to bound
$\DKL(\bm c^*_\circ\|\bm c_\circ)$ in terms of 
$\DKL(\bm c_\circ\|\bm c^*_\circ)$ and
Let $f(t)=\xlnx(t)-(t-1)$, $g(t)=(t-1)-\ln(t)$.
Notice that $\DKL(\bm c^*_\circ\|\bm c_\circ)=\expe[f(r(\bm c_\circ))]$ and
$\DKL(\bm c_\circ\|\bm c^*_\circ)=\expe[g(r(\bm c_\circ))]$, where $r:[\psibl,\psibu]\rarr[\psibl^2,\psibu^2]$, $c\mapsto c/\overline c$.
Both $f$ and $g$ have their global minimum $0$ at $t=1$, so
$f''(t)=1/t=tg''(t)\le\psibu^2g''(t)$ for $t\in[\psibl^2,\psibu^2]$ yields
$\DKL(\bm c^*_\circ\|\bm c_\circ)\le\psibu^2\DKL(\bm c_\circ\|\bm c^*_\circ)$, and hence
\begin{align*}
\rho\ge\frac{1}{(\psibu^2+1)^2\DKL(\bm c_\circ\|\bm c^*_\circ)}
\ge\frac{1}{(\psibu^2+1)^22\ln(\psibu)}.
\end{align*}
\end{proof}
Lemma \ref{lemma_constant_weights} covers the case $q=1$ since then $[q]^k=\{(1)_h\}$ and hence $\mclc=\mclp(\domPsi)$. Clearly, the main results do \emph{not} hold for $k=0$, e.g.~since $k$ appears in the denominator of $\bethe$ and the Poisson parameter of $\mR$.
However, using the embedding $f:[\psibl,\psibu]^{k}\rarr[\psibl,\psibu]^{k+1}$ given by $\psi'=f(\psi)$ with $\psi'(\tau)=\psi(\tau_{[k]})$ for $\tau\in[q]^{k+1}$, we have $k\ge 1$ without loss of generality.
\subsection{Reweighting and Relative Entropies}\label{reweighting_relative_entropies}
In this section we build some context for $\nablaI$ from Section \ref{assumptions} and $\bethe$ from Section \ref{bethe_main}.
As opposed to the proofs, for the theory in this section we exclusively consider the restrictions to $\mclp[*][2]([q])$ with $\ZFa(\gamma^*)=\ZFabu$ for $\gamma^*\in\mclp([q])$, i.e.~we require $\gamma^*$ to be a maximizer of $\ZFa$.

Let $r_{\mrmf}:\domPsi\times\mclp([q])^k\rarr\reals_{>0}$, $(\psi,\gamma)\mapsto\ZF(\psi,\gamma)/\ZFabu$.
Further, for $\sigma\in[q]$ and $\pi\in\mclp[*][2]([q])$ with $\mclr=\{(\psi,h,\gamma):\psi\in\domPsi,h\in[k],\gamma\in\mclp([q])^{[k]\setminus\{h\}}\}$ let
\begin{align*}
r_{\mrmv,\sigma}:\mclr\rarr\reals_{>0},\,
(\psi,h,\gamma)\mapsto\frac{1}{\ZFabu}\sum_{\tau}\bmone\{\tau_h=\sigma\}\psi(\tau)\prod_{h'\neq h}\gamma_{h'}(\tau_{h'}).
\end{align*}
For $\pi\in\mclp[*][2]([q])^2$ let $\bm x_i\dequal\lawpsi\otimes\pi_i^{\otimes k}$, $i\in[2]$, further $\hR\dequal\unif([k])$ and for $h\in[k]$ let $\bm x_{3,h}\dequal\lawpsi\otimes\bigotimes_{h'\in[k]}\pi_{3,h'}$ with $\pi_{3,h}=\pi_1$ and $\pi_{3,h'}=\pi_2$ for $h'\in[k]\setminus\{h\}$.
Let $\bm x_1^*$, $\bm x_2^*$, $\bm x^*_{3,h}$ be given by the Radon-Nikodym derivative $r_\mrmf$ with respect to $\bm x_1$, $\bm x_2$, $\bm x_{3,h}$ respectively, and
\begin{align*}
\nablaI_2(\pi_1,\pi_2)
&=\ZFabu(\expe[\ln(\ZF(\bm x^*_1))]+(k-1)\expe[\ln(\ZF(\bm x^*_2))]-k\expe[\ln(\ZF(\bm x^*_{3,\hR}))]),\\
\nablaI_3(\pi_1,\pi_2)
&=\ZFabu(\DKL(\bm x^*_1\|\bm x_1)+(k-1)\DKL(\bm x^*_2\|\bm x_2)-k\DKL(\bm x^*_{3,\hR}\|\bm x_{3,\hR}|\hR)).
\end{align*}
For $\pi\in\mclp[*][2]([q])$ let $\bm x_\mrmf\dequal\lawpsi\otimes\pi^{\otimes k}$,
$\bm x_{\mrmv,\circ}=(\psiR,\hR,\gammaR_{[k]\setminus\{\bm h\}})$, where $(\psiR,\hR,\gammaR)\dequal\lawpsi\otimes\unif([k])\otimes\pi^{\otimes k}$, let $\bm x^*_{\mrmf}$ be given by the Radon-Nikodym derivative $r_{\mrmf}$, and let $\bm x^*_{\mrmv,\circ,\sigma}$ be given by the Radon-Nikodym derivative $r_{\mrmv,\sigma}$ for $\sigma\in[q]$.
Further, let $(\dR,\bm x_{\mrmv})\dequal\Po(d)\otimes\bm x_{\mrmv,\circ}^{\otimes\ints_{>0}}$,
$\bm x^*_{\mrmv,\sigma}\dequal\bm x_{\mrmv,\circ,\sigma}^{*\otimes\ints_{>0}}$ and $\sigmaIID\dequal\gamma^*$ with $(\dR,\sigmaIID,\bm x^*_{\mrmv,\sigmaIID})\dequal\dR\otimes(\sigmaIID,\bm x^*_{\mrmv,\sigmaIID})$.
Finally, let $\bm X_{\mrmv}=\bm x_{\mrmv,[\dR]}$, $\bm X^*_{\mrmv}=\bm x^*_{\mrmv,\sigmaIID,[\dR]}$,
$Z_{\mrmv}(\psi_{[d]},h_{[d]},(\gamma_{a,h'})_{a\in[d],h'\neq h(a)})=Z_{\mrmv}(d,\psi,h,\gamma)$ and
\begin{align*}
\bethe_{2,d}(\pi)&=\expe\left[\ln\left(\ZV(\bm X^*_{\mrmv})\right)\right]-\frac{d(k-1)}{k}\expe[\left[\ln\left(\ZF(\bm x^*_{\mrmf})\right)\right],\\
\bethe_{3,d}(\pi)&=\phia(d)+\DKL(\bm X^*_{\mrmv}\|\bm X_{\mrmv})-\frac{d(k-1)}{k}\DKL(\bm x^*_{\mrmf}\|\bm x_{\mrmf}).
\end{align*}
\begin{lemma}\label{lemma_reweighting}
We have $\nablaI=\nablaI_2=\nablaI_3$ and $\bethe=\bethe_2=\bethe_3$.
\end{lemma}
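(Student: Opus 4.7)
The plan is to verify each equality by a Radon--Nikodym change-of-measure calculation anchored on the elementary identity
\[
C\,\expe[\ln Z(\bm y^*)]\;=\;\expe[Z(\bm y)\ln Z(\bm y)]\;=\;\expe[\xlnx(Z(\bm y))],
\]
valid whenever $\bm y^*$ has $(\bm y^*,\bm y)$-derivative $Z(\bm y)/C$ with $C=\expe[Z(\bm y)]$, together with its logarithmic twin $C\,\DKL(\bm y^*\|\bm y)=C\,\expe[\ln Z(\bm y^*)]-C\ln C$. All starred reweightings appearing in $\nablaI_2,\nablaI_3,\bethe_2,\bethe_3$ are of this form with $C=\ZFabu$: this is immediate for $r_\mrmf=\ZF/\ZFabu$ once $\pi\in\mclp[*,\gamma^*][2]([q])$ and $\ZFa(\gamma^*)=\ZFabu$ are invoked (cf.~Observation~\ref{obs_fad}\ref{obs_fad_amstar}), and for $r_{\mrmv,\sigma}$ after averaging against $\sigmaIID\dequal\gamma^*$ as explained below.

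For $\nablaI=\nablaI_2$ I apply the first identity to each of the three reweightings in $\nablaI_2$: the prefactor $\ZFabu$ cancels and every $\ln\ZF(\bm x_i^*)$-expectation converts into the matching $\xlnx$-expectation; the $\bm x_{3,\hR}$-term corresponds to $\ZFM(\psiR,\hR,\gammaR_\pi)$ by construction of the mixed marginals. The step $\nablaI_2=\nablaI_3$ is then purely algebraic: each $\ZFabu\,\DKL$ differs from the corresponding $\ZFabu\,\expe[\ln\ZF(\cdot^*)]$ by $-\ZFabu\ln\ZFabu$, and the three such constants cancel via $1+(k-1)-k=0$.

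For $\bethe=\bethe_2$ the factor contribution is treated identically. The variable contribution is the only step requiring a substantive computation: I will identify the density of $\bm X^*_\mrmv$ with respect to $\bm X_\mrmv$ in their joint laws. Conditional on $(\dR,\sigmaIID)=(d,\sigma)$ it equals $\prod_{a\in[d]}r_{\mrmv,\sigma}(\bm x_{\mrmv,a})$, and averaging $\sigmaIID\dequal\gamma^*$ one obtains
\[
\sum_{\sigma\in[q]}\gamma^*(\sigma)\prod_{a\in[d]}r_{\mrmv,\sigma}(\bm x_{\mrmv,a})\;=\;\ZFabu^{-d}\,\ZV(d,\psi,h,\gamma),
\]
which is the definition of $\ZV$ rescaled by $\ZFabu^{-d}$. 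Another application of the change-of-measure identity then yields $\expe[\ln\ZV(\bm X^*_\mrmv)]=\expe[\ZFabu^{-\dR}\,\xlnx(\ZV(\bm X_\mrmv))]$, matching the variable term of $\bethe$. Finally, $\bethe_2=\bethe_3$ is again bookkeeping: the $-\ln\ZFabu$ remainders inherit the coefficients $\expe[\dR]=d$ and $d(k-1)/k$ respectively, and they combine with $\phia(d)=(d/k)\ln\ZFabu$ via $d/k-d+d(k-1)/k=0$. The only genuine obstacle anywhere in the proof is the mixture computation producing $\ZV$, which relies crucially on the distribution of $\sigmaIID$ coinciding with the $\gamma^*$ appearing inside $\ZV$.
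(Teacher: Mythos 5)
Your proposal is correct and follows essentially the same approach as the paper: establish that the reweighted laws $\bm x^*_1,\bm x^*_2,\bm x^*_{3,h},\bm x^*_{\mrmf},\bm X^*_{\mrmv}$ are well-defined (i.e.\ the candidate Radon--Nikodym derivatives integrate to $1$, which for the variable term reduces to $\lawYgC[,\gamma^*]|_*=\gamma^*$ via Observation \ref{obs_fad}\ref{obs_fad_amstar}), after which all five equalities follow from the elementary change-of-measure identity you state. The paper stops at well-definedness and declares the rest clear; you have merely written out the routine conversion between $\ln$- and $\xlnx$-expectations and the cancellation of the $\ln(\ZFabu)$ constants via $1+(k-1)-k=0$ and $d/k-d+d(k-1)/k=0$, so there is no discrepancy.
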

\begin{proof}
For $\pi\in\mclp[*][2]([q])^k$ and $(\psiR,\gammaR)\dequal\lawpsi\otimes\bigotimes_h\pi_h$ we have $\expe[\ZF(\psiR,\gammaR)]=\ZFa(\gamma^*)=\ZFabu$, which shows that
$\bm x^*_1$, $\bm x^*_2$, $\bm x^*_{3,h}$ for $\nablaI$ and $\bm x^*_{\mrmf}$ for $\bethe$ are well-defined.
Let $\mclc=\gamma^{*-1}(\reals_{>0})$ be the support of $\gamma^*$ and $\mu=\lawYgC[,\gamma^*]$ from Section \ref{factor_assignment_distribution}.
For $|\mclc|=1$ we have $\mu|_*=\gamma^*$ since both are necessarily one-point masses on the only element of $\mclc$, otherwise we have $\mu|_*=\gamma^*$ by Observation \ref{obs_fad}\ref{obs_fad_amstar} (since $\gamma^*$ is a fully supported stationary point of $\ZFa$ on $\mclp(\mclc)$).
Hence, for $\pi\in\mclp[*][2]([q])^k$ and $\sigma\in[q]$ with $(\psiR,\hR,\gammaR)\dequal\lawpsi\otimes\unif([k])\otimes\bigotimes_h\pi_h$ we have
\begin{align*}
\expe\left[r_{\mrmv,\sigma}\left(\psiR,\hR,\gammaR_{[k]\setminus\{\bm h\}}\right)\right]
=\frac{1}{\ZFabu}\sum_h\frac{1}{k}\sum_\tau\bmone\{\tau_h=\sigma\}\psiae(\tau)\prod_{h'\neq h}\gamma^*(\tau_{h'})=\frac{\mu|_*(\sigma)}{\gamma^*(\sigma)}=1.
\end{align*}
This shows that $\bm X^*_{\mrmv}$ is well-defined, and hence the assertion clearly holds.
\end{proof}
\subsection{Lipschitz Continuity and Boundedness}\label{adddisc_lipschitz_bounds}
In this section we stress the relevant properties that allow to extend the main results to $\mR^*$ and the equivalence of various modes of convergence.

For any $\ThetaP$ and $\tI$ let
$\bar\phi_n(m)=\expe[\phiG(\GR_{m,\mIR,\setPR})]$,
$\phi^*_{\sigma,n}(m)=\expe[\phiG(\GTSM[m,\mIR,\setPR](\sigma))]$,
$\phi^\star_{\sigma,\tau,n}(m)=\expe[\phiG(\GTSYM[m,\mIR,\setPR](\sigma,\tau))]$,
further $\bar\phi^*_n(m)=\expe[\phiG(\GTSM[m,\mIR,\setPR](\sigmaIID))]$ and
$\hat\phi^*_n(m)=\expe[\phiG(\GTSM[m,\mIR,\setPR](\sigmaNIS_m))]$.
First, we recall the properties for the free entropies.
\begin{lemma}\label{lemma_phiIIDNIScont}
Notice that the following holds.
\begin{alphaenumerate}
\item\label{lemma_phiIIDNIScont_bound}
There exists $c_\mfkg\in\reals_{>0}$ such that
$|\bar\phi(m)|\le c(\frac{km}{n}+(1-\tI)\degae+\frac{\ThetaP}{n})$.
The same holds for $\bar\phi$ replaced by
$\phi^*_{\sigma},\phi^\star_{\sigma,\tau},
\bar\phi^*_{\sigma},\hat\phi^*_{\sigma}$.
\item\label{lemma_phiIIDNIScont_lipschitz}
There exists $L_\mfkg\in\reals_{>0}$ such that
$|\bar\phi(m_1)-\bar\phi(m_2)|\le L\left|\frac{km_1}{n}-\frac{km_2}{n}\right|$
for $m\in\ints_{\ge 0}^2$.
The same holds for $\bar\phi$ replaced by
$\phi^*_{\sigma},\phi^\star_{\sigma,\tau},
\bar\phi^*_{\sigma}$.
This also holds for $\phi$ replaced by $\hat\phi^*_{\sigma}$ if $m\le\mbu$.
\end{alphaenumerate}
\end{lemma}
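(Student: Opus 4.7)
The plan is to prove both parts by combining the pointwise graph-distance estimate of Observation \ref{obs_phi_lipschitz} with the graph-reduction identities of Section \ref{decorated_factor_graphs}. The only genuinely nontrivial case will be the Nishimori free entropy $\hat\phi^*_\sigma$, which requires combining both with a coupling of $\sigmaNIS_m$ across $m$.

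For Part (a) I would invoke the a.s.\ bound $|\phiG(G)|\le\tfrac{c}{n}(m+\|\mI\|_1+|\setP|)$ of Observation \ref{obs_phi_lipschitz}, which applies to \emph{any} decorated graph with the indicated counts and is therefore independent of which distribution over $G$ is used. Taking expectations on each of the five quantities and using $\expe[\|\mIR\|_1]=(1-\tI)\degae n$ (from Corollary \ref{cor_dega} or Observation \ref{obs_poisson}\ref{obs_poisson_bin}) together with $\expe[|\setPR|]=\ThetaP/2$ (from Observation \ref{obs_pin_basic}) yields the uniform bound. For Part (b) on $\bar\phi$, $\phi^*_\sigma$, $\phi^\star_{\sigma,\tau}$ and $\bar\phi^*_\sigma$, assume WLOG $m_1\le m_2$ and use the natural coupling in which the $m_2$-factor graph extends the $m_1$-factor one by $m_2-m_1$ i.i.d.\ additional standard factors drawn from the appropriate single-factor law ($\wRa$ for $\bar\phi$, $\wTSa[,\sigma]$ for $\phi^*_\sigma$, $\wTSYa[,\sigma,\tau_a]$ for $\phi^\star_{\sigma,\tau}$) with $(\mIR,\setPR)$ held fixed. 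Observations \ref{obs_TSM_iid} and \ref{obs_TSYM} ensure this is faithful to the target laws. Under the coupling $\distG(G_1,G_2)=m_2-m_1$ in the notation of Section \ref{phi_lipschitz}, so Observation \ref{obs_phi_lipschitz} gives $|\phiG(G_1)-\phiG(G_2)|\le\tfrac{c}{n}(m_2-m_1)$, and Jensen's inequality delivers the Lipschitz bound with $L=c/k$. The case $\bar\phi^*_\sigma$ follows by averaging once more over $\sigmaIID$.

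The main obstacle is the Nishimori case $\hat\phi^*_\sigma$, where the ground truth $\sigmaNIS_m$ itself depends on $m$ and the previous coupling breaks. I would handle the unit step via the telescope
\begin{align*}
\hat\phi^*(m+1)-\hat\phi^*(m)
=\bigl(\expe[g_{m+1}(\sigmaNIS_{m+1})]-\expe[g_{m+1}(\sigmaNIS_m)]\bigr)
+\bigl(\expe[g_{m+1}(\sigmaNIS_m)]-\expe[g_m(\sigmaNIS_m)]\bigr),
\end{align*}
where $g_{m'}(\sigma)=\expe[\phiG(\GTSM[m',\mIR,\setPR](\sigma))]$. The second bracket equals $\phi^*_{\sigmaNIS_m}(m+1)-\phi^*_{\sigmaNIS_m}(m)$ after conditioning on $\sigmaNIS_m$, so the Lipschitz estimate just proved for $\phi^*_\sigma$ (uniform in $\sigma$) yields a bound of order $1/n$. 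The first bracket is at most $2\|g_{m+1}\|_\infty\|\sigmaNIS_{m+1}-\sigmaNIS_m\|_\mrmtv$; Part (a) gives a $\sigma$-uniform bound on $\|g_{m+1}\|_\infty$ for $m\le\mbu$, and Observation \ref{obs_niscoupling}\ref{obs_niscoupling_tv} supplies $\|\sigmaNIS_{m+1}-\sigmaNIS_m\|_\mrmtv\le c/n$, which is precisely why the restriction $m\le\mbu$ appears in the statement. Summing unit increments produces the Lipschitz estimate $|\hat\phi^*(m_1)-\hat\phi^*(m_2)|\le L|km_1/n-km_2/n|$ on the allowed range, completing the proof.
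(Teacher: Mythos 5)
Your proposal is correct and follows essentially the same route as the paper: Part (a) via Observation \ref{obs_phi_lipschitz} and Observation \ref{obs_pin_basic}, Part (b) for the first four quantities via the canonical "append $m_2-m_1$ i.i.d.\ factors" coupling together with Observations \ref{obs_TSM_iid}/\ref{obs_TSYM} and Jensen, and the Nishimori case via Observation \ref{obs_niscoupling}\ref{obs_niscoupling_tv}. Your telescope-plus-TV treatment of $\hat\phi^*$ is mathematically equivalent to the paper's explicit optimal coupling of $\sigmaNIS_m$ and $\sigmaNIS_{m+1}$ (splitting on agreement vs.\ disagreement of the coupled ground truths), and you correctly locate the source of the $m\le\mbu$ restriction.
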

\begin{proof}
Part \ref{lemma_phiIIDNIScont}\ref{lemma_phiIIDNIScont_bound} follows from Observation \ref{obs_phi_lipschitz} and Observation \ref{obs_pin_basic}.
For Part \ref{lemma_phiIIDNIScont}\ref{lemma_phiIIDNIScont_lipschitz}
assume that $m_1\le m_2$ and let $\GR^\circ_m$ be any of $\GR_{m,\mIR,\setPR}$, $\GTSM[m,\mIR,\setPR](\sigma)$ or $\GTSYM[m,\mIR,\setPR](\sigma,\tau)$.
Under the canonical coupling (using Observation \ref{obs_TSM_iid})
we obtain $\GR^\circ(m_2)$ from $\GR^\circ(m_1)$ given $\GR^\circ(m_1)$ by adding $m_2-m_1$ factors with pairs drawn i.i.d.~from the underlying wires-weight pair distribution,
then with $c$ from Observation \ref{obs_phi_lipschitz} we have 
\begin{align*}
\left|\expe[\phiG(\GR^\circ_{m_2})]-\expe[\phiG(\GR^\circ_{m_1})]\right|
\le L\left|m_2-m_1\right|,\,
L=kc.
\end{align*}
The result for $\bar\phi^*$ now follows from $\bar\phi^*(m)=\expe[\phi^*_{\sigmaIID}(m)]$ and Jensen's inequality.
For $\hat\phi^*$ we first have to couple the ground truths using $c'$ from Observation \ref{obs_niscoupling}\ref{obs_niscoupling_tv} and the coupling lemma \ref{obs_tv}\ref{obs_tv_coupling}.
Hence, assume that $m_2=m_1+1$.
On the event that they coincide, the coupling from above applies. Otherwise, the left hand side is still at most $\frac{c''}{n}(2\mbu+2\degabu n+\ThetaP)$ with $c''$ from Observation \ref{obs_phi_lipschitz}, obtained by taking expectations. Recall that $\ThetaP\le n$, so with $c'''=c''(\frac{4\degabu}{k}+2\degabu+1)$ we have
$|\hat\phi(m_1)-\hat\phi(m_2)|\le\frac{\ln(\psibu)}{n}+\frac{c'c'''}{n}$.
Now, the assertion follows by the triangle inequality.
\end{proof}
For the remainder we restrict to $\ThetaP=0$, $\tI=1$, $\setP=\emptyset$ and $\mI\equiv 0$.
On the finite size side let $\degae_n(m)=km/n$.
Lemma \ref{lemma_phiIIDNIScont} suggests that e.g.~$|\bar\phi(m)|\le c\degae(m)$ and
$|\bar\phi(m_1)-\bar\phi(m_2)|\le L|\degae(m_1)-\degae(m_2)|$.
Next, we show that in general under these two properties convergence in probability, convergence of the expectation and pointwise convergence with respect to $m^\circ_n=\lfloor\degae n/k\rfloor$ coincide for $\mR^*$.
Let $\mclf[\circ]=\reals^{\ints_{>0}\times\ints_{\ge 0}}=\{f:\ints_{>0}\times\ints_{\ge 0}\rarr\reals\}$, and
for $c\in\reals_{>0}$ let $\mclf[c]=\mclf[\mrmb,c]\cap\mclf[\mrml,c]=\{f\in\mclf[\mrml,c]:\forall n\,f_n(0)=0\}$ with
\begin{align*}
\mclf[\mrmb,c]
&=\left\{f\in\mclf[\circ]:
\forall n\forall m\,
|f(n,m)|\le c\degae_n(m)\right\},\\
\mclf[\mrml,c]
&=\left\{f\in\mclf[\circ]:
\forall n\forall m\le\mbu\,
|f(n,m_1)-f(n,m_2)|\le c|\degae_n(m_1)-\degae_n(m_2)|\right\}.
\end{align*}
\begin{lemma}\label{lemma_convergenceequivalence}
For $c_\mfkg\in\reals_{>0}$, $f\in\mclf[c]$, $f^*:\reals_{\ge 0}\rarr\reals$ and $g_{\mfkg}:\ints_{>0}\rarr\reals_{\ge 0}$ with $g(n)=o(1)$ the following statements are equivalent.
Let $\Delta_\mfkg(n)=g(n)+\deltam(n)+\epsm(n)+n^{-1}$.
\begin{alphaenumerate}
\item\label{lemma_convergenceequivalence_pt}
We have $|f_n(m^\circ)-f^*(\degae)|=\mclo(\Delta(n))$.
\item\label{lemma_convergenceequivalence_prob}
There exists $C_\mfkg\in\reals_{>0}$ such that $\prob[|f_n(\mR^*)-f^*(\degae)|>C\Delta(n)]\le\epsm(n)$.
\item\label{lemma_convergenceequivalence_expe}
We have $|\expe[f_n(\mR^*)]-f^*(\degae)|=\mclo(\Delta(n))$.
\end{alphaenumerate}
\end{lemma}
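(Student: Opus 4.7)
\medskip
\noindent\textbf{Proof proposal.}
The plan is to establish the three implications (\ref{lemma_convergenceequivalence_pt})$\Rightarrow$(\ref{lemma_convergenceequivalence_prob})$\Rightarrow$(\ref{lemma_convergenceequivalence_expe})$\Rightarrow$(\ref{lemma_convergenceequivalence_pt}) using the boundedness $|f_n(m)|\le c\degae_n(m)$ and the Lipschitz estimate $|f_n(m_1)-f_n(m_2)|\le c|\degae_n(m_1)-\degae_n(m_2)|$ from $f\in\mclf[c]$, together with the two defining tail bounds for $\mR^*$ stated in Section~\ref{implications_extensions_related_work}, i.e.\ $\prob[|\degaR^*_n-\degae|>\deltam]\le\epsm$ and $\expe[\bmone\{|\degaR^*_n-\degae|>\deltam\}\degaR^*_n]\le\epsm$. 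Throughout, all implicit constants depend only on $\mfkg$.

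For (\ref{lemma_convergenceequivalence_pt})$\Rightarrow$(\ref{lemma_convergenceequivalence_prob}), I would first note that $|\degae_n(m^\circ)-\degae|\le k/n$. On the event $\mcle_n=\{|\degaR^*_n-\degae|\le\deltam(n)\}$, the triangle inequality yields $|\degae_n(\mR^*)-\degae_n(m^\circ)|\le\deltam(n)+k/n$, so the Lipschitz bound gives $|f_n(\mR^*)-f_n(m^\circ)|\le c(\deltam(n)+k/n)=\mclo(\Delta(n))$; combining with the pointwise bound $|f_n(m^\circ)-f^*(\degae)|=\mclo(\Delta(n))$ from (\ref{lemma_convergenceequivalence_pt}) shows that, for some $C_\mfkg$, $|f_n(\mR^*)-f^*(\degae)|\le C\Delta(n)$ on $\mcle_n$; the complement has probability at most $\epsm(n)$ by assumption, yielding~(\ref{lemma_convergenceequivalence_prob}).

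For (\ref{lemma_convergenceequivalence_prob})$\Rightarrow$(\ref{lemma_convergenceequivalence_expe}), let $\mcle^*_n=\{|f_n(\mR^*)-f^*(\degae)|\le C\Delta(n)\}$. Decomposing
\begin{align*}
|\expe[f_n(\mR^*)]-f^*(\degae)|
&\le\expe[\bmone\mcle^*|f_n(\mR^*)-f^*(\degae)|]\\
&\quad+\expe[\bmone\lnot\mcle^*|f_n(\mR^*)|]+|f^*(\degae)|\prob[\lnot\mcle^*],
\end{align*}
the first summand is $\mclo(\Delta(n))$ directly. For the middle term I bound $|f_n(\mR^*)|\le c\degaR^*$ and split once more along $\mcle_n$: on $\mcle_n$ we have $\degaR^*\le\degae+\deltam\le\degabu+\deltam$, contributing $\mclo(\deltam\epsm+\epsm)$; off $\mcle_n$ the second $\mR^*$-moment hypothesis yields $\expe[\bmone\lnot\mcle_n\degaR^*]\le\epsm$. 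Finally, to bound $|f^*(\degae)|$ uniformly, I extract a realization $m^*_n$ with $|\degae_n(m^*_n)-\degae|\le\deltam(n)$ (which exists whenever $\epsm(n)<1$, certainly for large $n$) and use (\ref{lemma_convergenceequivalence_prob}) at this $m^*_n$ together with boundedness to get $|f^*(\degae)|\le C\Delta(n)+c(\degabu+\deltam(n))$, which is $\mfkg$-bounded; for small $n$ we absorb everything into the constant since $\Delta(n)$ is bounded below.

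For (\ref{lemma_convergenceequivalence_expe})$\Rightarrow$(\ref{lemma_convergenceequivalence_pt}), the idea is to specialize $\mR^*$ to the deterministic sequence $m^\circ_n=\lfloor\degae n/k\rfloor$. Since $|\degae_n(m^\circ_n)-\degae|\le k/n$, as soon as $\deltam(n)\ge k/n$ (which holds for all sufficiently large $n$ since $\deltam(n)$ is a fixed positive function with $\deltam(1)=\degabu$ from Section~\ref{preparations}) this choice satisfies both tail conditions with the given $(\deltam,\epsm)$, so (\ref{lemma_convergenceequivalence_expe}) applies and gives $|\expe[f_n(\mR^*)]-f^*(\degae)|=|f_n(m^\circ)-f^*(\degae)|=\mclo(\Delta(n))$. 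For the finitely many remaining $n$ the estimate follows from $|f_n(m^\circ)|\le c\degabu$ and the lower bound on $\Delta(n)$. The main obstacle, and really the only delicate step, is the argument in (\ref{lemma_convergenceequivalence_prob})$\Rightarrow$(\ref{lemma_convergenceequivalence_expe}): one must extract a uniform bound on $|f^*(\degae)|$ from the hypothesis itself, without having any a priori control on the limiting quantity.
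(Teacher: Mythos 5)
Your implications (a)$\Rightarrow$(b) and (b)$\Rightarrow$(c) are sound (and (b)$\Rightarrow$(c) takes a slightly different path from the paper, which skips this implication and instead shows (a)$\Leftrightarrow$(c) directly). The flaw is in your (c)$\Rightarrow$(a), and it is a genuine quantifier error that breaks the cycle.

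Statement (c) is an assertion about the expectation $\expe[f_n(\mR^*)]$ for the \emph{given} sequence $\mR^*$ fixed in Section~\ref{implications_extensions_related_work}; the lemma asserts the equivalence of three statements about one and the same $\mR^*$. You cannot ``specialize $\mR^*$ to the deterministic sequence $m^\circ_n$'': that changes the hypothesis to a statement about a different random variable, which you do not have. Even if the deterministic sequence $m^\circ_n$ satisfies the two tail conditions (and in fact your claim that $\deltam(n)\ge k/n$ for large $n$ is not justified in general, since the paper only assumes $\deltam(n)=o(1)$, not $\deltam(n)=\omega(1/n)$), this tells you nothing about $\expe[f_n(m^\circ)]$ unless you already know (c) holds for that \emph{particular} choice. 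What is actually needed here is a direct comparison $|\expe[f_n(\mR^*)]-f_n(m^\circ)|=\mclo(\Delta(n))$, obtained from the two tail bounds for $\mR^*$ together with boundedness and Lipschitz continuity of $f$ (as in the paper's first display). Once you have that, (a)$\Leftrightarrow$(c) follows immediately and the implication chain closes. Your (b)$\Rightarrow$(c) step, though correct, then becomes unnecessary, since the paper obtains (b)$\Rightarrow$(a) by the realization-extraction trick you already use for bounding $|f^*(\degae)|$, and (a)$\Rightarrow$(c) follows from the direct comparison.
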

\begin{proof}
Using $E(n)=\expe[f_n(\mR^*)]$ we have
\begin{align*}
E(n)&\le\expe[\bmone\{|\degaR^*-\degae|\le\deltam\}f(\mR^*)]+c\epsm
\le\expe[\bmone\{|\degaR^*-\degae|\le\deltam\}f(m^\circ)]+c\deltam+c\epsm\\
&\le f(m^\circ)+c\degae(m^\circ)\epsm+c\deltam+c\epsm
\le f(m^\circ)+2c\degabu\Delta
\end{align*}
and analogously $E(n)\ge f(m^\circ)-2c\degabu\Delta$, which shows that
the statements \ref{lemma_convergenceequivalence}\ref{lemma_convergenceequivalence_pt} and \ref{lemma_convergenceequivalence}\ref{lemma_convergenceequivalence_expe} are equivalent.
Now, assume that \ref{lemma_convergenceequivalence}\ref{lemma_convergenceequivalence_pt} holds and let $C'_\mfkg$ be such that $|f(m^\circ)-f^*(\degae)|\le C'\Delta$.
By the triangle inequality we have $|f(m)-f^*(\degae)|\le c'|\degae(m)-\degae(m^\circ)|+C'\Delta\le(ck+C')\Delta$ if $|\degae(m)-\degae|\le\deltam$, since then $|\degae(m)-\degae(m^\circ)|\le\deltam+\frac{k}{n}\le k\Delta$ by the triangle inequality, so 
\ref{lemma_convergenceequivalence}\ref{lemma_convergenceequivalence_prob} holds with $C= ck+C'$.
Conversely, let $C'$ be the constant from \ref{lemma_convergenceequivalence}\ref{lemma_convergenceequivalence_prob} and $n_{\circ,\mfkg}$ such that $\epsm<1/2$ for all $n\ge n_\circ$. In this case we have
\begin{align*}
\prob[|f_n(\mR^*)-f^*(\degae)|\le C'\Delta(n),|\degaR^*-\degae|\le\deltam]\ge 1-2\epsm>0,
\end{align*}
so there exists $m$ with $|f(m)-f^*(\degae)|\le C'\Delta(n)$ and $|\degae(m)-\degae|\le\deltam$.
As above, the triangle inequality and Lipschitz continuity give
$|f(m^\circ)-f^*(\degae)|\le C'\Delta+ck\Delta$.
By taking the limit this shows that $|f^*(\degae)|\le c\degae$, so for $n\le n_\circ$ we have
$|f(m^\circ)-f^*(\degae)|\le 2c\degae\le 2c\degabu n_\circ\Delta$ and thereby Part
\ref{lemma_convergenceequivalence}\ref{lemma_convergenceequivalence_pt} holds with $C=\max(C'+ck,2c\degabu n_\circ)$.
\end{proof}
For the sake of brevity, we only verify that Lemma \ref{lemma_convergenceequivalence} applies to the quantities appearing in the main results,
i.e.~$\bar\phi$, $\bar\phi^*$, $\bar\phi_{\mrma}(m)=\frac{1}{n}\ln(\ZM)$,
$\iota(m)=\frac{1}{n}I(\sigmaIID,\GTSM(\sigmaIID))$ and
$\delta(m)=\DKL(\sigmaIID,\GTSM(\sigmaIID)\|\sigmaRG[,\GR],\GR)$.
\begin{lemma}\label{lemma_keyboundlip}
There exists $c_\mfkg\in\reals_{>0}$ such that $\bar\phi,\bar\phi^*,\bar\phi_{\mrma},\iota,\delta\in\mclf[c]$.
\end{lemma}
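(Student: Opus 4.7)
We verify the three conditions defining $\mclf[c]$ separately for each of the five functions: $f_n(0)=0$, $|f_n(m)|\le c\degae_n(m)$, and $|f_n(m_1)-f_n(m_2)|\le c|\degae_n(m_1)-\degae_n(m_2)|$ for $m_1,m_2\le\mbu$. With $\ThetaP=0$, $\mI\equiv 0$, $\setP=\emptyset$, $\tI=1$, the graph at $m=0$ carries only the normalised external fields, so $\ZG\equiv 1$ and $\phiG\equiv 0$; this makes all five functions vanish at $m=0$.

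\textbf{Free entropies.}
For $\bar\phi$ and $\bar\phi^*$, Lemma \ref{lemma_phiIIDNIScont}\ref{lemma_phiIIDNIScont_bound} specialised to our setting gives $|\bar\phi(m)|,|\bar\phi^*(m)|\le ckm/n=c\degae_n(m)$, and part \ref{lemma_phiIIDNIScont_lipschitz} gives the Lipschitz estimate in $\degae_n(m)$ uniformly for $m\le\mbu$. For the annealed free entropy $\bar\phi_\mrma(m)=\frac{1}{n}\ln\ZM$, both properties are provided directly by Observation \ref{obs_phia}\ref{obs_phia_lipschitz}.

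\textbf{Relative entropy.}
The proof of Theorem \ref{thm_infth_external} yields the clean identity
\begin{align*}
\delta(m)=\bar\phi^*(m)-\frac{m}{n}\expe[\ln\ZFa(\gammaIID)],
\end{align*}
obtained by unpacking the $(\sigmaIID,\GTSM(\sigmaIID))$-vs-$(\sigmaRG[,\GR],\GR)$ derivative via Observation \ref{obs_GRM_expebounds}\ref{obs_GRM_expeboundsPsiM}. Since $\ln\ZFa(\gammaIID)\in[\ln\psibl,\ln\psibu]$ by Observation \ref{obs_fad}\ref{obs_fad_bounds}, the correction is linear in $m/n$ with an $m$-independent bounded coefficient and therefore lies in $\mclf[c']$. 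Combined with $\bar\phi^*\in\mclf[c]$, this yields $\delta\in\mclf[c]$.

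\textbf{Mutual information and the main obstacle.}
The decomposition established in the proof of Lemma \ref{lemma_mi_ce} gives
\begin{align*}
\iota(m)=-\bar\phi^*(m)+\frac{m}{n}E_n+\frac{1}{n}\bar\delta^R(m),
\end{align*}
where $E_n=\expe[\xlnx(\psiR(\sigmaIID_{\vR}))/\ZFa(\gammaIID)]$ is bounded uniformly in $n$ and in $m$ by Observation \ref{obs_fad}\ref{obs_fad_bounds}, and $\bar\delta^R(m)=\expe[\expe[\DKL(\sigmaIID_{\mrmg,\GTSM}\|\sigmaRG[,\GTSM])|\GTSM]]$. The first two summands are already in $\mclf$ by the preceding steps. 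The pivotal input for the third summand is the uniform bound $\bar\delta^R(m)\le c_\mfkg$ for $m\le\mbu$ from Observation \ref{obs_DKLgt}\ref{obs_DKLgt_IIDCG}, which is exactly where the Nishimori-coupling technology pays off: together with $\bar\delta^R\ge 0$ and $\bar\delta^R(0)=0$, it yields both the bound $\bar\delta^R(m)/n\le(c/k)\degae_n(m)$ (trivially for $m=0$, and for $m\ge 1$ since $\degae_n(m)\ge k/n$) and the Lipschitz bound $|\bar\delta^R(m_1)-\bar\delta^R(m_2)|/n\le 2c/n\le(2c/k)|\degae_n(m_1)-\degae_n(m_2)|$ whenever $m_1\ne m_2\le\mbu$. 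For $m>\mbu$ boundedness is recovered from the crude estimate $\bar\delta^R(m)\le 2m\ln\psibu$ derived at the end of the proof of Theorem \ref{thm_mutinf} (from $|\ln r_G(\sigma)|\le 2m\ln\psibu$), so $\iota\in\mclf[c]$ overall. Without this uniform bound, Lipschitz continuity of $\iota$ would require a more delicate chain-rule coupling argument showing that each additional factor contributes only a bounded amount to $I(\sigmaIID,\GTSM)$; here the Nishimori identity short-circuits that analysis.
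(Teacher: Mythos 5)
Your proof is correct, but follows a genuinely different route for $\iota$ and, to a lesser extent, for $\delta$. The paper treats both quantities by bounding the relevant Radon--Nikodym derivatives directly, both uniformly and per-factor: for $\iota$ it works from the representation $\iota(m)=\frac{1}{n}\DKL(\sigmaIID_{\mrmg,\GTSM}\,\|\,\sigmaR\,|\,\GTSM)$ from Section \ref{proof_lemma_mi_decomp}, uses $\psibl^{4m}\le r_{\mrms,G}\le\psibu^{4m}$ via Observation \ref{obs_condiid}\ref{obs_condiid_bound} for boundedness, and uses the per-extension bound $\psibl^4 r_{\mrms,G}\le r_{\mrms,G'}\le\psibu^4 r_{\mrms,G}$ together with the canonical coupling of $\GTSM[m_1](\sigmaIID)$ and $\GTSM[m_2](\sigmaIID)$ for the Lipschitz estimate; for $\delta$ it likewise bounds the derivative $r(G)=\ZG(G)/(\hat r(\sigma)\ZM)$ per factor. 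Your handling of $\delta$ via the linear identity $\delta(m)=\bar\phi^*(m)-\frac{m}{n}\expe[\ln\ZFa(\gammaIID)]$ is essentially equivalent and arguably a little cleaner, reducing everything to $\bar\phi^*\in\mclf$ plus a bounded linear term. Your handling of $\iota$ through the decomposition of Lemma \ref{lemma_mi_decomp} is legitimate and all the ingredients you invoke (Observation \ref{obs_DKLgt}\ref{obs_DKLgt_IIDCG}, the crude bound $\DKL(\sigmaRG[,G]^*\|\sigmaRG[,G])\le 2m\ln\psibu$ for $m>\mbu$) are available, but the Lipschitz estimate for $\overline\delta$ you extract rests on a coarser mechanism: uniform boundedness plus the minimum spacing $k/n$ of $\degae_n$, i.e.\ it exploits the discreteness of $m$ rather than any genuine smoothness. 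The paper's per-factor derivative bound gives a direct Lipschitz constant that would survive a continuous interpolation in $m$ and needs neither the entropy decomposition nor Observation \ref{obs_DKLgt}\ref{obs_DKLgt_IIDCG}. Both approaches establish the lemma; the paper's is more self-contained and slightly more robust, while yours reuses more of the already-proved structural identities.
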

\begin{proof}
The assertion for $\bar\phi,\bar\phi^*$ follows from Lemma \ref{lemma_phiIIDNIScont}, $\ThetaP=0$ and $\tI=1$.
The assertion for $\bar\phi_{\mrma}$ is Observation \ref{obs_phia}\ref{obs_phia_lipschitz}.
For $\iota$ we recall that $\iota=\frac{1}{n}\DKL(\sigmaRG[,\GTSM]\|\sigmaR|\GTSM)$ from Section \ref{proof_lemma_mi_decomp}
with $(\sigmaR,\GTSM)\dequal\sigmaIID\otimes\GTSM(\sigmaIID)$ and $\sigmaRG$ from Section \ref{ground_truth_given_graph} given by the $(\sigmaRG[,G],\sigmaR)$-derivative $r_{\mrms,G}$, so with Observation \ref{obs_condiid}\ref{obs_condiid_bound} we have $\psibl^{4m}\le r_{\mrms,G}\le\psibu^{4m}$ for $G\in\domG$ and thereby $|\iota|\le 4\ln(\psibu)m/n$.
Further, for $G\in\domG_m$ and an extension $G'\in\domG_{m+1}$ we have $\psibl^4r_{\mrms,G}\le r_{\mrms,G'}\le \psibu^4r_{\mrms,G}$ so $|\iota(m_1)-\iota(m_2)|\le 4k\ln(\psibu)|\degae(m_1)-\degae(m_2)|$ using the canonical coupling of $\GTSM[m_1](\sigmaIID)$ and $\GTSM[m_2](\sigmaIID)$ from the proof of Lemma \ref{lemma_phiIIDNIScont}.
For $\delta$ we recall the derivative $r(G)=\ZG(G)/(\hat r(\sigma)\ZM)=\gamma^{*\otimes n}(\sigma)\ZG(G)/\psiM(\sigma)$ from the proof of Theorem \ref{thm_infth_external}, and notice that $\psibl^2r(G)\le r(G')\le\psibu^2 r(G)$ for an extension $G'\in\domG_{m+1}$ of $G\in\domG_m$, so
$|\delta(m_1)-\delta(m_2)|\le 2k\ln(\psibu)|\degae(m_1)-\degae(m_2)|$, and $|\delta(m)|\le 2k\ln(\psibu)\degae(m)$.
\end{proof}
\begin{remark}\label{remark_convergenceprobability}
The combination of Lemma \ref{lemma_keyboundlip} and Lemma \ref{lemma_convergenceequivalence} yields equivalent definitions of the limiting quantities, including $\phiqbl$ and $\phiqbu$, and all main results for graphs with and without external fields for the modes of convergence in Lemma \ref{lemma_convergenceequivalence}.
\end{remark}
Now, we turn to the limiting quantities.
Clearly, due to uniform convergence both boundedness and Lipschitz continuity translate to the limit, however, only for $(\lawpsi,\gamma^*,d)\in\mfkp$.
Now, we consider the limiting quantities directly, only assuming $\ZFa(\gamma^*)=\ZFabu$.
Let
\begin{align*}
\mclf[c]=\{f:\reals_{\ge 0}\rarr\reals:\forall d |f(d_1)-f(d_2)|\le c|d_1-d_2|,f(0)=0\},
\end{align*}
further $\phia(d)=d\ln(\ZFabu)/k$, $\iota^*(d)$ from Section \ref{main_proofs_mutinf} and $\delta^*(d)=\bethebu(d)-\phia(d)$.
\begin{lemma}\label{lemma_keyboundlipinf}
There exists $c_\mfkg\in\reals_{>0}$ such that $\bethe_\pi,\bethebu,\phia,\iota^*,\delta^*\in\mclf[c]$.
\end{lemma}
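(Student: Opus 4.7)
The plan is to verify each function separately; the substantive step is Lipschitz continuity of $\bethe_\pi$ in $d$. Since $\phia(d) = d\ln(\ZFabu)/k$ is linear with slope bounded by $\ln(\psibu)/k$ via Observation \ref{obs_fad}\ref{obs_fad_bounds}, we have $\phia \in \mclf[\ln(\psibu)/k]$ immediately. Once $\bethebu \in \mclf[c']$ is established, both $\iota^*(d) = \frac{d}{k\ZFabu}\expe[\xlnx(\psiR(\sigmaR))] - \bethebu(d)$ and $\delta^*(d) = \bethebu(d) - \phia(d)$ follow, since each is $\pm\bethebu$ plus an explicit linear function of $d$ whose slope is bounded in terms of $\mfkg$ (using $|\xlnx|\le c_0$ on $[\psibl, \psibu]$ and $\ZFabu \ge \psibl$). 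Vanishing at $d = 0$ is immediate from $\phia(0) = 0$ and $\bethebu(0) = 0$, the latter holding because $\bethe_{0,\pi} = 0$ for all $\pi$: at $d = 0$, $\dR = 0$ almost surely, so $\ZV(0,\ldots) = \sum_\sigma\gamma^*(\sigma) = 1$ and $\xlnx(\ZV) = 0$, while the second summand of $\bethe$ vanishes linearly. Finally, the supremum of a family of functions with common Lipschitz constant is Lipschitz with the same constant, so $\bethebu \in \mclf[c]$ whenever $\bethe_\pi \in \mclf[c]$ uniformly in $\pi \in \mclp[*,\gamma^*][2]([q])$.

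For $\bethe_\pi$ I would differentiate in $d$ using the Poisson identity $\frac{\partial}{\partial d}\expe_{\dR \sim \Po(d)}[g(\dR)] = \expe[g(\dR+1) - g(\dR)]$. Writing $\bm Z_j$ for $\ZV(j, \psiR_{[j]}, \hR_{[j]}, \gammaR_{[j]})$ and coupling through the first $j$ factors yields $\bm Z_{j+1} = \bm Z_j\,\bar W$, where $\bar W = \sum_\sigma R_j(\sigma) W(\sigma)$ is the Gibbs-weighted average of the new-factor contribution $W(\sigma) = \sum_\tau\bmone\{\tau_{\hR} = \sigma\}\psiR(\tau)\prod_{h' \ne \hR}\gammaR_{h'}(\tau_{h'})$, with $R_j$ the Gibbs marginal induced by the first $j$ factors and the $(j{+}1)$-th factor independent of everything prior. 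Expanding $\xlnx(\bm Z_j\bar W) = \bar W\xlnx(\bm Z_j) + \bm Z_j\xlnx(\bar W)$ and setting $g(j) = \expe[\ZFabu^{-j}\xlnx(\bm Z_j)]$ gives
\begin{align*}
g(j+1) - g(j) = \expe\!\left[\ZFabu^{-j}\xlnx(\bm Z_j)\!\left(\tfrac{\bar W}{\ZFabu} - 1\right)\right] + \tfrac{1}{\ZFabu}\expe\!\left[\ZFabu^{-j}\bm Z_j\,\xlnx(\bar W)\right].
\end{align*}

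The main obstacle is that $\ZFabu^{-j}\xlnx(\bm Z_j)$ is not bounded uniformly in $j$, so the first summand could a priori blow up. The key observation that resolves this exploits $\pi \in \mclp[*,\gamma^*][2]([q])$: a direct calculation invoking Observation \ref{obs_fad}\ref{obs_fad_amstar} gives $\expe[W(\sigma)\mid\text{first $j$ factors}] = \ZFabu\,r_{*,\gamma^*}(\sigma) = \ZFabu$ for every $\sigma$, hence $\expe[\bar W \mid \text{first $j$}] = \ZFabu$. Since $\bm Z_j$ and $R_j$ are measurable with respect to the first $j$ factors, the first summand \emph{vanishes} by the tower property. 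The second summand is uniformly bounded: $\bar W \in [\psibl, \psibu]$ gives $|\xlnx(\bar W)| \le c_0(\mfkg)$, and $\ZFabu^{-j}\bm Z_j$ is a Radon-Nikodym derivative with expectation $1$ (as noted after Proposition \ref{proposition_ass_external}), so the expectation is at most $c_0/\ZFabu \le c_0/\psibl$. Combined with the trivial Lipschitz bound on the explicitly linear second summand of $\bethe_{d,\pi}$, this shows $|\partial_d \bethe_{d,\pi}| \le c(\mfkg)$ uniformly in $d$ and $\pi$, which completes the argument.
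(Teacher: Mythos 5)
Your proof is correct, but it takes a more computational route than the paper's. The paper first invokes Lemma \ref{lemma_reweighting} to rewrite $\bethe_d(\pi)$ as $\bethe_{2,d}(\pi)$, where both contributions are expectations of $\ln$ (not $\xlnx$) of quantities in $[\psibl^{\dR},\psibu^{\dR}]$ under the \emph{reweighted} laws $\bm X^*_\mrmv$, $\bm x^*_\mrmf$; each added factor then changes the log by at most $\ln(\psibu)$, so boundedness and the Lipschitz estimate follow from the canonical additive coupling of $\Po(d_1)$ and $\Po(d_2)$ with no cancellation to track. Your approach instead stays with the $\xlnx$ form and differentiates in $d$ via the Poisson identity; the problematic contribution (where $\ZFabu^{-j}\xlnx(\bm Z_j)$ is unbounded in $j$) is killed by the tower property together with $\expe[\bar W\mid\text{first }j] = \ZFabu$ and $\expe[\ZFabu^{-j}\bm Z_j]=1$. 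Those two identities are exactly the Radon--Nikodym-derivative facts that power Lemma \ref{lemma_reweighting}, so you are rederiving the same structure on the fly rather than invoking it once up front. What your version buys is a self-contained argument that makes the cancellation explicit, in the spirit of the interpolation-method derivative computations in Section \ref{interpolation}; what it costs is the extra bookkeeping (the $\xlnx(ab)=a\xlnx(b)+b\xlnx(a)$ decomposition, the conditioning structure) that the coupling argument avoids. Both give a uniform-in-$\pi$ Lipschitz constant, and the remaining reductions ($\phia$, $\bethebu$ as sup of equi-Lipschitz functions, $\delta^*$ and $\iota^*$ as sums) are the same in both proofs. One tiny point worth stating explicitly: the ``trivial linear'' second summand of $\bethe_{d,\pi}$ has slope $\tfrac{k-1}{k\ZFabu}\expe[\xlnx(\ZF(\psiR_\circ,\gammaR_\circ))]$, which is bounded by $\tfrac{k-1}{k}\psibu\,\xlnx(\psibu)$ using $\ZF\in[\psibl,\psibu]$ and $\ZFabu\ge\psibl$ --- you gesture at this but do not write it out, and it is needed to make the Lipschitz constant depend only on $\mfkg$.
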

\begin{proof}
With Lemma \ref{lemma_reweighting} we have 
\begin{align*}
|\bethe_d(\pi)|=|\bethe_{2,d}(\pi)|
\le\expe[\degaR\ln(\psibu)]+\frac{d(k-1)}{k}\ln(\psibu)
=d,\,
c=\frac{(2k-1)\ln(\psibu)}{k}.
\end{align*}
For $d_2\ge d_1$ we use the canoncial coupling of $\degaR_i\dequal\Po(d_i)$, $i\in[2]$, to obtain
\begin{align*}
|\bethe_{2,d_1}(\pi)-\bethe_{2,d_2}(\pi)|\le(d_2-d_1)\ln(\psibu)+(d_2-d_1)\frac{k-1}{k}\ln(\psibu)=c|d_2-d_1|.
\end{align*}
This also yields $|\bethebu(d)|\le cd$ and $\bethebu(d_1)-\bethebu(d_2)|\le c|d_1-d_2|$, where the former is obvious and the latter follows by considering maximizing sequences $(\pi_{1,n})_n$, $(\pi_{2,n})_n$ to obtain
\begin{align*}
\bethebu(d_1)=\lim_{n\rarr\infty}\bethe_{d_1}(\pi_{1,n})\le\lim_{n\rarr\infty}\bethe_{d_2}(\pi_{1,n})+c|d_2-d_1|\le\bethebu(d_2)+c|d_2-d_1|
\end{align*}
and the analogous result by switching $1$ and $2$ in the above.
The result for $\phia$ is immediate, which directly implies the result for $\delta^*$.
The result for $\iota^*$ follows from the result for $\bethebu$ and the immediate result $d\expe[\xlnx(\psiR(\sigmaR))]/(k\ZFabu)\in\mclf[c]$ for $c=\ln(\psibu)/k$ using that $(\sigma,\psi)\mapsto\psi(\sigma)/\ZFabu$ is a Radon-Nikodym derivative for $(\sigmaR,\psiR)$ since $\expe[\psiR(\sigmaR)]=\ZFa(\gamma^*)=\ZFabu$.
\end{proof}
Specifically for the Bethe functional we also recall the Lipschitz continuity in $\pi$.
\begin{lemma}\label{lemma_bethelipschitzpi}
There exists $L_\mfkg\in\reals_{>0}$ such that $\bethe_d:\mclp[][2]([q])\rarr\reals$ is $L$-Lipschitz if $d\le\degabu$. Hence, there exists $\pi\in\mclp[*][2]([q])$ such that $\bethe_d(\pi)=\bethebu(d)$.
\end{lemma}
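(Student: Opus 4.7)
The plan is to decompose $\bethe_d$ into the variable and factor contributions already analyzed in Sections \ref{assfactor_final} and \ref{assvar_final}, apply the Lipschitz bounds proven there, and then invoke compactness for the existence of a maximizer. Concretely, writing
\begin{align*}
\bethe_d(\pi) = \bethe_{\mrmv,d}(\pi) - \bethe_{\mrmf,d}(\pi),
\end{align*}
Lemma \ref{lemma_bethelipschitzfactor} gives a $\mfkg$-dependent Lipschitz bound on $\bethe_{\mrmf,d}$ with respect to $\distW$ on $\mclp[][2]([q])$. Lemma \ref{lemma_bethelipschitzvar} gives the analogous bound for $\bethe_{\mrmv,d}$; although its Lipschitz constant involves moments of $\dR \dequal \Po(\degae)$, the canonical coupling of $\Po(\degae)$ with $\Po(\degabu)$ (as used in the proof of Lemma \ref{lemma_assvar_final}) shows that these moments are controlled uniformly for $\degae \le \degabu$, so the constant depends only on $\mfkg$. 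The triangle inequality then yields the Lipschitz constant $L$ for $\bethe_d$.

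For the second claim, I first observe that $\mclp([q])$, being a compact simplex in $\reals^q$, is a compact metric space under $\|\cdot\|_\mrmtv$. Thus $\mclp[][2]([q]) = \mclp(\mclp([q]))$ is compact in the weak topology (Prokhorov's theorem — every Borel measure on a compact metric space is tight), and this topology is metrized by $\distW$ since the underlying space is compact. The set $\mclp[*,\gamma^*][2]([q])$ is cut out by the condition $\expe[\gammaR_\pi] = \gamma^*$, and the map $\pi \mapsto \expe[\gammaR_\pi] \in \mclp([q])$ is continuous (it is the integration against the continuous, bounded identity on $\mclp([q])$, so continuity under weak convergence is immediate via Portmanteau). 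Hence $\mclp[*,\gamma^*][2]([q])$ is a closed subset of a compact space, and therefore compact.

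Finally, the restriction $\bethe_d\big|_{\mclp[*,\gamma^*][2]([q])}$ is Lipschitz (and hence continuous) by the first part. Weierstrass' theorem gives the existence of $\pi \in \mclp[*,\gamma^*][2]([q])$ with $\bethe_d(\pi) = \bethebu(d)$. No step presents a substantive obstacle; the only subtlety worth checking is the uniform control of the Poisson moments in the variable contribution for $\degae \in [0, \degabu]$, and the equivalence of weak convergence on $\mclp(\mclp([q]))$ with Wasserstein convergence (used to ensure that Lipschitz continuity in $\distW$ implies the usual continuity needed for the extremum argument).
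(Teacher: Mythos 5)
Your proof is correct and follows essentially the same route as the paper: Lipschitz continuity comes from splitting $\bethe_d=\bethe_{\mrmv,d}-\bethe_{\mrmf,d}$ and invoking Lemma \ref{lemma_bethelipschitzvar} and Lemma \ref{lemma_bethelipschitzfactor}, and the existence of a maximizer follows from compactness of $\mclp[][2]([q])$ in the Wasserstein/weak topology, closedness of $\mclp[*,\gamma^*][2]([q])$, and the extreme value theorem. Your extra remarks — that the Poisson moment in the variable-contribution Lipschitz constant is bounded uniformly for $\degae\le\degabu$ via the monotone coupling with $\Po(\degabu)$, and that closedness of $\mclp[*,\gamma^*][2]([q])$ follows from continuity of $\pi\mapsto\expe[\gammaR_\pi]$ under weak convergence — are correct and merely make explicit details the paper leaves implicit.
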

\begin{proof}
Lipschitz continuity follows from Lemma \ref{lemma_bethelipschitzvar} and Lemma \ref{lemma_bethelipschitzfactor}.
Recall from \cite{coja2018} that $\mclp[][2]([q])$ is a compact Polish space (Corollary 2.2.5, Theorem 2.2.7 and Proposition 2.2.8 in \cite{panaretos2020}),
notice that $\mclp[*][2]([q])\setle\mclp[][2]([q])$ is closed
and hence compact, so by the extreme value theorem the maximum is attained.
\end{proof}
\subsection{Unary Weights}\label{unary_weights}
In this section we discuss the last remaining special case, namely $k=1$ and $q\ge 2$, which we assume throughout this section.
First, we notice that $\ZFa$ is trivial.
\begin{observation}
We have $\psiae,\ZFa\equiv\ZFabu$.
\end{observation}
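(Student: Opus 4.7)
The plan is to use the very restrictive form of $\ZFa$ when $k=1$ together with the standing assumption $(\lawpsi,\gamma^*)\in\mfkA$, in particular $\gamma^*\ge\psibl>0$ and $\ZFa(\gamma^*)=\ZFabu$.

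First I would unfold the definition of $\ZFa$ for $k=1$: with $(\psiR,\sigmaR)\dequal\lawpsi\otimes\gamma$ and a single coordinate, one has $\ZFa(\gamma)=\expe[\psiR(\sigmaR)]=\sum_{\tau\in[q]}\psiae(\tau)\gamma(\tau)$, so $\ZFa:\mclp([q])\to\reals$ is an affine/linear functional of $\gamma$ on the simplex $\mclp([q])$. In particular $\ZFabu=\sup_\gamma\ZFa(\gamma)=\max_{\tau\in[q]}\psiae(\tau)$, the maximum being attained precisely at one-point masses supported on $\arg\max\psiae$.

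Next I would exploit the fact that $\gamma^*$ is a \emph{fully supported} maximizer. Writing $m=\max_\tau\psiae(\tau)=\ZFabu$, we get $0=\ZFabu-\ZFa(\gamma^*)=\sum_\tau(m-\psiae(\tau))\gamma^*(\tau)$. Since every summand is non-negative and $\gamma^*(\tau)\ge\psibl>0$ for every $\tau\in[q]$, each term must vanish, forcing $\psiae(\tau)=m=\ZFabu$ for all $\tau$. Hence $\psiae\equiv\ZFabu$ as claimed.

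The equality $\ZFa\equiv\ZFabu$ is then immediate: for any $\gamma\in\mclp([q])$,
\[
\ZFa(\gamma)=\sum_{\tau\in[q]}\psiae(\tau)\gamma(\tau)=\ZFabu\sum_{\tau\in[q]}\gamma(\tau)=\ZFabu,
\]
using only that $\gamma$ is a probability measure. There is no real obstacle here; the content of the observation is simply that in the unary case the assumption ``$\gamma^*$ is a maximizer with full support'' is very strong, essentially reducing $\psiae$ to a constant. This will later let one argue that all the planted/null models collapse to trivial i.i.d.\ objects for $k=1$, which is presumably the reason for isolating this observation as a preparatory step.
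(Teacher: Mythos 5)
Your proof is correct and follows essentially the same route as the paper: write $\ZFa(\gamma)=\sum_\tau\psiae(\tau)\gamma(\tau)\le\|\psiae\|_\infty$, note that equality requires the support of $\gamma$ to lie in $\arg\max\psiae$, and then use $\gamma^*\ge\psibl>0$ (full support) to force $\psiae\equiv\|\psiae\|_\infty=\ZFabu$, whence $\ZFa\equiv\ZFabu$ by normalization. (Minor nit: the supremum $\ZFabu$ is attained at \emph{any} $\gamma$ supported on $\arg\max\psiae$, not only at one-point masses, but this does not affect your argument.)
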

\begin{proof}
Notice that we have $\ZFa(\gamma)=\sum_\tau\gamma(\tau)\psiae(\tau)\le\|\psiae\|_\infty$ with equality if and only if $\gamma^{-1}(\reals_{>0})\setle\psiae^{-1}(\|\psiae\|_\infty)$, so $\psiae\equiv\|\psiae\|_\infty$ since $\ZFa(\gamma^*)=\ZFabu=\|\psiae\|_\infty$ and $\gamma^*\ge\psibl$.
\end{proof}
Now, we verify the main results.
\begin{lemma}
Theorem \ref{thm_bethe_external}, Theorem \ref{thm_infth_external}, Lemma \ref{lemma_condth_rs}, Lemma \ref{lemma_condth_c} and Theorem \ref{thm_mi_external} hold for $k=1$, $q\ge 2$ and $(\lawpsi,\gamma^*,\degae)\in\mfkP$.
\end{lemma}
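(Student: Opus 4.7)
The plan is to exploit that when $k=1$ each factor touches exactly one variable, so the partition function factorizes. For $G=[w]^{\Gamma}$ with $w=(v,\psi)\in\domG_{n,m}$ we have
\begin{align*}
\ZG(G)=\sum_{\sigma\in[q]^n}\prod_{i\in[n]}\gamma^*(\sigma_i)\prod_{a\in v^{-1}(i)}\psi_a(\sigma_i)=\prod_{i\in[n]}Z_i(G),
\end{align*}
with $Z_i(G)=\sum_{\tau\in[q]}\gamma^*(\tau)\prod_{a\in v^{-1}(i)}\psi_a(\tau)$, so $\phiG(G)=\frac{1}{n}\sum_i\ln Z_i(G)$. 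Combined with the preceding observation $\psiae\equiv\ZFabu$, this has two immediate consequences. First, $\ZFa\equiv\ZFabu$ on all of $\mclp([q])$, so $\ZFa(\gamma^*)=\ZFabu$ is automatic. Second, substituting $k=1$ into \eqref{nablaI_def} and using $\ZFM(\psi,1,\gamma')=\ZF(\psi,\gamma'_{1,1})$ makes the $(k-1)$-term vanish and the two remaining contributions cancel, so $\nablaI\equiv 0$ and $\nablaIbl\ge 0$ is trivial. Hence all the hypotheses used in the main theorems are automatic as soon as $\gamma^*\ge\psibl$.

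For Theorem \ref{thm_bethe_external}\ref{thm_bethe_external_po}, I would use Observation \ref{obs_TSM_iid}: the $(\wTSa,\wRa)$-derivative is $(v,\psi)\mapsto\psi(\sigma_v)/\ZFabu$, which factorizes into $v\dequal\unif([n])$ and $\psi$ with $(\psi,\lawpsi)$-derivative $\psi\mapsto\psi(\sigma_v)/\ZFabu$ given $v$. A Poissonization of $\mR\dequal\Po(dn)$ thus makes the variable degrees i.i.d.\ $\Po(d)$ and, conditional on $\sigmaIID$, the per-variable subgraphs independent. Writing $Z^*(\tau)=\sum_{\tau'}\gamma^*(\tau')\prod_{a\in[\dR_1]}\psiR^*_a(\tau')$ with $\dR_1\dequal\Po(d)$ and each $\psiR^*_a$ drawn with derivative $\psi\mapsto\psi(\tau)/\ZFabu$, this yields
\begin{align*}
\expe[\phiG(\GTSM[\mR](\sigmaIID))]=\expe[\ln Z^*(\sigmaIID_1)]=\sum_\tau\gamma^*(\tau)\expe[\ln Z^*(\tau)].
\end{align*}
On the Bethe side, $(k-1)=0$ kills the second contribution, and $\ZV(d',\psi,1,\gamma)=\sum_\sigma\gamma^*(\sigma)\prod_a\psi_a(\sigma)$ is $\gamma$-independent, so $\bethe_\pi$ does not depend on $\pi$. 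Rewriting $\ZFabu^{-\dR_1}\xlnx(\ZV)$ as an expectation under the density $\gamma^*(\sigma)\prod_a\psiR_a(\sigma)/\ZFabu^{\dR_1}$ (which has mean $1$ given $\dR_1$ since $\expe[\psiR]\equiv\ZFabu$) produces exactly the right-hand side above. Hence $\bethebu(d)=\bethe_\pi=\expe[\phiG(\GTSM[\mR](\sigmaIID))]$, with zero error, giving Part \ref{thm_bethe_external_po}. Parts \ref{thm_bethe_external_m} and \ref{thm_bethe_external_general} follow by Lipschitz continuity in $m$ (Corollary \ref{cor_contphiTSM}) and the Poisson-to-general-count transfer (Corollary \ref{cor_phiTSIIDNIS}), contributing $\mclo(n^{-1/2})\le\mclo(n^{-\rho})$ and $\mclo(\deltam+\epsm+n^{-1/2})$ errors respectively.

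For the remaining statements I would re-run the arguments of Sections \ref{main_proofs_dkl}--\ref{main_proofs_mutinf} verbatim. Those derivations never invoked $k\ge 2$: they only used Theorem \ref{thm_bethe_external} (just established), Observation \ref{obs_phia} (a direct computation), the Nishimori identity (Observation \ref{obs_nishicond}) and the relative-entropy bounds (Observation \ref{obs_DKLgt}), the graph concentration (Lemma \ref{lemma_mcdiarmid}), and the Paley-Zygmund step in the proof of Lemma \ref{lemma_condth_rs}. Each of these remains valid at $k=1$: the Nishimori identity is a pure measure-theoretic rewriting, the McDiarmid step only uses bounded weights, and the second-moment bound $\bm Z^2\le e^{2rn}\ZM^2$ on the typical event goes through from the product form. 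The main obstacle, and the only place one must be careful, is that the interpolation and Aizenman-Sims-Starr machinery of Section \ref{thm_bethe_proof} genuinely relies on $k\ge 2$ at various points (e.g.\ the $(k-1)$-reweighting in $\bethe$ and the enriched coupling in Section \ref{coupling_graphs}); but we bypass it entirely because the direct factorization argument above already delivers Theorem \ref{thm_bethe_external} for $k=1$, and the rest of the paper consumes Theorem \ref{thm_bethe_external} only as a black box.
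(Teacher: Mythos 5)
Your proof is correct and follows the same skeleton as the paper's Section~\ref{unary_weights}: the $k=1$ partition function factorizes across variables, Poissonization yields i.i.d.\ $\Po(\degae)$ degrees, and a direct computation shows $\expe[\phiG(\GTSM[\mR](\sigmaIID))]=\bethe_{\degae}(\pi)=\bethebu(\degae)$ with zero error (since $\ZV$ is independent of the $\gamma_{a,h'}$ and the factor contribution vanishes, $\bethe_{\degae}$ is $\pi$-independent). You differ in how you settle the remaining four statements: you invoke the proofs of Section~\ref{main_proofs} as black boxes, whereas the paper re-derives the explicit $k=1$ formulas (noting $\sigmaNIS\dequal\sigmaIID$, $\ZM=\ZFabu^m$, and computing the relative-entropy and mutual-information derivatives directly) and then appeals to Lemma~\ref{lemma_keyboundlip} and Lemma~\ref{lemma_convergenceequivalence} for the modes of convergence. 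Both routes work; the paper's explicit calculations are slightly more self-contained, since your shortcut implicitly requires the auxiliary estimates of Section~\ref{preparations} (stated under the blanket assumption $q,k\ge 2$) to survive $k=1$ --- they do, as the Nishimori condition, the McDiarmid concentration, and the Lipschitz estimates are all $k$-agnostic, but this deserves a word of verification rather than a bare assertion. One minor logical slip: you present $\ZFa(\gamma^*)=\ZFabu$ as ``automatic'' once $\psiae\equiv\ZFabu$, yet the constancy of $\psiae$ is itself \emph{deduced from} the hypotheses $\ZFa(\gamma^*)=\ZFabu$ and $\gamma^*\ge\psibl$ (the observation preceding the lemma), so as written the reasoning is circular --- harmless here because $\mfkA$-membership is already part of the lemma's hypothesis, but worth untangling.
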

\begin{proof}
Using Observation \ref{obs_poisson}\ref{obs_poisson_bin}
we may take $\GR_{\mR}=[\psiR]^{\Gamma}$ without loss of generality, where
$\psiR\dequal\bigotimes_{i\in[n]}\lawpsi^{\otimes\dR(i)}$ and $\dR\dequal\Po(\degae)^{\otimes n}$.
By Observation \ref{obs_TSM_iid}, the wires-weight pairs $\wTSa[,\sigma]=(\vTSa,\psiTSa)$ for the teacher-student model are obtained from the derivative $(v,\psi)\mapsto\psi(\sigma_v)/\ZFa(\gammaN[,\sigma])=\psi(\sigma_v)/\ZFabu$, in particular $\vTSa$ from the derivative $v\mapsto\psiae(\sigma_v)/\ZFabu=1$, so $\vTSa\dequal\unif([n])$, and $\psiTSa$ given $\vTSa=i$ from the derivative $\psi\mapsto\psi(\sigma_i)/\ZFabu$.
Hence, we have $\GTSM[\mR](\sigma)=[\psiTSM]^{\Gamma}$ without loss of generality, where
$\psiTSM\dequal\bigotimes_{i\in[n]}\psiTSa[,\sigma(i)][\otimes\dR(i)]$ and $\psiTSa[,\tau]$ is given by the $(\psiTSa[,\tau],\psiRa)$-derivative $\psi\mapsto\psi(\tau)/\ZFabu$.
Due to independence this yields
\begin{align*}
\bar\phi(d)=\expe\left[\phiG(\GR_{\mR})\right]
&=\expe\left[\ln\left(\sum_\tau\gamma^*(\tau)\prod_{a\in[\dR]}\psiR_a(\tau)\right)\right],\,
\dR\dequal\Po(\degae),\,
\psiR\dequal\lawpsi^{\otimes\ints_{>0}},\\
\expe\left[\phiG(\GTSM[\mR](\sigmaIID))\right]
&=\expe\left[\ZFabu^{-\dR}\xlnx\left(\sum_\tau\gamma^*(\tau)\prod_{a\in[\dR]}\psiR_a(\tau)\right)\right]=\bethe_{\degae}(\pi)=\bethebu(\degae).
\end{align*}
Further, notice that $\expe[\frac{1}{n}\ln(\ZM[,\mR])]=\degae\ln(\ZFabu)=\phia(\degae)$ since $\psiM[,m](\sigma)=\gamma^{*\otimes n}(\sigma)\ZFabu^m$, which also gives $\sigmaNIS\dequal\sigmaIID$.
As in Observation \ref{obs_nishicond} this yields $(\sigmaIID,\GTSM(\sigmaIID))\dequal(\sigmaRG[,\GTSM(\sigmaIID)],\GTSM(\sigmaIID))$, further the $(\GTSM(\sigmaIID),\GR)$-derivative $G\mapsto\ZG(G)/\ZM(G)$, and using the chain rule of the relative entropy thereby
\begin{align*}
\delta^*(m)=\frac{1}{n}\DKL\left(\sigmaIID,\GTSM(\sigmaIID)\middle\|\sigmaRG[,\GR],\GR\right)
=\expe[\phiG(\GTSM(\sigmaIID)]-\frac{1}{n}\ln(\ZM).
\end{align*}
This shows that $\expe[\delta^*(\mR)]=\bethebu(\degae)-\phia(\degae)$.
Using the above, the derivative for the mutual information is $(\sigma,G)\mapsto\frac{\psiG[,G](\sigma)\ZM}{\psiM(\sigma)\ZG(G)}$
and hence
\begin{align*}
\frac{1}{n}I(\sigmaIID,\GTSM[\mR](\sigmaIID))
&=\frac{1}{n}\expe\left[\ln\left(\frac{\psiG[,\GTSM(\sigmaIID)](\sigmaIID)}{\gamma^{*\otimes n}(\sigmaIID)}\right)\right]
-\expe[\phiG(\GTSM[\mR](\sigmaIID))]\\
&=\expe\left[\sum_\tau\gamma^*(\tau)\prod_{a\in[\dR]}\frac{\psiR_a(\tau)}{\ZFabu}\ln\left(\prod_{a\in[\dR]}\psiR_a(\tau)\right)\right]-\expe[\phiG(\GTSM[\mR](\sigmaIID))]\\
&=\frac{\degae}{k\ZFabu}\expe[\xlnx(\psiRa(\sigmaR))]-\bethebu(\degae),
\end{align*}
with $\sigmaR\dequal\gamma^*$.
Using the discussion in Section \ref{reweighting_relative_entropies}, let $\psiR^*_\tau\dequal\psiR_{\circ,\tau}^{*\otimes\ints_{>0}}$, where $\psiR^*_{\circ,\tau}$ is given by the $(\psiR^*_{\circ,\tau},\psiRa)$-derivative $\psi\mapsto\psi(\tau)/\ZFabu$ with $\psiRa\dequal\lawpsi$. Let $\bm X^*=\psiR^*_{\tauR,[\dR]}$ with $\tauR\dequal\gamma^*$ and $\bm X=\psiR_{[\dR]}$ with $\psiR\dequal\psiRa^{\ints_{>0}}$, then we have $\bethebu(d)=\phia(d)+\DKL(\bm X^*\|\bm X)$ and analogously $\bar\phi(d)=\phia(d)-\DKL(\bm X\|\bm X^*)$.
Now, Lemma \ref{lemma_keyboundlip} and Lemma \ref{lemma_convergenceequivalence} establish all results but Lemma \ref{lemma_condth_c}.
For simplicity, we notice that the proof of Lemma \ref{lemma_condth_c} including the underlying concentration results and using $\sigmaIID\dequal\sigmaNIS$ from above also holds for $k=1$.
\end{proof}
\subsection{External Fields}\label{external_fields}
In this section we follow up on the discussion of graphs with external fields in Section \ref{implications_extensions_related_work}.
For $\eta:[q]\rarr\reals_{>0}$ and $G=(v,\psi)\in\domG$ let $[G]^\Gamma_{\eta}=G'=(v'_a,\psi'_a)_{a\in\mcla}$ with $\mcla=[m]\dotcup[n]$ be given by $G'_{[m]}=G$, $G'_a=(a,\eta)$ for $a\in[n]$, i.e.~a graph with fixed external fields.
Let $\GR_{\mrme}=[\wR]^\Gamma_\eta$ with $\wR$ from Section \ref{random_decorated_graphs} and let $\GTSM[\mrme](\sigma)$ be given by the $(\GTSM[\mrme](\sigma),\GR_{\mrme})$-derivative $G\mapsto\psiG[,G](\sigma)/\expe[\psiG[,\GR_{\mrme}](\sigma)]$.
Let $\mfkP_{\mrme}=\{(\lawpsi,\gamma^*,d,c\gamma^*):(\lawpsi,\gamma^*,d)\in\mfkP,c\in\reals_{>0}\}$ be the parameters including external fields.
We reduce the general case to normalized external fields via $\phiG^\circ([G]^\Gamma_\eta)=\phiG([G]^\Gamma_{\eta})-\ln(\|\eta\|_1)$.
\begin{corollary}
Theorem \ref{thm_bethe_external}, Theorem \ref{thm_infth_external}, Lemma \ref{lemma_condth_rs}, Lemma \ref{lemma_condth_c} and Theorem \ref{thm_mi_external} hold for $\GR$, $\GTSM$, $\phiG$ replaced by $\GR_{\mrme}$, $\GTSM[\mrme]$, $\phiG^\circ$ and $(\lawpsi,\gamma^*,d,\eta)\in\mfkP_{\mrme}$.
\end{corollary}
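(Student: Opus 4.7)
The proof reduces the external-field setting to the already-established normalised setting via a perfect coupling. The key identity is that, since $\eta=c\gamma^*$ with $c\in\reals_{>0}$ and hence $\|\eta\|_1=c$, one has
\[
\psi_{[G]^\Gamma_\eta}(\sigma)=\prod_{i\in[n]}\eta(\sigma_i)\prod_{a\in[m]}\psi_a(\sigma_{v_a})=c^n\,\psi_{[G]^\Gamma}(\sigma)
\]
for every $\sigma\in[q]^n$ and every $G=(v,\psi)\in\domG$. The plan is to propagate this pointwise identity through the definitions of the partition function, the Gibbs measure, the teacher-student model and the Nishimori ground truth, and then to observe that each of the five target statements is either invariant or agrees with its normalised counterpart after removing the shift $\ln(c)$ that is built into $\phiG^\circ$.

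Concretely, I would couple $\GR_\mrme=[\wR]^\Gamma_\eta$ with $\GR=[\wR]^\Gamma$ via the same $\wR$. The identity above yields $\ZG(\GR_\mrme)=c^n\ZG(\GR)$ and $\phiG(\GR_\mrme)=\phiG(\GR)+\ln(c)$, hence $\phiG^\circ(\GR_\mrme)=\phiG(\GR)$ pointwise. The Gibbs measures coincide, $\lawG[,\GR_\mrme]=\lawG[,\GR]$, because the $c^n$ factor cancels in the normalisation, and in particular $\sigmaRG[,\GR_\mrme]\dequal\sigmaRG[,\GR]$. Since $\expe[\psi_{\GR_\mrme}(\sigma)]=c^n\expe[\psi_{\GR}(\sigma)]$, the Radon-Nikodym derivative $G\mapsto\psi_G(\sigma)/\expe[\psi_{\GR_\mrme}(\sigma)]$ defining $\GTSM[\mrme](\sigma)$ matches, on coupled graphs, the derivative defining $\GTSM(\sigma)$, which gives $\GTSM[\mrme](\sigma)\dequal[\wTSM(\sigma)]^\Gamma_\eta$. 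The same cancellation shows that the Nishimori density $\psiM(\sigma)/\ZM$ is unchanged, so $\sigmaNIS$ coincides with its external-field analogue.

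From here the five target results will follow essentially for free. For Theorem \ref{thm_bethe_external}, Lemma \ref{lemma_condth_rs} and Lemma \ref{lemma_condth_c}, every expected $\phiG^\circ(\cdot)$ on an external-field graph equals the corresponding expected $\phiG(\cdot)$ on the coupled normalised graph, so the asymptotics and uniform error bounds transfer verbatim. For Theorem \ref{thm_infth_external} and Theorem \ref{thm_mi_external}, the relative entropy and the mutual information are expectations of logarithms of Radon-Nikodym derivatives of pairs $(\sigma,G)$; in each such derivative the deterministic factor $c^n$ (equivalently $\prod_i\eta(\sigma_i)/\gamma^*(\sigma_i)$) cancels between numerator and denominator, so the values match those in the normalised case. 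No substantial obstacle is anticipated; the only care needed will be to verify all these cancellations simultaneously, in particular for the Nishimori density and for the conditional kernel $\sigmaRG$ entering Theorem \ref{thm_infth_external}.
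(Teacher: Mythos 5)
Your proposal is correct and follows essentially the same route as the paper: the pointwise $c^n$ cancellation in $\psiG$, $\ZG$, the Gibbs measures, and the teacher-student Radon–Nikodym derivatives is exactly the observation the paper records, after which it likewise declares the remaining reductions analogous to the translation from decorated to standard graphs. Your write-up simply makes some of the cancellations (the Nishimori density, the kernel $\sigmaRG$) a little more explicit, but there is no difference in substance.
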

\begin{proof}
We have $\|c\gamma^*\|_1=c$ and hence $\phiG^\circ([G]^\Gamma_{c\gamma^*})=\frac{1}{n}\ln(c^n\ZG([G]^\Gamma_{\gamma^*}))-\ln(c)=\phiG([G]^\Gamma_{\gamma^*})$.
Similarly, notice that $\psiG[,[G]^\Gamma_{c\gamma^*}](\sigma)=c^n\psiG[,[G]^\Gamma_{\gamma^*}](\sigma)$, so the Radon-Nikodym derivatives coincide in this sense and analogously to Observation \ref{obs_standard_graphs}, and thereby $\GTSM[\mrme](\sigma)\dequal[\wTSM(\sigma)]^\Gamma_{c\gamma^*}$.
The remainder is analogous to the translation of the results to graphs without external fields.
Notice that the results are uniform over $c\in\reals_{>0}$.
\end{proof}
\subsection{Simple Hypergraphs}\label{simple_hypergraphs}
In this section we illustrate how the results can be translated to similar models.
To be specific, we eliminate parallel edges and duplicate neighborhoods, starting with parallel edges only.
Let $k\ge 2$ and $n\ge k$.

With $\wRa=(\vRa,\psiRa)\dequal\unif([n]_k)\otimes\lawpsi$ let $\wR\dequal\wRa^{\otimes m}$, let $\GR=[\wR]^\Gamma_{\gamma^*}$ be the null model, and $\GTSM(\sigma)$ be the teacher-student model given by $G\mapsto\psiG[,G](\sigma)/\expe[\psiG[,\GR](\sigma)]$.
\begin{corollary}\label{cor_paralleledges}
Theorem \ref{thm_bethe_external}, Theorem \ref{thm_infth_external}, Lemma \ref{lemma_condth_rs}, Lemma \ref{lemma_condth_c} and Theorem \ref{thm_mi_external} hold for $\GR$, $\GTSM$ as defined here.
\end{corollary}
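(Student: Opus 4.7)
My plan is to establish the corollary by coupling the simple hypergraph model with the multigraph model already analyzed, showing that the two free entropies differ only by $\mathcal{O}(1/n)$ which is absorbed into the $\mathcal{O}(n^{-\rho})$ error budget of the main results. The mechanism is that the two models agree on all factors whose multigraph neighborhood is already injective, and disagree on only $\mathcal{O}(m/n) = \mathcal{O}(1)$ factors in expectation; each such discrepancy contributes $\mathcal{O}(1/n)$ to $\phiG$ via the Lipschitz estimate of Observation \ref{obs_phi_lipschitz}.

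Concretely, I would first set up an explicit coupling. For the null model, sample $\vRa \dequal \unif([n]^k)$ and set $\vR^{\mathrm{s}}_a = \vRa$ whenever $\vRa \in [n]_k$; otherwise independently draw $\vR^{\mathrm{s}}_a \dequal \unif([n]_k)$. This yields the correct marginals and preserves all ``good'' factors. The expected number of bad factors is at most $m\binom{k}{2}/n$, which is uniformly bounded on $m \le \mbu$. The distance bound $\distG([\wR]^\Gamma,[\wR^{\mathrm{s}}]^\Gamma) \le 2|\bm B|$ with $\bm B$ the bad set then gives $|\expe[\phiG([\wR]^\Gamma)] - \expe[\phiG([\wR^{\mathrm{s}}]^\Gamma)]| = \mathcal{O}(1/n)$, transferring all null-model results immediately.

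For the teacher-student model, the analog of Observation \ref{obs_psiexpenullprod} gives $\GTSM^{\mathrm{s}}(\sigma) \dequal (\vTSa^{\mathrm{s}},\psiTSa^{\mathrm{s}})^{\otimes m}$ with the derivative normalizing by $\ZFa^{\mathrm{s}}(\sigma) = |[n]_k|^{-1} \sum_{v \in [n]_k} \psiae(\sigma_v)$. Expanding in falling factorials and comparing with $\ZFa(\gammaN[,\sigma])$ yields $\ZFa^{\mathrm{s}}(\sigma) = \ZFa(\gammaN[,\sigma]) + \mathcal{O}(1/n)$ uniformly in $\sigma$, since the ratios of falling factorials to powers differ by $\mathcal{O}(1/n)$. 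Consequently $\psiM^{\mathrm{s}}$ and $\ZM^{\mathrm{s}}$ are within a $(1+\mathcal{O}(1/n))^m$ factor of their multigraph analogs, so Lemma \ref{lemma_firstmom} and Proposition \ref{proposition_mutual_contiguity} hold verbatim with uniformly comparable constants, and the coupling above extends to a coupling of $\GTSM(\sigma)$ and $\GTSM^{\mathrm{s}}(\sigma)$ whose expected distance is again $\mathcal{O}(1)$.

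With these comparisons in place, the transfer of the theorems is mechanical. Theorem \ref{thm_bethe_external} follows by re-running the interpolation method of Section \ref{interpolation} and the Aizenman-Sims-Starr scheme of Section \ref{ass} with the simple factor distribution: in each step the correction is an additional $\mathcal{O}(1/n)$ term dominated by the existing $n^{-\rho}$ rate, and the pinning lemma \ref{lemma_pinning} is measure-theoretic and applies without change. Theorem \ref{thm_infth_external}, Lemma \ref{lemma_condth_rs}, Lemma \ref{lemma_condth_c}, and Theorem \ref{thm_mi_external} then follow from their derivations in Section \ref{main_proofs}, which invoke only Theorem \ref{thm_bethe_external} together with the Nishimori and concentration estimates already transferred. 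The main obstacle is bookkeeping in the Aizenman-Sims-Starr scheme: the neighborhood decomposition of Section \ref{known_neighborhoods} and the base-graph coupling of Section \ref{coupling_graphs} must be redone with $\wR_{-\circ}$ and $\wR_{+\circ}$ restricted to injective neighborhoods (and with $i = n+1$ treated as the added vertex), which requires a simple-model analog of Observation \ref{obs_known}; once that version is established the remainder of Sections \ref{ass_base_graph}--\ref{assvar_final} proceeds unchanged with the same $n^{-\rho}$ rate.
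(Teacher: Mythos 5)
The coupling idea is right, but the way you propose to use it is a detour that the paper avoids entirely. You treat the coupling as only sufficient to transfer the null-model quantities and then propose to \emph{re-derive} Theorem \ref{thm_bethe_external} from scratch by re-running the interpolation method and the Aizenman--Sims--Starr scheme with the simple factor distribution, including a new analog of Observation \ref{obs_known}. The paper does none of that: it shows $|\expe[\phiG(\GTSM(\sigmaIID))]-\expe[\phiG(\GTSM[\mrmr](\sigmaIID))]|=\mclo(n^{-1/2})$ and $|\tfrac{1}{n}\ln(\expe[\ZG(\GR)])-\tfrac{1}{n}\ln(\expe[\ZG(\GR_{\mrmr})])|=\mclo(n^{-1})$ \emph{directly} from the coupling (plus the uniform bound $(1+c/n)^{-\mbu}\le\ZFa(\gamma)/z(\sigma)\le(1+c/n)^{\mbu}$), so that since $\rho<1/4$ the error is absorbed into the existing $\mclo(n^{-\rho})$, and Theorem \ref{thm_bethe_external} transfers without touching Sections \ref{interpolation}--\ref{ass}. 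The same applies to $\delta$, $\iota$, and the other quantities; the paper then invokes Lemma \ref{lemma_convergenceequivalence} to finish. Your plan would work if carried out, but it is a much larger undertaking than the task requires, and missing this shortcut is the main gap.

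Two further points you gloss over. First, the teacher--student coupling is not a trivial extension of the null-model one: redrawing $\vR^{\mathrm{s}}_a$ uniformly from $[n]_k$ on a collision does not reproduce the reweighted simple teacher--student neighborhood distribution, which is $\unif([n]_k)$ twisted by $\psi(\sigma_v)/z(\sigma)$. The paper instead verifies the restriction identity $\wR^*(\sigma)\dequal(\wR^*_{\mrmr}(\sigma)|\vR^*(\sigma)\in[n]_k^m)$ and builds a rejection sampler that keeps drawing i.i.d.\ copies of the \emph{multigraph} teacher--student pair until landing in $[n]_k$; your proposed mechanism would give the wrong conditional law. Second, the transfer of the relative entropy and mutual information is not a simple Lipschitz-in-$\distG$ argument: those are logarithms of ratios of Radon--Nikodym derivatives, and the paper handles them by conditioning on $|\bmcla|<\sqrt n$, bounding the ratio by $C\psibu^{2\sqrt n}$ on that event, and taking $\tfrac{1}{n}\ln(\cdot)$ to obtain $\mclo(n^{-1/2})$. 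Your proposal says those theorems ``follow from their derivations in Section \ref{main_proofs}'' once Theorem \ref{thm_bethe_external} is in hand, which is plausible in outline but skips the careful quantitative comparison that the paper actually makes.
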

\begin{proof}
With $\wR_{\mrmr,\circ}=(\vR_{\mrmr\circ},\psiR_{\mrmr\circ})\dequal\unif([n]^k)\otimes\lawpsi$ let $\wR_{\mrmr}\dequal\wR_{\mrmr\circ}^{\otimes m}$, let $\GR_{\mrmr}=[\wR_{\mrmr}]^\Gamma_{\gamma^*}$ and $\GTSM[\mrmr](\sigma)$ given by $G\mapsto\psiG[,G](\sigma)/\expe[\psiG[,\GR_{\mrmr}](\sigma)]$ be the decorated graphs from Section \ref{random_decorated_graphs}.
Let $\wR^*_{\mrmr\circ}(\sigma)$ given by $(v,\psi)\mapsto\psi(\sigma_v)/\ZFa(\gammaN[,\sigma])$ be the reweighted pair from Observation \ref{obs_TSM_iid}, and let $\wR^*_\circ(\sigma)$ be given by $(v,\psi)\mapsto\psi(\sigma_v)/z(\sigma)$ with $z(\sigma)=\expe[\psiae(\sigma_{\vR_\circ})]$.
With $\wR^*(\sigma)\dequal\wR_\circ^{*\otimes m}$ and $\wR^*_{\mrmr}(\sigma)\dequal\wR_{\mrmr\circ}^{*\otimes m}$ we have
$\expe[\psiG[,\GR](\sigma)]=\gamma^{*\otimes n}(\sigma)z(\sigma)^m$ and hence $\GR^*(\sigma)\dequal[\wR^*(\sigma)]^\Gamma$.
Further, since the $(\vR_\circ,\vR_{\mrmr\circ})$-derivative is $\bmone\{v\in[n]^k\}/P$ with $P=\prob[\vR_{\mrmr\circ}\in[n]_k]$, we have $\wR\dequal(\wR_{\mrmr}|\vR\in[n]_k^m)$ and $z(\sigma)=\expe[\psiR_{\mrmr\circ}(\sigma_{\vR_{\mrmr\circ}})|\vR_{\mrmr\circ}\in[n]_k]
=P^*(\sigma)\ZFa(\gammaN[,\sigma])/P$ with $P^*(\sigma)=\prob[\vR^*_{\mrmr\circ}(\sigma)\in[n]_k]$, which further yields $\wR^*(\sigma)\dequal(\wR^*_{\mrmr}(\sigma)|\vR^*(\sigma)\in[n]_k^m)$.

We couple $\wR$ with $\wR_{\mrmr}$, as well as $\wR^*(\sigma)$ with $\wR^*_{\mrmr}(\sigma)$ for $m\le\mbu$ as follows.
Recall that we have $r\le\psibu^2$ for the $(\wR^*_{\mrmr\circ},\wR_{\mrmr\circ})$-derivative $r$.
With $\wR^\circ=(\vR^\circ,\psiR^\circ)\dequal\wR^{*\otimes\ints_{>0}^2}_{\mrmr\circ}$ let $\wR'_{\mrmr}=(\wR^\circ_{a,1})_{a\in[m]}\dequal\wR^*_{\mrmr}$. For $a\in[m]$ let
$\bm b(a)=\min\{b\in\ints_{>0}:\vR^\circ_{a,b}\in[n]_k\}$
and $\wR'=(\wR^\circ_{a,\bm b(a)})_{a\in[m]}\dequal\wR^*$.
For $\bmcla=\{a\in[m]:\wR'(a)\neq\wR'_{\mrmr}(a)\}=\bm b(\ints_{>1})^{-1}$ we have
\begin{align*}
\prob[|\bmcla|\ge\sqrt{n}]\le\frac{\expe[|\bmcla|]}{\sqrt{n}}\le\frac{\mbu\prob[\vR_{\mrmr\circ}^*\not\in[n]_k]}{\sqrt{n}}\le\frac{c}{\sqrt{n}},\,
c=2\degabu\psibu^2k.
\end{align*}
Repeating the coupling with $\wR^*$ replaced by $\wR$ yields a coupling of $\wR$ and $\wR_{\mrmr}$ such that they differ on at least $\sqrt{n}$ factors with probability at most $c/\sqrt{n}$.
With $c$ from Observation \ref{obs_phi_lipschitz} we have
$|\phiG(G)|\le 2\degabu c/k$ since $m\le\mbu$ and $|\phiG(G)-\phiG(G')|\le 2c/n|\{a\in[m]:G_a\neq G'_a\}|$ for $G,G'\in\domG$, so
\begin{align*}
\left|\expe\left[\phiG(\GR^*(\sigmaIID))\right]-\expe\left[\phiG(\GR_{\mrmr}(\sigmaIID))\right]\right|
\le\frac{4\degabu c}{k}\prob[|\bmcla|\ge\sqrt{n}]+\frac{2c\sqrt{n}}{n}=\mclo(\sqrt{n}^{-1}),
\end{align*}
and analogously $|\expe[\phiG(\GR)]-\expe[\phiG(\GR_{\mrmr})]|=\mclo(\sqrt{n}^{-1})$.
Using $\gamma=\gammaN[,\sigma]$, notice that
\begin{align*}
\ZFa(\gammaN[,\sigma])
=\expe[\psiR_{\mrmr\circ}(\sigma_{\vR_{\mrmr\circ}})]
=Pz(\sigma)+\Delta,\,
\Delta=\expe[\bmone\{\vR_{\mrmr\circ}\not\in[n]_k\}\psiR_{\mrmr\circ}(\sigma_{\vR_{\mrmr\circ}})].
\end{align*}
Clearly, we have $\psibl\eps\le\Delta\le\psibu\eps$, $\eps=1-P$, and hence
$1-\eps\le\ZFa(\gamma)/z(\sigma)\le 1+\psibu^2\eps$.
With $c=\psibu^2 k^2$ and using $P=(n)_k/n^k$, $n\ge k$, where $(n)_k=\prod_{h=0}^{k-1}(n-h)$, we obtain
\begin{align*}
\frac{1}{1+\frac{c}{n}}\le\frac{\ZFa(\gamma)}{z(\sigma)}\le 1+\frac{c}{n}.
\end{align*}
With $\expe[\psiG[,\GR](\sigma)]=\gamma^{*\otimes n}(\sigma)z(\sigma)^m$, Observation \ref{obs_GRM_expebounds} and $C=\exp(2\degabu c/k)$ this yields
\begin{align*}
C^{-1}\le\left(1+\frac{c}{n}\right)^{-\mbu}
\le\frac{\expe[\psiG[,\GR](\sigma)]}{\expe[\psiG[,\GR_{\mrmr}](\sigma)]},\,
\frac{\expe[\ZG(\GR)]}{\expe[\ZG(\GR_{\mrmr})]}
\le\left(1+\frac{c}{n}\right)^{\mbu}\le C,
\end{align*}
and thereby $|\frac{1}{n}\ln(\expe[\ZG(\GR)])-\frac{1}{n}\ln(\expe[\ZG(\GR_{\mrmr})])|\le\ln(C)/n$.
Next, recall $\delta$ from Theorem \ref{thm_infth_external}, that
$\delta(m)=\expe[\phiG(\GTSM[\mrmr](\sigmaIID))]-\expe[\frac{m}{n}\ln(\ZFa(\gammaIID))]$ from the proof thereof, and let
\begin{align*}
\delta'(m)=\DKL\left(\sigmaIID,\GTSM(\sigmaIID)\middle\|\sigmaRG[,\GR],\GR\right)
=\expe[\phiG(\GTSM(\sigmaIID))]-\expe\left[\frac{m}{n}\ln\left(z(\sigmaIID)\right)\right].
\end{align*}
This yields $\delta(m)-\delta'(m)=\mclo(\sqrt{n}^{-1})$ using $m\le\mbu$.
Recall $\iota$ from Theorem \ref{thm_mi_external}, and from
Lemma \ref{lemma_mi_decomp}, Section \ref{ground_truth_given_graph} that
$\iota(m)=\frac{1}{n}\DKL(\sigmaRG[,\GTSM[\mrmr](\sigmaIID)]^*\|\sigmaR|\GTSM[\mrmr](\sigmaIID))
=\expe[\delta(\sigmaIID,\GTSM[\mrmr](\sigmaIID))]$, where
$\delta(\sigma,G)=\expe[\frac{1}{n}\ln(r_\sigma(G)/r^*(G))]$,
$r_\sigma(G)=\psiG[,G](\sigma)/\expe[\psiG[,\GR_{\mrmr}](\sigma)]$,
$r^*(G)=\expe[r_{\sigmaIID}(G)]$.

Let $\iota_\circ(m)=\frac{1}{n}I(\sigmaIID,\GTSM(\sigmaIID))
=\expe[\delta_\circ(\sigmaIID,\GTSM(\sigmaIID))]$,
$\delta_\circ(\sigma,G)=\expe[\frac{1}{n}\ln(r_{\circ,\sigma}(G)/r_\circ^*(G))]$, $r_{\circ,\sigma}(G)=\psiG[,G](\sigma)/\expe[\psiG[,\GR](\sigma)]$,
$r_\circ^*(G)=\expe[r_{\circ,\sigmaIID}(G)]$,
obtained analogously.
With the uniform bounds $|\delta|,|\delta_\circ|\le 4\ln(\psibu)\mbu/n=8\ln(\psibu)\degabu/k$ we have
\begin{align*}
|\iota(m)-\iota_\circ(m)|\le\expe[\bmone\{|\bmcla|<\sqrt{n}\}|\delta(\sigmaIID,[\wR'_{\mrmr}(\sigmaIID)]^\Gamma)-\delta_\circ(\sigmaIID,[\wR'(\sigmaIID)]^\Gamma)|]+\mclo(\sqrt{n}^{-1}).
\end{align*}
On the event $|\bmcla|<\sqrt{n}$ we have $\psibl^{2\sqrt{n}}\psiG[,\wR'](\sigma)\le\psiG[,\wR'_{\mrmr}](\sigma)\le\psibu^{2\sqrt{n}}\psiG[,\wR'](\sigma)$, and with the bounds above further $C^{-1}\psibl^{2\sqrt{n}}\le r_\sigma([\wR'_{\mrmr}]^\Gamma)/r_{\circ,\sigma}([\wR']^\Gamma)\le C\psibu^{2\sqrt{n}}$, which in turn yields the same bounds for $r^*$, hence
\begin{align*}
|\delta(\sigmaIID,[\wR'_{\mrmr}(\sigmaIID)]^\Gamma)-\delta_\circ(\sigmaIID,[\wR'(\sigmaIID)]^\Gamma)|\le\frac{1}{n}\ln\left(\left(C\psibu^{2\sqrt{n}}\right)^2\right)
=\mclo(\sqrt{n}^{-1})
\end{align*}
and thereby $|\iota(m)-\iota_\circ(m)|=\mclo(\sqrt{n}^{-1})$.
Lemma \ref{lemma_convergenceequivalence} with $\rho<1/4$ yields the assertion.
\end{proof}
Now, we also eliminate duplicate neighborhoods.
Hence, let $\wR=(\vR,\psiR)\dequal\unif(\mclv)\otimes\lawpsi^{\otimes m}$ with $\mclv=\{v\in[n]_k^m:\forall u\in[n]_k |v^{-1}(u)|\le 1\}$ denoting the injections (so $\mclv=([n]_k)_m$) for $m\le n!/(n-k)!$ and $n\ge k$.
Further, let $\GR=[\wR]^\Gamma_{\gamma^*}$ be the null model, and $\GTSM(\sigma)$ be the teacher-student model given by $G\mapsto\psiG[,G](\sigma)/\expe[\psiG[,\GR](\sigma)]$.
\begin{corollary}\label{cor_duplicatefactors}
Theorem \ref{thm_bethe_external}, Theorem \ref{thm_infth_external}, Lemma \ref{lemma_condth_rs}, Lemma \ref{lemma_condth_c} and Theorem \ref{thm_mi_external} hold for $\GR$, $\GTSM$ as defined here.
\end{corollary}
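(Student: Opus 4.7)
The plan is to mirror the strategy of Corollary \ref{cor_paralleledges}, coupling the simple hypergraph model $\GR$ of this corollary with the no-parallel-edges model of Corollary \ref{cor_paralleledges}, which I will denote $\GR'$. The key observation is $\wR \dequal (\wR' \,|\, \vR' \in \mclv)$, so $\wR$ is obtained from $\wR'$ by rejection sampling, resampling any factor whose neighborhood duplicates an earlier one. Formally, I sample i.i.d.~copies $\wR^\circ_{a,b} = (\vR^\circ_{a,b},\psiR^\circ_{a,b}) \dequal \unif([n]_k) \otimes \lawpsi$ for $(a,b) \in \ints_{>0}^2$, set $\wR' = (\wR^\circ_{a,1})_{a \in [m]}$, and define $\bm b(a)$ as the first index with $\vR^\circ_{a,\bm b(a)} \notin \{\vR^\circ_{a',\bm b(a')} : a' < a\}$, so that $\wR = (\wR^\circ_{a,\bm b(a)})_{a \in [m]}$ has the required law $\unif(\mclv) \otimes \lawpsi^{\otimes m}$. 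The analogous rejection scheme, now applied to i.i.d.~copies of the planted single pair $\wR'^*_\circ(\sigma)$ of Corollary \ref{cor_paralleledges}, produces a coupling of $\wR'^*(\sigma)$ with the planted simple-hypergraph pair $\wR^*(\sigma)$ defined by the derivative $\prod_a \psi_a(\sigma_{v_a})/Z^*(\sigma)$ with respect to $\wR$, where $Z^*(\sigma) = \expe[\prod_a \psiae(\sigma_{\vR(a)})]$; the identity $\wR^*(\sigma) \dequal (\wR'^*_\circ(\sigma)^{\otimes m} \,|\, \vR'^* \in \mclv)$ follows by a direct Radon--Nikodym calculation.

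The bad set $\bmcla = \{a : \bm b(a) > 1\}$ satisfies $\expe[|\bmcla|] \le \binom{m}{2}/(n)_k = \mclo(n^{2-k})$ since at step $a$ the collision probability is at most $(a-1)/(n)_k$, so for $k \ge 2$ and $m \le \mbu$ Markov's inequality gives $\prob[|\bmcla| \ge \sqrt{n}] = \mclo(n^{-1/2})$. The planted variant satisfies the same order bound up to a factor of $\psibu^2/\psibl^2$, using $\prob[\vR'^*_\circ(\sigma) = v] \le \psibu^2/(n)_k$ uniformly in $\sigma$. Combining these tail estimates with the Hamming--Lipschitz bound $|\phiG(G) - \phiG(G')| \le (2c/n)|\{a : G_a \neq G'_a\}|$ and the uniform bound $|\phiG| = \mclo(1)$ from Observation \ref{obs_phi_lipschitz} yields $|\expe[\phiG(\GR)] - \expe[\phiG(\GR')]| = \mclo(n^{-1/2})$ and the analogous estimate for the teacher-student free entropies.

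For the annealed quantity and for the cross-entropy/mutual-information decompositions used in the proofs of Theorem \ref{thm_infth_external} and Theorem \ref{thm_mi_external}, the relevant object is the ratio $Z^*(\sigma)/z(\sigma)^m$ with $z(\sigma) = \expe[\psiae(\sigma_{\vR'_\circ})]$. A short calculation gives $Z^*(\sigma)/z(\sigma)^m = P^*(\sigma)/P$, where $P = \prob[\vR' \in \mclv]$ and $P^*(\sigma) = \prob[\vR'^*(\sigma) \in \mclv]$. Both $1 - P$ and $1 - P^*(\sigma)$ are $\mclo(n^{2-k})$ uniformly in $\sigma$, and the product representation $P = \prod_{j=0}^{m-1}(1 - j/(n)_k)$ with $m \le \mbu$ shows $P, P^*(\sigma) \ge c_\mfkg > 0$. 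Hence the ratio lies in a $\mfkg$-dependent positive compact interval, $|\frac{1}{n}\ln \expe[\ZG(\GR)] - \frac{1}{n}\ln \expe[\ZG(\GR')]| = \mclo(n^{-1})$, and the corresponding $\mclo(n^{-1/2})$ closeness for $\iota(m)$ and $\delta(m)$ follows exactly as at the end of the proof of Corollary \ref{cor_paralleledges}. Invoking Lemma \ref{lemma_convergenceequivalence} with $\rho < 1/4$ transfers Theorems \ref{thm_bethe_external}, \ref{thm_infth_external}, and \ref{thm_mi_external}, as well as Lemmas \ref{lemma_condth_rs} and \ref{lemma_condth_c}, from $\GR', \GTSM'$ (already established in Corollary \ref{cor_paralleledges}) to the simple-hypergraph model.

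The main obstacle is the regime $k = 2$, in which the expected collision count $\mclo(n^{2-k})$ is only bounded rather than vanishing; securing $r(\sigma) = P^*(\sigma)/P$ uniformly bounded above and below in $\sigma$ then requires both the product formula for $P$ and the uniform single-pair bound $\prob[\vR'^*_\circ(\sigma) = v] \le \psibu^2/(n)_k$. The remaining bookkeeping is to verify that the rejection construction is well-defined for all $n$ large enough that $(n)_k \ge \mbu$ (which holds for $n \ge 2k$ and $k \ge 2$), while for small $n$ the uniform boundedness of $\phiG, \iota, \delta$ absorbs the error into the $\mclo$-terms.
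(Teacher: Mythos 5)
The proposal takes the same high-level route as the paper's proof: a sequential rejection coupling of the simple-hypergraph wires $\wR$ with the no-parallel-edges model $\wR_{\mrmr}$ of Corollary~\ref{cor_paralleledges}, a Markov bound on the number of rejected factors $|\bmcla|$, control of the annealed ratio via $Z^*(\sigma)/z(\sigma)^m=P^*(\sigma)/P$, and finally Lemma~\ref{lemma_convergenceequivalence} with $\rho<1/4$. The $k\ge 3$ cases go through as you say, since $1-P$ and $1-P^*(\sigma)$ are both $o(1)$.

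The genuine gap is in the step that pins $P^*(\sigma)=\prob[\vR'^*(\sigma)\in\mclv]$ away from zero uniformly in $\sigma$, which is exactly the crux of the $k=2$ regime. You write that ``the product representation $P=\prod_{j=0}^{m-1}(1-j/(n)_k)$ with $m\le\mbu$ shows $P,P^*(\sigma)\ge c_\mfkg>0$,'' but that exact product formula is the uniform birthday problem and applies only to $P$; it says nothing directly about $P^*(\sigma)$, the acceptance probability under the \emph{non-uniform} planted single-pair law. For $k=2$ the crude union bound $1-P^*(\sigma)\le\psibu^2\binom{m}{2}/(n)_k$ is $\mclo(1)$ and need not be bounded away from $1$, so a separate argument is required. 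The paper fills this in with an induction on $m$ establishing the two-sided bound
\[
\prod_{a=1}^m(1-p(a))\le\frac{\expe[\psiG[,\wR_{\mrmr}](\sigma)]}{\expe[\psiG[,\wR](\sigma)]}\le\prod_{a=1}^m(1+cp(a)),\qquad p(a)=\frac{a-1}{(n)_k},\ c=\psibu^2-1,
\]
obtained by decomposing $z(\sigma)=(1+\delta(v))E(v)$ at each step, where $E(v)$ is the conditional expected weight over $[n]_k\setminus v([m])$ and $|\delta(v)|\le cp(a)$; together with $P=\prod_a(1-p(a))$ this is what lands $P/P^*(\sigma)$ in a $\mfkg$-dependent compact interval \emph{uniformly in} $\sigma$. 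Your closing remark that ``the uniform single-pair bound $\prob[\vR'^*_\circ(\sigma)=v]\le\psibu^2/(n)_k$'' is also needed identifies the missing ingredient, but the proposal never actually combines it with a chain or inductive argument to prove $P^*(\sigma)\ge c_\mfkg$; as written this step does not follow from the product formula for $P$, and you should carry out one of these arguments explicitly.

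One further point worth flagging, though you are here mirroring the paper verbatim: the claimed distributional identity $(\wR^\circ_{a,\bm b(a)})_{a\in[m]}\dequal\wR^*(\sigma)$ for the \emph{sequential} rejection of the planted pairs deserves scrutiny, since sequential rejection from a non-uniform base measure produces the chain law $\prod_a\mu^*(v_a)/(1-\sum_{a'<a}\mu^*(v_{a'}))$ rather than the global conditional law $\prod_a\mu^*(v_a)/P^*(\sigma)$, and these coincide only when $\mu^*$ is uniform (which it is for the null model, but not for the planted one). If you intend to rely on this coupling to control the Hamming distance between $\wR^*_{\mrmr}(\sigma)$ and $\wR^*(\sigma)$ in the $k=2$ case, you will want to either verify that the approximation error it introduces is itself $\mclo(n^{-1/2})$, or replace it by a coupling whose marginals are exact.
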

\begin{proof}
Let $\GR_{\mrmr}=[\wR_{\mrmr}]^\Gamma$, $\GTSM[\mrmr](\sigma)\dequal[\wTSM[\mrmr](\sigma)]^\Gamma$ be the graphs from Corollary \ref{cor_paralleledges} and let $\wR_{\mrmr}=(\vR_{\mrmr},\psiR_{\mrmr})$, $\wR^*_{\mrmr}(\sigma)=(\vR^*_{\mrmr}(\sigma),\psiR^*_{\mrmr}(\sigma))$, $\wR=(\vR,\psiR)$ and $\wR^*(\sigma)$ given by $w\mapsto\psiG[,w](\sigma)/\expe[\psiG[,\wR](\sigma)]$. Notice that the $(\vR,\vR_{\mrmr})$-derivative is $v\mapsto\bmone\{v\in\mclv\}/P$ with $P=\prob[\vR_{\mrmr}\in\mclv]$ and that $\GTSM(\sigma)\dequal[\wTSM(\sigma)]^\Gamma$.
This yields $\expe[\psiG[,\wR](\sigma)]=P^*(\sigma)\expe[\psiG[,\wR_{\mrmr}](\sigma)]/P$ with $P^*(\sigma)=\prob[\vR^*_{\mrmr}(\sigma)\in\mclv]$ and hence $\wR^*(\sigma)\dequal(\wR^*_{\mrmr}(\sigma)|\vR^*_{\mrmr}(\sigma)\in\mclv)$.
Recall $\wR_{\mrmr\circ}$, $\wR^*_{\mrmr\circ}$ with
$\wR_{\mrmr}\dequal\wR_{\mrmr\circ}^{\otimes m}$, $\wR^*_{\mrmr}(\sigma)\dequal\wR_{\mrmr\circ}^{*\otimes m}$ from the proof of Corollary \ref{cor_paralleledges}.
Let $\wR^\circ=(\vR^\circ,\psiR^\circ)\dequal\wR_{\mrmr\circ}^{*\otimes\ints_{>0}^2}$, $\wR'_{\mrmr}=(\vR'_{\mrmr},\psiR'_{\mrmr})=(\wR^\circ_{a,1})_{a\in[m]}\dequal\wR^*_{\mrmr}(\sigma)$, for $a\in[m]$ let
\begin{align*}
\bm b(a)=\min\left\{b\in\ints_{>0}:\forall a'\in[a-1]\,\vR^\circ_{a,b}\neq\vR^\circ_{a',\bm b(a')}\right\},
\end{align*}
and $\bm w'=(\vR',\psiR')=(\wR^\circ_{a,\bm b(a)})_{a\in[m]}\dequal\wR^*(\sigma)$.
Notice that $\wR$, $\wR_{\mrmr}$ can be coupled analogously.
Using $r\le\psibu^2$ for the $(\wR^*_{\mrmr\circ},\wR_{\mrmr\circ})$-derivative and the union bound yields
\begin{align*}
\prob[\bm b(a)>1]=\expe\left[\prob\left[\exists a'\in[a-1]\,\vR^*_{\mrmr\circ}=\vR'_{a'}\middle|(\vR'_{a'})_{a'\in[a-1]}\right]\right]\le\frac{(a-1)\psibu^2}{(n)_k}
\end{align*}
with $(n)_k=\prod_{h=0}^{k-1}(n-h)$.
This yields $\expe[|\bm b^{-1}(\ints_{>1})|]\le\frac{\psibu^2}{(n)_k}\binom{m}{2}$.
For $m\le\mbu$ we have
\begin{align*}
P&=\frac{((n)_k)_m}{(n)_k^m}=\prod_{a=0}^{m-1}\left(1-\frac{a}{(n)_k}\right)
=\exp\left(-\sum_{a=0}^{m-1}\left(1+\mclo\left(\frac{\mbu}{(n)_k}\right)\right)\frac{a}{(n)_k}\right)\\
&=\exp\left(-\left(1+\mclo\left(\frac{1}{n}\right)\right)\frac{1}{(n)_k}\binom{m}{2}\right)
=(1+\mclo(n^{-1}))P_\infty,\,
P_\infty=\exp\left(\frac{-1}{(n)_k}\binom{m}{2}\right).
\end{align*}
Now, we show that for all $m\le(n)_k$, using $p(a)=\frac{a-1}{(n)_k}$, $c=\psibu^2-1$ and induction, we have
\begin{align*}
\prod_{a=1}^m(1-P(a))\le\frac{\expe[\psiG[,\wR_{\mrmr}](\sigma)]}{\expe[\psiG[,\wR](\sigma)]}\le\prod_{a=1}^m(1+cP(a)).
\end{align*}
For $m=0$ we have $\expe[\psiG[,\wR_{\mrmr}](\sigma)]=\expe[\psiG[,\wR](\sigma)]=1$, which coincides with both bounds, so assume that the hypothesis holds for $m$.
Then we have
\begin{align*}
\expe[\psiG[,\wR_{\mrmr}(m+1)](\sigma)]
=\expe[\psiG[,\wR_{\mrmr}](\sigma)]z(\sigma)
\le\prod_{a=1}^m(1+cP(a))\expe[\psiG[,\wR](\sigma)z(\sigma)].
\end{align*}
Let $v\in[n]_k^m$ with $|v([m])|=m$, $a=m+1$, $\vR=\vR_{m+1}$, recall that $\vR_{a}|\vR_{[a-1]}=v$ is uniform on $[n]_k\setminus v([m])$,
as is $\vR_{\mrmr\circ}|\vR_{\mrmr\circ}\not\in v([m])$, so we have
$E(v)=\expe[\psiae(\sigma_{\vR(a)})|\vR_{[a-1]}=v]=\expe[\psiae(\sigma_{\vR_{\mrmr\circ}})|\vR_{\mrmr\circ}\not\in v([m])]$, and $P(a)=\frac{m}{(n)_k}=\prob[\vR_{\mrmr\circ}\in v([m])]$.
Hence, we have
\begin{align*}
z(\sigma)&=(1-P(a))E(v)+\expe[\bmone\{\vR_{\mrmr\circ}\in v([m])\}\psiae(\sigma_{\vR_{\mrmr\circ}})]
=(1+\delta(v))E(v),\\
\delta(v)&=\frac{\expe[\bmone\{\vR_{\mrmr\circ}\in v([m])\}\psiae(\sigma_{\vR_{\mrmr\circ}})]}{E(v)}-P(a)\le cP(a),
\end{align*}
and analogously $\delta(v)\ge-(1-\psibl^2)P(a)$. Hence, we have
\begin{align*}
\expe[\psiG[,\wR_{\mrmr}(m+1)](\sigma)]
&\le\prod_{a=1}^{m+1}(1+cP(a))\expe[\psiG[,\wR](\sigma)E(\vR_{[m]})]\\
&=\prod_{a=1}^{m+1}(1+cP(a))\expe\left[\prod_{a=1}^m\psiae(\sigma_{\vR(a)})\expe[\psiae(\sigma_{\vR(m+1)})|\vR_{[m]}]\right]\\
&=\prod_{a=1}^{m+1}(1+cP(a))\expe[\psiG[,\wR(m+1)](\sigma)].
\end{align*}
With $\expe[\psiG[,\wR_{\mrmr}](\sigma)]z(\sigma)\ge\prod_a(1-P(a))\expe[\psiG[,\wR](\sigma)z(\sigma)]$ and $\delta(v)\ge -P(m+1)$ the lower bound follows analogously.
Next, we have $\prod_a(1+cP(a))\le\exp(c\sum_aP(a))=\exp(c\binom{m}{2}/(n)_k)$, and using $m\le\mbu$, $k\ge 2$ further $P(a)\le\mbu/(n)_k=\mclo(1/n)$ and
\begin{align*}
\prod_a(1-P(a))
&=\exp\left(\sum_a\ln(1-P(a))\right)
=\exp\left(-(1+\mclo(n^{-1}))\sum_aP(a)\right)\\
&=(1+\mclo(n^{-1}))P_\infty.
\end{align*}
Since these bounds do not depend on $\sigma$ we have
\begin{align*}
(1+\mclo(n^{-1}))P_\infty
\le\frac{\expe[\psiG[,\wR_{\mrmr}](\sigma)]}{\expe[\psiG[,\wR](\sigma)]},
\frac{\expe[\ZG(\GR_{\mrmr})]}{\expe[\ZG(\GR)]}\le
P_\infty^{-c}.
\end{align*}
We obtain and define $\delta$, $\delta'$, $\iota$ and $\iota_\circ$ analogously to the proof of Corollary \ref{cor_paralleledges}, with the slight modification $\delta(m)=\expe[\phiG(\GTSM[\mrmr](\sigmaIID))]-\expe[\frac{1}{n}\ln(\expe[\psiG[,\wR_{\mrmr}](\sigmaIID)|\sigmaIID])]$, and analogously for $\delta'$.
Now, for $k>2$ we have $\expe[|\bm b^{-1}(\ints_{>1})|]=\mclo(1/n)$ and $P_\infty=1+\mclo(1/n)$, so the graphs differ with probability $\mclo(1/n)$ and hence
\begin{align*}
|\expe[\phiG(\GR)]-\expe[\phiG(\GR_{\mrmr})]|,\,
|\expe[\phiG(\GTSM(\sigmaIID))]-\expe[\phiG(\GTSM[\mrmr](\sigmaIID))]|&=\mclo(n^{-1}),\\
\left|\frac{1}{n}\ln(\expe[\ZG(\GR)])-\frac{1}{n}\ln(\expe[\ZG(\GR_{\mrmr})])\right|&=\mclo(n^{-2}),\\
|\delta(m)-\delta'(m)|,\,|\iota(m)-\iota_\circ(m)|&=\mclo(n^{-1}).
\end{align*}
For $k=2$ there exists $C_\mfkg\in\reals_{>1}$ with $\expe[|\bm b^{-1}(\ints_{>1})|]\le C$ and $P_\infty\ge C^{-1}$, so the graph difference is at least $\sqrt{n}$ with probability $\mclo(\sqrt{n}^{-1})$, and repeating the arguments gives
\begin{align*}
|\expe[\phiG(\GR)]-\expe[\phiG(\GR_{\mrmr})]|,\,
|\expe[\phiG(\GTSM(\sigmaIID))]-\expe[\phiG(\GTSM[\mrmr](\sigmaIID))]|&=\mclo(\sqrt{n}^{-1}),\\
\left|\frac{1}{n}\ln(\expe[\ZG(\GR)])-\frac{1}{n}\ln(\expe[\ZG(\GR_{\mrmr})])\right|&=\mclo(n^{-1}),\\
|\delta(m)-\delta'(m)|,\,|\iota(m)-\iota_\circ(m)|&=\mclo(\sqrt{n}^{-1}).
\end{align*}
Lemma \ref{lemma_convergenceequivalence} with $\rho<1/4$ yields the assertion.
\end{proof}
Now, let $\mcla=\{\alpha\in\ints_{\ge 0}^q:\|\alpha\|_1=k\}$ be absolute frequencies of factor assignments, $\psiR_{\mrma\circ}:\mcla\rarr[\psibl,\psibu]$, $\psiR_{\circ}:[q]^k\rarr[\psibl,\psibu]$, $\tau\mapsto\psiR_{\mrma\circ}((|\tau^{-1}(\sigma)|)_{\sigma})$ and let $\lawpsi$ be the law of $\psiR_\circ$, i.e.~$\psiR_\circ$ is invariant to permutations of the coordinates.
Now, we may consider the hypergraph $\vR\dequal\unif(\mclv)$ with $\mclv=\{v\setle\binom{[n]}{k}:|v|=m\}$, equipped with weights $\psiR_{\mrma}\dequal\psiR_{\mrma\circ}^{\otimes\vR}$.
The weight for the null model is $\psiG[,(\vR,\psiR_{\mrma})](\sigma)=\prod_{a\in\vR}\psiR_{\mrma,a}(\sigma_a)$, which induces the teacher-student model.
An immediate consequence of Corollary \ref{cor_duplicatefactors} is that the main results also hold for this pair of models, which are pushforwards of the models in Corollary \ref{cor_duplicatefactors}.
\subsection{The Pinning Lemma}\label{pinning_lemma_tv}
In this section we briefly derive the strengthened generalization of Lemma 3.5 in \cite{coja2018} from Lemma \ref{lemma_pinning}.
Recall the notions from Lemma \ref{lemma_pinning} and the $(\eps,\ell)$-symmetric measures $\setES$ from Remark \ref{remark_epssym}.
\begin{corollary}\label{cor_pinning_lemma}
For $n\in\ints_{>0}$, $\mu\in\mclp([q]^n)$, $\ell\in\ints_{>0}$, $\eps\in\reals_{>0}$ and $\ThetaP\in(0,n]$ we have
$\prob[[\mu]^\darr_{\setPR,\sigmaR}\not\in\setES[,n,\eps,\ell]]\le\binom{\ell}{2}\ln(q)/(2\eps^2\ThetaP)$.
\end{corollary}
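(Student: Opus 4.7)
The plan is to deduce this directly from Lemma \ref{lemma_pinning} via Pinsker's inequality and Markov's inequality, with essentially no new technical content. Write $\bm\mu=[\mu]^\darr_{\setPR,\sigmaR}$ for the randomly pinned measure. By the definition of $\setES[,n,\eps,\ell]$ in Remark \ref{remark_epssym}, the event $\{\bm\mu\not\in\setES[,n,\eps,\ell]\}$ is exactly $\{\nu_\ell(\bm\mu)>\eps\}$.

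The first step is to dominate $\nu_\ell$ by $\iota_\ell$. Fix any $\mu'\in\mclp([q]^n)$ and $v\in[n]^\ell$. Pinsker's inequality gives $\nu_\circ(\mu',v)=\|\mu'|_v-\bigotimes_h\mu'|_{v(h)}\|_\mrmtv\le\sqrt{\iota_\circ(\mu',v)/2}$ as noted in Remark \ref{remark_epssym}, and taking the expectation over $\vR\dequal\unif([n]^\ell)$ together with Jensen's inequality for the concave square root yields $\nu_\ell(\mu')\le\sqrt{\iota_\ell(\mu')/2}$. In particular, on the event $\nu_\ell(\bm\mu)>\eps$ we must have $\iota_\ell(\bm\mu)>2\eps^2$, so
\begin{align*}
\prob[\bm\mu\not\in\setES[,n,\eps,\ell]]\le\prob[\iota_\ell(\bm\mu)>2\eps^2].
\end{align*}

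The second step is Markov's inequality together with Lemma \ref{lemma_pinning} applied to $\mu$, $\ell$ and $\ThetaP$: since $\iota_\ell\ge 0$ we get
\begin{align*}
\prob[\iota_\ell(\bm\mu)>2\eps^2]\le\frac{\expe[\iota_\ell(\bm\mu)]}{2\eps^2}\le\frac{1}{2\eps^2}\cdot\frac{\binom{\ell}{2}\ln(q)}{\ThetaP},
\end{align*}
which is the claimed bound.

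There is no real obstacle here: the entire content has already been absorbed into Lemma \ref{lemma_pinning} (the conditional-entropy derivative identity of Lemma \ref{lemma_pin_pinder} together with the integral bound of Corollary \ref{cor_pin_pinint}), and the corollary is just the standard Pinsker-plus-Markov passage from KL-type control to total-variation control, exactly the same move already used in the proof of Proposition \ref{proposition_pinG}\ref{proposition_pinG_IIDDKL}.
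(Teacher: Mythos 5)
Your proof is correct and follows exactly the same route as the paper's: dominate $\nu_\ell$ by $\sqrt{\iota_\ell/2}$ via Pinsker and Jensen (as in Remark \ref{remark_epssym}), then apply Markov's inequality with the bound of Lemma \ref{lemma_pinning}. You just spell out the Pinsker-plus-Jensen step more explicitly than the paper, which simply cites the remark.
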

\begin{proof}
Let $\bm\mu=[\mu]^\darr_{\setPR,\sigmaR}$, recall $\nu$ from Remark \ref{remark_epssym} and that $\nu(\mu)\le\sqrt{\iota(\mu)/2}$. Hence, on $\nu(\bm\mu)>\eps$ we have $\iota(\bm\mu)\ge 2\eps^2$ and thereby Lemma \ref{lemma_pinning} with Markov's inequality yields
$\prob[\bm\mu\not\in\setES]=\prob[\nu(\bm\mu)>\eps]\le\prob[\iota(\bm\mu)\ge 2\eps^2]\le\binom{\ell}{2}\ln(q)/(2\eps^2\ThetaP)$.
\end{proof}
Corollary \ref{cor_pinning_lemma} suggests that for $\ThetaP>\binom{\ell}{2}\ln(q)/(2\eps^3)$ and $n\ge\ThetaP$ we have $\prob[[\mu]^\darr_{\setPR,\sigmaR}\not\in\setES]<\eps$.
In order to recover Lemma 3.5 in \cite{coja2018} we consider $\ell=2$.
\bibliography{literature}
\end{document}